\let\mathbb\mathds
\DeclareMathAlphabet\mathbfcal{OMS}{cmsy}{b}{n}
\pgfplotsset{compat=1.13}
\def\on{\operatorname}
\def\bS{\textbf{MB}}
\def\MS{\textbf{MS}}
\def\sS{\textbf{S}}
\def\OO{\mathbb{O}}
\def\Fun{\on{Fun}}
\def\Cat{\on{Cat}}
\def\iCat{\EuScript{C}\!\on{at}}
\def\Hom{\on{Hom}}
\def\Nsc{\on{N}^{\mathbf{sc}}}
\def\Nsm{\on{N}^{\mathbf{ms}}}
\def\Nms{\on{N}^{\on{ms}}}
\def\Set{\on{Set}}
\def\Map{\on{Map}}
\def\scsSet{{\on{Set}_{\Delta}^{\mathbf{sc}}}}
\def\mbsSet{{\Set_\Delta^{\mathbf{mb}}}}
\def\sc{{\on{sc}}}
\def\ev{\on{ev}}
\def\Sqe{\mathbf{Sq}_{\on{E}}}
\declaretheoremstyle[bodyfont=\itshape,notefont=\bfseries]{abellanA}
\declaretheoremstyle[notefont=\bfseries]{abellanB}
\declaretheorem[style=abellanA,numberwithin=section,name={Theorem}]{theorem}
\declaretheorem[style=abellanA,numberlike=theorem,name={Lemma}]{lemma}
\declaretheorem[style=abellanB,numberlike=theorem,name={Definition}]{definition}
\declaretheorem[style=abellanB,numberlike=theorem,name={Definition-Lemma}]{definition-lemma}
\declaretheorem[style=abellanB,numberlike=theorem,name={Remark}]{remark}
\declaretheorem[style=abellanA,numberlike=theorem,name={Proposition}]{proposition}
\declaretheorem[style=abellanB,numbered=no,name={Notation}]{notation}
\declaretheorem[style=abellanA,numberlike=theorem,name={Corollary}]{corollary}
\newtheorem{thm*}{Theorem}
\newtheorem*{prop*}{Proposition}
\newtheorem{cor*}{Corollary}
\let\leq\leqslant
\let\geq\geqslant
\let\epsilon\varepsilon
\let\isom\simeq
\newcommand*\tensor{\otimes}
\newcommand*\btensor{\overline{\tensor}}
\newcommand*\bodot{\overline{\odot}}
\newcommand*\mathblank{\mathord{-}}
\DeclareMathOperator*\colim{colim}
\def\Cat{\on{Cat}}
\def\scr{\EuScript}
\let\emptyset\varnothing
\newcommand{\fixed@sra}{$\vrule height 2\fontdimen22\textfont2 width 0pt\rightarrow{}$}
\newcommand{\shortarrowup}[1]{%
  \mathrel{\text{\rotatebox[origin=c]{65}{\fixed@sra}}}
}
\newcommand{\shortarrowdown}[1]{%
  \mathrel{\text{\rotatebox[origin=c]{250}{\fixed@sra}}}
}
\newcommand{\upslash}{\!\shortarrowup{1}}
\def\lra{\longrightarrow}
\def\lla{\longleftarrow}
\def\llra{\def\arraystretch{.1}\begin{array}{c} \lra \\ \lla \end{array}}
\def\op{{\on{op}}}
        \string\usetikzlibrary{decorations.markings} to use arrows with markings}{}}{}%
\tikzset{curve/.style={settings={#1},to path={(\tikztostart)
    .. controls ($(\tikztostart)!\pv{pos}!(\tikztotarget)!\pv{height}!270:(\tikztotarget)$)
    and ($(\tikztostart)!1-\pv{pos}!(\tikztotarget)!\pv{height}!270:(\tikztotarget)$)
    .. (\tikztotarget)\tikztonodes}},
    settings/.code={\tikzset{quiver/.cd,#1}
        \def\pv##1{\pgfkeysvalueof{/tikz/quiver/##1}}},
    quiver/.cd,pos/.initial=0.35,height/.initial=0}
\tikzset{tail reversed/.code={\pgfsetarrowsstart{tikzcd to}}}
\tikzset{2tail/.code={\pgfsetarrowsstart{Implies[reversed]}}}
\tikzset{2tail reversed/.code={\pgfsetarrowsstart{Implies}}}
\tikzset{no body/.style={/tikz/dash pattern=on 0 off 1mm}}
\DeclareSymbolFont{lettersA}{U}{txmia}{m}{it}
\DeclareRobustCommand*{\varmathbb}[1]{\gdef\F@ntPrefix{m@thbbch@r}%
  \@EachCharacter #1\@EndEachCharacter}
\long\def\DoLongFutureLet #1#2#3#4{%
  \def\@FutureLetDecide{#1#2\@FutureLetToken
    \def\@FutureLetNext{#3}\else
    \def\@FutureLetNext{#4}\fi\@FutureLetNext}
  \futurelet\@FutureLetToken\@FutureLetDecide}
\def\DoFutureLet #1#2#3#4{\DoLongFutureLet{#1}{#2}{#3}{#4}}
\def\@EachCharacter{\DoFutureLet{\ifx}{\@EndEachCharacter}%
  {\@EachCharacterDone}{\@PickUpTheCharacter}}
\def\m@keCharacter#1{\csname\F@ntPrefix#1\endcsname}
\def\@PickUpTheCharacter#1{\m@keCharacter{#1}\@EachCharacter}
\def\@EachCharacterDone \@EndEachCharacter{}
\DeclareMathSymbol{\m@thbbch@rA}{\mathord}{lettersA}{129}
\DeclareMathSymbol{\m@thbbch@rB}{\mathord}{lettersA}{130}
\DeclareMathSymbol{\m@thbbch@rC}{\mathord}{lettersA}{131}
\DeclareMathSymbol{\m@thbbch@rD}{\mathord}{lettersA}{132}
\DeclareMathSymbol{\m@thbbch@rE}{\mathord}{lettersA}{133}
\DeclareMathSymbol{\m@thbbch@rF}{\mathord}{lettersA}{134}
\DeclareMathSymbol{\m@thbbch@rG}{\mathord}{lettersA}{135}
\DeclareMathSymbol{\m@thbbch@rH}{\mathord}{lettersA}{136}
\DeclareMathSymbol{\m@thbbch@rI}{\mathord}{lettersA}{137}
\DeclareMathSymbol{\m@thbbch@rJ}{\mathord}{lettersA}{138}
\DeclareMathSymbol{\m@thbbch@rK}{\mathord}{lettersA}{139}
\DeclareMathSymbol{\m@thbbch@rL}{\mathord}{lettersA}{140}
\DeclareMathSymbol{\m@thbbch@rM}{\mathord}{lettersA}{141}
\DeclareMathSymbol{\m@thbbch@rN}{\mathord}{lettersA}{142}
\DeclareMathSymbol{\m@thbbch@rO}{\mathord}{lettersA}{143}
\DeclareMathSymbol{\m@thbbch@rP}{\mathord}{lettersA}{144}
\DeclareMathSymbol{\m@thbbch@rQ}{\mathord}{lettersA}{145}
\DeclareMathSymbol{\m@thbbch@rR}{\mathord}{lettersA}{146}
\DeclareMathSymbol{\m@thbbch@rS}{\mathord}{lettersA}{147}
\DeclareMathSymbol{\m@thbbch@rT}{\mathord}{lettersA}{148}
\DeclareMathSymbol{\m@thbbch@rU}{\mathord}{lettersA}{149}
\DeclareMathSymbol{\m@thbbch@rV}{\mathord}{lettersA}{150}
\DeclareMathSymbol{\m@thbbch@rW}{\mathord}{lettersA}{151}
\DeclareMathSymbol{\m@thbbch@rX}{\mathord}{lettersA}{152}
\DeclareMathSymbol{\m@thbbch@rY}{\mathord}{lettersA}{153}
\DeclareMathSymbol{\m@thbbch@rZ}{\mathord}{lettersA}{154}
\newcommand{\myitem}[1]{%
  \item[#1]\protected@edef\@currentlabel{#1}%
}
\def\CSeg{\on{CSeg}_{\Delta}\left(\bcat{C}\!\on{at}_{(\infty,1)}\right)}
\def\CSeglax{\on{CSeg}^{\mathcal{I}\!\on{oplax}}_{\Delta}\left(\bcat{C}\!\on{at}_{(\infty,1)}\right)}
\def\EDbl{\on{EDbl}(\bcat{C}\!\on{at}_{(\infty)})}
\def\EDblnuc{\on{EDbl}^{\on{nuc}}(\bcat{C}\!\on{at}_{(\infty)})}
\def\ESeg{\on{ESeg}_\Delta\left(\bcat{C}\!\on{at}_{(\infty,1)}\right)}
\def\ESegnuc{\on{ESeg}^{\on{nuc}}_\Delta\left(\bcat{C}\!\on{at}_{(\infty,1)}\right)}
\def\EGlob{\mathbf{Gl}_{\on{E}}}
\def\iCatt{\bcat{C}\!\on{at}_{(\infty,1)}}
\def\bcat{\varmathbb}
\title{Comparing lax functors of $(\infty,2)$-categories}
\author{Fernando Abellán}
\date{}
\begin{document}
  \maketitle
  \begin{abstract}
    In this work, we study oplax normalised functors of $(\infty,2)$-categories. Our main theorem is a comparison between the notion of oplax normalised functor of scaled simplicial sets due to Gagna-Harpaz-Lanari and the corresponding notion in the setting of complete Segal objects in $(\infty,1)$-categories studied by Gaitsgory and Rozenblyum. As a corollary, we derive that the Gray tensor product of $(\infty,2)$-categories as defined by Gaitsgory-Rozenblyum is equivalent to that of Gagna-Harpaz-Lanari. 

    Moreover, we construct an $(\infty,2)$-categorical variant of the quintet functor of Ehresmann, from the $(\infty,2)$-category of $(\infty,2)$-categories to the $(\infty,2)$-category of double $(\infty,1)$-categories and show that it is fully faithful.
    
    As a key technical ingredient, given $(\bcat{C},E)$ an $(\infty,2)$-category equipped with a collection of morphisms and a functor of $(\infty,2)$-categories $f:\bcat{C}\xlongrightarrow{} \bcat{D}$, we construct a right adjoint to the restriction functor $f^*$ from the $(\infty,2)$-category of functors $\bcat{D} \xlongrightarrow{} \bcat{C}\!\on{at}_{(\infty,2)}$ and natural transformations to the  $(\infty,2)$-category of functors $\bcat{C} \xlongrightarrow{} \bcat{C}\!\on{at}_{(\infty,2)}$ and partially lax (according to $E$) natural transformations. We apply this new technology of partially lax Kan extensions to the study of complete Segal objects in $(\infty,1)$-categories and double $(\infty,1)$-categories which allows us to define the notion of an \emph{enhanced} Segal object (resp. \emph{enhanced} double $(\infty,1)$-category), the former yielding yet another model for the theory of $(\infty,2)$-categories.
  \end{abstract}
  \tableofcontents 
 \section{Introduction} 
  While originally intended, (\cite{QuiModel}, \cite{BVog}), as a language to model certain mathematical structures native to homotopy theory, $(\infty,n)$-categories have evolved into subject of its own interest. The ability of category theory to identify the \emph{characterizing property} of a mathematical object, coupled with the homotopical point of view has led to the understanding of mathematical constructions such as K-Theory (\cite{UnivK}) and Topological Quantum Field Theories (\cite{Cobor}), in terms of homotopy-coherent universal properties. 

  It does not take a long time wandering in the realm of $(\infty,1)$-categories until we start asking ourselves questions of the form:
 \begin{itemize}
   \item How can we characterise an $(\infty,1)$-category $\scr{C}$ according to its behaviour with respect to other categories?
   \item Given a diagram $\{\scr{C}_i\}_{i \in I}$ of $(\infty,1)$-categories and functors, what is the “free category“ on that diagram?
   \item Given an $(\infty,1)$-category $\scr{C}$ which satisfies a universal property in $\scr{C}\!\on{at}_{(\infty,1)}$, the $(\infty,1)$-category of $(\infty,1)$-categories. How does this universal property behave with respect to natural transformations of functors?
 \end{itemize}
 To give a satisfactory answer to these questions requires us to accept that $\scr{C}\!\on{at}_{(\infty,1)}$ is just the one-dimensional core of an $(\infty,2)$-category $\bcat{C}\!\on{at}_{(\infty,1)}$ which is the natural habitat for a wider range of universal properties. This is nothing new, as mathematicians working on category theory have known for a long time that many familiar categorical constructions, such as adjunctions, monads and monoidal categories, can be naturally understood in the 2-categorical language. However, it is not until recently that we have started to see a systematic use of the 2-dimensional theory of  $(\infty,n)$-categories being applied in contexts such as homotopy theory (\cite{LurieGoodwillie}) and derived geometry (\cite{GaitsgoryRozenblyum}, \cite{Strat}). Moreover, there is a rising interest in studying invariants of higher-categorical structures such as the Balmer spectrum (\cite{Balmer})  of a symmetric monoidal stable $(\infty,1)$-category. From a 2-categorical perspective, we can understand stable $(\infty,1)$-categories as categorified analogues of abelian groups and their symmetric monoidal counterparts as categorified commutative rings. This vision is currently been pursued by the authors of \cite{CDW} with the goal of developing a categorified theory of homological algebra. To fully realize this picture though, a highly developed theory of $(\infty,2)$-categories is necessary.

  Just in the same way as strict categories have higher dimensional generalizations known as $n$-categories, their homotopical counterparts can be organized into an infinite tower

  \[
  \Bigl\{ \text{spaces}\Bigr\} \subset \Bigl\{ (\infty,1)\text{-categories} \Bigr\}\subset  \Bigl\{ (\infty,2)\text{-categories} \Bigr\}\subset  \Bigl\{ (\infty,3)\text{-categories} \Bigr\}\subset \cdots
 \]
In this work, we will climb two steps in this higher-categorical ladder and establish several foundational results in the study of $(\infty,2)$-categories. More precisely:
\begin{enumerate}
  \item We compare the notions of oplax normalised functors across different models for $(\infty,2)$-categories and prove that they coincide in \autoref{intro:thmlax}. 
  \item In \autoref{intro:gray}, we derive an equivalence between the corresponding definitions of the Gray tensor product in the models studied.
  \item We show in \autoref{introcor:gray} that the Gray tensor product of complete Segal objects (or 2-fold complete Segal spaces) in $(\infty,1)$-categories given in \cite{GaitsgoryRozenblyum} defines a closed monoidal structure.
  \item We construct a novel model for $(\infty,2)$-categories which we call \emph{enhanced Segal objects} in $(\infty,1)$-categories by equipping Segal objects with additional lax data and show in \autoref{intro:enhanced}, that the $(\infty,2)$-category of enhanced segal objects $\ESeg$ models the $(\infty,2)$-category of $(\infty,2)$-categories, oplax normalised functors and opglobular natural transformations.
  \item We apply our enhancement construction to double $(\infty,1)$-categories to produce a theory of \emph{enhanced double} $(\infty,1)$-categories. 
  \item In \autoref{intro:sqe}, we construct an enhanced variant of Ehresmann quintet construction and show that it defines fully faithful functor of $(\infty,2)$-categories  
  \[
     \Sqe: \bcat{C}\!\on{at}_{(\infty,2)} \xlongrightarrow{} \on{EDbl}(\bcat{C}\!\on{at}_{(\infty,1)})
   \] 
   with values in enhanced double $(\infty,1)$-categories thus answering another question in \cite{GaitsgoryRozenblyum}.
   \item We lay the foundations of a general theory of lax Kan extensions in \autoref{intro:rightlaxkan}.
\end{enumerate}

\subsection{Oplax normalised functors of \texorpdfstring{$(\infty,2)$-}-categories and the Gray tensor product}
 One of the most fundamental  advantages of the 2-dimensional theory is that it allows us to define the notion of oplax (resp. lax) functors. Given a pair of $(\infty,2)$-categories $\bcat{C}$ and $\bcat{D}$, an oplax functor $F:\bcat{C} \xlongrightarrow{} \bcat{D}$ sends $n$-morphisms in $\bcat{C}$ to $n$-morphisms in $\bcat{D}$ but only respects composition of $1$-morphisms up to a coherent choice of 2-morphism $F(g \circ f)\xRightarrow{} F(g) \circ F(f)$. By reversing the the order of the 2-morphism $F(g \circ f)\xLeftarrow{} F(g) \circ F(f)$ one obtains the notion of a lax functor. In this work, we will only look at those oplax functors which are \emph{normalised}. More precisely, we say that $F$ is normalised if the condition below holds.
 \begin{itemize}
   \item[$\mathbf{N})$] Given a composite $g \circ f$ in $\bcat{C}$ such that \emph{either} $f$ or $g$ is an equivalence, then it follows that the canonical 2-morphism $F(g \circ f)\xRightarrow{\simeq} F(g) \circ F(f)$ is invertible in $\bcat{D}$.
 \end{itemize}

  In homotopy theory, this notion appears naturally when dealing with monoidal $(\infty,1)$-categories where a normalised oplax monoidal functor is given by a map between the cartesian fibrations, 
 \[
   \begin{tikzcd}
     \scr{C}^\tensor \arrow[rr,"f"] \arrow[dr,swap,"p"] & & \scr{D}^{\tensor} \arrow[dl,"q"]\\
     &  \Delta & 
    \end{tikzcd}
 \]
 representing the monoidal $(\infty,1)$-categories, such that:
 \begin{itemize}
  \item[i)] The map $f$ commutes with the projection maps, i.e. $q \circ f= p$.
  \item[ii)] Given a cartesian edge $u:x \xlongrightarrow{} y $ in $\scr{C}^\tensor$ such that $p(u):p(x) \xlongrightarrow{} p(y)$  is either a surjection in $\Delta$ or the inclusion of a subinterval $p(x)=[n] \subseteq [k]=p(y)$ of $[k]$, then it follows that $f(u)$ defines a cartesian edge\footnote{It is common in homotopy theory to deal with non-normalised monoidal functors in which case one drops the condition regarding surjective maps in $\Delta$.} in $\scr{D}^\tensor$.
 \end{itemize}

  More generally, one can define $(\infty,2)$-categories as functors $\scr{X}_\bullet:\Delta^\op \xlongrightarrow{} \bcat{C}\!\on{at}_{(\infty,1)}$ such that:
 \begin{enumerate}
   \item The functor satisfies the Segal condition. That is, given $n \geq 2$ the monotone maps $i_{\alpha}:[1] \xlongrightarrow{} [n]$ for $0\leq \alpha \leq n-1$ given by the inclusion $\{\alpha,\alpha+1\} \xlongrightarrow{} [n]$ induce an equivalence of $(\infty,1)$-categories
   \[
      \scr{X}_n \xlongrightarrow{\simeq} \scr{X}_1 \times_{\scr{X}_0}\scr{X}_1 \times  \cdots \times_{\scr{X}_0} \scr{X}_1.
    \] 
   \item The $(\infty,1)$-category $\scr{X}_0$ is an $\infty$-groupoid.
   \item The underlying Segal space $\scr{X}^\simeq_\bullet$ is complete (\autoref{def:completesegalobj}). 
 \end{enumerate}
 We call such functors \emph{complete Segal objects} in $\bcat{C}\!\on{at}_{(\infty,1)}$.

 Let us denote by $\mathcal{I}\subset \Delta$ the class of \emph{idle morphisms} which consists in surjections and interval inclusions. Applying the Grothendieck construction, we can define an oplax normalised functor of $(\infty,2)$-categories as a commutative diagram of cartesian fibrations representing complete Segal objects
 \[
   \begin{tikzcd}
     \scr{X} \arrow[rr,"f"] \arrow[dr,swap,"p"] & & \scr{Y} \arrow[dl,"q"]\\
     &  \Delta & 
    \end{tikzcd}
 \]
 such that map $f$ preserves cartesian edges lying over idle morphisms in $\Delta$. By considering 2-morphisms given by commutative diagrams
 \[
   \begin{tikzcd}
     \scr{X}\times \Delta^1 \arrow[rr,"f"] \arrow[dr,swap,"p\circ \pi"] & & \scr{Y} \arrow[dl,"q"]\\
     &  \Delta & 
    \end{tikzcd}
 \]
 where $\pi: \scr{X}\times \Delta^1 \xlongrightarrow{} \scr{X}$ is the obvious projection, we obtain an $(\infty,2)$-category $\on{CSeg}_\Delta(\bcat{C}\!\on{at}_{(\infty,1)})^{\mathcal{I}\!\on{oplax}}$ whose objects are $(\infty,2)$-categories and where the morphisms are oplax normalised functors. This way of looking at oplax normalised functors is the perspective chosen by the authors in \cite{GaitsgoryRozenblyum}, where  $(\infty,2)$-categories play a key role in their study of derived algebraic geometry.

 The notion of a complete Segal object in $(\infty,1)$-categories (or 2-fold complete Segal space in the original terminology in \cite{Unicity}) is an internalization of the notion of a Segal space to $(\infty,1)$-categories. This approach is a very succesful way of inductively defining $(\infty,n)$-categories as certain category objects (i.e., complete Segal objects) in $(\infty,n-1)$-categories. 

 There are however, other models for $(\infty,n)$-categories which are of more “external“ nature. An important example of these family of models is given by $n$-complicial sets \cite{Verity}, cubical sets \cite{comical}, quasicategories for $n=1$ and scaled simplicial sets \cite{LurieGoodwillie} for $n=2$. By external, we mean models following the following pattern:
 \begin{itemize}
   \item A set of objects and a set of $n$-morphisms is postulated for $n\geq 1$  as \emph{merely combinatorial data}. For example, in the case of quasicategories, our combinatorial data is modelled by a simplicial set $X:\Delta^\op \xlongrightarrow{}\on{Set}$.
   \item A set of axioms is imposed to this data, usually in the form of lifting properties of model-categorical flavour which guarantees that our combinatorial objects admit a homotopical interpretation.  Again, in the quasicategorical model this is given by the inner-horn lifting criterion which we diagrammatically depict as
   \[
     \begin{tikzcd}
       \Lambda^n_i \arrow[d] \arrow[r]  & X \\
       \Delta^n. \arrow[ur,dotted] &
     \end{tikzcd}
   \]
 \end{itemize}

  It is believed that these two ways of viewing $(\infty,n)$-categories are equivalent (\cite{Loubaton}) but to this day no complete general proof is available.

  In our case of interest ($n=2$) results of Bergner, Gagna, Harpaz, Lanari, Lurie and Rezk show that these points of view agree. In this paper, we further extend this equivalence by showing that we can compare the notion of oplax normalised functor across these two families of models for $(\infty,2)$-categories.
  \subsubsection{ Oplax normalised functors of scaled simplicial sets}

  We will not be working with $2$-complicial sets but with the closely related model of \emph{scaled simplicial} sets introduced by Lurie in \cite{LurieGoodwillie}. Let us point out that it is a theorem of Gagna, Harpaz and Lanari \cite{GHL_Equivalence} that both models are equivalent. 

  We define a scaled simplicial set to be a pair $(X,T_X)$ consisting in a simplicial set $X$ equipped with a chosen collection of triangles $T_X \subset X_2$ (called the \emph{thin} 2-simplices) containing every degenerate triangle. We view the collection $T_X$ as representing weakly commuting triangles  in $X$. A map of scaled simplicial sets $f:(X,T_X) \xlongrightarrow{} (Y,T_Y)$ is simply a map of simplicial sets such that $f(T_X) \subseteq T_Y$. 

  In \cite{LurieGoodwillie}, Lurie defines an $\infty$-bicategory (or a fibrant scaled simplicial set by \cite[Theorem 5.1]{GHL_Equivalence}) as an scaled simplicial set satisfying certain lifting properties which are generalizations of the usual inner horn lifting criterion for quasicategories. Given fibrant scaled simplicial sets $(\bcat{C},T_{\bcat{C}})$ and $(\bcat{D},T_{\bcat{D}})$ we can make the following definition:
  \begin{itemize}
    \item An oplax normalised functor $F: \bcat{C} \xlongrightarrow{} \bcat{D}$ is given by a map of the underlying simplicial sets such that given $\sigma \in T_{\bcat{C}}$ such that either $d_0(\sigma)$ or $d_2(\sigma)$ is an equivalence, where $d_i(\sigma) \in \bcat{C}_1$ for $i \in \{0,2\}$, then it follows that $F(\sigma) \in T_{\bcat{D}}$.
  \end{itemize}
  Unpacking this definition, we see that this is essentially the point of view adopted when we first introduced the notion of an oplax normalised functor. Even more is true, we can understand the set of $n$-simplices of $(\bcat{C},T_{\bcat{C}})$ in terms of oplax normalised functors as we demonstrate below.

  \begin{itemize}
    \item Let $(\bcat{C},T_{\bcat{C}})$ be a fibrant scaled simplicial set.  We define a subcollection $M_{\bcat{C}} \subset T_{\bcat{C}}$ consisting in those $\sigma \in T_{\bcat{C}}$ such that either $d_0(\sigma)$ or $d_2(\sigma)$ is an equivalence. It then follows that a map of scaled simplicial sets
    \[
      F:(\bcat{C},M_{\bcat{C}}) \xlongrightarrow{} (\bcat{D},T_{\bcat{D}})
    \]
    is precisely the data of an oplax normalised functor.
    \item Let $\Delta^n_\sharp:=(\Delta^n,\sharp)$ denote an $n$-simplex where all triangles are thin. This marked simplicial set is fibrant and represents the ordinary poset $[n]=\{0<1<\cdots <n\}$. It follows that the collection $M_{\Delta^n_\sharp}$ consists precisely in the degenerate triangles of $\Delta^n$. We denote $\Delta^n_\flat=(\Delta^n,M_{\Delta^n_\sharp})$.
    \item As a consequence of the Yoneda lemma we see that given a fibrant scaled simplicial set $(\bcat{C},T_{\bcat{C}})$, the data of an n-simplex $\bcat{C}_n$ corresponds precisely to a map $\Delta^n_\flat \xlongrightarrow{} \bcat{C}$. In order words, 
    \[
       \bcat{C}_n=\{F: [n] \xlongrightarrow{} \bcat{C} \enspace | \enspace \text{ F is an oplax normalised functor}\}
    \]  
    \item In the case $n=2$ we can unpack this definition to see that a 2-simplex in $\bcat{C}$ is comprised of  the data:
    \begin{itemize}
    \item Three morphisms $u: x \xlongrightarrow{} y$ and $v: y \xlongrightarrow{} z$ and $w: x \xlongrightarrow{} z$ in $\bcat{C}$.
    \item A 2-morphism  $w \xRightarrow{} v \circ u$.
  \end{itemize}
   In particular, a triangle is thin if the associated oplax normalised functor is an actual functor which happens if and only if the corresponding 2-morphism  $w \xRightarrow{\simeq} v \circ u$ is invertible.
  \end{itemize}

    We can now state the main theorem which appears in the document as \autoref{thm:rearranged}

   \begin{thm*}\label{intro:thmlax}
     Let $\bcat{B}\!\on{icat}_\infty^{\mathbf{oplax}}$ be the $(\infty,2)$-category of $(\infty,2)$-categories (modelled as fibrant marked-scaled simplicial sets), oplax normalised functors and opglobular natural transformations\footnote{See  \autoref{def:globularGray} and \autoref{def:globfunctorcat}.}. Then there exists an equivalence of $(\infty,2)$-categories
     \[
       \mathbf{G}:\bcat{B}\!\on{icat}_\infty^{\mathbf{oplax}} \xlongrightarrow{\simeq } \on{CSeg}_\Delta(\bcat{C}\!\on{at}_{(\infty,1)})^{\mathcal{I}\!\on{oplax}}
     \]
   \end{thm*}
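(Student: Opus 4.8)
The plan is to build the equivalence $\mathbf{G}$ by exhibiting an explicit comparison functor and then checking it is fully faithful and essentially surjective, leveraging the known equivalence at the level of the underlying $(\infty,1)$-categories (Bergner--Rezk on the object level, Lurie/GHL on the $1$-morphism level). The key technical device I would deploy is the partially lax Kan extension machinery advertised in the abstract: a scaled simplicial set carries, besides its thin triangles $T$, a distinguished subcollection $M \subset T$ of triangles one of whose outer faces is an equivalence, and the slogan ``$n$-simplices $=$ oplax normalised functors $[n]\to\bcat{C}$'' realizes $\bcat{C}$ as a functor $\Delta^\op \to \iCatt$ that is \emph{not} strictly a Segal object but satisfies a lax Segal condition measured by $\mathcal{I}$. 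So the first step is to promote the marked-scaled nerve into a functor landing in $\CSeglax$; concretely, to each fibrant marked-scaled $\bcat{C}$ I associate the cocartesian (equivalently, cartesian after passing to opposites) fibration over $\Delta$ whose fiber over $[n]$ is the $(\infty,1)$-category of oplax normalised functors $\Delta^n_\flat \to \bcat{C}$ and natural equivalences, and whose cartesian edges over idle maps come from the normalisation condition $\mathbf{N})$.

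Second, I would check that this assignment is functorial for oplax normalised functors and opglobular natural transformations, i.e. that it upgrades to a functor of $(\infty,2)$-categories $\mathbf{G}\colon \bcat{B}\!\on{icat}_\infty^{\mathbf{oplax}} \to \CSeglax$. The morphism level is essentially the identification already spelled out in the excerpt (an oplax normalised $F\colon \bcat{C}\to\bcat{D}$ is the same as a map $(\bcat{C},M_{\bcat{C}})\to(\bcat{D},T_{\bcat{D}})$, which over $\Delta$ produces a fiberwise functor preserving cartesian edges over $\mathcal{I}$); the $2$-morphism level requires comparing the opglobular function complex of \autoref{def:globfunctorcat} with the mapping $(\infty,1)$-categories computed by $\Delta^1$-cylinders $\scr{X}\times\Delta^1 \to \scr{Y}$ over $\Delta$, which I expect to follow by unwinding both sides against tensoring with $\Delta^1_\flat$ and using that the Gray tensor products agree (\autoref{intro:gray}).

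Third, fully faithfulness: it suffices to show $\mathbf{G}$ induces an equivalence on mapping $(\infty,1)$-categories $\Map(\bcat{C},\bcat{D}) \xrightarrow{\simeq} \Map(\mathbf{G}\bcat{C},\mathbf{G}\bcat{D})$. Both are themselves mapping objects in models of $(\infty,2)$-categories of functors, so I would reduce to the one-dimensional statement via the complicial/scaled-nerve comparison of Bergner--Rezk and Gagna--Harpaz--Lanari, combined with the observation that oplax-normalised mapping categories on both sides are right adjoints to the same restriction functor $f^*$ along the idle morphisms (this is exactly where the partially lax Kan extension of the abstract enters). Essential surjectivity then comes for free: a complete Segal object with its $\mathcal{I}$-cartesian structure is, by the straightening/unstraightening dictionary for cocartesian fibrations over $\Delta$ applied to the lax functor $\Delta^\op\to\iCatt$, the image of its associated marked-scaled nerve.

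The main obstacle I anticipate is the $2$-categorical bookkeeping in the second and third steps: matching the opglobular (as opposed to oplax or lax) $2$-morphisms of $\bcat{B}\!\on{icat}_\infty^{\mathbf{oplax}}$ with the $\Delta^1$-cylinder $2$-morphisms on the Gaitsgory--Rozenblyum side, and in particular verifying that the normalisation conditions on the two sides ($\mathbf{N})$ via the subcollection $M$, versus ``preserves cartesian edges over idle morphisms'') are carried to one another \emph{at the level of the mapping $(\infty,1)$-categories}, not merely on objects. This is where I would have to be careful that tensoring with $\Delta^1_\flat$ (rather than $\Delta^1_\sharp$) is what governs opglobular transformations, and invoke the equivalence of the two Gray tensor products to close the loop.
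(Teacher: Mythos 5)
Your overall architecture matches the paper's: the comparison is indeed mediated by the assignment $[n]\mapsto\{\text{oplax normalised functors }\Delta^n_\flat\to\bcat{C}\}$, and the partially lax right Kan extension along $(\Delta,\mathcal{I})\to(\Delta,\sharp)$ is exactly what reconciles $\mathcal{I}$oplax naturality on the Gaitsgory--Rozenblyum side with strict naturality on the scaled side. The paper, however, factors the equivalence through an intermediate model, the $(\infty,2)$-category $\ESeg$ of enhanced Segal objects, proving separately that $K_*\colon\CSeglax\to\ESeg$ is an equivalence (\autoref{thm:enhanced}) and that the enhanced globular functor $\EGlob\colon\bcat{B}\!\on{icat}_\infty^{\mathbf{oplax},\odot}\to\ESeg$ is an equivalence (\autoref{thm:maineglob}); your proposal collapses these into a single direct comparison, which is fine in principle but hides where the work happens.

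The genuine gap is in your third step. Fully faithfulness does not reduce to the known one-dimensional comparison, nor does it follow from exhibiting both mapping categories as right adjoints to a common restriction functor: on the enhanced side the mapping $(\infty,1)$-category $\on{Nat}_{\Delta^\op}(\EGlob(\bcat{C})_\bullet,\EGlob(\bcat{D})_\bullet)$ is corepresented by a coend $\int^{\mathbf{opgl}}_{E}\EGlob(\bcat{C})_\bullet$, i.e.\ by a \emph{left} adjoint, and one must compare this corepresenting object with $\bcat{C}$ itself. What is actually needed is (i) the saturation lemmas (\autoref{lem:Glpreservesequiv}, \autoref{lem:Glpreserglobs}) showing that an arbitrary natural transformation between the functors $\EGlob(\mathblank)_\bullet$ automatically respects the markings and scalings encoding equivalences and invertible $2$-morphisms, so that the corepresenting object may be replaced by the decorated $\mathbb{X}_{\bcat{C}}^{\odot}$; and (ii) the combinatorial \autoref{thm:bigcombinatorial}, showing that the section $\overline{\bcat{C}}\to\mathbb{X}_{\bcat{C}}^{\odot}$ is a trivial cofibration of marked-scaled simplicial sets. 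Item (ii) is the technical heart of the paper (the path-extension and pivot-trick machinery of Section 3) and has no analogue in the existing one-dimensional comparisons, so your plan gives no route to it. Essential surjectivity is likewise not free: it uses the equivalence $K_*$ together with the Quillen equivalence $\mathbf{Gl}$ of \autoref{thm:compari}, which itself requires a nontrivial chain through Lurie's model comparisons rather than a bare appeal to straightening over $\Delta$.
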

 
\subsubsection{The Gray tensor product}
 Let $F,G: \bcat{C} \xlongrightarrow{}\bcat{D}$ be a pair of functors of $(\infty,2)$-categories. When defining what a natural transformation should be in the 2-categorical world, there is no good reason to always require that given a morphism $u:c \xlongrightarrow{} c'$ the associated square
 \[\begin{tikzcd}
  {F(c)} & {F(c')} \\
  {G(c)} & {G(c')}
  \arrow["{F(u)}", from=1-1, to=1-2]
  \arrow[from=1-1, to=2-1]
  \arrow["{G(u)}"', from=2-1, to=2-2]
  \arrow[from=1-2, to=2-2]
  \arrow[Rightarrow, shorten >=6pt, from=1-2, to=2-1]
\end{tikzcd}\]
commutes, i.e. the diagonal 2-morphism is invertible. Informally speaking, a general notion of morphism between functors should be given by:
\begin{itemize}
  \item For every object $c \in \bcat{C}$ a 1-morphism $\alpha_c: F(c) \xlongrightarrow{} G(c)$.
  \item For every morphism $u:c \xlongrightarrow{} c'$ a 2-morphism $\alpha_u:\alpha_{c'} \circ F(u) \xRightarrow{} G(u) \circ \alpha_c$.
  \item For every 2-morphism a commuting diagram obtained by pasting certain 2-morphisms.
\end{itemize}
This definition yields the notion of an oplax natural transformation, where its lax counterpart is obtained by reversing the order of the 2-morphisms appearing in the definition. Part of the rapid development that the theory of $(\infty,2)$-categories has enjoyed in the recent years comes from the fact that we have provided good ways of realising the notion of (op)lax natural transformation in the homotopy-coherent world:
\begin{itemize}
  \item We have now, due to the effort of several authors (\cite{Berman},\cite{GHL_LaxLim}, \cite{Strat}, \cite{AScof}) a well-developed theory of partially lax limits in $(\infty,2)$-categories. Thanks to this new range of universal properties it was discovered by Linskens, Nardin and Pol (\cite{Global}) that the $(\infty,1)$-category of global spectra can be characterised as a partially lax limit.
\end{itemize}
Recall that an ordinary natural transformation between a pair of functors is given by a map out of the cartesian product $\alpha: \bcat{C} \times [1] \xlongrightarrow{} \bcat{D}$. The insight of Gray (\cite{Grayoriginal}) was to define a lax analogue of the cartesian product, which is now known as the Gray tensor product and denoted by “$\tensor$“. Equipped with this construction one sees that an oplax natural transformation corresponds to a functor 
\[
  \bcat{C} \tensor[1] \xlongrightarrow{} \bcat{D}.  
\]
Let us point out that the Gray tensor product is not commutative, as a map out of $[1] \tensor \bcat{C}$ corresponds to a lax natural transformation.

Gray's original definition has been leveraged to the homotopy-coherent setting (\cite{Verity},\cite{GHL_Gray}, \cite{GaitsgoryRozenblyum},\cite{graycampion}) and characterized by the following universal property:
\begin{itemize}
  \item Let $\bcat{C}$, $\bcat{D}$ and $\bcat{A}$ be $(\infty,2)$-categories. Then the data of a functor $\bcat{C}\tensor \bcat{D}\xlongrightarrow{}\bcat{A}$ corresponds precisely to the data of an oplax normalised functor 
  \[
    F:\bcat{C} \times \bcat{D} \xlongrightarrow{} \bcat{A}
  \]
  out of the cartesian product such that:
  \begin{enumerate}
    \item For every object $c \in \bcat{C}$, we have a (non-lax) functor $F(c,\mathblank): \bcat{C} \xlongrightarrow{} \bcat{A}$ and similarly for $F(\mathblank,d)$ for $d \in \bcat{D}$.
    \item Let $u:c \xlongrightarrow{} c'$ and $v: d \xlongrightarrow{} d'$ be a pair of morphisms in $\bcat{C}$ and $\bcat{D}$ and consider the composition $(u,v)=(\on{id}_{c'},v) \circ (u,\on{id}_d)$ in $\bcat{C} \times \bcat{D}$. Then it follows that the canonical 2-morphism $F(u,v) \xRightarrow{\simeq} F(\on{id}_{c'},v) \circ F(u,\on{id}_d)$ is invertible
  \end{enumerate}
\end{itemize}
It is possible to work only with this universal property in setting of complete Segal objects in $(\infty,1)$-categories (see \cite{GaitsgoryRozenblyum}) but the lack of a good workable model makes difficult to prove certain important properties that the Gray tensor product enjoys.

In the setting of scaled simplicial sets however, the Gray tensor product becomes very explicit as seen in the work of Gagna, Harpaz and Lanari. Exploiting the combinatorial nature of their definitions, it is proven in \cite{GHL_Gray} that the Gray tensor product of scaled simplicial sets is associative and preserves $\infty$-categorical colimits in each variable (see Propopsition 2.2 and Corollary 2.15 \cite{GHL_Gray}) and moreover defines a closed monoidal structure on $\bcat{C}\!\on{at}_{(\infty,2)}$. As a consequence 
of \autoref{intro:thmlax} we can derive (see \autoref{thm:maingray}, \autoref{cor:GHRprop}) the next results.
\begin{thm*}\label{intro:gray}
  The equivalence of $(\infty,2)$-categories given in \autoref{intro:thmlax},
   \[
     \mathbf{G}:  \bcat{B}\!\on{icat}_\infty^{\mathbf{oplax}} \xlongrightarrow{\simeq } \on{CSeg}_\Delta(\bcat{C}\!\on{at}_{(\infty,1)})^{\mathcal{I}\!\on{oplax}}
   \]
is compatible with the Gray tensor product. More precisely, given $(\infty,2)$-categories $\bcat{C}$ and $\bcat{D}$ it follows that there exists an equivalence (natural in $\bcat{C}$ and $\bcat{D}$)
\[
  \mathbf{G}(\bcat{C} \tensor \bcat{D}) \simeq \mathbf{G}(\bcat{C}) \tensor \mathbf{G}(\bcat{D}).
\]

\end{thm*}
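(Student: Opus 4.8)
The plan is to bypass the explicit combinatorial constructions of the two Gray tensor products and compare them through the common universal property recalled in the introduction, in the spirit of how different models of the Gray tensor product are usually matched up. Write $\bcat{B}\!\on{icat}_\infty \subset \bcat{B}\!\on{icat}_\infty^{\mathbf{oplax}}$ and $\CSeg \subset \CSeglax$ for the wide, locally full sub-$(\infty,2)$-categories whose $1$-morphisms are the honest (non-lax) functors. The equivalence $\mathbf{G}$ of \autoref{intro:thmlax} restricts to the classical equivalence $\bcat{B}\!\on{icat}_\infty \xlongrightarrow{\simeq} \CSeg$ between the honest $(\infty,2)$-categories of $(\infty,2)$-categories, so in particular $\mathbf{G}$ carries honest functors to honest functors and conversely; and being an equivalence of $(\infty,2)$-categories it preserves and reflects invertibility of $2$-morphisms. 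The Gagna--Harpaz--Lanari and Gaitsgory--Rozenblyum Gray tensor products are functors $-\tensor-$ on $\bcat{B}\!\on{icat}_\infty$ and on $\CSeg$ respectively, each characterised, as recalled in the introduction, by the corepresentability statement: $\bcat{C}\tensor\bcat{D}$ corepresents the $\bcat{C}\!\on{at}_{(\infty,1)}$-valued functor sending $\bcat{A}$ to the full sub-$(\infty,1)$-category $\Map^{\mathrm{cub}}(\bcat{C}\times\bcat{D},\bcat{A})\subseteq \Map_{\bcat{B}\!\on{icat}_\infty^{\mathbf{oplax}}}(\bcat{C}\times\bcat{D},\bcat{A})$ of those oplax normalised functors satisfying conditions (1) and (2), and symmetrically on the Segal side.

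First I would record two compatibilities of $\mathbf{G}$. Since cartesian products of $(\infty,2)$-categories are limits and $\mathbf{G}$ is an equivalence, $\mathbf{G}$ preserves products together with the projections and the slice inclusions $\bcat{C}\simeq\bcat{C}\times\{d\}\hookrightarrow\bcat{C}\times\bcat{D}$, so $\mathbf{G}(\bcat{C}\times\bcat{D})\simeq\mathbf{G}(\bcat{C})\times\mathbf{G}(\bcat{D})$ compatibly with all of these, also in $\CSeglax$. Moreover $\mathbf{G}$ induces, for all $\bcat{C},\bcat{D},\bcat{A}$, an equivalence of mapping $(\infty,1)$-categories
\[
  \mathbf{G}\colon \Map_{\bcat{B}\!\on{icat}_\infty^{\mathbf{oplax}}}(\bcat{C}\times\bcat{D},\bcat{A}) \xlongrightarrow{\ \simeq\ } \Map_{\CSeglax}\bigl(\mathbf{G}(\bcat{C})\times\mathbf{G}(\bcat{D}),\mathbf{G}(\bcat{A})\bigr),
\]
and I claim this restricts to an equivalence between the cubical full subcategories. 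Condition (1) asks that the composites with the slice inclusions be honest; these inclusions are matched by $\mathbf{G}$, which also carries honest functors to honest functors and back, so (1) is preserved and reflected. Condition (2) asks that the structure $2$-morphism $F(u,v)\xRightarrow{} F(\id_{c'},v)\circ F(u,\id_d)$ be invertible; $\mathbf{G}$ matches the relevant composites (again by product compatibility) and preserves and reflects invertibility of $2$-morphisms, so (2) is preserved and reflected as well. Hence $\Map^{\mathrm{cub}}(\bcat{C}\times\bcat{D},\bcat{A})\simeq \Map^{\mathrm{cub}}(\mathbf{G}(\bcat{C})\times\mathbf{G}(\bcat{D}),\mathbf{G}(\bcat{A}))$, naturally in all variables.

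Now the conclusion is a Yoneda argument. For every $(\infty,2)$-category $\bcat{A}$, concatenating the corepresentability of the Gagna--Harpaz--Lanari tensor, the equivalence $\mathbf{G}$ on honest functors, the two compatibilities just established, and the corepresentability of the Gaitsgory--Rozenblyum tensor gives equivalences of $(\infty,1)$-categories, natural in $\bcat{A}$ and in $\bcat{C},\bcat{D}$,
\[
\begin{aligned}
  \Map_{\CSeg}\bigl(\mathbf{G}(\bcat{C}\tensor\bcat{D}),\mathbf{G}(\bcat{A})\bigr)
  &\simeq \Map_{\bcat{B}\!\on{icat}_\infty}(\bcat{C}\tensor\bcat{D},\bcat{A}) \\
  &\simeq \Map^{\mathrm{cub}}(\bcat{C}\times\bcat{D},\bcat{A}) \\
  &\simeq \Map^{\mathrm{cub}}\bigl(\mathbf{G}(\bcat{C})\times\mathbf{G}(\bcat{D}),\mathbf{G}(\bcat{A})\bigr) \\
  &\simeq \Map_{\CSeg}\bigl(\mathbf{G}(\bcat{C})\tensor\mathbf{G}(\bcat{D}),\mathbf{G}(\bcat{A})\bigr).
\end{aligned}
\]
Since $\mathbf{G}$ is essentially surjective, every object of $\CSeg$ has the form $\mathbf{G}(\bcat{A})$, so this displays a natural equivalence of corepresentable $\bcat{C}\!\on{at}_{(\infty,1)}$-valued functors on $\CSeg$; by the Yoneda lemma for $(\infty,2)$-categories the corepresenting objects agree, $\mathbf{G}(\bcat{C}\tensor\bcat{D})\simeq\mathbf{G}(\bcat{C})\tensor\mathbf{G}(\bcat{D})$, and since the Yoneda embedding is fully faithful, the naturality in $\bcat{A}$ tracked along the chain promotes to the asserted naturality in $\bcat{C}$ and $\bcat{D}$. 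Running the same bookkeeping over the associativity and unit data upgrades this to a monoidal comparison, which is the form needed to deduce \autoref{introcor:gray}.

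The hard part will not be this formal manipulation but its two inputs. The first is to make the universal property of the Gaitsgory--Rozenblyum Gray tensor product available in precisely the corepresentable, mapping-$(\infty,1)$-category-level form used above: their account runs through the explicit model of cartesian fibrations over $\Delta$, so the statement has to be extracted from it, and one must likewise confirm the Gagna--Harpaz--Lanari universal property with this same precision rather than merely on the space of functors. The second is the claim of the middle paragraph, that the single equivalence $\mathbf{G}$ genuinely identifies the two a priori different incarnations of the cubical conditions across the scaled-simplicial-set model and the Segal-object model; this is soft, but it rests on compatibility of $\mathbf{G}$ with products and with the honest/oplax dichotomy, and these points, though consequences of $\mathbf{G}$ being an equivalence, must be spelled out carefully rather than taken for granted.
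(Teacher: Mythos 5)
Your proposal follows essentially the same route as the paper's proof of this statement (\autoref{thm:maingray}): both compare the corepresentable functors determined by the common universal property of the Gray tensor product across the equivalence of \autoref{intro:thmlax} and conclude by Yoneda, after checking that the equivalence matches the two incarnations of the cubical conditions (honest restriction to slices, invertibility of the interchange $2$-morphism). The two inputs you defer at the end are exactly what the paper supplies: \autoref{def:seggray} takes the Gaitsgory--Rozenblyum universal property as the definition at the level of mapping $(\infty,1)$-categories, and \autoref{lem:goodgrayuniv} upgrades the Gagna--Harpaz--Lanari universal property from the space of functors to the globular mapping $(\infty,1)$-categories of $\bcat{B}\!\on{icat}_\infty^{\mathbf{oplax}}$.
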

\begin{cor*}\label{introcor:gray}
  The Gray tensor product defines a closed monoidal structure in $\on{CSeg}_\Delta(\bcat{C}\!\on{at}_{(\infty,1)})$. In particular, it is associative and commutes with $\infty$-categorical colimits in each variable.
\end{cor*}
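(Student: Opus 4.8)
The plan is to \emph{transport} the closed monoidal structure along an $(\infty,1)$-categorical restriction of the equivalence $\mathbf{G}$ of \autoref{intro:thmlax}, using \autoref{intro:gray} to identify the transported tensor product with the Gray tensor product of Gaitsgory--Rozenblyum. Recall that by Gagna--Harpaz--Lanari \cite{GHL_Gray} the Gray tensor product of scaled simplicial sets underlies a closed (non-symmetric) monoidal structure on the $(\infty,1)$-category $\scr{C}\!\on{at}_{(\infty,2)}$ of $(\infty,2)$-categories and non-lax functors (associativity and colimit-preservation in each variable being Proposition~2.2 and Corollary~2.15 of loc.\ cit.); write $(\scr{C}\!\on{at}_{(\infty,2)},\tensor)$ for this monoidal $(\infty,1)$-category.

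First I would extract from \autoref{intro:thmlax} an equivalence of $(\infty,1)$-categories $\mathbf{g}\colon\scr{C}\!\on{at}_{(\infty,2)}\xlongrightarrow{\simeq}\CSeg$. A $1$-morphism of $\bcat{B}\!\on{icat}_\infty^{\mathbf{oplax}}$ (resp.\ of $\CSeglax$) is a non-lax functor exactly when all of its structure $2$-morphisms are invertible, a property detected on mapping $(\infty,1)$-categories and therefore preserved and reflected by the equivalence of $(\infty,2)$-categories $\mathbf{G}$; on the Gaitsgory--Rozenblyum side these are precisely the maps of cartesian fibrations over $\Delta$ preserving \emph{all} cartesian edges, i.e.\ the honest natural transformations of complete Segal objects. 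Restricting $\mathbf{G}$ to the sub-$(\infty,2)$-categories with the same objects and $2$-morphisms but only non-lax functors as $1$-morphisms, and then passing to underlying $(\infty,1)$-categories, produces $\mathbf{g}$. (One could alternatively invoke the known identification $\CSeg\simeq\scr{C}\!\on{at}_{(\infty,2)}$; all that is used below is that some such equivalence is compatible with $\mathbf{G}$.)

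Now read \autoref{intro:gray} as a natural equivalence between the two functors $\scr{C}\!\on{at}_{(\infty,2)}\times\scr{C}\!\on{at}_{(\infty,2)}\to\CSeg$ sending $(\bcat{C},\bcat{D})$ to $\mathbf{g}(\bcat{C}\tensor\bcat{D})$ and to $\mathbf{g}(\bcat{C})\tensor\mathbf{g}(\bcat{D})$ respectively. The conclusion then follows by the standard transport principle: the equivalence $\mathbf{g}$ (together with a chosen quasi-inverse) carries $(\scr{C}\!\on{at}_{(\infty,2)},\tensor)$ to a monoidal structure on $\CSeg$ whose tensor bifunctor is canonically equivalent to $\mathbf{g}\circ(\tensor)\circ(\mathbf{g}^{-1}\times\mathbf{g}^{-1})$, and by the previous paragraph this bifunctor is the Gaitsgory--Rozenblyum Gray tensor product. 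Hence the latter underlies a monoidal structure on $\CSeg$ for which $\mathbf{g}$ is a monoidal equivalence, and closedness is inherited by conjugating the internal hom of $\scr{C}\!\on{at}_{(\infty,2)}$ through $\mathbf{g}$. Associativity is part of being monoidal, and for fixed $\scr{X}\in\CSeg$ the functors $\scr{X}\tensor(\mathblank)$ and $(\mathblank)\tensor\scr{X}$ are left adjoints, hence preserve all $\infty$-categorical colimits.

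The step I expect to be the main obstacle is the bookkeeping in the second paragraph: one must check that $\mathbf{G}$ carries non-lax functors to non-lax functors and that, on the Gaitsgory--Rozenblyum side, the morphisms all of whose structure $2$-cells are invertible are exactly the honest morphisms of complete Segal objects, so that $\mathbf{G}$ does restrict to a functor essentially surjective onto $\CSeg$. A second, milder, point is that \autoref{intro:gray} must be used as an equivalence of bifunctors (naturality in the pair $(\bcat{C},\bcat{D})$) rather than merely separately in each variable; this is the natural reading of that statement, and in any case only the \emph{underlying} bifunctors need to be matched, since the monoidal coherence data on $\CSeg$ is produced by the transport and not compared against a preexisting structure.
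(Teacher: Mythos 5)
Your proposal is correct and follows essentially the same route as the paper: the paper's proof likewise reduces everything to the marked-scaled model via the compatibility of the two Gray tensor products under the comparison equivalence (and the identification of non-lax functors with nuclear morphisms, which the paper records separately), and then invokes the known associativity, colimit-preservation, and the explicit internal homs $\Fun^{\mathbf{gr}}$ and $\Fun^{\mathbf{opgr}}$ in that model. The only cosmetic difference is that the paper exhibits the right adjoints concretely rather than transporting the internal hom abstractly.
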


\subsection{The enhancement construction}
The main insight of this paper and the reason that we are able to prove the theorems above is that there exists an interpolating model for $(\infty,2)$-categories receiving two equivalences
\[
  \begin{tikzcd}
    K_* : \on{CSeg}_\Delta(\bcat{C}\!\on{at}_{(\infty,1)})^{\mathcal{I}\!\on{oplax}} \arrow[r,"\simeq"] & \ESeg & \arrow[l,swap,"\simeq"] \bcat{B}\!\on{icat}_\infty^{\mathbf{oplax}}: \EGlob.
  \end{tikzcd}
\]
which we call \emph{enhanced Segal objects} in $\bcat{C}\!\on{at}_{(\infty,1)}$.

Let $\bcat{X}_\bullet: \Delta^\op \xlongrightarrow{} \bcat{C}\!\on{at}_{(\infty,1)}$. We say that $\bcat{X}_\bullet$ satisfies the \emph{oplax Segal condition} if:
\begin{enumerate}
  \item For every $n\geq 2$, the canonical map 
  \[
    L_n: \bcat{X}_n \xlongrightarrow{}\bcat{X}_1 \times_{\bcat{X}_0}\bcat{X}_1 \times  \cdots \times_{\bcat{X}_0} \bcat{X}_1.
    \] 
   admits a fully faithful right adjoint.
   \item Let $\scr{X}_n$ for $n\geq 0$ be the full subcategory of $\bcat{X}_n$ determined by condition 1 above, where we are making the convention $\bcat{X}_i=\scr{X}_i$ for $i=0,1$. Then it follows that for every $u:[n] \xlongrightarrow{} [k]$ in $\Delta$ the induced map $\bcat{X}(u)$ preserves the corresponding subcategories thus inducing a functor
   \[
     \scr{X}_\bullet : \Delta^\op  \xlongrightarrow{} \bcat{C}\!\on{at}_{(\infty,1)}
   \]
   which satisfies the Segal condition. We call this functor the \emph{nucleus} of $\bcat{X}_\bullet$.
\end{enumerate}
Using \cite[Theorem 5.3.5]{mates} (or more generally the results in \cite{Agh}) we can take the mate of the natural transformation $\scr{X}_\bullet \xRightarrow{} \bcat{X}_\bullet$ which is an oplax natural transformation $\bcat{X}_\bullet \xRightarrow{}\scr{X}_\bullet$ given pointwise by the left adjoint $L_n$. We can then observe that this oplax natural transformation is natural with respect to the idle morphism $\mathcal{I}\subset \Delta$ and thus we obtain an $\mathcal{I}$oplax natural transformation $\bcat{X}_\bullet \xRightarrow{}\scr{X}_\bullet$.

Let us consider a functor given by the composite
\[
  T_n:(\Delta_{/[n]})^\op \xlongrightarrow{} \Delta^\op \xlongrightarrow{\scr{X}_\bullet} \bcat{C}\!\on{at}_{(\infty,1)}
\]
where the first map is the canonical projection of the comma category $\Delta_{/[n]}$. We now turn our attention to the $\mathcal{I}$oplax natural transformation $\bcat{X}_\bullet \xRightarrow{}\scr{X}_\bullet$ and note that:
\begin{itemize}
  \item For every object $u:[k] \xlongrightarrow{} [n]$ in $(\Delta_{/[n]})^\op $ we have a functor
  \[
     \bcat{X}_n \xlongrightarrow{\bcat{X}(u)} \bcat{X}_k \xlongrightarrow{} \scr{X}_k=T_n(u).
   \] 
   where the final map is induced by the $\mathcal{I}$oplax natural transformation $\bcat{X}_\bullet \xRightarrow{}\scr{X}_\bullet$. 
   \item For every commutative diagram
   \[
     \begin{tikzcd}
       {[}k{]} \arrow[rr,"f"] \arrow[dr,swap,"u"] && {[}\ell{]} \arrow[dl,"v"] \\
       & {[}n{]} &
     \end{tikzcd}
   \]
   in $(\Delta_{/[n]})^\op$, we obtain a laxly commuting triangle 
  \[\begin{tikzcd}
  & {\bcat{X}_n} \\
  {\bcat{X}_{\ell}} && {\bcat{X}_k} \\
  {\scr{X}_\ell} && {\scr{X}_k}
  \arrow[from=1-2, to=2-1]
  \arrow[from=2-1, to=2-3]
  \arrow[from=1-2, to=2-3]
  \arrow[from=2-1, to=3-1]
  \arrow[from=3-1, to=3-3]
  \arrow[from=2-3, to=3-3]
  \arrow[Rightarrow, shorten <=18pt, shorten >=18pt, from=2-3, to=3-1]
\end{tikzcd}\]
which actually commutes whenever $f$ is idle.
\item In particular, we see that the data above assembles into an $\mathcal{I}$oplax natural transformation $\bcat{X}_n \xRightarrow{} T_n$ where we abuse notation and identify $\bcat{X}_n$ with its corresponding constant functor.
\end{itemize}

Given a functor $\bcat{X}_\bullet: \Delta^\op \xlongrightarrow{} \bcat{C}\!\on{at}_{(\infty,1)}$ we say that $\bcat{X}_\bullet$ is an \emph{enhanced Segal object} if:
\begin{enumerate}
  \item The functor $\bcat{X}_\bullet$ satisfies the oplax Segal condition.
  \item The $\mathcal{I}$oplax natural transformation $\bcat{X}_n \xRightarrow{} T_n$ exhibits $\bcat{X}_n$  as the $\mathcal{I}oplax$ limit (see \cite{GHL_LaxLim} for a definition of partially lax limits) of $T_n$.
  \item The nucleus $\scr{X}_\bullet$ of  $\bcat{X}_\bullet$ is a complete Segal object in $(\infty,1)$-categories. 
 \end{enumerate}

 We finally define $\ESeg$ as the $(\infty,2)$-category whose objects are given by enhanced Segal objects, morphisms are \emph{just} natural transformations between these functors and where the 2-morphisms are given by modifications. Let us point out that there is no special reason to require $\bcat{X}_0$ to be a space and we might aswell consider a slight variation of condition 3, which takes the form:
 \begin{itemize}
   \item[3'.]  The nucleus $\scr{X}_\bullet$ of  $\bcat{X}_\bullet$ is a double $(\infty,1)$-category (see \autoref{def:dblinftycat}).
 \end{itemize}
In this situation we obtain the notion of an \emph{enhanced double $(\infty,1)$-category}. In a totally analogous way as before, we assemble the collection of enhanced double $(\infty,1)$-categories into a $(\infty,2)$-category which we denote as $\on{EDbl}\left(\bcat{C}\!\on{at}_{(\infty,1)} \right)$. 

 The relevance of our definitions can be justified through the following result (\autoref{thm:enhanced}).
 \begin{thm*}\label{intro:enh}
There exists a commutative diagram of $(\infty,2)$-categories
   \[
  \begin{tikzcd}
     \on{CSeg}_\Delta(\bcat{C}\!\on{at}_{(\infty,1)})^{\mathcal{I}\!\on{oplax}} \arrow[d] \arrow[r,"\simeq"] & \ESeg  \arrow[d] \\
     \on{Dbl}\left(\bcat{C}\!\on{at}_{(\infty,1)} \right)^{\mathcal{I}\!\on{oplax}} \arrow[r,"\simeq"]  & \on{EDbl}\left(\bcat{C}\!\on{at}_{(\infty,1)} \right)
  \end{tikzcd}
\]
where the vertical maps are fully faithful and the horizontal morphisms are equivalences.
 \end{thm*}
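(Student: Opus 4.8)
The plan is to obtain $K_*$ by specialising the partially lax Kan extension machinery of \autoref{intro:rightlaxkan}, to recognise its image inside $\ESeg$ by unwinding conditions (1)--(3) of the definition of enhanced Segal object, to produce a quasi-inverse out of the nucleus construction, and finally to transport the whole argument verbatim to the double-categorical row. Concretely, apply the partially lax right Kan extension of \autoref{intro:rightlaxkan} to the identity $\id\colon \Delta^\op \to \Delta^\op$ with $E$ the class of non-idle morphisms; this yields a right adjoint $\id_*$ to the widening functor $\id^*\colon \Fun(\Delta^\op,\iCatt) \to \Fun(\Delta^\op,\iCatt)^{\mathcal{I}\!\on{oplax}}$. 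The pointwise formula identifies $(\id_*\scr{X}_\bullet)_n$ with the $\mathcal{I}$oplax limit of $T_n$, and the unit $\scr{X}_\bullet \to \id_*\scr{X}_\bullet$ with, in degree $n$, the fully faithful right adjoint of $L_n$ (here one uses that $\scr{X}_\bullet$ is Segal to identify the target of $L_n$ with $\scr{X}_n$). We then set $K_*$ to be the restriction of $\id_*$ along $\CSeglax \hookrightarrow \Fun(\Delta^\op,\iCatt)^{\mathcal{I}\!\on{oplax}}$, and the double-categorical $K_*$ to be its restriction along $\on{Dbl}(\iCatt)^{\mathcal{I}\!\on{oplax}} \hookrightarrow \Fun(\Delta^\op,\iCatt)^{\mathcal{I}\!\on{oplax}}$.

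Next I check that $K_*$ takes values in $\ESeg$. For $\scr{X}_\bullet$ a complete Segal object put $\bcat{X}_\bullet := \id_*\scr{X}_\bullet$. Condition (2) is immediate from the previous paragraph. For condition (1) one identifies the target $\bcat{X}_1\times_{\bcat{X}_0}\cdots\times_{\bcat{X}_0}\bcat{X}_1$ of $L_n$ with the $\mathcal{I}$oplax limit of $T_n$ restricted to the full subcategory of $\Delta_{/[n]}$ spanned by the idle maps $[k]\to[n]$ (rewriting $\scr{X}_1\times_{\scr{X}_0}\cdots$ via the Segal condition on $\scr{X}_\bullet$); the inclusion of that subcategory induces $L_n$, and restriction-functoriality of the Kan extension together with the universal property of $\mathcal{I}$oplax limits \cite{GHL_LaxLim} provides the fully faithful right adjoint. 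Condition (3) reduces to the general principle — established alongside the Kan extension — that the strict core of $\id_*\scr{X}_\bullet$, namely the full subfunctor cut out by (1), which is the image of the unit, recovers $\scr{X}_\bullet$; in particular its nucleus is again a complete Segal object. Since none of $T_n$, the $\mathcal{I}$oplax limit, the nucleus or the Kan extension uses that $\scr{X}_0$ is a space, the double-categorical version (replace "complete Segal object" by "double $(\infty,1)$-category", \autoref{def:dblinftycat}) of everything above holds verbatim.

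To see that $K_*$ is an equivalence, I exhibit the nucleus functor $\scr{N}\colon \ESeg \to \CSeglax$ as a quasi-inverse. It is well-defined and only $\mathcal{I}$oplax-functorial: a strict transformation $\phi\colon \bcat{X}_\bullet \to \bcat{Y}_\bullet$ commutes strictly with the $L_n$, so passing to the right adjoints produces, in each degree, Beck--Chevalley squares that are invertible exactly along idle morphisms, i.e. an $\mathcal{I}$oplax transformation of the nuclei; a pasting argument handles modifications. On objects, $\scr{N}\circ K_* \simeq \id$ is the strict-core statement and $K_*\circ\scr{N} \simeq \id$ is precisely condition (2). On mapping $(\infty,1)$-categories, for complete Segal objects the $\id^*\dashv\id_*$ adjunction gives
\[
  \Map_{\ESeg}(K_*\scr{X}_\bullet,K_*\scr{Y}_\bullet)\simeq\Map^{\mathcal{I}\!\on{oplax}}(\id^*\id_*\scr{X}_\bullet,\scr{Y}_\bullet),
\]
and it remains to see that the counit $\id^*\id_*\scr{X}_\bullet \to \scr{X}_\bullet$ (componentwise $L_n$) induces an equivalence on $\Map^{\mathcal{I}\!\on{oplax}}(\mathblank,\scr{Y}_\bullet)$ for $\scr{Y}_\bullet$ complete Segal; this uses completeness of $\scr{Y}_\bullet$ and the universal property of $\mathcal{I}$oplax limits, and the $2$-morphism level is treated identically. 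Hence $K_*$ is fully faithful, and essentially surjective since every enhanced Segal object equals $K_*$ of its nucleus by (2); the double row is identical. Finally the vertical maps are the full-subcategory inclusions of complete Segal objects among double $(\infty,1)$-categories (those whose object-of-objects is a space) and of enhanced Segal objects among enhanced double $(\infty,1)$-categories, both carrying the same classes of morphisms and $2$-morphisms, so they are fully faithful; and the square commutes on the nose because $K_*$ is computed by the same formula $\id_*$ in both rows.

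The principal obstacle is the morphism- and $2$-morphism-level comparison inside the third step: translating $\mathcal{I}$oplax transformations and modifications of complete Segal objects into strict transformations and modifications of their enhancements. Concretely one must show that the counit $\bcat{X}_\bullet \to \scr{X}_\bullet$ of $\id^*\dashv\id_*$ is $\mathcal{I}$oplax-epimorphic when tested against complete Segal objects, and that the Beck--Chevalley $2$-cells making $\scr{N}$ $\mathcal{I}$oplax-functorial cohere; exhibiting the fully faithful right adjoint of $L_n$ on the $\mathcal{I}$oplax limit in condition (1) is a secondary point of the same flavour. Both rest on carefully combining the formal properties of the partially lax Kan extension with the explicit universal property of partially lax limits of Gagna--Harpaz--Lanari \cite{GHL_LaxLim}.
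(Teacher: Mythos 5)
Your architecture matches the paper's: $K_*$ is the partially lax right Kan extension along the identity of $(\Delta^\op,\mathcal{I})$, its values are computed by the $\mathcal{I}$oplax limit formula, one checks it lands in enhanced objects, and the equivalence is extracted from the adjunction $K^*\dashv K_*$ together with the nucleus. One slip at the outset: $E$ must be taken to be the idle morphisms $\mathcal{I}$ themselves, not their complement, since $E$ is by convention the class along which the lax naturality squares are required to commute.

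The genuine gap is precisely the step you flag as the ``principal obstacle'' and then defer: showing that restriction along the counit $L_\bullet\colon \bcat{X}_\bullet\to\scr{X}_\bullet$ induces an equivalence
\[
\on{Nat}_{\Delta^\op}(\scr{X}_\bullet,\scr{Y}_\bullet)^{\mathcal{I}\!\on{oplax}} \xlongrightarrow{} \on{Nat}_{\Delta^\op}(\bcat{X}_\bullet,\scr{Y}_\bullet)^{\mathcal{I}\!\on{oplax}}.
\]
You attribute this to ``completeness of $\scr{Y}_\bullet$ and the universal property of $\mathcal{I}$oplax limits,'' but completeness plays no role here, and the universal property of the limit only controls strict cones, not $\mathcal{I}$oplax transformations out of $\bcat{X}_\bullet$. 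What actually closes the step is: (i) the levelwise units and counits of $L_n\dashv R_n$ assemble into morphisms of $\mathcal{I}$oplax natural transformations (\autoref{lem:adjonadj}), which yields $\mathcal{R}^*$ with $\mathcal{R}^*\circ\mathcal{L}^*\simeq\on{id}$ and a $2$-cell $\on{id}\Rightarrow\mathcal{L}^*\circ\mathcal{R}^*$; and (ii) this $2$-cell is invertible because, for any $\mathcal{I}$oplax $f\colon\bcat{X}_\bullet\to\scr{Y}_\bullet$ and any unit component $\epsilon_x\colon x\to R_nL_n(x)$, the map $f_n(\epsilon_x)$ can be tested in $\scr{Y}_1$ along the interval inclusions $[1]\to[n]$: these are idle, so all the relevant squares commute, and $\epsilon$ is trivial in degrees $0,1$. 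This is a Segal-condition argument, not a completeness argument, and without it your full faithfulness claim does not go through. Relatedly, your quasi-inverse route requires upgrading the nucleus to an $\mathcal{I}$oplax functor (cohering the Beck--Chevalley cells and their compatibility with modifications), which the paper avoids entirely by arguing on mapping $(\infty,1)$-categories through the adjunction; keeping your route adds that coherence burden on top of the missing step above. The remaining ingredients (essential surjectivity via the enhancement condition, full faithfulness of the vertical inclusions, commutativity of the square) are fine as you state them.
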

  This means, that in the enhanced picture a natural transformation already encodes the data of an oplax normalised functor. Moreover, under this equivalence the honest functors in the right-hand side are enconded by  \emph{nuclear} maps:
\begin{itemize}
  \item A map $f:\bcat{X}_\bullet \xRightarrow{} \bcat{Y}_\bullet$ is called nuclear if it induces a morphism $\scr{X}_\bullet \xRightarrow{} \scr{Y}_\bullet$ between the corresponding nuclei.
\end{itemize}
The theorem above then yields a nuclear version.
\begin{cor*}\label{intro:enhanced}
    The commutative diagram  given in \autoref{intro:enh} restricts to a diagram
    \[
     \begin{tikzcd}
       \CSeg \arrow[r,"\simeq"] \arrow[d] & \ESegnuc \arrow[d] \\
       \on{Dbl}(\bcat{C}\!\on{at}_{(\infty,1)}) \arrow[r,"\simeq"] & \EDblnuc
     \end{tikzcd}
    \]
  where the vertical morphisms are fully faithful and horizontal morphisms are equivalences of $(\infty,2)$-categories.
  \end{cor*}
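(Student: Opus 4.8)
The plan is to deduce the statement from \autoref{intro:enh} by checking that all four functors of that square --- together with the homotopy witnessing its commutativity --- restrict to the subcategories named in the corollary, and that being an equivalence, resp.\ being fully faithful, is preserved under these restrictions. Write $\mathbf{Nuc}$ for the inverse equivalence of $K_*$: on objects it sends an enhanced Segal object to its nucleus, and on $\mathcal{I}$oplax morphisms it is produced by the mate construction (\cite{mates}) recalled in the introduction; there is a parallel statement for the bottom row with double $(\infty,1)$-categories replacing complete Segal objects. Recall further that, by construction, the mapping $(\infty,1)$-categories of $\CSeg$ (resp.\ $\ESegnuc$) are the full sub-$(\infty,1)$-categories of those of $\CSeglax$ (resp.\ $\ESeg$) spanned by the honest functors (resp.\ the nuclear maps), while the objects are unchanged. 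So the corollary comes down to the claim that $K_*$ identifies honest functors of complete Segal objects with nuclear maps of enhanced Segal objects.

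To see this, take a nuclear natural transformation $\bar{f}\colon\bcat{X}_\bullet\Rightarrow\bcat{Y}_\bullet$. Unwinding the mate construction, $\mathbf{Nuc}(\bar{f})$ is, in each simplicial degree $n$, the composite $\scr{X}_n\hookrightarrow\bcat{X}_n\xlongrightarrow{\bar{f}_n}\bcat{Y}_n\xlongrightarrow{L^{Y}_n}\scr{Y}_n$; nuclearity says $\bar{f}_n$ carries the nucleus $\scr{X}_n$ into $\scr{Y}_n$, where $L^{Y}_n$ is the identity on the image, so this is simply $\bar{f}_n|_{\scr{X}_n}$, and since $\bar{f}$ is an honest natural transformation preserving the full subcategories $\scr{X}_\bullet\subseteq\bcat{X}_\bullet$ these restrictions assemble into an honest natural transformation of Segal objects. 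Conversely, the identification of the nucleus of $K_*(\scr{X}_\bullet)$ with $\scr{X}_\bullet$ is natural in $\scr{X}_\bullet$ --- this is implicit in the proof of $\mathbf{Nuc}\circ K_*\simeq\id$ in \autoref{intro:enh} --- so for an honest functor $g$ the map $K_*(g)$ restricts on nuclei to $g$, hence preserves nuclei and is nuclear. As $K_*$ and $\mathbf{Nuc}$ are mutually inverse, these assignments on $1$-morphisms are mutually inverse, and the corresponding statement for $2$-morphisms then follows formally from the description of the mapping categories above and the fact that $K_*$ is an equivalence on mapping categories.

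Granting this, $K_*$ and $\mathbf{Nuc}$ restrict to mutually inverse equivalences $\CSeg\simeq\ESegnuc$, the coherence data restricting along the matched inclusions; the bottom row is identical, the replacement of complete Segal objects by double $(\infty,1)$-categories (condition $3$ versus $3'$) playing no role. For the verticals: the left-hand map of \autoref{intro:enh} is, up to equivalence, the inclusion of the full subcategory of $\on{Dbl}(\bcat{C}\!\on{at}_{(\infty,1)})^{\mathcal{I}\!\on{oplax}}$ on those objects whose $0$-th level is an $\infty$-groupoid, so it sends honest functors to honest functors and restricts to $\CSeg\to\on{Dbl}(\bcat{C}\!\on{at}_{(\infty,1)})$; dually, the nucleus of an enhanced double category --- hence the property of a map being nuclear --- does not depend on whether it is viewed in $\ESeg$ or $\EDbl$, so the right-hand map restricts to $\ESegnuc\to\EDblnuc$. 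Both restrictions remain fully faithful because, on a fixed pair of objects, the induced functor of mapping $(\infty,1)$-categories is the restriction of the equivalence from \autoref{intro:enh} to the full sub-$(\infty,1)$-categories spanned by honest (resp.\ nuclear) morphisms, hence an equivalence; and the restricted square commutes because the square of \autoref{intro:enh} does. The fiddliest point is unwinding the mate construction precisely enough to see the interaction of honesty and nuclearity in the two directions above; once that and the bookkeeping of the mapping categories are settled, the rest is formal.
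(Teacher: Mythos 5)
Your proposal is correct and follows essentially the same route as the paper: the corollary is reduced to the claim that, under $K_*$, honest natural transformations of complete Segal objects (resp.\ double $(\infty,1)$-categories) correspond exactly to nuclear morphisms of their enhancements --- this is the paper's \autoref{lem:nuccharac}, proved there via the adjunctions $r_n\dashv i_n$ of \autoref{prop:1dblcharac} rather than by unwinding the mate directly --- after which fully faithfulness, essential surjectivity, and the restriction of the vertical inclusions follow formally. The only step you pass over slightly is why the degreewise restrictions of a nuclear map assemble into an \emph{honest} transformation of nuclei given that the nucleus's simplicial structure maps involve the left adjoints $L_n$; this is precisely what the equivalence $r_n\circ i_n\simeq\id$ supplies.
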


  We expect that these new models will be helpful when dealing with categorical constructions whose universal properties are defined in terms of lax functors. In future work, we will use the enhanced point of view to study:
  \begin{itemize}
    \item The span double $(\infty,1)$-category of an $(\infty,1)$-category $\scr{C}$ and show that it satisfies the expected universal property.
    \item The Morita $(\infty,2)$-category of a monoidal $(\infty,1)$-category $\scr{C}$ where also expect to be able to produce a conceptual proof of its universal property as studied in \cite{Morita}.
  \end{itemize}

\subsubsection{Partially lax right Kan extensions}
Let $f:\bcat{C} \xlongrightarrow{} \bcat{D}$ be a functor of $(\infty,2)$-categories. Given a collection of 1-morphisms $E$ in $\bcat{C}$ we can construct a functor via restriction
\[
  f^*: \Fun(\bcat{D},\bcat{C}\!\on{at}_{(\infty,1)}) \xlongrightarrow{} \Fun(\bcat{C},\bcat{C}\!\on{at}_{(\infty,1)})^{\on{Elax}}
\]
where the right-hand side denotes the $(\infty,2)$-category of functors, partially lax natural transformations according $E$ and modifications. Here by partially lax we mean those lax natural transformations such that for every morphism $u \in E$ the corresponding square commutes. At the technical core of this paper (\autoref{thm:rightlaxkan}), lies the theory of partially lax Kan extensions.
\begin{thm*}\label{intro:rightlaxkan}
  Let $(\bcat{C},E)$ be an $(\infty,2)$-category equipped with a collection of edges containing the equivalences and let $f:\bcat{C} \xlongrightarrow{} \bcat{D}$. Then there exists an adjunction of $(\infty,2)$-categories.
  \[
    f^*: \Fun(\bcat{D},\bcat{C}\!\on{at}_{(\infty,2)}) \llra   \Fun(\bcat{C},\bcat{C}\!\on{at}_{(\infty,2)})^{\on{Elax}}: f_*
  \]
  and similarly for the Eoplax case.
\end{thm*}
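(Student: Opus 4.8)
The plan is to construct the right adjoint $f_*$ by a pointwise formula of partially lax right Kan extension, and verify the adjunction via an explicit unit/counit or via a hom-space equivalence at the level of the underlying $(\infty,1)$-categorical mapping categories together with 2-functoriality. First I would reduce to the case where $\bcat{D} = *$ (or more generally recall the classical template): there the statement should say that the limit of a diagram $\bcat{C} \to \bcat{C}\!\on{at}_{(\infty,2)}$ in the $\on{Elax}$-sense is right adjoint to the constant-diagram functor, i.e. it computes the $\on{Elax}$-weighted limit. So the heart of the construction is: given $G : \bcat{C} \to \bcat{C}\!\on{at}_{(\infty,2)}$, define
\[
  (f_* G)(d) \;=\; \lim^{\on{Elax}}_{(c,\, fc \to d) \,\in\, \bcat{C}_{d/}} G(c),
\]
the partially lax limit over the appropriate comma-type $(\infty,2)$-category $\bcat{C} \times_{\bcat{D}} \bcat{D}_{d/}$, where the lax structure is inherited from $E$ (pulled back along the projection to $\bcat{C}$). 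The existence of such partially lax limits in $\bcat{C}\!\on{at}_{(\infty,2)}$ is where I would invoke the theory of partially lax limits referenced in the paper (the works of Gagna--Harpaz--Lanari on lax limits, Abellán--Stern, etc.); one needs that $\bcat{C}\!\on{at}_{(\infty,2)}$ admits all $E$oplax/$E$lax limits indexed by small $(\infty,2)$-categories, which follows because it is (co)complete as an $(\infty,2)$-category and lax limits are built from ordinary limits plus cotensors by $[1]$ (i.e. the Gray-cylinder description).

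Next I would upgrade $d \mapsto (f_*G)(d)$ to an actual functor $\bcat{D} \to \bcat{C}\!\on{at}_{(\infty,2)}$ and then to a functor of $(\infty,2)$-categories $\Fun(\bcat{C},\bcat{C}\!\on{at}_{(\infty,2)})^{\on{Elax}} \to \Fun(\bcat{D},\bcat{C}\!\on{at}_{(\infty,2)})$. For the pointwise functoriality in $d$: a morphism $d \to d'$ induces a functor of comma $(\infty,2)$-categories $\bcat{C}\times_{\bcat{D}}\bcat{D}_{d'/} \to \bcat{C}\times_{\bcat{D}}\bcat{D}_{d/}$ compatible with the lax weights, hence a map on partially lax limits; a 2-morphism $d \rightrightarrows d'$ should induce a natural transformation of such restriction functors and hence a 2-cell between the induced maps on limits — here I want to use that partially lax limits are themselves functorial in the indexing $(\infty,2)$-category with respect to functors and natural transformations, which is the content of a "$\lim^{\on{Elax}}$ is a functor on a suitable arrow-category" lemma that I expect has been set up earlier in the paper. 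The key naturality point is that the lax-ness constraint (commuting squares for $u \in E$) is preserved because the projection $\bcat{C}\times_{\bcat{D}}\bcat{D}_{d/} \to \bcat{C}$ reflects membership in $E$.

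Then comes the adjunction itself. The cleanest route is to produce the counit $f^* f_* G \Rightarrow G$ (an $\on{Elax}$ natural transformation, since the source is only laxly functorial) from the canonical projections out of the partially lax limit, and the unit $F \Rightarrow f_* f^* F$ (an honest natural transformation in $\Fun(\bcat{D},-)$) from the universal property of the partially lax limit applied to the cone assembled from $F$ together with its structure 2-cells; then check the triangle identities. To organize this without drowning in coherences, I would instead verify the hom-equivalence
\[
  \Map_{\Fun(\bcat{D},\bcat{C}\!\on{at}_{(\infty,2)})}\!\bigl(F, f_*G\bigr)
  \;\simeq\;
  \Map_{\Fun(\bcat{C},\bcat{C}\!\on{at}_{(\infty,2)})^{\on{Elax}}}\!\bigl(f^*F, G\bigr)
\]
as $(\infty,1)$-categories (i.e. hom-$(\infty,1)$-categories inside the respective $(\infty,2)$-categories of functors), naturally in $F$ and $G$, and then appeal to a recognition principle for adjunctions of $(\infty,2)$-categories (an $(\infty,2)$-adjunction is detected by such a natural equivalence of hom-$(\infty,1)$-categories, as in the literature on $(\infty,2)$-adjunctions). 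The left-hand side unwinds, via the defining partially lax limit and the fact that mapping into a limit is the limit of mapping-ins, to the $(\infty,1)$-category of $\on{Elax}$-cones from $f^*F$ to $G$, which is exactly the right-hand side; the only real work is tracking that the partial-laxness bookkeeping matches on both sides, i.e. that $E$-morphisms in $\bcat{C}$ impose the same strictness on a cone as on an $\on{Elax}$ natural transformation out of $f^*F$.

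The main obstacle I anticipate is \emph{not} the pointwise formula but the \emph{functoriality and coherence in the $(\infty,2)$-direction}: making $f_*$ into a genuine functor of $(\infty,2)$-categories (respecting composition of 2-cells, associativity, etc.) and checking that the adjunction data are coherent rather than merely levelwise. I expect this is handled by encoding everything fibrationally — representing $\Fun(\bcat{D},\bcat{C}\!\on{at}_{(\infty,2)})$ and $\Fun(\bcat{C},\bcat{C}\!\on{at}_{(\infty,2)})^{\on{Elax}}$ as (scaled) fibrations and realizing $f_*$ by a relative right Kan extension / fiberwise limit at the level of the total spaces, so that all coherences come for free from the fibrational model, and then invoking a straightening-type comparison. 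The $E$oplax case is formally dual and follows by replacing $\bcat{D}_{d/}$ with $\bcat{D}_{/d}$, $\on{Elax}$ with $\on{Eoplax}$, and lax limits with oplax limits, or by passing to the $\mathbf{co}$ or $\op$ dual $(\infty,2)$-category throughout.
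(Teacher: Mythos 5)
Your strategy is essentially the right one, and you correctly anticipate in your final paragraph that the actual proof runs through a fibrational encoding; but the order of operations is inverted relative to the paper, and that inversion is where the real work hides. The paper does \emph{not} start from the pointwise formula $(f_*G)(d)=\lim^{\on{Elax}}G|_{\bcat{C}_{d\upslash}}$ and then struggle to make it coherent. Instead, working in the model of $(0,1)$-fibrations over $\bcat{D}$ (marked biscaled simplicial sets), it defines a "lax comma" endofunctor $R(-)$ on the fibrational side, built by pulling back the oplax arrow $\infty$-bicategory $\mathbb{F}(\bcat{D})=\Fun(\Delta^1,\bcat{D})^{\on{oplax}}$ along $f$ and the given fibration, and then \emph{defines} $f_*X$ by the strict universal property $\Hom_{/\bcat{D}}(L,f_*X)\isom\Hom_{/\bcat{C}}^{\on{Elax}}(R(L),X)$. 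All $(\infty,2)$-functoriality and coherence is then automatic, and your pointwise formula appears only afterwards as a remark, obtained by taking fibres. The content your proposal outsources to "the theory of partially lax limits" is precisely the paper's longest verification: that $f_*X\to\bcat{D}$ is again a $(0,1)$-fibration, which requires a delicate comparison of $R(\Delta^n)$ with an explicit pushout model $P(\Delta^n)$ and a case-by-case check against the generating anodyne maps. Likewise, the triangle identities are checked by hand at the simplicial level using an auxiliary functor built from $\Delta^2_\sharp\tensor(-)$, rather than by a hom-equivalence recognition principle. Two smaller points: your indexing "$(c,\,fc\to d)$" is backwards for a \emph{right} Kan extension (the paper's $\bcat{C}_{d\upslash}$ has objects $(c,\,d\to f(c))$, consistent with your notation $\bcat{D}_{d/}$ but not with your displayed formula); and the marking on the comma category is not simply pulled back from $E$ along the projection to $\bcat{C}$ — an edge is marked only when its $\bcat{C}$-component lies in $E$ \emph{and} the associated triangle in $\bcat{D}$ commutes, which matters for the limit formula to come out right.
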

To produce this adjunction we use \autoref{thm:laxGroth} to pass to a fibrational model where the description of the functor $f_*$ can be explicitly obtained. We would like to point out that the general theory of partially lax extensions (including left Kan extensions) will soon appear in \cite{Kan}. 

Applying \autoref{intro:rightlaxkan} to the identity functor $\Delta^\op \xlongrightarrow{} \Delta^\op$ and the collection of edges $E=\mathcal{I}$ we obtain a functor
\[
  K_*: \Fun(\Delta^\op,\bcat{C}\!\on{at}_{(\infty,1)})^{\mathcal{I}\!\on{oplax}} \xlongrightarrow{} \Fun(\Delta^\op,\bcat{C}\!\on{at}_{(\infty,1)})
\]
whose restriction to $ \on{CSeg}_\Delta(\bcat{C}\!\on{at}_{(\infty,1)})^{\mathcal{I}\!\on{oplax}} $ induces the equivalence given in \autoref{intro:enh}.

\subsection{The enhanced square and globular functors}
Let $\bcat{C}$ be an $(\infty,2)$-category and let $\Fun^{\mathbf{opgr}}(\Delta^n_\flat,\bcat{C})^{\leq 1}$ be the $(\infty,1)$-category such that:
\begin{itemize}
  \item The objects are oplax normalised functors $[n] \xlongrightarrow{} \bcat{C}$.
  \item The morphisms are oplax natural transformations among those  oplax functors. 
  \item The 2-morphisms are given by modifications.
\end{itemize}
We further define $\Fun^{\mathbf{opgl}}(\Delta^n_\flat,\bcat{C})^{\leq 1}$ as the subcategory containing the same set of objects but whose morphisms are given by those oplax natural transformations $f \xRightarrow{} g$ such that for every $i \in [n]$, the component at $c$, $f(c) \xlongrightarrow{\simeq} g(c)$ is an equivalence. We call such oplax natural transformations: opglobular natural transformations.

We can assemble these definitions into functors 
\[
  \Sqe(\bcat{C})_\bullet: \Delta^\op \xlongrightarrow{} \bcat{C}\!\on{at}_{(\infty,1)}, \enspace \enspace [n] \mapsto \Sqe(\bcat{C})_n:=\Fun^{\mathbf{opgr}}(\Delta^n_\flat,\bcat{C})^{\leq 1}
\]
\[
  \EGlob(\bcat{C})_\bullet: \Delta^\op \xlongrightarrow{} \bcat{C}\!\on{at}_{(\infty,1)}, \enspace \enspace [n] \mapsto \EGlob(\bcat{C})_n:=\Fun^{\mathbf{opgl}}(\Delta^n_\flat,\bcat{C})^{\leq 1}
\]
which further organise themselves into functors of $(\infty,2)$-categories
\[
  \Sqe: \bcat{B}\!\on{icat}_\infty \xlongrightarrow{} \on{EDbl}\left(\bcat{C}\!\on{at}_{(\infty,1)} \right), \enspace \enspace \EGlob: \bcat{B}\!\on{icat}^{\mathbf{oplax}}_\infty \xlongrightarrow{} \ESeg.
\]
Here we are denoting by $\bcat{B}\!\on{icat}_\infty $  the $(\infty,2)$-category of $(\infty,2)$-categories, functors and natural transformations and by $\bcat{B}\!\on{icat}^{\mathbf{oplax}}_\infty $ the $(\infty,2)$-category of $(\infty,2)$-categories, oplax normalised functors and opglobular natural transformations. We will refer to $\Sqe$ as the enhanced square functor and to $\EGlob$ as the enhanced globular functor.

As we already mentioned, in \autoref{thm:maineglob} we prove that the enhanced globular functor is an equivalence of $(\infty,2)$-categories. Our main theorem  (\autoref{thm:sqefullyfaithful}) regarding the enhanced square functor is the following.

\begin{thm*}\label{intro:sqe}
  The enhanced square functor $\Sqe: \bcat{B}\!\on{icat}_\infty \xlongrightarrow{} \on{EDbl}(\bcat{C}\!\on{at}_{(\infty,1)})$ is fully faithful.
\end{thm*}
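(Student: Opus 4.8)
The plan is to verify full faithfulness on mapping \((\infty,1)\)-categories: for all \((\infty,2)\)-categories \(\bcat{C},\bcat{D}\) one must show that the comparison
\[
  \Fun(\bcat{C},\bcat{D}) \xlongrightarrow{}\Hom_{\on{EDbl}(\bcat{C}\!\on{at}_{(\infty,1)})}\bigl(\Sqe(\bcat{C}),\Sqe(\bcat{D})\bigr)
\]
is an equivalence. The first step is to compute the nucleus of \(\Sqe(\bcat{C})\). For \(\Sqe(\bcat{C})_\bullet=\Fun^{\mathbf{opgr}}(\Delta^\bullet_\flat,\bcat{C})^{\leq 1}\) the Segal map \(L_n\) does admit a fully faithful right adjoint, which sends an \(n\)-tuple of composable morphisms of \(\bcat{C}\) to the oplax normalised functor \([n]\xlongrightarrow{}\bcat{C}\) whose values on the long edges are the strict composites and whose coherence \(2\)-morphisms are identities; its essential image — the nucleus \(\on{Sq}(\bcat{C})\) — is the double \((\infty,1)\)-category whose objects, vertical and horizontal \(1\)-morphisms are the objects and morphisms of \(\bcat{C}\) and whose squares are the lax squares, equivalently the \(2\)-morphisms, of \(\bcat{C}\); this is the \((\infty,2)\)-categorical incarnation of Ehresmann's quintet double category. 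Part of the verification that \(\Sqe\) lands in \(\on{EDbl}\) is the identification \(\Sqe(\bcat{C})=K_*(\on{Sq}(\bcat{C}))\), with \(K_*\) the equivalence of \autoref{intro:enh}, so that \autoref{intro:enh} presents the right-hand side above as the \((\infty,1)\)-category of \(\mathcal{I}\)oplax normalised functors \(\on{Sq}(\bcat{C})\xlongrightarrow{}\on{Sq}(\bcat{D})\) and natural transformations.

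The next step exploits that \(\on{Sq}(\bcat{C})\) is \emph{fibrant}: every vertical morphism of \(\on{Sq}(\bcat{C})\) — a morphism \(f\) of \(\bcat{C}\) — has a companion, namely \(f\) viewed horizontally, the companion and unit squares being furnished by identity \(2\)-morphisms of \(\bcat{C}\). A normalised \(\mathcal{I}\)oplax functor \(\Phi\colon\on{Sq}(\bcat{C})\xlongrightarrow{}\on{Sq}(\bcat{D})\) preserves companions, as every normalised oplax functor into a fibrant double category does; hence the horizontal component of \(\Phi\) is the companion of its vertical component, and the oplax horizontal-composition constraints of \(\Phi\) are the images, under passing to companions, of the composition constraints of the underlying functor of \((\infty,1)\)-categories \(\bcat{C}^{\leq 1}=\bcat{X}_0(\on{Sq}(\bcat{C}))\xlongrightarrow{}\bcat{D}^{\leq 1}\); as the latter are equivalences, so are the former, and \(\Phi\) is canonically nuclear. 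Running the same argument on natural transformations and modifications shows that every \(1\)-morphism of \(\Hom_{\on{EDbl}}(\Sqe(\bcat{C}),\Sqe(\bcat{D}))\) is nuclear, and hence, via the equivalence \(\on{Dbl}(\bcat{C}\!\on{at}_{(\infty,1)})\xlongrightarrow{\simeq}\EDblnuc\) of \autoref{intro:enhanced}, produces an equivalence
\[
  \Hom_{\on{EDbl}(\bcat{C}\!\on{at}_{(\infty,1)})}\bigl(\Sqe(\bcat{C}),\Sqe(\bcat{D})\bigr)\;\simeq\;\Hom_{\on{Dbl}(\bcat{C}\!\on{at}_{(\infty,1)})}\bigl(\on{Sq}(\bcat{C}),\on{Sq}(\bcat{D})\bigr).
\]

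It remains to identify honest functors of double \((\infty,1)\)-categories \(\on{Sq}(\bcat{C})\xlongrightarrow{}\on{Sq}(\bcat{D})\) with functors \(\bcat{C}\xlongrightarrow{}\bcat{D}\). One direction is \(\Sqe\); for the inverse I would extract from a double functor its action on objects, on horizontal \(1\)-morphisms, and on globular squares — the squares with identity vertical boundary, i.e.\ the \(2\)-morphisms — and recover the coherent composition of a functor of \((\infty,2)\)-categories from functoriality of the double functor together with companion-preservation, which decomposes an arbitrary square into globular pieces pasted between companion squares. Conceptually this exhibits \(\Sqe\) as the right adjoint of a ``free \((\infty,2)\)-category on a double \((\infty,1)\)-category'' functor \(\mathbb{L}\) — which exists by the adjoint functor theorem, \(\Sqe\) being limit-preserving — so that the claim reduces to the unit \(\bcat{C}\xlongrightarrow{}\mathbb{L}(\Sqe(\bcat{C}))\) being an equivalence, with the analogous statement on natural transformations and modifications promoting full faithfulness from \(1\)-morphisms to all of \(\Hom\). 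I expect the main obstacle to be exactly this final identification, together with the coherent companion-preservation that underlies the previous paragraph: both must be carried out genuinely at the \((\infty,2)\)-categorical level rather than on objects, \(1\)-morphisms and \(2\)-morphisms alone, which is handled either through a universal property of \(\on{Sq}(\bcat{C})\) as the ``free framing'' of \(\bcat{C}\) or through an explicit cocartesian-fibration model for \(\mathbb{L}\) in the spirit of the fibrational techniques behind \autoref{intro:rightlaxkan}.
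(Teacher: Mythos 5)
Your reduction has the right shape up to its last step: the identification $\Sqe(\bcat{C})\simeq K_*(\mathbf{Sq}(\bcat{C}))$ is \autoref{prop:Sqeenhanced}, and the claim that every morphism $\Sqe(\bcat{C})_\bullet\xRightarrow{}\Sqe(\bcat{D})_\bullet$ is nuclear is \autoref{prop:sqrigid} (proved there by direct simplex manipulation rather than by a companion-preservation principle, which in the $(\infty,2)$-setting would itself require a coherent proof you have not supplied). The genuine gap is the final step. After passing through $\EDblnuc\simeq\on{Dbl}(\bcat{C}\!\on{at}_{(\infty,1)})$ you have reduced the theorem to the statement that honest functors of double $(\infty,1)$-categories $\mathbf{Sq}(\bcat{C})\xlongrightarrow{}\mathbf{Sq}(\bcat{D})$ coincide with functors $\bcat{C}\xlongrightarrow{}\bcat{D}$ — but in the paper this is exactly \autoref{cor:ordinarysqe}, which is \emph{deduced from} \autoref{thm:sqefullyfaithful}, not used to prove it. Your sketch of this step (extract objects, horizontal arrows and globular squares, then recover the coherent composition from companion-pasting, or invoke an adjoint functor theorem and check that the unit $\bcat{C}\xlongrightarrow{}\mathbb{L}(\Sqe(\bcat{C}))$ is an equivalence) is a restatement of the difficulty rather than a resolution of it: checking that the unit is an equivalence requires computing the left adjoint, which is precisely the content you are trying to avoid. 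The paper notes that the companion-based route is carried out independently in forthcoming work of Ozornova--Rovelli--Ruit; it is not available here.

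The paper's actual argument sidesteps this entirely by computing the corepresenting object of the mapping category. Via the Quillen adjunction $\int^{\mathbf{opgr}}_E\dashv\Sqe$ one identifies $\on{Nat}_{\Delta^\op}(\Sqe(\bcat{C})_\bullet,\Sqe(\bcat{D})_\bullet)$ with $\Fun(\mathbb{X}^{\tensor}_{\bcat{C}},\bcat{D})^{\leq 1}$, where $\mathbb{X}^{\tensor}_{\bcat{C}}$ is the colimit of all decorated Gray prisms $\Delta^n_\flat\tensor\Delta^k_\sharp\xlongrightarrow{}\bcat{C}$ (\autoref{lem:xxdescription}, \autoref{prop:sqesaturation}), and the comparison map is restriction along the counit $\eta_{\bcat{C}}\colon\mathbb{X}^{\tensor}_{\bcat{C}}\xlongrightarrow{}\bcat{C}$. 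The theorem then follows from the combinatorial result \autoref{thm:bigcombinatorial} that the section $\bcat{C}\xlongrightarrow{}\mathbb{X}^{\tensor}_{\bcat{C}}$ is a trivial cofibration. Some such concrete computation (or the separate square--companion adjunction) is needed to close your argument; as written, the proposal assumes the hardest part.
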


We consider another functor $\mathbf{Sq}:\bcat{B}\!\on{icat}_\infty \xlongrightarrow{} \on{Dbl}\left(\bcat{C}\!\on{at}_{(\infty,1)} \right)$ which sends every $(\infty,2)$-category $\bcat{C}$ to the nucleus of $\Sqe(\bcat{C})$. This functor is known as Ehresmann's quintet construction \cite{quintet} and it was claimed in \cite{GaitsgoryRozenblyum} that it was a fully faithful embedding of $(\infty,2)$-categories into double $(\infty,1)$-categories.

In \autoref{cor:ordinarysqe}, we use the previous result to settle this claim.

\begin{cor*}
  The ordinary square functor $\mathbf{Sq}: \bcat{B}\!\on{icat}_\infty  \xlongrightarrow{} \on{Dbl}(\bcat{C}\!\on{at}_{(\infty,1)})$ is fully faithful. Moreover, the composite
    \[
        \bcat{B}\!\on{icat}_\infty  \xlongrightarrow{\mathbf{Sq}} \on{Dbl}(\bcat{C}\!\on{at}_{(\infty,1)})\xlongrightarrow{} \on{Dbl}\left(\bcat{C}\!\on{at}_{(\infty,1)}\right)^{\mathcal{I}\!\on{oplax}}
       \]   
     where the second functor is the obvious inclusion is also fully faithful.  
\end{cor*}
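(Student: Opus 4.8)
The plan is to deduce both assertions formally from \autoref{intro:sqe}, using the enhancement equivalences of \autoref{intro:enh} and \autoref{intro:enhanced}. The first step is to record that $\Sqe$ takes values in the \emph{nuclear} locus. Indeed, if $F\colon\bcat{C}\xlongrightarrow{}\bcat{D}$ is a functor of $(\infty,2)$-categories then $\Sqe(F)$ is computed levelwise by postcomposition with $F$, and postcomposition with $F$ sends honest functors $\Delta^n\xlongrightarrow{}\bcat{C}$ to honest functors $\Delta^n\xlongrightarrow{}\bcat{D}$; hence $\Sqe(F)$ preserves the full subcategories cutting out the nuclei, i.e.\ it is a nuclear map. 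Since nuclear maps are closed under composition and contain the identities, $\Sqe$ factors as
\[
  \bcat{B}\!\on{icat}_\infty\xlongrightarrow{\Sqe'}\EDblnuc\xlongrightarrow{\iota}\on{EDbl}(\bcat{C}\!\on{at}_{(\infty,1)}),
\]
where $\iota$ is the inclusion of the subcategory on all objects and the nuclear $1$-morphisms. Moreover, since by construction $\mathbf{Sq}(\bcat{C})$ is the nucleus of $\Sqe(\bcat{C})$, composing $\Sqe'$ with the equivalence $\EDblnuc\simeq\on{Dbl}(\bcat{C}\!\on{at}_{(\infty,1)})$ of \autoref{intro:enhanced} — the quasi-inverse of the enhancement functor, given on objects by passing to the nucleus — recovers $\mathbf{Sq}$. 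As that equivalence is an equivalence of $(\infty,2)$-categories, it suffices to prove that $\Sqe'$ is fully faithful.

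To prove that $\Sqe'$ is fully faithful I would compare mapping $(\infty,1)$-categories. Fix $(\infty,2)$-categories $\bcat{C}$ and $\bcat{D}$. Because ``nuclear'' is a condition imposed only on $1$-morphisms, $\Map_{\EDblnuc}(\Sqe\bcat{C},\Sqe\bcat{D})$ is the full subcategory of $\Map_{\on{EDbl}(\bcat{C}\!\on{at}_{(\infty,1)})}(\Sqe\bcat{C},\Sqe\bcat{D})$ spanned by the nuclear maps, and — since $\Sqe=\iota\circ\Sqe'$ — the functor induced by $\Sqe'$ is the corestriction of the functor induced by $\Sqe$. By \autoref{intro:sqe} the latter,
\[
  \Map_{\bcat{B}\!\on{icat}_\infty}(\bcat{C},\bcat{D})\xlongrightarrow{\simeq}\Map_{\on{EDbl}(\bcat{C}\!\on{at}_{(\infty,1)})}(\Sqe\bcat{C},\Sqe\bcat{D}),
\]
is an equivalence; in particular it is essentially surjective, so every object of the target is equivalent to some $\Sqe(F)$, which we have just seen is nuclear. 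As being nuclear is invariant under equivalence of $1$-morphisms, the full subcategory of nuclear maps exhausts the whole mapping category, so the functor induced by $\Sqe'$ is again this equivalence, and hence $\Sqe'$ is fully faithful. Therefore $\mathbf{Sq}$ is fully faithful, which settles the first assertion.

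For the second assertion, \autoref{intro:enh} provides an analogous equivalence $\on{Dbl}(\bcat{C}\!\on{at}_{(\infty,1)})^{\mathcal{I}\!\on{oplax}}\simeq\on{EDbl}(\bcat{C}\!\on{at}_{(\infty,1)})$, again via the enhancement/nucleus constructions, and \autoref{intro:enhanced} is precisely its restriction to honest (resp.\ nuclear) maps. Hence this equivalence carries the inclusion $\on{Dbl}(\bcat{C}\!\on{at}_{(\infty,1)})\hra\on{Dbl}(\bcat{C}\!\on{at}_{(\infty,1)})^{\mathcal{I}\!\on{oplax}}$ to $\iota\colon\EDblnuc\hra\on{EDbl}(\bcat{C}\!\on{at}_{(\infty,1)})$, and correspondingly carries the composite
\[
  \bcat{B}\!\on{icat}_\infty\xlongrightarrow{\mathbf{Sq}}\on{Dbl}(\bcat{C}\!\on{at}_{(\infty,1)})\hra\on{Dbl}(\bcat{C}\!\on{at}_{(\infty,1)})^{\mathcal{I}\!\on{oplax}}
\]
to $\iota\circ\Sqe'=\Sqe$. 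Since $\Sqe$ is fully faithful by \autoref{intro:sqe}, so is this composite.

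The only genuine input is \autoref{intro:sqe}; everything else is formal. The point requiring the most care is the bookkeeping in the first step: one must check not merely that $\Sqe$ lands in $\EDblnuc$ on objects, but that $\Sqe'$ composed with the nucleus equivalence agrees with $\mathbf{Sq}$ as a functor of $(\infty,2)$-categories, and that the relevant subcategory inclusions are matched up by the equivalences of \autoref{intro:enh} and \autoref{intro:enhanced}; this amounts to unwinding the constructions of $\Sqe$, of the enhancement functor, and of the nucleus, but presents no real difficulty. A secondary point to state explicitly is the stability of ``nuclear'' under equivalence of maps, which is what promotes ``$\Sqe(F)$ is nuclear'' to ``every map $\Sqe\bcat{C}\xlongrightarrow{}\Sqe\bcat{D}$ is nuclear'' in the second paragraph.
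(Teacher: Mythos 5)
Your argument is correct, and its skeleton coincides with the paper's: both proofs route everything through the enhancement equivalences of \autoref{thm:enhanced} and \autoref{cor:enhanced}, identify $\Sqe$ with $K_*\circ\widetilde{\mathbf{S}}\!\mathbf{q}$ via \autoref{prop:Sqeenhanced}, and reduce both assertions to the full faithfulness of $\Sqe$. The one substantive difference is how the first assertion is handled. The paper deduces the second assertion first (since $K_*$ is an equivalence and $K_*\circ\widetilde{\mathbf{S}}\!\mathbf{q}\simeq\Sqe$ is fully faithful), and then, to pass from $\widetilde{\mathbf{S}}\!\mathbf{q}$ to $\mathbf{Sq}$, it invokes the rigidity result \autoref{prop:sqrigid} (every morphism $\Sqe(\bcat{C})_\bullet\Rightarrow\Sqe(\bcat{D})_\bullet$ is nuclear) together with the characterisation \autoref{lem:nuccharac} of honest maps as those with nuclear enhancement. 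You instead rederive the content of \autoref{prop:sqrigid} formally: since $\Sqe$ is fully faithful, the induced functor on mapping $(\infty,1)$-categories is essentially surjective, every $1$-morphism $\Sqe(\bcat{C})\to\Sqe(\bcat{D})$ is therefore equivalent to some $\Sqe(F)$ with $F$ an honest functor, and such maps are visibly nuclear; since nuclearity is stable under equivalence of $1$-morphisms (the nucleus being the essential image of a fully faithful right adjoint, hence closed under equivalence), all maps are nuclear. This is a legitimate and rather elegant shortcut \emph{given} \autoref{thm:sqefullyfaithful} as a black box; it is not circular at the level of the corollary, although it should be said that \autoref{prop:sqrigid} cannot be dispensed with globally, since it feeds into \autoref{prop:sqesaturation} and hence into the proof of \autoref{thm:sqefullyfaithful} itself. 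The bookkeeping points you flag at the end (that $\Sqe'$ composed with the nucleus equivalence agrees with $\mathbf{Sq}$ as a functor of $(\infty,2)$-categories, and that the enhancement equivalences match up the two subcategory inclusions) are exactly the content of \autoref{prop:Sqeenhanced} and \autoref{cor:enhanced}, and are treated at the same level of detail in the paper.
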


The author was surprised to discover that an oplax normalised functor between $\mathbf{Sq}(\bcat{C}) \xRightarrow{}\mathbf{Sq}(\bcat{D})$ must be in fact, an honest functor. This points to the possibility of classifying the essential image of $\Sqe$ as certain enhanced double $(\infty,1)$-categories satisfying rigidity properties with respect to lax functors.

\subsubsection{Adjoints to the enhanced square functor}
In the strict 2-categorical world, it follows from the work of Ehresmann that the functor $\mathbf{Sq}$ admits a right adjoint $\mathbf{Comp}$, which is known as the companion functor. In \autoref{prop:coendleftquillen} we construct (at level of $(\infty,1)$-categories) a left adjoint to the functor $\Sqe$
\[
  \Sqe:\bcat{B}\!\on{icat}_\infty  \llra \on{EDbl}(\bcat{C}\!\on{at}_{(\infty,1)}): \int^{\mathbf{opgr}}_E
\]
but we \emph{do not} pursue the construction of an enhanced version of the functor $\mathbf{Comp}$. The study of the square-companion adjunction is the subject of upcoming work by Ozornova, Rovelli and Ruit \cite{comp} where an independent proof of the fully faithfulness of the functor $\mathbf{Sq}$ is derived as a corollary from the adjunction $\mathbf{Sq} \dashv \mathbf{Comp}$.

 \subsection{Organization of the paper} 
 In Section 2 we review the main definitions of the theory of marked-scaled (resp. scaled) simplicial sets and complete Segal objects in $(\infty,1)$-categories. Exploiting the comparison of models provided by Lurie in \cite{LurieGoodwillie} we prove in \autoref{thm:model} that the functor $\mathbf{Gl}$ that sends a fibrant marked-scaled simplicial set $\bcat{C}$ to the nucleus of $\EGlob(\bcat{C})$ defines an equivalence of models for $(\infty,2)$-categories.

In Section 3 we prove some key combinatorial results (\autoref{thm:bigcombinatorial}) which are fundamental to the proofs of of fully faithfulness of our functors $\Sqe$ and $\EGlob$. 

Section 4 is devoted to the study of partially lax (right) Kan extensions and the theory of enhanced Segal objects (resp. enhanced double $(\infty,1)$-categories).

In Section 5 and Section 6 we study the functors $\Sqe$ and $\EGlob$ and provide proofs of the theorems stated in this section.

\subsection{Acknowledgements}
I would like to thank R. Haugseng for giving me the idea of working on this project and W. Stern for his helpful suggestions regarding the exposition of the paper which greatly improved this document.

\newpage

 \section{Preliminaries}
 In this section we will mainly gather the main definitions of the theory of scaled simplicial sets (resp. marked-scaled) and the theory of complete Segal objects in $(\infty,1)$-categories as presented by Lurie in \cite{LurieGoodwillie}. In \autoref{thm:compari} we prove a new comparison result between both models for $(\infty,2)$-categories.

 \subsection{Scaled simplicial sets}

\begin{definition}\label{def:simp}
  We define the simplex category $\Delta$, as the full subcategory of the 1-category of categories $\on{Cat}$, spanned by the linearly ordered sets $[n]=\{0<1<2<\dots<n\}$.
\end{definition}

\begin{definition}\label{def:scaledsimplicialset}
  A scaled simplicial set $(X,T_X)$ consists in a simplicial set $X$ together with a collection of $2$-simplices (also called triangles) $T_X$ which contains \emph{every degenerate} triangle. We call the elements of $T_X$ the \emph{thin triangles} of $X$. A morphism of scaled simplicial sets $f:(X,T_X) \xlongrightarrow{} (Y,T_Y)$ is a map of simplicial sets $f: X \xlongrightarrow{} Y$ such that $f(T_X)\subseteq T_Y$. We denote the corresponding category of scaled simplicial sets by $\on{Set}_\Delta^{\mathbf{sc}}$.
\end{definition}

\begin{remark}
  When no confusion shall arise we will omit the pair notationand simply denote the scaled simplicial  $(X,T_X)$ set as  $X$.
\end{remark}

\begin{notation}
  Given a simplicial set $A$ we have two canonical ways of viewing it as a scaled simplicial set:
  \begin{itemize}
    \item We define $A_\flat=(A,\flat)$ where $\flat$ is the collection consisting only in the degenerate triangles of $A$.
    \item We define $A_\sharp=(A,\sharp)$ where $\sharp$ is the collection consisting in every triangle of $A$.
  \end{itemize}
\end{notation}

  \begin{definition}\label{def:scanodyne}
  The set of \emph{generating scaled anodyne maps} \(\sS\) is the set of maps of scaled simplicial sets consisting of:
  \begin{enumerate}
    \myitem{S1)}\label{item:anodyne-inner} the inner horns inclusions
    \[
    \bigl(\Lambda^n_i,\{\Delta^{\{i-1,i,i+1\}}\}\bigr)\xlongrightarrow{} \bigl(\Delta^n,\{\Delta^{\{i-1,i,i+1\}}\}\bigr)
    \quad , \quad n \geq 2 \quad , \quad 0 < i < n ;
    \]
    \myitem{S2)}\label{i:saturation} the map 
    \[
    (\Delta^4,T)\xlongrightarrow{} (\Delta^4,T\cup \{\Delta^{\{0,3,4\}}, \ \Delta^{\{0,1,4\}}\}),
    \]
    where we define
    \[
    T\overset{\text{def}}{=}\{\Delta^{\{0,2,4\}}, \ \Delta^{\{ 1,2,3\}}, \ \Delta^{\{0,1,3\}}, \ \Delta^{\{1,3,4\}}, \ \Delta^{\{0,1,2\}}\};
    \]
    \myitem{S3)}\label{item:anodyne_outer} the set of maps
    \[
    \Bigl(\Lambda^n_0\coprod_{\Delta^{\{0,1\}}}\Delta^0,\{\Delta^{\{0,1,n\}}\}\Bigr)\xlongrightarrow{} \Bigl(\Delta^n\coprod_{\Delta^{\{0,1\}}}\Delta^0,\{\Delta^{\{0,1,n\}}\}\Bigr)
    \quad , \quad n\geq 3.
    \]
  \end{enumerate}
  A general map of scaled simplicial set is said to be \emph{scaled anodyne} if it belongs to the weakly saturated closure of \(\sS\).
\end{definition}

\begin{definition}
  A scaled simplicial set $X$ is said to be an $\infty$-bicategory if it has the right lifting property again the scaled of scaled anodyne maps in \autoref{def:scanodyne}. In this case, we view the 2-simplices of $T_X$ as the collection of commuting triangles.
\end{definition}

\begin{definition}
  We denote by $\Cat_\Delta^+$ the category of $\on{Set}_\Delta^+$-enriched categories  (i.e. categories enriched in marked simplicial sets). We note that we can view the category of (strict) 2-categories $2\Cat$ as a full subcategory of $\Cat_\Delta^+$ by applying the nerve functor Hom-wise and marking the equivalences in each mapping category.
\end{definition}

\begin{definition}\label{def:OI}
  Let $I$ be a linearly ordered finite set. We define a $2$-category $\OO^{I}$ as
  follows
  \begin{itemize}
    \item the objects of $\OO^I$ are the elements of $I$,
    \item the category $\mathbb{O}^{I}(i,j)$ of morphisms between objects $i,j \in I$ is defined
    as the poset of finite sets $S \subseteq I$ such that $\min(S)=i$ and $\max(S)=j$
    ordered by inclusion,
    \item the composition functors are given, for $i,j,l\in I$, by
    \[
    \mathbb{O}^{I}(i,j) \times \mathbb{O}^{I}(j,l) \xlongrightarrow{} \mathbb{O}^{I}(i,l), \quad (S,T) \mapsto S \cup T.
    \]
  \end{itemize}
  When $I=[n]$, we denote $\OO^I$ by $\OO^n$. We will refer to the 2-categories $\OO^n$ for $n\geq 0$ as the (2-dimensional) orientals following the original terminology of \cite{street}.
\end{definition}

\begin{definition}\label{def:rigidification}
  The map
  \[
  \begin{tikzcd}
    \Delta \arrow[r,"\mathfrak{C}"] & {\Cat_\Delta^+} 
  \end{tikzcd}
  \]
  which sends $[n]$ to $\OO^n$ gives us a cosimplicial object in $\Cat_\Delta^+$. We can moreover send the thin 2-simplex $\Delta^2_\sharp$ to $\mathfrak{C}[\Delta^2]$ equipped with maximally-marked mapping spaces. The usual machinery of nerve and realization then gives us adjoint functors  
  \[
  \begin{tikzcd}
    \mathfrak{C}^{\sc}: &[-3em] \scsSet \arrow[r,shift left] & \Cat_\Delta^+ \arrow[l,shift left] &[-3em] :\Nsc 
  \end{tikzcd}
  \]
  which we will call the \emph{scaled nerve} and \emph{scaled rigidification}. 
\end{definition}

\begin{theorem}\label{thm:scaledmodel}
  There is a left proper, combinatorial model structure on $\scsSet$ with
  \begin{itemize}
    \item[W)] The weak equivalences are the morphisms $f:A\xlongrightarrow{} B$ such that $\mathfrak{C}^{\sc}[f]:\mathfrak{C}^{\sc}[A] \xlongrightarrow{} \mathfrak{C}^{\sc}[B]$ is an equivalence in $\Cat_\Delta^+$. 
    \item[C)] The cofibrations are the monomorphisms. 
  \end{itemize}
  Moreover, the fibrant objects in this model structure are the $\infty$-bicategories, and the adjunction 
  \[
  \begin{tikzcd}
    \mathfrak{C}^{\sc}: &[-3em] \scsSet \arrow[r,shift left] & \Cat_\Delta^+ \arrow[l,shift left] &[-3em] :\Nsc 
  \end{tikzcd}
  \]
  is a Quillen equivalence.
\end{theorem}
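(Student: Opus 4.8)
I would follow the strategy of \cite{LurieGoodwillie}: recognise the model structure as an instance of Jeff Smith's theorem on combinatorial model categories, and then identify the fibrant objects and establish the Quillen equivalence by hand. First I would record the ambient data. The category $\scsSet$ is the category of set-valued presheaves on the category $\Deltasc$ obtained from $\Delta$ by freely adjoining a single object $[2]_\sharp$ together with an arrow $[2]\to[2]_\sharp$, so it is a presheaf topos, hence locally presentable, and its monomorphisms are generated as a weakly saturated class by the boundary inclusions $\partial\Delta^n_\flat\hookrightarrow\Delta^n_\flat$ ($n\geq 0$) together with the scaling map $\Delta^2_\flat\hookrightarrow\Delta^2_\sharp$. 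On the other side I would use the standard model structure on $\Cat_\Delta^+$, in which a functor is a weak equivalence when it is essentially surjective on homotopy categories and a weak equivalence of marked simplicial sets on each mapping object; this is left proper, combinatorial and cartesian, its existence being the general theory of model structures on categories enriched over a suitable monoidal model category, applied to marked simplicial sets with the model structure modelling $(\infty,1)$-categories.

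Next I would apply Smith's recognition theorem with generating cofibrations $I=\{\partial\Delta^n_\flat\hookrightarrow\Delta^n_\flat\}_{n\geq 0}\cup\{\Delta^2_\flat\hookrightarrow\Delta^2_\sharp\}$, so that $\operatorname{cof}(I)$ is the class of all monomorphisms, and with weak equivalences the class $W=(\mathfrak{C}^{\sc})^{-1}(\text{weak equivalences of }\Cat_\Delta^+)$. Three points must be checked. First, $W$ satisfies two-out-of-three, is closed under retracts, and is accessible — the last because $\mathfrak{C}^{\sc}$ is accessible and the weak equivalences of $\Cat_\Delta^+$ form an accessible class. Second, $\operatorname{cof}(I)\cap W$ is closed under pushout and transfinite composition: here I would first verify by explicit computation with the orientals $\OO^n$ that $\mathfrak{C}^{\sc}$ carries each generating cofibration in $I$ to a cofibration of $\Cat_\Delta^+$; since $\mathfrak{C}^{\sc}$ preserves colimits and cofibrations of $\Cat_\Delta^+$ are closed under these operations, every monomorphism is then sent to a cofibration, so $\operatorname{cof}(I)\cap W$ lands in the trivial cofibrations of $\Cat_\Delta^+$ and the desired closure follows. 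Third, every map with the right lifting property against $I$ lies in $W$: such a map is a trivial fibration of underlying simplicial sets which moreover reflects thin triangles, hence admits a section and is a homotopy equivalence of scaled simplicial sets, so its image under $\mathfrak{C}^{\sc}$ is an equivalence. Smith's theorem then produces the model structure; it is left proper because its cofibrations are the monomorphisms and its weak equivalences are created by the colimit-preserving functor $\mathfrak{C}^{\sc}$ into the left proper $\Cat_\Delta^+$, and combinatorial because $\scsSet$ is locally presentable. By construction $\mathfrak{C}^{\sc}$ preserves cofibrations and weak equivalences, so it is left Quillen.

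To identify the fibrant objects I would prove the key combinatorial lemma that $\mathfrak{C}^{\sc}$ sends each of the generating scaled anodyne maps of \autoref{def:scanodyne} — the inner horns, the saturation square, and the outer horns — to a trivial cofibration of $\Cat_\Delta^+$; this is carried out by an explicit analysis of the mapping marked simplicial sets of the $2$-categories obtained by rigidifying source and target. Bootstrapping from this, the model structure admits the scaled anodyne maps as a set of generating trivial cofibrations (one also needs that a weak equivalence with the right lifting property against all scaled anodynes is already a trivial fibration), and consequently a scaled simplicial set is fibrant precisely when it has the right lifting property against all scaled anodyne maps, i.e. precisely when it is an $\infty$-bicategory; concretely, an object with this lifting property is a retract of its fibrant replacement along a scaled-anodyne cell complex, and fibrant objects are closed under retracts.

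Finally, for the Quillen equivalence it remains to show that the derived unit and counit of $\mathfrak{C}^{\sc}\dashv\Nsc$ are weak equivalences. Since every object of $\scsSet$ is cofibrant and $\mathfrak{C}^{\sc}$ by definition reflects all weak equivalences, a standard criterion reduces this to showing that the counit $\mathfrak{C}^{\sc}[\Nsc\mathcal{D}]\to\mathcal{D}$ is a weak equivalence for every fibrant $\mathcal{D}\in\Cat_\Delta^+$; this I would verify by an explicit comparison of mapping objects, exhibiting $\mathfrak{C}^{\sc}[\Nsc\mathcal{D}](x,y)$ as a cofibrant replacement of $\mathcal{D}(x,y)$ in marked simplicial sets, in exact parallel with the corresponding analysis of the rigidification functor underlying the equivalence between quasicategories and simplicial categories. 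I expect the principal obstacle to be the combinatorial lemma identifying $\mathfrak{C}^{\sc}$ on the scaled anodyne maps with trivial cofibrations of $\Cat_\Delta^+$: this computation propagates into both the fibrancy statement and the counit comparison and carries essentially all of the content, the remaining steps being formal.
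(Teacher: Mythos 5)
The paper does not prove this theorem at all: it cites \cite[Thm A.3.2.4]{LurieGoodwillie} for the model structure and the Quillen equivalence, and \cite[Thm 5.1]{GHL_Equivalence} for the identification of the fibrant objects. Your outline is therefore really a sketch of the proofs in those references, and for the Smith-theorem construction, the left properness, the combinatorial lemma on scaled anodynes, and the counit comparison it follows the correct strategy (one small inaccuracy: $\scsSet$ is not a presheaf topos but a locally presentable full subcategory of one, cut out by the injectivity condition $T_X\subseteq X_2$; this changes nothing since only local presentability is used).

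There is, however, a genuine gap in your identification of the fibrant objects, and it is exactly the point for which the paper needs the second citation. Your bootstrapping argument is circular: if $X$ has the right lifting property against the scaled anodyne maps and you factor $X\to *$ as a scaled-anodyne cell complex $X\to\hat X$ followed by a map with the RLP against scaled anodynes, then the retract argument does exhibit $X$ as a retract of $\hat X$, but $\hat X$ is only known to have the RLP against scaled anodynes --- which is the very property of $X$ you started from, not fibrancy. To conclude fibrancy you would need to know that every trivial cofibration has the left lifting property against every map with the RLP against scaled anodynes, i.e.\ that the scaled anodynes ``generate'' the trivial cofibrations in the relevant sense; that statement is equivalent to the theorem you are trying to prove. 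The easy direction (fibrant $\Rightarrow$ RLP against scaled anodynes) follows from your combinatorial lemma, and Lurie's original argument establishes that fibrant objects are $\infty$-bicategories; but the converse --- that every scaled simplicial set with the RLP against the maps of \autoref{def:scanodyne} is fibrant --- was open for years and is the main theorem of Gagna--Harpaz--Lanari \cite[Thm 5.1]{GHL_Equivalence}, proved by a substantial comparison argument rather than by any formal retract manipulation. You should either cite that result or acknowledge that this step carries at least as much content as the combinatorial lemma you single out as the principal obstacle.
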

\begin{proof}
  This is \cite[Thm A.3.2.4]{LurieGoodwillie}. The characterization of fibrant objects is \cite[Thm 5.1]{GHL_Equivalence}.
\end{proof}

\begin{definition}
  We say that a map of scaled simplicial sets is a \emph{bicategorical equivalence} if it is a weak equivalence in the model structure given in \autoref{thm:scaledmodel}. Similarly, call the fibrations in the model structure of scaled simplicial sets \emph{bicategorical fibrations}.
\end{definition}

\begin{definition}\label{def:funsc}
  Given a pair of scaled simplicial sets $X,Y$ we denote by $\on{Fun}^{\mathbf{sc}}(X,Y)$ the scaled simplicial set determined by the universal property
  \[
    \Hom_{\on{Set}_\Delta^{\mathbf{sc}}}(K,\on{Fun}^{\mathbf{sc}}(X,Y))\isom  \Hom_{\on{Set}_\Delta^{\mathbf{sc}}}(K \times X, Y)
  \]
  where $K\times X$ denotes the Cartesian product of scaled simplicial sets.
\end{definition}

\begin{definition}\label{def:mappingcat}
  Let $\bcat{C}$ be an $\infty$-bicategory. Given an object $y \in \bcat{C}$ we define a scaled simplicialset $\bcat{C}_{\upslash y}$  as follows. The data of an $n$-simplices $\Delta^n \xlongrightarrow{} \bcat{C}_{\upslash y}$ is given by a map $\sigma:\Delta^{n+1} \xlongrightarrow{} \bcat{C}$ such that $\sigma(n+1)=y$. The inclusion $d_{n+1}: \Delta^n \xlongrightarrow{} \Delta^{n+1}$ induces a map 
  \[
    \pi:\bcat{C}_{\upslash y} \xlongrightarrow{} \bcat{C}
  \]
  which we use to declare a triangle in $\bcat{C}_{\upslash y}$ to be thin if and only if its image unde $\pi$ is thin in $\bcat{C}$. It follows from \cite[Prop. 2.33]{GHL_Equivalence} that the fibre of $\pi$ at an object $x \in \bcat{C}$ is a model for $\bcat{C}(x,y)$, the mapping $(\infty,1)$-category.
\end{definition}

\subsubsection{Marked-scaled simplicial sets}
For practical reasons it will be convenient to work with another model for $(\infty,2)$-categories consisting in scaled simplicial sets equipped with a collection of distinguished edges which model the equivalences inside an $(\infty,2)$-category. The results exposed here can be found in Section 3.1 in \cite{AbGroth}.
\begin{definition}\label{def:msSet}
  A marked-scaled simplicial set denoted by $(X,E_X,T_X)$ is given by
  \begin{enumerate}
    \item A simplicial set $X$.
    \item A collection of edges $E_X \subseteq X_1$ which contains \emph{all degenerate} edges. We refer to the elements of this collection as \emph{marked} edges.
    \item A collection of triangles $T_X \subseteq X_2$ which contains $\emph{all degenerate}$ triangles. We refer to the elements of this collection as \emph{thin} triangles.
  \end{enumerate}
  A morphism of marked-scaled simplicial sets $f:(X,E_X,T_X) \xlongrightarrow{} (Y,E_Y,T_Y)$ is given by a map of simplicial sets such that $f(E_X) \subseteq E_Y$ and $f(T_X) \subseteq T_Y$. We denote by $\on{Set}_\Delta^{\mathbf{ms}}$ the category of marked-scaled simplicial sets.
\end{definition}

\begin{definition}\label{def:msanodyne}
  The set of \emph{generating marked-scaled anodyne maps} $\MS$ is the set of maps of marked-scaled simplicial sets consisting of
  \begin{enumerate}
    \myitem{(M1)}\label{ms:innerhorn} The inner horn inclusions 
    \[
    \bigl(\Lambda^n_i,\flat, \{\Delta^{\{i-1,i,i+1\}}\}\bigr)\xlongrightarrow{} \bigl(\Delta^n,\flat, \{\Delta^{\{i-1,i,i+1\}}\}\bigr).
    \quad , \quad n \geq 2 \quad , \quad 0 < i < n ;
    \]
    
    \myitem{(M2)}\label{ms:wonky4} The map 
    \[
    (\Delta^4,\flat, T)\xlongrightarrow{} (\Delta^4,\flat, T\cup \{\Delta^{\{0,3,4\}}, \ \Delta^{\{0,1,4\}}\}),
    \]
    where we define
    \[
    T\overset{\text{def}}{=}\{\Delta^{\{0,2,4\}}, \ \Delta^{\{ 1,2,3\}}, \ \Delta^{\{0,1,3\}}, \ \Delta^{\{1,3,4\}}, \ \Delta^{\{0,1,2\}}\};
    \]
    \myitem{(M3)}\label{ms:2coCartesianmorphs} The set of maps
    \[
    \Bigl(\Lambda^n_0,\{\Delta^{\{0,1\}}\}, \{ \Delta^{\{0,1,n\}} \}\Bigr) \xlongrightarrow{} \Bigl(\Delta^n,\{\Delta^{\{0,1\}}\}, \{ \Delta^{\{0,1,n\}} \}\Bigr) \quad , \quad n \geq 2.
    \]
    
    \myitem{(M4)}\label{mb:2coCartliftsExist} The inclusion of the initial vertex
    \[
    \Bigl(\Delta^{0},\sharp,\sharp \Bigr) \xlongrightarrow{} \Bigl(\Delta^1,\sharp,\sharp \Bigr).
    \]
    \myitem{(MS1)}\label{ms:composeacrossthin} The map
    \[
    \Bigl(\Delta^2,\{\Delta^{\{0,1\}}, \Delta^{\{1,2\}}\},\sharp \Bigr) \xlongrightarrow{} \Bigl(\Delta^2,\sharp,\sharp \Bigr).
    \]
    
    \myitem{(ME)}\label{ms:equivalences} For every Kan complex $K$, the map
    \[
    \Bigl( K,\flat,\sharp  \Bigr) \xlongrightarrow{} \Bigl(K,\sharp, \sharp\Bigr).
    \]
    Which requires that every equivalence is a marked morphism.
  \end{enumerate}
  A map of $\MS$ simplicial sets is said to be \MS-anodyne if it belongs to the weakly saturated closure of \MS.
\end{definition}

\begin{remark}
  Observe that $(X,E_X,T_X)$ has the right lifting property against the class of $\MS$-anodyne morphisms if and only if $X$ is an $\infty$-bicategory, $E_X$ is the collection of equivalences in $X$ and $T_X$ is the collection of thin triangles. Consequently, we might call such marked-scaled simplicial sets $\infty$-bicategories as well.
\end{remark}

\begin{remark}\label{rem:saturation}
  Let $0<i<3$ and let $U_i$ be the collection of all triangles in $\Delta^3$ except the face skipping the vertex $i$. 
  \begin{enumerate}
    \myitem{(MSI)}\label{ms:saturation} Then the map,
    \[
      (\Delta^3,\flat,U_i) \xlongrightarrow{} (\Delta^3,\flat,\sharp)
    \]
  \end{enumerate}
  fits into a pushout diagram
  \[
    \begin{tikzcd}
      (\Delta^4,\flat, T) \arrow[d] \arrow[r] & (\Delta^3,\flat,U_i) \arrow[d] \\
      (\Delta^4,\flat, T\cup \{\Delta^{\{0,3,4\}}, \ \Delta^{\{0,1,4\}}\}) \arrow[r] & (\Delta^3,\flat,\sharp)
    \end{tikzcd}
  \]
  and is consequently $\mathbf{MS}$-anodyne.  
\end{remark}

\begin{definition}\label{def:cofms}
  We say that a morphism of marked-scaled simplicial sets is a cofibration if its underlying map of simplicial sets is a cofibration. The class of cofibration is generated by the following families of maps:
\begin{enumerate}
  \myitem{(C1)}\label{cof:c1} The maps $(\partial \Delta^n,\flat,\flat) \xlongrightarrow{} (\Delta^n,\flat,\flat)$ for $n\geq 0$.
  \myitem{(C2)}\label{cof:c2} The map $(\Delta^1,\flat,\sharp) \xlongrightarrow{} (\Delta^1,\sharp,\sharp)$.
  \myitem{(C3)}\label{cof:c3} The map $(\Delta^2,\flat,\flat) \xlongrightarrow{} (\Delta^2,\flat,\sharp)$.
\end{enumerate}

\end{definition}

\begin{proposition}\label{prop:msCart}
  Let $f:X \xlongrightarrow{}  Y$ be a cofibration in $\on{Set}_\Delta^{\mathbf{ms}}$ and $g: A \xlongrightarrow{}  B$ be an $\MS$-anodyne morphism. Then the pushout-product
  \[
    X \times B \coprod_{X \times A} Y \times A \xlongrightarrow{} Y \times B
  \]
  is again $\MS$-anodyne.
\end{proposition}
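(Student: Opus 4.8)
The plan is to reduce the statement to a finite check on generators and then run the usual filtration (``shuffle'') arguments on products of simplices, keeping careful track of which edges are marked and which triangles are thin.

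\emph{Reduction to generators.} Write $f\hat\times g$ for the pushout-product map $X\times B\coprod_{X\times A}Y\times A\to Y\times B$. Since the Cartesian product of marked-scaled simplicial sets preserves colimits separately in each variable, for a fixed map $g$ the assignment $f\mapsto f\hat\times g$ carries pushout squares to pushout squares, transfinite composites to transfinite composites, and retracts to retracts; the same holds for $g\mapsto f\hat\times g$ with $f$ fixed. As the class of $\MS$-anodyne maps is by definition weakly saturated, it follows that $\{g:f\hat\times g\text{ is }\MS\text{-anodyne}\}$ is weakly saturated for every cofibration $f$, and $\{f:f\hat\times g\text{ is }\MS\text{-anodyne}\}$ is weakly saturated for every $\MS$-anodyne $g$. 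Running these two observations in turn, it suffices to prove the claim when $f$ ranges over the generating cofibrations \ref{cof:c1}--\ref{cof:c3} and $g$ ranges over the generating $\MS$-anodyne maps \ref{ms:innerhorn}--\ref{ms:equivalences}; along the way it is convenient to also allow the derived generator \ref{ms:saturation} of \autoref{rem:saturation}, which is itself $\MS$-anodyne.

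\emph{The inner-type generators.} The substantive cases are $f=$\ref{cof:c1}, i.e.\ $(\partial\Delta^m,\flat,\flat)\to(\Delta^m,\flat,\flat)$, paired with $g\in\{\ref{ms:innerhorn},\ref{ms:wonky4},\ref{ms:2coCartesianmorphs}\}$. For \ref{ms:innerhorn} one filters $\Delta^m\times\Delta^n$ by its nondegenerate simplices, ordered by the standard shuffle ordering, so that each stage attaches a nondegenerate simplex along an \emph{inner} horn; this is Joyal's argument for stability of inner anodynes, and with the scaling $\{\Delta^{\{i-1,i,i+1\}}\}$ carried along the $\Delta^n$-factor one checks that every attaching map is (a pushout of) an instance of \ref{ms:innerhorn}, occasionally invoking \ref{ms:saturation} to promote the relevant triangles to thin at the right moment. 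The generator \ref{ms:wonky4} is treated the same way, using the pushout presentation in \autoref{rem:saturation} to reduce each attachment to $3$- and $4$-dimensional cells of the prescribed scaled shape. The delicate one is the outer-flavoured generator \ref{ms:2coCartesianmorphs}, $(\Lambda^n_0,\{\Delta^{\{0,1\}}\},\{\Delta^{\{0,1,n\}}\})\to(\Delta^n,\{\Delta^{\{0,1\}}\},\{\Delta^{\{0,1,n\}}\})$: here one must choose the shuffle ordering of $\Delta^m\times\Delta^n$ so that throughout the filtration the ``initial edge'' $\Delta^{\{0,1\}}$ in the $\Delta^n$-direction remains marked and the triangle $\Delta^{\{0,1,n\}}$ remains thin, exactly as in the classical proof that the (special) left/marked-anodyne maps are stable under smash product with marked cofibrations; the marking and scaling data is precisely what renders these otherwise-outer horns fillable.

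\emph{The remaining generators and conclusion.} Pairing \ref{cof:c1} with the low-dimensional generators \ref{mb:2coCartliftsExist}, \ref{ms:composeacrossthin}, \ref{ms:equivalences} produces maps of the same elementary type: for \ref{ms:equivalences} one notes that $X\times K\to Y\times K$ with $K$ a Kan complex is an isomorphism on underlying simplicial sets and merely enlarges the collection of marked edges among those projecting isomorphically to $K$, which one recognizes as a transfinite composite of pushouts of \ref{ms:equivalences}. Pairing the marking- and scaling-adding cofibrations \ref{cof:c2}, \ref{cof:c3} with any generating $g$ likewise yields maps that are isomorphisms on underlying simplicial sets, hence only add markings and thin triangles, and are checked directly against \ref{cof:c2}, \ref{cof:c3}, \ref{ms:composeacrossthin}, \ref{ms:equivalences}. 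Assembling the finitely many verified cases and invoking the two reductions of the first paragraph yields the proposition. The main obstacle is the case \ref{cof:c1}$\,\hat\times\,$\ref{ms:2coCartesianmorphs}: unlike inner anodynes, the weakly saturated class generated by \ref{ms:2coCartesianmorphs} is not formally closed under product with boundary inclusions, so one genuinely needs the bespoke shuffle ordering keyed to the marked edge $\Delta^{\{0,1\}}$ and the thin triangle $\Delta^{\{0,1,n\}}$ in order to exhibit each attachment as an elementary $\MS$-anodyne step.
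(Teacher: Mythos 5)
The paper does not actually prove \autoref{prop:msCart}: it is quoted from Section~3.1 of \cite{AbGroth}, so there is no in-text argument to compare against. Your strategy --- reduce by weak saturation to the generating cofibrations \ref{cof:c1}--\ref{cof:c3} against the generating anodynes, observe that most pairings are isomorphisms or only add decorations, and run a shuffle filtration on $\Delta^m\times\Delta^n$ for the substantive cases --- is exactly the standard argument and the one carried out in the cited source (and in Lurie's analogous statement for scaled simplicial sets). The reduction paragraph is correct as stated, and your computation that pairing \ref{cof:c2}, \ref{cof:c3}, \ref{ms:composeacrossthin} or \ref{ms:equivalences} with the other generator yields an isomorphism on underlying simplicial sets (hence at worst a decoration-adding map) is right; the only slip there is listing the cofibrations \ref{cof:c2}, \ref{cof:c3} among the maps one ``checks against'' --- any leftover decorations must of course be realized by pushouts of \emph{anodyne} generators such as \ref{ms:composeacrossthin}, \ref{ms:equivalences} or \ref{ms:saturation}, not of cofibrations, though in these cases there is in fact nothing left to add.

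The one place where the sketch papers over the actual content is the case \ref{cof:c1} against \ref{ms:innerhorn} (and similarly \ref{ms:2coCartesianmorphs} and \ref{mb:2coCartliftsExist}). For the Cartesian product the thin triangles of $(\Delta^m,\flat,\flat)\times(\Delta^n,\flat,\{\Delta^{\{i-1,i,i+1\}}\})$ are only those whose projection to $\Delta^m$ is degenerate and whose projection to $\Delta^n$ is degenerate or the distinguished triangle; the pivot triangle $\Delta^{\{j-1,j,j+1\}}$ of a shuffle attached along $\Lambda^{m+n}_j$ sits at a corner of the staircase and is typically nondegenerate in \emph{both} factors, so it is not thin and the attachment is not literally a pushout of \ref{ms:innerhorn}. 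Saying one ``occasionally invokes \ref{ms:saturation} to promote the relevant triangles to thin'' names the right tool but understates the difficulty: \ref{ms:saturation} and \ref{ms:wonky4} only apply once one has produced auxiliary $3$- and $4$-simplices with enough already-thin faces, and arranging this (or choosing a nonstandard filtration keyed to the scaling) is precisely the bulk of the proof in \cite{AbGroth} and of the analogous arguments in \cite{LurieGoodwillie} and \cite{GHL_Gray}. So the proposal is a correct outline with the right reductions and the right list of delicate cases, but the combinatorial core at this step is asserted rather than carried out.
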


\begin{corollary}
  Given a marked-biscaled simplicial set $X$ which has the right lifting property against the class of $\MS$-anodyne morphisms it follows that $\on{Fun}^{\mathbf{ms}}(A,X)$ (compare to \autoref{def:funsc}) has the right lifting property against the class $\MS$-anodyne morphisms for every $A \in \on{Set}_\Delta^{\mathbf{ms}}$.
\end{corollary}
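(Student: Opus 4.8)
The plan is to deduce this directly from the pushout--product property recorded in \autoref{prop:msCart}, following the standard argument that internal homs into fibrant objects are fibrant in a cartesian model structure.

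First I would unwind the adjunction defining the marked-scaled internal hom $\on{Fun}^{\mathbf{ms}}(A,X)$ (the evident analogue of \autoref{def:funsc}): for every $K \in \on{Set}_\Delta^{\mathbf{ms}}$ there is a natural bijection
\[
\Hom_{\on{Set}_\Delta^{\mathbf{ms}}}\bigl(K,\on{Fun}^{\mathbf{ms}}(A,X)\bigr) \cong \Hom_{\on{Set}_\Delta^{\mathbf{ms}}}(K\times A, X),
\]
where I am implicitly using that $\on{Set}_\Delta^{\mathbf{ms}}$ is cartesian closed so that $\on{Fun}^{\mathbf{ms}}(A,X)$ exists. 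Consequently, for any map $g\colon S \to T$ of marked-scaled simplicial sets, supplying a diagonal filler for a square
\[
\begin{tikzcd}
S \arrow[r] \arrow[d,"g"'] & \on{Fun}^{\mathbf{ms}}(A,X) \\
T \arrow[ur,dashed] &
\end{tikzcd}
\]
is equivalent, under the adjunction, to supplying a diagonal filler for the adjoint square
\[
\begin{tikzcd}
S\times A \arrow[r] \arrow[d,"g\times\id_A"'] & X \\
T\times A \arrow[ur,dashed] &
\end{tikzcd}
\]

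Next I would let $g$ be an arbitrary $\mathbf{MS}$-anodyne morphism and check that $g\times\id_A$ is again $\mathbf{MS}$-anodyne. For this I apply \autoref{prop:msCart} to the cofibration $\emptyset \hookrightarrow A$ — it is a cofibration since its underlying map of simplicial sets is a monomorphism, cf.\ \autoref{def:cofms} — together with the $\mathbf{MS}$-anodyne morphism $g$. The resulting pushout--product map
\[
 \emptyset\times T \coprod_{\emptyset\times S} A\times S \xlongrightarrow{} A\times T
\]
is, after identifying $\emptyset\times S = \emptyset\times T = \emptyset$, exactly $\id_A\times g\colon A\times S \to A\times T$, which is therefore $\mathbf{MS}$-anodyne; since the cartesian product of marked-scaled simplicial sets is symmetric, this is the same morphism as $g\times\id_A$.

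Finally, since $X$ has the right lifting property against all $\mathbf{MS}$-anodyne morphisms by hypothesis, the adjoint square above admits a diagonal filler, and transporting it back across the adjunction yields a filler for the original square. As $g$ ranged over all $\mathbf{MS}$-anodyne morphisms, this is exactly the statement that $\on{Fun}^{\mathbf{ms}}(A,X)$ has the right lifting property against the class of $\mathbf{MS}$-anodyne morphisms. I do not expect a genuine obstacle: the only points needing attention are that the marked-scaled internal hom is actually available (cartesian closedness of $\on{Set}_\Delta^{\mathbf{ms}}$) and that \autoref{prop:msCart} is invoked with the correct cofibration $\emptyset \hookrightarrow A$; the rest is the formal adjunction manipulation.
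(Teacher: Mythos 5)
Your argument is correct and is exactly the intended deduction: the paper states this corollary without proof as an immediate consequence of \autoref{prop:msCart}, and your reduction via the exponential adjunction to the pushout–product of $\emptyset \hookrightarrow A$ with an $\mathbf{MS}$-anodyne map is the standard way to make that implication explicit. No gaps.
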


\begin{definition}
  A morphism of marked-scaled simplicial sets $i:A \xlongrightarrow{}  B$ is said to be a \emph{weak equivalence} if for every $\infty$-bicategory $X$ the induced map
  \[
    i^*: \on{Fun}^{\mathbf{ms}}(B,X) \xlongrightarrow{\simeq} \on{Fun}^{\mathbf{ms}}(A,X)
  \]
  is a bicategorical equivalence.
\end{definition}

\begin{remark}\label{rem:mstensor}
  Given a marked simplicial set $K \in \on{Set}_\Delta^+$ and $X \in \on{Set}_\Delta^{\mathbf{ms}}$ we define the tensor $K \times X:=I(K) \times X$ where $I(K)=(K,E_K,\sharp)$. Similarly, we define the cotensor $X^{K}:= \on{Fun}^{\mathbf{ms}}(I(K),X)$.
\end{remark}

\begin{theorem}\label{thm:markedscaledmodel}
  There exists a left proper combinatorial simplicial model category structure on $\on{Set}_\Delta^{\mathbf{ms}}$, which is characterized uniquely by the following properties:
  \begin{itemize}
    \item[C)] A morphism $f:X \xlongrightarrow{}  Y$ in  $\on{Set}_\Delta^{\mathbf{ms}}$ us a cofibration if and only if $f$ induces a monomorphism on the underlying simplicial set.
    \item[F)] An object $X$ in $\on{Set}_\Delta^{\mathbf{ms}}$ is fibrant if and only if it was the right lifting property against the class of marked-anodyne morphisms.
  \end{itemize}
  Moreover the tensor and cotensor in \autoref{rem:mstensor} equipps $\on{Set}_\Delta^{\mathbf{ms}}$ with the structure of a $\on{Set}_\Delta^+$-enriched model category.
\end{theorem}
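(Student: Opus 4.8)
Proof proposal. The plan is to obtain the model structure from the recognition theorem of Jeff Smith for combinatorial model categories, taking the three maps (C1), (C2), (C3) of \autoref{def:cofms} as generating cofibrations and as weak equivalences the class $W$ of maps $i\colon A\to B$ such that $i^{*}\colon\on{Fun}^{\mathbf{ms}}(B,X)\to\on{Fun}^{\mathbf{ms}}(A,X)$ is a bicategorical equivalence for every $\infty$-bicategory $X$. Concretely one must verify: (a) the weakly saturated closure of $\{(C1),(C2),(C3)\}$ is the class of all monomorphisms; (b) $W$ has two-out-of-three, is closed under retracts, and is an accessible, accessibly embedded subcategory of the arrow category; (c) every map with the right lifting property against all monomorphisms lies in $W$; and (d) there is a \emph{set} $J$ of monomorphisms in $W$ whose weakly saturated closure is exactly the class of monomorphisms in $W$ (so that this class is closed under pushout and transfinite composition). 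The overall shape follows Lurie's construction of the (co)Cartesian model structure on marked simplicial sets and the scaled case recalled in \autoref{thm:scaledmodel}; see also \cite{AbGroth}.

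Item (a) is a skeletal induction on the underlying simplicial set, attaching non-degenerate simplices and then separately analysing the admissible markings and scalings on $\Delta^{0},\Delta^{1},\Delta^{2}$, where precisely (C1)--(C3) are needed; in particular $\emptyset\to X$ is a monomorphism, so every object is cofibrant. Before (b)--(d) I would record the basic computation that $\on{Fun}^{\mathbf{ms}}(-,X)$ carries every $\MS$-anodyne map (\autoref{def:msanodyne}) to a trivial fibration of scaled simplicial sets when $X$ is an $\infty$-bicategory: on the generators of $\MS$ this is \autoref{prop:msCart}, and one passes to the saturation using that $\on{Fun}^{\mathbf{ms}}(-,X)$ converts colimits into limits while trivial fibrations are stable under pullback, transfinite towers and retracts. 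In particular every $\MS$-anodyne map is a monomorphism lying in $W$. A variant of the same argument gives (c): a map with the right lifting property against all monomorphisms is a strong deformation retract, and $\on{Fun}^{\mathbf{ms}}(-,X)$ carries strong deformation retracts to bicategorical equivalences; two-out-of-three and closure under retracts for $W$ are immediate.

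The substantive content is in (b) -- specifically accessibility of $W$ -- and in (d), and I expect accessibility of $W$ to be the main obstacle. Running the small object argument with the set $\MS$ yields an accessible fibrant-replacement endofunctor $R$ valued in $\infty$-bicategories together with a natural $\MS$-anodyne transformation $\on{id}\Rightarrow R$, so that $i\in W$ iff $R(i)\in W$. For a map between $\infty$-bicategories, membership in $W$ is equivalent to being a bicategorical equivalence: one implication is the stability of bicategorical equivalences under $\on{Fun}^{\mathbf{ms}}(-,X)$ recorded above, the other a Yoneda-type argument producing enough fibrant test objects by means of \autoref{prop:msCart}. Hence $W=R^{-1}(\{\text{bicategorical equivalences}\})$ is the preimage of an accessible class of maps under an accessible functor -- the bicategorical equivalences being accessible because $\scsSet$ is combinatorial by \autoref{thm:scaledmodel} -- and is therefore accessible. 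For (d) one takes $J$ to be the pushout--products of the generators of $\MS$ with the generating cofibrations (C1)--(C3); by \autoref{prop:msCart} each such map is $\MS$-anodyne, hence monic and in $W$, so its weakly saturated closure is contained in the monomorphisms of $W$, and the reverse inclusion is the usual argument that a $J$-injective weak equivalence has the right lifting property against all monomorphisms. Smith's theorem then produces a combinatorial model structure with cofibrations the monomorphisms (property C), which is left proper since all objects are cofibrant.

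It remains to identify the fibrant objects and install the enrichment. As $\MS$-anodyne maps are trivial cofibrations, every $\infty$-bicategory is fibrant; conversely, if $X$ has the right lifting property against $\MS$ and $j\colon A\to B$ is a trivial cofibration, then $j^{*}\colon\on{Fun}^{\mathbf{ms}}(B,X)\to\on{Fun}^{\mathbf{ms}}(A,X)$ is a bicategorical equivalence (because $j\in W$) and, since the trivial cofibrations are generated by $J$ and $\on{Fun}^{\mathbf{ms}}(-,X)$ sends $\MS$-anodyne maps to trivial fibrations, $j^{*}$ is itself a trivial fibration, hence surjective on vertices; thus every map $A\to X$ extends along $j$, so $X$ is fibrant (property F). For the $\on{Set}_\Delta^{+}$-enriched structure of \autoref{rem:mstensor}, the pushout--product of two cofibrations is evidently a cofibration; when one of them is a generating trivial cofibration $g$ of marked simplicial sets one checks that $I(g)$ is $\MS$-anodyne and applies \autoref{prop:msCart}, and the case of an arbitrary trivial cofibration on the marked-scaled side reduces to this one, the trivial cofibrations being generated by $J$. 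Finally, the uniqueness clause is the general principle that a model structure in which every object is cofibrant is determined by its class of cofibrations together with its class of fibrant objects.
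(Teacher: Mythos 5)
The paper does not actually prove this theorem: the subsection opens by deferring all of these results to Section~3.1 of \cite{AbGroth}, so there is no internal proof to compare against. Your overall architecture — Smith's recognition theorem with (C1)--(C3) as generating cofibrations, weak equivalences detected by $\on{Fun}^{\mathbf{ms}}(-,X)$ into $\infty$-bicategories, accessibility of $W$ via an accessible $\MS$-fibrant replacement, and \autoref{prop:msCart} as the combinatorial engine — is the standard and correct template for such constructions (it is essentially Lurie's HTT~A.2.6.13 scheme, which is also how the cited reference proceeds).

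The genuine gap is in your step (d) and everything downstream of it. You assert that the weakly saturated closure of the set $J$ of pushout--products equals the class of all monomorphisms in $W$, justified by ``the usual argument that a $J$-injective weak equivalence has the right lifting property against all monomorphisms.'' That last claim is not a routine retract argument: it is precisely the hard technical core of these constructions, and for maps with non-fibrant target it is not available before the model structure exists. Since every element of $J$ is $\MS$-anodyne, your claim would force the trivial cofibrations to coincide with the $\MS$-anodyne maps, and hence the fibrant objects to be exactly the $\MS$-injective objects \emph{as a formal consequence}; but in the closely analogous scaled setting the corresponding identification is \cite[Theorem~5.1]{GHL_Equivalence}, which the paper cites separately in \autoref{thm:scaledmodel} exactly because it does not fall out of the existence of the model structure. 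Your derivation of property~F inherits this circularity. The repair is to replace (d) by the actual hypotheses of Smith's theorem / HTT~A.2.6.13: verify that $W$ is perfect (stability under filtered colimits, using that the $\MS$-generators have finite (co)domains so the fibrant replacement is $\omega$-continuous) and that $W$ is stable under cobase change along cofibrations (whence left properness); the existence of \emph{some} generating set of trivial cofibrations is then a conclusion, not an input. The identification of the fibrant objects with the $\MS$-injective objects then still requires a separate argument — showing that an $\MS$-fibration with fibrant target and in $W$ is a trivial fibration — which your proposal does not supply.
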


\begin{theorem}\label{thm:scms}
  The functor $L: \on{Set}_\Delta^{\mathbf{sc}} \xlongrightarrow{}  \on{Set}_\Delta^{\mathbf{ms}}$ sending a scaled simplicial set $(X,T_X)$ to the marked-scaled simplicial set $(X,\flat,T_X)$ is a left Quillen equivalence.
\end{theorem}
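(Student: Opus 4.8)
The plan is to verify that the functor $L\colon \scsSet \to \mssSet$, $(X,T_X)\mapsto (X,\flat,T_X)$, is left Quillen and then that it is a Quillen equivalence. First I would identify the right adjoint $R$: given a marked-scaled simplicial set $(Y,E_Y,T_Y)$, we set $R(Y,E_Y,T_Y) = (Y',T_Y\cap Y'_2)$ where $Y'\subseteq Y$ is the largest simplicial subset all of whose edges lie in $E_Y$ — in practice, since $E_Y$ always contains the degenerate edges but need not contain all edges, one should instead define $R$ on the level of the adjunction by $\Hom_{\scsSet}((X,T_X), R(Y)) \cong \Hom_{\mssSet}(L(X,T_X),(Y,E_Y,T_Y))$; because $L$ marks nothing beyond the degenerate edges, a map $L(X,T_X)\to (Y,E_Y,T_Y)$ is exactly a map $X\to Y$ carrying $T_X$ into $T_Y$ and carrying every edge of $X$ into $E_Y$. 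So $R(Y,E_Y,T_Y)$ is the scaled simplicial set whose $n$-simplices are those $\sigma\colon \Delta^n\to Y$ with every edge of $\sigma$ marked, with thin triangles inherited from $T_Y$. This $R$ is well-defined and visibly right adjoint to $L$.

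Next I would show $L$ is left Quillen, i.e. $L$ preserves cofibrations and trivial cofibrations. Preservation of cofibrations is immediate since both model structures (\autoref{thm:scaledmodel}, \autoref{thm:markedscaledmodel}) have the monomorphisms of underlying simplicial sets as cofibrations, and $L$ is the identity on underlying simplicial sets. For trivial cofibrations it suffices (by the small object argument) to check that $L$ sends the generating scaled anodyne maps $\sS$ of \autoref{def:scanodyne} to $\mathbf{MS}$-anodyne maps. Comparing \autoref{def:scanodyne} with \autoref{def:msanodyne}: \ref{item:anodyne-inner} maps to \ref{ms:innerhorn} (the extra $\flat$ markings match, as $L$ introduces no non-degenerate marked edges); \ref{i:saturation} maps to \ref{ms:wonky4}; and for \ref{item:anodyne_outer}, one observes that $L$ of the pushout $\Lambda^n_0\coprod_{\Delta^{\{0,1\}}}\Delta^0 \to \Delta^n\coprod_{\Delta^{\{0,1\}}}\Delta^0$ with the indicated thin triangle is $\mathbf{MS}$-anodyne: it is obtained from the \ref{ms:2coCartesianmorphs} map (which marks $\Delta^{\{0,1\}}$) by collapsing $\Delta^{\{0,1\}}$ to a point, and collapsing a marked edge is handled via \ref{mb:2coCartliftsExist} together with \ref{ms:composeacrossthin}. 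This last verification — realizing the outer-horn scaled anodyne map as a composite/retract of $\mathbf{MS}$-anodyne maps involving the edge-collapse — is the step I expect to be the main obstacle, since it requires a small combinatorial argument rather than a direct matching of generators.

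Finally, to upgrade to a Quillen \emph{equivalence}, I would use that both sides present $(\infty,2)$-categories. The cleanest route is: the derived unit and counit are equivalences. For a fibrant $(Y,E_Y,T_Y)$ — an $\infty$-bicategory with $E_Y$ its equivalences — the counit $L R(Y)\to Y$ is the inclusion of the wide sub-scaled-simplicial-set on all simplices with every edge an equivalence; but in an $\infty$-bicategory every edge is an equivalence precisely when... no — rather, $RY$ has underlying simplicial set the maximal sub-simplicial set with all edges equivalences, which is not all of $Y$. Instead I would argue via fibrant replacement: for $(X,T_X)\in\scsSet$ with fibrant replacement $X\to \widehat{X}$ an $\infty$-bicategory, $L(X)\to L(\widehat X)$ is a trivial cofibration (just shown $L$ is left Quillen), and $L(\widehat X) = (\widehat X,\flat,T_{\widehat X})$ admits an $\mathbf{MS}$-anodyne map to $(\widehat X, E_{\widehat X}, T_{\widehat X})$ by applying \ref{ms:equivalences} (every equivalence becomes marked) — indeed the maximal marking of equivalences is exactly the \ref{ms:equivalences} generator. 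Thus $L$ followed by fibrant replacement on both sides agrees, up to the anodyne comparison, with the known equivalence of homotopy theories; combined with the fact that the identity on underlying simplicial sets and thin triangles induces the evident comparison of $\mathfrak{C}^{\mathbf{sc}}$ and its marked-scaled analogue, one concludes $L$ is a Quillen equivalence. In writing this up I would lean on \cite{AbGroth} Section 3.1, where the marked-scaled model structure and its comparison with the scaled one are established, citing the relevant statement there rather than reproving the equivalence from scratch.
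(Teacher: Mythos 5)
The paper offers no proof of this statement — it is imported wholesale from \cite{AbGroth}, Section 3.1 — so your proposal can only be judged on its own terms, and it contains a genuine error at the very first step: the right adjoint is misidentified. Because $L$ equips $X$ with the \emph{minimal} marking $\flat$, a map $L(X,T_X)\to (Y,E_Y,T_Y)$ imposes no condition whatsoever on where the non-degenerate edges of $X$ go; you correctly note that ``$L$ marks nothing beyond the degenerate edges'' and then draw the opposite conclusion, namely that every edge of $X$ must land in $E_Y$. That would describe the adjoint of the \emph{maximal} marking functor $(X,T_X)\mapsto (X,\sharp,T_X)$. The right adjoint of $L$ is simply the forgetful functor $U(Y,E_Y,T_Y)=(Y,T_Y)$ — which is exactly how it is used later in this paper, where $U$ appears as ``the right adjoint to the left Quillen equivalence given in \autoref{thm:scms}.'' This error is what derails your final paragraph: with the correct $U$, the counit at a fibrant object is precisely $(\bcat{C},\flat,T_{\bcat{C}})\to(\bcat{C},E_{\bcat{C}},T_{\bcat{C}})$, i.e.\ the ``mark the equivalences'' map you handle via \ref{ms:equivalences}, and no detour through the maximal sub-simplicial set with all edges marked is needed.

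There is a second gap in the left-Quillen step. The trivial cofibrations of the model structure of \autoref{thm:scaledmodel} are \emph{not} the weakly saturated closure of the generating scaled anodyne maps $\sS$ of \autoref{def:scanodyne}: the weak equivalences are defined via $\mathfrak{C}^{\mathbf{sc}}$, and $\sS$ only characterizes the fibrant objects (already $(K,\flat)\to(K,\sharp)$ for $K$ Kan is a trivial cofibration that is not scaled anodyne — this is precisely why \ref{ms:equivalences} had to be added as a generator on the marked-scaled side). So verifying that $L$ carries the generators of $\sS$ to $\mathbf{MS}$-anodyne maps, however carefully you treat the outer-horn case, does not show that $L$ preserves all trivial cofibrations. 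The efficient route goes through the correct adjoint: $L$ preserves cofibrations because both classes are the monomorphisms of underlying simplicial sets, and since the fibrant objects of $\mssSet$ are exactly the $\infty$-bicategories with their equivalences marked and canonical scaling, one has a natural identification of the underlying simplicial sets of $\on{Fun}^{\mathbf{ms}}(LA,\bcat{C})$ and $\on{Fun}^{\mathbf{sc}}(A,(\bcat{C},T_{\bcat{C}}))$ for every such $\bcat{C}$; by the mapping-space characterization of weak equivalences on both sides, $L$ therefore preserves (indeed reflects) weak equivalences, which together with the identification of the derived unit and counit yields the Quillen equivalence.
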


\subsubsection{The Gray tensor product of marked-scaled simplicial sets}
In this section we collect the main definitions of the Gray tensor product of $(\infty,2)$-categories as developed in \cite{GHL_Gray} in addition to its universal property in terms of oplax normalised functors. We will adapt the original definitions in \cite{GHL_Gray} to the setting of marked-scaled simplicial sets and derive the expected properties of the Gray tensor product in the marked-scaled world. We would to point out that our definitions are essentially the same as the Gray tensor product of 2-complicial sets given in \cite{Verity}.

\begin{definition}\label{def:gray}
  Let $X,Y \in \on{Set}_\Delta^{\mathbf{ms}}$ we define the \emph{Gray tensor product} $X \tensor Y \in \on{Set}_\Delta^{\mathbf{ms}}$ as follows:
  \begin{enumerate}
   \item The underlying simplicial set of $X \tensor Y$ is given by the usual Cartesian product of the underlying simplicial sets $X \times Y$.
   \item A 2-simplex $\sigma: \Delta^2 \xlongrightarrow{}  X \tensor Y$ is declared to be thin in the Gray tensor product if and only if the following holds:
   \begin{itemize}
      \item[i)] The simplex $\sigma$ is thin in \emph{both} $X$ and $Y$.
      \item[ii)] The the restriction of $\sigma$ to $\Delta^{\{0,1\}}$ is marked in $Y$ \emph{or} the restriction of $\sigma$ to $\Delta^{\{1,2\}}$ is marked in $X$.
    \end{itemize} 
   \item An edge in $\Delta^1:e \xlongrightarrow{}  X \tensor Y$ is declared to be marked if it is marked in \emph{both} $X$ and $Y$. 
  \end{enumerate}
\end{definition}

\begin{remark}\label{rem:failurecolimit}
  The Gray tensor product given in \autoref{def:gray} is associative but it \emph{fails} to preserve colimits in each variable. To illustrate this, we look at the pushout diagram of marked-scaled simplicial 
  \[
    \begin{tikzcd}
      (\Delta^1,\flat, \sharp) \arrow[d] \arrow[r,"d_0"] & (\Delta^2,\flat,\sharp) \arrow[d] \\
      (\Delta^1,\sharp,\sharp) \arrow[r] & (\Delta^2,\Delta^{\{1,2\}},\sharp)
    \end{tikzcd}
  \]
 and consider the Gray tensor product $(\Delta^2,\Delta^{\{1,2\}},\sharp) \tensor (\Delta^1,\flat,\sharp)$. Then it follows that we have a map
  \[
    \iota: (\Delta^1,\sharp, \sharp) \tensor  (\Delta^1,\flat, \sharp)  \coprod_{ (\Delta^1,\flat, \sharp) \tensor (\Delta^1,\flat,\sharp)} (\Delta^2,\flat,\sharp) \tensor (\Delta^1,\flat,\sharp) \xlongrightarrow{} (\Delta^2,\Delta^{\{1,2\}},\sharp) \tensor (\Delta^1,\flat,\sharp)
  \]
  which in is an isomorphism of the underlying marked simplicial sets but where the scalings differ. Fortunately, this failure is just cosmetic as seen in the next proposition.
\end{remark}

\begin{proposition}\label{prop:bothgraycoincide}
  Let $\overline{\tensor}:\on{Set}_\Delta^{\mathbf{ms}} \times \on{Set}_\Delta^{\mathbf{ms}} \xlongrightarrow{} \on{Set}_\Delta^{\mathbf{ms}}$ be the colimit preserving functor whose values restricted the marked simplicial sets $(\Delta^n,\flat,\flat), (\Delta^1,\sharp,\sharp)$ and $(\Delta^2,\flat,\sharp)$ coincides with the Gray tensor product $\tensor$ given in \autoref{def:gray}. Then there exists a natural transformation $\xi:( \mathblank \overline{\tensor} \mathblank) \xRightarrow{}( \mathblank \tensor \mathblank )$ which is a pointwise trivial cofibration of marked-scaled simplicial sets.
\end{proposition}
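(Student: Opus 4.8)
The plan is to construct the comparison transformation $\xi$ by working cell-by-cell in the first (or both) variable(s) and checking that on each generating cofibration the induced map is a trivial cofibration, then to conclude by standard cancellation and two-out-of-three arguments. First I would fix notation: write $\btensor$ for the colimit-preserving functor pinned down by its values on the cells $(\Delta^n,\flat,\flat)$, $(\Delta^1,\sharp,\sharp)$, $(\Delta^2,\flat,\sharp)$, and recall from \autoref{def:cofms} that these cells (together with $(\partial\Delta^n,\flat,\flat)$, and the maps \ref{cof:c1}--\ref{cof:c3}) generate all cofibrations. Since both $\btensor$ and $\tensor$ restrict to the \emph{same} functor on the cells $(\Delta^n,\flat,\flat)$, $(\Delta^1,\sharp,\sharp)$, $(\Delta^2,\flat,\sharp)$ — this is the defining property of $\btensor$, and for $\tensor$ it is immediate from \autoref{def:gray} since on such inputs there are no extra scalings to worry about — we get a canonical comparison natural transformation $\xi\colon(\mathblank\btensor\mathblank)\xRightarrow{}(\mathblank\tensor\mathblank)$: indeed $\btensor$ is the left Kan extension along the inclusion of the full subcategory on these generators, so $\xi$ is the counit/canonical map out of the Kan extension, which is automatically natural and which on the underlying simplicial sets is always the identity (both functors have underlying Cartesian product $X\times Y$). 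Thus $\xi_{X,Y}$ is always a monomorphism on underlying simplicial sets, hence a cofibration; what remains is to show it is a weak equivalence pointwise, i.e. that the scalings (and markings) only differ by a scaled-anodyne-type move.

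Next I would reduce to the case of generating cofibrations in each variable. By a skeletal (cellular) induction on $X$ and $Y$ separately, using that both functors preserve colimits and that pushout-products of cofibrations with (eventually) trivial cofibrations are trivial cofibrations, it suffices to verify that $\xi_{X,Y}$ is a trivial cofibration when $X$ and $Y$ each range over the generating cells. When neither cell involves an extra scaling on a $\Delta^2$ beyond the $\flat$/$\sharp$ pattern — that is, when both are among $(\Delta^n,\flat,\flat)$, $(\Delta^1,\sharp,\sharp)$, $(\Delta^2,\flat,\sharp)$ — the map $\xi$ is an isomorphism, by construction. The only genuinely new phenomenon is the one displayed in \autoref{rem:failurecolimit}: when we feed in a cell like $(\Delta^2,\Delta^{\{1,2\}},\sharp)$ (obtained as a pushout of smaller cells), the colimit-preserving $\btensor$ under-scales relative to the explicit $\tensor$, and $\xi$ is the inclusion that adds precisely those 2-simplices of $X\times Y$ that become thin in $\tensor$ because clause ii) of \autoref{def:gray} is triggered by a marking that "appeared" after the pushout. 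So the key local claim is: for each such pair of cells, the inclusion of the $\btensor$-scaling into the $\tensor$-scaling of the \emph{same} underlying simplicial set $\Delta^p\times\Delta^q$ is a finite composite of pushouts of the generating scaled-anodyne maps \ref{ms:saturation} (equivalently \ref{ms:wonky4}), i.e. \MS-anodyne, hence a trivial cofibration.

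The main obstacle, and the real content, will be this last local claim: exhibiting, for the relevant low-dimensional products $\Delta^p\times\Delta^q$, an explicit filtration of the set of "new" thin triangles by scaled-anodyne pushouts. Concretely one has to order the triangles of $X\times Y$ that are thin for $\tensor$ but not yet for $\btensor$ so that each is added either by an instance of \ref{ms:saturation} (the $(\Delta^3,\flat,U_i)\to(\Delta^3,\flat,\sharp)$ move, which as \autoref{rem:saturation} records is a pushout of \ref{ms:wonky4} and hence \MS-anodyne) or, in the degenerate edge cases, is already present. This is where a careful combinatorial analysis of the product of two simplices is needed, and it is essentially the same bookkeeping that underlies the proof that the two Gray tensor products of 2-complicial sets in \cite{Verity} agree; I would set it up via the nondegenerate simplices of $\Delta^p\times\Delta^q$ (shuffles), stratify by a suitable weight function recording how many "new" thin faces a simplex carries, and push out one \ref{ms:saturation}-cell at a time up the stratification. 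Once this is done for the generating cells, the general case follows formally: $\xi_{X,Y}$ is a retract/transfinite composite of pushouts of the local maps, hence a trivial cofibration, so $\xi$ is a pointwise trivial cofibration as claimed.
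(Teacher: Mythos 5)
Your overall shape is right: $\xi$ is the canonical map out of the left Kan extension, it is the identity on underlying simplicial sets (and on markings), and the only work is to add the extra thin triangles of $X\tensor Y$ by $\mathbf{MS}$-anodyne pushouts. But there are two problems. First, the proposed reduction \enquote{by a skeletal induction \dots\ using that both functors preserve colimits} fails: $\tensor$ does \emph{not} preserve colimits --- that is exactly the phenomenon recorded in \autoref{rem:failurecolimit} and the entire reason this proposition is needed. Since $X\tensor Y$ is not the colimit of the $X_i\tensor Y_j$ over cells, $\xi_{X,Y}$ does not decompose cellwise; and if such a reduction were valid the proof would end immediately, because $\xi$ is an isomorphism on pairs of generating cells, contradicting \autoref{rem:failurecolimit}. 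The paper instead argues directly for arbitrary $A$ and $B$: a triangle $\theta=(\theta_A,\theta_B)$ is thin in $A\tensor B$ but not in $A\btensor B$ exactly when both components are thin and the marked-edge clause of \autoref{def:gray} is triggered by a \emph{nondegenerate} component, and each such $\theta$ is scaled individually.

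Second, you defer the \enquote{main obstacle} (the explicit anodyne filtration) to an unspecified shuffle/stratification argument, but this is where all the content lies, and it is in fact short. For $\theta$ with $\theta_A(1\to 2)\in E_A$, take the $3$-simplex $\rho=(s_2\theta_A,s_1\theta_B)$, i.e.
\[
(a_0,b_0)\xlongrightarrow{}(a_1,b_1)\xlongrightarrow{}(a_2,b_1)\xlongrightarrow{}(a_2,b_2);
\]
all of its $2$-faces are already thin in $A\btensor B$ except $d_2(\rho)=\theta$ and $d_3(\rho)$. The face $d_3(\rho)$ is in turn the unique non-pre-scaled face (namely $d_1$) of the $3$-simplex $(a_0,b_0)\to(a_1,b_0)\to(a_1,b_1)\to(a_2,b_1)$, so two pushouts along the saturation move \ref{ms:saturation} of \autoref{rem:saturation} (itself a pushout of \ref{ms:wonky4}) scale first $d_3(\rho)$ and then $\theta$; the case $\theta_B(0\to 1)\in E_B$ is symmetric. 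Without this (or an equivalent) explicit construction your argument is incomplete.
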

\begin{proof}
  Given marked-scaled simplicial sets $(A,E_A,T_A)$ and $(B,E_B,T_B)$ and a 2-simplex $\sigma:(\sigma_A,\sigma_B):\Delta^2 \xlongrightarrow{} A \overline{\tensor} B$ it follows that $\sigma$ is thin precisely when:
  \begin{itemize}
    \item Both 2-simplices $\sigma_A$ and $\sigma_B$ are thin in $A$ and $B$ respectively.
    \item The edge $\sigma_A( 1 \to 2)$ belongs to $E_A$ and the triangle $\sigma_A$ is degenerate \emph{or} the edge $\sigma_B(0 \to 1)$ belongs to $E_B$ and triangle $\sigma_B$ is degenerate. 
  \end{itemize}
 Since both $A \overline{\tensor} B$ and $A \tensor B$ only differ on the scaling, we see that we have a canonical map $A \overline{\tensor} B \xlongrightarrow{} A \tensor B$ yielding the desired natural transformation.

 We will show that we can scale the remaining 2-simplices in $A \btensor B$ using pushouts along anodyne morphisms. Let $(\theta_A,\theta_B): \Delta^2_\sharp \xlongrightarrow{} A \tensor B$ be a thin 2-simplex and suppose that $\theta_A(1 \xlongrightarrow{} 2)$ belongs to $E_A$. We look at the map of marked-scaled simplicial sets $\rho=(s_2\circ \theta_A,s_1\circ \theta_B)$ which we represent as
 \[
   (a_0,b_0) \xlongrightarrow{} (a_1,b_1) \xlongrightarrow{} (a_2,b_1) \xlongrightarrow{} (a_2,b_2)
 \]
 We observe that the only faces of $\rho$ which are not already scaled in $A \btensor B$ are precisely $d_3(\rho)$ and $d_2(\rho)=\theta$. If we show that we can scale $d_3(\rho)$ the claim will follow after taking a pushout along a morphism of type \ref{ms:wonky4} in \autoref{def:msanodyne}. Finally, we consider another 3-simplex $\varphi$
 \[
   (a_0,b_0) \xlongrightarrow{} (a_1,b_0) \xlongrightarrow{} (a_1,b_1) \xlongrightarrow{} (a_2,b_1)
 \]
and observe that every face in $\varphi$ is already scaled in $A \btensor B$ except $d_1(\varphi)=d_3(\rho)$ and thus the claim holds by taking a pushout along a morphism of type \ref{ms:wonky4} in \autoref{def:msanodyne}. 

The remaining case where $\theta_B(0 \xlongrightarrow{} 1)$ belongs to $E_B$ is totally analogous and left as an exercise.
 
\end{proof}

\begin{lemma}\label{lem:0saturations}
  The following morphisms are $\mathbf{MS}$-anodyne:
  \begin{itemize}
    \item[i)] The map $(\Delta^3,\Delta^{\{0,1\}},U_0) \xlongrightarrow{}  (\Delta^3,\Delta^{\{0,1\}},\sharp)$ where $U_0$ contains all triangles in $\Delta^3$ except the 2-simplex $\Delta^{\{1,2,3\}}$.
    \item[ii)] The map $(\Delta^2, \{\Delta^{\{0,1\}},\Delta^{\{0,2\}}\},\sharp ) \xlongrightarrow{}  (\Delta^2,\sharp,\sharp)$.
  \end{itemize}
\end{lemma}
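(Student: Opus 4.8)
The plan is to prove each item by exhibiting the inclusion as a finite composite of pushouts along generating $\mathbf{MS}$-anodyne maps; this will suffice because, by \autoref{prop:msCart}, such pushouts keep us inside the class of $\mathbf{MS}$-anodyne morphisms. The argument will follow the pattern of \autoref{rem:saturation} and of the proof of \autoref{prop:bothgraycoincide}, the new ingredient being that we are now forced to scale the \emph{outer} face of $\Delta^3$ --- the one omitting the initial vertex $0$ --- and for this the hypothesis that $\Delta^{\{0,1\}}$ be marked is essential: it is precisely what will let us bring in the outer anodyne maps \ref{ms:2coCartesianmorphs} together with \ref{mb:2coCartliftsExist}.

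For (i), I would write $A=(\Delta^3,\Delta^{\{0,1\}},U_0)$ and $B=(\Delta^3,\Delta^{\{0,1\}},\sharp)$, so that $A\hookrightarrow B$ adds only the thin triangle $\Delta^{\{1,2,3\}}$. Since the purely inner saturation of \autoref{rem:saturation} cannot reach the outer face $\Delta^{\{1,2,3\}}$ (a pushout along \ref{ms:wonky4} along a reindexing $\Delta^4 \to \Delta^3$ only scales faces through the initial vertex), one must first exploit the marked edge. Concretely, a sequence of pushouts along \ref{mb:2coCartliftsExist} and \ref{ms:2coCartesianmorphs} adjoins to $A$ an ``inverse'' of $\Delta^{\{0,1\}}$ together with the thin triangles witnessing that $\Delta^{\{0,1\}}$ is an equivalence. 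Using these backwards edges and faces, and filling the missing inner faces by pushouts along \ref{ms:innerhorn}, one can then assemble a $4$-simplex $\tau$ in which $\Delta^{\{1,2,3\}}$ occurs as the face $\Delta^{\{0,1,4\}}$ (equivalently $\Delta^{\{0,3,4\}}$) of $\tau$ and in which all the faces listed in the collection $T$ of \ref{ms:wonky4} are thin --- the faces already thin in $A$ account for most of these, and the rest are scaled by auxiliary pushouts along \ref{ms:innerhorn} and \autoref{rem:saturation}. A final pushout along \ref{ms:wonky4} then scales $\Delta^{\{0,1,4\}}$, hence $\Delta^{\{1,2,3\}}$, completing the factorization of $A\hookrightarrow B$ into pushouts of $\mathbf{MS}$-anodyne maps.

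For (ii), the inclusion $(\Delta^2,\{\Delta^{\{0,1\}},\Delta^{\{0,2\}}\},\sharp)\hookrightarrow(\Delta^2,\sharp,\sharp)$ adds only the marked edge $\Delta^{\{1,2\}}$, and it is the two-out-of-three statement for marked edges --- the ``cancellation'' counterpart of \ref{ms:composeacrossthin}. I would use the same toolkit: first invert the marked edge $\Delta^{\{0,1\}}$ by pushouts along \ref{mb:2coCartliftsExist} and \ref{ms:2coCartesianmorphs}, producing an edge $g$ inverse to it; then, using $g$ and a pushout along \ref{ms:innerhorn} inside the $3$-simplex obtained by applying the degeneracy $s_0$ to the generic thin triangle, produce a thin triangle on the vertices underlying $\Delta^{\{1,2\}}$ whose other two edges are $g$ and $\Delta^{\{0,2\}}$, both marked; and finally apply \ref{ms:composeacrossthin} to this thin triangle to conclude that $\Delta^{\{1,2\}}$ is marked. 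A retract argument then identifies the inclusion of (ii) with a retract of the resulting composite of pushouts of $\mathbf{MS}$-anodyne maps, so it is itself $\mathbf{MS}$-anodyne.

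The hard part in both items will be purely combinatorial: choosing the auxiliary simplices so that at every intermediate stage the set of already-thin faces has exactly the shape demanded by the wonky pattern $T$ of \ref{ms:wonky4} (respectively, the already-marked edges have the shape demanded by \ref{ms:composeacrossthin}), and being careful to argue throughout with pushouts along the \emph{generating} maps rather than merely with lifting against $\infty$-bicategories, since that is what the conclusion ``$\mathbf{MS}$-anodyne'' requires.
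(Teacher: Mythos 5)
You correctly identify the crux: the inner saturation move of \autoref{rem:saturation} cannot reach the face $\Delta^{\{1,2,3\}}$, and the marked edge $\Delta^{\{0,1\}}$ must be exploited via the moves \ref{ms:2coCartesianmorphs} and \ref{mb:2coCartliftsExist}. However, there is a structural gap in how you propose to conclude. A pushout along a generating anodyne map only adds decorations without adding new non-degenerate simplices when the attaching simplex is degenerate (this is why \autoref{rem:saturation} works); the moment you adjoin an inverse of $\Delta^{\{0,1\}}$ via \ref{mb:2coCartliftsExist} or an outer horn filler via \ref{ms:2coCartesianmorphs}, you create genuinely new cells --- in particular an edge from vertex $1$ back to vertex $0$ --- which do not exist in $\Delta^3$. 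So your composite of pushouts is a map $A\to A'$ with $A'$ strictly larger than the target $B=(\Delta^3,\Delta^{\{0,1\}},\sharp)$, and it is \emph{not} a factorization of $A\hookrightarrow B$. To deduce that $A\to B$ is anodyne you would need to exhibit it as a retract of $A\to A'$, which requires a retraction $A'\to B$ under $A$; no such retraction exists, because the inverse edge has nowhere to go in $\Delta^3$. The retract you invoke for (ii) has the same unaddressed problem. (Separately, the auxiliary $4$-simplex $\tau$ realizing $\Delta^{\{1,2,3\}}$ as the face $\Delta^{\{0,1,4\}}$ with all of the pattern $T$ thin is asserted but never constructed, and no choice built only from the vertices of $\Delta^3$ and degeneracies can work.)

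The paper resolves exactly this difficulty by \emph{not} insisting on an explicit cell presentation: it applies the small object argument to factor $A\to B$ as an $\mathbf{MS}$-anodyne map $i\colon A\to X$ followed by $p\colon X\to B$ with the right lifting property against $\mathbf{MS}$-anodynes. Then $X$ is a fibrant $\infty$-bicategory, and one only has to produce a lift $B\to X$ of the identity of $B$, i.e.\ to check that $i(\Delta^{\{1,2,3\}})$ is thin in $X$; the retraction onto $B$ is then supplied for free by $p$, exhibiting $A\to B$ as a retract of $i$. Producing the lift is where the homotopy theory of fibrant objects may legitimately be used: the marked edge becomes coCartesian in $X$ by the RLP against \ref{ms:2coCartesianmorphs}, and precomposition with it is an equivalence on mapping $(\infty,1)$-categories, hence reflects invertibility of the $2$-morphism represented by $\Delta^{\{1,2,3\}}$. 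Your closing remark --- that one must avoid ``lifting against $\infty$-bicategories'' to conclude anodynicity --- is the misconception driving the gap: the weakly saturated class is closed under retracts, and the small object argument is precisely the device that converts a lifting argument against a fibrant replacement into membership in that class.
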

\begin{proof}
  We prove the case $i)$ and leave the remaining (easier) verification as an exercise. We use the small object argument to produce a factorization
  \[
    (\Delta^3,\Delta^{\{0,1\}},U_0)  \xlongrightarrow{i} (X,E_X,T_X) \xlongrightarrow{p} (\Delta^3,\Delta^{\{0,1\}},\sharp)
  \]
  where the first morphism is $\mathbf{MS}$-anodyne and the second morphism has the right lifting property (RLP) against the class of marked-scaled anodyne morphisms.

  It follows from the fact that $(\Delta^3,\Delta^{\{0,1\}},\sharp)$ is maximallly scaled that $(X,T_X)$ is an $\infty$-bicategory. We claim that we can produce the dotted arrow
  \[
    \begin{tikzcd}
      (\Delta^3,\Delta^{\{0,1\}},U_0) \arrow[r,"i"] \arrow[d] & (X,E_X,T_X) \arrow[d,"p"] \\
      (\Delta^3,\Delta^{\{0,1\}},\sharp) \arrow[r,"="] \arrow[ur,dotted] & (\Delta^3,\Delta^{\{0,1\}},\sharp)
    \end{tikzcd}
  \]
  As a consequence of the claim, we see that we can express the left-most vertical morphism in the diagram above as a retract of a $\mathbf{MS}$-anodyne morphism and thus the result will follow.

  To prove the claim, one only needs to show that the triangle $\Delta^{\{1,2,3\}}$ gets mapped under $i$ to a thin triangle. Since $(X,T_X)$ is an $\infty$-bicategory this ammounts to show that the associated 2-morphism in the mapping $(\infty,1)$-category $X(i(1),i(3))$ is an equivalence. The fact that $p$ has the RLP with respect to the morphisms of type \ref{ms:2coCartesianmorphs} shows that $i(0\xlongrightarrow{}  1)=e$ is a coCartesian edge in $X$. We conclude that a 2-morphism  $X(i(1),i(3))$ is invertible if and only if it is invertible after precomposing with $e$. The result follows.
\end{proof}

\begin{proposition}\label{prop:grpushout}
  Let $f: X \xlongrightarrow{}  Y$ be cofibration in $\on{Set}_\Delta^{\mathbf{ms}}$ and let $g:A \xlongrightarrow{} B$ be a $\mathbf{MS}$-anodyne morphism. Then the pushout-product maps
  \[
    f \wedge_{\on{gr}} g: X \tensor B \coprod\limits_{X \tensor A} Y \tensor A \xlongrightarrow{} Y \tensor B
  \]
  \[
    g \wedge_{\on{gr}} f : B \tensor X \coprod\limits_{A \tensor X} A \tensor Y \xlongrightarrow{} B \tensor Y
  \]

  are trivial cofibrations and the same holds by replacing $\tensor$ with $\btensor$
\end{proposition}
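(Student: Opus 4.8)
The plan is to reduce the statement to a finite check on generators, using the standard machinery of weakly saturated classes. First I would recall that by \autoref{def:cofms} the cofibrations in $\on{Set}_\Delta^{\mathbf{ms}}$ are generated, as a weakly saturated class, by the three maps \ref{cof:c1}, \ref{cof:c2}, \ref{cof:c3}, while the $\mathbf{MS}$-anodyne maps are generated by the family $\MS$ of \autoref{def:msanodyne} (together with the derived morphism \ref{ms:saturation} of \autoref{rem:saturation} and the ones in \autoref{lem:0saturations}). Since the Gray tensor product $\tensor$ preserves colimits in the first variable when the second is one of $(\Delta^n,\flat,\flat)$, $(\Delta^1,\sharp,\sharp)$, $(\Delta^2,\flat,\sharp)$ — and dually — a routine argument with pushout-products shows that $f\wedge_{\on{gr}}g$ is a trivial cofibration for all cofibrations $f$ and all $\mathbf{MS}$-anodyne $g$ as soon as this holds when $f$ ranges over $\{$\ref{cof:c1}, \ref{cof:c2}, \ref{cof:c3}$\}$ and $g$ ranges over the generating $\mathbf{MS}$-anodyne maps. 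Here one must be slightly careful because of the colimit-preservation failure noted in \autoref{rem:failurecolimit}; I would handle this exactly as in \autoref{prop:bothgraycoincide}, i.e.\ first prove the statement for $\btensor$ (which \emph{is} colimit preserving, so the reduction to generators is clean) and then transfer to $\tensor$ using the pointwise trivial cofibration $\xi:(\mathblank\btensor\mathblank)\Rightarrow(\mathblank\tensor\mathblank)$ together with the two-out-of-three property. Both pushout-product maps in the two displayed formulas are of the same shape (one is the "first-variable" and one the "second-variable" pushout-product), and since the Gray tensor product is not symmetric they genuinely require separate, though parallel, verification.

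The heart of the matter is then the finite list of cases: for each generating cofibration $f$ and each generating $\mathbf{MS}$-anodyne $g$, exhibit $f\wedge_{\on{gr}}g$ as $\mathbf{MS}$-anodyne (this is the stronger and cleaner statement, and it implies "trivial cofibration" since $\mathbf{MS}$-anodyne maps are in particular weak equivalences that are monomorphisms). For the cofibration \ref{cof:c1}, namely $\partial\Delta^n_{\flat\flat}\hookrightarrow\Delta^n_{\flat\flat}$, tensoring with the generators \ref{ms:innerhorn}, \ref{ms:2coCartesianmorphs}, \ref{mb:2coCartliftsExist} produces prisms $\Delta^n\times\Delta^m$ (with various markings/scalings) whose relevant filtration one analyzes simplex by simplex via the shuffle decomposition of the prism; each added nondegenerate simplex is attached along an inner horn or along a map of type \ref{ms:2coCartesianmorphs}, exactly as in the proof of \autoref{prop:msCart} for the Cartesian product, the only new bookkeeping being which 2-faces are scaled in the Gray tensor product by clause ii) of \autoref{def:gray}. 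The cofibrations \ref{cof:c2} and \ref{cof:c3} only change the marking resp.\ scaling on a low-dimensional simplex, so the corresponding pushout-products are supported on small explicit scaled simplicial sets; these I would match directly against the generators \ref{ms:composeacrossthin}, the maps \ref{ms:saturation}, and the two maps of \autoref{lem:0saturations}, which were presumably proved precisely to cover these cases. The interactions with \ref{ms:equivalences} (Kan complexes becoming marked) and \ref{ms:composeacrossthin} (composing across thin edges) reduce, after the combinatorics, to already-established $\mathbf{MS}$-anodyne maps by a pushout argument identical in spirit to \autoref{rem:saturation}.

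The main obstacle I anticipate is the bookkeeping in the prism filtration for the case $f=$\ref{cof:c1} against $g=$\ref{ms:2coCartesianmorphs}: one has to track simultaneously (a) which edges of $\Delta^n\times\Delta^m$ are forced to be marked, (b) which triangles are scaled according to the asymmetric rule ii) of \autoref{def:gray}, and (c) the order in which the nondegenerate $(n{+}m)$-simplices of the prism are adjoined, so that at each stage the attaching map is genuinely one of the listed anodyne generators and not merely "a horn." The correct order is the lexicographic/staircase order on shuffles used in Verity's and in Gagna–Harpaz–Lanari's treatments, and the scaling clause means that the "outer" anodyne generator \ref{ms:2coCartesianmorphs} (rather than an inner horn) must be used precisely on the staircases that start with a step in the $X$-direction carrying the marked edge. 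Once this ordering is set up correctly the verification is mechanical, so after stating the filtration lemma carefully I would relegate the per-simplex check to a short inductive argument, exactly as is done for \autoref{prop:msCart}. Finally, the transfer from $\btensor$ to $\tensor$ via $\xi$ and two-out-of-three closes the proof of the full statement as displayed.
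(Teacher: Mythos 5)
Your plan matches the paper's proof: reduce to the generating cofibrations and the generating $\mathbf{MS}$-anodyne maps via the colimit-preserving $\btensor$ and \autoref{prop:bothgraycoincide}, then verify the finite list of cases by the shuffle/prism filtration (the paper cites \cite[Prop.~2.16]{GHL_Gray} for the minimally-marked cases rather than redoing the filtration) together with \autoref{rem:saturation} and \autoref{lem:0saturations} for the low-dimensional decorations. One small caveat: in the $g\wedge_{\on{gr}}f$ direction against \ref{ms:equivalences} the paper notes that one needs the opposite of the map \ref{ms:wonky4}, which is not a priori $\mathbf{MS}$-anodyne but is still a trivial cofibration, so your stronger claim that every case can be exhibited as $\mathbf{MS}$-anodyne requires that one adjustment.
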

\begin{proof}
 For the sake of brevity we include only the case $f \wedge_{\on{gr}} g$ leaving the analogous computations for $g \wedge_{\on{gr}} f$ as an exercise. We will however comment some instances where both cases differ.

 Note that by \autoref{prop:bothgraycoincide} it suffices to verify the claim for only one of the versions of the Gray tensor product given. Since $\btensor$ preserves colimits it will be enough to check the claim on the set of generating cofibrations given in \autoref{def:cofms} and the set of generating $\textbf{MS}$-anodyne morphisms. Again, by \autoref{prop:bothgraycoincide} it will be enough to verify the claim for the generators using $\tensor$. We proceed by cases:
 \begin{itemize}
   \item \textbf{The cofibration is of type:} \ref{cof:c1}
   \begin{itemize}
      \item[\ref{ms:innerhorn}]
      Since both marked-scaled simplicial sets are minimally marked it will be enough to work with the underlying scaled simplicial sets. The claim now follows from Proposition 2.16 in \cite{GHL_Gray}.
      \item[\ref{ms:wonky4}]
      Same as above.
      \item[\ref{ms:2coCartesianmorphs}]
      Let $f:\partial \Delta^n \xlongrightarrow{} \Delta^n$ and $g:\Lambda^m_0 \xlongrightarrow{} \Delta^m$ (with the corresponding decorations). Then it is clear from the definition that every marked edge in $\Delta^n \tensor \Delta^m$ must factor through pushout in the definition of $f \wedge_{\on{gr}} g$. The claim follows mutatis mutandis from \cite[Proposition 2.16]{GHL_Gray} by applying the proof for the scaled anodyne map \ref{item:anodyne_outer} in \autoref{def:scanodyne} and replacing the degenerate edge by the marked edge.
      \item[\ref{mb:2coCartliftsExist}]
      Since $g$ is maximally marked our Gray tensor product degenerates to the usual Cartesian product and the result follows from \autoref{prop:msCart}.
      \item[\ref{ms:composeacrossthin}]
      Let $f:\partial \Delta^n \xlongrightarrow{} \Delta^n$ and observe that $f \wedge_{\on{gray}} g$ is an isomorphism if $n \geq 2.$ For the case $n=1$ one easily verifies that the map in question is in the weakly saturated class of morphisms of  type \ref{ms:saturation} in \autoref{rem:saturation}.
      \item[\ref{ms:equivalences}] 
      The associated map is an isomorphism on the underlying simplicial sets. Moreover, both marked-scaled simplicial sets have the same collection of marked edges. We will show that we can add the remaining scaled triangles using pushouts along $\mathbf{MS}$-anodyne morphisms. The only case we need to show is the case $f: \partial \Delta^1 \xlongrightarrow{} \Delta^1$ since in the remaining cases, the map $f\wedge_{\on{gr}} g$ is an isomorphism. 

      Let $g:K_\flat=(K,\flat,\sharp) \xlongrightarrow{} K_{\sharp}=(K,\sharp,\sharp)$. Given an edge $e: \Delta^1 \xlongrightarrow{} K$ we will show that we can scale all the triangles in the image of the map 
      \[
         \on{id} \tensor e : \Delta^1 \tensor \Delta^1 \xlongrightarrow{} \partial \Delta^0 \otimes K_{\sharp} \coprod\limits_{\partial \Delta^0 \tensor K_{\flat}} \Delta^1 \tensor K_\flat = \mathcal{Q}
       \] 
       Let $\sigma:\Delta^3 \xlongrightarrow{} K$ be a $3$-simplex exhibiting $e$ as an invertible morphism, such that $\sigma(0 \xlongrightarrow{} 2)$ and $\sigma(1 \xlongrightarrow{} 3)$ are degenerate edges and such that $d_2(\sigma)=s_0(e)$. We consider a 4-simplex $\rho: \Delta^4 \xlongrightarrow{} \mathcal{Q}$ (equipped with the induced scaling) given by 
       \[
         \rho(i)=\begin{cases}
           (0,\sigma(i)), \enspace \text{ if }i< 4 \\
           (1,\sigma(3)), \enspace \text{else}.
         \end{cases}
       \]
       It then follows that we can use a morphism of type \ref{ms:wonky4} to scale the triangles $\{0,3,4\}$ and $\{0,1,4\}$ in $\rho$ which proves our claim. We can thus scale all triangles of the form $\on{id}\tensor e$ and obtain a factorization
       \[
         \mathcal{Q} \xlongrightarrow{\simeq} \mathcal{P} \xlongrightarrow{} \Delta^1 \tensor K_\sharp.
       \]
       where the first morphism is in the weakly saturated class of morphisms of type \ref{ms:wonky4}. To finish the proof it suffices to show that for every 2-simplex $\alpha: \Delta^2 \xlongrightarrow{} K$ we can scale every simplex in the image of the map
       \[
         \on{id} \tensor \alpha: \Delta^1 \tensor \Delta^2 \xlongrightarrow{} \mathcal{P}
       \]
       this follows immediately by using pushouts along morphisms of type \ref{ms:saturation} in \autoref{rem:saturation}

       We would like to point out that for the case $g \wedge_{\on{gray}}f$ the case is essentially the same except for the fact that we need to use the opposite of the morphism \ref{ms:wonky4} which is a priori not $\mathbf{MS}$-anodyne but is obviously a trivial cofibration.
    \end{itemize} 
    \item \textbf{The cofibration is of type:} \ref{cof:c2}
    In this case, the map $f \wedge_{\on{gr}} g$ is an isomorphism on the underlying simplicial sets and we only need that we can add the pertinent decorations using pushouts along trivial cofibrations. For every $\mathbf{MS}$-anodyne morphism $g: A \xlongrightarrow{} B$ morphism we define
    \[
      \Delta^1 \boxplus  B = (\Delta^1,\flat,\sharp) \tensor B \coprod_{\Delta^1 \otimes A} (\Delta^1,\sharp,\sharp) \otimes A
    \]
    and similarly $B \boxplus \Delta^1$
   \begin{itemize}
      \item[\ref{ms:innerhorn}]
      Since the map $g$ is minimally marked it follows that the marking on both marked-scaled simplicial sets coincide. We claim that the only case where $f \wedge_{\on{gr}} g$ is not an isomorphism of marked-scaled simplicial sets is precisely when $g:(\Lambda^m_i,\flat,\Delta^{\{i-1,i,i+1\}}) \xlongrightarrow{} (\Delta^m,\flat,\Delta^{\{i-1,i,i+1\}})$ with $m =2$. If $m\geq 4$ this is clear since the pushout product already contains all 2-simplices. Let $m=3$ then $\sigma: \Delta^2 \xlongrightarrow{} \Delta^1 \boxplus B$ such that $\sigma$ is scaled in the Cartesian product. Then by definition $\sigma$ must factors through $\Lambda^3_i$ and thus $\sigma$ is already scaled. 

      For the case $m=2$ one easily checks that $\Delta^1 \boxplus \Delta^2 \xlongrightarrow{} \Delta^1 \times \Delta^2$ is the in the weakly saturated class of morphisms of type \ref{ms:wonky4}.
      \item[\ref{ms:wonky4}]
      Inspection reveals that we need to scale the triangles $(0,0) \xlongrightarrow{} (0,3) \xlongrightarrow{} (1,4)$ and $(0,0) \xlongrightarrow{} (0,1) \xlongrightarrow{} (1,4)$. We show that we can scale the first triangle and leave the remaining case as an exercise. We extend this 2-simplex to 
      \[
        (0,0) \xlongrightarrow{} (0,3) \xlongrightarrow{} (1,3) \xlongrightarrow{} (1,4)
      \]
      it follows that this 3-simplex has every face already scaled in $\Delta^1 \boxplus \Delta^4$ except possible the face skipping the vertex $2$. It follows that we scale the $2$-nd face using a morphism of type \ref{ms:saturation}.

      \item[\ref{ms:2coCartesianmorphs}]
      It follows that we only need to check that the map $\Delta^1 \boxplus \Delta^m \xlongrightarrow{} \Delta^1 \times \Delta^m$ is anodyne when $m=2$. We further observe that we ca scaled every triangle using morphisms of type \ref{ms:saturation} except possibly the triangle $\sigma:(0,1) \xlongrightarrow{} (0,2) \xlongrightarrow{} (1,2)$. However, we note that we have a $3$-simplex $\rho:\Delta^3 \xlongrightarrow{} \Delta^1 \boxplus \Delta^4$ such that $\rho(0)=(0,0)$ and $d_0(\rho)=\sigma$. The claim follows from \autoref{lem:0saturations}.

      We finish the proof by noting that the marking in both marked-scaled simplicial sets coincide.
      \item[\ref{mb:2coCartliftsExist}]
      The map is an isomorphism on the underlying scaled simplicial sets. The only edge that remains unmarked in $\Delta^1 \boxplus \Delta^1$ is precisely $(0,1) \xlongrightarrow{} (1,1)$ which can be scaled using a morphism of type $ii)$ in \autoref{lem:0saturations}.
      \item[\ref{ms:composeacrossthin}]
      The map is an isomorphism on the underlying scaled simplicial sets. The only edge we need to mark is the edge $(0,0) \xlongrightarrow{} (1,2)$. We can do this using a morphism of type  \ref{ms:composeacrossthin}.
      \item[\ref{ms:equivalences}]
      The map is again an isomorphism on the underlying scaled simplicial sets. To mark the remaining edges which are of the form $(0,x) \xlongrightarrow{} (1,y)$ we consider a 2-simplex $\sigma:(0,x) \xlongrightarrow{} (1,x) \xlongrightarrow{} (1,y)$ we mark $d_1(\sigma)$ use a morphism of type \ref{ms:composeacrossthin}.

    \end{itemize} 
    \item \textbf{The cofibration is of type:} \ref{cof:c3}
    As in the previous case, the map $f \wedge_{\on{gr}} g$ is an isomorphism on the underlying simplicial sets, so we only need to add the decorations. For every $\mathbf{MS}$-anodyne morphism $g: A \xlongrightarrow{} B$ morphism we define
    \[
      \Delta^2 \boxplus B = \Delta^2_{\sharp} \otimes A \coprod\limits_{\Delta^2_{\flat} \otimes A} \Delta^2_\flat \otimes B
    \]
    and similarly $B \boxplus \Delta^2$.
   \begin{itemize}
      \item[\ref{ms:innerhorn}]
      Let $g:\Lambda^m_i \xlongrightarrow{} \Delta^m$ it follows that the map is an isomorphism of marked-scaled simplicial sets for $m\geq 3$. For the case $m=2$ we only need to show that we can add the triangle $(0,0) \xlongrightarrow{} (1,0) \xlongrightarrow{} (2,2)$. To do so, we extend this to a $3$-simplex
      \[
        (0,0) \xlongrightarrow{} (1,0)\xlongrightarrow{} (2,0)\xlongrightarrow{}(2,2)
      \]
      which allows us to take a pushout along a morphism of type \ref{ms:saturation}.
      \item[\ref{ms:wonky4}]
      The map is an isomorphism.
      \item[\ref{ms:2coCartesianmorphs}]
      Let $g:\Lambda^m_0 \xlongrightarrow{} \Delta^m$. Again, we see that the map in question is an isomorphism unless $m=2$. We need to add the triangles $\sigma_1:(0,0) \xlongrightarrow{} (1,1) \xlongrightarrow{} (2,2)$ and $\sigma_2:(0,1) \xlongrightarrow{} (1,1) \xlongrightarrow{} (2,2)$. First, let us consider the 3-simplex
      \[
        (0,0)\xlongrightarrow{} (1,0) \xlongrightarrow{}(1,1) \xlongrightarrow{}(2,2).
      \]
      Then it follows that every simplex except the face skipping $(1,0)$ is already scaled. Consequently we can scale $\sigma_1$ using a morphism of type \ref{ms:saturation}. Now, we look at the $3$-simplex
      \[
        (0,0) \xlongrightarrow{} (0,1)\xlongrightarrow{}(1,1)\xlongrightarrow{}(2,2)
      \]
      using we already scaled $\sigma_1$ it follows that every face in this 3-simplex is scaled except possible the $0$-th face. We can now scale $\sigma_2$ using a morphism of type $i)$ in \autoref{lem:0saturations}.
      \item[\ref{mb:2coCartliftsExist}]
      This follows from \autoref{prop:msCart}.
      \item[\ref{ms:composeacrossthin}]
      We only need to scale the simplex $(0,0) \xlongrightarrow{} (1,2) \xlongrightarrow{} (2,2)$ to this end we consider the 3-simplex
      \[
        (0,0) \xlongrightarrow{}(0,2) \xlongrightarrow{} (1,2) \xlongrightarrow{} (2,2)
      \]
      and conclude that we can scale the desired triangle again using a morphism of type \ref{ms:saturation}.
      \item[\ref{ms:equivalences}] 
      We need to show that we can scale every 2-simplex in $\Delta^2 \boxplus K$. Let $x \xlongrightarrow{} y \xlongrightarrow{} z$ be a 2-simplex in $K$. To finish the proof it is enough to show that we can scale $(0,x) \xlongrightarrow{} (1,y) \xlongrightarrow{} (2,z)$. We consider a 3-simplex
      \[
        (0,x) \xlongrightarrow{} (1,x) \xlongrightarrow{} (1,y) \xlongrightarrow{} (2,z)
      \]
      and observe that every face is scaled except the face skipping the vertex $(1,x)$ and thus we can scale the triangle in question using a morphism of type \ref{ms:saturation}.
    \end{itemize} 
 \end{itemize}
\end{proof}

\begin{proposition}\label{prop:msgrayleftquillen}
  The functor $\btensor: \on{Set}_\Delta^{\mathbf{ms}}\times \on{Set}_\Delta^{\mathbf{ms}} \xlongrightarrow{} \on{Set}_\Delta^{\mathbf{ms}}$ is a left Quillen bifunctor.
\end{proposition}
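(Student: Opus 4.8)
The plan is to check the two requirements defining a left Quillen bifunctor: that $\btensor$ preserves colimits in each variable, and that it satisfies the pushout--product axiom with respect to the model structure of \autoref{thm:markedscaledmodel}. The first requirement holds by construction, since $\btensor$ was introduced in \autoref{prop:bothgraycoincide} precisely as the colimit--preserving (in each variable) extension of the Gray tensor product; so all of the content lies in the pushout--product axiom, namely that for cofibrations $f$ and $g$ the pushout--product $f \wedge_{\on{gr}} g$ is a cofibration, which is trivial as soon as $f$ or $g$ is.

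First I would record the cofibration part, which is purely formal. Cofibrations in $\on{Set}_\Delta^{\mathbf{ms}}$ are exactly the maps that are monomorphisms on underlying simplicial sets, and the underlying simplicial set of $X \btensor Y$ is the Cartesian product $X \times Y$; hence, on underlying simplicial sets, $f \wedge_{\on{gr}} g$ is the ordinary pushout--product of two monomorphisms of simplicial sets, which is again a monomorphism. Thus $f \wedge_{\on{gr}} g$ is a cofibration whenever $f$ and $g$ are.

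For the trivial case I would invoke the standard pushout--product lemma for cofibrantly generated model categories: since $\btensor$ preserves colimits in each variable and the classes of cofibrations and of trivial cofibrations are weakly saturated, it suffices to verify that $f \wedge_{\on{gr}} g$ and $g \wedge_{\on{gr}} f$ are trivial cofibrations when $f$ runs over the three generating families of cofibrations of \autoref{def:cofms} and $g$ runs over a chosen set of generating trivial cofibrations of $\on{Set}_\Delta^{\mathbf{ms}}$. Taking this latter set to be $\MS$, this is exactly what \autoref{prop:grpushout} provides. The one point that deserves care — and the main obstacle — is the justification that $\MS$ may be used as a set of generating trivial cofibrations, i.e. that the weakly saturated closure of $\MS$ is the class of trivial cofibrations of the model structure of \autoref{thm:markedscaledmodel}; this is how that model structure is set up in \cite{AbGroth}. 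If one prefers to make this explicit, the same conclusion follows from \autoref{prop:grpushout} by a retract argument: a trivial cofibration $g\colon A\to B$ factors as $A\xlongrightarrow{i}B'\xlongrightarrow{p}B$ with $i$ being $\MS$-anodyne and $p$ having the right lifting property against $\MS$-anodyne maps, whence by two--out--of--three $p$ is a weak equivalence and therefore a trivial fibration, so $g$ is a retract of $i$ in the arrow category; since $f\wedge_{\on{gr}}(-)$ and $(-)\wedge_{\on{gr}}f$ preserve retracts and trivial cofibrations are retract--closed, $f\wedge_{\on{gr}}g$ and $g\wedge_{\on{gr}}f$ are then trivial cofibrations.
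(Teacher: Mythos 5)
Your reduction to generating trivial cofibrations contains a genuine gap, and it is precisely the point that the paper's proof is designed to circumvent. The model structure of \autoref{thm:markedscaledmodel} is characterized by its cofibrations (monomorphisms) and its \emph{fibrant objects} (those with the right lifting property against $\MS$-anodyne maps); it is a Cisinski--Lurie-type localization in which weak equivalences are detected by mapping into $\infty$-bicategories. In such a model structure the weakly saturated closure of $\MS$ is contained in, but in general strictly smaller than, the class of trivial cofibrations, and no explicit generating set of trivial cofibrations is available (exactly as for the Joyal model structure, where the inner anodyne maps do not generate the trivial cofibrations). So your first route --- ``take $\MS$ as the set of generating trivial cofibrations'' --- is unjustified, and nothing in \autoref{thm:markedscaledmodel} or \cite{AbGroth} asserts it. Your fallback retract argument fails for the same reason: in the factorization $A\xrightarrow{i}B'\xrightarrow{p}B$ the map $p$ has the right lifting property against $\MS$-anodyne maps and is a weak equivalence, but this does not make it a \emph{fibration} of the model structure (fibrations are detected by lifting against all trivial cofibrations, a strictly larger class when $B$ is not fibrant), hence you cannot conclude that $p$ is a trivial fibration, and you cannot solve the lifting problem exhibiting $g$ as a retract of $i$.

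The paper's proof avoids this by never claiming that an arbitrary trivial cofibration is built from $\MS$: it uses \autoref{prop:grpushout} only for genuinely $\MS$-anodyne maps (the fibrant replacements of the source and target of $i$, and the inclusion $\Delta^{0}\to(\Delta^1)^{\sharp}$), reduces via homotopy pushouts and 2-out-of-3 to showing that $\bcat{A}\btensor X\to\bcat{B}\btensor X$ is a weak equivalence for any equivalence $\bcat{A}\to\bcat{B}$ of \emph{fibrant} objects, and then concludes by choosing a homotopy inverse and arguing with the cylinder $\bcat{A}\times(\Delta^1)^{\sharp}$. Your argument becomes correct if you replace the appeal to generating trivial cofibrations by such a fibrant-replacement step.
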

\begin{proof}
  By construction the functor $\btensor$ preserves colimits separately in each variable. Therefore, the only thing to show is that given cofibrations $i:A \xlongrightarrow{} B$ and $j:C \xlongrightarrow{} D$ the pushout-product
  \[
    i \wedge_{\on{gray}} j = A \btensor D \coprod\limits_{A \btensor C} B \btensor C \xlongrightarrow{} B \btensor D
  \]
  is a cofibration which is a trivial cofibration whenever $i$ or $j$ is a trivial cofibration. It is obvious that $i \wedge_{\on{gray}} j$ is a cofibration. Let us suppose $i$ is a trivial cofibration and consider a commutative diagram 
  \[
    \begin{tikzcd}
      A \arrow[r] \arrow[d,"\simeq"] & B \arrow[d,"\simeq"] \\
      \bcat{A} \arrow[r] & \bcat{B}
    \end{tikzcd}
  \]
  such that the vertical arrows are $\mathbf{MS}$-anodyne and the marked-scaled simplicial sets at the bottom are $\infty$-bicategories. We obtain a commutative diagram
  \[
    \begin{tikzcd}
      A \btensor D \coprod\limits_{A \btensor C} B \btensor C \arrow[r] \arrow[d,"\simeq"] & B \btensor D \arrow[d,"\simeq"] \\
     A \btensor D \coprod\limits_{A \btensor C} \bcat{B} \btensor C \arrow[d,"\simeq"] \arrow[r] & \bcat{B} \btensor D \\
      \bcat{A} \btensor D \coprod\limits_{\bcat{A} \btensor C} \bcat{B} \btensor C \arrow[ur] & \\
    \end{tikzcd}
  \]
  where the vertical morphisms are equivalences due to \autoref{prop:grpushout} and due to the fact that the pushouts involved are homotopy pushouts.  Using 2-out-of-3 we see that it is enough to show that the bottom diagonal morphisms is a weak equivalence. We consider the composite
  \[
    \bcat{A} \btensor D \xlongrightarrow{} \bcat{A} \btensor D \coprod\limits_{\bcat{A} \btensor C} \bcat{B} \btensor C \xlongrightarrow{} \bcat{B} \btensor D
  \]
  and conclude that our problem reduces to proving the following:
  \begin{itemize}
    \item[$\ast)$] For every $X \in \on{Set}_{\Delta}^{\mathbf{ms}}$ and every equivalence of $\infty$-bicategories $f:\bcat{A} \xlongrightarrow{} \bcat{B}$ the induced map
    \[
       \bcat{A}\btensor X \xlongrightarrow{} \bcat{B} \btensor X
     \] 
     is a weak equivalence.
  \end{itemize}
  We pick a homotopy inverse $g: \bcat{B}\xlongrightarrow{} \bcat{A}$ and claim that the obvious maps $\bcat{A} \btensor X \xlongrightarrow{} \bcat{A}\btensor X$ and $\bcat{B} \btensor X \xlongrightarrow{} \bcat{B} \btensor X$ are weak equivalences. This will imply the result by 2-out-of-3. Let $\bcat{A} \times (\Delta^1)^{\sharp} \xlongrightarrow{} \bcat{A}$ be a homotopy between the identity on $\bcat{A}$ and $g \circ f$. Then it follows from \autoref{prop:grpushout} that the map $\bcat{A}  \xlongrightarrow{}  \bcat{A} \times (\Delta^1)^{\sharp}$ induced by $\Delta^{0} \xlongrightarrow{} (\Delta^1)^\sharp$ is a trivial cofibration. Using 2-out-of-3 we deduce that 
  \[
    \bcat{A} \times (\Delta^1)^{\sharp} \btensor X \xlongrightarrow{} \bcat{A} \btensor X
  \]
  is also a weak equivalence. Let $\bcat{A} \times (\Delta^1)^{\sharp} \xlongrightarrow{} \bcat{A}$ be the obvious collapse map. Then again, it follows from 2-out-of-3 that the maps
  \[
    (\bcat{A}\times \Delta^{\{1\}}) \btensor X \xlongrightarrow{} (\bcat{A} \times (\Delta^1)^{\sharp}) \btensor X \xlongrightarrow{} \bcat{A}\btensor X
  \]
  are also a weak equivalences which proves our claim. The other case is totally analogous and so is the computation where $j$ is a trivial cofibration. The result is now proved.
\end{proof}

\begin{definition}\label{def:globularGray}
  Let $X,Y \in \on{Set}_\Delta^{\mathbf{ms}}$ we define the \emph{globular Gray product} $X \odot Y \in \on{Set}_{\Delta}^{\mathbf{ms}}$ by declaring its underlying scaled simplicial set to be that of the $X \tensor Y$ and by declaring and edge to be marked if and only if it is marked in $X$.
\end{definition}

\begin{remark}\label{rem:bodot}
  Following the same procedure as in \autoref{prop:bothgraycoincide} we define a colimit preserving version of the globular Gray product which we denote by $\bodot$.
\end{remark}

\begin{remark}
  We note that for every $X,Y \in \on{Set}_\Delta^{\mathbf{ms}}$ there is a natural map 
  \[
   \eta_{X,Y}: X \tensor Y  \xlongrightarrow{} X \odot Y
  \]
  which is always a cofibration and similarly for $\bodot$.
\end{remark}

\begin{proposition}\label{prop:globularcoincide}
  Let $X,Y \in \on{Set}_\Delta^{\mathbf{ms}}$ then we have a pushout diagram
  \[
    \begin{tikzcd}
      X \btensor Y \arrow[r,"\simeq"] \arrow[d] & X \tensor Y \arrow[d] \\
      X \bodot Y \arrow[r,"\simeq"] & X \odot Y
    \end{tikzcd}
  \]
  In particular, the obvious natural map $ X \bodot Y  \xlongrightarrow{} X \odot Y$ is a trivial cofibration.
\end{proposition}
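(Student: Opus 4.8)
The plan is to present every corner of the square as a marked-scaled structure on the simplicial set $X\times Y$, to identify the marked edges and thin triangles in each case, and then to verify directly that the square is a pushout in $\on{Set}_\Delta^{\mathbf{ms}}$; the trivial-cofibration assertion is then immediate, since the bottom map $\bar\xi\colon X\bodot Y\to X\odot Y$ becomes a cobase change of the trivial cofibration $\xi\colon X\btensor Y\to X\tensor Y$ supplied by \autoref{prop:bothgraycoincide}.

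First I would record the relevant identifications. By \autoref{def:gray} and \autoref{def:globularGray} the products $X\tensor Y$ and $X\odot Y$ have the \emph{same} underlying scaled simplicial set, say with scaling $T$, while an edge is marked in $X\tensor Y$ iff it is marked in both $X$ and $Y$ and marked in $X\odot Y$ iff it is marked in $X$; thus $\eta_{X,Y}$ is the identity on underlying scaled simplicial sets and only enlarges the marking. By \autoref{prop:bothgraycoincide} the map $\xi$ is the identity on underlying \emph{marked} simplicial sets and enlarges the scaling from some $T'\subseteq T$ to $T$, and it is a trivial cofibration; by \autoref{rem:bodot} the same reasoning shows that $\bar\xi$ is the identity on underlying marked simplicial sets and enlarges the scaling from some $T''\subseteq T$. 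Consequently $X\btensor Y$, $X\tensor Y$, $X\bodot Y$, $X\odot Y$ are the marked-scaled structures on $X\times Y$ whose markings are, respectively, ``marked in both / marked in both / marked in $X$ / marked in $X$'' and whose scalings are $T'/T/T''/T$, and all four maps in the square are the identity on $X\times Y$, so the square commutes.

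The crucial point is that $T'=T''$. Here I would use that the functor $U\colon \on{Set}_\Delta^{\mathbf{ms}}\to \on{Set}_\Delta^{\mathbf{sc}}$ forgetting the marking has a right adjoint (equip a scaled simplicial set with all edges marked) and therefore preserves colimits. Since $\btensor$ and $\bodot$ preserve colimits separately in each variable, $U(\mathblank\btensor\mathblank)$ and $U(\mathblank\bodot\mathblank)$ are colimit-preserving bifunctors; on the generating objects $(\Delta^n,\flat,\flat)$, $(\Delta^1,\sharp,\sharp)$, $(\Delta^2,\flat,\sharp)$ — which generate $\on{Set}_\Delta^{\mathbf{ms}}$ under colimits (cf.\ \autoref{def:cofms}) — they agree, because there $\tensor$ and $\odot$ differ only in their markings. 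Hence $U(\mathblank\btensor\mathblank)\cong U(\mathblank\bodot\mathblank)$ naturally, and the natural map $X\btensor Y\to X\bodot Y$ realises this isomorphism on underlying scaled simplicial sets; so $T'=T''$.

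It remains to compute the pushout, for which I recall that colimits in $\on{Set}_\Delta^{\mathbf{ms}}$ are formed on underlying simplicial sets, with marking and scaling given by the unions of the images. Since all four corners have underlying simplicial set $X\times Y$ and all maps are the identity there, the pushout $X\tensor Y\coprod_{X\btensor Y}X\bodot Y$ has underlying simplicial set $X\times Y$, marking the union of ``marked in both'' and ``marked in $X$'', i.e.\ ``marked in $X$'', and scaling $T\cup T'=T$; that is exactly $X\odot Y$, with structure maps $\eta_{X,Y}$ and $\bar\xi$. Therefore the square is a pushout, $\bar\xi$ is the cobase change of $\xi$ along $X\btensor Y\to X\bodot Y$, and since trivial cofibrations are stable under pushout in the model structure of \autoref{thm:markedscaledmodel}, the natural map $X\bodot Y\to X\odot Y$ is a trivial cofibration. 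The only step that is not pure bookkeeping is the identification $T'=T''$ of the two scalings; everything else follows from the explicit descriptions of the four products.
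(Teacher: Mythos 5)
Your proof is correct and takes the same route as the paper, which simply declares the statement "immediate from Proposition \ref{prop:bothgraycoincide}": you have filled in the bookkeeping (all four corners live on $X\times Y$, the two colimit-preserving constructions agree on underlying scaled simplicial sets because they agree on the generators, and the pushout of decorations is computed by unions), after which the trivial-cofibration claim follows by cobase change of the trivial cofibration from Proposition \ref{prop:bothgraycoincide}. No gaps.
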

\begin{proof}
  Immediate from \autoref{prop:bothgraycoincide}.
\end{proof}

\begin{proposition}\label{prop:globularpushout}
  Let $f: X \xlongrightarrow{} Y$ be cofibration and let $g:A \xlongrightarrow{} B$ be a $\mathbf{MS}$-anodyne morphism. Then the pushout-product maps
  \[
    f \wedge_{\on{gl}} g: X \odot B \coprod\limits_{X \odot A} Y \odot A \xlongrightarrow{} Y \odot B.
  \]
  \[
    g \wedge_{\on{gl}} f: B \odot X \coprod\limits_{A \odot X} A \odot Y \xlongrightarrow{} B \odot Y
  \]
  is a trivial cofibration and similarly for $\bodot$.
\end{proposition}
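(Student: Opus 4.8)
The plan is to deduce the statement from the Gray tensor product version already proved in \autoref{prop:grpushout}, using the comparison between $\odot$ and its colimit-preserving variant $\bodot$ furnished by \autoref{prop:globularcoincide}. First I would reduce the $\odot$-statement to the $\bodot$-statement. Writing $P_\tensor$, $P_\btensor$, $P_\bodot$, $P_\odot$ for the sources of $f\wedge_{\on{gr}}g$, of its $\btensor$-analogue, and of the pushout-products for $\bodot$ and for $\odot$ respectively, one applies the pushout square of \autoref{prop:globularcoincide} pointwise and commutes colimits to obtain $P_\odot\isom P_\tensor\coprod_{P_\btensor}P_\bodot$ and $Y\odot B\isom Y\tensor B\coprod_{Y\btensor B}Y\bodot B$. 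Thus $f\wedge_{\on{gl}}g$ fits into a commuting square over the $\bodot$-pushout-product $f\wedge_{\on{gl}}^{\bodot}g$ whose two horizontal arrows are the cobase changes of $P_\btensor\to P_\tensor$ and of $Y\btensor B\to Y\tensor B$. The second of these is a trivial cofibration by \autoref{prop:bothgraycoincide}; the first is a trivial cofibration since it is a cofibration that realises the comparison of the two homotopy pushouts induced by the pointwise trivial cofibrations $X\btensor Z\to X\tensor Z$. Two-out-of-three then shows that $f\wedge_{\on{gl}}g$ (which is visibly a cofibration) is a weak equivalence as soon as $f\wedge_{\on{gl}}^{\bodot}g$ is, so it is enough to prove the $\bodot$-statement; and since $\bodot$ preserves colimits in each variable, it suffices to treat the case where $f$ is a generating cofibration \ref{cof:c1}--\ref{cof:c3} and $g$ a generating $\mathbf{MS}$-anodyne map of \autoref{def:msanodyne}.

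For that case analysis I would use that $X\bodot Y$ and $X\btensor Y$ carry the same underlying scaled simplicial set, so that the underlying map of scaled simplicial sets of $f\wedge_{\on{gl}}^{\bodot}g$ agrees with that of the corresponding $\btensor$-pushout-product. Every step in the proof of \autoref{prop:grpushout} that adjoins a thin triangle --- via maps of type \ref{ms:wonky4}, maps of type \ref{ms:saturation} from \autoref{rem:saturation}, or \autoref{lem:0saturations} --- therefore applies verbatim, and the only new work is to adjoin the marked edges of $Y\bodot B$ not already present in the pushout. Here the globular marking rule, which marks a pair $(e_X,e_Y)$ exactly when $e_X$ is marked, actually makes matters easier than in the Gray case: for $f$ of type \ref{cof:c1} with $n\geq 1$ and for $f$ of type \ref{cof:c3} the pushout-product is already a bijection on marked edges, irrespective of the markings of $g$, and for $f$ of type \ref{cof:c2} the only missing marked edges are those of the form $(\text{non-degenerate edge of }\Delta^1,\ e')$ with $e'$ an edge of $B$ not in $A$, which are adjoined through auxiliary thin $2$-simplices using maps of type \ref{ms:composeacrossthin}, exactly mirroring the bookkeeping in \autoref{prop:grpushout}.

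The only case not directly subsumed by \autoref{prop:grpushout} is the degenerate one $f\colon\emptyset\to\Delta^0$ (type \ref{cof:c1}, $n=0$), for which $f\wedge_{\on{gl}}^{\bodot}g$ reduces to the map $(A,\sharp,T_A)\to(B,\sharp,T_B)$ obtained from $g$ by marking every edge; I would check that this is a trivial cofibration one generator at a time, noting that once all $1$-simplices are marked the maps of type \ref{ms:saturation} and their cobase changes scale every triangle, after which an inner-horn inclusion or the fourth-face saturation with everything marked is a trivial cofibration by a direct comparison of fibrant replacements (both modelling a point). The hard part will not be any single step but the sheer number of cases, and in particular --- just as in \autoref{prop:grpushout} --- the interaction with the $\mathbf{MS}$-anodyne maps of type \ref{ms:equivalences}, where one must build auxiliary $3$- and $4$-simplices inside $\Delta^1\bodot K$ to propagate scalings and markings; however, no idea beyond those already present in \autoref{prop:grpushout} and the structural reduction above is required. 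Finally, the map $g\wedge_{\on{gl}}f$ is handled by the symmetric argument, using the opposite of a map of type \ref{ms:wonky4} where necessary (a trivial cofibration, though not obviously $\mathbf{MS}$-anodyne), exactly as in \autoref{prop:grpushout}; and the $\bodot$-version of the whole statement has been established along the way.
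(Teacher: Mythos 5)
Your structural reduction is essentially the paper's: \autoref{prop:grpushout} is used as a black box so that the underlying scaled simplicial set requires no further attention, and the residual work is to adjoin the extra globular markings by $\mathbf{MS}$-anodyne pushouts. The paper packages this slightly more efficiently — it forms the pushout $L(f,g)$ of the globular pushout-product source along the Gray pushout-product $X \tensor B \coprod_{X\tensor A} Y\tensor A \to Y\tensor B$, so that the first leg is a trivial cofibration by cobase change and the second leg $L(f,g)\to Y\odot B$ is an isomorphism of underlying scaled simplicial sets — but your "the thin-triangle steps apply verbatim" amounts to the same thing, and is legitimate because the globular marking contains the Gray marking, so every marked edge invoked in \autoref{prop:grpushout} is still available. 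Your explicit treatment of the $n=0$ instance of \ref{cof:c1} is a point in your favour.

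The gap is in your treatment of $g\wedge_{\on{gl}}f$. The globular marking is controlled entirely by the \emph{first} tensor factor, so the two pushout-products are not exchanged by any symmetry: for $f\wedge_{\on{gl}}g$ the marked edges of $Y\odot B$ are governed by the cofibration (whence your correct observation that for \ref{cof:c1} with $n\geq 1$ and for \ref{cof:c3} no marked edge is missing), but for $g\wedge_{\on{gl}}f$ they are governed by the anodyne map $g$, and the claim "bijection on marked edges irrespective of the markings of $g$" fails there. Concretely, for $f:\partial\Delta^1\to\Delta^1$ and $g$ of type \ref{mb:2coCartliftsExist} the target $(\Delta^1,\sharp,\sharp)\odot(\Delta^1,\flat,\flat)$ contains the marked edges $(0,0)\to(1,1)$ and $(1,0)\to(1,1)$, which do not factor through $B\odot X\coprod_{A\odot X}A\odot Y$; similarly for $g$ of type \ref{ms:composeacrossthin} the edge $(0,0)\to(2,1)$ is missing, and for \ref{ms:equivalences} all edges $(e_K,e)$ with both components non-degenerate are missing. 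These are precisely the cases where the paper does its marking work for $\alpha_{g,f}$, using pushouts of type \ref{ms:composeacrossthin} and \autoref{lem:0saturations}. No idea beyond the tools you already cite is needed to fill this in, but as written your case analysis covers only one of the two maps, and deferring the other to "the symmetric argument" would fail if executed literally.
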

\begin{proof}
 We consider a pushout diagram
 \[
   \begin{tikzcd}
     X \tensor B \coprod\limits_{X \tensor A} Y \tensor A  \arrow[r,"\eta_{X,Y}"] \arrow[d,"\simeq"] & X \odot B \coprod\limits_{X \odot A} Y \odot A  \arrow[d,"\simeq"] \\
   Y \tensor B \arrow[r] & L(f,g)
   \end{tikzcd}
 \]
 where the left-most vertical morphism is a weak equivalence due to \autoref{prop:grpushout}. Therefore, by 2-out-of-3 it will enough to show that $\alpha_{f,g}:L(f,g) \xlongrightarrow{} X \odot Y$ is a weak equivalence. Note that $\alpha_{f,g}$ is an isomorphism on the underlying scaled-simplicial sets so it will be enough to show that we can add the remaining marked edges in a suitable way. As routinary, we verify case by case.
 \begin{itemize}
   \item \textbf{The cofibration is of type} \ref{cof:c1}.
   \begin{itemize}
     \item[\ref{ms:innerhorn}] Since the marking on both marked-scaled simplicial sets is the minimal one it follows that $\alpha_{f,g}$ is an isomorphism (resp. $\alpha_{g,f}$).
     \item[\ref{ms:wonky4}] Same as above.
     \item[\ref{ms:2coCartesianmorphs}] We verify the check $\alpha_{g,f}$. Note since $g:\Lambda^m_0 \xlongrightarrow{} \Delta^m$ and the edge $0 \xlongrightarrow{} 1$ is marked in $\Delta^m$ it follows that an edge in $\Delta^m \odot \Delta^n$ is marked if and only its component in $\Delta^m$ is either degenerate or $0 \xlongrightarrow{} 1$. However, all such edges factor through $\Lambda^m_0 \odot \Delta^n$ and thus $\alpha_{g,f}$ is an isomorphism.
     \item[\ref{mb:2coCartliftsExist}] The map $\alpha_{f,g}$ is always an isomorphim. We also see that $\alpha_{g,f}$ is an isomorphism unless $f:\partial \Delta^1 \xlongrightarrow{} \Delta^1$. In this case the only edges to mark are $(0,0) \xlongrightarrow{} (1,1)$ and $(1,0) \xlongrightarrow{} (1,1)$. This can be done using morphisms of type \ref{ms:composeacrossthin} and of type $ii)$ in \autoref{lem:0saturations}.
     \item[\ref{ms:composeacrossthin}] The only relevant case is $\alpha_{g,f}$ with $f:\partial \Delta^1 \xlongrightarrow{} \Delta^1$. In this case we are missing the marking on $(0,0) \xlongrightarrow{} (2,1)$ which can be marked with a morphism of type \ref{ms:composeacrossthin} after considering the 2-simplex $(0,0) \xlongrightarrow{} (2,0) \xlongrightarrow{} (2,1)$.
     \item[\ref{ms:equivalences}] As before we need to check $\alpha_{g,f}$ with $f:\partial \Delta^1 \xlongrightarrow{} \Delta^1$. The argument is essentially the same as in the previous case and left as an exercise.
   \end{itemize}
   \item \textbf{The cofibration is of type} \ref{cof:c2}.
      \begin{itemize}
        \item[\ref{ms:innerhorn}] The only case to check is $\alpha_{f,g}$ with $f:\partial \Delta^1 \xlongrightarrow{} \Delta^1$. In this case we need to mark $(0,0) \xlongrightarrow{} (1,2)$ which we do usin the 2-simplex $(0,0) \xlongrightarrow{} (1,0) \xlongrightarrow{} (1,2)$ and a morphism of type \ref{ms:composeacrossthin}.
        \item[\ref{ms:wonky4}] The map is an isomorphism.
        \item[\ref{ms:2coCartesianmorphs}] The only case to check is $\alpha_{f,g}$ with $f:\partial \Delta^1 \xlongrightarrow{} \Delta^1$. We only need to mark the edge $(0,1) \xlongrightarrow{} (1,2)$. To this end we consider the 2-simplex $(0,1) \xlongrightarrow{} (1,1) \xlongrightarrow{} (1,2)$ and use a morphism of type \ref{ms:composeacrossthin}.
        \item[\ref{mb:2coCartliftsExist}] The map is an isomorphism.
        \item[\ref{ms:equivalences}] The map is an isomorphism.
      \end{itemize}
    \item \textbf{The cofibration is of type} \ref{cof:c3}.

    In this case the maps $\alpha_{f,g}$ and $\alpha_{g,f}$ are always isomorphisms.
 \end{itemize}
\end{proof}

\begin{proposition}\label{prop:globularleftquillen}
  The functor $\bodot: \on{Set}_\Delta^{\mathbf{ms}}\times \on{Set}_\Delta^{\mathbf{ms}} \xlongrightarrow{} \on{Set}_\Delta^{\mathbf{ms}}$ is a left Quillen bifunctor.
\end{proposition}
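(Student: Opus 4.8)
The plan is to replay the proof of \autoref{prop:msgrayleftquillen} with $\bodot$ in place of $\btensor$, since every ingredient used there has a globular counterpart that is already available. By construction (see \autoref{rem:bodot}) the functor $\bodot$ preserves colimits separately in each variable, so the content is the pushout-product axiom. Given cofibrations $i:A\xlongrightarrow{}B$ and $j:C\xlongrightarrow{}D$, the underlying simplicial set of $X\bodot Y$ is the Cartesian product $X\times Y$, so the underlying map of $i\wedge_{\on{gl}}j$ is the pushout-product of two monomorphisms, hence a monomorphism; thus $i\wedge_{\on{gl}}j$ is always a cofibration, and it remains to treat the trivial case.

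Suppose $i$ is a trivial cofibration; the case in which $j$ is trivial is symmetric, using the second pushout-product map of \autoref{prop:globularpushout}. Exactly as in \autoref{prop:msgrayleftquillen}, I would choose $\mathbf{MS}$-anodyne maps $A\xlongrightarrow{}\bcat{A}$ and $B\xlongrightarrow{}\bcat{B}$ fitting into a commutative square with $\bcat{A},\bcat{B}$ $\infty$-bicategories. Invoking \autoref{prop:globularpushout} in the role that \autoref{prop:grpushout} plays there, together with left properness of the model structure (\autoref{thm:markedscaledmodel}), so that the relevant pushouts are homotopy pushouts, and $2$-out-of-$3$, one reduces the problem to:
\begin{itemize}
  \item[$\ast)$] for every $X\in\on{Set}_\Delta^{\mathbf{ms}}$ and every equivalence of $\infty$-bicategories $f:\bcat{A}\xlongrightarrow{}\bcat{B}$, the induced map $\bcat{A}\bodot X\xlongrightarrow{}\bcat{B}\bodot X$ is a weak equivalence.
\end{itemize}

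To prove $\ast)$, pick a homotopy inverse $g:\bcat{B}\xlongrightarrow{}\bcat{A}$ and a cylinder $\bcat{A}\times(\Delta^1)^\sharp\xlongrightarrow{}\bcat{A}$ witnessing $\id\simeq g\circ f$. The inclusion $(\Delta^0,\sharp,\sharp)\xlongrightarrow{}(\Delta^1,\sharp,\sharp)$ is $\mathbf{MS}$-anodyne of type \ref{mb:2coCartliftsExist}, so \autoref{prop:globularpushout} gives that $\bcat{A}\xlongrightarrow{}\bcat{A}\bodot(\Delta^1)^\sharp$ is a trivial cofibration; and since $(\Delta^1)^\sharp$ is maximally marked and maximally scaled and $\Delta^1$ carries no non-degenerate triangles, one checks directly that $\bodot$ against $(\Delta^1)^\sharp$ coincides with the Cartesian product on both scalings and markings, so this map is precisely $\bcat{A}\xlongrightarrow{}\bcat{A}\times(\Delta^1)^\sharp$. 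Collapsing the cylinder in both directions and chasing $2$-out-of-$3$, as in \autoref{prop:msgrayleftquillen}, then shows $(\bcat{A}\times\Delta^{\{1\}})\bodot X\xlongrightarrow{}\bcat{A}\bodot X$ is a weak equivalence, and likewise for $\bcat{B}$; feeding this into the zig-zag $g\circ f\simeq\id$, $f\circ g\simeq\id$ yields $\ast)$. Running the same argument with $\odot$ replacing $\bodot$ disposes of that variant.

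The argument is essentially formal once \autoref{prop:globularpushout} is in hand, so I do not expect a serious obstacle; the one point deserving care is the verification that $\bodot$ genuinely degenerates to the Cartesian product against the maximally decorated $(\Delta^1)^\sharp$, since this is exactly what allows the cylinder argument of \autoref{prop:msgrayleftquillen} to transfer verbatim, and, relatedly, confirming that the homotopy-pushout reductions go through, for which left properness is used.
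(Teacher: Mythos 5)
Your proposal is correct and follows exactly the route the paper intends: the paper's own proof of this proposition is just the remark that the argument of \autoref{prop:msgrayleftquillen} carries over verbatim with \autoref{prop:globularpushout} in place of \autoref{prop:grpushout}, which is precisely what you carry out. The one point you flag for care — that $\bcat{A}\odot(\Delta^1)^\sharp$ agrees with the Cartesian product so the cylinder argument transfers — is indeed the only thing to check, and it holds since $(\Delta^1)^\sharp$ is maximally marked and scaled.
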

\begin{proof}
  The proof is essentially the same as the proof of \autoref{prop:msgrayleftquillen} and left to the reader.
\end{proof}

\begin{definition}\label{def:graymapping}
  Let $X$ be a marked-scaled simplicial set and let $\bcat{D}$ be an $\infty$-bicategory. We define marked-scaled simplicial sets via the universal properties
  \[
   \Hom_{\on{Set}_\Delta^{\mathbf{ms}}}(A, \on{Fun}^{\mathbf{gr}}(X,\bcat{D}))\isom \Hom_{\on{Set}_\Delta^{\mathbf{ms}}}(A \tensor X , \bcat{D} )
  \]
  \[
    \Hom_{\on{Set}_\Delta^{\mathbf{ms}}}(A, \on{Fun}^{\mathbf{opgr}}(X,\bcat{D}))\isom \Hom_{\on{Set}\Delta^{\mathbf{ms}}}(X \tensor A, \bcat{D} )
  \]
 which are $\infty$-bicategories in virtue of \autoref{prop:grpushout}.
\end{definition}

\begin{remark}[Warning]
   Let $X$ be a marked-scaled simplicial set. Then the functor category $\on{Fun}^{\mathbf{gr}}(X,\bcat{D})$ does not coincide with the category of functors and partially lax natural transformations studied in section 4.1 of \cite{GHL_LaxLim}. This is due to the fact that the objects of $\on{Fun}^{\mathbf{gr}}(X,\bcat{D})$ are functors $F:X \xlongrightarrow{} \bcat{D}$ sending the marked edges in $X$ to marked edges in $\bcat{D}$ whereas in $\cite{GHL_LaxLim}$ the marking is used to model the level of laxness of the natural transformation.

   One could produce a version of the Gray tensor product for marked-scaled simplicial sets equipped which an additional collection of marked edges. We will not follow that line of thought here for the sake of simplicity.
\end{remark}

\begin{definition}\label{def:globfunctorcat}
  Let $X$ be a marked-scaled simplicial set and let $\bcat{D}$ be an $\infty$-bicategory. We define scaled simplicial sets via the universal properties
  \[
   \Hom_{\on{Set}_\Delta^{\mathbf{ms}}}(A, \on{Fun}^{\mathbf{gl}}(X,\bcat{D}))\isom \Hom_{\on{Set}_\Delta^{\mathbf{ms}}}(A \odot X, \bcat{D} )
  \]
  \[
    \Hom_{\on{Set}_\Delta^{\mathbf{ms}}}(A, \on{Fun}^{\mathbf{opgl}}(X,\bcat{D}))\isom \Hom_{\on{Set}_\Delta^{\mathbf{ms}}}(X \odot A, \bcat{D} )
  \]
 which are $\infty$-bicategories in virtue of \autoref{prop:globularpushout}.
\end{definition}

\begin{remark}\label{rem:graymodels}
  We know from \autoref{prop:msgrayleftquillen} and \autoref{prop:globularleftquillen} that the functors $\btensor$ and $\bodot$ are left Quillen bifunctors. In particular they descend to functors of $(\infty,1)$-categories. From this point on we will identify $\tensor $ with $\btensor$ and similarly for $\odot$ and $\bodot$ since they represent the same functor a the level of $(\infty,1)$-categories. In this abuse of notation, we are tacitly assuming that whenever we need to make use of an specific model-categorical incarnation of the Gray tensor product we will pick the variant that makes the most sense (see for example \autoref{prop:coendleftquillen}, \autoref{prop:enhancedcoendleftquillen}). We would like to remind the reader that the constructions given in \autoref{def:graymapping} and \autoref{def:globfunctorcat} are insensitive to the choice of model for the Gray tensor product (resp. globular Gray tensor product).
\end{remark}

\subsubsection{The universal property of the Gray tensor product}
\begin{definition}\label{def:invertible2morph}
  Let $\bcat{C}$ be an $\infty$-bicategory. We define a subcollection of the thin triangles $M_{\bcat{C}}\subseteq T_{\bcat{C}}$ called the invertible 2-morphisms which consist in in those thin 2-simplices $\sigma:\Delta^2 \xlongrightarrow{} \bcat{C}$ such that the edge $\sigma(0\xlongrightarrow{} 1)$ or the edge $\sigma(1\xlongrightarrow{} 2)$ are equivalences in $\bcat{C}$.
\end{definition}

\begin{definition}\label{def:laxfun}
  Let $\bcat{C}$ and $\bcat{A}$ be a pair of $\infty$-bicategories. We define a oplax normalised functor from $\bcat{C}$ to $\bcat{A}$ to be a map of marked-scaled simplicial sets $f:(\bcat{C},E_{\bcat{C}},M_{\bcat{C}}) \xlongrightarrow{} (\bcat{A},E_{\bcat{A}},T_{\bcat{A}})$. Similarly we define  
    \[
       \Fun^{\mathbf{oplax}}(\bcat{C},\bcat{D}):= \Fun (\bcat{C},E_{\bcat{C}},M_{\bcat{C}}),(\bcat{A},E_{\bcat{A}},T_{\bcat{A}}) )
    \] 
    which is an $\infty$-bicategory since $\bcat{A}$ is fibrant.
\end{definition}

\begin{remark}\label{rem:iotacd}
  Let $\bcat{C}$ and $\bcat{D}$ be a pair of $\infty$-bicategories and observe that the scaling $M_{\bcat{C}\times \bcat{D}}\subset T_{\bcat{C}\tensor \bcat{D}}$ where the later denotes the scaling in the Gray tensor product \autoref{def:gray}. In particular this induces a morphism of marked-scaled simplicial sets
  \[
    \iota_{\bcat{C},\bcat{D}}:(\bcat{C}\times \bcat{D},E_{\bcat{C}} \times E_{\bcat{D}},M_{\bcat{C}\times \bcat{D}}) \xlongrightarrow{}  (\bcat{C}\times \bcat{D},E_{\bcat{C}}\times E_{\bcat{D}}, T_{\bcat{C}\tensor \bcat{D}})
  \]
\end{remark}

The next result can be found as Theorem 3.7 in \cite{GHL_Gray}.

\begin{theorem}\label{thm:univgray}
  Let $\bcat{C},\bcat{D}$ and $\bcat{A}$ be  $\infty$-bicategories the restriction along the map $\iota_{\bcat{C},\bcat{D}}$ (see \autoref{rem:iotacd}) induces a fully faithful functor
  \[
    \iota^*: \Fun^{\mathbf{oplax}}(\bcat{C}\times \bcat{D},\bcat{A}) \xlongrightarrow{} \on{Fun}(\bcat{C}\tensor \bcat{D},\bcat{A})
  \]
  whose essential image is is given by those oplax normalised functors $F:\bcat{C}\times \bcat{D} \xlongrightarrow{} \bcat{A}$ satisfying the following conditions:
  \begin{enumerate}
    \item For all objects $x \in \bcat{C}$ and $y \in \bcat{D}$  the restriction of $F$ to $\{x\} \times \bcat{D}$ and the restriction to $\bcat{C}\times \{y\}$ define functors of $\infty$-bicategories $F_x: \bcat{D} \xlongrightarrow{} \bcat{A}$ and $F_y:\bcat{C}\xlongrightarrow{} \bcat{A}$ respectively.
    \item For all arrows $f:x \xlongrightarrow{} x'$ in $\bcat{C}$ and $g:y \xlongrightarrow{} y'$ in $\bcat{D}$ the functor $F$ maps the triangle
    \[
       \begin{tikzcd}
         (x,y) \arrow[r,"f \times \on{id}"] \arrow[dr,swap,"f\times g"] & (x',y) \arrow[d,"\on{id}\times g"] \\
         &  (x',y')
       \end{tikzcd}
     \] 
     to a thin simplex in $\bcat{A}$.
  \end{enumerate}
\end{theorem}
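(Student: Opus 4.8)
The statement is the marked--scaled counterpart of \cite[Theorem 3.7]{GHL_Gray}, and the plan is to rerun that argument in $\on{Set}_\Delta^{\mathbf{ms}}$, where it is if anything cleaner since equivalences are recorded by the marking. By \autoref{rem:iotacd} the map $\iota_{\bcat{C},\bcat{D}}$ is the identity on underlying simplicial sets and on marked edges; only the scaling changes, enlarging $M_{\bcat{C}\times\bcat{D}}$ to the Gray scaling $T_{\bcat{C}\tensor\bcat{D}}$. Hence the induced functor on functor $\infty$-bicategories is injective on simplices in every degree, and the theorem splits into: (a) every map $\bcat{C}\tensor\bcat{D}\longrightarrow\bcat{A}$ restricts to an oplax normalised functor $\bcat{C}\times\bcat{D}\longrightarrow\bcat{A}$ satisfying (1) and (2); (b) conversely, every oplax normalised $F\colon\bcat{C}\times\bcat{D}\longrightarrow\bcat{A}$ satisfying (1) and (2) extends, necessarily uniquely, over $\iota_{\bcat{C},\bcat{D}}$ to a map out of $\bcat{C}\tensor\bcat{D}$; and (c) these identifications promote from objects to the full $\infty$-bicategorical structure.

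Claim (a) is immediate: a restriction is certainly an oplax normalised functor since $M_{\bcat{C}\times\bcat{D}}\subseteq T_{\bcat{C}\tensor\bcat{D}}$; for each object $x$ the slice $\{x\}\times\bcat{D}\hookrightarrow\bcat{C}\tensor\bcat{D}$ carries all of its thin triangles to Gray-thin ones (the $\{1,2\}$-leg of the $\bcat{C}$-coordinate of such a triangle is degenerate, hence marked), so the restriction to $\{x\}\times\bcat{D}$ preserves every thin triangle, i.e. is a functor of $\infty$-bicategories, and symmetrically for $\bcat{C}\times\{y\}$ --- this is (1); for the same reason the interchange triangle of a pair $f,g$ is Gray-thin and hence sent to a thin triangle --- this is (2).

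The content is Claim (b). Since $\iota_{\bcat{C},\bcat{D}}$ is bijective on simplices and markings, an extension exists precisely when $F$ already sends every triangle of $T_{\bcat{C}\tensor\bcat{D}}$ to a thin triangle of $\bcat{A}$; this is known for the triangles of $M_{\bcat{C}\times\bcat{D}}$ (as $F$ is a morphism of marked-scaled simplicial sets) and, by hypothesis, for those of (1)- and (2)-type, and the task is to reduce the remaining Gray-thin triangles to these. Conceptually one slices a general Gray-thin triangle $(\theta_{\bcat{C}},\theta_{\bcat{D}})$ along the intermediate vertices of the associated grid in $\bcat{C}\times\bcat{D}$ and writes $F(\theta_{\bcat{C}},\theta_{\bcat{D}})$ as a pasting of identity $2$-cells (strictness of $F$ on slices, from (1)), invertible interchange $2$-cells (from (2)), and the $2$-morphisms of the thin triangles $\theta_{\bcat{C}},\theta_{\bcat{D}}$ themselves; the one non-strict "reversed-orientation" interchange cell that appears is handled because its critical leg is marked and $F$ preserves equivalences, so it exhibits a composite along an equivalence and is thin in $\bcat{A}$. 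To make this rigorous I would, as in \cite{GHL_Gray}, exhibit $\iota_{\bcat{C},\bcat{D}}$ as a transfinite composite of pushouts along scaled-anodyne and $\mathbf{MS}$-anodyne maps --- of saturation type \ref{i:saturation}/\ref{ms:wonky4}, of type \ref{ms:saturation}, and the $3$-simplices interpolating between slice triangles and interchange squares --- arranged so that, on the fibrant $\bcat{A}$, the first layer of fillers is supplied exactly by conditions (1) and (2) while every later layer is automatic. This combinatorial bookkeeping is where essentially all the work lies and is the step I expect to be the main obstacle; the extra marked data is harmless --- into the fibrant $\bcat{A}$ marked edges are the equivalences --- and \autoref{prop:bothgraycoincide} lets one pass freely between $\tensor$ and $\btensor$ along the way.

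For Claim (c), the two functor $\infty$-bicategories are computed by the internal homs of \autoref{def:graymapping}, which are $\infty$-bicategories by the pushout-product property of the Gray product (\autoref{prop:grpushout}); a standard argument smashing with the generating cofibrations --- applying the analysis of (a) and (b) to $K\times\bcat{C}$ in place of $\bcat{C}$ for a simplex $K$, using associativity of $\tensor$ together with \autoref{prop:grpushout} and \autoref{prop:bothgraycoincide} to absorb the mild failure of $\tensor$ to commute with cartesian products in one slot --- upgrades the identification at the level of objects to a fully faithful embedding with the stated essential image.
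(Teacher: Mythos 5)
The paper does not prove this statement at all: it is imported verbatim (transported to the marked--scaled setting) from Theorem~3.7 of \cite{GHL_Gray}, with the sentence preceding the theorem serving as the entire ``proof.'' So there is no in-paper argument to match, and your proposal should be judged as a reconstruction of the cited proof. Your parts (a) and (c) are correct and match what one would do: the verification that the slice triangles $\{x\}\times\bcat{D}$ and the interchange triangles are Gray-thin follows directly from condition (ii) of \autoref{def:gray} (the $\Delta^{\{1,2\}}$-leg of the $\bcat{C}$-coordinate being degenerate, hence marked), and the inclusion $M_{\bcat{C}\times\bcat{D}}\subseteq T_{\bcat{C}\tensor\bcat{D}}$ of \autoref{rem:iotacd} gives oplax normalisation of the restriction. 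The passage from objects to functor $\infty$-bicategories via the pushout-product property (\autoref{prop:grpushout}) and \autoref{prop:bothgraycoincide} is also the standard move.

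The one substantive issue is your part (b), and you have correctly identified it: showing that an oplax normalised $F$ satisfying (1) and (2) carries \emph{every} Gray-thin triangle to a thin triangle is the entire content of the theorem, and your proposal only describes the shape of the filtration (pushouts along morphisms of type \ref{ms:wonky4}, \ref{ms:saturation}, and interpolating $3$-simplices) without exhibiting it. In \cite{GHL_Gray} this is exactly where the several pages of combinatorics live. Relative to this paper --- which delegates precisely that step to the citation --- your proposal is consistent and not in conflict with anything; but if a self-contained proof were demanded, part (b) as written is a genuine gap, since the claim that ``every later layer is automatic'' is asserted rather than established, and the case analysis over which leg of the Gray-thin triangle is marked (the $\Delta^{\{0,1\}}$-edge in $\bcat{D}$ versus the $\Delta^{\{1,2\}}$-edge in $\bcat{C}$) genuinely requires two different decompositions.
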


\subsection{Complete Segal objects in \texorpdfstring{$(\infty,1)$-}-categories}

\begin{definition}\label{def:segalobj}
  Let $\iCat_\infty$ be the $(\infty,1)$-category of $(\infty,1)$-categories and let $\on{Fun}(\Delta^\op,\iCat_\infty)$ denote the category of simplicial objects in $\iCat_\infty$. We say that $X_\bullet: \Delta^\op \xlongrightarrow{} \iCat_\infty$ is a Segal object $\iCat_\infty$ if the following holds:
  \begin{itemize}
    \item[$\mathbf{S1})$] The value $X_\bullet(0)=X_0$ factors through the $(\infty,1)$-category of spaces $\scr{S}$.
    \item[$\mathbf{S2})$] Let  $\on{P(n)}$ be the full subcategory of $(\Delta_{/n})^{\op}$ spanned by objects of the form $u:[0] \xlongrightarrow{} [n]$ where $0<u(0)<n$ and $f:[1] \xlongrightarrow{} [n]$ where $f(1)=f(0)+1$. Then we have
    \[
      X_n\isom\lim_{\on{P(n)}}X_\bullet \circ \pi_n.
    \]
    where $\pi_n: \on{P}(n) \xlongrightarrow{} \Delta^\op$ is the obvious projection functor.
  \end{itemize}
  We call a Segal object $X_\bullet: \Delta^\op \xlongrightarrow{} \scr{S} \subset \iCat_\infty$ which factors through the $(\infty,1)$-category of spaces a Segal space.
\end{definition}

\begin{remark}\label{rem:underlyingsegal}
  Let $X_\bullet \in \on{Fun}(\Delta^\op,\iCat_\infty)$ be a Segal object. Then it follows that after taking the underlying groupoid at each level we obtain an associated Segal space $X_\bullet^{\simeq} \subseteq X_\bullet$.
\end{remark}

\begin{definition}\label{def:segalhomcat}
  Let $Y_\bullet:\Delta^\op \xlongrightarrow{} \scr{S}$ be a Segal space. We define the \emph{homotopy category} of $Y_\bullet$ denoted by $\on{ho}(Y_\bullet)$ as follows:
  \begin{itemize}
    \item[i)] The objects of $\on{ho}(Y_\bullet)$ are given by the set of objects of $Y_0$.
    \item[ii)] Given a pair of objects $x,y \in \on{ho}(Y_\bullet)$ we set $\on{Map}_{\on{ho}(Y_\bullet)}(x,y)$ to be the connected components of the space $\pi_0(\on{Map}_{Y_\bullet}(x,y))$ where we define $\on{Map}_{Y_\bullet}(x,y)$ as the pullback
    \[
      \begin{tikzcd}
        \on{Map}_{Y_\bullet}(x,y) \arrow[r] \arrow[d] & Y_1 \arrow[d,"d_0 \times d_1"] \\
      \Delta^0 \arrow[r,"(x{,}y)"] & Y_0 \times Y_0
      \end{tikzcd}
    \]
  \item[iii)] The composition functor is induced by the map
  \[
    \begin{tikzcd}
       Y_1 \times_{Y_0} Y_1 \arrow[r,"\simeq"] & Y_2 \arrow[r,"d_1"]& Y_1.
    \end{tikzcd}
  \]
  where the first invertible map is provided by $\mathbf{S2})$ in \autoref{def:segalobj}.
  \end{itemize}
\end{definition}

\begin{definition}\label{def:equivgroupoid}
  Let $X_\bullet: \Delta^\op \xlongrightarrow{} \iCat_\infty$ be a Segal object with underlying Segal space $X_\bullet^{\simeq}$. We say that an object $[f] \in X_1$ is an equivalence if it defines an isomorphism in $\on{ho}(X^{\simeq}_{\bullet})$. More generally we say that an object $[g] \in X_n$ is an equivalence if for every $e:[1] \xlongrightarrow{} [n]$ we have that $e([g]) \in X_1$ is an equivalence.

  By restricting to those objects that are equivalences with obtain a simplicial space 
  \[
    X^{\on{eq}}_\bullet: \Delta^\op \xlongrightarrow{}\scr{S}.
  \]
\end{definition}

\begin{definition}\label{def:completesegalobj}
  We say that a Segal object in $(\infty,1)$-categories $X_\bullet: \Delta^\op \xlongrightarrow{} \iCat_\infty$ is a complete Segal object if the simplicial space of equivalences $X_\bullet^{\on{eq}}$ (see \autoref{def:equivgroupoid}) is constant. In other words, for every $\alpha:[n] \xlongrightarrow{} [m]$ we have that the induced morphism
  \[
    X^{\on{eq}}_m \xlongrightarrow{\simeq} X^{\on{eq}}_n
  \]
  is a homotopy equivalence. We denote by $\CSeg$ the full subcategory of $\Fun(\Delta^\op,\iCat_\infty)$ spanned by the complete Segal objects.
\end{definition}

\subsection{A comparison of models}
In this section, we prove a comparison of two models of the theory of $(\infty,2)$-categories:  We show that $\infty$-bicategories (i.e. fibrant scaled simplicial sets) are equivalent to complete Segal objects in $(\infty,1)$-categories. The proof of this fact will utilize the comparison of several models for $(\infty,2)$-categories given by Lurie in \cite{LurieGoodwillie}. Let us point out that this two models are known to be equivalent (see \cite{LurieGoodwillie}) but in this section we provide a different equivalence of which, to our knowledge, no proof is available.

\begin{definition}\label{def:bicatinfty}
  Let $\on{Set}_\Delta^{\mathbf{ms}}$ denoted the $\on{Set}_\Delta^+$-enriched model category of marked-scaled simplicial sets and let $\left(\on{Set}_\Delta^{\mathbf{ms}}\right)^{\on{o}}$ denote the full enriched subcategory spanned by the $\infty$-bicategories. We denote $\bcat{B}\!\on{icat}_\infty=\Nsc\left(\left(\on{Set}_\Delta^{\mathbf{ms}}\right)^{\on{o}}\right)$ and call it the $\infty$-bicategory of $\infty$-bicategories.
\end{definition}

\begin{definition}\label{def:leq}
  Let $\on{Set}_\Delta^+$ denote the category of marked simplicial sets. We have a colimit preserving functor
  \[
    L_\sharp: \on{Set}_\Delta^+ \xlongrightarrow{} \on{Set}_\Delta^{\mathbf{ms}}, (X,E_X) \mapsto L_\sharp(X,E_X)=(X,E_X,\sharp)
  \]
  whose right adjoint we denote $(-)^{\leq 1}:\on{Set}_\Delta^{\mathbf{ms}} \xlongrightarrow{} \on{Set}_\Delta^{+}$. One can easily check that the adjunction 
  \[
    L_\sharp:\on{Set}_\Delta^+ \llra \on{Set}_\Delta^{\mathbf{ms}}:(-)^{\leq 1}
  \]
  is a Quillen pair.
\end{definition}

\begin{definition}
  Let $\on{Fun}(\Delta^\op,\on{Set}_\Delta^{+})$ denote the (ordinary) category of simplicial objects in marked simplicial sets. We construct a functor 
  \[
    \mathbf{Gl}: \on{Set}_\Delta^{\mathbf{ms}} \xlongrightarrow{} \on{Fun}(\Delta^\op,\on{Set}_\Delta^{+})
  \]
  which sends a marked-scaled simplicial set $(X,E_X,T_X)$ to the functor defined by 
  \[
    \mathbf{Gl}(X)_n:=\on{Fun}^{\mathbf{opgl}}((\Delta^n,\flat,\sharp),X)^{\leq 1}.
  \]
\end{definition}

\begin{remark}\label{rem:globcoend}
 Let $\mathbf{Gl}: \on{Set}_\Delta^{\mathbf{ms}} \xlongrightarrow{} \on{Fun}(\Delta^\op,\on{Set}_\Delta^{+})$ as above. Then $\mathbf{Gl}$ as a left adjoint
 \[
   \int^{\mathbf{opgl}}:\on{Fun}(\Delta^\op,\on{Set}_\Delta^{+}) \xlongrightarrow{} \on{Set}_\Delta^{\mathbf{ms}}, \enspace F \mapsto \int  
L_\sharp(F) \odot L_\sharp(\Delta)
 \]
 
 where the symbol $\int$ denotes \emph{the coend} of the functors $L_\sharp(F)$ obtained from postcomposing $F$ with $L_\sharp$ and the functor $L_\sharp(\Delta^{\bullet}): \Delta \xlongrightarrow{} \on{Set}_{\Delta}^{\mathbf{ms}}$ given by postcomposing the canonical comsimplical object $\Delta^\bullet: \Delta \xlongrightarrow{} \on{Set}_\Delta^+$ that maps $[n] \xlongrightarrow{} (\Delta^n,\flat)$ with the functor $L_\sharp$.
\end{remark}

\begin{proposition}\label{prop:coendleftquillen}
  The adjoint functors
  \[
    \mathbf{Gl}: \on{Set}_\Delta^{\mathbf{ms}} \llra \on{Fun}(\Delta^\op,\on{Set}_\Delta^{+}): \int^{\mathbf{opgl}}
  \]
  are a Quillen pair where the left-handside is endowed with the model structure of marked-scaled simplicial sets given in \autoref{thm:markedscaledmodel} and the right-hand side is equipped with the complete Segal model structure given in \cite[Proposition 1.5.4]{LurieGoodwillie}.
\end{proposition}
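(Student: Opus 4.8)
The plan is to verify that the left adjoint $\int^{\mathbf{opgl}}$ of \autoref{rem:globcoend} preserves cofibrations and trivial cofibrations; since it is colimit-preserving, it suffices to check this on a generating set of cofibrations and a generating set of trivial cofibrations for the complete Segal model structure on $\on{Fun}(\Delta^\op,\on{Set}_\Delta^{+})$. That model structure is a left Bousfield localization of a levelwise model structure (e.g. the projective one) at a set $S$ of maps encoding the Segal and completeness conditions, so by the standard recognition criterion for left Quillen functors out of a left Bousfield localization I would reduce the statement to two claims: (a) $\int^{\mathbf{opgl}}$ is left Quillen for the \emph{un-localized} model structure, and (b) $\int^{\mathbf{opgl}}$ carries every map in $S$ to a weak equivalence of marked-scaled simplicial sets.

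For (a) I would exploit the coend presentation $\int^{\mathbf{opgl}}(F)=\int^{[k]\in\Delta}L_\sharp(F_k)\bodot L_\sharp(\Delta^k)$, writing $\bodot$ (see \autoref{rem:bodot}) for the colimit-preserving variant of $\odot$, which is legitimate by \autoref{rem:graymodels}. A short co-Yoneda computation identifies $\int^{\mathbf{opgl}}$ on the free level-$n$ simplicial object on $X\in\on{Set}_\Delta^{+}$ as $L_\sharp(X)\bodot L_\sharp(\Delta^n)$. Hence a generating cofibration — the free object on a generating cofibration $i$ of $\on{Set}_\Delta^{+}$ — is sent to $L_\sharp(i)\bodot L_\sharp(\Delta^n)$, and a generating trivial cofibration to $L_\sharp(j)\bodot L_\sharp(\Delta^n)$ for $j$ a generating trivial cofibration of $\on{Set}_\Delta^{+}$. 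Both are handled by the fact that $\bodot$ is a left Quillen bifunctor (\autoref{prop:globularleftquillen}) — so that $\bodot(-,L_\sharp(\Delta^n))$ is itself a left Quillen functor — together with $L_\sharp$ being left Quillen (\autoref{def:leq}); for generating trivial cofibrations one invokes in addition \autoref{prop:globularpushout}. If one prefers the Reedy or injective presentation of the generators, the same argument goes through with ``free cell'' replaced by ``relative latching map''.

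For (b), note first that $\int^{\mathbf{opgl}}$ carries a discrete simplicial object — in particular the representable $[n]$ and the objects representing the Segal and completeness conditions — to a maximally marked (and maximally scaled) simplicial set, since the left $\odot$-factor is then discrete. Consequently the Segal covering map is sent to an inclusion of the form $(\mathrm{Sp}(n),\sharp,\sharp)\hookrightarrow(\Delta^n,\sharp,\sharp)$, and the completeness map to a map $\Delta^0\xlongrightarrow{} Z$ with $Z$ obtained by gluing copies of $(\Delta^1,\sharp,\sharp)$ along vertices. In each case both source and target have weakly contractible underlying simplicial set and are maximally decorated, hence are weakly equivalent to $\Delta^0$ by \ref{mb:2coCartliftsExist} and \ref{ms:equivalences} (together with the inner-horn maps \ref{ms:innerhorn} used to collapse the spine); so every map in $S$ is sent to a weak equivalence. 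Combining (a) and (b) yields the proposition.

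I expect the main obstacle to be (b): one must compute $\int^{\mathbf{opgl}}$ explicitly on the representables and on the Segal/completeness objects used in \cite[Proposition 1.5.4]{LurieGoodwillie}, keep careful track of the markings and scalings induced under the $\bodot$-coend, and then recognise the resulting maps as weak equivalences by assembling the anodyne maps \ref{ms:innerhorn}, \ref{mb:2coCartliftsExist} and \ref{ms:equivalences}. The completeness map is the more delicate of the two, since its target must first be identified concretely before one can see that it is $\mathbf{MS}$-equivalent to a point.
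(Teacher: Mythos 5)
Your overall strategy is the same as the paper's: first show that $\int^{\mathbf{opgl}}$ is left Quillen for the un-localized model structure by feeding the left Quillen bifunctor $\odot\circ(L_\sharp\times L_\sharp)$ (\autoref{prop:globularleftquillen}) into the coend against the Reedy cofibrant cosimplicial object $\Delta^\bullet$ (the paper invokes \cite[Proposition A.2.9.26]{HTT} for precisely this), and then check that the spine inclusions and the completeness map are carried to weak equivalences of marked-scaled simplicial sets. Your part (a) is essentially the paper's argument in a different presentation and is fine.

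Part (b), however, rests on a miscomputation of the decorations. The left adjoint is $F\mapsto\int^{[k]}(\Delta^k,\flat,\sharp)\odot L_\sharp(F_k)$ with the cosimplicial object in the \emph{first} slot of $\odot$: this is forced by $\mathbf{Gl}(X)_n=\Fun^{\mathbf{opgl}}((\Delta^n,\flat,\sharp),X)^{\leq 1}$ together with $\Hom(A,\Fun^{\mathbf{opgl}}(Y,X))\isom\Hom(Y\odot A,X)$ (the displayed formula in \autoref{rem:globcoend} has the factors in the opposite order, which may be what misled you). Since the marking of $Y\odot A$ is inherited from the first factor $Y$, a discrete simplicial object is sent to a \emph{minimally} marked, maximally scaled object: $\int^{\mathbf{opgl}}(\Delta^n)=(\Delta^n,\flat,\sharp)$, not $(\Delta^n,\sharp,\sharp)$. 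In particular $(\Delta^n,\flat,\sharp)$ models the poset $[n]$ and is not weakly contractible, so your justification for the Segal maps --- ``both sides are maximally decorated with weakly contractible underlying simplicial set, hence equivalent to $\Delta^0$'' --- fails. The correct reason that $(\on{Sp}(n),\flat,\sharp)\xlongrightarrow{}(\Delta^n,\flat,\sharp)$ is a trivial cofibration is that it is $\MS$-anodyne: the spine inclusion is a composite of pushouts of inner horn inclusions, and in the presence of the maximal scaling each of these is a pushout of a map of type \ref{ms:innerhorn}. The completeness map survives a similar correction: $\int^{\mathbf{opgl}}(\on{Iso})=(\on{Iso},\flat,\sharp)$ with $\on{Iso}$ a contractible Kan complex, so \ref{ms:equivalences} upgrades it to $(\on{Iso},\sharp,\sharp)$, which is equivalent to $\Delta^0$. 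The proposition is thus still reachable by your route, but the identification of the markings --- the very point you flag as delicate --- has to be redone with the cosimplicial factor in the marking-determining slot.
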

\begin{proof}
  Since $L_\sharp:\on{Set}_\Delta^+ \xlongrightarrow{} \on{Set}_\Delta^{\mathbf{ms}}$ is a left Quillen functor it follows from \autoref{prop:globularleftquillen} that the composite
  \[
    \on{Set}_\Delta^+ \times \on{Set}_\Delta^+ \xlongrightarrow{} \on{Set}_\Delta^{\mathbf{ms}}\times \on{Set}_\Delta^{\mathbf{ms}} \xlongrightarrow{\odot} \on{Set}_\Delta^{\mathbf{ms}}
  \]
  is a left-Quillen bifunctor (see \autoref{rem:graymodels}). It follows from \cite[Proposition A.2.9.26]{HTT} that the coend functor
  \[
    \on{Fun}(\Delta,\on{Set}_\Delta^+) \times \on{Fun}(\Delta^\op,\on{Set}_\Delta^+) \xlongrightarrow{} \on{Set}_\Delta^{\mathbf{ms}}
  \]
  where the left-handside is equipped with the Reedy model structure on each factor. Since the canonical cosimplicial object $\Delta^{\bullet}: \Delta \xlongrightarrow{} \on{Set}_\Delta^+$ is Reedy cofibrant it follows that we have a left Quillen functor
  \[
  \int^{\mathbf{opgl}}: \on{Fun}(\Delta^\op,\on{Set}_\Delta^+) \xlongrightarrow{} \on{Set}_\Delta^{\mathbf{ms}}
  \]
  To finish the proof we need to show that $\int^{\mathbf{opgl}}$ sends the maps below to weak equivalences in $\on{Set}_\Delta^{\mathbf{ms}}$.
  \begin{itemize}
    \item The map $i_n:\on{Sp}(n) \xlongrightarrow{} \Delta^n$ given by the inclusion of the spine of a simplex. 
    \item The map $t: \on{Iso} \xlongrightarrow{} \Delta^0$ where $\on{Iso}$ is the free walking isomorphism.  
  \end{itemize} 
  A direction computation reveals that $\int^{\mathbf{opgl}}i_n:(\on{Sp}(n),\flat,\sharp) \xlongrightarrow{} (\Delta^n,\flat,\sharp)$ and similarly for similarly for $\int^{\mathbf{opgl}}t$ so the claim follows. 
\end{proof}

Our goal in this section is to show that $\mathbf{Gl}$ is a right Quillen equilvanece. We will use to our advantage several models of the theory of $(\infty,2)$-categories and the comparison done by Lurie in \cite{LurieGoodwillie}.

\begin{definition}
  Let $S$ be a set and consider the functor $H_S: \Delta^\op \xlongrightarrow{} \on{Set}$ sending $n \mapsto \Hom_{\on{Set}}([n],S)$. We define $\Delta_S$ to be the category of elements of the presheaf $H_S$. Given an object $(n,c: [n] \xlongrightarrow{} S) \in \Delta_S$ we will use the notation $(n,c: [n] \xlongrightarrow{} S)=[s_0,s_1,\dots,s_n]$ and $(0,c:[0] \xlongrightarrow{} S)=[s]$.
\end{definition}

\begin{definition}
  Let $S \in \on{Set}$ and let $\on{Fun}^{\on{pre}}(\Delta^\op_S,\on{Set}_\Delta^+)$ denote the full subcategory on those functors $X:\Delta^\op_S \xlongrightarrow{} \on{Set}_\Delta^+$ such that for every $[s] \in \Delta_S$ we have $X[s]=\Delta^0$. We observe that this definition extends to a functor 
  \[
    \on{Set}^\op \xlongrightarrow{} \on{Cat}; \enspace S \mapsto \on{Fun}^{\on{pre}}(\Delta_S^\op,\on{Set}_\Delta^+).
  \]
  Taking the (contravariant) Grothendieck construction we obtain a category $\on{Seg}_{\on{Set}_\Delta^+}$ which we call the category of $\on{Set}_\Delta^+$-enriched Segal precategories. Given a Segal precategory $X: \Delta^\op_S \xlongrightarrow{} \on{Set}_\Delta^+$ we will call the set $S$, \emph{the set of objects} of $X$.
\end{definition}

\begin{definition}
  We say that a $\on{Set}_\Delta^+$-enriched Segal precategory $X:\Delta_S^\op \xlongrightarrow{} \on{Set}_\Delta^+$ is an $\on{Set}_\Delta^+$-enriched Segal category if for every $[s_0,s_1,\dots,s_n]$ the map 
  \[
     X[s_0,s_1,\dots,s_n] \xlongrightarrow{} X[s_0,s_1] \times X[s_1,s_2]\times \cdots \times X[s_{n-1},s_n]
  \]
  exhibits $X[s_0,s_1,\dots,s_n]$ as the homotopy product of the objects $\{X[s_i,s_{i+1}]\}_{i=0}^{n-1}$.
\end{definition}

\begin{definition}
   We say that a $\on{Set}_\Delta^+$-enriched Segal precategory $X$ is locally fibrant if for every $[s_0,\dots,s_n]$ the value $X[s_0,\dots,s_n]$ is a fibrant object in $\on{Set}_\Delta^+$.
\end{definition}

\begin{definition}
  We define a functor $G:\on{Cat}_\Delta^+ \xlongrightarrow{} \on{Seg}_{\on{Set}_\Delta^+}$ mapping a $\on{Set}_\Delta^+$-enriched category $\scr{C}$ to the $\on{Set}_{\Delta}^+$-enriched Segal category $G(\scr{C})$, whose set of objects is given by the objects of $\scr{C}$, via the formula 
  \[
    G(\scr{C})[s_0,\dots,s_n]=\scr{C}(s_0,s_1) \times\cdots \times \scr{C}(s_{n-1},s_n)
  \]
\end{definition}
The next result can be found in \cite[Theorem 2.2.16]{LurieGoodwillie}
\begin{theorem}
  The functor $G:\on{Cat}_\Delta^+ \xlongrightarrow{} \on{Seg}_{\on{Set}_\Delta^+}$ is a right Quillen equivalence where $\on{Cat}_\Delta^+$ is equipped with the usual model structure on $\on{Set}_\Delta^+$-enriched categories and $\on{Seg}_{\on{Set}_\Delta^+}$ is equipped with the projective model structure on $\on{Set}_\Delta^+$-enriched Segal precategories found in Theorem 2.2.16 \cite{LurieGoodwillie}.
\end{theorem}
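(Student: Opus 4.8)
The plan is to realise the left adjoint $F$ of $G$ as a \emph{rigidification} functor, check that $(F,G)$ is a Quillen pair, and then reduce the Quillen-equivalence statement to a fibrewise computation over a fixed set of objects.

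First I would construct $F$. For each $[n]$ let $\mathfrak{F}[n]$ be the $\on{Set}_\Delta^+$-enriched category with object set $\{0,\dots,n\}$ freely generated by the chain $0\to 1\to\cdots\to n$; as $[n]$ and the object set vary these organise into a functor out of $\bigsqcup_S\Delta_S$, and $F$ is the induced colimit: for a Segal precategory $X\colon\Delta_S^\op\to\on{Set}_\Delta^+$ one lets $F(X)$ be the enriched category with object set $S$ whose mapping objects are computed by the two-sided bar construction gluing the $X[s_0,s_1],X[s_1,s_2],\dots$ along the points $X[s]=\Delta^0$. Comparing against the defining formula $G(\scr{C})[s_0,\dots,s_n]=\scr{C}(s_0,s_1)\times\cdots\times\scr{C}(s_{n-1},s_n)$ exhibits $F\dashv G$.

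Next, $G$ is right Quillen. In the projective model structure on $\on{Seg}_{\on{Set}_\Delta^+}$, fibrations and weak equivalences restrict, over a fixed object set $S$, to the levelwise ones on $\on{Fun}^{\on{pre}}(\Delta_S^\op,\on{Set}_\Delta^+)$, and $G$ visibly lands in locally fibrant objects. Since $G$ is built from cartesian products, a finite product of (trivial) fibrations of marked simplicial sets is again a (trivial) fibration, and fibrations of $\on{Set}_\Delta^+$-enriched categories are detected on mapping objects together with the underlying homotopy categories, one checks directly that $G$ preserves fibrations and trivial fibrations; hence $(F,G)$ is a Quillen pair. To see it is a Quillen equivalence, note that both model structures are assembled in the same way from their fibres over $\on{Set}$ and that $G$ commutes with the set-of-objects functor, so by a standard fibrewise argument it suffices to prove that for each $S$ the induced adjunction $F_S\dashv G_S$, between $\on{Set}_\Delta^+$-enriched categories with object set $S$ and Segal precategories with object set $S$, is a Quillen equivalence. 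For the derived counit: if $\scr{C}$ is a locally fibrant $\on{Set}_\Delta^+$-category with object set $S$ then $G_S(\scr{C})$ already has strictly associative composition, so after a cofibrant replacement $\widetilde{G_S(\scr{C})}$ the bar construction defining $F_S$ collapses and the counit $F_S(\widetilde{G_S(\scr{C})})\to\scr{C}$ is a levelwise equivalence on mapping objects. For the derived unit: one reduces, by a cell induction using left properness of $\on{Set}_\Delta^+$, to Segal precategories freely generated by a finite graph decorated with generating (trivial) cofibrations of marked simplicial sets, and there the Segal condition lets one compute $X$ and $G_SF_S(X)$ explicitly and compare them.

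The main obstacle is the unit statement above: proving that rigidifying a $\on{Set}_\Delta^+$-enriched Segal category leaves the homotopy types of its mapping objects unchanged, equivalently that the bar construction computing $F_S(X)(s,t)$ models the derived mapping object of a Segal-category fibrant replacement of $X$. This is the enriched counterpart of Bergner's comparison between Segal categories and simplicial categories; it demands a careful homotopy-colimit analysis controlling the iterated pushouts that build a cofibrant Segal precategory and checking that $F_S$ carries them to homotopically meaningful pushouts of enriched categories, and it relies essentially on the left properness and cartesian closedness of $\on{Set}_\Delta^+$. The remaining ingredients — the construction of $F$, the Quillen-pair verification, and the counit — are comparatively formal.
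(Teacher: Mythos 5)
First, a point of reference: the paper does not actually prove this statement. It is imported verbatim from Lurie, and the ``proof'' in the text is the single sentence citing \cite[Theorem 2.2.16]{LurieGoodwillie}. So there is no internal argument to compare yours against; what you have written is an outline of how one would prove the cited result from scratch, and it does follow the standard Lurie/Bergner-style strategy (construct the rigidification left adjoint, verify the Quillen pair, reduce to fixed object sets, check derived unit and counit).

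As a proof, however, the proposal has two genuine gaps. The smaller one is the fibrewise reduction: it is not true that ``both model structures are assembled in the same way from their fibres over $\on{Set}$'' with levelwise fibrations and weak equivalences. The weak equivalences on both sides are Dwyer--Kan-type equivalences (compare \autoref{lem:presegalequiv}, i.e.\ Proposition 2.2.9 and Corollary 2.2.12 of \cite{LurieGoodwillie}), which need not be bijective on objects, and the fibrations of the projective model structure of Theorem 2.2.16 are not simply the levelwise ones over a fixed $S$. The reduction is salvageable because the derived unit and counit maps are identity-on-objects, but as stated the step is incorrect and needs to be rephrased in those terms. The larger gap is the one you flag yourself: the derived unit computation --- that the bar/necklace construction computing $F_S(X)(s,t)$ for cofibrant $X$ has the homotopy type of the mapping object of a Segal-category fibrant replacement --- is the entire mathematical content of the theorem (the enriched analogue of Bergner's comparison), not a ``remaining obstacle'' to be noted. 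The counit argument suffers from the same deferral: $F_S G_S(\scr{C})$ is the free enriched category on the composable-strings data of $\scr{C}$ and is much larger than $\scr{C}$, so the claim that the bar construction ``collapses'' after cofibrant replacement is exactly the same comparison in disguise. Until that step is carried out (or explicitly delegated to \cite{LurieGoodwillie}, as the paper does), the proposal is a correct roadmap rather than a proof.
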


\begin{lemma}\label{lem:presegalequiv}
  Let $X_i:\Delta^\op_{S_i} \xlongrightarrow{} \on{Set}_\Delta^+$ be $\on{Set}_\Delta^+$-enriched Segal precategories with $i=1,2$ and suppose that we are given a morphism $X_0 \xlongrightarrow{} X_1$ such that the associated map $f:S_1 \xlongrightarrow{} S_2$ is surjective and such that for every $(n,c:[n] \xlongrightarrow{} S_1)$ the map
  \[
    X_0[s_0,\dots,s_n] \xlongrightarrow{} X_1[f(s_0),\dots,f(s_n)]
  \]
  is a weak equivalence. Then the map $X_0 \xlongrightarrow{} X_1$ is a weak equivalence in the projective model structure on $\on{Set}_\Delta^+$-enriched Segal precategories. 
\end{lemma}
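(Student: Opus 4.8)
The plan is to factor the morphism $X_0\to X_1$ through a reindexing of $X_1$ along $f$ and to show separately that each of the resulting two maps is a weak equivalence. Write $S_0$ and $S_1$ for the sets of objects of $X_0$ and $X_1$, and let $f\colon S_0\to S_1$ denote the underlying surjection. Precomposing $X_1$ with the functor $\Delta_{S_0}\to\Delta_{S_1}$ induced by $f$ produces a $\on{Set}_\Delta^+$-enriched Segal precategory $f^{*}X_1\colon\Delta^{\op}_{S_0}\to\on{Set}_\Delta^+$ with $(f^{*}X_1)[s_0,\dots,s_n]=X_1[f(s_0),\dots,f(s_n)]$; it is again a Segal precategory because $(f^{*}X_1)[s]=X_1[f(s)]=\Delta^0$. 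In $\on{Seg}_{\on{Set}_\Delta^+}$ the morphism $X_0\to X_1$ then factors canonically as
\[
  X_0\xlongrightarrow{\alpha}f^{*}X_1\xlongrightarrow{\beta}X_1,
\]
where $\alpha$ lies over $\id_{S_0}$ and encodes exactly the maps $X_0[s_0,\dots,s_n]\to X_1[f(s_0),\dots,f(s_n)]$, while $\beta$ is the structural morphism covering $f$, which is the identity on every value.

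First I would note that $\alpha$ is a weak equivalence: by hypothesis it is an objectwise weak equivalence in $\on{Fun}^{\on{pre}}(\Delta^{\op}_{S_0},\on{Set}_\Delta^+)$, and a levelwise weak equivalence of Segal precategories inducing the identity on the set of objects is a weak equivalence in $\on{Seg}_{\on{Set}_\Delta^+}$ --- this is immediate from the construction of the model structure in \cite[Theorem 2.2.16]{LurieGoodwillie}, and concretely such a map becomes a Dwyer--Kan equivalence after fibrant replacement. Next I would handle $\beta$ by reducing to the case where $X_1$ is fibrant. Fibrant replacement of a $\on{Set}_\Delta^+$-enriched Segal precategory does not enlarge its set of objects, so one may choose a fibrant replacement $X_1\xrightarrow{\sim}\widehat X_1$ lying over $\id_{S_1}$, with $\widehat X_1$ a locally fibrant Segal category. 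Using that $f^{*}$ carries weak equivalences of Segal precategories to weak equivalences, $f^{*}X_1\to f^{*}\widehat X_1$ is again a weak equivalence, and applying two-out-of-three to the square formed by $\beta$, the replacements $X_1\to\widehat X_1$ and $f^{*}X_1\to f^{*}\widehat X_1$, and the structural map $f^{*}\widehat X_1\to\widehat X_1$, reduces us to showing that $f^{*}\widehat X_1\to\widehat X_1$ is a weak equivalence. That map is the identity on all mapping objects, so it induces isomorphisms $f^{*}\widehat X_1[a,b]=\widehat X_1[f(a),f(b)]$ for $a,b\in S_0$; and it is essentially surjective on homotopy categories, since $f$ is surjective on objects. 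Hence $f^{*}\widehat X_1\to\widehat X_1$ is a Dwyer--Kan equivalence between locally fibrant Segal categories, so a weak equivalence by the characterisation of weak equivalences in \cite[Theorem 2.2.16]{LurieGoodwillie}. Together with the statement for $\alpha$ and two-out-of-three, this shows that $X_0\to X_1$ is a weak equivalence.

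The steps that require genuine care are the ones involving the set of objects. One must check that a fibrant replacement of a $\on{Set}_\Delta^+$-enriched Segal precategory can always be taken over the identity on objects, so that the comparison square above genuinely lies over $\id_{S_1}$ and over $f$; and one must verify that the reindexing functor $f^{*}$, which is plainly left Quillen for the levelwise model structures, remains homotopically well behaved for the Bousfield-localised model structures, i.e.\ that it preserves weak equivalences of Segal precategories. For this last point an alternative is to argue on the other side of the right Quillen equivalence $G\colon\on{Cat}_\Delta^+\to\on{Seg}_{\on{Set}_\Delta^+}$: one checks that $G$ intertwines $f^{*}$ with the corresponding reindexing functor on $\on{Set}_\Delta^+$-enriched categories, and that a functor of $\on{Set}_\Delta^+$-enriched categories which is surjective on objects and fully faithful is a Dwyer--Kan equivalence, hence a weak equivalence in $\on{Cat}_\Delta^+$ --- which transports back to the desired statement for $\beta$.
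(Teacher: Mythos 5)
Your argument is correct. For comparison: the paper disposes of this lemma in one line, by citing Proposition 2.2.9 and Corollary 2.2.12 of \cite{LurieGoodwillie}, which together say essentially that a map of $\on{Set}_\Delta^+$-enriched Segal precategories that is surjective on objects and a weak equivalence on all multi-mapping objects is an equivalence; your factorization $X_0 \to f^*X_1 \to X_1$ is precisely the argument those results encapsulate. The first map is a levelwise weak equivalence over a bijection on objects, hence survives the localization, and the second becomes, after fibrant replacement, a fully faithful and essentially surjective map of locally fibrant Segal categories, i.e.\ a Dwyer--Kan equivalence. The two points you flag at the end are indeed the ones requiring care, and both close: fibrant replacement in this model structure is built from generating trivial cofibrations that never enlarge the object set, so it can be taken over $\on{id}_{S_1}$; and for the homotopical behaviour of $f^*$ the cleanest route is the one you sketch (transport along $G$ to $\on{Cat}_\Delta^+$, where a functor surjective on objects and an isomorphism on mapping objects is visibly a Dwyer--Kan equivalence), or alternatively one observes that a weak equivalence over a fixed object set becomes a \emph{levelwise} equivalence after fibrant replacement (fully faithfulness plus the Segal condition), and that $f^*$, being levelwise-colimit-preserving, sends the generating trivial cofibrations to trivial cofibrations. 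Nothing in the argument fails; it is a correct expansion of the citation the paper relies on.
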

\begin{proof}
  The result follows from Proposition 2.2.9 and Corollary 2.2.12 in \cite{LurieGoodwillie}.
\end{proof}

\begin{definition}
  Given $Y: \Delta^\op \xlongrightarrow{} \on{Set}_\Delta^+$ we let $S=\Hom_{\on{Set}_\Delta^+}(\Delta^0,Y_0)$ and define a $\on{Set}_\Delta^+$-enriched Segal precategory given by defining $ D(Y)[s_0,\dots, s]$ to be the pullback
  \[
     \begin{tikzcd}
       D(Y)[s_0,\dots,s] \arrow[r] \arrow[d] & Y(n) \arrow[d,"\prod_{i=0}^n d_i"] \\
     \Delta^0 \arrow[r,"\prod_{i=0}^{n}s_i"] & Y_0^{n}
     \end{tikzcd}
  \]
  This construction extends to a functor $D: \on{Fun}(\Delta^\op,\on{Set}_\Delta^+) \xlongrightarrow{} \on{Seg}_{\on{Set}_\Delta^+}$ which is a right Quillen equivalence between the complete Segal model structure and the projective model structure by Proposition 2.3.1 and Proposition 2.3.9 in \cite{LurieGoodwillie}.
\end{definition}

\begin{definition}
  We define a functor $\on{N}^{\mathbf{ms}}:\on{Cat}_\Delta^{+} \xlongrightarrow{} \on{Set}_\Delta^{\mathbf{ms}}$ as follows:
  \begin{enumerate}
    \item Given $\scr{C} \in \on{Cat}_\Delta^{+}$ the underlying scaled simplicial set of $\on{N}^{\mathbf{ms}}(\scr{C})$ coincides with $\Nsc(\scr{C})$ (see \autoref{def:rigidification}).
    \item We declare an edge in $\on{N}^{\mathbf{ms}}(\scr{C})$ is the associated morphism $f \in \scr{C}(x,y)$ satisfies the following condition:
    \begin{itemize}
      \item There exists a morphism $g \in \scr{C}(y,x)$ such that $g\circ f$ is connected to the identity on $x$ via a zig-zag of marked morphisms and similarly for $f \circ g$.
    \end{itemize}
  \end{enumerate}
\end{definition}

\begin{proposition}
  The functor $\on{N}^{\mathbf{ms}}:\on{Cat}_\Delta^{+} \xlongrightarrow{} \on{Set}_\Delta^{\mathbf{ms}}$ preserves weak equivalences between fibrant objects and induces an equivalence upon passage to homotopy categories.
\end{proposition}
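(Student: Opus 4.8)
The plan is to realise $\on{N}^{\mathbf{ms}}$ as being, on fibrant objects, naturally weakly equivalent to the composite $L\circ\Nsc$, where $\Nsc\colon\on{Set}_\Delta^{\mathbf{sc}}\to\on{Cat}_\Delta^{+}$ is the right Quillen equivalence of \autoref{thm:scaledmodel} (read the other way, $\Nsc\colon\on{Cat}_\Delta^{+}\to\on{Set}_\Delta^{\mathbf{sc}}$) and $L\colon\on{Set}_\Delta^{\mathbf{sc}}\to\on{Set}_\Delta^{\mathbf{ms}}$ is the left Quillen equivalence of \autoref{thm:scms}. Write $U\colon\on{Set}_\Delta^{\mathbf{ms}}\to\on{Set}_\Delta^{\mathbf{sc}}$ for the right adjoint of $L$; unwinding the universal property, $U$ simply forgets the marking, so that for any $\infty$-bicategory $(X,E_X,T_X)$ the counit is the identity-on-underlying map $(X,\flat,T_X)\xlongrightarrow{}(X,E_X,T_X)$. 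Since every object of these simplicial-set model categories is cofibrant, this counit agrees with the derived counit of the Quillen equivalence $L\dashv U$, hence is a weak equivalence in $\on{Set}_\Delta^{\mathbf{ms}}$; and it is plainly natural in $(X,E_X,T_X)$.

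The structural heart of the argument is the claim that for $\scr{C}\in\on{Cat}_\Delta^{+}$ \emph{fibrant}, the marked-scaled simplicial set $\on{N}^{\mathbf{ms}}(\scr{C})$ is itself fibrant, i.e. an $\infty$-bicategory in the sense of the remark following \autoref{def:msanodyne}. Its underlying scaled simplicial set is $\Nsc(\scr{C})$, which is an $\infty$-bicategory by \autoref{thm:scaledmodel}, so one only needs that the marking prescribed in the definition of $\on{N}^{\mathbf{ms}}$ is exactly the collection of equivalences of the $\infty$-bicategory $\Nsc(\scr{C})$. For this, recall that a $1$-morphism $f\in\scr{C}(x,y)$ is an equivalence in $\Nsc(\scr{C})$ iff there is $g\in\scr{C}(y,x)$ with $g\circ f\cong\id_x$ and $f\circ g\cong\id_y$ in the homotopy categories of the mapping $(\infty,1)$-categories $\Nsc(\scr{C})(x,x)$, $\Nsc(\scr{C})(y,y)$; since those mapping $(\infty,1)$-categories are modelled by the fibrant marked simplicial sets $\scr{C}(x,x)$, $\scr{C}(y,y)$ (as in \autoref{def:mappingcat} and \cite[Prop.~2.33]{GHL_Equivalence}), and in a fibrant marked simplicial set two objects are isomorphic in the homotopy category precisely when they are joined by a zig-zag of marked edges, this reproduces the defining condition of $\on{N}^{\mathbf{ms}}$ verbatim. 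Thus $\on{N}^{\mathbf{ms}}(\scr{C})=(\Nsc(\scr{C}),E,T)$ with $E$ the equivalences and $T$ the thin triangles, which is fibrant; and the counit of the previous paragraph provides a natural weak equivalence $L\Nsc(\scr{C})\xlongrightarrow{\simeq}\on{N}^{\mathbf{ms}}(\scr{C})$ of fibrant objects.

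Granting this, both assertions follow formally. If $F\colon\scr{C}\to\scr{D}$ is a weak equivalence between fibrant objects of $\on{Cat}_\Delta^{+}$, then $\Nsc(F)$ is a bicategorical equivalence (right Quillen functor, Ken Brown), hence $L\Nsc(F)$ is a weak equivalence in $\on{Set}_\Delta^{\mathbf{ms}}$ (left Quillen functor, all objects cofibrant, Ken Brown), and applying $2$-out-of-$3$ to the naturality square of $L\Nsc\Rightarrow\on{N}^{\mathbf{ms}}$ gives that $\on{N}^{\mathbf{ms}}(F)$ is a weak equivalence. Consequently $\on{N}^{\mathbf{ms}}$ descends to a functor $\on{Ho}(\on{Cat}_\Delta^{+})\to\on{Ho}(\on{Set}_\Delta^{\mathbf{ms}})$, using the equivalence $\on{Ho}(\on{Cat}_\Delta^{+})\simeq\on{Ho}\bigl((\on{Cat}_\Delta^{+})^{\mathrm{fib}}\bigr)$. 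The natural weak equivalence $L\Nsc(\scr{C})\simeq\on{N}^{\mathbf{ms}}(\scr{C})$ on fibrant objects identifies this functor with $\mathbf{L}L\circ\mathbf{R}\Nsc$: for fibrant $\scr{C}$, $\Nsc(\scr{C})$ is fibrant–cofibrant and computes $\mathbf{R}\Nsc[\scr{C}]$, and $L$ applied to a cofibrant object computes $\mathbf{L}L$. Since $\mathbf{R}\Nsc$ and $\mathbf{L}L$ are equivalences of homotopy categories (derived functors of Quillen equivalences, \autoref{thm:scaledmodel} and \autoref{thm:scms}), so is their composite, and hence so is the functor induced by $\on{N}^{\mathbf{ms}}$.

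The one genuinely non-formal step is the identification in the second paragraph of the marking of $\on{N}^{\mathbf{ms}}(\scr{C})$ with the equivalences of $\Nsc(\scr{C})$: this rests on the description of the mapping $(\infty,1)$-categories of the scaled nerve and on the (elementary) fact that in a fibrant marked simplicial set the marked edges generate the isomorphism relation of the homotopy category. Once this is in place, everything else is a bookkeeping assembly of the two Quillen equivalences already established, together with Ken Brown's lemma.
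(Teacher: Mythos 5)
Your proof is correct and follows essentially the same route as the paper: the key non-formal step in both is showing that for fibrant $\scr{C}$ the marking of $\on{N}^{\mathbf{ms}}(\scr{C})$ coincides with the equivalences of $\Nsc(\scr{C})$, so that $\on{N}^{\mathbf{ms}}(\scr{C})$ is fibrant, after which everything reduces to the Quillen equivalences of \autoref{thm:scaledmodel} and \autoref{thm:scms}. The only (immaterial) difference is that the paper exploits the strict equality $U\circ\on{N}^{\mathbf{ms}}=\Nsc$ and concludes by 2-out-of-3, whereas you use the natural weak equivalence $L\Nsc(\scr{C})\to\on{N}^{\mathbf{ms}}(\scr{C})$ coming from the counit of $L\dashv U$.
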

\begin{proof}
  We claim that if $\scr{C}$ is fibrant then $\Nsm(\scr{C})$ is a fibrant marked-scaled simplicial set. This ammounts to showing that the marked morphisms in $\Nsm(\scr{C})$ are precisely the equivalences on $\Nsc(\scr{C})$. Let $f \in \scr{C}(x,y)$ be an equivalence. Then, since $\scr{C}$ is fibrant there exists equivalences in the associated mapping categories $g\circ f \isom \on{id}_x$ and $f \circ g \isom \on{id}_y$ which translate to the corresponding thin simplices expressing $f$ as an equivalence in $\Nms(\scr{C})$. The converse is clear. Therefore given a weak equivalence between fibrant objects we obtain a map between fibrant marked-scaled simplicial sets whose underlying map of scaled simplicial sets is a bicategorical equivalence. This shows the first claim.

  We finish the proof by consider the following commutative diagram
  \[
    \begin{tikzcd}
      \on{Cat}_\Delta^+ \arrow[r,"\Nms"] \arrow[dr,swap,"\Nsc"] & \on{Set}_\Delta^{\mathbf{ms}} \arrow[d,"U"] \\
      & \on{Set}_\Delta^{\mathbf{sc}}
    \end{tikzcd}
  \]
  where $U$ is the right adjoint to the left Quillen equivalence given in \autoref{thm:scms}. The claim follows by 2-out-of-3 since $\Nsc$ is also a right Quillen functor.
\end{proof}

\begin{remark}\label{rem:onecomps}
  Let $\bcat{D}$ be an $\infty$-bicategory and recall that by \cite[Proposition 4.1.8]{LurieGoodwillie} it follows that the pullback
  \[
    \begin{tikzcd}
      \Map_{\bcat{D}}(x,y) \arrow[r] \arrow[d] & \on{Fun}^{\mathbf{opgl}}(\Delta^1,\bcat{D}) \arrow[d,"\on{ev}_0 \times \on{ev}_1"] \\
    \Delta^0 \arrow[r,"x \times y"] & \bcat{D}\times \bcat{D}
    \end{tikzcd}
  \]
  is a model for the mapping $(\infty,1)$-category.
\end{remark}

\begin{theorem}\label{thm:compari}
  The functor $\mathbf{Gl}:\on{Set}_\Delta^{\mathbf{ms}} \xlongrightarrow{} \on{Fun}(\Delta^\op,\on{Set}_\Delta^+)$ is a right Quillen equilvalence where the left-handside is endowed with the model structure of marked-scaled simplicial sets given in \autoref{thm:markedscaledmodel} and the right-hand side is equipped with the complete Segal model structure given in \cite[Proposition 1.5.4]{LurieGoodwillie}.
\end{theorem}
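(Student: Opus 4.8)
The plan is to deduce the statement from Lurie's chain of model comparisons recalled above, by exhibiting a square of right Quillen functors that commutes, up to natural weak equivalence, on fibrant objects. By \autoref{prop:coendleftquillen} the adjunction $(\int^{\mathbf{opgl}},\mathbf{Gl})$ is already a Quillen pair, so it remains only to check that the total right derived functor $\mathbf{R}\mathbf{Gl}$ is an equivalence of homotopy categories. To this end, consider the square
\[
  \begin{tikzcd}
    \on{Cat}_\Delta^+ \arrow[r,"\Nms"] \arrow[d,"G"'] & \on{Set}_\Delta^{\mathbf{ms}} \arrow[d,"\mathbf{Gl}"] \\
    \on{Seg}_{\on{Set}_\Delta^+} & \on{Fun}(\Delta^\op,\on{Set}_\Delta^+) \arrow[l,"D"']
  \end{tikzcd}
\]
I claim that for every fibrant $\scr{C}\in\on{Cat}_\Delta^+$ there is a natural comparison map $G(\scr{C})\xlongrightarrow{} D(\mathbf{Gl}(\Nms\scr{C}))$ which is a weak equivalence of $\on{Set}_\Delta^+$-enriched Segal precategories. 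Granting this, the theorem follows: $\Nms$ induces an equivalence of homotopy categories (Proposition above, via $U\circ\Nms=\Nsc$ and \autoref{thm:scms}), while $G$ and $D$ are right Quillen equivalences, and for $\scr{C}$ fibrant the objects $\Nms\scr{C}$, $\mathbf{Gl}(\Nms\scr{C})$, $G(\scr{C})$, $D(\mathbf{Gl}(\Nms\scr{C}))$ are all fibrant — here we use that $\mathbf{Gl}$ is right Quillen, so no fibrant replacement is needed — so the point-set square computes the derived identity $\mathbf{R}D\circ\mathbf{R}\mathbf{Gl}\circ\mathbf{R}\Nms\simeq\mathbf{R}G$ on homotopy categories. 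Since three of the four derived functors are equivalences, so is $\mathbf{R}\mathbf{Gl}$.

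It remains to construct and analyse the comparison map. Since $\Nms\scr{C}$ is a fibrant marked-scaled simplicial set (Proposition above) and $\mathbf{Gl}$ is right Quillen, $\mathbf{Gl}(\Nms\scr{C})$ is a complete Segal object, so $D(\mathbf{Gl}(\Nms\scr{C}))$ is a locally fibrant $\on{Set}_\Delta^+$-enriched Segal category; so is $G(\scr{C})$, and both have the objects of $\scr{C}$ as their set of objects (these being the $0$-simplices of $\mathbf{Gl}(\Nms\scr{C})_0$). Sending a string of composable morphisms $f_i\colon s_{i-1}\to s_i$ in $\scr{C}$ to the $n$-simplex of $\mathbf{Gl}(\Nms\scr{C})$ determined by the associated (non-lax) functor $[n]\to\scr{C}$ defines a natural map
\[
  G(\scr{C})[s_0,\dots,s_n]=\scr{C}(s_0,s_1)\times\cdots\times\scr{C}(s_{n-1},s_n)\xlongrightarrow{} D\bigl(\mathbf{Gl}(\Nms\scr{C})\bigr)[s_0,\dots,s_n].
\]
Because both sides are Segal categories, the Segal condition together with \autoref{lem:presegalequiv} (applied to the identity on object sets, which is surjective) reduces us to the case $n=1$. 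There $D(\mathbf{Gl}(\Nms\scr{C}))[s_0,s_1]$ is, by the definitions of $D$, of $\mathbf{Gl}$ and of $(-)^{\leq 1}$ (\autoref{def:leq}), the fibre of $\on{Fun}^{\mathbf{opgl}}((\Delta^1,\flat,\sharp),\Nms\scr{C})^{\leq 1}$ over $(s_0,s_1)$, which by \autoref{rem:onecomps} models the mapping $(\infty,1)$-category $\Map_{\Nms\scr{C}}(s_0,s_1)$ with its canonical marking. The desired statement thus becomes: the mapping $(\infty,1)$-category of the $\infty$-bicategory $\Nms\scr{C}$ from $s_0$ to $s_1$ is modelled, compatibly with the evident comparison map, by the fibrant marked simplicial set $\scr{C}(s_0,s_1)$; this follows from the compatibility of $\Nms$ with the scaled nerve/rigidification adjunction of \autoref{def:rigidification} and the identification of mapping $(\infty,1)$-categories of \cite[Prop.~2.33]{GHL_Equivalence} (compare \autoref{def:mappingcat}).

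The main obstacle is precisely this last reduction: identifying $\on{Fun}^{\mathbf{opgl}}((\Delta^1,\flat,\sharp),\Nms\scr{C})^{\leq 1}$ fibered over its two endpoints with the enriched mapping object $\scr{C}(s_0,s_1)$, which requires transporting the $(-)^{\leq 1}$ right adjoint and the globular Gray product $\odot$ through the rigidification adjunction and checking that the resulting comparison is the canonical equivalence rather than merely some weak equivalence. A secondary point needing care is that $\mathbf{Gl}$ of a fibrant marked-scaled simplicial set genuinely satisfies the Segal and completeness conditions, so that the reduction to $n=1$ is legitimate; this is a consequence of \autoref{prop:globularpushout} and \autoref{prop:coendleftquillen}.
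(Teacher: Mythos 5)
Your overall strategy coincides with the paper's: the same square of functors $G$, $D$, $\Nms$, $\mathbf{Gl}$, the same reduction to showing that $D\circ\mathbf{Gl}\circ\Nms$ agrees with $G$ after localization, and the same appeal to \autoref{lem:presegalequiv} to reduce to a levelwise statement about mapping objects. The genuine gap sits exactly where you flag "the main obstacle", and it is not a point that can be deferred: the comparison map $G(\scr{C})\xlongrightarrow{} D(\mathbf{Gl}(\Nms\scr{C}))$ you posit does not exist at the point-set level. A $k$-simplex of $G(\scr{C})[s_0,\dots,s_n]=\scr{C}(s_0,s_1)\times\cdots\times\scr{C}(s_{n-1},s_n)$ is a tuple of $k$-simplices of the \emph{enriched} mapping objects, and there is no evident way to convert such a tuple into a $k$-simplex of $\mathbf{Gl}(\Nms\scr{C})_n$ lying over $(s_0,\dots,s_n)$: the enriched hom $\scr{C}(x,y)$ and the mapping $(\infty,1)$-category $\Map_{\Nsc(\scr{C})}(x,y)$ are not isomorphic, only equivalent, and the equivalence is mediated by the straightening/unstraightening adjunction over the point. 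Citing the rigidification adjunction and \cite[Prop.\ 2.33]{GHL_Equivalence} only gives you a weak equivalence of homotopy types, not the natural point-set map that \autoref{lem:presegalequiv} requires as input.

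The paper closes this gap with a specific device you would need to reproduce: the endofunctor $\mathcal{U}$ of $\on{Cat}_\Delta^+$ with $\mathcal{U}(\scr{C})(x,y)=\on{Un}^{\mathbf{sc}}_*(\scr{C}(x,y))$, using the canonical \emph{isomorphism} $\on{Un}^{\mathbf{sc}}_*(\scr{C}(x,y))\isom\Map_{\Nsc(\scr{C})}(x,y)$ together with the natural equivalence $\scr{C}\xlongrightarrow{}\mathcal{U}(\scr{C})$ for fibrant $\scr{C}$. One then replaces $G$ by $\hat G=G\circ\mathcal{U}$ and constructs the comparison in the opposite direction, $T(\scr{A})\xlongrightarrow{}\hat G(\scr{A})$, as the map induced on fibres by the Segal comparison map $Y_n\xlongrightarrow{}\lim_{\on{P}(n)}Y\circ\pi_n$ for $Y_\bullet=\mathbf{Gl}(\Nms(\scr{A}))$; for fibrant $\scr{A}$ this is a levelwise trivial fibration, and \autoref{lem:presegalequiv} applies. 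Your square and the reduction to $n=1$ are the right ideas, but without an intermediary of this kind the argument does not produce the map it needs to compare.
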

\begin{proof}
 We will show that $\mathbf{Gl}$ induces an equivalence upon passage to homotopy categories. First, we consider a non-commutative diagram
 \[
   \begin{tikzcd}
     \on{Cat}_\Delta^+ \arrow[r,"\Nms"] \arrow[d,"G"] & \on{Set}_{\Delta}^{\mathbf{ms}} \arrow[d,"\mathbf{Gl}"] \\
     \on{Seg}_{\on{Set}_\Delta^+} & \arrow[l, "D"] \on{Fun}(\Delta^\op,\on{Set}_\Delta^+). 
   \end{tikzcd}
 \]
 We will show that the functor $T=D \circ \mathbf{Gl} \circ \Nms$ becomes naturally equivalent to $G$ after localization. The result will then follow by 2-out-of-3.

 Let $\on{Un}^{\mathbf{sc}}_*: \on{Set}_\Delta^+ \xlongrightarrow{} \on{Set}_\Delta^+$ be the Unstraightening functor over the point constructed in Section 3.5 in \cite{LurieGoodwillie}. We make some crucial observations
 \begin{itemize}
   \item[i)] The functor $\on{Un}^{\mathbf{sc}}_*$ is a right Quillen equivalence (see \cite[Corollary 3.6.3]{LurieGoodwillie}).
   \item[ii)] Its left adjoint satisfies $\on{St}_*^{\mathbf{sc}}(\Delta^0)=\Delta^0$.
   \item[iii)] For every $\scr{C} \in \on{Cat}_\Delta^+$ and every pair of objects $x,y \in \scr{C}$ we have a canonical isomorphism
   \[
      \on{Un}^{\mathbf{sc}}_*\left(\scr{C}(x,y)\right) \isom \Map_{\Nsc(\scr{C})}(x,y).
    \] 
 \end{itemize}
 Using $i)$ and $ii)$ we obtain an endofunctor $\mathcal{U}: \on{Cat}_\Delta^+ \xlongrightarrow{} \on{Cat}_\Delta^+$ which sends a $\on{Set}_\Delta^+$-enriched category $\scr{C}$ to $\mathcal{U}(\scr{C})$ which has the same set of objects as $\scr{C}$ but its mapping spaces are given by $\mathcal{U}(\scr{C})(x,y)=\on{Un}^{\mathbf{sc}}_*(\scr{C}(x,y))$. The functor comes equiped with a natural transformation from the identity functor
 \[
   \Xi_{\scr{C}}: \scr{C} \xlongrightarrow{} \mathcal{U}(\scr{C})
 \]
 which is an equivalence of fibrant $\on{Set}_\Delta^+$-enriched categories if $\scr{C}$ is fibrant. We see that we can safely replace $G$ by $G \circ \mathcal{U}=\hat{G}$. 

For every $\on{Set}_\Delta^+$-enriched category $\scr{A}$ we have a map (natural in $\scr{A}$) from $T(\scr{A}) \xlongrightarrow{} \hat{G}(A)$ whose component at $[s_0,\dots,s_n]$ is induced by taking the corresponding pullback of the map of the
\[
  Y_n \xlongrightarrow{} \colim\limits_{\on{P}(n)}Y \circ \pi_n
\]
(see \autoref{def:segalobj}) where $Y_\bullet= \mathbf{Gl}\circ \Nms(\scr{A})$. If $\scr{A}$ is fibrant the map above is a trivial fibration and so it follows that the map 
\[
  T(\scr{A})[s_0,\dots,s_n]  \xlongrightarrow{} \hat{G}(\scr{A})[s_0,\dots,s_n]
\]
is again a trivial fibration. The result now follows from \autoref{lem:presegalequiv}.
\end{proof}

\section{The main combinatorial arguments}
In this section, we prove the main combinatorial results underlying the proofs of our main theorems \autoref{thm:sqefullyfaithful} and \autoref{thm:maineglob}. This section is of technical character and may be safely skipped by those readers who are not willing to delve deeply into the intricacies of the combinatorics of the theory of marked-scaled simplicial sets.

Throughout this section we fix a fibrant marked-scaled simplicial set $(\bcat{C},E_{\bcat{C}},T_{\bcat{C}})$ which we will simply denote by $\bcat{C}$.

\begin{definition}\label{def:overlineCC}
  We will denote by $\overline{\bcat{C}}$ the marked-scaled simplicial set given by $(\bcat{C},E_{\bcat{C}},M_{\bcat{C}})$ where $M_{\bcat{C}} \subset T_{\bcat{C}}$ is the collection of triangles consisting in the invertible 2-morphisms (see \autoref{def:invertible2morph}).
\end{definition}

\begin{remark}
  We will make heavy use of the constructions of the Gray tensor product of marked-scaled simplicial sets (see \autoref{def:gray}) which we denote by $\tensor$ and its globular counterpart $\odot$, which was given in \autoref{def:globularGray}.
\end{remark}

\begin{definition}\label{def:indexStensor}
       We define $\mathcal{S}_{\tensor}$ to be the (ordinary) category whose objects are given by maps $f:\Delta^n_\flat \tensor \Delta^{k}_\sharp \xlongrightarrow{} \bcat{C}$ and whose morphisms are given by commutative diagrams
  \[
     \begin{tikzcd}
        \Delta^{n}_\flat \tensor \Delta^{k}_{\sharp} \arrow[rr,"\alpha \tensor \beta"] \arrow[dr,swap,"f"] && \Delta^s_{\flat} \tensor \Delta^{\ell}_\sharp \arrow[dl,"g"] \\
        & \bcat{C} &
     \end{tikzcd}
  \]
  where $\alpha \tensor \beta$ is a product of monotone maps. 

  We similar define a globular version of $\mathcal{S}_\tensor$ denoted by $\mathcal{S}_\odot$  whose objects are given by maps $f:\Delta^n_\flat \odot \Delta^{k}_\sharp \xlongrightarrow{} \overline{\bcat{C}}$ and whose morphisms are given by commutative diagrams
  \[
     \begin{tikzcd}
        \Delta^{n}_\flat \odot \Delta^{k}_{\sharp} \arrow[rr,"\alpha \odot \beta"] \arrow[dr,swap,"f"] && \Delta^s_{\flat} \odot \Delta^{\ell}_\sharp \arrow[dl,"g"] \\
        & \overline{\bcat{C}} &
     \end{tikzcd}
  \]
  where $\alpha \odot \beta$ is a product of monotone maps.
  \end{definition}

  \begin{definition}\label{def:indexfunctortensor}
    We define a functor $T_\tensor: \mathcal{S}_{\tensor} \xlongrightarrow{} \on{Set}_\Delta^{\mathbf{ms}}$ which sends an object $f: \Delta^n_\flat \tensor \Delta^k_\sharp \xlongrightarrow{} \bcat{C}$ to the marked-scaled simplicial set $T_\tensor(f)$ whose underlying simplicial set is given by $\Delta^n \times \Delta^k$ and where an edge (resp. triangle) is marked (resp. thin) if its image under $f$ is marked (resp. thin) in $\bcat{C}$.

    We similarly define a functor $T_\odot: \mathcal{S}_{\odot} \xlongrightarrow{} \on{Set}_\Delta^{\mathbf{ms}}$ in a totally analogous way as before but where in this case we are using $\overline{\bcat{C}}$ instead of $\bcat{C}$.
  \end{definition}

  \begin{definition}\label{def:indexfunctorcolimittensor}
    Let $\mathbb{X}^\tensor_{\bcat{C}}$ (resp. $\mathbb{X}_{\bcat{C}}^{\odot}$) denote the (ordinary) colimit of the functor $T_\tensor$ (resp. $T_\odot$) given in \autoref{def:indexfunctortensor}. Then it follows that we have canonical maps
    \[
      \bcat{C} \xlongrightarrow{s_\tensor}  \mathbb{X}^\tensor_{\bcat{C}} \xlongrightarrow{\pi_\tensor} \bcat{C}, \enspace \enspace  \overline{\bcat{C}} \xlongrightarrow{s_\odot}  \mathbb{X}_{\bcat{C}}^{\odot} \xlongrightarrow{\pi_\odot} \overline{\bcat{C}}
    \]
    such that $\pi_\tensor \circ s_\tensor =\on{id}$ and similarly for the globular versions. 

    The map $s_\tensor$ is induced by the universal property of the colimit after restricting $T_\tensor$ along the inclusion of the subcategory $\mathcal{S}^{0}_\tensor$ consisting in those maps $\Delta^n_\flat \tensor \Delta^0 \xlongrightarrow{} \bcat{C}$ and similarly for $s_\odot$. The definition of $\pi_\tensor$ (resp. $\pi_\odot$) is clear since the marked-scaled simplicial set $\bcat{C}$ (resp. $\overline{\bcat{C}}$) defines as a cone for the functor $T_\tensor$ (resp. $T_\odot$) and thus the map is induced via the universal property of the colimit.
  \end{definition} 

The goal of this section is to show that the maps
\[
    \bcat{C} \xlongrightarrow{s_\tensor}  \mathbb{X}^\tensor_{\bcat{C}} , \enspace \enspace \overline{\bcat{C}} \xlongrightarrow{s_\odot}  \mathbb{X}_{\bcat{C}}^{\odot}
  \]
are weak equivalences of marked-scaled simplicial sets. We will only explicitly verify the statement for $s_\odot$ which is more delicate than the case of $s_\tensor$ due to the fact that one has to take care of respecting the bigger collection of marked edges associated to the globular Gray product. However, the argument that we will use for $s_\odot$ translates perfectly to the case of $s_\tensor$ and we will comment along the way how to adapt our proofs to the case of $s_\tensor$. 

\subsection{Path extensions}
In this section, we collect some useful lemmata which will play a central role in the proof of \autoref{thm:bigcombinatorial}.
\begin{definition}\label{def:pathextension}
  Let $\Delta^n \times \Delta^k$ and consider a simplex of maximal dimension (which we will call a path) $\gamma: \Delta^{n+k} \xlongrightarrow{} \Delta^n \times \Delta^k$. For every $i \in \Delta^{n+k}$ we denote the value of $\gamma$ at the point $i$ as $\gamma(i)=(a_i,b_i)$ We can then define a map of  marked-scaled simplicial sets 
  \[
    E_\gamma: \Delta^{n+k}_\flat \odot \Delta^k_\sharp  \xlongrightarrow{} \Delta^n_\flat \odot \Delta^k_\sharp , \enspace (i,j) \mapsto (a_i,\max(b_i,j)).
  \]
  If we are a given a map $\sigma: \Delta^n_\flat \odot \Delta^k \xlongrightarrow{} \overline{\bcat{C}}$ of marked-scaled simplicial sets, we call the composite $\sigma \circ E_\gamma$ the $\gamma$-extension of $\sigma$.
\end{definition}

\begin{definition}\label{def:pathorder}
  Let $\gamma_1,\gamma_2$ be two simplices of maximal dimension in $\Delta^n \times \Delta^k$ where we denote $\gamma_i(\ell)=(a^{i}_\ell,b^{i}_\ell)$ for $i=1,2$. Let $s \in \Delta^{n}\times \Delta^k$ be the first element where $\gamma_1(s) \neq \gamma_2(s)$. We say that $\gamma_1 < \gamma_2$ if $a^1_{s} < a^2_s$.
\end{definition}

\begin{lemma}\label{lem:extensionsstable}
   Let $\gamma$ be a simplex of maximal dimension inside of $\Delta^n \odot \Delta^k$ and consider the map 
  \[
      E_\gamma: \Delta^{n+k}_\flat \odot \Delta^k_\sharp  \xlongrightarrow{} \Delta^n_\flat \odot \Delta^k_\sharp
   \] 
   given in \autoref{def:pathextension}. Given a simplex of maximal dimension $\varphi:\Delta^{n+2k} \xlongrightarrow{} \Delta^{n+k}_\flat \odot \Delta^k_\sharp$ then we have a commutative diagram
   \[
     \begin{tikzcd}
       \Delta^{n+2k}_\flat \odot \Delta^{k}_\sharp \arrow[r,"E_{\varphi}"] \arrow[d,"s \odot \on{id}"] & \Delta^{n+k}_\flat \odot \Delta^k_\sharp \arrow[d,"E_\gamma"] \\
       \Delta^{n+k}_\flat \odot \Delta^{k}_\sharp \arrow[r,"E_{\gamma'}"] & \Delta^n_\flat \odot \Delta^k_\sharp
     \end{tikzcd}
   \]
   where $s:[n+2k] \xlongrightarrow{} [n+k]$ is a surjective map and $\gamma'$ is a simplex of maximal dimension in $\Delta^n_\flat \odot \Delta^k_\sharp$ such that $\gamma' \leq \gamma$ with respect to the order in \autoref{def:pathorder}.
\end{lemma}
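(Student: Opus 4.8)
The plan is to show that, on underlying simplicial sets, the square comes from a commuting square of monotone poset maps, and then to read off $\gamma'\leq\gamma$ from explicit formulas. First note that $E_\varphi$, $E_\gamma$ and $E_{\gamma'}$ are maps of marked-scaled simplicial sets by \autoref{def:pathextension}, and $s\odot\id$ is one by functoriality of $\odot$; hence both composites $E_\gamma\circ E_\varphi$ and $E_{\gamma'}\circ(s\odot\id)$ are maps of marked-scaled simplicial sets, and it suffices to check that the underlying maps of simplicial sets agree. Each of these is the nerve of a monotone map of posets $[n+2k]\times[k]\to[n]\times[k]$ (for the $E$'s, because $(i,j)\mapsto(a_i,\max(b_i,j))$ is evidently monotone), so it is determined by its values on vertices. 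Writing $\gamma(i)=(a_i,b_i)$ for $i\in[n+k]$ and $\varphi(\ell)=(p_\ell,q_\ell)$ for $\ell\in[n+2k]$, a direct substitution into \autoref{def:pathextension} gives
\[
  (E_\gamma\circ E_\varphi)(\ell,j)=E_\gamma\bigl(p_\ell,\max(q_\ell,j)\bigr)=\bigl(a_{p_\ell},\,\max(b_{p_\ell},q_\ell,j)\bigr).
\]

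This motivates setting $(A_\ell,B_\ell):=\bigl(a_{p_\ell},\max(b_{p_\ell},q_\ell)\bigr)\in\Delta^n\times\Delta^k$, so that $(E_\gamma\circ E_\varphi)(\ell,j)=(A_\ell,\max(B_\ell,j))$. I would then show that the sequence $(A_\ell,B_\ell)_{\ell=0}^{n+2k}$ is componentwise nondecreasing, runs from $(0,0)$ to $(n,k)$, and has the property that each step $\ell\to\ell+1$ is either stationary or increments exactly one coordinate by $1$. Monotonicity and the endpoints are immediate from the corresponding facts for $\gamma$ and $\varphi$, and the step property is a short case analysis on whether $\varphi$ advances in its first or second coordinate at $\ell$ and, in the first case, on whether $\gamma$ advances in its first or second coordinate at $p_\ell$, tracking the outer $\max$. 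Since $A_\ell+B_\ell$ increases by $1$ at each non-stationary step and runs from $0$ to $n+k$, there are exactly $n+k$ non-stationary steps; as the sequence is monotone, once it leaves a vertex it does not return, so the distinct values of $(A_\ell,B_\ell)$ listed in order form a simplex of maximal dimension $\gamma'\colon\Delta^{n+k}\to\Delta^n\times\Delta^k$, and $s(\ell):=A_\ell+B_\ell$ is a monotone surjection $[n+2k]\to[n+k]$ with $\gamma'(s(\ell))=(A_\ell,B_\ell)$. Comparing vertices,
\[
  \bigl(E_{\gamma'}\circ(s\odot\id)\bigr)(\ell,j)=\bigl(a'_{s(\ell)},\max(b'_{s(\ell)},j)\bigr)=(A_\ell,\max(B_\ell,j)),
\]
which is exactly the formula above, so the square commutes.

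It remains to check $\gamma'\leq\gamma$ in the sense of \autoref{def:pathorder}, where I write $\gamma'(i)=(a'_i,b'_i)$. Any maximal-dimensional simplex of $\Delta^n\times\Delta^k$ satisfies $a_i+b_i=i$; in particular $a'_i+b'_i=i=a_i+b_i$. Given $i\in[n+k]$, pick $\ell$ with $s(\ell)=i$; then $i=s(\ell)=A_\ell+B_\ell=a_{p_\ell}+\max(b_{p_\ell},q_\ell)\geq a_{p_\ell}+b_{p_\ell}=p_\ell$, so $p_\ell\leq i$ and therefore $a'_i=A_\ell=a_{p_\ell}\leq a_i$ by monotonicity of $a_{\bullet}$. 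Since $a'_i+b'_i=a_i+b_i$, this inequality is strict precisely when $\gamma'(i)\neq\gamma(i)$; hence at the first index at which $\gamma'$ and $\gamma$ differ one has $a'_i<a_i$, i.e.\ $\gamma'<\gamma$, while $\gamma'=\gamma$ if there is no such index. Either way $\gamma'\leq\gamma$, which finishes the argument.

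I expect the only delicate point to be the bookkeeping in the middle paragraph: verifying the stationary-or-unit-step property and, from it, simultaneously producing the maximal path $\gamma'$, the surjection $s$, and the identity $\gamma'(s(\ell))=(A_\ell,B_\ell)$. Each step is routine, but one must make sure the case analysis correctly handles the interaction between the two occurrences of $\max$, and that the count of non-stationary steps is exactly $n+k$ so that $\gamma'$ really is of maximal dimension.
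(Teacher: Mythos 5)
Your proof is correct. The commutativity check, the extraction of $s$ and $\gamma'$ from the factorization of $E_\gamma\circ E_\varphi$ on vertices, and the verification of $\gamma'\leq\gamma$ all go through, and the reduction to underlying simplicial sets is legitimate since the decorations impose only conditions on maps. Where you genuinely diverge from the paper is in how you establish the central combinatorial fact that $\theta(\ell)=(A_\ell,B_\ell)$ degenerates at exactly $k$ steps. The paper proves this (its claim $\star)$) by an induction over the total order of \autoref{def:pathorder}: it verifies the count for the maximal path, then compares each path with its predecessor through a multi-case analysis of which edges of the two interchanged triangles degenerate under $E_\gamma$. Your argument replaces all of this with the observation that each step of $(A_\ell,B_\ell)$ is stationary or increments exactly one coordinate by one, so the coordinate sum $A_\ell+B_\ell$ counts the non-stationary steps and forces exactly $n+k$ of them out of $n+2k$; this also hands you the surjection $s(\ell)=A_\ell+B_\ell$ and the identity $\gamma'(s(\ell))=(A_\ell,B_\ell)$ for free, since a maximal path in $\Delta^n\times\Delta^k$ is determined by the coordinate sum of its vertices. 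Your route is shorter, avoids the path-order induction entirely, and makes the inequality $\gamma'\leq\gamma$ an actual consequence of $a'_i\leq a_i$ together with $a'_i+b'_i=a_i+b_i=i$ (the paper asserts this last point as immediate); the paper's inductive comparison of adjacent paths, on the other hand, is the form of the argument it reuses elsewhere (e.g.\ in part 2 of \autoref{lem:facesofextensions} and in the filtrations of \autoref{thm:bigcombinatorial}), which is presumably why it is organized that way there.
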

\begin{proof}
  We consider the composite $\theta=E_\gamma \circ \varphi: \Delta^{n+2k} \xlongrightarrow{} \Delta^{n} \odot \Delta^k$ and make the following claim:
  \begin{itemize}
     \item[$\star)$] There are precisely $k$ vertices in $\Delta^{n+2k}$ such that $\theta(i)=\theta(i+1)$.
   \end{itemize}  

  We will check that the $\star)$ holds inductively, using the order on the paths $\varphi$ in $\Delta^{n+k}_\flat \odot \Delta{k}_\sharp$. Let $\varphi_m$ be the biggest path in our order. Then one checks easily that $\theta$ degenerates precisely in the last $k$ vertices. Now let us suppose that the claim holds for some path $\varphi$ and let $\psi$ be the predecesor of $\varphi$ in the order. Observe that since $\psi$ is the predecesor of $\varphi$ then boths paths differ only on one vertex $s \in [2n+k]$. Let us fix some notation by denoting $\varphi(j)=(x_j^\varphi, y_j^\varphi)$ and similarly for $\psi(j)$. Our assumptions imply that we have 2-simplices of the form
    \[
     \sigma_1 :\varphi(s-1) \xlongrightarrow{} \varphi(s) \xlongrightarrow{}  \varphi(s+1) \enspace =  \enspace (x_{s-1}^\varphi,y_{s-1}^\varphi) \xlongrightarrow{} (x_{s-1}^\varphi + 1,y_{s-1}^\varphi) \xlongrightarrow{} (x_{s-1}^\varphi +1,y_{s-1}^\varphi +1),
    \]
     \[
     \sigma_2: \psi(s-1) \xlongrightarrow{} \psi(s) \xlongrightarrow{}  \psi(s+1) \enspace =  \enspace (x_{s-1}^\varphi,y_{s-1}^\varphi) \xlongrightarrow{} (x_{s-1}^\varphi ,y_{s-1}^\varphi +1) \xlongrightarrow{} (x_{s-1}^\varphi +1,y_{s-1}^\varphi +1).
    \]
    We similary fix the notation $\gamma(x_j^\varphi,y_j^\varphi)=(a_j^\varphi,b_j^\varphi)$. We will show that $\sigma_1$ and $\sigma_2$ have the same number of edges which degenerate under $E_\gamma$ which are of the form $d_i(\sigma_j)$ with $i\in \{0,2\}$ for $j=1,2$. If this holds, we can conclude that $\psi$ degenerates precisely on $k$ vertices using an inductive argument since we are assuming that the claim holds for $\varphi$.

     First, we claim that the edge 
    \[
      \varphi(s-1) \xlongrightarrow{} \varphi(s+1)
    \]
    cannot be degenerate under $E_\gamma$. Indeed, if that were true then we would have that $a^\varphi_{s-1}=a^\varphi_s$ and that $\max(b^\varphi_{s-1},y^\varphi_{s-1})=\max(b^\varphi_{s},y^\varphi_{s-1}+1)$. We see then that $b^\varphi_{s-1} > y^\varphi_{s-1}$ which then implies that $b^\varphi_{s-1}=b^\varphi_{s}$ which is a contradicition. Note that this proof also implies that $\psi(s-1) \xlongrightarrow{} \psi(s+1)$ cannot degenerate under $E_\gamma$. We finish the proof of $\star)$ by checking it case by case:
 
  \begin{itemize}
    \item[i)] The edge $\varphi(s-1) \xlongrightarrow{} \varphi(s)$ degenerates under $E_\gamma$.

    The argument given before shows that in this case the path $\varphi(s) \xlongrightarrow{} \varphi(s+1)$ cannot degenerate under $E_\gamma$. Our assumptions imply that  $a_{s-1}=a_s$ and that $b_s=b_{s-1}+1$. We claim that $\psi(s) \xlongrightarrow{} \psi(s+1)$ must be degenerate. If that holds then $\psi(s-1) \xlongrightarrow{} \psi(s)$ cannot degenerate and the claim will hold. We note that our assumptions imply that $\max(b_{s-1},y_{s-1})=\max(b_s,y_{s-1})$ since $b_s > b_{s-1}$ it follows that we must have $y_{s-1}> b_{s-1}$ which implies that $y_{s-1}+1 > b_{s-1}+1=b_s$. The claim now follows.
  
    \item[ii)]  The edge $\varphi(s) \xlongrightarrow{} \varphi(s+1)$ degenerates under $E_\gamma$.

    Again, as above we know that $\varphi(s-1) \xlongrightarrow{} (\varphi(s)$ cannot degenerate. In this case we will show that either the edge $\psi(s-1) \xlongrightarrow{} \psi(s)$ or the edge $\psi(s) \xlongrightarrow{} \psi(s+1)$ degenerate under $E_\gamma$.

    We know by our assumptions that $\max(b_s, y_{s-1})=\max(b_s, y_{s-1} +1)$ which implies that we must have that $b_s > y_{s-1}$. We consider two subcases,  depending wheter $a_{s-1}= a_s$ or not. Let us first suppose that $a_{s-1}<a_{s}$ which in turn implies that $b_s=b_{s-1}$. Then it follows that $b_{s-1}> y_{s-1}$ which implies that $b_{s-1}\geq y_{s-1} +1$ and so $\psi(s-1) \xlongrightarrow{} \psi(s)$ will degenerate under $E_\gamma$. Finally we assume that $a_s=a_{s-1}$ which implies that $b_s=b_{s-1} +1$. We know that $b_s \geq y_{s-1} +1$. If $b_s -1= b_{s-1} \geq y_{s-1} +1$ then $\psi(s-1) \xlongrightarrow{} \psi(s)$ degenerates as before. If $b_s=y_{s-1} +1$ then $\psi(s) \xlongrightarrow{} \psi(s+1)$ degenerates under $E_\gamma$.
   
   \item[iii)] No edge in $\sigma_1$ degenerates under $E_\gamma$
   In this case we wish to show that no edge in $\sigma_2$ degenerates under $E_\gamma$. To do so we assume that a $d_i(\sigma_2)$ degenerates with $i \in \{0,2\}$. Then a similar argument as above shows that a certain edge in $\sigma_1$ must degenerate under $E_\gamma$ which is a contradicition.
  \end{itemize}

  We have now established the claim $\star)$, which gives us an epi-mono factorization of $\theta$
  \[
    \Delta^{n+2k} \xlongrightarrow{s} \Delta^{n+k} \xlongrightarrow{\gamma'} \Delta^n_\flat \odot \Delta^k_\sharp.
  \]
  It is immediate from the definition of $E_\gamma$ that our new path satisfies $\gamma'\leq \gamma$. 

  Given $(i,j) \in \Delta^{n+2k}_\flat \odot \Delta^k_\sharp$ we have that
  \[
    E_\gamma \circ E_\varphi ((i,j))=E_\gamma((x_i^\varphi,\max(y_i^\varphi,j)))=(a_i^\varphi, \max(b_i^\varphi,\max(y_i^\varphi,j)),
  \]
  where we are borrowing the notation from the proof of the claim $\star)$. The same claim implies that we have that $\gamma'(s(i))=(a_i^\varphi,\max(b_i^\varphi,y_i^\varphi))$. We finally see that
  \[
    E_{\gamma'}((s(i),j))=(a_i^\varphi,\max(\max(b_i^\varphi,y_i^\varphi),j))= E_\gamma \circ E_\varphi ((i,j)).
  \]
and thus the result follows.
\end{proof}

\begin{lemma}\label{lem:postextension}
   Let $\gamma$ be a simplex of maximal dimension inside of $\Delta^n_\flat \odot \Delta^k_\sharp$ and consider the map 
  \[
      E_\gamma: \Delta^{n+k}_\flat \odot \Delta^k_\sharp  \xlongrightarrow{} \Delta^n_\flat \odot \Delta^k_\sharp
   \] 
   given in \autoref{def:pathextension}. Then the following holds:
   \begin{enumerate}
     \item Given a face map $d_r:[n] \xlongrightarrow{}  [n+1]$ then we have a commutative diagram
      \[
        \begin{tikzcd}
          \Delta^{n+k}_\flat \odot \Delta^k \arrow[d,"d_\kappa \odot \on{id}"] \arrow[r,"E_\gamma"] & \Delta^n_\flat \odot \Delta^k_\sharp \arrow[d,"d_r \odot \on{id}"] \\
          \Delta^{n+k+1}_\flat \odot \Delta^k_\sharp \arrow[r,"E_{\gamma'}"] & \Delta^{n+1}_\flat \odot \Delta^k_\sharp
        \end{tikzcd}
      \]
      where $\gamma': \Delta^{n+k+1} \xlongrightarrow{} \Delta^{n+1}_\flat \odot \Delta^k_\sharp$ is the unique simplex of maximal dimension such that $\gamma' \circ d_\kappa=(d_r \odot \on{id})\circ \gamma$.
     \item Given a face map $d_s:[k] \xlongrightarrow{}  [k+1]$ then we have a commutative diagram
     \[
        \begin{tikzcd}
          \Delta^{n+k}_\flat \odot \Delta^k_\sharp \arrow[d,"d_{\kappa}\odot d_{s}"] \arrow[r,"E_\gamma"] & \Delta^n_\flat \odot \Delta^k_\sharp \arrow[d,"\on{id} \odot d_s"] \\
          \Delta^{n+k+1}_\flat \odot \Delta^{k+1}_\sharp \arrow[r,"E_{\gamma'}"] & \Delta^{n}_\flat \odot \Delta^{k+1}_\sharp
        \end{tikzcd}
      \]
      where $\gamma': \Delta^{n+k+1} \xlongrightarrow{} \Delta^{n+1}_\flat \odot \Delta^k_\sharp$ is the unique simplex of maximal dimension such that $\gamma' \circ d_\kappa=(\on{id}\odot d_s)\circ   \gamma$.
      \item Given a degeneracy map $s_u:[n] \xlongrightarrow{} [n-1]$ then we have a commutative diagram
      \[
        \begin{tikzcd}
           \Delta^{n+k}_\flat \odot \Delta^k_\sharp \arrow[d,"s_\kappa \odot \on{id}"] \arrow[r,"E_\gamma"] & \Delta^n_\flat \odot \Delta^k_\sharp \ \arrow[d,"s_u \odot \on{id}"] \\
         \Delta^{n+k-1}_\flat \odot \Delta^k_\sharp \arrow[r,"E_{\gamma'}"] & \Delta^{n-1}_\flat \odot \Delta^{k}_\sharp
        \end{tikzcd}
      \]
      where $\gamma'$ is the unique simplex of maximal dimension in $\Delta^{n-1}_\flat \odot \Delta^{k}_\sharp$ such that $\gamma' \circ s_\kappa=(s_u \circ \on{id}) \circ \gamma$.
      \item Given a degeneracy map $s_\ell: [k] \xlongrightarrow{} [k-1]$ then we have a commutative diagram
      \[
        \begin{tikzcd}
           \Delta^{n+k}_\flat \odot \Delta^k_\sharp \arrow[d,"s_\kappa \odot s_\ell"] \arrow[r,"E_\gamma"] & \Delta^n_\flat \odot \Delta^k_\sharp \ \arrow[d,"\on{id} \odot s_\ell"] \\
         \Delta^{n+k-1}_\flat \odot \Delta^{k-1}_\sharp \arrow[r,"E_{\gamma'}"] & \Delta^{n}_\flat \odot \Delta^{k-1}_\sharp
        \end{tikzcd}
      \]
      where $\gamma'$ is the unique simplex of maximal dimension in $\Delta^{n-1}_\flat \odot \Delta^{k}_\sharp$ such that $\gamma' \circ s_\kappa=(\on{id}\odot s_\ell) \circ \gamma$.
   \end{enumerate}
\end{lemma}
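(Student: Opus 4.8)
The plan is to reduce all four identities to an elementary check on $0$-simplices. First I would note that, on underlying simplicial sets, every object occurring in the four squares is the nerve of a product of linear orders $[N]\times[m]$, and every map is induced by an order-preserving map of such posets: this is clear for the structural maps $d_r\odot\on{id}$, $\on{id}\odot d_s$, $s_u\odot\on{id}$, $d_\kappa\odot d_s$, $s_\kappa\odot s_\ell$, and it holds for the extensions $E_\gamma$, $E_{\gamma'}$ by construction, since $(i,j)\mapsto(a_i,\max(b_i,j))$ is order-preserving, cf. \autoref{def:pathextension}. Since the nerve is fully faithful, commutativity of each square on underlying simplicial sets can be verified on vertices; and since the markings and scalings of the objects $\Delta^N_\flat\odot\Delta^m_\sharp$ are determined by the two factors and are manifestly preserved by all these maps, commutativity then holds as a diagram of marked--scaled simplicial sets. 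So it suffices, in each case, to exhibit $\gamma'$ (and the relevant $d_\kappa$ or $s_\kappa$) with the stated factorization property and to check the square on a vertex $(i,j)$.

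Second I would pin down $\gamma'$, recalling that a simplex of maximal dimension in $\Delta^N\times\Delta^m$ is a monotone lattice path from $(0,0)$ to $(N,m)$, and writing $\gamma(i)=(a_i,b_i)$. In the two coface cases, $(d_r\odot\on{id})\circ\gamma$ (resp. $(\on{id}\odot d_s)\circ\gamma$) is a non-degenerate $(n+k)$-simplex whose image is a chain in $[n+1]\times[k]$ (resp. $[n]\times[k+1]$) missing a single vertex; \emph{crucially}, because $d_r$ (resp. $d_s$) omits a value in only one factor, this missing vertex is the unique element filling an interval of length $2$ inside that factor — namely $(r,c)$, where $c$ is the second coordinate at the step of $\gamma$ where the first coordinate increments across $r$ (resp. $(c,s)$) — or the omitted bottom/top vertex when $r$ (resp. $s$) is extremal. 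Filling it in gives the unique maximal $\gamma'$, with $d_\kappa$ the coface deleting the position of the inserted vertex. In the two codegeneracy cases, $(s_u\odot\on{id})\circ\gamma$ (resp. $(\on{id}\odot s_\ell)\circ\gamma$) has exactly one pair of equal consecutive vertices — at the unique unit step of $\gamma$ where the first coordinate increments from $u$ to $u+1$ (resp. the second from $\ell$ to $\ell+1$) — so its epi--mono factorization has surjective part a codegeneracy $s_\kappa$ collapsing that pair and non-degenerate part a maximal simplex $\gamma'$; this yields existence, uniqueness and the factorization relation $\gamma'\circ d_\kappa=(d_r\odot\on{id})\circ\gamma$ at once, and likewise in the other three cases.

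Third I would verify each square on a vertex $(i,j)$ of $\Delta^{n+k}_\flat\odot\Delta^k_\sharp$. Composing through $E_{\gamma'}$ amounts to applying the defining formula of \autoref{def:pathextension} to the image of $(i,j)$ under the left vertical map, and then using the factorization relation for $\gamma'$ to express the result in terms of $(a_i,b_i)$; composing through $E_\gamma$ gives the right vertical map applied to $(a_i,\max(b_i,j))$. In cases (1) and (3) both sides equal $(d_r(a_i),\max(b_i,j))$, resp. $(s_u(a_i),\max(b_i,j))$, while in cases (2) and (4) one uses that the map acting on the $\Delta^k$-factor is order-preserving to move it past the maximum — e.g. $d_s(\max(b_i,j))=\max(d_s(b_i),d_s(j))$ and $s_\ell(\max(b_i,j))=\max(s_\ell(b_i),s_\ell(j))$ — whence the two sides agree. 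The only step demanding genuine care is the combinatorial description of $\gamma'$ in the coface cases, where the interior case and the two extremal cases ($r\in\{0,n+1\}$, resp. $s\in\{0,k+1\}$) must be treated separately; the rest is routine bookkeeping. Deleting every reference to marked edges, the same argument proves the analogous statement with $\tensor$ in place of $\odot$.
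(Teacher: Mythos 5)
Your proposal is correct and follows essentially the same route as the paper: identify $\gamma'$ via injectivity (coface cases) or the epi--mono factorization (codegeneracy cases), then verify the squares on vertices using that the order-preserving maps $d_s$, $s_\ell$ commute with $\max$. You supply more detail than the paper (which leaves cases (1), (3), (4) largely to the reader), but the key computations — e.g. $d_s(\max(b_i,j))=\max(d_s(b_i),d_s(j))$ — are exactly those in the paper's proof of case (2).
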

\begin{proof}
  We proceed case by case.
  \begin{enumerate}
    \item Observe that the composite $\varphi:\Delta^{n+k} \xlongrightarrow{\gamma}  \Delta^n_\flat \odot \Delta^k_\sharp \xlongrightarrow{d_r \odot \on{id}} \Delta^{n+1}_\flat \odot \Delta^k_\sharp$ is an injective morphism. Therefore there exists a simplex of maximal dimension $\gamma': \Delta^{n+k+1} \xlongrightarrow{} \Delta^{n+1}_\flat \odot \Delta^k_\sharp$ and  vertex $\kappa \in [n+k]$ such that $\gamma' \circ d_\kappa= \varphi$. The claim follows easily.
    \item Given $(i,j) \in  \Delta^{n+k}_\flat \odot \Delta^k_\sharp$ we denote $\gamma(i)=(a_i,b_i)$ and check that we have
    \[
      E_{\gamma'}((d_\kappa(i),d_s(j)))=(a_i,\max(d_s(b_i),d_s(j)))=(a_i,d_s(\max(b_i,j)))=((\on{id}\odot d_s)\circ  E_\gamma )((i,j)).
    \]
    \item The proof is dual to $1$ and left as an exercise.
    \item The proof is dual to $2$ and left as an exercise.
  \end{enumerate}
\end{proof}

\begin{lemma}\label{lem:facesofextensions}
  Let $\gamma$ be a simplex of maximal dimension inside of $\Delta^n \odot \Delta^k$ and consider the map 
  \[
      E_\gamma: \Delta^{n+k}_\flat \odot \Delta^k_\sharp  \xlongrightarrow{} \Delta^n_\flat \odot \Delta^k_\sharp
   \] 
   given in \autoref{def:pathextension}. Then the following holds:
   \begin{enumerate}
     \item If $\gamma$ is the biggest simplex in the order given in \autoref{def:pathorder} then we have that the composite
     \[
        \Delta^n_\flat \odot \Delta^k_\sharp \xlongrightarrow{\alpha \odot \on{id}} \Delta^{n+k}_\flat \odot \Delta^k_\sharp  \xlongrightarrow{E_\gamma} \Delta^n_\flat \odot \Delta^k_\sharp
      \] 
      where $\alpha$ is induced by the inclusion of the set $[0,\dots,n] \subset [n+k]$, equals the identity.
     \item Let $\gamma_1 < \gamma_2$ be a pair of paths such that $\gamma_2$ is the succesor of $\gamma_1$. Given the obvious face map $d_u: \Delta^{n+k-1} \xlongrightarrow{} \Delta^{n+k}$ such that $\gamma_1 \circ d_u= \gamma_2 \circ d_u$ we have that the maps
     \[
       \Delta^{n+k-1}_\flat \odot \Delta^k_\sharp \xlongrightarrow{d_u \times \on{id}} \Delta^{n+k}_\flat \odot \Delta^k_\sharp \xlongrightarrow{E_{\gamma_i}} \Delta^n_\flat \odot \Delta^k_\sharp
     \]
     coincide for $i=1,2$.

    \item Let $0\leq i \leq n+k$ and denote $\gamma(i)=(a_i,b_i)$.  Let us suppose that one of the following conditions hold:
    \begin{itemize}
      \item[i)] We have that $i=0$ and that $\gamma(1)=(1,0)$.
      \item[ii)] We have that $0<i<n+k$ and $b_{i-1}= b_i=b_{i+1}$.
      \item[iii)] We have $i=n+k$ and $\gamma(n+k-1)=(n-1,k)$. 
    \end{itemize}
     Then we have a commutative diagram
     \[
        \begin{tikzcd}
          \Delta^{n+k-1}_\flat \odot \Delta^{k}_\sharp \arrow[r,"d_s \odot \on{id}"] \arrow[d,swap,"E_{\hat{\gamma}}"] & \Delta^{n+k}_\flat \odot \Delta^{k}_\sharp \arrow[d,"E_\gamma"] \\
          \Delta^{n-1}_\flat\odot \Delta^{k}_\sharp \arrow[r,"d_i \odot \on{id}"] & \Delta^{n}_\flat \odot \Delta^k_\sharp
        \end{tikzcd}
      \]
      where $\hat{\gamma}$ is the unique simplex of maximal dimension in $\Delta^{n-1}_\flat \odot \Delta^{k}_\sharp$ such that $(d_i \odot \on{id}) \circ \hat{\gamma}=\gamma \circ d_s$.

      \item Let $0\leq i \leq n+k$ and denote $\gamma(i)=(a_i,b_i)$.  Let us suppose that one of the following conditions hold:
      \begin{itemize}
        \item[i]  We have that $i=0$ and that $\gamma(1)=(0,1)$.
        \item[ii)] We have that $0<i<n+k$ and $a_{i-1}= a_i=a_{i+1}$.
      \item[iii)] We have $i=n+k$ and $\gamma(n+k-1)=(n,k-1)$.  
      \end{itemize}
      Then we have a commutative diagram
      \[
        \begin{tikzcd}
           \Delta^{n+k-1}_\flat \odot \Delta^{k-1}_\sharp \arrow[r,"d_s \odot d_{b_i}"] \arrow[d,swap,"E_{\hat{\gamma}}"] & \Delta^{n+k}_\flat \odot \Delta^{k}_\sharp \arrow[d,"E_\gamma"] \\
          \Delta^{n}_\flat\odot \Delta^{k-1}_\sharp \arrow[r,"\on{id} \odot d_{b_i}"] & \Delta^{n}_\flat \odot \Delta^k_\sharp
        \end{tikzcd}
      \]
      where $\hat{\gamma}$ is the unique simplex of maximal dimension in $\Delta^{n}_\flat \odot \Delta^{k}_\flat$ such that $(\on{id}\odot d_{b_i}) \circ \hat{\gamma}=\gamma \circ d_s$.

     \item Let $d_0: \Delta^{n+k-1} \xlongrightarrow{} \Delta^{n+k+1}$ and assume that $\gamma(1)=(0,1)$ then we have a commutative diagram
     \[\begin{tikzcd}
  & {\Delta^{n+k-1}_\flat \odot \Delta^{k-1}_\sharp} && {\Delta^{n+k}_\flat \odot \Delta^k_\sharp} \\
  {\Delta^{n+k-1}_\flat \odot \Delta^{k}_\sharp} &&&& {} \\
  & {\Delta^{n+k}_\flat \odot \Delta^{k}_\sharp} & {} & {\Delta^{n}_\flat \odot \Delta^{k}_\sharp}
  \arrow["{d_0 \odot d_0}", from=1-2, to=1-4]
  \arrow["{\operatorname{id}\odot s_0}"{pos=0.4}, from=2-1, to=1-2]
  \arrow["{d_0 \odot \operatorname{id}}"', from=2-1, to=3-2]
  \arrow["{E_{\gamma}}"', from=3-2, to=3-4]
  \arrow["{E_{\gamma}}", from=1-4, to=3-4].
\end{tikzcd}\]
    which combined with 4 above yields another commutative diagram
    \[\begin{tikzcd}
  & {\Delta^{n+k-1}_\flat \odot \Delta^{k-1}_\sharp} && {\Delta^{n}_\flat \odot \Delta^{k-1}_\sharp} \\
  {\Delta^{n+k-1}_\flat \odot \Delta^{k}_\sharp} &&&& {} \\
  & {\Delta^{n+k}_\flat \odot \Delta^{k}_\sharp} & {} & {\Delta^{n}_\flat \odot \Delta^{k}_\sharp}
  \arrow["{E_{\hat{\gamma}}}", from=1-2, to=1-4]
  \arrow["{\operatorname{id}\odot s_0}"{pos=0.4}, from=2-1, to=1-2]
  \arrow["{d_0 \odot \operatorname{id}}"', from=2-1, to=3-2]
  \arrow["{E_{\gamma}}"', from=3-2, to=3-4]
  \arrow["{\operatorname{id}\odot d_0}", from=1-4, to=3-4].
\end{tikzcd}\]
    \item Let $d_0: \Delta^{k-1} \xlongrightarrow{} \Delta^k$ and let $r \in [n+k]$ be the biggest index such that we have $\gamma(r)=(a_r,b_r)$ with $b_r=0$. Then there exists a path $\hat{\gamma}$ in $\Delta^n_\flat \odot \Delta^{k-1}_\sharp$ and a commutative diagram
    \[\begin{tikzcd}
  & {\Delta^{n+k-1}_\flat \odot \Delta^{k-1}_\sharp } && {\Delta^{n}\odot \Delta^{k-1}} \\
  {\Delta^{n+k}_\flat \odot\Delta^{k-1}}_\sharp \\
  & {\Delta^{n+k}_\flat\odot\Delta^k_\sharp} && {\Delta^n_\flat \odot \Delta^k_\sharp}
  \arrow["{s_{r}\odot\operatorname{id}}", from=2-1, to=1-2]
  \arrow["{\operatorname{id}\odot d_0}"', from=2-1, to=3-2]
  \arrow["{E_{\gamma}}"', from=3-2, to=3-4]
  \arrow["{E_{\tilde{\gamma}}}", from=1-2, to=1-4]
  \arrow["{\operatorname{id}\odot d_0}", from=1-4, to=3-4].
\end{tikzcd}\]
   \end{enumerate}
\end{lemma}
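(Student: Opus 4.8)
The plan is to treat \autoref{lem:facesofextensions} as a sequence of explicit verifications on vertices, using the combinatorial description of a simplex of maximal dimension $\gamma$ in $\Delta^n\times\Delta^k$ as a monotone lattice path: writing $\gamma(i)=(a_i,b_i)$ one has $(a_0,b_0)=(0,0)$, $(a_{n+k},b_{n+k})=(n,k)$, and each step increments exactly one of the two coordinates by one. The only algebraic facts I will use repeatedly are that an order-preserving injection $d\colon[m]\hookrightarrow[m']$ commutes with binary maxima, $\max(d(x),d(y))=d(\max(x,y))$, together with the elementary identity $\max(b,j+1)=\max(\max(b-1,0),j)+1$. With these in hand each assertion reduces to chasing the formula $E_\gamma(i,j)=(a_i,\max(b_i,j))$ along the prescribed maps. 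Parts (1) and (2) are then immediate: the biggest path in the order of \autoref{def:pathorder} performs all $a$-steps first, so $b_i=0$ for $i\leq n$ and hence $E_\gamma(\alpha(i),j)=(i,\max(0,j))=(i,j)$; and two successive paths differ at a single vertex $u$, so that $d_u$ (which omits $u$) identifies their restrictions, and since $E_\gamma(i,j)$ depends only on $\gamma(i)$ the two composites coincide.

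For parts (3) and (4) I will first observe that the stated hypotheses on $\gamma$ near the index $i$ — two consecutive $a$-steps (resp.\ $b$-steps), or the corresponding boundary situation — are exactly the condition that the value $a_i$ (resp.\ $b_i$) is attained by $\gamma$ at the \emph{single} vertex $i$. Consequently $\gamma\circ d_i$ factors through $d_{a_i}\odot\id$ (resp.\ $\id\odot d_{b_i}$), which is precisely what defines the auxiliary path $\hat\gamma$; the commutativity of the square is then the identity $\max(d(b'_m),d(j))=d(\max(b'_m,j))$ recalled above, together with the matching of $a$-coordinates, which is tautological.

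Parts (5) and (6) are the longer computations. For the first diagram in (5) I will use that $\gamma(1)=(0,1)$ forces $b_m\geq 1$ for every $m\geq 1$, so that the extra ``$\max$ with $1$'' produced by $\id\odot s_0$ followed by $d_0\odot d_0$ is absorbed; the second diagram in (5) is then obtained by pasting the square of part (4) — applied at $i=0$, boundary case (i), where $d_{b_0}=d_0$ — onto the first. For part (6) I will produce $\hat\gamma$ explicitly: with $r$ the largest index with $b_r=0$, set $\hat\gamma(\ell)=(\ell,0)$ for $\ell\leq r$ and $\hat\gamma(\ell)=(a_{\ell+1},b_{\ell+1}-1)$ for $\ell\geq r$ (the two formulas agree at $\ell=r$ because $\gamma(r+1)=(r,1)$), and then check the square vertexwise, splitting into the ranges $m\leq r$ and $m>r$ and invoking $\max(b_m,j+1)=\max(\max(b_m-1,0),j)+1$ to handle the action of $d_0$ in the $\Delta^k$-direction.

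The only genuine difficulty here is bookkeeping rather than ideas: keeping the several face and degeneracy indices in (3)--(6) straight (the statement's $d_s$ and $d_i$ refer, respectively, to the face of $\Delta^{n+k}$ omitting vertex $i$, and to the face $d_{a_i}$, resp.\ $d_{b_i}$, of $\Delta^n$, resp.\ $\Delta^k$), and, in (6), correctly pinning down $\hat\gamma$ and verifying that the required equality of vertices holds for \emph{every} $j\in[k-1]$, which is what forces the case split on whether $b_m=0$. I expect this last verification to be the most error-prone step, but it is entirely routine once the formula for $\hat\gamma$ is in place.
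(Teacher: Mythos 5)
Your proposal is correct and takes essentially the same route as the paper's proof: a vertexwise check from the formula $E_\gamma(i,j)=(a_i,\max(b_i,j))$, with the same explicit path $\hat{\gamma}$ and case split on $m\leq r$ versus $m>r$ in part (6), and the same absorption $\max(b_{i+1},1)=b_{i+1}$ (valid since $\gamma(1)=(0,1)$ forces $b_m\geq 1$ for $m\geq 1$) in part (5). Your reading of the loosely indexed face maps --- $d_s$ omitting the vertex $i$ of $\Delta^{n+k}$, and the lower maps being $d_{a_i}$ on $\Delta^{n}$ in part (3) and $d_{b_i}$ on $\Delta^{k}$ in part (4) --- is the intended one.
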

\begin{proof}
  As usual we check each case separately.
  \begin{enumerate}
    \item This is immediate from the definitions.
    \item This is immediate from the definitions.
    \item We note that the conditions in the statement guarantee the existence of $\hat{\gamma}$. The result follows from the proof of 1 in \autoref{lem:postextension}.
    \item The result follows from the proof of 2 in \autoref{lem:postextension}.
    \item We check that $E_{\gamma}(i+1,s_0(j)+1)=E_\gamma(i+1,j)$. Note that $s_0(j)+1=j$ unless $j=0$ in which case we need to check $E_\gamma(i+1,1)=E_\gamma(i+1,0)$. Since $\gamma(1)=(0,1)$ then if we denote $\gamma_{i+1}=(a_{i+1},b_{i+1})$ we have that $b_{i+1}\geq 1$ which implies the claim.
    \item We define the path $\tilde{\gamma}$ as follows
    \[
      \tilde{\gamma}: \Delta^{n+k-1} \xlongrightarrow{} \Delta^{n+k-1}_\flat \odot \Delta^{k-1}_\sharp, \enspace s \mapsto \begin{cases}
        (a_s,0), \enspace \text{ if } s \leq r,\\
        (a_{s+1},b_{s+1}-1) \text{ if } s > r.
      \end{cases}
    \]
    Then we check that $i \leq r$ we have that $E_\gamma(i,j+1)=(a_i,j+1)$ and that $E_{\tilde{\gamma}}(i,j)=(a_i,j)$ and so the claim holds in this case. If $i = r +1$ we need to check that $E_\gamma(r+1,j+1)=E_\gamma(r,j+1)$. We observe that $a_{r+1}=a_{r}$ since $\gamma$ is a simplex of maximal dimension. Finally we note that $j+1 \geq b_{r+1}$ and so the claim holds.

    Finally if $i> r+1$ we have that $E_{\tilde{\gamma}}(i-1,j)=(a_{i},\max(b_i-1,j))$. We see that $E_\gamma(i,j+1)=(a_i, \max(b_i,j+1))=(a_i,d_0(\max(b_i-1,j))$. The proof is now complete. \qedhere  
  \end{enumerate}
\end{proof}

\begin{lemma}\label{lem:noboundaries}
  Let $\gamma$ be a simplex of maximal dimension inside of $\Delta^n_\flat \odot \Delta^k_\sharp$ and consider the map 
  \[
      E_\gamma: \Delta^{n+k}_\flat \odot \Delta^k_\sharp  \xlongrightarrow{} \Delta^n_\flat \odot \Delta^k_\sharp
   \] 
   given in \autoref{def:pathextension}. Let $0<r\leq k$ and define $\epsilon_r$ to be the first index in $[n+k]$ such that $\gamma(\epsilon_r)=(a_{\epsilon_r},b_{\epsilon_r})$ where $b_{\epsilon_r}=r$. We further define $u_r$ to be the first index in $[n+k]$ such that we have $\gamma(u_r)=(a_{u_r},b_{u_r})$ with $a_{u_r}=a_{\epsilon_r}$ and note that $0\leq u_r <\epsilon_r$. Given $\alpha:[\ell] \xlongrightarrow{} [n+k]$ and $\beta:[m] \xlongrightarrow{} [k]$ such that:
   \begin{enumerate}
     \item The maps $\alpha,\beta$ are injective.
      \item The index $\epsilon_r$ is not in the image of $\alpha$.
     \item The index $0$ is in the image of $\alpha$. 
     \item The index $n+k$ is in the image of $\alpha$ unless $\epsilon_r=n+k$.
     \item There exists an index $u_r \leq j < \epsilon_r$ such that $j$ is in the image of $\alpha$.
     \item The indices $0,r$ are in the image of $\beta$.
     \item The last $k-r$ indices are not in the image of $\beta$.
   \end{enumerate}
   Then it follows that we have a commutative diagram
   \[
     \begin{tikzcd}
       \Delta^{\ell+m}_\flat \odot \Delta^{m}_\sharp \arrow[r,"s \odot \on{id}"] \arrow[d,"E_{\tau}"] & \Delta^{\ell+p}_\flat \odot \Delta^m_\sharp \arrow[r,"\hat{\alpha} \odot \beta"]  & \Delta^{n+k}_\flat \odot \Delta^{k}_\sharp \arrow[d,"E_\gamma"] \arrow[d] \\
       \Delta^{\ell}_\flat \odot \Delta^{m}_\sharp \arrow[r,"\alpha \odot \beta"] & \Delta^{n+k}_\flat \odot \Delta^{k}_\sharp \arrow[r,"E_\gamma"] & \Delta^n_\flat \odot \Delta^k_\sharp 
     \end{tikzcd}
   \]
   where
   \begin{itemize}
      \item The map $\hat{\alpha}$ has  $\epsilon_r$ in its image and restricts to $\alpha$.
      \item The map $s$ is surjective.
      \item $\tau$ is a path in $\Delta^\ell_\flat \odot \Delta^m_\sharp$.
    \end{itemize} 
\end{lemma}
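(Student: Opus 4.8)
The plan is to continue in the explicit combinatorial vein of \autoref{lem:extensionsstable}--\autoref{lem:facesofextensions}: build $\hat\alpha$, the path $\tau$ and the surjection $s$ by hand, and then check commutativity of the outer rectangle vertex by vertex using only the defining formula $E_\gamma(i,j)=(a_i,\max(b_i,j))$.

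First I would set $\nu$ to be the largest element of the image of $\alpha$ that is strictly smaller than $\epsilon_r$; hypothesis (5) guarantees it exists and satisfies $u_r\le \nu<\epsilon_r$. Since $u_r$ is by definition the first index whose first coordinate is $a_{\epsilon_r}$ and the first coordinate of $\gamma$ is non-decreasing, $\gamma$ has constant first coordinate $a_{\epsilon_r}$ on the whole segment $[u_r,\epsilon_r]$; in particular $a_\nu=a_{\epsilon_r}$. I would then take $\hat\alpha$ to be the monotone injection whose image is that of $\alpha$ together with every vertex $x$ of $\gamma$ with $\nu<x\le\epsilon_r$ and $b_x$ in the image of $\beta$ — this set contains $\epsilon_r$ because $b_{\epsilon_r}=r$ lies in the image of $\beta$ by (6) — and let $p$ be the number of adjoined vertices, so that $\hat\alpha$ restricts to $\alpha$ and has $\epsilon_r$ in its image. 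Because all adjoined vertices lie in $[u_r,\epsilon_r]$, pre-composing $a\circ\hat\alpha$ with the evident iterated degeneracy $\delta\colon[\ell+p]\to[\ell]$ that collapses them back onto the position $c_0:=\alpha^{-1}(\nu)$ returns $a\circ\alpha$; this is what forces the first coordinates of the two composites to agree.

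Next I would take $\tau$ to be the path in $\Delta^\ell_\flat\odot\Delta^m_\sharp$ consisting of $c_0$ steps in the $\Delta^\ell$-direction, then $m$ steps in the $\Delta^m$-direction, then $\ell-c_0$ steps in the $\Delta^\ell$-direction, and $s\colon[\ell+m]\to[\ell+p]$ the surjection that is the unique lift of the first-coordinate map of $\tau$ through $\delta$ outside the column $c=c_0$, and which inside that column sends the vertex of height $t$ to $c_0$ plus the number of adjoined $b$-coordinates whose $\beta$-preimage is $\le t$. With these choices $\delta\circ s$ equals the first coordinate of $\tau$ and $s$ is surjective, so the first coordinates of the two composites coincide, and the verification of $E_\gamma\circ(\alpha\odot\beta)\circ E_\tau=E_\gamma\circ(\hat\alpha\odot\beta)\circ(s\odot\on{id})$ on the second coordinate splits into the three regions $i<c_0$, $c_0\le i\le c_0+m$, and $i>c_0+m$. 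The first region is immediate; in the last region one uses that any element of the image of $\alpha$ above $\nu$ is in fact above $\epsilon_r$ (since $\epsilon_r$ is not in the image of $\alpha$ by (2)), hence has $b$-coordinate $\ge r$, together with the image of $\beta$ being contained in $\{0,\dots,r\}$ by (7); in the middle region both sides reduce, after using $a_\nu=a_{\epsilon_r}$, to an identity between the nested maxima $\max(b_\nu,\beta(\max(d_i,j)))$ and $\max(b_{\hat\alpha(s(i))},\beta(j))$ which I would establish by the case split on the height $t$ relative to the thresholds $\beta^{-1}(b_x)$, using exactly that the adjoined $b$-coordinates are the elements of the image of $\beta$ in $\{b_\nu+1,\dots,r\}$ and that $\beta(m)=r$ by (6).

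The middle-region computation is where I expect the real work to be: it is what dictates the precise shape of $\tau$ and $s$, and matching the two nested maxima for every $j$ (including $j=0$ and $j=m$) requires the careful interleaving of path heights with the image of $\beta$ described above. The only remaining subtlety is the bookkeeping at the two ends — the cases $\epsilon_r=n+k$ (so $c_0=\ell$ and there is no column past $c_0$), $i=0$, and $i=\ell+m$ — which is exactly where hypotheses (3) and (4) are used, to guarantee that the endpoints of the paths involved are $(0,0)$ and $(n,k)$ as needed for the extensions to be defined and for $\tau$ to be a path of maximal dimension.
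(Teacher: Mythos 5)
Your construction agrees with the paper's proof: your $\nu$, $\tau$, $s$ and $\hat\alpha$ are exactly the $\omega_r$, $\tau$, $s$ and $\hat\alpha$ of the paper (the image of $\hat\alpha$ being $\operatorname{im}(\alpha)$ together with the indices $x\in(\omega_r,\epsilon_r]$ whose $b$-coordinate lies in $\operatorname{im}(\beta)$), and your three-region case analysis is the same computation the paper performs. The only organizational difference is that the paper routes the verification through an auxiliary path $\tilde\tau$ in $\Delta^{n+2k}$ and the epi--mono factorization of \autoref{lem:extensionsstable}, whereas you check the two composites directly from the formula $E_\gamma(i,j)=(a_i,\max(b_i,j))$; both yield the same identity of nested maxima in the middle region.
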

\begin{proof}
Let $u_r \leq \omega_r< \epsilon_r$ be the biggest index in condition $5$ and define $v_r=\alpha^{-1}(\omega_r)$. We define a path $\tau$ in $\Delta^\ell_\flat \odot \Delta^m_\sharp$ as follows:
\[
  \tau: \Delta^{\ell+m} \xlongrightarrow{}\Delta^\ell_\flat \odot \Delta^m_\sharp, \enspace  i \mapsto \begin{cases}
    (i,0), \enspace \text{ if }\enspace  0\leq i \leq v_r \\
    (v_r,i-v_r), \enspace \text{ if }\enspace  v_r  < i \leq v_r + m \\
    (i-m, m), \enspace \text{ if } \enspace m+v_r < i \leq m+l
  \end{cases}
\]
We further define another path
\[
  \tilde{\tau}:\Delta^{n+2k}_\flat \xlongrightarrow{} \Delta^{n+k}_\flat \odot \Delta^k_\sharp, \enspace i \mapsto \begin{cases}
    (i,0), \enspace \text{ if }\enspace i\leq \omega_r, \\
    (\omega_r,i-\omega_r), \enspace \text{ if } \enspace \omega_r < i \leq \omega_r + r, \\
    (i- r, r), \enspace \text{ if } \enspace \omega_r + r < i \leq n+k+r ,\\
    (n+k,i-n-k ), \enspace \text{ if } \enspace n+k+r < i \leq n+2k
  \end{cases}
\]
and note that we have a commutative diagram,
\[
  \begin{tikzcd}
    \Delta^{\ell+m}_\flat \odot \Delta^m_\sharp \arrow[r,"\iota \odot \beta"] \arrow[d,"E_{\tau}"] & \Delta^{n+2k}_\flat \odot \Delta^k_\sharp \arrow[d,"E_{\tilde{\tau}}"] \\
    \Delta^\ell_\flat \odot \Delta^m_\sharp \arrow[r,"\alpha \odot \beta"] & \Delta^{n+k}_\flat \odot \Delta^k_\sharp 
  \end{tikzcd}
\]
where $\iota$ is the injective map described below
\[
  \iota: \Delta^{\ell+m}_\flat \xlongrightarrow{} \Delta^{n+2k}_\flat, \enspace i \mapsto \begin{cases}
    \alpha(i), \enspace \text{ if } \enspace 0\leq i \leq v_r \\
    \omega_r+\beta(i-v_r), \enspace \text{ if } \enspace v_r < i \leq v_r+m \\
  \alpha(i-m) + r, \enspace \text{ if } \enspace m+v_r < i \leq m+l.

  \end{cases}
\]

Following the proof of \autoref{lem:extensionsstable} we note that the composite $\theta: \Delta^{n+2k}_\flat \xlongrightarrow{\tilde{\tau}} \Delta^{n+k}_\flat \odot \Delta^k_{\sharp} \xlongrightarrow{E_\gamma} \Delta^n_\flat \odot \Delta^k_\sharp$ factors as
\[
   \Delta^{n+2k}_\flat \xlongrightarrow{\nu} \Delta^{n+k}_\flat \xlongrightarrow{\gamma}  \Delta^n_\flat \odot \Delta^k_\sharp
\]
where $\nu$ is a surjective map and thus we obtain a commutative diagram
\[
  \begin{tikzcd}
     \Delta^{\ell+m}_\flat \odot \Delta^m_\sharp \arrow[r,"\iota \odot \beta"] \arrow[d,"E_{\tau}"] & \Delta^{n+2k}_\flat \odot \Delta^k_\sharp \arrow[d,"E_{\tilde{\tau}}"] \arrow[r,"\nu \odot \on{id}"] & \Delta^{n+k}_\flat \odot \Delta^k_\sharp  \arrow[d,"E_\gamma"] \\
    \Delta^\ell_\flat \odot \Delta^m_\sharp \arrow[r,"\alpha \odot \beta"] & \Delta^{n+k}_\flat \odot \Delta^k_\sharp  \arrow[r,"E_\gamma"] & \Delta^n_\flat \odot \Delta^k_\sharp
  \end{tikzcd}
\]
To finish the proof we need to show that $\nu \circ \iota =\hat{\alpha} \circ s$. Let $\mu=\nu \circ \iota$ and note that the image of $\nu \circ \iota$ is precisely the image of the map
\[
 \mu: \Delta^{\ell+m}_\flat \xlongrightarrow{} \Delta^n_\flat \odot \Delta^k_\sharp, \enspace i \mapsto \begin{cases}
     (a_{\alpha(i)},b_{\alpha(i)}), \enspace \text{ if } \enspace 0\leq i \leq v_r \\
   (a_{\omega_r},\max(b_{\omega_r},\beta(i-v_r))), \enspace \text{ if } \enspace v_r < i \leq v_r+m \\
  (a_{\alpha(i-m)},\max(r,b_{\alpha(i-m)})), \enspace \text{ if } \enspace m+v_r < i \leq m+l.
  \end{cases}
\]
where we denote $\gamma(j)=(a_j,b_j)$.

Note that by construction if $m+v_r < i \leq m+l$ then $i-m> v_r$ and it follows that $b_{\alpha(i-m)}\geq r$ so in particular, if $i> m + v_r$ we have that $\mu(i)=(a_{\alpha(i-m)},b_{\alpha(i-m)})$. Let $0\leq j\leq m$ such that $b_{\omega_r} \leq \beta(j) \leq b_{\epsilon_r}=r$. Then if we set $i=j+v_r$ it follows that 
\[
  \mu(i)=(a_{\omega_r},\beta(j))=\gamma(s), \enspace \text{ for some } \omega_r <s <\epsilon_r
\]
If we consider $0\leq j \leq m$ such that $0\leq \beta(j) \leq b_{\omega_r}$ it follows that if $i=v_r+j$ we get $\mu(i)=(a_{\omega_r},b_{\omega_r})$. It follows that $\mu$ has an epi-mono factorization
\[
  \Delta^{\ell+m}_\flat \xlongrightarrow{s} \Delta^{\ell+p}_\flat \xlongrightarrow{\hat{\alpha}} \Delta^{n+k}_{\flat}
\]
and so, the result follows.
\end{proof}
\subsection{A technical result}
\begin{theorem}\label{thm:bigcombinatorial}
  The maps of marked-scaled simplicial sets
  \[
    \bcat{C} \xlongrightarrow{s_\tensor}  \mathbb{X}^\tensor_{\bcat{C}} , \enspace \enspace \overline{\bcat{C}} \xlongrightarrow{s_\odot}  \mathbb{X}_{\bcat{C}}^{\odot}
  \]
  are trivial cofibrations in the model structure of marked-scaled simplicial sets given in \autoref{thm:markedscaledmodel}.
\end{theorem}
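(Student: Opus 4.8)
The plan is to show that $\bcat{C} \xlongrightarrow{s_\odot} \mathbb{X}_{\bcat{C}}^{\odot}$ (and, entirely analogously, $s_\tensor$) is a trivial cofibration by exhibiting $\mathbb{X}^\odot_{\bcat{C}}$ as built from $\overline{\bcat{C}}$ by a (transfinite) sequence of pushouts along $\mathbf{MS}$-anodyne maps, together with the fact that $s_\odot$ is visibly a monomorphism (it is a section of $\pi_\odot$ and injective by inspection of the colimit defining $\mathbb{X}^\odot_{\bcat{C}}$). First I would unwind the colimit $\mathbb{X}^\odot_{\bcat{C}} = \colim_{\mathcal{S}_\odot} T_\odot$: its nondegenerate simplices are indexed by pairs consisting of a map $f : \Delta^n_\flat \odot \Delta^k_\sharp \to \overline{\bcat{C}}$ that does not factor (nontrivially) through a smaller such product together with a simplex of $\Delta^n \times \Delta^k$, taken modulo the morphisms of $\mathcal{S}_\odot$. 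The section $s_\odot$ picks out exactly the summands with $k = 0$. So the content is: starting from the $k=0$ part, one can attach all the $k \geq 1$ cells.

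The key device is the \emph{path extension} $E_\gamma : \Delta^{n+k}_\flat \odot \Delta^k_\sharp \to \Delta^n_\flat \odot \Delta^k_\sharp$ of Definition~\ref{def:pathextension}. For a map $f : \Delta^n_\flat \odot \Delta^k_\sharp \to \overline{\bcat C}$ and a maximal-dimensional path $\gamma$ in $\Delta^n \times \Delta^k$, the composite $f \circ E_\gamma$ records $f$ together with the "connecting" data along $\gamma$; crucially, by \autoref{lem:extensionsstable} and \autoref{lem:postextension}, these $\gamma$-extensions are stable under all face and degeneracy maps, and by \autoref{lem:facesofextensions} the faces of a $\gamma$-extension are controlled (they are either $\gamma'$-extensions for $\gamma' \leq \gamma$, or factor through lower-dimensional data, or agree across successive paths). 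The strategy is then to order the simplices of $\mathbb{X}^\odot_{\bcat{C}}$ not yet in the image of $s_\odot$ by (i) the dimension/complexity of the indexing product $\Delta^n_\flat \odot \Delta^k_\sharp$ and (ii) within a fixed such product, the order on paths $\gamma$ from \autoref{def:pathorder}, and to attach them one equivalence class at a time. At each stage one must identify a horn-type inclusion whose filling adds precisely the new simplex: \autoref{lem:facesofextensions}(1) gives the retraction making the biggest path a degenerate-type cell, part (2) glues successive paths along a common face, and parts (3)--(7) together with \autoref{lem:noboundaries} guarantee that the relevant faces are \emph{already present} (no spurious boundary appears), so that the attaching map is an inner horn \ref{ms:innerhorn}, a saturation map \ref{ms:saturation}/\ref{ms:wonky4}, or one of the marking-anodyne maps \ref{ms:composeacrossthin}/\ref{ms:2coCartesianmorphs}, possibly composed with a collapse along the surjection $s$ appearing in \autoref{lem:extensionsstable}. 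The scalings and markings match because $T_\odot$ by definition scales/marks exactly the simplices whose image in $\overline{\bcat{C}}$ is thin/marked, and $\overline{\bcat{C}}$ uses the invertible-2-morphism collection $M_{\bcat C}$, which is closed under the operations appearing.

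Concretely, I would organize the argument as an induction: let $\mathbb{X}^{\odot,\leq q}_{\bcat C}$ be the sub-marked-scaled simplicial set of $\mathbb{X}^\odot_{\bcat C}$ generated by $s_\odot(\overline{\bcat C})$ together with all cells indexed by products $\Delta^n_\flat \odot \Delta^k_\sharp$ with $n + k \leq q$, filtered further by path order; show $\overline{\bcat C} = \mathbb{X}^{\odot,\leq 0}_{\bcat C} \hookrightarrow \mathbb{X}^{\odot,\leq 1}_{\bcat C} \hookrightarrow \cdots$ is a sequence of $\mathbf{MS}$-anodyne maps with colimit $\mathbb{X}^\odot_{\bcat C}$, invoking \autoref{prop:msCart} (pushout-product stability) to handle simultaneous attachment of a whole orbit of cells. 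The main obstacle is bookkeeping: one must check that when passing from the $\gamma_1$-extension stratum to the $\gamma_2$-extension stratum (with $\gamma_2$ the successor of $\gamma_1$), the new nondegenerate cell's boundary, \emph{after} collapsing by the surjection from \autoref{lem:extensionsstable}, lies entirely in the already-constructed stage except for exactly one inner face — this is precisely what cases (3)--(7) of \autoref{lem:facesofextensions} and the somewhat intricate \autoref{lem:noboundaries} are engineered to supply, and assembling them into a single clean filtration is where essentially all the work lies. Once this is done for $s_\odot$, the case of $s_\tensor$ follows by the same argument with the smaller marking (indeed it is strictly easier, as the commentary preceding the theorem indicates), using the Gray (rather than globular) pushout-product results in place of their $\odot$-analogues.
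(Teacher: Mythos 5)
Your overall strategy coincides with the paper's: both arguments filter $\mathbb{X}^{\odot}_{\bcat{C}}$ starting from the image of $s_\odot$, organize the new cells first by the indexing product $\Delta^n_\flat\odot\Delta^k_\sharp$, then by the maps into $\overline{\bcat{C}}$, then by the order on paths from \autoref{def:pathorder}, and use \autoref{lem:extensionsstable}, \autoref{lem:postextension}, \autoref{lem:facesofextensions} and \autoref{lem:noboundaries} to decide which faces of a given $\gamma$-extension already lie in earlier stages. So the skeleton of the proof is correctly identified, as is the final reduction of $s_\tensor$ to the $s_\odot$ argument.

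The gap is that the proposal stops exactly where the content of the theorem begins. The attaching maps $\partial U_\nu \xlongrightarrow{} U_\nu$, and within them $\partial R^s_\nu \xlongrightarrow{} \Delta^{i+2j}$ for each path $\varphi_s$, are \emph{not} single horn or saturation inclusions; the paper shows they are anodyne by means of two dedicated devices, the pivot trick \autoref{lem:refinedinnerpivot} and its outer variant \autoref{lem:refouterpivot}, which package a secondary filtration by simplices containing a chosen pivot vertex together with precise hypotheses (conditions 1--5) on the markings and scalings of what is still missing. Verifying those hypotheses is where essentially all of the work lies: the four cases A1)--A4) each require choosing the pivot and the sets $U,V$, producing auxiliary $3$- and $4$-simplices, and using fibrancy of $\bcat{C}$ to scale or mark the remaining decorations via \ref{ms:wonky4}, \ref{ms:composeacrossthin} and \autoref{lem:0saturations}. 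Your remark that $M_{\bcat{C}}$ is ``closed under the operations appearing'' is not a substitute for this; the needed closure statements are exactly conditions 4 and 5 of the pivot lemma and must be checked against the specific combinatorics of each $E_\gamma$. Finally, \autoref{prop:msCart} plays no role here --- the cells being attached are not pushout-products --- so the proposed appeal to it for attaching ``a whole orbit of cells'' simultaneously would not go through as stated.
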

\begin{proof}
  We first study the case of $s_\odot$ and later we show that our proof translates mutatis mutandis to the case of $s_\tensor$.

  For $i \geq -1$ be define a marked-scaled simplicial set $F_i \subset \mathbb{X}_{\bcat{C}}^{\odot}$ (with the induced decorations) as follows:
 \begin{itemize}
   \item We declare $F_{-1}=  \overline{\bcat{C}}$ and define inductively $F_i$ for $i\geq 0$ to contain the simplices in $F_{i-1}$ in addition to those simplices contained in $\Delta^{\ell}_\flat \odot \Delta^s_\sharp $ according to the colimit given in \autoref{def:indexfunctorcolimittensor} (and with the corresponding decorations given by the colimit) corresponding to a map $\Phi: \Delta^{\ell}_\flat \odot \Delta^{s}_\sharp \xlongrightarrow{} \bcat{C}$ such that $\Phi$ can be factored as
   \[
      \Delta^{\ell}_\flat \odot \Delta^{s}_\sharp \xlongrightarrow{\alpha \odot \beta} \Delta^{i+r}_\flat \odot \Delta^{r}_\sharp \xlongrightarrow{E_\gamma} \Delta^i_\flat \odot \Delta^{r}_\sharp \xlongrightarrow{} \overline{\bcat{C}}
   \]
   where $E_\gamma$ is a certain $\gamma$-extension (see \autoref{def:pathextension}). Note that as consequence of 1 in \autoref{lem:facesofextensions} we have already those factors in the colimit corresponding to maps that factor as
   \[
     \Delta^{\ell}_\flat \odot \Delta^{s}_\sharp \xlongrightarrow{\alpha \odot \beta} \Delta^i_\flat \odot \Delta^{r}_\sharp \xlongrightarrow{} \overline{\bcat{C}}.
   \]
 \end{itemize}
 This definition yields a filtration $\colim\limits_{i \geq 0} F_i \isom \mathbb{X}_{\bcat{C}}^{\odot}$ which reduces the problem to showing that each step in the filtration is a trivial cofibration.

 Our next order of  business is to filter again the map $F_{i-1} \xlongrightarrow{} F_{i}$. Given some index $i$, we define for $j\geq 0$ a simplicial subset  $F_i^{j} \subset F_i$ as follows:
\begin{itemize}
  \item We declare $F_i^0=F_{i-1}$ and define inductively $F_i^j$ to contain every simplex of $F_i^{j-1}$ in addition to those simplices in $\Delta^{\ell}_\flat \odot \Delta^{s}_\sharp$ corresponding to a map $\Phi: \Delta^\ell_\flat \odot \Delta^s_\sharp \xlongrightarrow{} \bcat{C}$ such that $\Phi$ factors as
  \[
      \Delta^{\ell}_\flat \odot \Delta^{s}_\sharp \xlongrightarrow{\alpha \odot \beta} \Delta^{i+j}_\flat \odot \Delta^{j}_\sharp \xlongrightarrow{E_\gamma} \Delta^i_\flat \odot \Delta^{j}_\sharp \xlongrightarrow{} \overline{\bcat{C}}
   \]
   where $E_\gamma$ is a $\gamma$-extension.
\end{itemize}
 Again we get that $\colim\limits_{j\geq 1} F_i^{j}=F_i$ so it we turn our attention into showing that each step in this subfiltration is a trivial cofibration.

 Let us pick a total order of the set $M_\Psi=\{ \Psi_0 < \Psi_1 < \cdots < \cdots\}$ of the maps $\Psi_\alpha: \Delta^{i}_\flat \odot \Delta^j_\sharp \xlongrightarrow{} \overline{\bcat{C}}$ which do not factor through the previous steps of the filtration and consider yet another filtration
 \[
   F^{j-1}_i=P^{i,j}_{-1} \xlongrightarrow{} P^{i,j}_0 \xlongrightarrow{} \cdots \xlongrightarrow{}  P^{i,j}_k \xlongrightarrow{}  \cdots \xlongrightarrow{} F^{j}_i
 \]
 by adding to $P^{i,j}_{k-1}$ the simplices that factor through a $\gamma$-extension of $\Psi_k$. To ease the notation we will drop the superscripts of $P^{i,j}_{k}$ whenever the indices $i,j$ are clear from context.

  Now we fix a certain map $\Psi_\alpha: \Delta^{i}_\flat \odot \Delta^j_\sharp \xlongrightarrow{} \bcat{C}$ and let $\Gamma(\Psi_\alpha):=\{\gamma^{\alpha}_0 < \gamma^{\alpha}_1 < \cdots < \gamma^{\alpha}_m\}$ be the set of paths in $\Delta^{i}_\flat \odot \Delta^j_\sharp$ ordered as in \autoref{def:pathorder}. We define $K^\alpha_{-1}=P^{i,j}_{\alpha-1}$ and define inductively $K^{\alpha}_\nu$ for $0\leq \nu \leq m$ by attaching to $K^{\alpha}_{\nu-1}$ all of the simplices which factor through the $\gamma$-extension of $\Psi_\alpha$ determined by $\gamma_\nu$.  We get a filtration
  \[
   P^{i,j}_{\alpha-1}=K^{\alpha}_{-1}\xlongrightarrow{} K^{\alpha}_0 \xlongrightarrow{} \cdots \xlongrightarrow{} K^{\alpha}_{m-1} \xlongrightarrow{} K^{\alpha}_m=P^{i,j}_\alpha.
 \]
 Now, let us pick some index $0\leq \nu \leq m$ and denote $f_\nu=E_{\gamma_\nu} \circ \Phi_\alpha$. We denote $T_\odot(f_\nu)=U_\nu$ and look at the pullback diagram
 \[
   \begin{tikzcd}
     \partial U_\nu \arrow[d] \arrow[r] &  U_{\nu} \arrow[d] \\
   K^{\alpha}_{\nu-1} \arrow[r] & K^\alpha_\nu.
   \end{tikzcd}
 \]
 We claim that this diagram is also a pushout. Indeed given a map 
 \[
   \Delta^{a}_\flat \odot \Delta^{b}_\sharp \xlongrightarrow{(u,v)} \Delta^{i+j}_\flat \odot \Delta^j_\sharp \xlongrightarrow{E_{\gamma_\nu}} \Delta^{i}_\flat \odot \Delta^j_\sharp \xlongrightarrow{\Psi_\alpha} \overline{\bcat{C}}
 \]
 we can take an epi-mono factorization of both $u$ and $v$ and observe that the identifications provided by the colimit defining $\mathbb{X}_{\bcat{C}}^{\odot}$ yield the result. Therefore we can focus our attention into showing that the map $\partial U_\nu \xlongrightarrow{} U_\nu$ is a trivial cofibration. Before embarking upon the proof of this fact we make some important comments:
 \begin{itemize}
   \item We note that this argument up to this point can be changed to an argument for $s_{\tensor}$ by simply replacing the the symbols ”$\odot$“ with ”$\tensor$“.
   \item We will use usual arguments with anodyne morphisms to show that the map $\partial U_\nu \xlongrightarrow{} U_\nu$ is a trivial cofibration. More precisely we will filter this map and show that each step of the filtration can be obtained by taking a pushout along a trivial cofibration of the form
   \[
    \iota: (A,E_A,T_A) \xlongrightarrow{} (\Delta^n,E_n,T_n).
   \]
   This will done in such a way that whenever we need to use the marking in those simplicial sets to show that $\iota$ is a trivial cofibration, the marked edge in question will be marked in both $\mathbb{X}_{\bcat{C}}^{\odot}$ and in $\mathbb{X}^\tensor_{\bcat{C}}$. In particular, our proof for $s_\odot$ will apply with cosmetic changes to $s_\tensor$.
 \end{itemize}
 We are ready to commence the main part of the proof, but before that some more comments:
 \begin{itemize}
   \item[i)] We can use \autoref{lem:facesofextensions} to see some of the faces (where by this we mean, the simplicial subsets of $U_\nu$ obtained by removing certains columns and/or rows) of $U_\nu$ which factor through $\partial U_\nu$. 
   \item[ii)] Suppose that we have a certain face of $U_\nu$ which factors through $\partial U_\nu$ which is associated to a map
   \[
     \xi:\Delta^a_\flat \odot \Delta^b_\sharp \xlongrightarrow{} \Delta^{u+v}_\flat \odot \Delta^v_\sharp \xlongrightarrow{} \bcat{C}
   \]
   The for every simplex of maximal dimension $\rho$ in $\Delta^a_\flat \odot \Delta^b_\sharp$ it follows from \autoref{lem:extensionsstable} and \autoref{lem:postextension} that the factors in the colimit associated to the composites $E_\rho \circ \xi$ will also factor through previous steps in the filtration.
   \item[iii)] Let us consider the parameter $r$ given in \autoref{lem:noboundaries}. If the face of $U_\nu$ given by skipping the $r$-th column factors through $\partial U_\nu$ then it follows that from $ii)$ above and \autoref{lem:noboundaries} that $\partial U_\nu =U_\nu$. 
   \item[iv)] Let $r$ as above and suppose that there is a face of $U_\nu$ which factors through $\partial U_\nu$ and that skips certain rows and columns (according to \autoref{lem:noboundaries}), one from those being the $r$-th column. Then the \autoref{lem:noboundaries} implies that face of $U_\nu$ which skips the same rows and columns except the $r$-th column factors through $\partial U_\nu$.
 \end{itemize}
  Let $P_{U}=\{\varphi_0 < \cdots < \cdots \varphi_\ell \}$ be the set of simplices of maximal dimension inside of $\Delta^{i+j}_\flat \odot \Delta^j_\sharp$ (i.e the set of paths) ordered as in \autoref{def:pathorder}. By declaring $\partial U_\nu = R^\nu_{\ell+1}$ and defining inductively $R^\nu_i$ by attaching to $R^\nu_{i+1}$ the simplex of maximal dimension determined by $\varphi_{i}$ we obtain a filtration
 \[
   \partial U_\nu= R^\nu_{\ell+1}\xlongrightarrow{} R^\nu_\ell \xlongrightarrow{} \cdots \xlongrightarrow{}  R^\nu_1 \xlongrightarrow{} R^\nu_0=U_\nu.
 \]
 We consider a pullback diagram
 \[
   \begin{tikzcd}
     \partial R_\nu^{s} \arrow[r] \arrow[d] & \Delta^{i+2j} \arrow[d,"\varphi_s"] \\
     R^\nu_{s+1} \arrow[r] & R^\nu_s 
   \end{tikzcd}
 \]
 where we view every simplicial set on the top row as having the obvious induced decorations. We will show that the top horizontal morphism is a trivial cofibration. Let us assume without loss of generality that this map is not the identity map. We need to consider several cases.
 \begin{itemize}
   \item[A1)] We have that $s=\ell$.

   In this case, we note that $\partial R_\nu^\ell$ contains the face skipping the vertex $0$ and the face skipping the last $j$ vertices inside of $\Delta^{i+2j}$ (see point $i)$ above). We observe that we cannot have the face skipping the vertex $i+j$ unless $\partial R_{\nu}^{\ell}=R_\nu^\ell$. Morever, given  $\Delta^{I} \subset \partial R_\nu^{\ell}$ such that $i+j \in I$. Then it follows that $\Delta^{J} \subset \partial R_{\nu}^\ell$ with $J=I \cup \{i+j\}$. Indeed, if the vertex $i+j$ were the reason why we are skipping a row or a column then it follows that either $0 \notin I$ or $I$ does not contain the last $j$ vertices of $\Delta^{i+2j}$. 

   If $\partial R_\nu^\ell$ contains all the decorations of $\Delta^{i+2j}$ then the result follows already from \autoref{lem:refinedinnerpivot} where:
   \begin{itemize}
      \item The pivot point is $i+j$.
      \item The set $U=\{0\}$ and the set $V=\{i+j+1, \dots, i+2j\}$. 
    \end{itemize} 
    To finish the proof we show that the decorations of $\partial R_\nu^\ell$ satisfy the hypothesis in the aforementioned lemma.

   Let us suppose that we are given a marked edge in $\Delta^{i+2j}$ which does not factor through $\partial R_\nu^\ell$. In particular this edge is of the form $0 \xlongrightarrow{} x$ where $x$ is one of the last $j$ vertices. Since every $\gamma$-extension maps the last column to a degenerate edge we see that every edge in the 2-simplex, $0 \xlongrightarrow{} i+j \xlongrightarrow{} x$, is marked in $\Delta^{i+2j}$ so that the marking in $\partial R_\nu^\ell$ satisfies condition 4 in \autoref{lem:refinedinnerpivot}. Now, let us suppose that we are given a $2$-simplex $0<y< z$ which is thin in $\Delta^{i+2j}$ and does not factor through $\partial R_\nu^\ell$. Note that we must have that $z> i+j$. If $y=i+j$ then it follows that condition 5 in \autoref{lem:refinedinnerpivot} holds. If $y\neq i+j$ we look at the 3-simplex determined by the vertices $\{0,i+j,y,z\}$. We wish to show that every face of this simplex is thin in $\Delta^{i+2j}$. Our assumptions guarantee that we have a map 
   \[
     (\Delta^3,E, T) \xlongrightarrow{} \overline{\bcat{C}}
   \] 
   where $E$ contains the edges $\Delta^{\{2,3\}}$, and at least one of the edges $\Delta^{\{0,1\}}$ or $\Delta^{\{1,3\}}$ in  $\Delta^3$. The case of $\Delta^{\{2,3\}}$ follows from the definition of $E_\gamma$, and the last two cases are due to the definition of the thin simplices in $\overline{\bcat{C}}$.
 
   We see that $T$ contains every 2-dimensional face except possibl the face skipping the final vertex. However, since $\bcat{C}$ is an $(\infty,2)$-category this map factors through $(\Delta^3,E',\sharp)$ where $E'=E$ if $\Delta^{\{0,1\}}$ is marked or $E=E \cup \{\Delta^{\{1,2\}}\}$ otherwise. We conclude that the face skipping the final vertex is sent to a thin simplex in $\overline{\bcat{C}}$ and therefore we see that condition 5 in our lemma holds.

    \item[A2)] There exists $0<r<j$ such that $\varphi_s$ is of the form
    \[
      (0,0) \xlongrightarrow{} (0,1) \xlongrightarrow{} \cdots \xlongrightarrow{} (0,r)\xlongrightarrow{} (1,r) \xlongrightarrow{} \cdots \xlongrightarrow{} (i+j,r) \xlongrightarrow{} \cdots \xlongrightarrow{} (i+j,j).
    \]
    In this situation we have that $\partial R^s_\nu$ contains the face skipping the vertex $0$ and the face skipping the vertex $r$ (since that simplex factors through some $R^{s'}_\nu$ where the path $\varphi_{s'}$ is the succesor of $\varphi_s$). Moreover  we also have the face that skips the last $j-r+1$ vertices in $\Delta^{i+2j}$, which corresponds to the vertices in the path $\{(i+j,r), \cdots ,(i+j,j)\}$. We make the following claims

    \begin{enumerate}
      \item The simplicial set $\partial R^s_\nu$ contains every edge for every $s,\nu$ and every $2$-simplex unless $\nu=0$ and $i\leq 1$.
      
      The claim about the edges is obvious so we can focus on the 2-simplices. Suppose that $\nu >0$. Then it follows from 2 in \autoref{lem:facesofextensions} that $\partial R^{s}_\nu$ contains the face that skips a vertex $r<\alpha<r+i+j-1$ and so the claim holds. Similarly if $\nu=0$ and $i>1$ we can use 3 in \autoref{lem:facesofextensions} to obtain a similar vertex $r<\beta<r+i+j-1$.

      \item Given a thin simplex $0<r< x$ in  $\Delta^{i+2j}$ which does not factor through $\partial R^s_\nu$ then  it follows that the every 2-dimensional face in $3$-simplex $\theta$ given by the vertices $\{0,r,r+i+j-1,x\}$ is thin.

      We note that by construction $x \geq r+i+j-1$ so we might assume without loss of generality that $x > r+i+j-1$. We look at the composite
      \[
        \Delta^3 \xlongrightarrow{\theta} \Delta^{i+j}_\flat \odot \Delta^j_\sharp \xlongrightarrow{E_{\gamma_\nu}} \Delta^{i}_\flat \odot \Delta^j_\sharp \xlongrightarrow{\Psi_\alpha} \overline{\bcat{C} }
      \]
      is degenerate on the last edge so the claim holds.
    \end{enumerate}

     We consider a factorization
    \[
      \partial R^s_\nu \xlongrightarrow{}Q^s_\nu \xlongrightarrow{} \Delta^{i+2j}
    \]
    where $Q^s_\nu$ is obtained by attaching the simplex of $\Delta^{i+2j}$ that skips the last $j-r$ vertices. First we will show that the final map satisfies the conditions of \autoref{lem:refinedinnerpivot} with the following parameters:
    \begin{itemize}
      \item The pivot point is $r+i+j-1$ corresponding to the vertex $(i+j,r)$ in the path.
      \item We have $U=\{r\}$ corresponding to the vertex $(0,r)$ and $V$ is preciselly the set consisting in the last $j-r$ vertices in $\Delta^{i+2j}$.
    \end{itemize}
    Conditions 1-3 in the lemma are clearly satisfied which reduces our problem to verify that the decorations in $Q^s_\nu$ satisfy conditions 4 and 5. This follows directly from the claims proved above.

    To finish the proof of this case we verify that the map $\partial R^s_\nu \xlongrightarrow{} Q^s_\nu$ is also \textbf{MS}-anodyne. To this end we consider a pullback diagram
    \[
      \begin{tikzcd}
        \partial Q^s_\nu \arrow[r] \arrow[d] & \Delta^{[0,r+i+j-1]} \arrow[d] \\
        \partial R^s_\nu \arrow[r] & Q^s_\nu
      \end{tikzcd}
    \]
    and wish to show that the top horizontal morphism satisfies the conditions of \autoref{lem:refinedinnerpivot}. Note that in $\partial Q^s_\nu$ we have the faces that skip the vertex $0$, $r$ and $r+i+j-1$ respectively. Let $\epsilon_r$ be the first index in $[i+j]$ such that $\gamma_\nu(\epsilon_r)=(a_{\epsilon_r},b_{\epsilon_r})$ with $b_{\epsilon_r}=r$. We claim that $\partial Q^s_\nu$ satisfies the hypothesis of \autoref{lem:refinedinnerpivot} with the parameters:
    \begin{itemize}
      \item The pivot point is given by the vertex $\omega \in \Delta^{[r+i+j-1]}$ corresponding to  $(\epsilon_r,r)$.
      \item Let $u$ be the smallest index such that $\gamma_\nu(u)=(a_u,b_u)$ with $a_{u}=a_{\epsilon_r}$. We define $U=\{ \Xi \}$ where $\Xi$ is the vertex corresponding to $(u,r)$ in the path $\varphi_s$. Note that $r\leq \Xi <\omega<r+i+j-1$ and in some cases we can have $\Xi=r$.
      \item We define $V=\{r+i+j-1\}$.
    \end{itemize}
    Let us verify the conditions of \autoref{lem:refinedinnerpivot}. First we check condition 2, which essentially amounts to showing that we have the face that skips the vertex $\Xi$. To see this we can assume that $u\neq 0$ since otherwise we have $\Xi = r $ and the claim already holds. If $u>0$ it follows that $\gamma_\nu(u-1)=(a_{\epsilon_r}-1,b_u)$ and so the face skipping $\Xi$ factors through a previous step of the filtration by 2 in \autoref{lem:facesofextensions}. To check condition 3 in our lemma it will suffice to check that the 2-simplex $\sigma:\Xi<\omega <r+i+j-1$ gets mapped to a thin simplex in $\Delta^{i}_\flat \odot \Delta^j_\sharp$ under $E_{\gamma_\nu}$. We check that 
    \[
      E_{\gamma_\nu}(u,r)=(a_{\epsilon_r},\max(b_u,r)), \enspace E_{\gamma_\nu}(\epsilon_r,r)=(a_{\epsilon_r},\max(b_{\epsilon_r},r)),
    \]
    since $b_u < r$ we conclude that $ E_{\gamma_\nu}(u,r)=E_{\gamma_\nu}(\epsilon_r,r)$ so $E_\gamma(\sigma)$ is degenerate. We deal now with 4 and 5. We note that 4 holds since $\partial R_\nu^s$ contains all edges already. It follows that $\partial Q^\nu_s$ contains every 2-simplex unless $\Xi=r$. If $\Xi=r$ then any thin simplex which does not factor through $\partial Q^\nu_s$ must be of the form $0<r<r+i+j-1$. Then since  $E_{\gamma_\nu}(u,r)=E_{\gamma_\nu}(\epsilon_r,r)$ it follows that  every 2-dimensional face of the 3-simplex $0<r<\omega<r+i+j-1$ is thin thus condition 5 holds. We are left to show that condition 1 in our lemma holds.

    Let $\rho:\Delta^I \xlongrightarrow{} \partial Q^s_\nu$ such that $\omega \notin I$ and we wish to show that we have a simplex $\hat{\rho}:\Delta^{J} \xlongrightarrow{} \partial Q^s_\nu$ given by $J=I \cup \{\omega \}$. We can assume without loss of generality that the vertices $\Xi,0,r,r+i+j-1,$ are in $I$ since otherwise the claim is already true. Now, given that $I$ contains those vertices it follows that if $\rho$ factors through $ \partial Q^s_\nu$ then we must have a factorization
    \[
      \rho:\Delta^{I} \xlongrightarrow{} \Delta^{\ell}_\flat \odot \Delta^m_\flat \xlongrightarrow{\alpha \odot \beta} \Delta^{i+j}_\flat \odot \Delta^j_\flat \xlongrightarrow{E_{\gamma_\nu}} \Delta^{i}_\flat \odot \Delta^j_\sharp
      \]
    such that $E_{\gamma_{\nu}} \circ (\alpha \odot \beta)$ factors through a previous step in the filtration. However, now we can apply \autoref{lem:noboundaries} to show that $\hat{\rho}$ already factors through $\partial Q^s_\nu$.

    \item[A3)] There exists some $0<t<i+j$ such that $\varphi_s$ contains an edge of the form $(t,u) \xlongrightarrow{} (t,u+1)$.

    Let $y$ be biggest vertex in $\Delta^{i+2j}$ such that $\varphi_s(y)=(a_y,b_y)$ with $a_y=t$ and let $p$ be the smallest vertex in $\Delta^{i+2j}$ such that $\varphi^s(p)=(a^\varphi_p,b^\varphi_p)$ with $a_p=t$. Finally we set $x$ to be the smallest index in $\Delta^{i+2j}$ such that $\varphi^s(x)=(a^\varphi_x,b^{\varphi}_x)$ with $b^{\varphi}_x=b^{\varphi}_p$. 

    It follows that the face that skips the vertex $x$ factors through $\partial R_\nu^s$. This is because either $x=0$ or this face factors through the image of some path $\varphi_{s'}\geq \varphi_s$. Similarly, it follows that the face that skips the vertex $y$ factors through $\partial R_\nu^s$.

    Let $\sigma:\Delta^I \xlongrightarrow{} \partial R^s_\nu$ such that $p \notin I$. We claim that the simplex $\hat{\sigma}:\Delta^{J} \xlongrightarrow{} \partial R^s_\nu$ with $J=I \cup \{p\}$ also factors. We can assume without loss of generality that $x,y \in I$ since otherwise the claim is true. If $\sigma$ factors through $\partial R^s_\nu$ because it lands in the image of a bigger path $\varphi_{s'}$ then $\sigma$ is contained in the face that skips a certain vertex $\omega \neq p$ and the claim also holds.  Finally if $\sigma$ factors because it skips certains rows and columns it follows that since $x,y \in I$ we cannot skip the columns and rows containing $p$ so $\hat{\sigma}$ factors through $\partial R^s_\nu$.

    We will use \autoref{lem:refinedinnerpivot} with the following parameters:
    \begin{itemize}
      \item The pivot is the point $p$.
      \item $U=\{x\}$ and $V=\{y\}$ 
    \end{itemize}
    It follows from our previous discussion that conditions 1-3 in our lemma hold. We will finish the proof of this case by studying the decorations.

    Let $\xi=\gamma_\nu(t)=(a_t^{\gamma},b_{t}^\gamma)$ and suppose that $a^{\gamma}_t\neq i$. Then it follows that we have the face that skips a certain vertex $z$ with $z>y$. Indeed, there are two possibilities: Either the face that skips the vertex $z$ factors through a previous step of the filtration with $\nu' < \nu$ or we have that $\gamma_\nu(i+j-1)=(i-1,j)$ in which case the face skipping the last vertex factors through a step in the filtration with $i'< i$. In particular, $\partial R^s_\nu$ already contains all edges. We also have all edges if $x \neq 0$ or if $\varphi^s(y)=(t,b^{\varphi}_y)\neq (t,j)$. 

    Let us assume that $a^{\gamma}_t = i$, $x=0$, $b^{\varphi}_y=j$. We consider a 3-simplex $0<p<y<i+2j$ which we call $\zeta$. If $\zeta$ factors through $\partial R^s_\nu$ then we already have all edges. Otherwise it follows by the previous discussion that $d_1(\zeta)$ does not factor through $\partial R^s_\nu$. We consider a pullback diagram
    \[
     \begin{tikzcd}
        T_3 \arrow[r] \arrow[d]& \Delta^3 \arrow[d,"\zeta"] \\
      \partial R^s_\nu \arrow[r] & R^s_\nu
     \end{tikzcd}
    \]
    where the decorations are induced by the map $\zeta$.

    Note that the edge $y \xlongrightarrow{} i+2j$ is marked since its image under the map  $\Psi_{\alpha} \circ E_{\gamma_\nu}$ is degenerate. We further observe that by construction $\zeta$ gets mapped under $\Psi_{\alpha} \circ E_{\gamma_\nu}$ to degenerate 3-simplex which is degenerate on a thin 2-simplex which shows that all faces in $\zeta$ are thin. Moreover, the same argument shows that if $x \xlongrightarrow{} y$ is marked in $ R^s_\nu$ then the edge $x \xlongrightarrow{} i+2j$ must also be marked.  Note that $T_3$ contains precisely the face that skips the vertex $0$ and the face that skips the vertex $2$. It is clear that if we disregard the marking, the top horizontal map is in the weakly saturated class of morphisms of type  \ref{ms:innerhorn} in \autoref{def:msanodyne}. Therefore we obtain a factorization
    \[
      T_3 \xlongrightarrow{\simeq} \hat{T}_3 \xlongrightarrow{} \Delta^3.
    \]
    We note that $\hat{T}_3$ and $\Delta^3$ only possibly differ in the marking of the edge $0 \xlongrightarrow{} 2$. However if $0 \xlongrightarrow{} 2$ is marked then the edges $0 \xlongrightarrow{} 3$ and $2 \xlongrightarrow{} 3$ are already marked in $T_3$. In particular, we do not have to use that the edge $1 \xlongrightarrow{} 2$ is marked to obtain that the last map is a trivial cofibration. 

    Using a totally analogous argument we can add to $\partial R^s_\nu$ every missing simplex in $R^s_\nu$ of the form $0<p<y<z$ with $y<z<i+2j$ and thus we obtain a factorization
    \[
      \partial R^s_\nu \xlongrightarrow{\simeq}M^s_\nu \xlongrightarrow{} R^s_\nu.
    \]
    We consider a 2-simplex determined by vertices $\{0,a,y\}$ which does not factor through $M^s_\nu$. Then it follows that the 4-simplex $\phi$ determined by the simplices $\{0,a,p,y,i+2j\}$ does not factor through $M^s_\nu$. We consider a pullback diagram 
    \[
     \begin{tikzcd}
        S_4 \arrow[r] \arrow[d] & \Delta^4 \arrow[d,"\phi"] \\
      M^s_\nu \arrow[r] & R^s_\nu
     \end{tikzcd}
    \]
    Note that by construction $S_4$ is the union of the face that skips the vertex $0$, the face that skips the vertex $a$ and the face that skips the vertex $y$. It follows that we can construct a factorization
    \[
      S_4 \xlongrightarrow{\simeq } \hat{S}_4 \xlongrightarrow{} \Delta^4
    \]
    where the first map is in the weakly saturated class of morphisms of type  \ref{ms:innerhorn} in \autoref{def:msanodyne} and $\hat{S}_4$ coincides with $\Delta^4$ except possible in the scaling of the triangle $\{0,a,y\}$. If this latter triangle is scaled in $R^s_\nu$ we note the 3-simplex $\tau$ given by the vertices $\{0,a,y,i+2j\}$ gets send under $\Psi_\alpha \circ E_{\gamma_\nu}$ to a degenerate simplex in $\bcat{C}$ which shows that $\tau$ is thin scaled in $R^s_\nu$. Now in $\hat{S}_4$ every triangle in $\tau$ is thin except possible the face skipping the last vertex but since the last edge is marked it follows that $\hat{S}_4 \xlongrightarrow{} \Delta^4$ is a trivial cofibration. 

    We can repeat this process and add every missing 4-simplex of the form $\{0,a,y,p,i+2j\}$ where $\{0,a,y\}$ does not factor through $M^s_\nu$. We thus obtian a factorization
    \[
      M^s_\nu \xlongrightarrow{\simeq} N^s_\nu \xlongrightarrow{}R^s_\nu
    \]
    where the first morphism is a trivial cofibration. Since every simplex that we added contained the pivot point $p$ conditions 1-3 in \autoref{lem:refinedinnerpivot} still hold. We note that conditions 4-5 hold since $N^s_\nu$ contains all the decorations.

    We have established the claim in the cases where $a^{\gamma}_t = i$, $x=0$, $b^{\varphi}_y=j$. We deal with the remaining cases.

    \begin{itemize}
      \item We have that $x\neq 0$ and $a^{\gamma}_t\neq i$ or $b^{\varphi}_y \neq j$. 

      Then $\partial R^\nu_s$ contains all triangles and so the conditions of \autoref{lem:refinedinnerpivot} holds.

      \item We have that $x \neq 0$ ,  $a^{\gamma}_t= i$ and $b^{\varphi}_y = j$.

      Then the only possibly missing 2-simplex is of the form $0<x<y$. The proof in this case is similar as before: We note that the 4-simplex $0<x<p<y<i+2j$ gets mapped under $\Psi_\alpha \circ E_{\gamma_\nu}$ to a degenerate simplex.  Since we can add this simplex using pushouts along anodyne maps and the 3-simplex $0<x<y<i+2j$ has all faces thin except possible $0<x<y$ we conclude that since $y \xlongrightarrow{} i+2j$ must be marked we can scale the remaining triangle.

      \item We have that $x=0$, $b^{\varphi}_y=j$ and that  $a^{\gamma}_t\neq i$.

       Let $t<\ell\leq i+2j$ be the first index such that $E_{\gamma_\nu}(\varphi_s(\ell))=(i,j)$. Then it follows that the face that skips the vertex $\ell$ factors through $\partial R^s_\nu$. Therefore the only possible missing scaled 2-simplex is of the form $\{0,y,\ell\}$. If $\ell \neq i+2j$  then we consider a 4-simplex $\rho$ given by $\{0<p<y<\ell<i+2j\}$ and note that $E_{\gamma_\nu}(\rho)=s_2(E_{\gamma_\nu}(d_3(\rho)))$. It is clear that we can add $\rho$ to $\partial R^s_\nu$ using pushout along inner horns. Finally we look at $d_1(\rho)$ and conclude that every face except possible the face missing the last vertex must be thin. Since the last edge is marked by construction the claim follows. If $\ell=i+2j=t+1$ then we must have $\gamma_\nu(t)=(a^{\gamma}_t,j)$ in which case the simplex $\theta$ given by $\{0,p,y,\ell\}$ satisfies $E_{\gamma_\nu}(\theta)=s_1(E_{\gamma_\nu}(d_2(\theta)))$ and it follows that condition $5$  in \autoref{lem:refinedinnerpivot} is satisfied. 
      
      Finally let $\ell=i+2j$ but $\ell \neq t+1$. Then we look at $\gamma_\nu(t+1)=(a^\gamma_{t+1},b^{\gamma}_{t+1})$. If $a^\gamma_{t+1} > a^{\gamma}_t$ it follows that we have a vertex $y<\mu <\ell$ such that the face skipping the vertex $\mu$ factors through $\partial R^{s}_\nu$ and therefore we already have all triangles. If $a^\gamma_{t+1}=a^\gamma_{t}$ but $b^\gamma_{t+1}> b^\gamma_{t}$ we consider the simplex $\Xi$ given by $\{0,p,y,y+1,\ell\}$ and note that $E_{\gamma_\nu}(y \xlongrightarrow{} y+1)$ is degenerate. Again, we can add $\Xi$ using inner hornx. By looking at $\{0,y,y+1,\ell\}$ we see that every triangle is already scaled except possible at the face skipping the vertex $y+1$ and so the result follows.

      \item We have that $x=0$, $b^{\varphi}_y \neq j$ but $a^{\gamma}_t=i$.

      Then it follows the only scaled triangle that could be missing would be of the form $0<y<\ell$ where $E_{\gamma_\nu}(\ell)=(a^{\gamma}_\ell,b^{\gamma}_\ell)$ with $a^{\gamma}_\ell=i$. One checks that condition 5 in \autoref{lem:refinedinnerpivot} already holds.

    \end{itemize}

    \item[A4)] We have that $s=0$.
     
    Let $\epsilon_j$ be the first index in $i+j$ such that $\gamma_{\nu}(\epsilon_j)=(a^{\gamma}_{\epsilon_j},b^{\gamma}_{\epsilon_j})$ with $b^{\gamma}_{\epsilon_j}=j$. Let us further define an index $u_j$ to be the first index in $i+j$ such that $\gamma_\nu(u_j)=(a^{\gamma}_{u_j},b^{\gamma}_{u_j})$ such that $a^{\gamma}_{u_j}=a^{\gamma}_{\epsilon_j}$.

    We observe that $\partial R^{s}_\nu$ contains the faces skipping the vertex $0$, the vertex $j$ and the vertex $x$ such that $\varphi(x)=(a^{\varphi}_x,b^{\varphi}_x)$ with $a^{\varphi}_x=u_j$ and $b^{\varphi}_x=j$. We call $x<p$ the vertex corresponding $\varphi(p)=(a^{\varphi}_p,b^{\varphi}_p)$ with $a^{\varphi}_p=\epsilon_j$ and $b^{\varphi}_p=j$.   We divide the proof into cases.

    \begin{itemize}
      \item We have $\epsilon_r \neq i+j$.

      Then it follows that we also have the face that skips the vertex $i+2j$. Moreover for every for every $x<a<p$ it follows that the triangle $\sigma$ given by $a<p<i+2j$ satisfies $E_{\gamma_\nu}(\sigma)=s_0(E_{\gamma_\nu})(d_0(\sigma))$. It follows from \autoref{lem:noboundaries} that we can apply \autoref{lem:refinedinnerpivot} where $p$ is our pivot point $U={x}$ and $V=\{i+2j\}$

      \item We have $\epsilon_r=i+j$.

      Observe that for every $x \geq t$ the triangle $0 \xlongrightarrow{} t \xlongrightarrow{} n$ gets always mapped by $E_{\gamma_\nu}$ to triangle which is degenerate on the last edge. We can use \autoref{lem:noboundaries} to see that the conditions of \autoref{lem:refouterpivot} hold where $U=\{0\}$ and $V=\{x\}$. 
    \end{itemize}
 \end{itemize}
Every case is now proved and moreover, the times that we used \autoref{lem:refouterpivot} we considered marked edges which are marked in both $\mathbb{X}_{\bcat{C}}^{\odot}$ and in $\mathbb{X}^\tensor_{\bcat{C}}$. Our theorem is now proved.
\end{proof}

\begin{lemma}[The pivot trick]\label{lem:refinedinnerpivot}
  Let $\Delta^n_\dagger=(\Delta^n,E_n,T_n)$ be a marked-scaled simplex and let $Z^n_\dagger \subset \Delta^n_\dagger$ be a simplicial subset. Suppose that the following conditions hold:
  \begin{enumerate}
    \item There exists a vertex $s \in \Delta^n_\dagger$ such that for every (decorated) simplex $\Delta^{I} \xlongrightarrow{} Z^n_\dagger$ (where $I \subset[n]$) such that $s \notin I$ then it follows that the simplex $\Delta^J$ with $J=I\cup \{s\}$ factors through $Z^n_\dagger$. We call $s$ the pivot point.
    \item There exists a pair of subsets $U,V \subset [n]$ with the following properties:
    \begin{itemize}
       \item We have $U,V \neq \emptyset$ , $U \cap V = \emptyset$ and $s\notin U \cup V$.
       \item For every $u\in U$ and $v \in V$ then we have $u<s<v$. 
       \item The face skipping every vertex of $U$ and the face skipping every vertex of $V$ factor through $Z^n_\dagger$. 
     \end{itemize} 
    \item For every $u\leq i<s<j\leq v$ where $u \in U$ and $v \in V$ then the triangle $\Delta^{\{i,s,j\}}$ is thin in $\Delta^n_\dagger$.
    \item Given $u \in U$ and $v \in V$ and a marked edge $e: u \xlongrightarrow{} v$ in $\Delta^n_\dagger$ which does not factor through $Z^n_\dagger$ then the edges $u \xlongrightarrow{} s$ and $s \xlongrightarrow{} v$ are also marked.
    \item Given a thin simplex $x \xlongrightarrow{} y \xlongrightarrow{} z$ in $\Delta^n_\dagger$ which does not factor through $Z^n_\dagger$ then it follows that every 2-dimensional face of the 3-simplex determined by the vertices $\{x,y,z,s\}$ is thin.
  \end{enumerate}
  Then the map $\iota: Z^n_\dagger \xlongrightarrow{} \Delta^n_\dagger$ is \textbf{MS}-anodyne.
\end{lemma}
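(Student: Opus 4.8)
The plan is to realise $\iota\colon Z^n_\dagger\to\Delta^n_\dagger$ as a transfinite composite of pushouts along the generating $\mathbf{MS}$-anodyne maps of \autoref{def:msanodyne} and \autoref{rem:saturation}; since $\iota$ is visibly a cofibration, this proves it is $\mathbf{MS}$-anodyne. Call a non-degenerate simplex $\Delta^I$ of $\Delta^n$ \emph{missing} if it does not factor through $Z^n_\dagger$, so that $\Delta^n_\dagger$ is obtained from $Z^n_\dagger$ by adjoining the missing simplices with their decorations induced from $\Delta^n_\dagger$. Two combinatorial observations drive the argument. First, condition (2) forces $s$ to be an \emph{interior} vertex of every missing $\Delta^I$ with $s\in I$: if $s=\max I$ then $I\cap V=\emptyset$ since every element of $V$ exceeds $s$, so $\Delta^I$ is a face of the face of $\Delta^n$ skipping $V$, which factors through $Z^n_\dagger$ by condition (2), a contradiction; the case $s=\min I$ is dual, using $U$. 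In particular no missing edge contains $s$. Second, condition (1) read contrapositively says that if $\Delta^J$ is missing with $s\in J$ then its face $\Delta^{J\setminus\{s\}}$ is missing too; conversely, if $\Delta^I$ is missing with $s\notin I$ then $\Delta^{I\cup\{s\}}$ is missing, since a face of a non-missing simplex is non-missing. Hence the missing simplices come in pairs $\bigl(\Delta^{J\setminus\{s\}},\Delta^{J}\bigr)$ with $s\in J$ and $s$ interior.

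Enumerate the missing simplices that contain $s$ as $\Delta^{J_1},\Delta^{J_2},\dots$ with $\dim J_m$ non-decreasing, and put $Y_0=Z^n_\dagger$ and $Y_m=Y_{m-1}\cup\Delta^{J_m}$, a chain of sub-marked-scaled simplicial sets of $\Delta^n_\dagger$ (note $\Delta^{J_m}\supseteq\Delta^{J_m\setminus\{s\}}$). I claim $Y_{m-1}\cap\Delta^{J_m}=\Lambda^{J_m}_{k_m}$, the inner horn omitting the face opposite $s$, where $k_m$ is the position of $s$ in $J_m$; it is inner precisely because $s$ is interior. Indeed, every proper face of $\Delta^{J_m}$ other than $\Delta^{J_m\setminus\{s\}}$ lies in that horn, and such faces are in $Y_{m-1}$: those containing $s$ have strictly smaller dimension and were adjoined earlier, and those not containing $s$ are faces of $\Delta^{J_m\setminus\{s\}}$, hence of the horn; meanwhile $\Delta^{J_m\setminus\{s\}}$ itself is missing, and it cannot already lie in $Y_{m-1}$, for that would require $J_m\setminus\{s\}\subseteq J_\ell$ with $\ell<m$, whence $J_m\subseteq J_\ell$ (as $s\in J_\ell$) and so $\dim J_\ell>\dim J_m\ge\dim J_\ell$, absurd. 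Forgetting decorations, each step $Y_{m-1}\hookrightarrow Y_m$ is thus a pushout of an inner horn inclusion, and $\colim_m Y_m=\Delta^n_\dagger$.

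It remains to verify that, equipped with the decorations induced from $\Delta^n_\dagger$, the inclusion $\Lambda^{J_m}_{k_m}\hookrightarrow\Delta^{J_m}$ lies in the weakly saturated class of $\mathbf{MS}$; pushing out along $\Lambda^{J_m}_{k_m}\to Y_{m-1}$ then yields $Y_{m-1}\to Y_m$. When $\dim J_m\ge 4$, every edge and triangle of $\Delta^{J_m}$ already lies in the horn, so after a pushout the inclusion is exactly a map of type \ref{ms:innerhorn}, using condition (3) to see that the triangle on the two vertices of $J_m$ adjacent to $s$ is thin. When $\dim J_m=3$, one first fills the horn via \ref{ms:innerhorn} and then, if the newly adjoined face $\Delta^{J_m\setminus\{s\}}$ is thin in $\Delta^n_\dagger$, scales it up by a pushout along a map of type \ref{ms:saturation} of \autoref{rem:saturation}: by condition (5), thinness of $\Delta^{J_m\setminus\{s\}}$ forces all $2$-faces of $\Delta^{J_m}$ to be thin, so the required $U_i$-configuration is in place. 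When $\dim J_m=2$ the map is of type \ref{ms:innerhorn} for $n=2$, again by condition (3). Finally, any edge of $\Delta^{J_m}$ adjoined in this process that is marked in $\Delta^n_\dagger$ runs from a vertex below $s$ to one above $s$, and in the relevant cases is of the form $u\to v$ with $u\in U$, $v\in V$; condition (4) then makes $u\to s$ and $s\to v$ marked, so the edge is added by a pushout along a map of type \ref{ms:composeacrossthin}.

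The main obstacle is the last step: one must schedule the elementary moves — horn fillings \ref{ms:innerhorn}, saturation scalings \ref{ms:saturation}, and markings \ref{ms:composeacrossthin} — so that at each pushout the sub-horn, saturation $3$-simplex, or marked configuration demanded by the chosen generator is already present with exactly the right decorations, and one must check that conditions (3)--(5) really do account for every decoration that ever needs to be added (including those straddling the $U$- and $V$-sides of the pivot). This is where essentially all the difficulty lies, and in practice it is carried out through the case distinctions A1)--A4) by which this lemma is invoked inside the proof of \autoref{thm:bigcombinatorial}; the undecorated skeleton of the first two paragraphs is, by comparison, routine.
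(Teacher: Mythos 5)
Your argument is essentially the paper's own proof: the paper likewise filters $Z^n_\dagger\hookrightarrow\Delta^n_\dagger$ by attaching the missing simplices containing the pivot $s$ (grouped into layers of minimal dimension rather than one at a time, which is an immaterial difference) along inner horns opposite $s$, using conditions (1)--(2) for the horn combinatorics, condition (3) for the thin triangle required by \ref{ms:innerhorn}, condition (4) together with \ref{ms:composeacrossthin} for missing marked edges $u\to v$, and condition (5) together with \ref{ms:wonky4}/\ref{ms:saturation} for missing thin triangles realized as the face opposite $s$ of a $3$-simplex. Your closing deferral of the decoration scheduling to the cases A1)--A4) of \autoref{thm:bigcombinatorial} is misplaced (those cases verify the \emph{hypotheses} of this lemma in its applications, not its proof), but it is also unnecessary, since your third paragraph already contains the bookkeeping the paper itself carries out.
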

\begin{proof}
 Observe that condition $1$ implies that we might assume without loss of generality that $Z^n_\dagger$ does not contain the face skipping the vertex $s$ since otherwise $Z^n_\dagger=\Delta^n_\dagger$. We first consider the case where $Z^n_\dagger$ already contains all of the decorations.

  Let $\mathcal{Y}_0$ be the set of simplices $\sigma$ in $\Delta^n_\dagger$ which are of minimal dimension among those simplices satisfying the following:
  \begin{itemize}
    \item[i)] The simplex $\sigma$ does not factor through $Z^n_\dagger$.
    \item[ii)] The simplex $\sigma$ contains the vertex $j$.
  \end{itemize}
  Note that by construction every simplex $\sigma \in \mathcal{Y}_0$ has the same dimension. We define $Y_0$ as the simplicial subset of $\Delta^n_\dagger$ containing $Z^n_\dagger$ in addition to every simplex $\sigma \in \mathcal{Y}_0$. We define inductively $\mathcal{Y}_\ell$ as the set of simplices of minimal dimension among those which do not factor through $Y_{\ell-1}$ and contain the vertex $s$. We consequently define $Y_s$ by attaching the simplices of $\mathcal{Y}_s$ to $Y_{s-1}$ thus yielding a filtration
  \[
    Z^n_\dagger=Y_{-1} \xlongrightarrow{} Y_0 \xlongrightarrow{} \cdots \xlongrightarrow{} Y_\ell \xlongrightarrow{}\Lambda^n_s \xlongrightarrow{} \Delta^n_\dagger.
  \]
  It will be enough to show that each step of the filtration (the last being obvious) is in the weakly saturated class of morphisms of type \ref{ms:innerhorn} in \autoref{def:msanodyne}. 

   It follows by minimality of the dimension that given $\sigma \in \mathcal{Y}_\ell$ then every face $d_i(\sigma)$ where $i \neq s$ must factor through $Y_{\ell-1}$. Moreover since the dimension of the simplices in $\mathcal{Y}_{\ell}$ is strictly bigger to that of the simplices in $\mathcal{Y}_{\ell-1}$ it follows that $d_s(\sigma)$ cannot factor through $Y_{\ell-1}$ if $\ell\geq 1$. If $\ell=0$ then it follows from condition 1 that $d_s(\sigma)$ cannot factor through $Z^n_\dagger$.

    Moreover $2$ in our statement guarantees that $\sigma$ contains at least a vertex $u \in U$ and $v \in V$ with $u<s<v$ which finally shows that we have pullback diagram (where the decorations are those induced from $\Delta^n_\dagger$)
   \[
     \begin{tikzcd}
       \Lambda^{m}_i \arrow[r] \arrow[d] & \Delta^m \arrow[d] \\
       Y_{\ell-1} \arrow[r]  & Y_{\ell}
     \end{tikzcd}
   \]
   where $0<i<m$ and the triangle $\{i-1,i,i+1\}$ is thin by condition $3$ in the statement. We finish the proof by noting that if $\sigma_1, \sigma_2 \in \mathcal{Y}_\ell$ satisfy $d_s(\sigma_1)=d_s(\sigma_2)$ then $\sigma_1=\sigma_2$ which shows that we can add every $\sigma \in \mathcal{Y}_s$ using pushouts along inner horns.

   To finish the proof we will need to take care of the decorations. We borrow the notation of the previous case and we will show that the filtration before will take care of the decorations under our assumptions. Let us suppose that that there is a marked edge $e$ which does not factor through $Z^n_\dagger$. Condition 2 implies that $e$ is of the form $u \xlongrightarrow{} v$ with $u \in U$ and $v \in V$ and it follows that the simplex $\sigma: u<s<v$ belongs to $\mathcal{Y}_0$. Using condition 4 in our statement it follows that we add $\sigma$ using a pushout along a morphism of type
   \[
      (\Lambda^2_1,\sharp,\sharp)\xlongrightarrow{} (\Delta^2,\sharp,\sharp)
    \] 
    which is in the weakly saturated class of morphisms of type \ref{ms:innerhorn} and \ref{ms:composeacrossthin} in \autoref{def:msanodyne}.

    Finally let us suppose that we have a 2-simplex $\rho:x<y<z$ which does not factor through $Z^n_\dagger$. Note  that we have at least pair of vertices in $\rho$ which belong to $U$ and $V$ respectively. First let us suppose that $x \in U$, $y=s$ and $z \in V$. Then it follows from condition 1 that $d_1(\rho)$ cannot factor through $Z^n_\dagger$. In particular, $\rho \in \mathcal{Y}_0$ and we are done. If $y \neq s$ then we consider a $3$-simplex $\theta$ given by the vertices $\{x,y,z,s\}$. It follows that $\theta \in \mathcal{Y}_\ell$ for $\ell\leq 1$. Since there exists a pair of vertices $a,b \in \{x,y,z,s\}$ such that $a\in U$ and $b \in V$ it follows from condition 5 in our statement that we can add $\theta$ using a pushout along a morphism of the form
    \[
      (\Lambda^3_i,E,\sharp) \xlongrightarrow{} (\Delta^3,E,\sharp)
    \]
    which is in the weakly saturated class of morphisms of type \ref{ms:innerhorn} and \ref{ms:wonky4} in \autoref{def:msanodyne}. 
\end{proof}

\begin{lemma}[The outer pivot trick]\label{lem:refouterpivot}
   Let $\Delta^n_\dagger=(\Delta^n,E_n,T_n)$ be a marked-scaled simplex and let $X^n_\dagger \subset \Delta^n_\dagger$ be a simplicial subset. Suppose that the following conditions hold:
   \begin{enumerate}
     \item Given a simplex $\Delta^{I} \xlongrightarrow{} X^n_\dagger$ such that $n \notin I$ then it follows that $\Delta^{J} \xlongrightarrow{} \Delta^n_\dagger$ where $J=I \cup \{n\}$ also factors through $X^n_\dagger$.
     \item There exists a pair of subsets $U,V \subset [n]$ with the following properties:
     \begin{itemize}
        \item $U=\{0\}$, such that $0,n \notin  V$.
        \item The face skipping every vertex of $U$ and the face skipping every vertex of $V$ factor through $X^n_\dagger$.  
      \end{itemize} 
      \item For every $v \leq x < n$ where $v \in V$ it follows that the triangle $0<x<n$ is thin in $\Delta^n_\dagger$.
      \item For every $v\leq x<n$ where $v \in V$ it follows that the edge $x \xlongrightarrow{} n$ is marked in $\Delta^n_\dagger$.
      \item Given a marked edge $0 \xlongrightarrow{} v$ which does not factor through $X^n_\dagger$ then it follows that $0 \xlongrightarrow{} n$ is marked in $\Delta^n_\dagger$.
      \item Given a thin triangle $0 \xlongrightarrow{} v \xlongrightarrow{} x$ which does not factor through $X^n_\dagger$ then it follows that the triangle given by $0 \xlongrightarrow{} v \xlongrightarrow{} x \xlongrightarrow{} n$ is fully thin scaled.
   \end{enumerate}
   Then map $\iota : X^n_\dagger \xlongrightarrow{} \Delta^n_\dagger$ is a trivial cofibration of marked-scaled simplicial sets.
\end{lemma}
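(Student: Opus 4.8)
The plan is to run the same horn-filling induction that proves \autoref{lem:refinedinnerpivot}, but with the interior pivot replaced by the terminal vertex $n$, so that the attaching maps become \emph{right} outer horn inclusions $\Lambda^m_m\hookrightarrow\Delta^m$ rather than inner horns. As in \autoref{lem:refinedinnerpivot}, I would first reduce to the case in which $X^n_\dagger$ already carries every marked edge and every thin triangle of $\Delta^n_\dagger$; the general case is then recovered by rerunning the same filtration with suitably decorated outer horns, using conditions 4, 5 and 6. Namely: condition 4 supplies the marking of the top edge of each newly attached simplex; condition 5 turns a missing marked edge $0\to v$ into a marked edge $0\to n$; and condition 6 places a missing thin triangle $0\to v\to x$ inside a fully scaled tetrahedron $\{0,v,x,n\}$. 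In each case the outer horn that adjoins the offending decoration may then be taken fully marked and scaled, and the attachment is a pushout along a map in the weak saturation of \autoref{def:msanodyne}, verified exactly as in \autoref{lem:0saturations}.

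For the simplices: call a simplex of $\Delta^n_\dagger$ \emph{bad} if it does not factor through $X^n_\dagger$. Condition 2 forces every bad simplex to contain the vertex $0$ and at least one vertex of $V$, while condition 1 says that a bad simplex containing $n$ stays bad after $n$ is deleted. I would therefore filter $\iota$ by dimension,
\[
  X^n_\dagger = W_1 \hookrightarrow W_2 \hookrightarrow \cdots \hookrightarrow W_n = \Delta^n_\dagger,
\]
where $W_m$ is obtained from $W_{m-1}$ by adjoining, for each $m$-simplex $\tau$ that contains $n$ and is bad relative to $W_{m-1}$, both $\tau$ and its top face $d_m\tau$ (an $(m-1)$-simplex not containing $n$, which one checks is still absent from $W_{m-1}$). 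By minimality of dimension every face $d_i\tau$ with $i<m$ already lies in $W_{m-1}$, so $W_{m-1}\hookrightarrow W_m$ is a pushout along a coproduct of right outer horn inclusions $\Lambda^m_m\hookrightarrow\Delta^m$ with $m\geq 2$; here $m\geq 2$ because $\tau$ contains $0<v<n$ for some $v\in V$, and the bad simplices of dimension $\leq 1$ are either automatically present or are precisely these top faces.

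It then remains to check that each $\Lambda^m_m\hookrightarrow\Delta^m$, with the decorations induced from $\Delta^n_\dagger$, is $\mathbf{MS}$-anodyne. If the vertices of $\tau$ are $0<\cdots<w<n$, then $w\geq v$ for some $v\in V$, so condition 4 makes the edge $\Delta^{\{m-1,m\}}$ of $\Delta^m$ marked and condition 3 makes the triangle $\Delta^{\{0,m-1,m\}}$ thin; thus the inclusion we must treat is the image of the generating anodyne map \ref{ms:2coCartesianmorphs} of \autoref{def:msanodyne} under the order-reversing involution of $\Delta$. This is where genuine work is required, since $\mathbf{MS}$ is not stable under reversing the order of vertices: I would show that the inclusion $\Lambda^m_m\hookrightarrow\Delta^m$ with $\Delta^{\{m-1,m\}}$ marked and $\Delta^{\{0,m-1,m\}}$ scaled on both sides is $\mathbf{MS}$-anodyne by the retract trick of \autoref{lem:0saturations} — factor it as an $\mathbf{MS}$-anodyne map followed by one with the right lifting property against $\mathbf{MS}$, observe that the codomain is then an $\infty$-bicategory in which the marked edge $\Delta^{\{m-1,m\}}$ forces the ``missing'' $2$-morphism to be invertible, produce the diagonal lift, and conclude that the original inclusion is a retract of an $\mathbf{MS}$-anodyne map. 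Once this (together with the analogous mirrored forms of the edge- and triangle-marking moves invoked for conditions 5 and 6) is in place, every stage of the filtration is a trivial cofibration, and therefore so is $\iota$.

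The main obstacle is precisely this anodyne computation: establishing that the right-handed mirror of the outer horn generator lies in the weak saturation of $\mathbf{MS}$. Everything else is a routine transcription of the proof of \autoref{lem:refinedinnerpivot}, with the bookkeeping of faces adapted to a pivot sitting at the end of the simplex rather than in its interior.
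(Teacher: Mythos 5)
Your proposal is correct and follows essentially the same route as the paper, which runs the filtration of \autoref{lem:refinedinnerpivot} with pivot at the terminal vertex $n$ and attaches simplices along the mirrored generators $(\Lambda^m_m,\Delta^{\{m-1,m\}},\Delta^{\{0,m-1,m\}})\to(\Delta^m,\Delta^{\{m-1,m\}},\Delta^{\{0,m-1,m\}})$, $(\Lambda^2_2,\sharp,\sharp)\to(\Delta^2,\sharp,\sharp)$ and $(\Delta^3,\Delta^{\{2,3\}},U_3)\to(\Delta^3,\Delta^{\{2,3\}},\sharp)$, leaving the details to the reader. You correctly isolate the one non-formal point — that these order-reversed generators are not literally $\mathbf{MS}$-anodyne and must be shown to be trivial cofibrations by the retract argument of \autoref{lem:0saturations} — which is precisely why the lemma's conclusion is stated as ``trivial cofibration'' rather than ``$\mathbf{MS}$-anodyne''.
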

\begin{proof}
  The proof is essentially the same as in \autoref{lem:refinedinnerpivot}. We define the analogous filtration where we use the vertex $n$ as the pivot point 
  \[
     X^n_\dagger \xlongrightarrow{} Y_0 \xlongrightarrow{} Y_1 \cdots \xlongrightarrow{} \Lambda^n_n \xlongrightarrow{} \Delta^n_\dagger
   \] 
   where each step in the filtration is in the weakly saturated class of morphisms of type:
   \begin{itemize}
     \item $(\Lambda^n_n,\Delta^{\{n-1, n\}},\Delta^{\{0,n-1,n\}}) \xlongrightarrow{} (\Delta^n,\Delta^{\{n-1, n\}},\Delta^{\{0,n-1,n\}})$.
     \item $(\Lambda^2_2,\sharp,\sharp) \xlongrightarrow{} (\Delta^2,\sharp,\sharp)$.
     \item $(\Delta^3,\Delta^{\{2, 3\}},U_3) \xlongrightarrow{} (\Delta^3,\Delta^{\{2, 3\}},\sharp)$ we $U_3$ is the collection of all triangles except the face skipping the vertex $3$.
   \end{itemize}
   We leave the details to the reader.
\end{proof}

 \section{Lax Kan extensions and the enhancement construction}
 In this section, we develop the main conceptual ingredients of this paper. We commence by reviewing the theory of fibrations of $(\infty,2)$-categories which will later be deployed to construct a lax version of the right Kan extension functor. We will apply this new technology to the study of complete Segal objects in $(\infty,1)$-categories (resp. double $(\infty,1)$-categories) to obtain enhancements of the usual theory of Segal objects (resp. double $(\infty,1)$-categories) which will play a key role in the final sections of this document.
 \subsection{Preliminaries on fibrations of \texorpdfstring{$(\infty,2)$-}-categories}
 \begin{definition}
   Let $r: \scr{X} \xlongrightarrow{} \scr{S}$ be a functor of $(\infty,1)$-categories. We say that $p$ is a $0$-fibration if it is a cocartesian fibration. Similarly we say that $p$ is a 1-fibration if it is a cartesian fibration.
 \end{definition}

 \begin{definition}\label{def:ienriched}
   Let $p:\bcat{X} \xlongrightarrow{} \bcat{S}$ be a functor of $(\infty,2)$-categories. We say that $p$ is enriched in $i$-fibrations for $i \in \{0,1\}$ if for every pair of objects $a,b \in \bcat{X}$ the induced morphism on mapping $(\infty,1)$-categories
   \[
     \bcat{X}(a,b) \xlongrightarrow{} \bcat{S}(p(a),p(b))
   \]
   is an $i$-fibration and for every triple of objects $a,b,c \in \bcat{X}$ the composition functor
   \[
     \bcat{X}(a,b) \times \bcat{X}(b,c) \xlongrightarrow{} \bcat{X}(a,c)
   \]
   preserves $i$-cartesian edges. Given another functor $q: \bcat{Y} \xlongrightarrow{} \bcat{S}$ enriched in $i$-fibrations and a morphism over $\bcat{S}$
   \[
     \begin{tikzcd}
       \bcat{X} \arrow[rr,"f"] \arrow[dr,swap,"p"] & & \bcat{Y} \arrow[dl,"q"] \\
       & \bcat{S} &
     \end{tikzcd}
   \]
   we say that $f$ is a morphism of $i$-enriched fibrations if it preserves $i$-cartesian edges on the mapping $(\infty,1)$-categories.
 \end{definition}

 \begin{definition}
     Let $p:\bcat{X} \xlongrightarrow{} \bcat{S}$ be a functor of $(\infty,2)$-categories. We say that an edge $e:a \xlongrightarrow{} b$ in $\bcat{X}$ is $0$-cartesian if for every $c \in \bcat{X}$ the following diagram
     \[
       \begin{tikzcd}
         \bcat{X}(b,c) \arrow[r,"-\circ e"] \arrow[d] & \bcat{X}(a,c) \arrow[d] \\
         \bcat{S}(p(b),p(c)) \arrow[r,"-\circ p(e)"] & \bcat{S}(p(a),p(c))
       \end{tikzcd}
     \]
     induced by precomposition by $e$ is a pullback diagram.  We define similarly, using postcomposition, the notion of a 1-cartesian edge.
 \end{definition}

 \begin{definition}\label{def:enough}
     Let $p:\bcat{X} \xlongrightarrow{} \bcat{S}$ be a functor of $(\infty,2)$-categories. We say that $p$ has \emph{enough} 0-cartesian morphisms if we can always produce a 0-cartesian edge solving the lifting problem below
     \[
       \begin{tikzcd}
         \Delta^{\{0\}} \arrow[d] \arrow[r] & \bcat{X} \arrow[d,"p"] \\
         \Delta^1 \arrow[ur,dotted] \arrow[r] & \bcat{S}.
       \end{tikzcd}
     \]
     We dually define the notion of $p$ having enough $1$-cartesian edges, using the inclusion of the terminal vertex $\Delta^{\{1\}} \xlongrightarrow{} \Delta^1$.
 \end{definition}

 \begin{definition}\label{def:ijfib}
     Let $p:\bcat{X} \xlongrightarrow{} \bcat{S}$ be a functor of $(\infty,2)$-categories. We say that $p$ is a $(i,j)$-fibration if the following conditions hold:
     \begin{enumerate}
       \item The functor $p$ has enough $i$-cartesian morphisms (see \autoref{def:enough}).
       \item The functor $p$ is enriched in $j$-fibrations (see \autoref{def:ienriched}).
     \end{enumerate}
 \end{definition}

 \begin{remark}\label{rem:fibtensor}
   Let $\bcat{D}$ be an $(\infty,2)$-category and let $p:\bcat{X} \xlongrightarrow{} \bcat{S}$ be $(i,j)$-fibration then we have an $(i,j)$-fibration $\pi:\bcat{X}\times \bcat{D} \xlongrightarrow{} \bcat{S}$ characterised as follows:
   \begin{itemize}
     \item The map $\pi$ is given by the composite
     \[
        \bcat{X}\times \bcat{D} \xlongrightarrow{} \bcat{X} \xlongrightarrow{p} \bcat{S}.
      \] 
      \item Given $a,b \in \bcat{X}$ and $s,t \in \bcat{D}$ an edge  $e:\Delta^1 \xlongrightarrow{} \bcat{X}(a,b)\times \bcat{D}(s,t)$ is $j$-cartesian if and only if its image in $ \bcat{X}(a,b)$ is $j$-cartesian and its image in $\bcat{D}(s,t)$ is an equivalence.
      \item An edge $u:\Delta^1 \xlongrightarrow{} \bcat{X} \times \bcat{D}$ is $i$-cartesian if its image in $\bcat{X}$ is $i$-cartesian and its image in $\bcat{D}$ is an equivalence.
   \end{itemize}
 \end{remark}

 \begin{definition}
   Let $(\bcat{S},E)$ denote the pair consisting of an $(\infty,2)$-category $\bcat{S}$ together with a collection of edges $E$ containing all the equivalences in $\bcat{S}$. We say that a functor over $\bcat{S}$ 
    \[
     \begin{tikzcd}
       \bcat{X} \arrow[rr,"f"] \arrow[dr,swap,"p"] & & \bcat{Y} \arrow[dl,"q"] \\
       & \bcat{S} &
     \end{tikzcd}
   \]
   where $p,q$ are $(0,j)$-fibrations is an $E$lax morphism of $(0,j)$-fibrations if:
   \begin{itemize}
     \item The map $f$ is enriched in $j$-fibrations.
     \item The map $f$ preserves $0$-cartesian edges lying over the eges in $E$. 
   \end{itemize}
   Given $p,q$ as above we an $(\infty,1)$-category of E-lax morphisms over $\bcat{S}$ denoted by $\on{Map}_{\bcat{S}}^{0,j}(X,Y)^{\on{Elax}}$ via the universal property that identifies every functor of $(\infty,1)$-categories 
   \[
     \scr{C} \xlongrightarrow{} \on{Map}_{\bcat{S}}^{0,j}(\bcat{X},\bcat{Y})^{\on{Elax}}
   \]
   with an $E$lax morphism over $\bcat{S}$
   \[
     \begin{tikzcd}
       \bcat{X}\times \scr{C}\arrow[rr] \arrow[dr,swap,"p"] & & \bcat{Y} \arrow[dl,"q"] \\
       & \bcat{S} &
     \end{tikzcd}
   \]
   (see \autoref{rem:fibtensor}). We denote by $\bcat{F}\!\on{ib}_{0,j}(\bcat{S})^{\on{Elax}}$ the resulting $(\infty,2)$-category. We similarly define in the case of $(1,j)$-fibrations the $(\infty,2)$-category $\bcat{F}\!\on{ib}_{1,j}(\bcat{S})^{\on{Eoplax}}$
 \end{definition}

 \begin{definition}\label{def:elaxnatural}
    Let $(\bcat{C},E)$ denote the pair consisting of an $(\infty,2)$-category $\bcat{C}$ together with a collection of edges $E$ containing all the equivalences in $\bcat{C}$. Given another $(\infty,2)$-category $\bcat{A}$, we define the $(\infty,2)$-category $\on{Fun}(\bcat{C},\bcat{A})^{\on{Elax}}$ of functors and $E$lax natural transformations as the subcategory of $\on{Fun}^{\mathbf{gr}}((\bcat{C},\flat),\bcat{A})$ (see \autoref{def:graymapping}) consisting in those lax natural transformations  
    \[
      \Delta^1 \tensor \bcat{C} \xlongrightarrow{} A
    \]
   such that for every edge $e:\Delta^1 \xlongrightarrow{} \bcat{C}$ where $e \in E$ then it follows that the restriction 
   \[
       \Delta^1 \tensor \Delta^1 \xlongrightarrow{ \on{id} \times e}  \Delta^1 \tensor \bcat{C}  \xlongrightarrow{} A  
  \]   
   defines a commutative square in $\bcat{A}$. We dually define the $(\infty,2)$-category $\on{Fun}(\bcat{C},\bcat{A})^{\on{Eoplax}}$.
 \end{definition}

 \begin{remark}\label{rem:Eisequiv}
   We observe that if $E$ is precisely the collection of equivalences we have that the definition above coincides with the construction given in \autoref{def:graymapping}. In this case we will use the notation $\on{Fun}(\bcat{C},\bcat{A})^{\on{lax}}$ and $\on{Fun}(\bcat{C},\bcat{A})^{\on{oplax}}$.
 \end{remark}

 \begin{remark}
   Note that \autoref{def:elaxnatural} is precisely the category of partially lax natural transformations studied in \cite{GHL_LaxLim}.
 \end{remark}
 
 We conclude the section with a version of the straightening-unstraightening equivalence for $E$lax transformations found in \cite{Agh}.
 \begin{theorem}\label{thm:laxGroth}
    Let $(\bcat{S},E)$ denote the pair consisting of an $(\infty,2)$-category $\bcat{S}$ together with a collection of edges $E$ containing all the equivalences in $\bcat{S}$. Then there exists a natural equivalences of $(\infty,2)$-categories
    \[
      \on{St}_{\bcat{S}}^{\on{Elax}}: \bcat{F}\!\on{ib}_{0,1}(\bcat{S})^{\on{Elax}} \llra \on{Fun}\left(\bcat{S},\bcat{C}\!\on{at}_{(\infty,2)}\right)^{\on{Elax}}: \on{Un}_{\bcat{S}}^{\on{Elax}},
    \]
    \[
      \on{St}_{\bcat{S}}^{\on{Eoplax}}: \bcat{F}\!\on{ib}_{1,0}(\bcat{S})^{\on{Eoplax}} \llra \on{Fun}\left(\bcat{S}^{\op},\bcat{C}\!\on{at}_{(\infty,2)}\right)^{\on{Eoplax}}: \on{Un}_{\bcat{S}}^{\on{Eoplax}},
    \]
    and similarly for the rest of the variances.
 \end{theorem}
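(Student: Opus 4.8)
The plan is to deduce the theorem from the \emph{pseudonatural} straightening--unstraightening equivalence for $(\infty,2)$-categories --- the special case in which $E$ is the collection of \emph{all} edges of $\bcat{S}$, so that by \autoref{def:elaxnatural} the $E$lax natural transformations are the ordinary (pseudo)natural ones --- and then to track how the $E$-condition is transported across it. So first I would invoke the Grothendieck construction of \cite{AbGroth}, \cite{Agh}: there is an equivalence of $(\infty,2)$-categories $\St_{\bcat{S}}\colon \bcat{F}\!\on{ib}_{0,1}(\bcat{S}) \xrightarrow{\simeq} \on{Fun}(\bcat{S},\bcat{C}\!\on{at}_{(\infty,2)})$ which on objects sends a $(0,1)$-fibration to its fiberwise straightening and which restricts to equivalences on mapping $(\infty,1)$-categories and on modifications. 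The source and target of the desired $\on{St}^{\on{Elax}}_{\bcat{S}}$ have exactly the same objects and the same $2$-morphisms as in this pseudo case, so it suffices to show that the pointwise equivalences on mapping $(\infty,1)$-categories carry the full subcategory of $E$lax morphisms of fibrations isomorphically onto the full subcategory of $E$lax natural transformations.

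The crux is a \emph{cylinder} description of both of these subcategories. On the functor side, \autoref{def:elaxnatural} presents $\on{Fun}(\bcat{S},\bcat{C}\!\on{at}_{(\infty,2)})^{\on{Elax}}(F,G)$ as the subcategory of Gray cylinders $\Delta^1 \tensor \bcat{S} \xlongrightarrow{} \bcat{C}\!\on{at}_{(\infty,2)}$ restricting to $F$ and $G$ at the two ends and sending, for each $e \in E$, the corresponding Gray square to a commuting square. On the fibration side, the defining universal property of $\on{Map}^{0,1}_{\bcat{S}}(\bcat{X},\bcat{Y})^{\on{Elax}}$ presents it through $E$lax morphisms $\bcat{X} \times \Delta^1 \xlongrightarrow{} \bcat{Y}$ over $\bcat{S}$, i.e. maps over $\bcat{S}$ that are enriched in $1$-fibrations and preserve $0$-cartesian edges lying over $E$. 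What is needed, then, is a compatibility statement: unstraightening over $\bcat{S}$ intertwines the operation $\bcat{S} \mapsto \Delta^1 \tensor \bcat{S}$ on the base with the operation $\bcat{X} \mapsto \bcat{X} \times \Delta^1$ on total spaces, so that a Gray cylinder upstairs corresponds to a map of fibrations $\bcat{X} \times \Delta^1 \xlongrightarrow{} \bcat{Y}$ over $\bcat{S}$ --- and, crucially, so that the $E$-commutativity of the cylinder matches precisely the requirement that $0$-cartesian lifts over $E$ be preserved. Working in the marked-scaled model, one sees this by observing that $\Delta^1 \tensor \bcat{S}$ is obtained from the Cartesian product $\bcat{S} \times \Delta^1$ by \emph{not} inverting the ``horizontal'' edges coming from the Gray squares; unstraightening over $\Delta^1 \tensor \bcat{S}$ therefore only forces cocartesian lifts over those edges to be preserved, which after restriction to $E$ is exactly the defining condition of an $E$lax morphism. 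The left-Quillen property of the Gray tensor product (\autoref{prop:msgrayleftquillen}) together with the Quillen-equivalence form of the Grothendieck construction make this identification homotopy-invariant and compatible with the extra $\scr{C}$-family of parameters appearing in the two universal properties.

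Granting the cylinder identification, the matching of subcategories is immediate: a map of $(0,1)$-fibrations over $\bcat{S}$ is $E$lax $\Leftrightarrow$ its $0$-cartesian lifts over $E$ are preserved $\Leftrightarrow$ the associated Gray cylinder sends each $E$-square to a commuting square $\Leftrightarrow$ it is an $E$lax natural transformation; and on $2$-cells no further condition is imposed on either side, since modifications and $2$-cells of fibrations are the pseudo ones regardless of $E$. Hence $\on{St}^{\on{Elax}}_{\bcat{S}}$ is the identity on objects up to equivalence and fully faithful on mapping $(\infty,1)$-categories, so it is an equivalence of $(\infty,2)$-categories; naturality in $\bcat{S}$ (and in the pair $(\bcat{S},E)$) is inherited from the pseudo case. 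The $\on{Eoplax}$ statement, and the remaining variances, follow by running the same argument over $\bcat{S}^{\op}$ and $\bcat{S}^{\co}$, using that a $(1,0)$-fibration over $\bcat{S}$ is a $(0,1)$-fibration over $\bcat{S}^{\co}$ and that the Gray tensor product exchanges lax with oplax under these dualities.

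The step I expect to be the main obstacle is precisely this compatibility of (un)straightening with the Gray tensor product on the base: making rigorous the claim that a fibration over $\Delta^1 \tensor \bcat{S}$ with prescribed fibers at its two ends is the same data as an $E$lax map of fibrations over $\bcat{S}$, and carrying out the bookkeeping of markings and scalings that guarantees that it is exactly the cocartesian lifts over $E$ --- and no others --- that are constrained. This is the combinatorially heavy part of the argument, and it is essentially the content that the statement imports from \cite{Agh}; everything else is formal once it is in place.
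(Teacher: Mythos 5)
First, a structural point: the paper does not prove this statement at all --- it is quoted as ``a version of the straightening--unstraightening equivalence for $E$lax transformations found in \cite{Agh}'' and used as a black box, so there is no in-paper argument for your proposal to match. Judged on its own terms, your sketch has a direction error in its opening reduction. For a general marking $E$ the mapping $(\infty,1)$-categories $\on{Fun}(\bcat{S},\bcat{C}\!\on{at}_{(\infty,2)})^{\on{Elax}}(F,G)$ and $\on{Map}^{0,1}_{\bcat{S}}(\bcat{X},\bcat{Y})^{\on{Elax}}$ \emph{contain} the pseudonatural ones as (non-full, in general) subcategories; they are larger, not smaller. So an equivalence of the pseudo mapping categories (the case $E=\sharp$) cannot be ``restricted'' or ``tracked'' to produce an equivalence of the $E$lax ones: you would be trying to extend an equivalence of subcategories to an equivalence of ambient categories, which is not automatic. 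The sensible reduction runs the other way --- establish the maximally lax case ($E=$ equivalences only) and then cut down to general $E$ by checking that both sides single out the same full subcategory of each lax mapping category --- but that requires the fully lax Grothendieck correspondence as input, which is precisely the hard case.

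Second, the cylinder argument you correctly identify as the crux --- that unstraightening over $\bcat{S}$ intertwines $\Delta^1 \tensor \bcat{S}$ on the base with $\bcat{X}\times\Delta^1$ on total spaces, matching $E$-commutativity of Gray squares with preservation of $0$-cartesian lifts over $E$ --- is not a lemma available anywhere in this paper, and it is not a routine consequence of \autoref{prop:msgrayleftquillen}. It is essentially the entire content of the theorem: one must produce, for a $(0,1)$-fibration $p\colon\bcat{X}\to\bcat{S}$ and a map of fibrations $\bcat{X}\to\bcat{Y}$ over $\bcat{S}$ that need not preserve cocartesian lifts, a classifying fibration over the Gray cylinder, and verify that exactly the lifts over $E$ (and no others) are constrained, compatibly with composition and with the enrichment in $1$-fibrations. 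Your proposal defers all of this to ``the bookkeeping of markings and scalings,'' i.e.\ it assumes the statement it set out to prove. As a roadmap the second and third paragraphs point in the right direction, but as a proof the proposal is circular at its load-bearing step.
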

 \subsubsection{A model for the theory of \texorpdfstring{$(0,1)$-}-fibrations}
 We briefly recall an specific model for the theory $(0,1)$-fibrations which will useful for computational purpouses in the next section. The results here exposed can be found in \cite{AbGroth}.

 \begin{definition}
  A \emph{marked biscaled} simplicial set ($\bS$ simplicial set) is given by the following data
  \begin{itemize}
    \item A simplicial set $X$.
    \item A collection of edges  $E_X \in X_1$ containing all degenerate edges.
    \item A collection of triangles $T_X \in X_2$ containing all degenerate triangles. We will refer to the elements of this collection as \emph{thin triangles}.
    \item A collection of triangles $C_X \in X_2$ such that $T_X \subseteq C_X$. We will refer to the elements of this collection as \emph{lean triangles}.
  \end{itemize}
  We will denote such objects as triples $(X,E_X, T_X \subseteq C_X)$. A map $(X,E_X, T_X \subseteq C_X) \xlongrightarrow{} (Y,E_Y,T_Y \subseteq C_Y)$ is given by a map of simplicial sets $f:X \xlongrightarrow{} Y$ compatible with the collections of edges and triangles above. We denote by $\mbsSet$ the category of mb simplicial sets.
\end{definition}

\begin{notation}
  Let $(X,E_X, T_X \subseteq C_X)$ be a mb simplicial set. Suppose that the collection $E_X$ consist only of degenerate edges. Then we fix the notation $(X,E_X, T_X \subseteq C_X)=(X,\flat,T_X \subseteq E_X)$ and do similarly for the collection $T_X$. If $C_X$ consists only of degenerate triangles we fix the notation $(X,E_X, T_X \subseteq C_X)=(X,E_X, \flat)$. In an analogous fashion we wil use the symbol “$\sharp$“ to denote a collection containing all edges (resp. all triangles). Finally suppose that $T_X=C_X$ then we will employ the notation $(X,E_X,T_X)$.
\end{notation}

\begin{remark}
  We will often abuse notation when defining the collections $E_X$ (resp. $T_X$, resp. $C_X$) and just specified its non-degenerate edges (resp. triangles).
\end{remark}

  \begin{definition}\label{def:mbsanodyne}
  The set of \emph{generating mb anodyne maps} \(\bS\) is the set of maps of mb simplicial sets consisting of:
  \begin{enumerate}
    \myitem{(A1)}\label{mb:innerhorn} The inner horn inclusions 
    \[
    \bigl(\Lambda^n_i,\flat,\flat \subset \{\Delta^{\{i-1,i,i+1\}}\}\bigr)\xlongrightarrow{} \bigl(\Delta^n,\flat,\flat \subset \{\Delta^{\{i-1,i,i+1\}}\}\bigr)
    \quad , \quad n \geq 2 \quad , \quad 0 < i < n ;
    \]
    These maps force left-degenerate lean-scaled triangles to represent Cartesian edges of the mapping category.
    \myitem{(A2)}\label{mb:wonky4} The map 
    \[
    (\Delta^4,\flat,\flat \subset T)\xlongrightarrow{} (\Delta^4,\flat,\flat \subset T\cup \{\Delta^{\{0,3,4\}}, \ \Delta^{\{0,1,4\}}\}),
    \]
    where we define
    \[
    T\overset{\text{def}}{=}\{\Delta^{\{0,2,4\}}, \ \Delta^{\{ 1,2,3\}}, \ \Delta^{\{0,1,3\}}, \ \Delta^{\{1,3,4\}}, \ \Delta^{\{0,1,2\}}\};
    \]
    \myitem{(A3)}\label{mb:2coCartesianmorphs} The set of maps
    \[
    \Bigl(\Lambda^n_0,\{\Delta^{\{0,1\}}\}, \{ \Delta^{\{0,1,n\}} \}\Bigr) \xlongrightarrow{} \Bigl(\Delta^n,\{\Delta^{\{0,1\}}\}, \{ \Delta^{\{0,1,n\}} \}\Bigr) \quad , \quad n \geq 2.
    \]
    This forces the marked morphisms to be $p$-coCartesian with respect to the given thin triangles. 
    \myitem{(A4)}\label{mb:2CartliftsExist} The inclusion of the initial vertex
    \[
    \Bigl(\Delta^{0},\sharp,\sharp \Bigr) \xlongrightarrow{} \Bigl(\Delta^1,\sharp,\sharp \Bigr).
    \]
    This requires $p$-coCartesian lifts of morphisms in the base to exist.
    \myitem{(S1)}\label{mb:composeacrossthin} The map
    \[
    \Bigl(\Delta^2,\{\Delta^{\{0,1\}}, \Delta^{\{1,2\}}\},\sharp \Bigr) \xlongrightarrow{} \Bigl(\Delta^2,\sharp,\sharp \Bigr),
    \]
    requiring that $p$-coCartesian morphisms compose across thin triangles.
    \myitem{(S2)}\label{mb:coCartoverThin} The map
    \[
    \Bigl(\Delta^2,\flat,\flat \subset \sharp \Bigr) \xlongrightarrow{} \Bigl( \Delta^2,\flat,\sharp\Bigr),
    \]
    which requires that lean triangles over thin triangles are, themselves, thin.
    \myitem{(E)}\label{mb:equivalences} For every Kan complex $K$, the map
    \[
    \Bigl( K,\flat,\sharp  \Bigr) \xlongrightarrow{} \Bigl(K,\sharp, \sharp\Bigr).
    \]
    Which requires that every equivalence is a marked morphism.
  \end{enumerate}
  A map of $\bS$ simplicial sets is said to be \bS-anodyne if it belongs to the weakly saturated closure of \bS.
\end{definition}

\begin{definition}
  A map of $\bS$ simplicial sets is said to be an \bS-\emph{fibration} if it has the right lifting property against the class of \bS-anodyne maps.
\end{definition}

\begin{definition}
  Given a scaled simplicial set $(S,T_S)$ we define the category $\left(\on{Set}^{\mathbf{mb}}_\Delta\right)_{/S}$ of $\bS$ simplicial sets over $(S,\sharp,T_S \subset \sharp)$ as follows:
  \begin{itemize}
    \item The objects are maps $p:(X,E_X,T_X \subset C_X) \xlongrightarrow{} (S,\sharp,T_S \subset \sharp)$.
    \item A morphism from $p:(X,E_X,T_X \subset C_X) \xlongrightarrow{} (S,\sharp,T_S \subset \sharp)$ to $q:(Y,E_Y,T_Y \subset C_Y) \xlongrightarrow{} (S,\sharp,T_S \subset \sharp)$ is given by a map $f: (X,E_X,T_X \subseteq C_X) \xlongrightarrow{} (Y,E_Y,T_Y \subset C_Y) $ such that $q \circ f=p$.
  \end{itemize}
  An object of $\left(\on{Set}^{\mathbf{mb}}_\Delta\right)_{/S}$ is said to be a $(0,1)$-fibration if the corresponding map of $\bS$ simplicial sets is an $\bS$-fibration.
\end{definition}

\begin{theorem}\label{thm:model}
  Let $S$ be a scaled simplicial set. Then there exists a left proper combinatorial simplicial model structure on $\left(\on{Set}^{\mathbf{mb}}_\Delta\right)_{/S}$, which is characterized uniquely by the following properties:
  \begin{itemize}
    \item[C)] A morphism $f:X \xlongrightarrow{} Y$ in $\left(\on{Set}^{\mathbf{mb}}_\Delta\right)_{/S}$ is a cofibration if and only if $f$ induces a monomorphism on the underlying simplicial sets.
    \item[F)] An object $p:X \xlongrightarrow{} S$ in $\left(\on{Set}^{\mathbf{mb}}_\Delta\right)_{/S}$ is fibrant if and only if it is a $(0,1)$-fibration.
  \end{itemize}
\end{theorem}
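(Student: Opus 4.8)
This statement appears in \cite{AbGroth}; let me outline the argument, which follows the template Lurie uses to build the model structures on marked and scaled simplicial sets (compare the proof of \autoref{thm:scaledmodel} and \cite[\S A.2.6]{HTT}). The category $\mbsSet$ is a presheaf category, hence locally presentable, by the same reasoning that applies to marked, scaled and marked-scaled simplicial sets, and therefore so is the slice $(\on{Set}^{\mathbf{mb}}_\Delta)_{/S}$ over any fixed object. Moreover the monomorphisms of $(\on{Set}^{\mathbf{mb}}_\Delta)_{/S}$ are the weakly saturated closure of the set $I$ consisting of the boundary inclusions $(\partial\Delta^n,\flat,\flat)\xlongrightarrow{}(\Delta^n,\flat,\flat)$ together with the decoration-promoting maps $(\Delta^1,\flat,\flat)\xlongrightarrow{}(\Delta^1,\sharp,\flat)$, $(\Delta^2,\flat,\flat)\xlongrightarrow{}(\Delta^2,\flat,\flat\subset\sharp)$ and $(\Delta^2,\flat,\flat\subset\sharp)\xlongrightarrow{}(\Delta^2,\flat,\sharp)$ (compare \autoref{def:cofms}), mapped to $(S,\sharp,T_S\subset\sharp)$ in all possible ways; so cofibrations are generated by a set.

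The decisive combinatorial input is a Leibniz property for the class $\bS$, namely that the pushout-product of a cofibration with a $\bS$-anodyne map is again $\bS$-anodyne --- the analogue for $(0,1)$-fibrations of \autoref{prop:msCart}. Granting this, the simplicial enrichment of $(\on{Set}^{\mathbf{mb}}_\Delta)_{/S}$ is such that cotensors of $(0,1)$-fibrations are $(0,1)$-fibrations; in particular $\Map_S(A,Z)$ is a Kan complex whenever $Z\to S$ is a $(0,1)$-fibration. One then declares $f\colon X\xlongrightarrow{}Y$ over $S$ to be a weak equivalence if $f^{*}\colon\Map_S(Y,Z)\xlongrightarrow{}\Map_S(X,Z)$ is a homotopy equivalence for every $(0,1)$-fibration $Z$, and feeds $(I,W)$ into the recognition principle for combinatorial model structures of \cite[\S A.2.6]{HTT}.

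The formal checks are routine. The class $W$ is closed under $2$-out-of-$3$ and retracts by construction, and it is an accessible subcategory of the arrow category because a small set of $(0,1)$-fibrations suffices to detect it, these being produced by the accessible fibrant-replacement functor built by the small object argument from $\bS$. The Leibniz property shows at once that every $\bS$-anodyne map lies in $W$ (it becomes a trivial Kan fibration after applying $\Map_S(-,Z)$), that every map with the right lifting property against $I$ lies in $W$ (the $(\Delta^1)^{\sharp}$-cylinder exhibits such a map as a simplicial homotopy equivalence over $S$), and that $W$ is stable under pushout along cofibrations, so the resulting structure is left proper and the tensor/cotensor make it simplicial. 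Fibrant objects are then the $\bS$-fibrations once one identifies the trivial cofibrations with the weakly saturated closure $\overline{\bS}$; the inclusion $\overline{\bS}\subseteq\mathrm{cof}\cap W$ is immediate, and the uniqueness clause is automatic since cofibrations and fibrant objects determine the whole model structure.

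I expect the genuine work to be concentrated in the two non-formal points above. First, the combinatorial Leibniz lemma for $\bS$: its proof is a generator-by-generator analysis in the spirit of \autoref{prop:grpushout} and \autoref{lem:refinedinnerpivot}, now with the extra bookkeeping of the lean triangles and of the two distinct decoration classes. Second, the reverse inclusion $\mathrm{cof}\cap W\subseteq\overline{\bS}$ --- equivalently, the assertion that a $\bS$-fibration which is a weak equivalence has the right lifting property against all monomorphisms. This is the main obstacle; one handles it by reducing via fibrant replacement and $2$-out-of-$3$ to a $\bS$-fibration between $(0,1)$-fibrations that lies in $W$, exhibiting it as a deformation retract over $S$ for the $(\Delta^1)^{\sharp}$-cylinder, and promoting that to the lifting property against the generators of $I$ column by column on the decorations, using once more the stability of $\bS$-anodyne maps under pushout-product with cofibrations.
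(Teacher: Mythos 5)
The paper does not prove this theorem: it is recalled verbatim from \cite{AbGroth} (the subsection opens by saying the results exposed there can be found in that reference), so there is no in-paper argument to compare against. Your sketch follows the standard template used in that reference and in the analogous constructions the paper does carry out (\autoref{thm:scaledmodel}, \autoref{thm:markedscaledmodel}): generate cofibrations by the boundary and decoration-promoting maps, prove the pushout-product (Leibniz) lemma for $\bS$-anodyne maps against cofibrations, detect weak equivalences by mapping spaces into $(0,1)$-fibrations, and invoke the recognition principle of \cite[\S A.2.6]{HTT}, with the genuine work concentrated exactly where you locate it. This is the right approach and correctly identifies the non-formal inputs.
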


\begin{remark}
  Let us suppose that $(S,T_S)$ is actually given by a fibrant scaled simplicial. Given an object $p:X \xlongrightarrow{} S$  in $\left(\on{Set}^{\mathbf{mb}}_\Delta\right)_{/S}$ it follows that $p$ defines a fibrant object in the model structure given in \autoref{thm:model} if and only if $X$ is an $\infty$-bicategory and the map $p$ it is a $(0,1)$-fibration in the sense of \autoref{def:ijfib}.
\end{remark}

 \subsection{Right lax Kan extensions}
 Let $(\bcat{C},E)$ be an $(\infty,2)$-category equipped with a collection of edges containing the equivalences and let $f:\bcat{C} \xlongrightarrow{} \bcat{D}$ be a functor. The goal of this section is to produce a right adjoint to the restriction functor
 \[
   f^*: \on{Fun}\left(\bcat{D},\bcat{C}\!\on{at}_{(\infty,2)}\right) \xlongrightarrow{} \on{Fun}\left(\bcat{C},\bcat{C}\!\on{at}_{(\infty,2)}\right)^{\on{Elax}}
 \]
 building the first step towards a general theory of partially lax Kan extensions which will appear in \cite{Kan}. For the rest of this section we fix once and for all the data of $(\bcat{C},E)$ and $f:\bcat{C}\xlongrightarrow{} \bcat{D}$. Throughout this section we will the model structure given in \autoref{thm:model} to deal with $(0,1)$-fibrations.

 \begin{remark}\label{rem:nicemarking}
   Let $(\bcat{C},E)$ be as above. We say that $E$ is saturated if given a \emph{thin} 2-simplex $\sigma:\Delta^2 \xlongrightarrow{} \bcat{C}$ such that:
   \begin{itemize}
     \item The restriction of $\sigma$ to $\Delta^{\{0,2\}}$ belongs to $E$.
     \item The restriction of $\sigma$ to $\Delta^{\{0,1\}}$ \emph{or} the restriction of $\sigma$ to $\Delta^{\{1,2\}}$ is an equivalence in $\bcat{C}$.
   \end{itemize}
   Then it follows that the remaining edge of $\sigma$ belongs to $E$. We denote by $E^{\on{sat}}$ the smallest saturated marking such that $E \subseteq E^{\on{sat}}$.

   Finally we observe that it is easy to see that we have an equivalence of $(\infty,2)$-categories
   \[
     \on{Fun}\left(\bcat{C},\bcat{C}\!\on{at}_{(\infty,2)}\right)^{\on{Elax}} \xlongrightarrow{\isom} \on{Fun}\left(\bcat{C},\bcat{C}\!\on{at}_{(\infty,2)}\right)^{\on{E^{\on{sat}}lax}}
   \]
   so from this point on we will assume without loss of generality that $E$ is saturated.
 \end{remark}

 \begin{definition}\label{def:freeId}
   Let $\bcat{D}$ be an $(\infty,2)$-category (incarnated as a fibrant scaled simplicial set). We denote by $\mathbb{F}(\bcat{D})=\on{Fun}(\Delta^1,\bcat{D})^{\on{oplax}}$ (see \autoref{rem:Eisequiv}) and observe that we have maps
    \[
     \on{ev}_i: \mathbb{F}(\bcat{D}) \xlongrightarrow{} \bcat{D}
    \]
    given by evaluation at the vertex $i$ for $i \in \{0,1\}$.  It follows from Theorem 2.2.6 in \cite{GHL_Cart} that evaluation at $0$ yields a $(1,0)$-fibration. One can similarly show that the map $\on{ev}_1$ defines a $(0,1)$-fibration. 
 \end{definition}

 \begin{remark}\label{rem:scalingfree}
   Let us consider the $(1,0)$-fibration $\on{ev}_0: \mathbb{F}(\bcat{D}) \xlongrightarrow{} \bcat{D}$. We view $\mathbb{F}(\bcat{D})$ as a $\bS$ simplicial set as follows:
   \begin{itemize}
     \item[i)] An edge $e$ in $\mathbb{F}(\bcat{D})$ is declared marked if  $\ev_1(e)$ is an equivalence in $\bcat{D}$ and the associated square commutes.
     \item[ii)] A triangle $\sigma$ in $\mathbb{F}(\bcat{D})$ is declared lean if $\ev_1(\sigma)$ is thin in $\bcat{D}$.
     \item[iii)] A triangle $\rho$ in $\mathbb{F}(\bcat{D})$ is declared thin if $\ev_i(\sigma)$ is thin $\bcat{D}$ for $i=0,1$.
   \end{itemize}
 \end{remark}

 \begin{definition}\label{def:fibersfree}
   We set the following notation
   \[
       \begin{tikzcd}
         \bcat{D}_{d \upslash} \arrow[r] \arrow[d] & \mathbb{F}(\bcat{D}) \arrow[d,"\ev_0"]  \\
         \Delta^0 \arrow[r,"d"] & \bcat{D}
       \end{tikzcd} \enspace \enspace \enspace 
       \begin{tikzcd}
         \bcat{C}_{d \upslash} \arrow[r] \arrow[d] &  \bcat{D}_{d \upslash} \arrow[d,"\ev_1"]  \\
         \bcat{C} \arrow[r,"f"] & \bcat{D}.
       \end{tikzcd} 
     \]
     where all of the diagrams above are pullbacks.
 \end{definition}

 \begin{definition}\label{def:cosimpKan}
   Let $p:(L,E_L,T_L \subseteq C_L) \xlongrightarrow{} (\bcat{D},\sharp,T_{\bcat{D}}\subset \sharp )$ be an object of $\left(\on{Set}^{\mathbf{mb}}_\Delta\right)_{/\bcat{D}}$. We define another $\bS$ simplicial set $R(p):(R(L),E_{R(L)},T_{R(L)}\subseteq C_{R(L)}) \xlongrightarrow{} (\bcat{C},\sharp,T_{\bcat{C}}\subseteq \sharp)$ as follows:
   \begin{itemize}
     \item[i)] The underlying simplicial set of $R(L)$ is defined via the diagram 
     \[\begin{tikzcd}
  {R(L)} && {\bcat{C}} \\
  {Q(L)} & {\mathbb{F}(\bcat{D})} & {\bcat{D}} \\
  L & {\bcat{D}}
  \arrow[from=1-1, to=2-1]
  \arrow[from=2-1, to=2-2]
  \arrow[from=2-1, to=3-1]
  \arrow["p"', from=3-1, to=3-2]
  \arrow["{\ev_0}", from=2-2, to=3-2]
  \arrow["{\ev_1}"', from=2-2, to=2-3]
  \arrow["{R(p)}", from=1-1, to=1-3]
  \arrow["f", from=1-3, to=2-3]
\end{tikzcd}\]
where all the squares depicted are pullback diagrams. 
    \item[ii)] The collection $E_{R(L)}$ consists in those edges $e$ in $R(L)$ such that:
     \begin{enumerate}
        \item The associated square in $\bcat{D}$ commutes.
        \item The image of $e$ in $L$ is marked.
      \end{enumerate} 
      \item[iii)] The collection $C_{R(L)}$ consists in those triangles $\sigma$ such the image of $\sigma$ in $L$ is lean.
      \item[iv)] The collection $T_{R(L)}$ consists in those lean triangles $\rho$ such that $R(p)(\rho)$ is thin in $\bcat{C}$. 
   \end{itemize}
   This construction yields a functor $R: \left(\on{Set}^{\mathbf{mb}}_\Delta\right)_{/\bcat{D}} \xlongrightarrow{} \left(\on{Set}^{\mathbf{mb}}_\Delta\right)_{/\bcat{C}}$.
 \end{definition}

 \begin{remark}\label{rem:hatE}
   Let $R(p):(R(L),E_{R(L)},T_{R(L)}\subseteq C_{R(L)}) \xlongrightarrow{} (\bcat{C},\sharp,T_{\bcat{C}}\subseteq \sharp)$ as above. It will useful to consider an alternative marking $\hat{E}_{R(L)}$ given by those edges which belong to $E_{R(L)}$ whose image under $R(p)$ belong to the marked edges in $\bcat{C}$. This gives a variant of our functor $R$ which we denote $\hat{R}$
 \end{remark}

 \begin{definition}
   Let $\left(\on{Set}^{\mathbf{mb}}_\Delta\right)^{\on{Elax}}_{/\bcat{C}}$ by the category of $\bS$ simplicial sets over $(\bcat{C},\sharp,T_{\bcat{C}}\subset \sharp)$ and morphims those maps of simplicial sets over $\bcat{C}$ that preserve marked edges over $E$ and every lean and thin triangle.
 \end{definition}

 \begin{definition}\label{def:fshriekmodel}
   We define a functor $f_{*}: \left(\on{Set}^{\mathbf{mb}}_\Delta\right)^{\on{Elax}}_{/\bcat{C}} \xlongrightarrow{} \left(\on{Set}^{\mathbf{mb}}_\Delta\right)_{/\bcat{D}}$ by means of the universal property
   \[
     \Hom_{\left(\on{Set}^{\mathbf{mb}}_\Delta\right)_{/\bcat{D}}}\left(L, f_{*}X\right) \isom \Hom_{\left(\on{Set}^{\mathbf{mb}}_\Delta\right)^{\on{Elax}}_{/\bcat{C}}}(R(L),X).
   \]
 \end{definition}

 \begin{remark}\label{rem:fhat}
   If we consider the alternative collection of marked edges discussed in \autoref{rem:hatE} it follows that we have isomorphisms
   \[
     \Hom_{\left(\on{Set}^{\mathbf{mb}}_\Delta\right)^{\on{Elax}}_{/\bcat{C}}}(R(L),X) \isom \Hom_{\left(\on{Set}^{\mathbf{mb}}_\Delta\right)_{/\bcat{C}}}(\hat{R}(L),X)
   \]
   which provide an alternative description of $f_*X$.
 \end{remark}

 \begin{lemma}\label{lem:kansatflower}
  Let $q:K \xlongrightarrow{} \bcat{D}$ be a functor where $K$ is a Kan complex and let $p:\bcat{X}\xlongrightarrow{} \bcat{C}$ be $(0,1)$-fibration. Let $K^\flat=(K,\flat,\sharp)$ and $K^\sharp=(K,\sharp,\sharp)$ be $\bS$ simplicial sets. Then there exists a solution to the lifting problem
    \[
       \begin{tikzcd}
         \hat{R}(K^\flat) \arrow[r,"\varphi"] \arrow[d] & \bcat{X} \arrow[d,"p"] \\
         \hat{R}(K^\sharp) \arrow[r] \arrow[ur,dotted] & \bcat{C}
       \end{tikzcd}
    \]   
 \end{lemma}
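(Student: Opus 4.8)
The plan is to solve the lifting problem directly, exploiting the fact that $\hat R(K^\flat)$ and $\hat R(K^\sharp)$ carry the \emph{same} underlying marked‑biscaled simplicial set $R(K)$, with the same lean and thin triangles, and differ only in their collections of marked edges. First I would unwind \autoref{def:cosimpKan} together with the variant marking of \autoref{rem:hatE}: since $R(-)$ depends only on the underlying simplicial set of its argument together with its map to $\bcat D$, the structure maps $\hat R(K^\flat)\to\bcat C$ and $\hat R(K^\sharp)\to\bcat C$ have the same underlying map $R(q)\colon R(K)\to\bcat C$, the same lean triangles, and the same thin triangles. An edge of $R(K)$ is a triple $(e_K,e_\phi,e_c)$ with $e_K\in K_1$, $e_\phi$ an edge of $\mathbb F(\bcat D)$ whose associated square in $\bcat D$ commutes, and $e_c\in\bcat C_1$, compatible via $\ev_0,\ev_1$ and $f$; it is marked in $\hat R(K^\sharp)$ iff $e_c$ is marked in $\bcat C$, and marked in $\hat R(K^\flat)$ iff in addition $e_K$ is degenerate. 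Hence $\hat R(K^\flat)\hookrightarrow\hat R(K^\sharp)$ is the identity on underlying data and only enlarges the marking by the edges $e=(e_K,e_\phi,e_c)$ with $e_K$ non‑degenerate, $e_\phi$ a commuting square, and $e_c$ marked. Since $p\colon\bcat X\to\bcat C$ is a $(0,1)$‑fibration, hence a fibrant object of the model structure of \autoref{thm:model} whose marked edges are precisely its $p$‑coCartesian edges, and since $\varphi$ already preserves lean and thin triangles, the dotted lift (necessarily equal to $\varphi$ on underlying simplicial sets) exists if and only if $\varphi$ sends each such edge $e$ to a $p$‑coCartesian edge of $\bcat X$.

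The heart of the argument is then a cancellation trick. Fix $e=(e_K\colon x\to y,\,e_\phi,\,e_c\colon c\to c')$ as above, with $e_K$ non‑degenerate. Since $K$ is a Kan complex, $e_K$ is an equivalence: choose an inverse $\bar e_K\colon y\to x$ and a $2$‑simplex $\alpha_K$ of $K$ with $d_2\alpha_K=e_K$, $d_0\alpha_K=\bar e_K$, $d_1\alpha_K=s_0x$. I would then build a triangle $\beta=(\alpha_K,\Phi,s_1(e_c))$ of $R(K)$ lying over the degenerate $2$‑simplex $s_1(e_c)$ of $\bcat C$, where $\Phi$ is a thin triangle of $\mathbb F(\bcat D)$ with $d_2\Phi=e_\phi$ and $d_0\Phi$ a commuting square. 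Granting $\beta$, the conclusion drops out: the face $d_1\beta=(s_0x,\,d_1\Phi,\,e_c)$ is marked in $\hat R(K^\flat)$ — its $K$‑component is degenerate, $d_1\Phi$ is a commuting square as the composite of the commuting squares $e_\phi$ and $d_0\Phi$ along the thin triangle $\Phi$, and $e_c$ is marked — so $\varphi(d_1\beta)$ is $p$‑coCartesian; the face $d_0\beta=(\bar e_K,\,d_0\Phi,\,\id_{c'})$ is an equivalence of $R(K)$ since each of its three projections is an equivalence (using that a commuting square with invertible legs is an equivalence of $\mathbb F(\bcat D)$), so $\varphi(d_0\beta)$ is an equivalence and hence $p$‑coCartesian; and $\beta$ is thin in $\hat R(K^\flat)$, being lean with $R(q)(\beta)=s_1(e_c)$ thin in $\bcat C$, so $\varphi(\beta)$ is a thin triangle of $\bcat X$ exhibiting $\varphi(d_1\beta)$ as a composite $\varphi(d_0\beta)\circ\varphi(e)$. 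Since $\varphi(d_0\beta)$ is an equivalence and $\varphi(d_1\beta)$ is $p$‑coCartesian, the standard cancellation property (if $h$ is an equivalence and $h\circ g$ is $p$‑coCartesian, then $g$ is $p$‑coCartesian) forces $\varphi(e)$ to be $p$‑coCartesian.

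The only non‑formal step — and the main obstacle — is the construction of $\Phi$: a triangle of $\mathbb F(\bcat D)$ with $\ev_0$‑projection $q(\alpha_K)$, $\ev_1$‑projection $s_1(f(e_c))$, $d_2$‑face equal to $e_\phi$, and $d_0$‑face a commuting square. Here I would take $d_0\Phi$ to be the commuting square obtained by transporting the target vertex of $e_\phi$ along the equivalence $q(\bar e_K)$ (which exists precisely because $q(\bar e_K)$ is an equivalence in $\bcat D$), and then fill the resulting inner horn $\Lambda^2_1$ spanned by $e_\phi$ and $d_0\Phi$ relative to $\ev_0$ and $\ev_1$; this is possible because both $\ev_0\colon\mathbb F(\bcat D)\to\bcat D$ and $\ev_1\colon\mathbb F(\bcat D)\to\bcat D$ are fibrations of $(\infty,2)$‑categories (\autoref{def:freeId}) and $q$ carries the thin triangle $\alpha_K$ of $K$ to a thin triangle of $\bcat D$, so the filler may be taken thin with the prescribed projections. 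Unwinding the fibration structure of $\mathbb F(\bcat D)$ carefully, choosing the vertices and edges of $\Phi$ so that the three projections are genuinely consistent over $\bcat D$, is where the bookkeeping lives; once $\Phi$, and hence $\beta$, are in hand the remaining steps are just the elementary closure and cancellation properties of $p$‑coCartesian edges and of equivalences recalled above.
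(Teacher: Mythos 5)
Your proof is correct and is essentially the paper's own argument: both use the Kan condition to invert the $K$-component of a newly marked edge, assemble a thin triangle of $\hat{R}(K^\flat)$ whose other two faces are respectively marked and an equivalence, and conclude by cancellation for $0$-cartesian edges. The only differences are cosmetic — you attach the inverse after $e$ (so $e$ is $d_2$ of the triangle and you cancel an equivalence off the left of the composite), whereas the paper precomposes with it (so $e$ is $d_0$ and one cancels a coCartesian edge off the right) — and you spell out the construction of the $\mathbb{F}(\bcat{D})$-component of the auxiliary triangle, which the paper leaves implicit.
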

 \begin{proof}
   Let $e: \Delta^1\xlongrightarrow{}\hat{R}(K^\flat)$ be an edge which is marked in $\hat{R}(K^\sharp)$ we wish to show that $\varphi(e)$ is a $0$-cartesian edge of $\bcat{X}$. We identify $e$ with a commutative diagram in $\bcat{D}$
   \[
     \begin{tikzcd}
       p(k_0) \arrow[r,"p(u)"] \arrow[d] & p(k_1) \arrow[d] \\
       f(c_0) \arrow[r,"f(\alpha)"] & f(c_1)
     \end{tikzcd}
   \]
   where $\alpha$ is a marked edge in $\bcat{C}$. By picking an inverse of $u$ which we call $v$, we extend this diagram to 
   \[
     \begin{tikzcd}
      p(k_1) \arrow[r,"p(v)"] \arrow[d] & p(k_0) \arrow[r,"p(u)"] \arrow[d] & p(k_1) \arrow[d] \\
      f(c_0) \arrow[r,"="]& f(c_0) \arrow[r,"f(\alpha)"] & f(c_1)
     \end{tikzcd}
   \]
   This defines a thin 2-simplex $\rho:\Delta^2 \xlongrightarrow{} \hat{R}(K^\flat)$ with the following properties:
   \begin{enumerate}
     \item The edge $d_2(\rho)$ is an equivalence in $\hat{R}(K^\flat)$.
     \item The edge $d_1(\rho)$ is marked in $\hat{R}(K^\flat)$.
     \item The edge $d_0(\rho)=e$
   \end{enumerate}
   In particular it follows that $\varphi(\rho)$ is a thin 2-simplex such that every edge $0$-cartesian except possibly the edge $d_0(\varphi(\rho))$. It follows that this edge must be $0$-cartesian and the claim follows.
 \end{proof}

 \begin{proposition}
   Let $\bcat{X} \xlongrightarrow{} \bcat{C}$ be a $(0,1)$-fibration then $f_{*}\bcat{X} \xlongrightarrow{} \bcat{D}$ is again a $(0,1)$-fibration. Moreover, the functor $f_{*}$ descends to a morphism of $(\infty,2)$-categories
   \[
     f_*:\bcat{F}\!\on{ib}_{0,1}(\bcat{C})^{\on{Elax}} \xlongrightarrow{} \bcat{F}\!\on{ib}_{0,1}(\bcat{D}).
   \]
   which we call the $E$lax right Kan extension functor.
 \end{proposition}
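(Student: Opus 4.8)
The plan is to deduce fibrancy of $f_{*}\bcat{X}\xlongrightarrow{}\bcat{D}$ by an adjunction argument, transporting lifting problems along the pair $\hat{R}\dashv f_{*}$, and then to bootstrap the morphism of $(\infty,2)$-categories from the fact that $\hat{R}$ commutes with products by a fixed $(\infty,2)$-category.

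First I would record that, via \autoref{def:fshriekmodel} together with the identification in \autoref{rem:fhat}, the functor $f_{*}$ is right adjoint to $\hat{R}\colon \left(\on{Set}^{\mathbf{mb}}_\Delta\right)_{/\bcat{D}}\xlongrightarrow{}\left(\on{Set}^{\mathbf{mb}}_\Delta\right)_{/\bcat{C}}$. Hence a lifting problem of a $\bS$-anodyne map $i\colon A\xlongrightarrow{}B$ over $\bcat{D}$ against $f_{*}\bcat{X}\xlongrightarrow{}\bcat{D}$ is adjoint to a lifting problem of $\hat{R}(i)\colon\hat{R}(A)\xlongrightarrow{}\hat{R}(B)$ against $\bcat{X}\xlongrightarrow{}\bcat{C}$. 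Since the latter is a $(0,1)$-fibration by hypothesis, it suffices to show that $\hat{R}$ carries $\bS$-anodyne maps to $\bS$-anodyne maps. Being a left adjoint, $\hat{R}$ preserves colimits and therefore weakly saturated classes, so it is enough to verify this on the generating set of \autoref{def:mbsanodyne} (for arbitrary structure maps to $\bcat{D}$); moreover $\hat{R}$ preserves monomorphisms, being assembled from pullbacks, so each $\hat{R}(i)$ is automatically a cofibration.

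The heart of the proof is then the case analysis on the generators. Using \autoref{def:cosimpKan}, for a generating map $g\colon A_{0}\xlongrightarrow{}B_{0}$ over $\bcat{D}$ one has $\hat{R}(B_{0})\isom B_{0}\times_{\bcat{D}}\mathbb{F}(\bcat{D})\times_{\bcat{D}}\bcat{C}$ equipped with the marking $\hat{E}_{R(-)}$ of \autoref{rem:hatE}, and $\hat{R}(g)$ is the induced base change; by \autoref{rem:scalingfree} the extra coordinate in $\mathbb{F}(\bcat{D})$ carries a prescribed marking and scaling, so that $\hat{R}(g)$ is obtained from $g$ by adjoining simplices in a controlled $\Delta^{1}$-direction. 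For the generator \ref{mb:equivalences} the required statement is exactly \autoref{lem:kansatflower}. For the remaining generators \ref{mb:innerhorn}, \ref{mb:wonky4}, \ref{mb:2coCartesianmorphs}, \ref{mb:2CartliftsExist}, \ref{mb:composeacrossthin}, \ref{mb:coCartoverThin}, I would filter $\hat{R}(g)$ by attaching the new simplices path-by-path, as in the proof of \autoref{thm:bigcombinatorial}, and exhibit each step as a pushout of a map handled either directly by an element of the generating set or by the pivot tricks \autoref{lem:refinedinnerpivot} and \autoref{lem:refouterpivot} --- always arranging that the marked edges invoked lie over the marking of $\bcat{C}$, so the whole argument takes place inside $\left(\on{Set}^{\mathbf{mb}}_\Delta\right)_{/\bcat{C}}$. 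This shows $\hat{R}(g)$ is $\bS$-anodyne, hence $f_{*}\bcat{X}\xlongrightarrow{}\bcat{D}$ is a $(0,1)$-fibration.

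For the second assertion I would first observe that $\hat{R}$ commutes with the functor $(\mathblank)\times\scr{C}$ for any fixed $(\infty,2)$-category $\scr{C}$: the external factor pulls out of every pullback defining $R$, and the decorations match by \autoref{rem:fibtensor}; consequently its right adjoint $f_{*}$ also commutes with $(\mathblank)\times\scr{C}$. Therefore $f_{*}$ sends an $E$lax morphism over $\bcat{C}$ of the shape $\bcat{X}\times\scr{C}\xlongrightarrow{}\bcat{Y}$ to a morphism over $\bcat{D}$ of the shape $f_{*}\bcat{X}\times\scr{C}\xlongrightarrow{}f_{*}\bcat{Y}$, which by the universal property defining $\on{Map}^{0,1}$ produces functors $\on{Map}^{0,1}_{\bcat{C}}(\bcat{X},\bcat{Y})^{\on{Elax}}\xlongrightarrow{}\on{Map}^{0,1}_{\bcat{D}}(f_{*}\bcat{X},f_{*}\bcat{Y})$ compatible with composition and with modifications, i.e. a functor of $\scsSet$-enriched categories on fibrant objects; passing to scaled nerves yields the desired morphism $f_{*}\colon\bcat{F}\!\on{ib}_{0,1}(\bcat{C})^{\on{Elax}}\xlongrightarrow{}\bcat{F}\!\on{ib}_{0,1}(\bcat{D})$. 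That this deserves the name right Kan extension --- namely that under \autoref{thm:laxGroth} it is right adjoint to $f^{*}$ --- can be recorded afterwards. The main obstacle is the case analysis of the third paragraph: controlling the simplices introduced by the $\mathbb{F}(\bcat{D})$-coordinate and filling them using only anodyne maps whose marked edges lie over the marking of $\bcat{C}$; this is precisely what the combinatorial machinery of Section 3 and \autoref{lem:kansatflower} were designed to provide.
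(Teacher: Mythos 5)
Your reduction is set up correctly: $f_*$ is right adjoint to $\hat{R}$, $\hat{R}$ preserves colimits and monomorphisms, so it suffices to control $\hat{R}$ on the generating $\bS$-anodynes; and your treatment of the second assertion (using the projection $R(A)\xlongrightarrow{}A$, repackaged as compatibility of $\hat{R}$ with $(\mathblank)\times\scr{K}$) is essentially the argument the paper gives. However, the heart of the proof --- the case analysis on the generators --- has a genuine gap. The objects $\hat{R}(\Delta^n)$ and $\hat{R}(\Lambda^n_i)$ are \emph{not} finite complexes obtained by "adjoining simplices in a controlled $\Delta^1$-direction": they are lax slices built from $\mathbb{F}(\bcat{D})=\Fun(\Delta^1,\bcat{D})^{\on{oplax}}$ and depend on all of $\bcat{C}$ and $\bcat{D}$. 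The path-extension and pivot machinery of Section 3 is tailored to the colimits $\mathbb{X}^{\tensor}_{\bcat{C}}$ of Gray products of simplices and to subcomplexes of a single decorated simplex; it does not apply to $\hat{R}(\Lambda^n_i)\xlongrightarrow{}\hat{R}(\Delta^n)$, so the filtration you invoke cannot be carried out as described. The missing idea is the identification, up to weak equivalence in the model structure of $(0,1)$-fibrations over $\bcat{C}$, of $\hat{R}(\Delta^n)$ with the explicit pushout $R(\Delta^{[0,n-1]})\coprod_{\Delta^{[0,n-1]}\times\bcat{C}_{d\upslash}}\Delta^n\times\bcat{C}_{d\upslash}$ (and its horn analogue), which imports Lemma 3.55 and Proposition 3.79 of \cite{AGS_CartII} and converts the problem into pushout-product statements about the honest product $\Delta^n\times\bcat{C}_{d\upslash}$, followed by a careful bookkeeping of the extra decorations.

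A second, related issue: you aim to show that $\hat{R}$ carries $\bS$-anodynes to $\bS$-anodynes, which is stronger than needed and probably not provable by these methods. Since $\bcat{X}\xlongrightarrow{}\bcat{C}$ is a fibrant object of the model structure of \autoref{thm:model}, it lifts against all \emph{trivial cofibrations}, and the paper's argument only establishes that $\hat{R}$ of an anodyne is a trivial cofibration --- the conclusion is reached via homotopy pushouts and 2-out-of-3, which do not produce anodyne maps. Weakening your target to "trivial cofibration" is both sufficient for the proposition and necessary for the strategy to go through. (Minor: for the generator \ref{mb:coCartoverThin} the induced map is already an isomorphism, so it need not be filtered at all; your handling of \ref{mb:equivalences} via \autoref{lem:kansatflower} is correct.)
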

 \begin{proof}
   First, let us assume that we have shown that $f_*\bcat{X}$ is a $(0,1)$-fibration and let us show that the map $f_*$ descends to a morphism of $(\infty,2)$-categories. Let $\scr{K}$ be an $(\infty,1)$-category and consider an $E$lax morphism over $\bcat{C}$ 
   \[
     \begin{tikzcd}
       \bcat{X}\times \scr{K}\arrow[rr] \arrow[dr,swap,"p"] & & \bcat{Y} \arrow[dl,"q"] \\
       & \bcat{C} &
     \end{tikzcd}
   \]
   We wish to show that we can define a morphism of $(0,1)$-fibrations over $\bcat{D}$,
   \[
     \begin{tikzcd}
       f_*\bcat{X}\times \scr{K}\arrow[rr] \arrow[dr,swap,"f_*(p)"] & & f_*\bcat{Y} \arrow[dl,"f_*(q)"] \\
       & \bcat{D} &
     \end{tikzcd}
   \]
   Suppose that we are given an $\mathbf{MB}$ simplicial set $A$ over $\bcat{D}$ and maps $R(A) \xlongrightarrow{} \bcat{X}$, $A \xlongrightarrow{} \scr{K}$. Then using that we always have a morphism $R(A) \xlongrightarrow{} A$ we can construct a map over $\bcat{C}$
   \[
     R(A) \xlongrightarrow{} \bcat{X} \times \scr{K} \xlongrightarrow{} \bcat{Y}
   \]
   which yields the definition of the map $ f_*\bcat{X}\times \scr{K} \xlongrightarrow{} f_* \bcat{Y}$. It is easy to see that this map preserves the marked edges in $f_* \bcat{X} \times \scr{K}$.

   We show now that $f_*\bcat{X} \xlongrightarrow{} \bcat{D}$ is a $(0,1)$-fibration by verifying that it has the right lifting property against the class of \bS-anodyne morphisms (see \autoref{def:mbsanodyne}). 

    Before perfoming the necessary verifications case by case we make several observations:
    \begin{enumerate}
       \item We note that the map $R(A) \xlongrightarrow{} A$ is always a $(1,0)$-fibration when equipped with the following decorations:
    \begin{itemize}
       \item An edge in $R(A)$ is 1-cartesian if the associated square in $\bcat{D}$ commutes and its image in $\bcat{C}$ is an equivalence.
       \item A triangle in $R(A)$ is lean if its image in $\bcat{C}$ is thin.
       \item A triangle in $R(A)$ is thin if it is lean and its image in $A$ is thin. We denote this collection of triangles $\mathcal{E}_A$.
     \end{itemize} 
     \item As a consequence of 1 above together with Lemma 3.55 and Proposition 3.79 in\footnote{For this is crucial to note that the proof of Proposition 3.79 only uses scaled anodyne morphisms in the filtration.} \cite{AGS_CartII} we see that if $A=(\Delta^n,\flat)\xrightarrow{\sigma} \bcat{D}$ we have a weak equivalence of scaled simplicial sets
     \[
       (P(\Delta^n),\mathcal{E}_{P(\Delta^n)})=\left(R(\Delta^{[0,n-1]}),\mathcal{E}_{\Delta^[0,n-1]}\right) \coprod_{\Delta^{[0,n-1]} \times \bcat{C}_{d\upslash}}\Delta^n \times \bcat{C}_{d\upslash} \xlongrightarrow{} (R(\Delta^n),\mathcal{E}_{\Delta^n}).
     \]
     where $d=\sigma(n)$. Here we are viewing $\Delta^n \times \bcat{C}_{d\upslash}$ as having usual scaling in the Cartesian product (and similarly for $\Delta^{[0,n-1]}\times \bcat{C}_{d\upslash}$).
     \item Given $(\Lambda^n_i,\flat)\xrightarrow{\sigma} \bcat{D}$ with $i<n$ we have a weak equivalence of scaled simplicial sets
     \[
        (P(\Lambda^n_i),\mathcal{E}_{P(\Lambda^n_i)})=\left(R(\Delta^{[0,n-1]}),\mathcal{E}_{\Delta^[0,n-1]}\right) \coprod_{\Delta^{[0,n-1]} \times \bcat{C}_{d\upslash}}\Lambda^n_i \times \bcat{C}_{d\upslash} \xlongrightarrow{} (R(\Lambda^n_i),\mathcal{E}_{\Lambda^n_i}).
     \]
     where $d=\sigma(n)$ and the scaling on the cartesian products is as above. To see this, we note that $R(\Lambda^n_i)$ splits as a colimit of its $n-1$-dimensional faces and use the previous point to each of the faces except the $n$-th face.
     \item More generally, given $\sigma: \Delta^n_\dagger=(\Delta^n,T_n) \xlongrightarrow{} \bcat{D}$ and $\sigma_{|\Lambda^n_i}:(\Lambda^n_i)_\dagger=(\Lambda^n_i,T_{(\Lambda^n_i)_\dagger}) \xlongrightarrow{} \bcat{D}$ (with $i<n$) where the scaling of $\Lambda^n_i$ is inherited from $T_n$ we define
     \[
       (P(\Delta^n_\dagger),\mathcal{E}_{P(\Delta^n_\dagger)})=\left(R(\Delta^{[0,n-1]}_\dagger),\mathcal{E}_{\Delta^{[0,n-1]}_\dagger}\right) \coprod_{(\Delta^{[0,n-1]}_{\dagger}\times \bcat{C}_{d\upslash})}(\Delta^n,T) \times \bcat{C}_{d\upslash}
     \]
     where $\Delta^{[0,n-1]}_\dagger$ is the obvious restriction. We claim that we have weak equivalences of scaled simplicial sets
     \[
        (P(\Delta^n_\dagger),\mathcal{E}_{P(\Delta^n_\dagger)}) \xlongrightarrow{} (R(\Delta^n_\dagger),\mathcal{E}_{\Delta^n_\dagger}), \enspace \enspace (P((\Lambda^n_i)_\dagger),\mathcal{E}_{P((\Lambda^n_i)_\dagger)})\xlongrightarrow{} (R((\Lambda^n_i)_\dagger),\mathcal{E}_{(\Lambda^n_i)_\dagger}).
     \]
     We check this for the first map since the remaining case follows in an exactly identical fashion. We fix the notation $\mathbf{P}^\dagger= (P(\Delta^n_\dagger),\mathcal{E}_{P(\Delta^n_\dagger)})$ and consider a pushout diagram
     \[
       \begin{tikzcd}
          (P(\Delta^n),\flat,\mathcal{E}_{P(\Delta^n)}) \arrow[r] \arrow[d] & (R(\Delta^n),\flat,\mathcal{E}_{\Delta^n}) \arrow[d] \\
          \mathbf{P}^\dagger \arrow[r] & \mathbf{T}^\dagger
       \end{tikzcd}
     \]
     since the left-most vertical morphism is a cofibration and the top horizontal morphism is a weak equivalence by 3 (resp. 4 for the horn case) it follows that it will be enough to show that the induced morphism
     \[
       \mathbf{T}^{\dagger} \xlongrightarrow{} \mathbf{R}^\dagger=(R(\Delta^n_\dagger),\mathcal{E}_{\Delta^n_\dagger})
     \]
     is a weak equivalence. Let $\rho:\Delta^2 \xlongrightarrow{} \mathbf{R}^\dagger$ be a thin simplex which does not factor through $\mathbf{T}^\dagger$. Then it follows that the image of $\rho$ in $\Delta^n$ which we denote $\overline{\rho}$ is of the form $i \xlongrightarrow{} j \xlongrightarrow{} n$. We consider a morphism
     \[
       \kappa:\Delta^2 \xlongrightarrow{v} \Delta^n \times \bcat{C}_{d\upslash} \xlongrightarrow{} \mathbf{R}^\dagger
     \]
     where the value of $\kappa$ at $\Delta^n$ is given by $\overline{\rho}$ and the value of $\kappa$ at $ \bcat{C}_{d\upslash}$ is degenerate on the object $\rho(2)$. Let $M=(\Delta^3,E_M,T_M \subseteq C_M)$ be a $\bS$ simplicial set defined as follows:
     \begin{itemize}
       \item An edge is marked if it is of the form $i \xlongrightarrow{} 3$ with $i \neq 0$.
       \item Every triangle is lean and every triangle is thin except $\Delta^{\{0,2,3\}}$ and $\Delta^{\{1,2,3\}}$. 
     \end{itemize}
     We consider a map $\iota: M \xlongrightarrow{} \Delta^n$ given by $s_0(\overline{\rho})$. We further define a subsimplicial subset (with the inherited decorations) $N \subset M$ consisting in the following faces:
     \begin{itemize}
       \item The face missing the vertex $0$.
       \item The edge $0 \xlongrightarrow{} 3$.
     \end{itemize}
      We look at a commutative diagram
     \[
       \begin{tikzcd}
         N \arrow[d] \arrow[r] & \mathbf{R}^\dagger \arrow[d] \\
         M \arrow[r,"\iota"] \arrow[ur,"\alpha",dotted] & \Delta^n_\dagger
       \end{tikzcd}
     \]
     where the top horizontal morphism maps the face missing the vertex $0$ to $\kappa$ and the edge $0 \xlongrightarrow{} 3$ to $d_1(\rho)$. Since the left-most vertical morphism is anodyne (with respect to the model structure of $(1,0)$-fibrations) it follows that the dotted arrow $\alpha$ exists. By construction, every triangle of $\alpha$ is thin in $\mathbf{R}^\dagger$ excepts possibly the face skipping the vertex $1$. It follows that we can use a scaled anodyne morphism of type \ref{i:saturation} in \autoref{def:scanodyne} to scale $d_1(\alpha)$.

     To finish the proof of the claim we consider a morphism,
     \[
       e:\Delta^1 \xlongrightarrow{} \Delta^n \times \bcat{C}_{d\upslash} \xlongrightarrow{} \mathbf{R}^\dagger
     \]
     whose component at $\Delta^n$ is given by $d_0(\overline{\rho})$ and whose component at $\bcat{C}_{d \upslash}$ is degenerate on $\rho(2)$. By construction this map yields a $1$-cartesian edge in $\mathbf{R}^\dagger$. We construct a commutative diagram
      \[
       \begin{tikzcd}
         \Lambda^2_2 \arrow[r,"\Xi"] \arrow[d] & \mathbf{R}^\dagger \arrow[d] \\
         \Delta^2 \arrow[r] \arrow[ur,dotted,"\tau_{e}"] & \Delta^n_\dagger
       \end{tikzcd}
     \]
     where the bottom horizontal morphism is $s_0d_0(\overline{\rho})$ and the map $\Xi$ maps $0\xlongrightarrow{} 2$ to $d_0(\rho)$ and $\Xi$ maps $1 \xlongrightarrow{} 2$ to $e$. We note that the solution of this lifting problem $\tau_e$ is a thin simplex. Finally we consider a lifting problem
     \[
       \begin{tikzcd}
         \Lambda^3_3 \arrow[d] \arrow[r] & \mathbf{R}^\dagger \arrow[d] \\
         \Delta^3 \arrow[r] \arrow[ur,"\beta",dotted] & \Delta^n_\dagger
       \end{tikzcd}
     \]
     where the morphism $\Delta^3 \xlongrightarrow{} \Delta^n$ is given by $s_1(\overline{\rho})$ and the top horizontal morphism is defined as follows:
     \begin{itemize}
       \item We map the $0$-face via $\tau_{e}$ (see the case of marked edges for a definition of $\tau_e$).
       \item We map the $1$-face via $d_1(\alpha)$.
       \item We map the $2$-face via $\rho$. 
     \end{itemize}
     It is easy to see that the faces of $\beta$ are all scaled except possibly $d_2(\beta)=\rho$. In particular, we can again scale this face using a scaled anodyne morphism of type \ref{i:saturation} in \autoref{def:scanodyne}. The claim now follows.

     \item It is easy to see that we can upgrade the equivalences above to weak equivalences in the model structure of $(0,1)$-fibrations over $\bcat{C}$
     \[
       (P(\Delta^n_\dagger),\flat,\mathcal{E}_{P(\Delta^n_\dagger)}) \xlongrightarrow{} (R(\Delta^n_\dagger),\flat,\mathcal{E}_{\Delta^n_\dagger}),
     \]
     \[
        (P((\Lambda^n_i)_\dagger),\flat,\mathcal{E}_{P((\Lambda^n_i)_\dagger)}) \xlongrightarrow{} (R((\Lambda^n_i)_\dagger),\flat,\mathcal{E}_{(\Lambda^n_i)_\dagger})
     \]

     \item Let $Q=(\Delta^n,E_Q,T_Q \subseteq C_Q) \xrightarrow{\sigma} \bcat{D}$ and let $(Q_i=\Lambda^n_i,E_{Q_i},T_{Q_i}\subseteq C_{Q_i}) \xrightarrow{\sigma_i} \bcat{D}$ with $i<n$ be a pair of $\bS$ simplicial sets over $\bcat{D}$ where the decorations of $Q_i$ are induced from $Q$. Let us assume that $E_Q$ satisfies the following property:
     \begin{itemize}
       \item[$\ast)$] Given a non-degenerate edge $i \xlongrightarrow{} j$ in $E_Q$ such that $j \neq n$ then it follows that $i \xlongrightarrow{} j \xlongrightarrow{} n$ belongs to $T_Q$. 
     \end{itemize}
     Let $(R(\Delta^n,\hat{E}_{R(Q)},T_{R(Q)}\subseteq C_{R(Q)})$ (resp. $(R(\Lambda^n_i),\hat{E}_{R(Q_i)},T_{R(Q_i)}\subseteq C_{R(Q_i)})$) as in \autoref{def:cosimpKan} and \autoref{rem:hatE}. We look at the $\bS$ simplicial sets as defined un point 5 $(P(Q),\flat,\mathcal{E}_{P(Q)})$ (resp. $(P(Q_i),\flat,\mathcal{E}_{P(Q_i)})$) and define new decorations for $P(Q)$ (resp. $P(Q_i)$) as follows:
     \begin{itemize}
       \item[i)] We decorate the factor of the pushout corresponding to $R(\Delta^{[0,n-1]})$ with the decorations induced from $(R(\Delta^n),\hat{E}_{R(Q)},T_{R(Q)}\subseteq C_{R(Q)})$.
       \item[ii)] We decorate the factor $\Delta^n \times \bcat{C}_{d\upslash}$ by declaring an edge to be marked if its image in $\Delta^n$ belongs to $E_Q$, its image in $\bcat{C}_{d\upslash}$ defines a commutative square in $\bcat{D}$ and its projection to $\bcat{C}$ is marked. A triangle is declared lean if its image in $\Delta^n$ belongs to $C_Q$. Finally we declare a triangle to be thin if it is lean and its image in $\bcat{C}_{d \upslash}$ is thin.
       \item[iii)] We denote the resulting $\bS$ simplicial set by $\mathbf{U}(Q)$ (resp. $\mathbf{U}(Q_i)$).
     \end{itemize}
    We claim that we have an induced morphism $\mathbf{U}(Q) \xlongrightarrow{} (R(\Delta^n),\hat{E}_{R(Q)},T_{R(Q)}\subseteq C_{R(Q)})$ (and similarly for the horn). To check this it suffices to verify that the map 
    \[
      \Phi:\Delta^n \times \bcat{C}_{d\upslash} \xlongrightarrow{} R(\Delta^n)
    \]
    is compatible with the decorations. The case of the triangles follows immediately. Let $e: \Delta^1  \xlongrightarrow{} \Delta^n \times \bcat{C}_{d \upslash}$ and let us denote the value of $e$ in $\Delta^n$ by $i \xlongrightarrow{} j$. We identify the image of $e$ in $\bcat{C}_{d \upslash}$ with a diagram in $\bcat{D}$
    \[\begin{tikzcd}
  {\sigma(n)} && {\sigma(n)} \\
  {f(c_0)} && {f(c_1)}
  \arrow["{\operatorname{id}}", from=1-1, to=1-3]
  \arrow["u"', from=1-1, to=2-1]
  \arrow["v", from=1-3, to=2-3]
  \arrow["{f(\alpha)}"', from=2-1, to=2-3]
  \arrow[""{name=0, anchor=center, inner sep=0}, curve={height=6pt}, draw=none, from=1-1, to=2-3]
  \arrow["\simeq"{pos=0.4}, shorten <=5pt, shorten >=10pt, Rightarrow, from=0, to=1-3]
\end{tikzcd}\]
where $\theta$ is invertible and $\alpha$ is marked in $\bcat{C}$. Finally let $\sigma(e(0) \xlongrightarrow{} e(1))=s$. We consider two cases $e(1)=n$ and $e(1)\neq n$. If $e(1)=n$ the value of $\Phi$ can be identified with a commutative diagram 
\[\begin{tikzcd}
  {\sigma(i)} && {\sigma(n)} \\
  {f(c_0)} && {f(c_1)}
  \arrow["s", from=1-1, to=1-3]
  \arrow["{u\circ s}"', from=1-1, to=2-1]
  \arrow["v", from=1-3, to=2-3]
  \arrow["{f(\alpha)}"', from=2-1, to=2-3]
  \arrow[""{name=0, anchor=center, inner sep=0}, curve={height=6pt}, draw=none, from=1-1, to=2-3]
  \arrow["\simeq"{pos=0.4}, shorten <=5pt, shorten >=11pt, Rightarrow, from=0, to=1-3]
\end{tikzcd}\]
which defines a marked edge in the target. Let us assume that $e(1)\neq n$. If $e(0)=e(1)$ the claim follows in a similar way as before so let us assume that $e(0)\neq e(1)$. We consider a 2-simplex  $\varphi:\Delta^2 \xlongrightarrow{} \Delta^n$ given by $i \xlongrightarrow{} j \xlongrightarrow{} n$.  Note that by condition $\ast)$ above it follows that $\varphi$ belongs to $T_Q$. We fix some notation for the image of $\varphi$ in $\bcat{D}$ under $\sigma$
\[\begin{tikzcd}
  & {\sigma(j)} \\
  {\sigma(i)} && {\sigma(n)}
  \arrow["s", from=2-1, to=1-2]
  \arrow["t", from=1-2, to=2-3]
  \arrow[""{name=0, anchor=center, inner sep=0}, "w"', from=2-1, to=2-3]
  \arrow["\simeq"', shorten <=5pt, shorten >=3pt, Rightarrow, from=0, to=1-2]
\end{tikzcd}\]
Finally we can now identify the value of $\Phi(e)$ with a diagram
\[\begin{tikzcd}
  {\sigma(i)} && {\sigma(j)} \\
  {f(c_0)} && {f(c_1)}
  \arrow["s", from=1-1, to=1-3]
  \arrow["{u\circ w}"', from=1-1, to=2-1]
  \arrow["{v\circ t}"', swap, from=1-3, to=2-3]
  \arrow["{f(\alpha)}"', from=2-1, to=2-3]
  \arrow[""{name=0, anchor=center, inner sep=0}, shift right=2, draw=none, from=1-1, to=2-3]
  \arrow["\theta"{pos=0.4}, shorten <=5pt, shorten >=10pt, Rightarrow, from=0, to=1-3]
\end{tikzcd}\]
where $\theta$ can be expresed as a composite of invertible two morphisms 
\[
  f(\alpha) \circ u \circ w \xRightarrow{\simeq} f(\alpha)\circ u \circ t \circ s \xRightarrow{\simeq} v \circ t \circ s
\]
which shows that $\Phi$ is compatible with our new decorations.

Now that the claim is established we can construct a factorization of the map
\[
  \mathbf{U}(Q) \xlongrightarrow{\simeq} \mathbf{V}(Q) \xlongrightarrow{\gamma} (R(\Delta^n),\hat{E}_{R(Q)},T_{R(Q)}\subseteq C_{R(Q)})
\]
 (resp. for $Q_i$) where the first morphism is obtained via the homotopy pushout
\[
  \begin{tikzcd}
    (P(Q),\flat,\mathcal{E}_{P(Q)}) \arrow[d] \arrow[r,"\simeq"] & (R(\Delta^n),\flat,\mathcal{E}_{Q}) \arrow[d] \\
    \mathbf{U}(Q) \arrow[r,"\simeq"] & \mathbf{V}(Q)
  \end{tikzcd}
\]
here we are using point $5$ to guarantee that the top (and hence the bottom) horizontal morphism is a weak equivalence in the model structure of $(0,1)$-fibrations over $\bcat{C}$. We claim that the map $\gamma$ is a weak equivalence. To show this we will add the remaining decorations using pushout along $\bS$-anodyne morphisms (see \autoref{def:mbsanodyne}). The argument for adding the decorations on the triangles is very similar to that of $4$ and therefor left as an exercise.  Let $e: \Delta^1 \xlongrightarrow{} R(\Delta^n)$ which belongs to $\hat{E}_{R(Q)}$ but does not factor through $\mathbf{V}(Q)$. This implies that the image of $e$ in $\Delta^n$ is of the form $i \xlongrightarrow{} n$ with $i \neq n$. Then one can easily construct a thin 2-simplex $\rho$ in $\mathbf{V}(Q)$ such that:
\begin{itemize}
  \item The edge $\rho(1 \xlongrightarrow{} 2)$ is marked and lies in the image of $\Phi$.
  \item The edge $\rho(0 \xlongrightarrow{} 1)$ is marked and its image in $\Delta^n$ is degenerate in $i$.
  \item The edge $\rho(0 \xlongrightarrow{} 2)$ equals $e$.
\end{itemize}
The claim now follows by taking a pushout along of morphism of type \ref{mb:composeacrossthin} in \autoref{def:mbsanodyne}.
     \end{enumerate} 
   
   Equipped with observations 1-6 we are left to prove the following claim.

   \begin{itemize}
     \item[\textbf{Claim}:] Given a $\bS$-anodyne morphism  $i:A \xlongrightarrow{} B$ over $\bcat{D}$ then the induced cofibration 
     \[
       \hat{R}(A) \xlongrightarrow{} \hat{R}(B)
     \]
     is a trivial cofibration in the model structure of $(0,1)$-fibrations over $\bcat{C}$.

     The case where the morphism $i$ is of type \ref{mb:equivalences} in \autoref{def:mbsanodyne} was done in \autoref{lem:kansatflower}. If $i$ is of type \ref{mb:coCartoverThin} then the induced map is an isomorphism. For the remaining cases we use 1-6 to see that we have a commutative diagram
     \[
       \begin{tikzcd}
         \mathbf{U}(A) \arrow[r,"\simeq"] \arrow[d,"\simeq"] & \hat{R}(A) \arrow[d] \\
       \mathbf{U}(B) \arrow[r,"\simeq"] & \hat{R}(B)
       \end{tikzcd}
     \]
     where the horizontal maps are weak equivalences by the previous discussion. The fact that the left-most vertical morphism is a weak equivalence follows by the description of $\mathbf{U}(-)$ as a homotopy pushout. We conclude by 2-out-of-3. \qedhere
   \end{itemize}
 \end{proof}

 \begin{remark}\label{rem:othervariances}
   We can define an alternative version of the functor $R(-)$ by reversing exchanging the roles of $\ev_0$ and $\ev_1$ in \autoref{def:cosimpKan}. An analogous argument to that of the previous proposition shows that there exists a functor of $(\infty,2)$-categories (note the abuse of notation)
   \[
     f_{*} : \bcat{F}\!\on{ib}_{1,0}(\bcat{C})^{\on{Eoplax}} \xlongrightarrow{} \bcat{F}\!\on{ib}_{1,0}(\bcat{D})
   \]
   It is also possible to construct versions for the other variances but that requires working with dual versions of $\mathbb{F}(\bcat{D})$ given by $\Fun(\Delta^1,\bcat{D})^{\on{lax}}$.
 \end{remark}

\begin{remark}\label{rem:limitformula}
  Under the Grothendieck construction of \autoref{thm:laxGroth} we can identify $f_*$ with a functor
  \[
    f_*: \on{Fun}(\bcat{C},\bcat{C}\!\on{at}_{(\infty,2)})^{\on{Elax}} \xlongrightarrow{} \on{Fun}(\bcat{D},\bcat{C}\!\on{at}_{(\infty,2)})^{\on{Elax}}.
  \]
  Let $G:\bcat{C} \xlongrightarrow{} \bcat{C}\!\on{at}_{(\infty,2)}$ with associated $(0,1)$-fibration $\bcat{X} \xlongrightarrow{} \bcat{C}$. We wish to determine the value of $f_*G(d)$ which is in turn given by the fibre
  \[
    f_*\bcat{X}\times_{\bcat{D}}\{d\} \isom \on{Map}^{(0,1)}_{\bcat{C}}(\bcat{C}_{d\upslash},\bcat{X})^{\on{Elax}}.
  \]
  We can interpret the right-hand side using \cite{Agh} as the Elax limit of the composite
  \[
    \bcat{C}_{d\upslash} \xlongrightarrow{} \bcat{C} \xlongrightarrow{G} \bcat{C}\!\on{at}_{(\infty,2)}
  \]
  which gives the expected limit formula for the right lax Kan extension. Here the marking in $\bcat{C}_{d\upslash}$ consists in edges of the form
 \[\begin{tikzcd}
  & d \\
  {} \\
  {f(c_0)} && {f(c_1)}
  \arrow[from=1-2, to=3-1]
  \arrow["{f(\alpha)}"', from=3-1, to=3-3]
  \arrow[""{name=0, anchor=center, inner sep=0}, from=1-2, to=3-3]
  \arrow["\simeq"', shorten <=20pt, shorten >=12pt, Rightarrow, from=3-1, to=0]
\end{tikzcd}\]
where the associated triangle in $\bcat{D}$ commutes and the edge $\alpha:c_0 \xlongrightarrow{} c_1$ is marked in $\bcat{C}$.

    In the Eoplax case the right lax Kan extension of a funtor $H: \bcat{C}^\op \xlongrightarrow{}  \bcat{C}\!\on{at}_{(\infty,2)}$ is given by the Eoplax limit of the functor
  \[
    (\bcat{C}_{\upslash d})^\op \xlongrightarrow{} \bcat{C}^\op \xlongrightarrow{H} \bcat{C}\!\on{at}_{(\infty,2)}.
  \]
\end{remark}

\begin{definition}
 Given a pair of functors of $(\infty,2)$-categories $ L: \bcat{A} \llra \bcat{B}: R$ we say that $L$ is left adjoint to $R$ (denoted by $L \dashv R$) if there is a natural equivalences of functors
 \[
   \bcat{A}(-,R(-)) \isom \bcat{B}(L(-),-) : \bcat{A}^\op \times \bcat{B} \xlongrightarrow{} \bcat{C}\!\on{at}_{(\infty,1).}
 \]
 Equivalently, we have that $L \dashv R$ if we have unit counit natural transformations $\epsilon: \on{id}_{\bcat{A}} \xRightarrow{} RL$ and $\eta: LR \xRightarrow{} \on{id}_{\bcat{B}}$ satisfying the usual triangular identities.
\end{definition}

Our next order of bussiness is to construct unit and counit natural transformations for the functors
\[
     f^* :  \bcat{F}\!\on{ib}_{0,1}(\bcat{D}) \llra \bcat{F}\!\on{ib}_{0,1}(\bcat{C})^{\on{Elax}}:f_*
   \] 
   where $f^*$ is the pullback functor. Let $p:L \xlongrightarrow{} \bcat{C} \xlongrightarrow{} \bcat{D}$ be a $\bS$ simplicial set over $\bcat{D}$ such that $p$ factors through $\bcat{C}$. Note that exists a map $L \xlongrightarrow{} R(L)$ over $\bcat{C}$ induced by the projection
   \[
     \Delta^1 \otimes L \xlongrightarrow{} L
   \]
   which preserves the decorations. This in turn induces for every $(0,1)$-fibration $p:\bcat{X} \xlongrightarrow{} \bcat{C}$ an Elax morphism
   \[
     \begin{tikzcd}
       f^*f_* \bcat{X} \arrow[rr,"\eta_{\bcat{X}}"] \arrow[dr,"f^*R(p)",swap] & & \bcat{X} \arrow[dl,"p"] \\
       & \bcat{C} & 
     \end{tikzcd}
   \]
   This map $\eta_{\bcat{X}}$ is clearly natural on $\bcat{X}$ thus inducing a natural transformation $\eta: f^* f_* \Rightarrow \on{id}$.

   The definition of the unit $\epsilon: \on{id} \Rightarrow f_* f^*$ is slightly more involved and therefore we will proceed in two steps.
   \begin{enumerate}
     \item Given $\bcat{Y} \xlongrightarrow{} \bcat{D}$ and let $i_*\bcat{Y} \xlongrightarrow{} \bcat{D}$ be the value of the right lax Kan extension with respect to the map $(\bcat{D},\sharp) \xlongrightarrow{} (\bcat{D},\sharp)$. Given a $\bS$ simplicial $p: L \xlongrightarrow{} \bcat{D}$ it follows that a map $L \xlongrightarrow{} i_* \bcat{Y}$ is precisely given by a map of $\bS$-simplicial sets $Q(L) \xlongrightarrow{} \bcat{Y}$ over $\bcat{D}$ (respecting some decorations) (see \autoref{def:cosimpKan}) where we view $Q(L)$ as a $\bS$-simplicial over $\bcat{D}$ via the map
     \[
      \pi: Q(L) \xlongrightarrow{} \mathbb{F}(\bcat{D})\xlongrightarrow{\ev_1}\bcat{D}
     \]
     In an almost identical way as in the definition of $\eta$ we can consider an obvious map $L \xlongrightarrow{} Q(L)$ over $\bcat{D}$ which yields a morphism of $(0,1)$-fibrations $\Xi_{\bcat{Y}}:i_*\bcat{Y} \xlongrightarrow{} \bcat{Y}$. Using the dual version of Theorem 3.17\footnote{note that the proof of the theorem is totally combinatorial and does not use fibrancy of $\bcat{X}$.} \cite{AScof} we conclude that the map $L \xlongrightarrow{} Q(L)$ is $\bS$ anodyne which in turn implies that $\Xi_{\bcat{Y}}$ must be a trivial fibration.  Moreover, since the map is natural in $\bcat{Y}$ we obtain a natural equivalence $\Xi:i_* \xRightarrow{\simeq} \on{id}$. 
     \item To finally give the definition of $\epsilon$ we note that by 1 above it will be enough to construct a natural transformation $\hat{\epsilon}:i_* \xRightarrow{} f_*f^*$. 

     Given a $\bS$ simplicial $p: L \xlongrightarrow{} \bcat{D}$ and a map $L \xlongrightarrow{} i_* \bcat{Y}$ it follows that we have a commutative diagram
     \[
      \begin{tikzcd}
         R(L)=Q(L)\times_{\bcat{D}} \bcat{C} \arrow[r] \arrow[d] & \bcat{Y}\times_{\bcat{D}}\bcat{C}=f^*\bcat{Y} \arrow[d] \\
       Q(L) \arrow[r] & \bcat{Y}
      \end{tikzcd}
     \]
     where we identify the top horizontal morphism with a morphism $L \xlongrightarrow{} f_*f^*\bcat{Y}$. This gives a map (which is natural in $\bcat{Y}$) $i_* \bcat{Y} \xlongrightarrow{} f_*f^*\bcat{Y}$ and provides the definition of $\hat{\epsilon}: i_* \Rightarrow f_*f^*$.
   \end{enumerate}

\begin{theorem}\label{thm:rightlaxkan}
  Let $(\bcat{C},E)$ be an $(\infty,2)$-category equipped with a collection of edges containing the equivalences and let $f:\bcat{C} \xlongrightarrow{} \bcat{D}$. Then there exists an adjunction of $(\infty,2)$-categories.
  \[
    f^*: \Fun(\bcat{D},\bcat{C}\!\on{at}_{(\infty,2)}) \llra   \Fun(\bcat{C},\bcat{C}\!\on{at}_{(\infty,2)})^{\on{Elax}}: f_*
  \]
  and similarly for the Eoplax case.
\end{theorem}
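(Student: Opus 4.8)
The plan is to pass to the fibrational model via \autoref{thm:laxGroth} and then to verify the triangle identities for the unit and counit constructed just above. First I would use \autoref{thm:laxGroth}: the unstraightening equivalences identify $\on{Fun}(\bcat{D},\bcat{C}\!\on{at}_{(\infty,2)})$ with $\bcat{F}\!\on{ib}_{0,1}(\bcat{D})$ and $\on{Fun}(\bcat{C},\bcat{C}\!\on{at}_{(\infty,2)})^{\on{Elax}}$ with $\bcat{F}\!\on{ib}_{0,1}(\bcat{C})^{\on{Elax}}$, carrying the restriction functor $f^{*}$ to the pullback functor $\bcat{Y}\mapsto f^{*}\bcat{Y}=\bcat{Y}\times_{\bcat{D}}\bcat{C}$ (by naturality of unstraightening in the base) and carrying $f_{*}$ to the $E$lax right Kan extension functor $\bcat{F}\!\on{ib}_{0,1}(\bcat{C})^{\on{Elax}}\xlongrightarrow{}\bcat{F}\!\on{ib}_{0,1}(\bcat{D})$ built from \autoref{def:fshriekmodel} — indeed this is how $f_{*}$ on functor categories was defined (see \autoref{rem:limitformula}). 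By \autoref{rem:nicemarking} we may assume $E$ is saturated throughout. Thus it suffices to produce an adjunction $f^{*}\dashv f_{*}$ between these $(\infty,2)$-categories of fibrations.

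For this I would take the natural transformations $\eta\colon f^{*}f_{*}\Rightarrow\on{id}$ and $\epsilon\colon\on{id}\Rightarrow f_{*}f^{*}$ constructed above and check the two zigzag identities, $(\eta f^{*})\circ(f^{*}\epsilon)=\on{id}_{f^{*}}$ and $(f_{*}\eta)\circ(\epsilon f_{*})=\on{id}_{f_{*}}$, which determine an adjunction in the sense recalled before the statement. Both identities reduce to an unwinding of the definitions. The counit $\eta_{\bcat{X}}$ is the map $f^{*}f_{*}\bcat{X}\xlongrightarrow{}R(f_{*}\bcat{X})\xlongrightarrow{}\bcat{X}$, with the first arrow induced by the constant-path comparison $L\xlongrightarrow{}Q(L)$ together with the pullback squares of \autoref{def:cosimpKan}; the unit $\epsilon_{\bcat{Y}}$ is the composite of $\hat{\epsilon}_{\bcat{Y}}\colon i_{*}\bcat{Y}\xlongrightarrow{}f_{*}f^{*}\bcat{Y}$ with an inverse of the natural equivalence $\Xi_{\bcat{Y}}\colon i_{*}\bcat{Y}\xRightarrow{\simeq}\bcat{Y}$, the latter coming from the fact that $L\xlongrightarrow{}Q(L)$ is \bS-anodyne, and $\hat{\epsilon}$ being induced by the square relating $R(L)$, $Q(L)$ and $f^{*}$. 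Tracing the resulting endomorphism $f^{*}\bcat{Y}\xlongrightarrow{}f^{*}f_{*}f^{*}\bcat{Y}\xlongrightarrow{}f^{*}\bcat{Y}$ through these diagrams exhibits it as the map induced by two mutually inverse reparametrisations, hence canonically homotopic to $\on{id}_{f^{*}\bcat{Y}}$; that $L\xlongrightarrow{}Q(L)$ is \bS-anodyne ensures that the comparison maps in play are weak equivalences between fibrant $(0,1)$-fibrations, so that the homotopies can be chosen coherently, and the second identity follows by the same bookkeeping together with naturality of $\Xi$.

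The Eoplax statement is obtained by running the identical argument with the dual functor $R(-)$ of \autoref{rem:othervariances}, built from $\on{Fun}(\Delta^{1},\bcat{D})^{\on{lax}}$ instead of $\mathbb{F}(\bcat{D})$ and with $(1,0)$-fibrations replacing $(0,1)$-fibrations; the preservation statement for $f_{*}$ and the construction of the corresponding unit and counit are dual to those established above.

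The main obstacle is the triangle-identity verification. Because $\eta$ and $\epsilon$ are honest maps in the model categories $\left(\on{Set}^{\mathbf{mb}}_\Delta\right)_{/\bcat{C}}$ and $\left(\on{Set}^{\mathbf{mb}}_\Delta\right)_{/\bcat{D}}$ and are threaded through the auxiliary objects $Q(L)$, $R(L)$ and $\mathbb{F}(\bcat{D})$, one must keep track of the homotopies witnessing the two identities and confirm they are invertible where required. The ingredient that makes this go through is exactly the result proved in the proposition immediately preceding the theorem — that $\hat{R}$ sends \bS-anodyne morphisms over $\bcat{D}$ to trivial cofibrations over $\bcat{C}$ — together with the fact, used in the construction of $\epsilon$, that $L\xlongrightarrow{}Q(L)$ is \bS-anodyne; for the latter, \autoref{lem:kansatflower} and the computations with the objects $\mathbf{U}(-)$ and $\mathbf{V}(-)$ in that proof do the real work.
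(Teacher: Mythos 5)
Your overall strategy coincides with the paper's: pass to the fibrational picture via \autoref{thm:laxGroth}, identify $f^*$ with pullback of $(0,1)$-fibrations and $f_*$ with the construction of \autoref{def:fshriekmodel}, and verify the triangle identities for the unit $\epsilon$ and counit $\eta$ built just before the statement. The first identity is indeed a direct unwinding: the composite $f^*i_*\Rightarrow f^*f_*f^*\Rightarrow f^*$ is precisely $f^*\Xi$, so composing with $\Xi^{-1}$ gives the identity.

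The gap is in the second triangle identity, which you dispatch as ``the same bookkeeping together with naturality of $\Xi$''. It is not. One must compare two a priori different maps $i_*f_*\Rightarrow f_*$, namely the equivalence $\Xi f_*$ and the composite $\tau=(f_*\eta)\circ(\hat{\epsilon} f_*)$. On representables these are induced by two maps $\varphi_0,\varphi_1\colon \hat{R}(L)\xlongrightarrow{}\hat{R}(Q(L))$ coming from the two distinct collapse maps $q_0,q_1\colon\Lambda^2_1\xlongrightarrow{}\Delta^1$ (collapsing the edge $0\to 1$, respectively $1\to 2$); these are neither equal nor ``mutually inverse reparametrisations'', and naturality of $\Xi$ relates neither of them to the other. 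The paper closes this step by interpolating: it introduces an auxiliary $\bS$ simplicial set $T(L)$ whose $n$-simplices are maps $\Delta^2_\sharp\tensor\Delta^n\xlongrightarrow{}\bcat{D}$ restricting to simplices of $\hat{R}(Q(L))$ on $\Lambda^2_1\tensor\Delta^n$, shows that the projection $T(L)\xlongrightarrow{}\hat{R}(Q(L))$ is a trivial fibration using \autoref{prop:grpushout}, chooses a section $\kappa$ so that $\kappa\circ\varphi_i$ is induced by the degeneracy $s_i\colon\Delta^2_\sharp\xlongrightarrow{}\Delta^1$, and uses the face map $d_1\colon\Delta^2_\sharp\xlongrightarrow{}\Delta^1$ to produce a transformation $t\colon f_*\Rightarrow\Gamma_f$ satisfying $\Xi f_*\circ s\circ t=\tau\circ s\circ t=\on{id}$; a 2-out-of-3 argument then forces $\Xi f_*\simeq\tau$. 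Some such interpolation through the $2$-simplex is needed for your argument to close, and it is the substantive content of the paper's proof rather than bookkeeping.
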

\begin{proof}
  Working on the fibrational side of the picture it will be enough to show that the unit and counit natural transformations that we defined above satisfy the triangular identities.
  \begin{enumerate}
    \item We consider $f^*\circ i_* \xRightarrow{}f^*\circ f_* \circ f^* \xRightarrow{}f^*$ and wish to show that it is equivalent to $f^*\Xi$. Unraveling the definition we see that the composite is precisely given by $f^*\Xi$. 
    \item Finally, we look at $\tau:i_* \circ f_*  \xRightarrow{} f_* \circ f^* \circ f_* \xRightarrow{} f_*$ and we wish to show that this composite is equivalent to $\Xi f_*$. 

    Recall, that map of $\bS$ simplicial sets $L \xlongrightarrow{} i_*f_* \bcat{X}$ over $\bcat{D}$ is given by a map (of $\bS$ simplicial sets) over $\bcat{C}$ 
    \[
      \hat{R}(Q(L)) \xlongrightarrow{} \bcat{X}.
    \]
    Given $q:L \xlongrightarrow{} \bcat{D}$ let us unpack the definition of $\hat{R}(Q(L))$.
    \begin{itemize}
      \item A simplex $\Delta^n \xlongrightarrow{} \hat{R}(Q(L))$ consists in two maps of scaled simplicial sets 
      \[
        \alpha_i:   \Delta^1 \otimes \Delta^n  \xlongrightarrow{}\bcat{D}, \enspace i=1,2
      \]
      satisfying the following conditions:
      \begin{itemize}
        \item[i)] The restriction of $\alpha_1$ to $ \Delta^{\{0\}} \times \Delta^n$ factors through $p:L \xlongrightarrow{} \bcat{D}$.
        \item[ii)] The restriction of $\alpha_2$ to $ \Delta^{\{1\}} \times \Delta^n$ factors through $f:\bcat{C} \xlongrightarrow{} \bcat{D}$.
        \item[iii)] The of $\alpha_1$ to $ \Delta^{\{1\}} \times \Delta^n$ coincides with the restriction of $\alpha_2$ to $\Delta^{\{0\}} \times \Delta^n$
      \end{itemize}
      We depict a 2-simplex in $\hat{R}(Q(L))$ diagrammatically as follows:
      \[\begin{tikzcd}
  {p(l_0)} & {p(l_1)} & {p(l_2)} \\
  {d_0} & {d_1} & {d_2} \\
  {f(c_0)} & {f(c_1)} & {f(c_2)}
  \arrow[from=1-1, to=2-1]
  \arrow[from=2-1, to=3-1]
  \arrow[from=1-1, to=1-2]
  \arrow[from=1-2, to=2-2]
  \arrow[from=1-2, to=1-3]
  \arrow[from=1-3, to=2-3]
  \arrow[from=2-1, to=2-2]
  \arrow[from=2-2, to=3-2]
  \arrow[from=3-1, to=3-2]
  \arrow[from=2-2, to=2-3]
  \arrow[from=2-3, to=3-3]
  \arrow[from=3-2, to=3-3]
  \arrow[""{name=0, anchor=center, inner sep=0}, curve={height=12pt}, draw=none, from=1-1, to=2-2]
  \arrow[""{name=1, anchor=center, inner sep=0}, curve={height=12pt}, draw=none, from=1-2, to=2-3]
  \arrow[""{name=2, anchor=center, inner sep=0}, curve={height=12pt}, draw=none, from=2-1, to=3-2]
  \arrow[""{name=3, anchor=center, inner sep=0}, curve={height=12pt}, draw=none, from=2-2, to=3-3]
  \arrow[shorten <=4pt, shorten >=8pt, Rightarrow, from=0, to=1-2]
  \arrow[shorten <=4pt, shorten >=8pt, Rightarrow, from=1, to=1-3]
  \arrow[shorten <=4pt, shorten >=8pt, Rightarrow, from=2, to=2-2]
  \arrow[shorten <=4pt, shorten >=8pt, Rightarrow, from=3, to=2-3]
\end{tikzcd}\]
\item An edge in $\hat{R}(Q(L))$ is marked if its image in $\bcat{C}$ is marked, its image in $\bcat{L}$ is marked and both squares in $\bcat{D}$ commute. A triangle is lean precisely when its image in $L$ is lean. Finally a triangle is thin if it is lean and its image in $\bcat{C}$ is thin.
\item In other words, a simplex $\Delta^n \xlongrightarrow{} \hat{R}(Q(L))$ is the data of a map $ \Lambda^2_1 \tensor \Delta^n$ satisfying the conditions above.
    \end{itemize}
   First, we identify the pair of maps (over $\bcat{C}$) $\varphi_i:\hat{R}(L) \xlongrightarrow{} \hat{R}(Q(L))$ for $i=0,1$ inducing the maps $\Xi_{f_*\bcat{X}}$ and $\tau_{\bcat{X}}$ respectively. After unraveling the definitions we see that $\varphi_i$ is induced by the maps
   \[
     \Delta^n \tensor \Lambda^2_1 \xlongrightarrow{\on{id}\times q_i} \Delta^n \tensor \Delta^1
   \]
   where $q_i$ is the map that collapses the edge $i \xlongrightarrow{} i+1$.

   Given $p:L \xlongrightarrow{} \bcat{D}$ as before we define an $\bS$ simplicial $T(L) \xlongrightarrow{} \hat{R}(Q(L)) $ as follows:
   \begin{itemize}
     \item A simplex $\Delta^n \xlongrightarrow{} T(L)$ is given by a map 
     \[
       \Delta^2_{\sharp} \tensor \Delta^n \xlongrightarrow{} \bcat{D}, 
     \]
     whose restriction to $ \Lambda^2_1 \tensor \Delta^n$ defines a simplex in $\hat{R}(Q(L))$.
     \item The decorations on $T(L)$ are induced from those of $\hat{R}(Q(L))$.
   \end{itemize}
   This definition yields a functor:
   \[
     \Gamma_f: \bcat{F}\!\on{ib}_{0,1}(\bcat{C})^{\on{Elax}} \xlongrightarrow{} \bcat{F}\!\on{ib}_{0,1}(\bcat{D})
   \]
   It follows from \autoref{prop:grpushout} that the canonical map $\phi:T(L) \xlongrightarrow{} \hat{R}(Q(L)) $ is a trivial fibration of $\bS$ simplicial sets over $\bcat{C}$ which gives an equivalence of functors $\pi: i_*f_* \xRightarrow{} \Gamma_f$. We can pick a section of $\phi$ denoted by $\kappa: \hat{R}(Q(L)) \xlongrightarrow{} T(L)$ such that the composite
   \[
     \hat{R}(L) \xlongrightarrow{\varphi_i} \hat{R}(Q(L))  \xlongrightarrow{\kappa} T(L), \enspace i=0,1
   \]
   is induced by the map 
   \[
      s_i  \times \on{id}:  \Delta^2_{\sharp} \tensor \Delta^n \xlongrightarrow{}  \Delta^1 \tensor \Delta^n
   \]
   where $i=0,1$ and  $s_i:\Delta^2_\sharp \xlongrightarrow{} \Delta^1$ is the obvious degeneracy map. Again we obtain an equivalence of functors $s:\Gamma_f \xRightarrow{} i_*f_*$. To finish the proof it suffices to show that $\Xi f_* \circ s \simeq \tau \circ s$.

   Finally, let $\nu: T(L) \xlongrightarrow{} \hat{R}(L)$ be the map induced by the face operator $d_1:\Delta^2_\sharp \xlongrightarrow{} \Delta^1$ which in turn gives us a natural transformation $t:f_* \Rightarrow \Gamma_f$. Direct inspection reveals that
   \[
     \Xi f_*\circ s \circ t = \tau \circ s \circ t = \on{id}.
   \]
    Since $\Xi f_*$ and $s$ are equivalences it follows that $t$ is also an equivalence. We conclude that $\Xi f_* \simeq \tau$ and thus the triangular identities hold. \qedhere
  \end{enumerate}
\end{proof}
  
  \subsection{Enhanced double \texorpdfstring{$(\infty,1)$-}-categories and Segal objects}
  \begin{definition}\label{def:verticalfragment}
    Let $\scr{X}_\bullet:\Delta^\op \xlongrightarrow{} \bcat{C}\!\on{at}_{(\infty,1)}$ be a functor. We define the \emph{vertical fragment} of $\scr{X}_\bullet$ which we denote as $\scr{X}_{\bullet}^{\mathbf{v}}$ by declaring for every $[n] \in \Delta^\op$ the category $\scr{X}_{n}^{\mathbf{v}} \subset \scr{X}_n$ to be the subcategory spanned by those edges whose image under any map $[n] \xlongrightarrow{} [0]$ in $\Delta^\op$ factors through $\scr{X}_0^{\simeq}$, the underlying $\infty$-groupoid of $\scr{X}_0$. In particular, it follows that $\scr{X}^{\mathbf{v}}_{0}=\scr{X}^{\simeq}_{0}$. 
  \end{definition}

  \begin{remark}
    We observe that if $\scr{X}_\bullet:\Delta^\op \xlongrightarrow{} \bcat{C}\!\on{at}_{(\infty,1)}$ satisfies the Segal condition then it follows that $\scr{X}_{\bullet}^{\mathbf{v}}$ also satisfies the Segal condition.
  \end{remark}

  \begin{definition}\label{def:dblinftycat}
    Let $\scr{X}_\bullet:\Delta^\op \xlongrightarrow{} \bcat{C}\!\on{at}_{(\infty,1)}$ be a functor. We say that $\scr{X}_\bullet$ is a \emph{double} $(\infty,1)$-category if it satisfies the Segal condition and its vertical fragment (see \autoref{def:verticalfragment}) $\scr{X}_\bullet^{\mathbf{v}}$ is a complete Segal object (see \autoref{def:completesegalobj}) in $\bcat{C}\!\on{at}_{(\infty,1)}$.

    We denote by $\on{Dbl}(\bcat{C}\!\on{at}_{(\infty,1)})$ the full subcategory of the $(\infty,2)$-category $\Fun(\Delta^\op,\bcat{C}\!\on{at}_{(\infty,1)})$ spanned by the double $(\infty,1)$-categories.
  \end{definition}

  \begin{remark}
    Observe that given a double $(\infty,1)$-category $\scr{X}_\bullet$ then it follows that $\scr{X}_\bullet$ is a complete Segal object if and only if $\scr{X}_0$ is an $\infty$-groupoid.
  \end{remark}

  \begin{definition}\label{def:deltaidle}
    Let $\Delta$ be the simplex category. We say that a morphism $\alpha: [k] \xlongrightarrow{} [n]$ is \emph{idle} if at least one of the following conditions hold:
    \begin{itemize}
      \item[i)]  The map $\alpha$ is surjective.
      \item[ii)] The map $\alpha$ is the inclusion of a subinterval of $[n]$. Here by subinterval we mean a subset $I \subseteq [n]$ such that for every $x \in [n]$ such that  $\min(I) \leq x \leq \max(I)$ then $x \in I$.
    \end{itemize}
    We denote by $\mathcal{I}$ the collection composites of idle morphisms in $\Delta$.
  \end{definition}

  \begin{remark}
   In this section we will have to deal with the comma categories $\Delta_{/[n]}$ where $[n] \in \Delta$. To ease the notation we will simply denote the object $[n]$ by $n$ and similarly we will write $\Delta_{/n}$ for the comma category above.
  \end{remark}

  \begin{lemma}\label{lem:iclarified}
    A morphism $\varphi: k \xlongrightarrow{} n$ belongs to $\mathcal{I}$ if and only if it can be factored as a surjection followed by an interval inclusion. 
  \end{lemma}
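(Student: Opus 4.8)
The plan is to identify $\mathcal{I}$ with the class $\mathcal{N}$ of those morphisms $\varphi\colon k \xlongrightarrow{} n$ admitting a factorization $\varphi = \iota \circ s$ with $s$ a surjection and $\iota$ an interval inclusion (i.e.\ a surjection \emph{followed by} an interval inclusion). One inclusion is immediate: any such $\varphi$ is a composite of two idle morphisms, so $\mathcal{N} \subseteq \mathcal{I}$. For the reverse inclusion I would show that $\mathcal{N}$ is a subcategory of $\Delta$ containing every idle morphism; since $\mathcal{I}$ consists of all composites of idle morphisms, each such composite then lies in the subcategory $\mathcal{N}$, giving $\mathcal{I} \subseteq \mathcal{N}$ and hence the equality asserted by the lemma. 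That $\mathcal{N}$ contains the idle morphisms is clear, since a surjection $s$ equals $\mathrm{id}\circ s$ and an interval inclusion $\iota$ equals $\iota \circ \mathrm{id}$, with identities being both; so the real content is closure of $\mathcal{N}$ under composition.

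The key technical input is that a monotone surjection sends subintervals to subintervals. Concretely, if $s\colon [a] \xlongrightarrow{} [b]$ is monotone and surjective, then the fibers $s^{-1}(w)$ are nonempty intervals that partition $[a]$ into consecutive blocks, whence for any subinterval $I = \{x,\dots,y\}$ one has $s(I) = \{s(x), s(x)+1, \dots, s(y)\}$, a full subinterval of $[b]$. Granting this, I would record the rewriting step: for an interval inclusion $\iota_1$ followed by a surjection $s_2$, the composite $s_2 \circ \iota_1$ again lies in $\mathcal{N}$. Indeed, in the (epi--mono) factorization $s_2\circ\iota_1 = m \circ e$ in $\Delta$, the monomorphism $m$ is the inclusion of the image $s_2(\mathrm{im}\,\iota_1)$, which is a subinterval by the observation; hence $m$ is an interval inclusion $\iota'$ and $e$ a surjection $s'$, so $s_2 \circ \iota_1 = \iota' \circ s'$ has the required shape.

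With these in hand, closure of $\mathcal{N}$ under composition is a short computation. Given $\varphi = \iota_1 \circ s_1$ and $\psi = \iota_2 \circ s_2$ in $\mathcal{N}$, I would write $\psi \circ \varphi = \iota_2 \circ (s_2 \circ \iota_1) \circ s_1$ and apply the rewriting step to the middle term, $s_2 \circ \iota_1 = \iota' \circ s'$, obtaining $\psi \circ \varphi = (\iota_2 \circ \iota') \circ (s' \circ s_1)$. Here $s' \circ s_1$ is a surjection, since surjections compose, and $\iota_2 \circ \iota'$ is an interval inclusion, since interval inclusions compose (the image of a subinterval under a shift inclusion $x \mapsto x+p$ is again a subinterval). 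Thus $\psi \circ \varphi \in \mathcal{N}$, completing the argument. I expect the main obstacle to be exactly the interval-preservation fact for surjections and the consequent rewriting of $s_2 \circ \iota_1$, since without surjectivity a monotone map may carry an interval to a non-interval; all remaining steps are formal bookkeeping about subcategories.
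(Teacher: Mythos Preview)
Your proposal is correct and follows essentially the same approach as the paper: both reduce to the rewriting step that a composite $p \circ i$ of an interval inclusion followed by a surjection has interval image, hence refactors as a surjection followed by an interval inclusion. Your framing via the subcategory $\mathcal{N}$ and its closure under composition makes explicit what the paper leaves implicit (the paper just says ``it will suffice to show'' the rewriting step), but the technical core is identical.
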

  \begin{proof}
    One direction is obvious. Let $i:k \xlongrightarrow{} l$ be an interval inclusion and let $p:l \xlongrightarrow{} n$ be a surjection. It is easy to see that it will suffice to show that $\phi=p \circ i$ can be factored as a surjection followed by an interval inclusion. Let $r,s \in n$ with $r < s$ which are in the image of $\phi$ and let $r < x < s$ Then $r=p\circ i(a)$ and $s=p \circ i (b)$. It follows that the restriction of $p$ to $i(a)< i(b)$ surjects onto the interval $[r,s]$. Since $i$ is an interval inclusion it follows that $a<b$ surjects onto $i(a)<i(b)$ and the claim holds.
  \end{proof}

  \begin{remark}
    As above, usually denote the objects of $\Fun(\Delta^\op,\bcat{C}\!\on{at}_{(\infty,1)})$ using the lower-bullet notation $X_{\bullet}$ where we reserve the use of caligraphic letters to denote double $(\infty,1)$-categories (resp. complete Segal objects).
  \end{remark}

  \begin{definition}\label{def:laxsegcond}
    Let $X_\bullet$ be an object of $\Fun(\Delta^\op,\bcat{C}\!\on{at}_{(\infty,1)})$. We say that $X_\bullet$ satisfies the \emph{oplax} Segal condition if:
    \begin{itemize}
      \item[\textbf{OS1)}] For every $n\geq 2$ the canonical map
      \[
         L_n:X_n \xlongrightarrow{} X_1 \times_{X_0}X_1 \times_{X_0}\cdots \times_{X_0} X_1:=\on{S}(X)_n
       \] 
      admits a fully faithful right adjoint. 
      \item[\textbf{OS2)}] For every $u:m \xlongrightarrow{} n$ in $\Delta$ the natural transformation $X(u)\circ R_n \xRightarrow{}R_m\circ L_m \circ X(u) \circ R_n$ induced by the unit of the adjunction $L_m \dashv R_m$ is a pointwise weak equivalence. We will use the convention that $L_m$ (resp $R_m$) is the identity functor for $m=0,1$.
    \end{itemize}
    By reversing the role of the adjoints we obtain the dual notion of the $\emph{lax}$ Segal condition.
  \end{definition}

  \begin{definition}\label{def:segalnucleus}
    Let $X_\bullet$ be an object of $\Fun(\Delta^\op,\bcat{C}\!\on{at}_{(\infty,1)})$ which satisfies the oplax Segal condition. Given a morphism $u:n \xlongrightarrow{} m$ in $\Delta$ we define $\on{S}(X)(u)=L_n \circ X(u) \circ R_m$. Then it follows from condition \textbf{OS2)} in \autoref{def:laxsegcond} that  the $(\infty,1)$-categories, $\{\on{S}(X)_n\}_{n \geq 0}$ (where $\on{S}(X)_n=X_n$ if $n=0,1$) assemble into a functor $\on{S}(X)_\bullet: \Delta^\op \xlongrightarrow{} \bcat{C}\!\on{at}_{(\infty,1)}$ which we call the \emph{nucleus} of $X_\bullet$. 
  \end{definition}

  \begin{remark}\label{rem:degfactorsthroughessentialimage}
    Note that condition \textbf{OS2} in \autoref{def:laxsegcond} guarantees that given a surjective map $s_i: 2 \xlongrightarrow{} 1$ the image of the map $X_1 \xlongrightarrow{} X_2$ factors through the essential image of $R_2$.
  \end{remark}

  \begin{lemma}\label{lem:canonicalnat}
     Let $X_\bullet$ be an object of $\Fun(\Delta^\op,\bcat{C}\!\on{at}_{(\infty,1)})$ which satisfies the oplax Segal condition. Then the right adjoints assemble into a natural transformation $R_\bullet: \on{S}(X)_\bullet \xRightarrow{} X_\bullet$ and the left-adjoints assemble into an $\mathcal{I}\!\on{oplax}$ natural transformation $L_\bullet: X_\bullet \xRightarrow{} \on{S}(X)_\bullet$ such that $R_\bullet$ and $L_\bullet$ are mates.
  \end{lemma}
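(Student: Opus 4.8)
The plan is to prove \autoref{lem:canonicalnat} in three stages: first assemble the right adjoints $R_n$ into a genuine natural transformation $R_\bullet \colon \on{S}(X)_\bullet \xRightarrow{} X_\bullet$, then invoke the mate calculus of \cite{mates} (in the $(\infty,2)$-categorical form of \cite{Agh}) to produce the oplax transformation $L_\bullet$, and finally check that the lax naturality is strict along the idle morphisms $\mathcal{I}$. For the first stage, recall that by \textbf{OS2)} in \autoref{def:laxsegcond} the square
\[
  \begin{tikzcd}
    \on{S}(X)_m \arrow[r,"R_m"] \arrow[d,"\on{S}(X)(u)"'] & X_m \arrow[d,"X(u)"] \\
    \on{S}(X)_n \arrow[r,"R_n"] & X_n
  \end{tikzcd}
\]
commutes up to a canonical equivalence: indeed, $\on{S}(X)(u) = L_n \circ X(u) \circ R_m$ by \autoref{def:segalnucleus}, so $R_n \circ \on{S}(X)(u) = R_n \circ L_n \circ X(u) \circ R_m$, and the unit $\on{id} \Rightarrow R_n L_n$ applied to $X(u)\circ R_m$ is a pointwise weak equivalence precisely by \textbf{OS2)}. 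First I would make this coherent: rather than checking pentagon-type identities by hand, I would package the data as a map of functors out of $\Delta^\op$ using the fact that $\on{S}(X)_\bullet$ is defined as a literal composite of left adjoints with $X_\bullet$, so the right adjoints $R_n$ are obtained functorially by passing to a suitable adjoint diagram (e.g. via the $\infty$-categorical adjoint-functor naturality, or by working in the twisted-arrow/fibrational model of $\Fun(\Delta^\op, \bcat{C}\!\on{at}_{(\infty,1)})$); the key input is again \textbf{OS2)} guaranteeing the Beck--Chevalley-type comparison is an equivalence.

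For the second stage, once $R_\bullet$ is a natural transformation whose every component $R_n$ is a right adjoint with left adjoint $L_n$, I would apply \cite[Theorem 5.3.5]{mates} (or \cite{Agh}) pointwise: the mate of $R_\bullet$ is an oplax natural transformation $L_\bullet \colon X_\bullet \xRightarrow{} \on{S}(X)_\bullet$ with components the $L_n$, and $R_\bullet$, $L_\bullet$ are mutually mates by construction. Concretely, for each $u\colon n\to m$ the oplax structure $2$-cell of $L_\bullet$ at $u$ is the mate
\[
  L_m \circ X(u) \xRightarrow{\ \eta\ } L_m \circ X(u) \circ R_n \circ L_n \xRightarrow{\ \cong\ } L_m \circ R_m \circ \on{S}(X)(u) \circ L_n \xRightarrow{\ \varepsilon\ } \on{S}(X)(u) \circ L_n,
\]
obtained by pasting the unit of $L_n \dashv R_n$, the (equivalence) naturality square of $R_\bullet$ at $u$, and the counit of $L_m\dashv R_m$.

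For the third stage — which I expect to be the main obstacle, though it is not deep — I must check that this oplax structure cell is invertible whenever $u \in \mathcal{I}$, so that $L_\bullet$ is genuinely $\mathcal{I}\!\on{oplax}$ in the sense of \autoref{def:elaxnatural}. By \autoref{lem:iclarified} it suffices to treat $u$ a surjection and $u$ an interval inclusion separately. For an interval inclusion, $X(u)$ should commute with the Segal projections $L_n$ strictly (an interval inclusion just forgets outer factors of the Segal product), so the comparison cell is an equivalence essentially by unwinding definitions. For a surjection, I would use \autoref{rem:degfactorsthroughessentialimage}: the relevant map $X(u)$ lands in the essential image of the right adjoint $R_m$, on which $\varepsilon$ is invertible, which forces the pasted $2$-cell above to be an equivalence. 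Assembling these two cases, and using that $\mathcal{I}$ is closed under composition, gives that $L_\bullet$ is $\mathcal{I}\!\on{oplax}$. Finally, the mate relation between $R_\bullet$ and $L_\bullet$ is immediate from \cite[Theorem 5.3.5]{mates} since the two transformations are defined as mates of one another, completing the proof.
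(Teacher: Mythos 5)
Your overall strategy coincides with the paper's: assemble the $R_n$ into a natural transformation using \textbf{OS2)}, take its mate via \cite[Theorem 5.3.5]{mates} to obtain an oplax $L_\bullet$, and then check strictness separately on surjections and interval inclusions via \autoref{lem:iclarified}. Stages one and two, and your treatment of interval inclusions, match the paper: after cancelling the counit against the naturality equivalence of $R_\bullet$ by a triangle identity, the mate $2$-cell at $u$ (with $X(u)\colon X_n\to X_m$) reduces to the whiskered unit $L_m\circ X(u)\circ \eta_n$, where $\eta_n\colon \on{id}\Rightarrow R_nL_n$ lives in the \emph{source} $X_n$; the commutation $L_m\circ X(u)\simeq \overline{u}\circ L_n$ against a map $\overline{u}\colon \on{S}(X)_n\to\on{S}(X)_m$ defined directly on the Segal products identifies this cell with $\overline{u}(L_n\eta_n)$, which is invertible because the counit of $L_n\dashv R_n$ is invertible ($R_n$ being fully faithful) together with the triangle identities.

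Your surjection case, however, has a genuine gap. The deduction ``$X(u)$ lands in the essential image of the right adjoint, on which the (co)unit is invertible, hence the mate cell is invertible'' controls the wrong unit. What full faithfulness buys you on the essential image of $R_m$ is invertibility of $\eta_m$ whiskered \emph{after} $X(u)$ (i.e.\ of $X(u)\Rightarrow R_mL_mX(u)$), whereas the mate cell is the whiskering of $\eta_n$ \emph{before} $X(u)$: even if $X(u)$ landed entirely in the essential image of $R_m$, the map $L_mX(u)(\eta_n(x))$ would merely be $L_m$ applied to some morphism between objects of that essential image, which need not be an equivalence. (The counit $\varepsilon$ is invertible everywhere, so ``on which $\varepsilon$ is invertible'' is not the relevant condition.) Moreover, \autoref{rem:degfactorsthroughessentialimage} only concerns $s_i\colon [2]\to[1]$, where the mate cell is trivially invertible since $\eta_1=\on{id}$; for a general surjection onto $[n]$ with $n\geq 2$, condition \textbf{OS2)} only guarantees that $X(u)\circ R_n$ lands in the essential image of $R_m$, not $X(u)$ itself. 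The repair is the paper's: surjections admit exactly the same $\overline{u}$-argument as interval inclusions, with $\overline{u}$ built by inserting degenerate edges via $X_0\to X_1$ into the Segal product, after which $\overline{u}\circ L_n\simeq L_m\circ X(u)$ and the mate cell is again $\overline{u}(L_n\eta_n)$, hence invertible.
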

  \begin{proof}
    The existence of $R_\bullet$ is clear from condition \textbf{OS2)} in \autoref{def:laxsegcond}. Then since $R_\bullet$ is a natural transformation which is a pointwise right adjoint we can use \cite[Theorem 5.3.5]{mates} to obtain its mate $L_\bullet$ which a priory is only an oplax natural transformation. To show that $L_\bullet$ is actually an $\mathcal{I}\!\on{oplax}$ natural transformation we will check that given $u:m \xlongrightarrow{} n$ where $u$ is either an interval inclusion or a surjective map then the associated squares actually commute.

    Observe that if $u$ is an interval inclusion there is an obvious map $\overline{u}:\on{S}(X)_n \xlongrightarrow{} \on{S}(X)_m$ which forgets certain factors in the limit defining $\on{S}(X)_n$. It is immediate to verify that $\overline{u} \circ L_n \isom L_m \circ X(u)$. Similarly, if $u$ is a surjection we can use the degeneracy maps $X_0 \xlongrightarrow{} X_1$ to produce some $\overline{u}$ satisfying the same relation as above.

    Unraveling the definitions and computing the mate of the square assocatied to $u$ we see that we need to show that the natural transformation $L_m \circ X(u) \xRightarrow{} L_m \circ X(u) \circ R_n \circ L_n$ (which is induced by the unit of the adjunction $L_n \dashv R_n$) is invertible. However, using the previous discussion we identify this natural transformation with $\overline{u} \circ L_n \xRightarrow{} \overline{u} \circ L_n \circ R_n \circ L_n$ since the counit is of the adjunction $L_n \dashv R_n$ is always invertible the claim now follows from the triangular identities.
  \end{proof}

  \begin{lemma}\label{lem:adjonadj}
    Let $X_\bullet$ be an object of $\Fun(\Delta^\op,\bcat{C}\!\on{at}_{(\infty,1)})$ which satisfies the oplax Segal condition. Then we have that:
    \begin{enumerate}
      \item We have a morphism of $\mathcal{I}\!\on{oplax}$ natural transformation $\Gamma:L_\bullet \circ R_\bullet \xlongrightarrow{} \on{id}$ where $L_\bullet \circ R_\bullet$ is the the obvious composite (see \autoref{lem:canonicalnat}) and $\on{id}$ is the identity natural transformation. Moreover, for every $n\geq 0$ the natural transformation $\Gamma_n$ is the counit of the adjunction $L_n \dashv R_n$.  
      \item We have a morphism of $\mathcal{I}\!\on{oplax}$ natural transformations $\Phi: \on{id} \xlongrightarrow{} R_\bullet \circ L_\bullet$ such that for every $n\geq 0$ the natural transformation $\Phi_n $ is the unit of the adjunction $L_n \dashv R_n$.
    \end{enumerate}
  \end{lemma}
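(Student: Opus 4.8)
The plan is to deduce both statements from a single fact: the pointwise adjunctions $L_n \dashv R_n$ assemble into an adjunction $L_\bullet \dashv R_\bullet$ inside the $(\infty,2)$-category $\Fun(\Delta^\op,\bcat{C}\!\on{at}_{(\infty,1)})^{\mathcal{I}\!\on{oplax}}$, and then to take $\Phi$ and $\Gamma$ to be the unit and counit of that adjunction. First I would record that the two composites make sense: $R_\bullet$ is a \emph{strict} natural transformation by \autoref{lem:canonicalnat}, and composing a strict natural transformation with an $\mathcal{I}\!\on{oplax}$ one (in either order) yields again an $\mathcal{I}\!\on{oplax}$ natural transformation, with structure $2$-cells obtained by whiskering those of $L_\bullet$; so $L_\bullet\circ R_\bullet$ and $R_\bullet\circ L_\bullet$ are legitimate $1$-morphisms in the $\mathcal{I}\!\on{oplax}$ functor $(\infty,2)$-category. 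Since a modification between $\mathcal{I}\!\on{oplax}$ natural transformations is exactly a modification between them in the ambient $\on{oplax}$ functor $(\infty,2)$-category (the $\mathcal{I}\!\on{oplax}$ variant restricts the $1$-morphisms but keeps all higher cells), it suffices to produce the unit and counit in $\Fun(\Delta^\op,\bcat{C}\!\on{at}_{(\infty,1)})^{\on{oplax}}$ and to identify their components with the $\eta_n$ and $\epsilon_n$.

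To produce the adjunction I would pass to the fibrational model, using the $\bcat{C}\!\on{at}_{(\infty,1)}$-valued specialisation of the straightening equivalence of \autoref{thm:laxGroth} (for $\bcat{S}=\Delta$ and $E=\mathcal{I}$). There $X_\bullet$ and $\on{S}(X)_\bullet$ become cartesian fibrations $p\colon\mathcal{X}\xlongrightarrow{}\Delta$ and $q\colon\mathcal{S}\xlongrightarrow{}\Delta$; the strict transformation $R_\bullet$ becomes a functor $\mathcal{S}\xlongrightarrow{}\mathcal{X}$ over $\Delta$ preserving all cartesian edges, whose restriction to the fibre over $[n]$ is $R_n$; and $L_\bullet$ becomes a functor $\mathcal{X}\xlongrightarrow{}\mathcal{S}$ over $\Delta$ preserving cartesian edges over $\mathcal{I}$ and restricting fibrewise to $L_n$. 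Condition \textbf{OS1)} of \autoref{def:laxsegcond} says $\mathcal{S}\xlongrightarrow{}\mathcal{X}$ is fibrewise a right adjoint, so by the relative adjoint functor theorem \cite{HTT} it admits a left adjoint relative to $\Delta$; since $R_\bullet$ and $L_\bullet$ are mates (\autoref{lem:canonicalnat}) this relative left adjoint is the functor induced by $L_\bullet$, and condition \textbf{OS2)} is precisely the Beck--Chevalley compatibility ensuring that the relative left adjoint preserves cartesian edges exactly over $\mathcal{I}$. The unit and counit of a relative adjunction are computed fibrewise, so they restrict over $[n]$ to $\eta_n\colon\on{id}\xRightarrow{}R_nL_n$ and $\epsilon_n\colon L_nR_n\xRightarrow{}\on{id}$; transporting them back through the straightening equivalence gives modifications $\Phi\colon\on{id}\xRightarrow{}R_\bullet\circ L_\bullet$ and $\Gamma\colon L_\bullet\circ R_\bullet\xRightarrow{}\on{id}$ with the asserted components. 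Note that $\Gamma$ comes out automatically invertible, since each $\epsilon_n$ is an equivalence by fully faithfulness of $R_n$, whereas $\Phi$ need not be; and the triangular identities, to the extent they are needed downstream, transport along with the $(\infty,2)$-categorical equivalence.

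I expect the main obstacle to be exactly the step ``fibrewise adjunction $\Rightarrow$ relative adjunction of fibrations with controlled unit and counit''. In the homotopy-coherent setting one cannot simply glue the $\eta_n,\epsilon_n$ together by hand, so the argument must be routed through the relative adjoint functor theorem (or, equivalently, a sufficiently strong mate-calculus statement as in \cite{Agh}); and within that, the delicate point is verifying that the relative left adjoint preserves precisely the cartesian edges over $\mathcal{I}$ and no more. This is where condition \textbf{OS2)} and the mate identification of \autoref{lem:canonicalnat} are used, and it is the place where I would spend most of the care.
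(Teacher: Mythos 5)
Your proposal is correct in outline but follows a genuinely different route from the paper. The paper's proof never leaves the straightened world: it realises the modification $\Phi$ directly as an $\mathcal{I}\!\on{oplax}$ natural transformation out of the cylinder $(X\times\underline{\Delta}^1)_\bullet$, and for each $u\colon m\to n$ checks by an explicit commutative square of natural transformations (the naturality of the unit against $X(u)$) that the composite $X(u)\Rightarrow R_m L_m X(u)\Rightarrow R_m\on{S}(X)(u)L_n\Rightarrow X(u)R_nL_n$ is the whiskering of $\eta_n$; the case of $\Gamma$ is left as an analogous exercise. You instead unstraighten via \autoref{thm:laxGroth} and invoke the relative adjoint functor theorem over $\Delta$, obtaining unit, counit, triangle identities, and the invertibility of $\Gamma$ (from fully faithfulness of $R_n$) all in one package — which is more than the lemma asks for but is genuinely useful downstream, e.g.\ in the proof of the enhancement theorem. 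The cost is that you lean on heavier machinery: you need the straightening equivalence to be compatible with composition of $1$-morphisms and with $2$-cells, and you need to identify the relative left adjoint with the unstraightening of the mate $L_\bullet$, which is essentially the content of \cite{mates} already used in \autoref{lem:canonicalnat}. One small correction: condition \textbf{OS2)} of \autoref{def:laxsegcond} is not the Beck--Chevalley condition governing which cartesian edges the relative left adjoint preserves; it is the condition ensuring that $X(u)\circ R_n$ lands in the essential image of $R_m$, i.e.\ that $\on{S}(X)_\bullet$ is functorial and $R_\bullet$ is strictly natural (so that its unstraightening preserves \emph{all} cartesian edges). The preservation of cartesian edges over $\mathcal{I}$ by the left adjoint is the separate mate computation carried out in \autoref{lem:canonicalnat}, which you do cite, so this is an imprecision of attribution rather than a gap.
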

  \begin{proof}
    We will show that the levelwise units and counits assemble into morphisms of ($\mathcal{I}\!\on{oplax}$) natural transformations $\Gamma:L_\bullet \circ R_\bullet \xlongrightarrow{} \on{id}$ and $\Phi: \on{id} \xlongrightarrow{} R_\bullet \circ L_\bullet$. We will verify the case of $\Phi$ and leave the analogous checks for $\Gamma$ to the reader.

    Let $(X \times \underline{\Delta}^{1})_\bullet$ denote the functor whose value at each $n\geq 0$ is given by $X_n \times \Delta^1$. We want to construct an $\mathcal{I}\!\on{oplax}$ natural transformation 
    \[
      (X \times \underline{\Delta}^{1})_\bullet \xRightarrow{} X_\bullet
    \]
    such that for every $n\geq 0$ the map $X_n \times \Delta^1 \xlongrightarrow{} \bcat{X}_n$ is precisely the unit of the adjunction $L_n \dashv R_n$. Unraveling the definitions it follows that it will suffice to show that for every map $u:m \xlongrightarrow{} n$ the composite
    \[
      X(u) \xRightarrow{} R_m \circ L_m \circ X(u) \xRightarrow{} R_m \circ \on{S}(X)(u) \circ L_n \xRightarrow{\alpha^{-1}} X(u)\circ R_n \circ L_n 
    \]
   is equivalent to the map induced by the unit of the adjunction $L_n \dashv R_n$. To see this we note that we have a commutative diagram of natural transformations
   \[
     \begin{tikzcd}
       X(u) \arrow[Rightarrow,r] \arrow[Rightarrow,d] & X(u)\circ R_n \circ L_n \arrow[Rightarrow,d,"\alpha"] \\
       R_m \circ L_m \circ X(u) \arrow[Rightarrow,r] & R_m \circ L_m \circ X(u) \circ R_n \circ L_n
     \end{tikzcd}
   \]
   where by definition $R_m \circ L_m \circ X(u) \circ R_n \circ L_n= R_m \circ \on{S}(X)(u) \circ L_n$. The claim now follows.
  \end{proof}

  \begin{remark}\label{rem:canonicalcone}
     Let $X_\bullet$ be an object of $\Fun(\Delta^\op,\bcat{C}\!\on{at}_{(\infty,1)})$ which satisfies the oplax Segal condition. We note that for every $n \in \Delta$ we have a canonical natural transformation 
     \[
       \overline{\Xi}_n: (\Delta_{/n})^\op \times \Delta^1 \xlongrightarrow{} \bcat{C}\!\on{at}_{(\infty,1)}
     \]
     from the constant functor at $X_n$ to the composite $(\Delta_{/n})^\op \xlongrightarrow{} \Delta^\op \xlongrightarrow{X_\bullet}  \bcat{C}\!\on{at}_{(\infty,1)} $. Composing this natural transformation with the $\mathcal{I}\!\on{oplax}$ natural transformation from $X_\bullet \xlongrightarrow{} \on{S}(X)_\bullet$ given in \autoref{lem:canonicalnat} we obtain an $\mathcal{I}\!\on{oplax}$ (with the obvious marking in the comma category) natural transformation 
     \[
       \Xi_n : (\Delta_{/n})^\op \times \Delta^1 \xlongrightarrow{} \bcat{C}\!\on{at}_{(\infty,1)}
     \]
     from the constant functor on $X_n$ to the composite $(\Delta_{/n})^\op \xlongrightarrow{} \Delta^\op \xlongrightarrow{\on{S}(X)_\bullet}  \bcat{C}\!\on{at}_{(\infty,1)}$.
  \end{remark}

  \begin{definition}\label{def:enhanced}
    Let $X_\bullet$ be an object of $\Fun(\Delta^\op,\bcat{C}\!\on{at}_{(\infty,1)})$ which satisfies the oplax Segal condition. We say that $X_\bullet$ is \emph{enhanced} for every $n \geq 0$ the natural transformation $\Xi_n$ given in \autoref{rem:canonicalcone} exhibits $X_n$ as the $\mathcal{I}\!\on{oplax}$ limit of the functor $(\Delta_{/n})^\op \xlongrightarrow{} \Delta^\op \xlongrightarrow{\on{S}(X)_\bullet}  \bcat{C}\!\on{at}_{(\infty,1)}$.
  \end{definition}

  \begin{definition}\label{def:enhanceddouble}
    Let $\bcat{X}_\bullet: \Delta^\op \xlongrightarrow{} \bcat{C}\!\on{at}_{(\infty,1)})$ be a functor. We say that $\bcat{X}_\bullet$ is an \emph{enhanced} double $(\infty,1)$-category if the following conditions are satisfied:
    \begin{enumerate}
      \item The functor $\bcat{X}_\bullet$ satisfies the oplax Segal condition (\autoref{def:laxsegcond}).
      \item The functor $\bcat{X}_\bullet$ is enhanced (\autoref{def:enhanced}).
      \item The nucleus of $\bcat{X}_\bullet$ (\autoref{def:segalnucleus}) defines a double $(\infty,1)$-category.
    \end{enumerate}
    We will denote by $\on{EDbl}(\bcat{C}\!\on{at}_{(\infty,1)})$ the full subcategory of the $(\infty,2)$-category $\Fun(\Delta^\op,\bcat{C}\!\on{at}_{(\infty,1)})$ spanned by the enhaced double $(\infty,1)$-categories.
  \end{definition}

  \begin{remark}\label{rem:nucleusnotation}
    We will tend to use blackboard characters $\bcat{X}_\bullet$ to denote enhaced double $(\infty,1)$-categories. We no confusion shall arise we will use the lighter notation $\scr{X}_\bullet=\on{S}(\bcat{X})_\bullet$ to refer to the nucleus of $\bcat{X}_\bullet$.
  \end{remark}

  \begin{lemma}\label{lem:verticalpreservesoplax}
    Let $\bcat{X}_\bullet$ be an enhanced double $(\infty,1)$-category. Then its vertical fragment $\bcat{X}^{\mathbf{v}}_\bullet$ satisfies the oplax Segal condition (\autoref{def:laxsegcond}) and is enhanced (\autoref{def:enhanced}).
  \end{lemma}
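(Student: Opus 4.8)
The statement asserts that passing to the vertical fragment is compatible with both the oplax Segal condition and the enhancement condition. The plan is to exploit the fact that $\bcat{X}^{\mathbf v}_\bullet$ is obtained from $\bcat{X}_\bullet$ by a purely fibrewise operation (taking, at each level $n$, the full subcategory $\bcat{X}^{\mathbf v}_n \subset \bcat{X}_n$ cut out by a condition on images under the maps $[n]\to[0]$), so that all the adjunctions and (partially lax) limits involved restrict. First I would record the elementary observation that $\bcat{X}^{\mathbf v}_\bullet$ is a well-defined subfunctor of $\bcat{X}_\bullet$: this is because a morphism $u\colon[m]\to[n]$ in $\Delta$ commutes with the structure maps to $[0]$, so $\bcat{X}(u)$ carries $\bcat{X}^{\mathbf v}_m$ into $\bcat{X}^{\mathbf v}_n$ when we reindex $\Delta^{\op}$ appropriately (the verification is the same bookkeeping already used in the remark following \autoref{def:verticalfragment} for the Segal condition).

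For condition \textbf{OS1)}, I would argue that the Segal map $L_n\colon \bcat{X}_n \to \on{S}(\bcat{X})_n$ restricts to a Segal map $L^{\mathbf v}_n\colon \bcat{X}^{\mathbf v}_n \to \on{S}(\bcat{X}^{\mathbf v})_n = \bcat{X}^{\mathbf v}_1\times_{\bcat{X}^{\mathbf v}_0}\cdots\times_{\bcat{X}^{\mathbf v}_0}\bcat{X}^{\mathbf v}_1$, and that the right adjoint $R_n$ restricts as well. The key point is that $R_n$ is \emph{fully faithful} onto the full subcategory of $\bcat{X}_n$ determined by the Segal data, and that the vertical condition is detected on the spine $\bcat{X}_1\times_{\bcat{X}_0}\cdots\times_{\bcat{X}_0}\bcat{X}_1$ (an object of $\bcat{X}_n$ lies in $\bcat{X}^{\mathbf v}_n$ iff each of its $n$ edges, viewed in $\bcat{X}_1$, maps to $\bcat{X}^{\mathbf v}_0$ under the two face maps $[1]\to[0]$, and the nucleus of the vertical fragment is the vertical fragment of the nucleus, essentially because $L_n,R_n$ are built from the structure maps of the simplicial object). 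Hence $R_n$ restricts to $R^{\mathbf v}_n\colon \on{S}(\bcat{X}^{\mathbf v})_n \to \bcat{X}^{\mathbf v}_n$, and since a right adjoint between full subcategories whose unit/counit are inherited is again a right adjoint, $L^{\mathbf v}_n\dashv R^{\mathbf v}_n$ with $R^{\mathbf v}_n$ fully faithful. Condition \textbf{OS2)} then follows because the relevant natural transformations $\bcat{X}^{\mathbf v}(u)\circ R^{\mathbf v}_n \Rightarrow R^{\mathbf v}_m\circ L^{\mathbf v}_m\circ\bcat{X}^{\mathbf v}(u)\circ R^{\mathbf v}_n$ are restrictions of the corresponding transformations for $\bcat{X}_\bullet$, which are pointwise equivalences by hypothesis, and equivalences are detected in the full subcategory.

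For the enhancement condition (\autoref{def:enhanced}), I would use that the cone $\Xi_n$ for $\bcat{X}^{\mathbf v}_\bullet$ is the restriction of $\Xi_n$ for $\bcat{X}_\bullet$ along the inclusions $\bcat{X}^{\mathbf v}_k\hookrightarrow\bcat{X}_k$, and that $\mathcal I\!\on{oplax}$-limits commute with passing to full subcategories cut out by a levelwise condition that is stable under the diagram — concretely, the $\mathcal I\!\on{oplax}$-limit of the composite $(\Delta_{/n})^{\op}\to\Delta^{\op}\xrightarrow{\on{S}(\bcat{X}^{\mathbf v})_\bullet}\bcat{C}\!\on{at}_{(\infty,1)}$ is the full subcategory of the $\mathcal I\!\on{oplax}$-limit of $(\Delta_{/n})^{\op}\to\Delta^{\op}\xrightarrow{\on{S}(\bcat{X})_\bullet}\bcat{C}\!\on{at}_{(\infty,1)}$ spanned by those sections all of whose components lie in the vertical fragments, which by the enhancement of $\bcat{X}_\bullet$ is exactly $\bcat{X}^{\mathbf v}_n$. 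The main obstacle I anticipate is making this last compatibility precise: one must check that the full subcategory of a partially lax limit spanned by ``componentwise vertical'' sections is computed by the partially lax limit of the componentwise vertical subfunctor, which requires knowing that the vertical condition is closed under the transition functors of the diagram $\on{S}(\bcat{X})_\bullet\circ\pi_n$ and under the lax structure maps — this is where the description of $\mathcal I\!\on{oplax}$-limits via the Gray-tensor / fibrational model from \cite{GHL_LaxLim} together with \autoref{lem:canonicalnat} and \autoref{lem:adjonadj} does the work. Everything else is routine restriction of adjunctions and natural transformations.
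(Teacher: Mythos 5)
The central gap is in your treatment of the oplax Segal condition, and it stems from a misreading of \autoref{def:verticalfragment}: you describe $\bcat{X}^{\mathbf v}_n$ as a \emph{full} subcategory of $\bcat{X}_n$ ``cut out by a condition on images under the maps $[n]\to[0]$'' and later speak of when ``an object of $\bcat{X}_n$ lies in $\bcat{X}^{\mathbf v}_n$''. In fact the vertical fragment is a wide, non-full subcategory: it contains \emph{every} object and only those \emph{morphisms} whose images under the vertex evaluations $\bcat{X}_n\to\bcat{X}_0$ are equivalences. Consequently, restricting $L_n\dashv R_n$ is not the formal statement that ``a right adjoint between full subcategories whose unit/counit are inherited is again a right adjoint'': one must actually prove that the unit components $\eta_x\colon x\to R_nL_n(x)$ — which are morphisms of $\bcat{X}_n$ and in general not equivalences — satisfy the verticality condition, so that $\eta$ restricts to a natural transformation of endofunctors of $\bcat{X}^{\mathbf v}_n$. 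This is precisely the one nontrivial step of the paper's proof: since $n\geq 2$, every vertex $[0]\to[n]$ factors through a spine edge $[1]\to[n]$, so the evaluation $\bcat{X}_n\to\bcat{X}_0$ factors as $\on{pr}_i\circ L_n$ followed by $\bcat{X}_1\to\bcat{X}_0$; and $L_n(\eta_x)$ is invertible by the triangular identities (the counit being invertible because $R_n$ is fully faithful), whence $\eta_x$ is vertical. Your proposal asserts the unit is ``inherited'' without argument, and your full-subcategory framing hides the need for one.

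The remaining steps are essentially sound and match the paper's route. That $R_n$ carries $\on{S}(\bcat{X}^{\mathbf v})_n$ into $\bcat{X}^{\mathbf v}_n$ because verticality is detected on the spine is correct, and \textbf{OS2)} is unproblematic since its components are equivalences and equivalences are automatically vertical. For the enhancement condition, your identification of the partially lax limit of the vertical subdiagram with the ``componentwise vertical'' part of the big limit is the same comparison the paper makes by producing mutually inverse maps $K^{\mathbf v}\to\bcat{X}^{\mathbf v}_n$ and $\bcat{X}^{\mathbf v}_n\to K^{\mathbf v}$ from the two universal properties; but here too the subdiagram $\on{S}(\bcat{X})^{\mathbf v}_\bullet\subset\on{S}(\bcat{X})_\bullet$ is levelwise non-full, so ``the full subcategory of the partially lax limit spanned by vertical sections'' is not quite the right description, and the two-maps-and-check-they-are-inverse argument is the cleaner way to close this step.
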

  \begin{proof}
    We observe that for every $n\geq 2$ we have that the unit of the adjunction $\on{id} \Rightarrow R_n \circ L_n$ satisfies that at each $x \in \bcat{X}_n$ the component $x \xlongrightarrow{} R_n\circ L_n (x)$ factors through $\bcat{X}_n^{\mathbf{v}}$. To see this we note that for every $0 \xlongrightarrow{} n$ we have a factorization
    \[
      \bcat{X}_n \xlongrightarrow{L_n} \bcat{X}_1 \times_{\bcat{X}_0}\cdots \times_{\bcat{X}_0} \bcat{X}_1 \xlongrightarrow{} \bcat{X}_1 \xlongrightarrow{}\bcat{X}_0
    \]
    where the final maps are given by the projection to one of the factors and the factorization $0 \xlongrightarrow{} 1 \xlongrightarrow{} n$ since $n\geq 2$. Therefore the claim follows from the triangular identities. The factorization above shows that $L_n \dashv R_n$ induces an adjunction in $L_n^{\mathbf{v}} \dashv R_n^{\mathbf{v}}$ where $R_n^{\mathbf{v}}$ is fully faithful. We conclude that $\bcat{X}^{\mathbf{v}}_\bullet$ satisfies the oplax Segal condition.

    To finish the proof we verify that $\bcat{X}^{\mathbf{v}}_\bullet$ is enhanced. Direct inspection reveals that $\on{S}(\bcat{X}^{\mathbf{v}})_\bullet$ coincides precisely with $\on{S}(\bcat{X})^{\mathbf{v}}_\bullet$. Let $K^{\mathbf{v}}$ denote the $\mathcal{I}$oplax limit of the composite
    \[
      V_n:(\Delta^{\op})^{\op}_{/n} \xlongrightarrow{} \Delta^\op \xlongrightarrow{S(\bcat{X})^{\mathbf{v}}_\bullet} \bcat{C}\!\on{at}_{(\infty,1)}
    \]
    and observe that the universal property of the partially lax limit implies that we have a map $K^{\mathbf{v}} \xlongrightarrow{} \bcat{X}_n$ which factors through $\bcat{X}_n^{\mathbf{v}}$. Moreover, one easily checks that the if $\alpha: a \xlongrightarrow{} n$  is an object of $(\Delta^{\op})^{\op}_{/n}$ the composite
    \[
      \bcat{X}_n^{\mathbf{v}} \xlongrightarrow{} \bcat{X}_n \xlongrightarrow{} \on{S}(\bcat{X})_a
    \]
    ,where the last morphism is given by the canonical cone that expresses $\bcat{X}_n$ as an $\mathcal{I}$oplax limit, factors through $\on{S}(\bcat{X})^{\mathbf{v}}_a$. This provides us with a map $\bcat{X}_n^{\mathbf{v}} \xlongrightarrow{} K^{\mathbf{v}}$. The proof follows after verifying that both maps are mutual inverses.
  \end{proof}

  \begin{proposition}\label{prop:enhancedprev}
    Let $\bcat{X}_\bullet: \Delta^\op \xlongrightarrow{} \bcat{C}\!\on{at}_{(\infty)}$ be a functor. Then the following conditions are equivalent:
    \begin{itemize}{}
      \item[i)] The functor $\bcat{X}_\bullet$ is an enhanced double $(\infty,1)$-category such that $\bcat{X}_0$ is an $\infty$-groupoid.
      \item[ii)] The functor $\bcat{X}_\bullet$ is an enhanced double $(\infty,1)$-category and its nucleus is a complete Segal object.
      \item[iii)] The following conditions hold:
      \begin{enumerate}
      \item The functor $\bcat{X}_\bullet$ satisfies the oplax Segal condition (\autoref{def:laxsegcond}).
      \item The functor $\bcat{X}_\bullet$ is enhanced (\autoref{def:enhanced}).
      \item The nucleus of $\bcat{X}_\bullet$ (\autoref{def:segalnucleus}) defines a complete Segal object in $\bcat{C}\!\on{at}_{(\infty,1)}$. 
      \end{enumerate}
    \end{itemize}
  \end{proposition}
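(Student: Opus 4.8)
The plan is to prove the three statements equivalent by running the cycle $\text{(i)} \Rightarrow \text{(ii)} \Rightarrow \text{(iii)} \Rightarrow \text{(i)}$, since once two elementary observations are in place each implication is pure bookkeeping against \autoref{def:enhanceddouble}. The first observation is that the nucleus $\scr{X}_\bullet = \on{S}(\bcat{X})_\bullet$ agrees with $\bcat{X}_\bullet$ in degree zero: by the degree-$\{0,1\}$ convention in \autoref{def:segalnucleus} (where $L_0, R_0$ are identities) we have $\bcat{X}_0 = \scr{X}_0$, so the clauses ``$\bcat{X}_0$ is an $\infty$-groupoid'' and ``$\scr{X}_0$ is an $\infty$-groupoid'' are literally the same condition. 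The second observation is that a complete Segal object $\scr{X}_\bullet$ in $\bcat{C}\!\on{at}_{(\infty,1)}$ is in particular a double $(\infty,1)$-category: it is a Segal object by definition, and because $\scr{X}_0$ is an $\infty$-groupoid (condition $\mathbf{S1}$ of \autoref{def:segalobj}) we have $\scr{X}_0^{\simeq} = \scr{X}_0$, so every edge of every $\scr{X}_n$ trivially satisfies the defining condition of \autoref{def:verticalfragment}; hence the vertical fragment $\scr{X}_\bullet^{\mathbf{v}}$ equals $\scr{X}_\bullet$, which is complete Segal, so \autoref{def:dblinftycat} applies. Conversely, the remark following \autoref{def:dblinftycat} records that a double $(\infty,1)$-category whose zeroth term is an $\infty$-groupoid is already a complete Segal object.

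With these in hand I would argue as follows. For $\text{(i)} \Rightarrow \text{(ii)}$: if $\bcat{X}_\bullet$ is an enhanced double $(\infty,1)$-category then its nucleus is a double $(\infty,1)$-category (condition 3 of \autoref{def:enhanceddouble}), and if in addition $\bcat{X}_0 = \scr{X}_0$ is an $\infty$-groupoid, the second observation promotes the nucleus to a complete Segal object. For $\text{(ii)} \Rightarrow \text{(iii)}$ there is nothing to do: conditions 1 and 2 of \autoref{def:enhanceddouble} are verbatim items (1) and (2) of (iii), and the completeness hypothesis on the nucleus is item (3). For $\text{(iii)} \Rightarrow \text{(i)}$: items (1) and (2) supply the oplax Segal condition and the enhancement (\autoref{def:laxsegcond}, \autoref{def:enhanced}); by the first observation together with $\mathbf{S1}$, completeness of the nucleus forces $\bcat{X}_0$ to be an $\infty$-groupoid; and by the second observation the nucleus, being a complete Segal object, is a double $(\infty,1)$-category, so condition 3 of \autoref{def:enhanceddouble} holds and $\bcat{X}_\bullet$ is an enhanced double $(\infty,1)$-category. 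This closes the cycle.

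Since everything is formal, there is no real obstacle; the one place where a small check is genuinely needed is the claim that for a Segal object with $\infty$-groupoid of objects the vertical fragment of \autoref{def:verticalfragment} is the whole object. This is precisely what makes ``double $(\infty,1)$-category with $\infty$-groupoid of objects'' and ``complete Segal object'' interchangeable, and it is the linchpin of both observations above. The remaining work is just unwinding \autoref{def:enhanceddouble} and matching it term by term with the list in (iii).
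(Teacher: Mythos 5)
Your argument is correct and is precisely the definitional unwinding that the paper compresses into ``Immediate from the definitions'': the two key observations (that $\bcat{X}_0=\scr{X}_0$ by the degree-$\{0,1\}$ convention in the nucleus, and that a complete Segal object has full vertical fragment and is hence a double $(\infty,1)$-category, with the converse supplied by the remark after \autoref{def:dblinftycat}) are exactly what make the three conditions match term by term. Nothing is missing.
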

  \begin{proof}
   Immediate from the definitions.
  \end{proof}

  \begin{definition}\label{def:enhancedsegalobject}
    We say that a functor $\bcat{X}_\bullet: \Delta^\op \xlongrightarrow{} \bcat{C}\!\on{at}_{(\infty,1)}$ is an \emph{enhanced} Segal object in $\bcat{C}\!\on{at}_{(\infty,1)}$ it satisfies any of the conditiosn in \autoref{prop:enhancedprev}. 
  \end{definition}

  \begin{proposition}\label{prop:vertofendbl}
    Let $\bcat{X}_\bullet$ be an enhanced double $(\infty,1)$-category. Then its vertical fragment (\autoref{def:verticalfragment}) $\bcat{X}_\bullet^{\mathbf{v}}$ is an enhanced Segal object.
  \end{proposition}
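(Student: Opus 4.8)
The plan is to check the three conditions of \autoref{prop:enhancedprev}~(iii) for the vertical fragment $\bcat{X}^{\mathbf{v}}_\bullet$: that it satisfies the oplax Segal condition, that it is enhanced, and that its nucleus is a complete Segal object in $\bcat{C}\!\on{at}_{(\infty,1)}$. The first two of these are exactly the statement of \autoref{lem:verticalpreservesoplax}, so the only remaining point is the completeness of the nucleus $\on{S}(\bcat{X}^{\mathbf{v}})_\bullet$.

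For this I would reuse the identification already recorded in the proof of \autoref{lem:verticalpreservesoplax}: the nucleus of the vertical fragment coincides with the vertical fragment of the nucleus, $\on{S}(\bcat{X}^{\mathbf{v}})_\bullet \isom \on{S}(\bcat{X})^{\mathbf{v}}_\bullet$. The input here is that for $n\geq 2$ the factorization $\bcat{X}_n \xrightarrow{L_n} \bcat{X}_1\times_{\bcat{X}_0}\cdots\times_{\bcat{X}_0}\bcat{X}_1 \to \bcat{X}_1 \to \bcat{X}_0$ shows that the vertical subcategories are compatible with the adjoints $L_n \dashv R_n$, so that applying $\on{S}(-)$ and $(-)^{\mathbf{v}}$ in either order produces the same simplicial object. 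Since $\bcat{X}_\bullet$ is an enhanced double $(\infty,1)$-category, its nucleus $\on{S}(\bcat{X})_\bullet$ is by \autoref{def:enhanceddouble} a double $(\infty,1)$-category, and \autoref{def:dblinftycat} says precisely that a double $(\infty,1)$-category is a functor satisfying the Segal condition whose vertical fragment is a complete Segal object. Hence $\on{S}(\bcat{X})^{\mathbf{v}}_\bullet$, and therefore $\on{S}(\bcat{X}^{\mathbf{v}})_\bullet$, is a complete Segal object.

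Combining this with \autoref{lem:verticalpreservesoplax}, the functor $\bcat{X}^{\mathbf{v}}_\bullet$ satisfies condition (iii) of \autoref{prop:enhancedprev}, and so it is an enhanced Segal object by \autoref{def:enhancedsegalobject}. (One could equally note that $\bcat{X}^{\mathbf{v}}_0 = \bcat{X}_0^{\simeq}$ is an $\infty$-groupoid, which together with $\bcat{X}^{\mathbf{v}}_\bullet$ being an enhanced double $(\infty,1)$-category would give condition (i); but the route through (iii) is the most direct.) I do not expect a genuine obstacle in this proposition: the analytic content is entirely carried by \autoref{lem:verticalpreservesoplax}, and what is left is bookkeeping. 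The one point meriting care is the commutation of the vertical-fragment and nucleus constructions, which is exactly the factorization argument above and is already implicit in the proof of that lemma.
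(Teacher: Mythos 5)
Your proof is correct and follows essentially the same route as the paper: both invoke \autoref{lem:verticalpreservesoplax} for the oplax Segal and enhancement conditions, identify the nucleus of the vertical fragment with the vertical fragment of the nucleus, and conclude completeness from the definition of a double $(\infty,1)$-category via condition (iii) of \autoref{prop:enhancedprev}. No issues.
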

  \begin{proof}
    We know by \autoref{lem:verticalpreservesoplax} that $\bcat{X}_\bullet^{\mathbf{v}}$ satisfies the oplax Segal condition and is enhanced. We note that since the nucleus of $\bcat{X}_\bullet^{\mathbf{v}}$ is given by the vertical fragment of the nucleus of $\bcat{X}_\bullet$ it follows that condition 3 in \autoref{prop:enhancedprev} also holds.
  \end{proof}

  \begin{definition}\label{def:kdownstar}
     We consider the adjunction of $(\infty,2)$-categories
   \[
    K^*: \Fun(\Delta^\op,\bcat{C}\!\on{at}_{(\infty,1)}) \llra   \Fun(\Delta^\op,\bcat{C}\!\on{at}_{(\infty,1)})^{\on{\mathcal{I}\!oplax}}: K_*
  \]
  given by right Kan extending with respect to the canonical map $(\Delta,\mathcal{I}) \xlongrightarrow{} (\Delta,\sharp)$. 
  \end{definition}
 
  \begin{proposition}\label{prop:1dblcharac}
    Let $\scr{E}_{\bullet} \in \on{Dbl}(\bcat{C}\!\on{at}_{(\infty,1)})$ and let $\bcat{E}_\bullet=K_*(\scr{E})_\bullet$ then there exists a natural transformation $i_\bullet: \scr{E}_{\bullet} \Rightarrow{} \bcat{E}_{\bullet} $ such that:
    \begin{itemize}
      \item[i)] For $n\geq 0$ the map $i_n: \scr{E}_n \xlongrightarrow{} \bcat{E}_n$ is fully faithful and admits a left adjoint.
      \item[ii)] For $n=0,1$  the map $i_n: \scr{E}_n \xlongrightarrow{} \bcat{E}_n$ is an equivalence.
      \item[iii)] Given a degeneracy operator $s_i:[2] \xlongrightarrow{} [1]$ the induced functor $\bcat{E}_1 \xlongrightarrow{} \bcat{E}_2$ factors through $\scr{E}_2$.
    \end{itemize}
  \end{proposition}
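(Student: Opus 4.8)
The plan is to unwind the definition of $K_* = f_*$ for the identity functor $f = \mathrm{id}\colon \Delta^\op \to \Delta^\op$ with the collection of edges $E = \mathcal{I}$, and to use the explicit limit formula for the right lax Kan extension recorded in \autoref{rem:limitformula}. Concretely, for $\scr{E}_\bullet \in \on{Dbl}(\bcat{C}\!\on{at}_{(\infty,1)})$ and $\bcat{E}_\bullet = K_*(\scr{E})_\bullet$, that formula identifies $\bcat{E}_n$ with the $\mathcal{I}\!\on{oplax}$ limit of the composite
\[
  (\Delta^\op_{\upslash n})^{} \xlongrightarrow{} \Delta^\op \xlongrightarrow{\scr{E}_\bullet} \bcat{C}\!\on{at}_{(\infty,1)},
\]
where the marked edges in $\Delta^\op_{\upslash n}$ are exactly those lying over $\mathcal{I}$ (using \autoref{lem:iclarified} to describe them as surjection-then-interval-inclusion). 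The natural transformation $i_\bullet\colon \scr{E}_\bullet \Rightarrow \bcat{E}_\bullet$ is the one produced in the construction of the (co)unit preceding \autoref{thm:rightlaxkan}: on the fibrational side it comes from the canonical map $L \to R(L)$, and levelwise it is the map $\scr{E}_n \to \bcat{E}_n$ that includes the constant cone over the object $\mathrm{id}_{[n]}\in \Delta^\op_{\upslash n}$ into the $\mathcal{I}\!\on{oplax}$ limit.

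For part (i), I would argue that $i_n$ is fully faithful with a left adjoint by exhibiting an explicit projection $\bcat{E}_n \to \scr{E}_n$. The object $\mathrm{id}_{[n]}$ is initial in $\Delta_{\upslash n}$, hence terminal in $(\Delta^\op)_{\upslash n}$; evaluating an $\mathcal{I}\!\on{oplax}$ cone at this terminal object gives a functor $p_n\colon \bcat{E}_n \to \scr{E}_n$ with $p_n \circ i_n \simeq \mathrm{id}$. The composite $i_n \circ p_n$ receives a natural transformation from the identity obtained by pasting the $\mathcal{I}\!\on{oplax}$ structure maps out of the terminal vertex, and because every edge $[k]\to[n]$ in $\Delta_{\upslash n}$ factoring through the terminal object is (trivially) in the relevant class, one checks this is the unit of an adjunction $p_n \dashv i_n$; in particular $i_n$ is fully faithful. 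Here the fact that $\scr{E}_\bullet$ is a double $(\infty,1)$-category is not yet needed — parts (i) and (iii) should hold for \emph{any} simplicial object satisfying the Segal condition, but I would state the proof only in the generality we need. For part (ii), when $n=0,1$ the category $\Delta_{\upslash n}$ has $\mathrm{id}_{[n]}$ as terminal object and all of its objects $[k]\to[n]$ with $k\le n$ are reached by interval inclusions composed with surjections, so every edge of $(\Delta^\op)_{\upslash n}$ is marked; an $\mathcal{I}\!\on{oplax}$ limit over a category with terminal object in which all morphisms are marked is just the value at the terminal object, giving $i_n$ an equivalence. (For $n=1$ one must also check that the non-degenerate object $[0]\xrightarrow{\{0\}}[1]$ and $[0]\xrightarrow{\{1\}}[1]$ contribute no extra data, which follows since the cone legs to these are determined by the Segal maps and $\scr{E}_0$ is the common fibre product base.)

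For part (iii), I would take a degeneracy $s_i\colon [2]\to[1]$ and analyze the induced functor $\bcat{E}_1 \to \bcat{E}_2$. Since $s_i$ is surjective, hence in $\mathcal{I}$, precomposition with $s_i\colon \Delta^\op_{\upslash 2}\to \Delta^\op_{\upslash 1}$ sends marked edges to marked edges and moreover the image of an $\mathcal{I}\!\on{oplax}$ cone over $\Delta^\op_{\upslash 1}$ lands among those cones over $\Delta^\op_{\upslash 2}$ whose structure $2$-morphisms out of $\mathrm{id}_{[2]}$ are invertible — precisely because, by \autoref{rem:degfactorsthroughessentialimage}, the relevant structure maps factor through the essential image of the fully faithful right adjoints $R_n$, where the unit is invertible. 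Thus the composite factors through the full subcategory of $\bcat{E}_2$ where the $\mathcal{I}\!\on{oplax}$ cone is actually a genuine cone over $\Delta^\op_{\upslash 2}$, and that full subcategory is equivalent, via the $\mathcal{I}\!\on{oplax}$ limit, to the honest limit $\scr{E}_2 = \lim_{\Delta^\op_{\upslash 2}} \scr{E}_\bullet\circ\pi$; identifying this honest limit with $\scr{E}_2$ uses the Segal condition on the nucleus. The main obstacle I anticipate is the bookkeeping in (iii): one must verify carefully that the marking on $\Delta^\op_{\upslash 2}$ is set up so that a cone in the image of $s_i^*$ has \emph{all} the needed $2$-morphisms invertible (not merely those lying over $\mathcal{I}$), and this is where \autoref{rem:degfactorsthroughessentialimage} together with the mate description from \autoref{lem:canonicalnat} does the real work; the rest is a diagram chase with the universal property of partially lax limits from \cite{GHL_LaxLim}.
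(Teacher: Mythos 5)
Your overall route is the same as the paper's: obtain $i_\bullet$ and the projection $r_\bullet$ from the unit and counit of $K^*\dashv K_*$, use the limit formula of \autoref{rem:limitformula} to identify $\bcat{E}_n$ with the $\mathcal{I}\!\on{oplax}$ limit over $(\Delta_{/n})^\op$, build the adjunction $r_n\dashv i_n$ from the lax cone structure maps out of the vertex $\on{id}_{[n]}$, and reduce (ii) and (iii) to the statement that for $n\leq 1$ the partially lax limit is the ordinary one. Part (i) is essentially the paper's argument. However, two of your justifications are not correct as stated. First, in (ii) you claim that for $n=0,1$ \emph{every} edge of $(\Delta_{/n})^\op$ is marked. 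This is false: the collection $\mathcal{I}$ is not all of $\Delta$ even in low slices (e.g.\ the map $[1]\to[2]$ hitting $\{0,2\}$ is not idle, and such maps occur as morphisms of $\Delta_{/0}$ and $\Delta_{/1}$). What is true, and what the paper proves, is that the marked simplicial set $(\Delta_{/n},\mathcal{I})$ for $n\leq 1$ can be \emph{saturated} to the full marking using pushouts along $(\Delta^2,\{\Delta^{\{1,2\}},\Delta^{\{0,2\}}\},\sharp)\to(\Delta^2,\sharp,\sharp)$: for any morphism $f\colon u\to v$ over $[n]$ one forms the triangle $u\to v\to\on{id}_{[n]}$ and uses that for $n\leq 1$ every structure map $[k]\to[n]$ is idle (surjective, or a surjection onto a point followed by an interval inclusion). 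This saturation step is the actual content of (ii), and it visibly fails for $n\geq 2$, which is why the distinction matters.

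Second, in (iii) your appeal to \autoref{rem:degfactorsthroughessentialimage} is circular: that remark is a consequence of condition \textbf{OS2)} for a functor already known to satisfy the oplax Segal condition, whereas the present proposition is precisely the input used (in \autoref{prop:enhancedkanfactors}) to establish that $\bcat{E}_\bullet$ satisfies the oplax Segal condition. The correct argument — and the one that resolves the "main obstacle" you flag at the end — is an immediate consequence of the repaired version of (ii): since the marking on $(\Delta_{/1},\mathcal{I})$ saturates to the full marking, an $\mathcal{I}\!\on{oplax}$ cone over $\Delta_{/1}$ is automatically a strict cone; restricting a strict cone along the functor $\Delta_{/2}\to\Delta_{/1}$ induced by $s_i$ yields a strict cone over $\Delta_{/2}$, which factors through the ordinary limit, namely $\scr{E}_2$. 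No analysis of which $2$-cells become invertible after restriction is needed once one knows the source cone has none. With these two corrections your proof coincides with the paper's.
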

  \begin{proof}
     We observe that $K^*$ acts on objects as the identity so we can use the unit map to obtain a natural transformation $i_\bullet:\scr{E}_{\bullet} \Rightarrow K_*K^* (\scr{E}_{\bullet})=K_* (\scr{E}_{\bullet})$. Dually we can view the counit as defining a natural transformation $r:K_*(\scr{E}_{\bullet})=K^*K_*(\scr{E}_{\bullet}) \Rightarrow \scr{E}_{\bullet}$. As a consequence we obtain for every $n \in \Delta$ a pair of functors
    \[
      i_n: \scr{E}_{n} \xlongrightarrow{} K_*(\scr{E})_n, \enspace \enspace r_n:K_*(\scr{E})_n \xlongrightarrow{} \scr{E}_{k},
    \]
    which we claim that form an adjoint pair $r_n \dashv i_n$. Chasing through the triangular identities one checks that $ r_n \circ i_n \simeq \on{id}$. To construct the natural transformation $\on{id} \Rightarrow{}  i_n \circ r_n $ we take the point of view of \autoref{rem:limitformula} to see that $K_*(\scr{E})_n$ is the $\mathcal{I}\!\on{oplax}$ limit of the diagram
    \[
      (\Delta_{/n})^\op \xlongrightarrow{\pi_n} \Delta^\op \xlongrightarrow{\scr{E}_{\bullet}} \bcat{C}\!\on{at}_{(\infty,1)}.
    \]
    From this point of view, we have that the map $i_n$ is given by the universal property of the limit. More precisely, it follows that the ordinary limit (i.e. the partially oplax limit with respect to the maximal marking) is given by $\scr{E}_n$. Then the canonical cone for the ordinary limit defines in particular a partially lax cone for $\scr{E}_{\bullet}\circ \pi_n$ and thus induces the desired map $i_n:\scr{E}_n \xlongrightarrow{} K_*(\scr{E})_n$. Similarly the value of $r_n$ is given by the canonical projection 
    \[
      K_*(\scr{E})_n \xlongrightarrow{} \scr{E}_{\bullet} \circ \pi_n (\on{id}_n)=\scr{E}_n.
    \]
    Given an object $u: i \xlongrightarrow{} n$ in $\Delta_{/n}$ let $\alpha_u: K_*(\scr{E})_n \xlongrightarrow{} \scr{E}_i$ denote the morphisms defining the canonical cone for $K_*(\scr{E})_n$. Using the notation we have that $\alpha_{\on{id}_n}=r_n$ and it is easy to verify that
    \[
      \alpha_u \circ i_n \circ r_n \simeq \scr{E}(u) \circ r_n.
    \]
    The fact that the maps $\alpha_u$ define a partially lax cone implies that we have 2-morphisms
    \[
      \theta_u: \alpha_u \Rightarrow{} \scr{E}(u) \circ r_n.
    \]
    These natural transformations assemble into a morphism of cones and thus induce by the universal property of the limit a natural transformation $\Xi:\on{id} \Rightarrow{}i_n \circ r_n$.

    Let us point out that the equivalence $r_n \circ i_n \isom \on{id}$ its a consequence of the fact that the have an equivalences
    \[
      \alpha_{u} \circ i_n \simeq \scr{E}(u)
    \]
    applying to the case where $u$ is the identity on $n$. To finish the proof we show that $\Xi$ defines the unit of an adjunction. By construction it is clear that $r_n*\Xi$ is equivalent to the identity natural transformation on $r_n$. Finally we observe that by construction of $i_n$ it follows that $\theta_u* i_n$ is equivalent to the identity on $\scr{E}(u)$ which shows that the remaing triangular identity holds. This proves the first claim in our statement.

    To prove the second claim it suffices to show that if $n=0,1$, the map $(\Delta_{/n}, \mathcal{I}) \xlongrightarrow{} (\Delta_{/n}, \sharp)$  is the weakly saturated class of morphisms of type 
    \begin{itemize}
      \item[*)]  $(\Delta^2,\{\Delta^{\{1,2\}},\Delta^{\{0,2\}}\},\sharp) \xlongrightarrow{} (\Delta^2,\sharp,\sharp)$.
    \end{itemize}
    We are denoting by $(\Delta_{/n}, \mathcal{I})$ the marking on $\Delta_{/n}$ where an edge is marked if and only if its image under the canonical projection to $\Delta$ belongs to $\mathcal{I}$. Then it follows that this map defines a weak equivalence of $(1,0)$-cartesian fibrations over $\Delta$ and it is final with respect to $\mathcal{I}\!\on{oplax}$ limits (see the final section in \cite{GHL_LaxLim}. This implies that $i_n$ is a weak equivalence whenever $k=0,1$.

    Finally, to show the final claim we see that the map $\bcat{E}_1 \xlongrightarrow{} \bcat{E}_2$ is induced by postcomposition with the degeneracy operator $(\Delta_{/2},\mathcal{I}) \xlongrightarrow{} (\Delta_{/1},\mathcal{I})$. However we saw that the marking on the target of this map can be saturated to contain all edges so it follows that $\bcat{E}_1 \xlongrightarrow{} \bcat{E}_2$ must factor through the ordinary limit which is $\scr{E}_2$.
  \end{proof}

  \begin{remark}
    We could have proved the previous proposition using the fibrational language. However, this proof illustrates how this theorem will generalise the moment we have a general notion of right lax Kan extension where the target category is a (sufficiently complete) $(\infty,2)$-category $\bcat{A}$ in which it makes sense to talk about double category objects and complete Segal objects.
  \end{remark}

  \begin{definition}
    Let $\on{Dbl}\left(\bcat{C}\!\on{at}_{(\infty,1)}\right)^{\mathcal{I}\!\on{oplax}} $ to be the full $(\infty,2)$-category spanned by the double $(\infty,1)$-categories, inside of $\Fun(\Delta^\op,\bcat{C}\!\on{at}_{(\infty,1)})^{\mathcal{I}\!\on{oplax}}$. We similarly define $\CSeglax$.
  \end{definition}

  \begin{proposition}\label{prop:enhancedkanfactors}
    Given the adjunction defined in \autoref{def:kdownstar}
   \[
    K^*: \Fun(\Delta^\op,\bcat{C}\!\on{at}_{(\infty,1)}) \llra   \Fun(\Delta^\op,\bcat{C}\!\on{at}_{(\infty,1)})^{\on{\mathcal{I}\!oplax}}: K_*
  \]
  then it follows that the restriction of $K_*$ to $\on{Dbl}(\bcat{C}\!\on{at}_{(\infty,1)})$, resp. to $\CSeglax$, factors through $\on{EDbl}(\bcat{C}\!\on{at}_{(\infty,1)})$, resp.  $\on{ESeg}_\Delta(\bcat{C}\!\on{at}_{(\infty,1)})$.
  \end{proposition}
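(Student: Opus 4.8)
The plan is to verify directly that for $\scr{E}_\bullet \in \on{Dbl}(\bcat{C}\!\on{at}_{(\infty,1)})$ the simplicial object $\bcat{E}_\bullet := K_*(\scr{E})_\bullet$ satisfies the three conditions of \autoref{def:enhanceddouble}; the statement for $\CSeglax$ and $\on{ESeg}_\Delta(\bcat{C}\!\on{at}_{(\infty,1)})$ will then follow at once, since \autoref{prop:1dblcharac}(ii) gives $\bcat{E}_0 = \scr{E}_0$, so $\bcat{E}_0$ is an $\infty$-groupoid whenever $\scr{E}_0$ is, and one invokes \autoref{prop:enhancedprev}. As the target categories involved are full $(\infty,2)$-subcategories, it suffices to treat objects. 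Throughout I would use the adjunctions $r_n \dashv i_n$ supplied by \autoref{prop:1dblcharac}, together with the description (\autoref{rem:limitformula}) of $\bcat{E}_n$ as the $\mathcal{I}\!\on{oplax}$ limit of $\scr{E}_\bullet\circ\pi_n$, whose canonical cone maps I denote $\alpha_u\colon\bcat{E}_n\to\scr{E}_i$ for $u\colon[i]\to[n]$: they satisfy $\alpha_{\on{id}_n}=r_n$ and $\alpha_v\circ\bcat{E}(u)=\alpha_{u\circ v}$, and the structure $2$-cell $\alpha_u\Rightarrow\scr{E}(f)\circ\alpha_v$ attached to a morphism $f$ of $\Delta_{/n}$ is invertible whenever the underlying map of $f$ lies in $\mathcal{I}$.

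First I would establish the oplax Segal condition for $\bcat{E}_\bullet$. Because $\bcat{E}_0=\scr{E}_0$ and $\bcat{E}_1=\scr{E}_1$ by \autoref{prop:1dblcharac}(ii) and $\scr{E}_\bullet$ satisfies the Segal condition, the target $\on{S}(\bcat{E})_n=\bcat{E}_1\times_{\bcat{E}_0}\cdots\times_{\bcat{E}_0}\bcat{E}_1$ is canonically equivalent to $\scr{E}_n$. The Segal projection $L_n$ has $k$-th component $r_1\circ\bcat{E}(i_k)=\alpha_{i_k}$, where $i_k\colon[1]\to[n]$ is the interval inclusion $\{k,k+1\}$, hence idle; invertibility of the corresponding cone $2$-cell gives $\alpha_{i_k}\simeq\scr{E}(i_k)\circ r_n$, so $L_n$ is identified, via the Segal equivalence of $\scr{E}_\bullet$, with $r_n$. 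Thus $L_n$ admits the fully faithful right adjoint $i_n$ (postcomposed with the inverse Segal equivalence), which is \textbf{OS1}. For \textbf{OS2}, naturality of the transformation $i_\bullet\colon\scr{E}_\bullet\Rightarrow\bcat{E}_\bullet$ from \autoref{prop:1dblcharac} gives $\bcat{E}(u)\circ i_n\simeq i_m\circ\scr{E}(u)$ for every $u$, so $\bcat{E}(u)\circ R_n$ lands in the essential image of $R_m=i_m$ and the relevant unit is an equivalence. The same square identifies the structure maps $\on{S}(\bcat{E})(u)=r_n\circ\bcat{E}(u)\circ i_m$ of the nucleus with those of $\scr{E}_\bullet$, so $\on{S}(\bcat{E})_\bullet\simeq\scr{E}_\bullet$; condition $3$ of \autoref{def:enhanceddouble} (and its complete Segal variant) then holds by the hypothesis on $\scr{E}_\bullet$.

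It remains to prove that $\bcat{E}_\bullet$ is enhanced. Unwinding \autoref{rem:canonicalcone}, the cone $\Xi_n$ has component at $u\colon[i]\to[n]$ equal to $L_i\circ\bcat{E}(u)$, which by the previous paragraph is equivalent to $r_i\circ\bcat{E}(u)=\alpha_u$, i.e. to the canonical cone map exhibiting $\bcat{E}_n$ as the $\mathcal{I}\!\on{oplax}$ limit of $\scr{E}_\bullet\circ\pi_n\simeq\on{S}(\bcat{E})_\bullet\circ\pi_n$. So on underlying objects $\Xi_n$ already is the universal cone; to conclude one must check that the $2$-cells of $\Xi_n$, which arise from the $\mathcal{I}\!\on{oplax}$ structure of $L_\bullet$ (the mate of $R_\bullet$, \autoref{lem:canonicalnat}), match the structure $2$-cells of the universal cone appearing in the proof of \autoref{prop:1dblcharac}. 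I expect this last identification to be the main obstacle: it is ``formally true'' but requires carefully matching the mate construction of \cite{mates} against the limit formula of \autoref{rem:limitformula}, and it is the only step that is not pure bookkeeping. Once it is in place, $\Xi_n$ exhibits $\bcat{E}_n$ as the $\mathcal{I}\!\on{oplax}$ limit of the nucleus evaluated along $(\Delta_{/n})^\op$, which is exactly \autoref{def:enhanced}, so $K_*$ restricts to a functor $\on{Dbl}(\bcat{C}\!\on{at}_{(\infty,1)})^{\mathcal{I}\!\on{oplax}}\to\on{EDbl}(\bcat{C}\!\on{at}_{(\infty,1)})$, and the complete Segal version follows as noted.
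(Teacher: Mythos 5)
Your proposal is correct and follows essentially the same route as the paper: deduce the oplax Segal condition and the identification of the nucleus with $\scr{E}_\bullet$ from \autoref{prop:1dblcharac}, conclude enhancedness from the limit description of $K_*$ in \autoref{rem:limitformula}, and reduce the complete Segal case to the double-category case via $K_*(\scr{E})_0\isom\scr{E}_0$ and \autoref{prop:enhancedprev}. The paper's proof is just a terser version of your argument (it asserts without comment the compatibility of the cone $2$-cells that you rightly flag as the one non-bookkeeping step).
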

  \begin{proof}
    Let $\scr{X}_\bullet \in \on{EDbl}(\bcat{C}\!\on{at}_{(\infty,1)})$ and let $K_*(\scr{X})_\bullet=\bcat{X}_\bullet$. We want to show that $\bcat{X}_\bullet$ satisfies the conditions of \autoref{def:enhanceddouble}.

     It follows from \autoref{prop:1dblcharac} that $\bcat{X}_\bullet$ satisfies the oplax Segal condition (see \autoref{def:laxsegcond}). By construction it is also clear that $\scr{X}$ is the nucleus of $\bcat{X}_\bullet$ which is a double $(\infty,1)$-category. We conclude by \autoref{rem:limitformula} that $\bcat{X}_\bullet$ is enhanced (\autoref{def:enhanced}). 

     Finally note that $K_*(\scr{X})_0 \isom \scr{X}_0$ which proves the claim for Segal objects due to \autoref{prop:enhancedprev}.
   \end{proof}

   \begin{theorem}\label{thm:enhanced}
      Let $K_*$ be the right Kan extension functor defined in \autoref{prop:enhancedkanfactors}. Then we have a commutative diagram 
      \[
        \begin{tikzcd}
          \CSeglax \arrow[d] \arrow[r,"\simeq"] & \on{ESeg}_\Delta(\bcat{C}\!\on{at}_{(\infty,1)}) \arrow[d] \\
          \on{Dbl}\left(\bcat{C}\!\on{at}_{(\infty,1)}\right)^{\mathcal{I}\!\on{oplax}} \arrow[r,"\simeq"] & \on{EDbl}(\bcat{C}\!\on{at}_{(\infty,1)})
        \end{tikzcd}
      \]
     where the vertical morphisms are fully faithful and the horizontal morphisms are equivalences of $(\infty,2)$-categories which are obtained by restricting the functor $K_*$.
   \end{theorem}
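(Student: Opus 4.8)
The plan is to upgrade the adjunction $K^* \dashv K_*$ from \autoref{def:kdownstar} to an equivalence after restricting to the relevant full subcategories, and to do so simultaneously at the level of double $(\infty,1)$-categories and of complete Segal objects. First I would invoke \autoref{prop:enhancedkanfactors} to know that $K_*$ restricts to functors $\on{Dbl}(\bcat{C}\!\on{at}_{(\infty,1)})^{\mathcal{I}\!\on{oplax}} \to \on{EDbl}(\bcat{C}\!\on{at}_{(\infty,1)})$ and $\CSeglax \to \on{ESeg}_\Delta(\bcat{C}\!\on{at}_{(\infty,1)})$, and that the left square of the claimed diagram commutes since $K^*$ (equivalently its restriction) acts as the identity on underlying functors and the vertical inclusions are the evident ones; the fully faithfulness of the vertical maps is immediate from the fact that both $\on{Dbl}$ and $\on{EDbl}$ (resp. $\CSeg$ and $\ESeg$) are \emph{full} subcategories of $\Fun(\Delta^\op,\bcat{C}\!\on{at}_{(\infty,1)})$ and $\Fun(\Delta^\op,\bcat{C}\!\on{at}_{(\infty,1)})^{\mathcal{I}\!\on{oplax}}$ respectively, so the statement really reduces to showing the horizontal restrictions of $K_*$ are equivalences.

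For that, I would construct an explicit inverse by sending an enhanced double $(\infty,1)$-category (resp.\ enhanced Segal object) $\bcat{X}_\bullet$ to its nucleus $\scr{X}_\bullet = \on{S}(\bcat{X})_\bullet$ (\autoref{def:segalnucleus}), viewed with the $\mathcal{I}\!\on{oplax}$ structure supplied by \autoref{lem:canonicalnat} — i.e.\ the assignment $\bcat{X}_\bullet \mapsto \scr{X}_\bullet$ together with the left adjoints $L_\bullet$ as the $\mathcal{I}\!\on{oplax}$ transition data. One checks this is functorial on $\on{EDbl}$ (resp.\ $\ESeg$) using that any morphism of enhanced double categories must be compatible with the adjunctions $L_n \dashv R_n$, since $R_n$ is characterized by a universal property (fully faithful right adjoint) and the $L_n$ are its mates. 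The composite $K_*(\scr{X})_\bullet$ is then, by \autoref{rem:limitformula}, the pointwise $\mathcal{I}\!\on{oplax}$ limit of $(\Delta_{/n})^\op \xrightarrow{\pi_n} \Delta^\op \xrightarrow{\scr{X}_\bullet} \bcat{C}\!\on{at}_{(\infty,1)}$, and the definition of \emph{enhanced} (\autoref{def:enhanced}, via the canonical cone $\Xi_n$ of \autoref{rem:canonicalcone}) says precisely that $\bcat{X}_n$ \emph{is} this $\mathcal{I}\!\on{oplax}$ limit — so the unit $\bcat{X}_\bullet \Rightarrow K_* \on{S}(\bcat{X})_\bullet$ is an equivalence. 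Conversely, for $\scr{E}_\bullet$ a double $(\infty,1)$-category (resp.\ complete Segal object) one shows $\on{S}(K_*(\scr{E}))_\bullet \simeq \scr{E}_\bullet$: by \autoref{prop:1dblcharac}(ii) the functors $i_n$ are equivalences for $n = 0,1$, and by \autoref{prop:1dblcharac}(i) the maps $i_n$ are fully faithful right adjoints with left adjoints $r_n$, which means the full subcategory $\scr{E}_n \subseteq K_*(\scr{E})_n$ cut out by condition \textbf{OS1)} together with part (iii) of that proposition (degeneracies land in $\scr{E}_2$) recovers exactly $\scr{E}_\bullet$ as the nucleus; one has to verify the nucleus transition maps $\on{S}(K_*\scr{E})(u) = L_n \circ K_*(\scr{E})(u) \circ R_m$ agree with $\scr{E}(u)$, which follows from the mate calculus and the triangular identities as in the proof of \autoref{lem:canonicalnat}.

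I expect the main obstacle to be the \textbf{2-categorical coherence}: checking that these pointwise inverse equivalences assemble into genuine \emph{functors of $(\infty,2)$-categories} that are inverse to the restrictions of $K_*$, rather than merely a levelwise or $(\infty,1)$-categorical statement — in particular, tracking how modifications and the $\mathcal{I}\!\on{oplax}$-natural-transformation data on both sides correspond under $\on{S}(-)$ and $K_*$. The cleanest route is probably to avoid building the inverse by hand and instead argue that the adjunction $K^* \dashv K_*$ of \autoref{thm:rightlaxkan} (applied to $f = \on{id}_{\Delta^\op}$, $E = \mathcal{I}$) has unit and counit that become equivalences after restriction: the counit $K^* K_* \Rightarrow \on{id}$ is an equivalence on $\on{EDbl}$ because $K^*$ is the identity on objects and $K_*$ is fully faithful there (so $K_*$ restricted is fully faithful, making $K^* = $ its inverse on the image), and \autoref{prop:enhancedkanfactors} plus the nucleus computation above shows $K_*$ restricted is essentially surjective; dually one uses \autoref{prop:1dblcharac} for the unit side. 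Finally, compatibility of the two horizontal equivalences with the vertical inclusions — i.e.\ that the square commutes as a diagram of $(\infty,2)$-categories, not just that each square of functors commutes — follows since all four functors in sight are either restrictions of $K_*$ or full inclusions, and $K_*$ is defined uniformly on all of $\Fun(\Delta^\op,\bcat{C}\!\on{at}_{(\infty,1)})^{\mathcal{I}\!\on{oplax}}$; I would spell this last point out by noting the vertical inclusion $\CSeglax \hookrightarrow \on{Dbl}(\bcat{C}\!\on{at}_{(\infty,1)})^{\mathcal{I}\!\on{oplax}}$ is compatible with $K_*$ on the nose and that \autoref{prop:vertofendbl} gives the matching inclusion $\ESeg \hookrightarrow \on{EDbl}$ on the enhanced side.
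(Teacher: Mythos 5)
Your reduction to the double-categorical case and your essential surjectivity argument (the unit $\bcat{X}_\bullet \Rightarrow K_*\on{S}(\bcat{X})_\bullet$ being an equivalence via \autoref{rem:limitformula} and the definition of \emph{enhanced}) coincide with what the paper does. The gap is in fully faithfulness. Your ``cleanest route'' asserts that the counit $K^*K_*\scr{X}_\bullet \to \scr{X}_\bullet$ becomes an equivalence on the relevant subcategories ``because $K_*$ is fully faithful there'' --- but fully faithfulness of the restricted $K_*$ is precisely what is to be proved, so this is circular; and in fact the counit, which is the levelwise left adjoint $L_\bullet:\bcat{X}_\bullet \to \scr{X}_\bullet$, is genuinely \emph{not} an equivalence ($\bcat{X}_2$ is an $\mathcal{I}$oplax limit strictly larger than $\scr{X}_1\times_{\scr{X}_0}\scr{X}_1$). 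Note also that $K^*$ does not restrict to a functor $\on{EDbl}(\bcat{C}\!\on{at}_{(\infty,1)}) \to \on{Dbl}\left(\bcat{C}\!\on{at}_{(\infty,1)}\right)^{\mathcal{I}\!\on{oplax}}$, so there is no restricted adjunction to invert formally.

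What is actually needed, and what the paper proves, is that precomposition with $L_\bullet$ induces an equivalence $\on{Nat}_{\Delta^\op}(\scr{X}_\bullet,\scr{Y}_\bullet)^{\mathcal{I}\!\on{oplax}} \to \on{Nat}_{\Delta^\op}(\bcat{X}_\bullet,\scr{Y}_\bullet)^{\mathcal{I}\!\on{oplax}}$ whenever the \emph{target} $\scr{Y}_\bullet$ is itself a double $(\infty,1)$-category. The paper exhibits precomposition with the unit $R_\bullet$ as a one-sided inverse using \autoref{lem:adjonadj}, and then shows the remaining composite is equivalent to the identity by checking that every $\mathcal{I}$oplax transformation $f:\bcat{X}_\bullet \to \scr{Y}_\bullet$ sends the unit maps $x \to R_nL_n(x)$ to equivalences: this is detected after applying the maps $\scr{Y}_n \to \scr{Y}_1$ associated to the interval inclusions $[1]\to[n]$ (Segal condition on $\scr{Y}_\bullet$), and because interval inclusions are idle the corresponding lax squares for $f$ commute. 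This non-formal step --- the only place where the hypothesis that the target is a double category enters --- is absent from your proposal. Your ``by hand'' route correctly identifies the nucleus as the inverse on objects, but as you yourself note it leaves the functoriality and the two-sided comparison unestablished; the adjunction-based argument just described is how the paper avoids ever constructing the inverse as a functor.
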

   \begin{proof}
    Note that the vertical morphisms are fully faithful. In particular, the top horizontal morphism is fully faithful if and only if the bottom horizontal morphism is. Moreover since $K_*(\scr{X})_0 \isom \scr{X}_0$ it follows that if the bottom horizontal morphism is essentially surjective so is the top one. We might therefore, reduce our attention to the case of double $(\infty,1)$-categories,

     We first show that $K_*$ is fully faithful. Let $\scr{X}_\bullet$  be a double $(\infty,1)$-category and let $\bcat{X}_\bullet$ be the image of $\scr{X}_\bullet$ under $K_*$ which is an enhanced double $(\infty,1)$-category by \autoref{prop:enhancedkanfactors}. Since $K_*$ is a right adjoint it will be enough to show that for any other double $(\infty,1)$-category $\scr{Y}_{\bullet}$, the map obtained by precomposing with the counit $L_\bullet : \bcat{X}_\bullet \xlongrightarrow{} \scr{X}_\bullet$ induces an equivalence on mapping $(\infty,1)$-categories
     \[
       \mathcal{L}^*:\on{Nat}_{\Delta^\op}(\scr{X}_\bullet,\scr{Y}_\bullet)^{\mathcal{I}\!\on{oplax}} \xlongrightarrow{} \on{Nat}_{\Delta^\op}(\bcat{X}_\bullet,\scr{Y}_{\bullet})^{\mathcal{I}\!\on{oplax}}.
     \]
     We note that the morphism induced by the unit $R_\bullet: \scr{X}_\bullet \xlongrightarrow{} \bcat{X}_\bullet$ also provides a map 
     \[
       \mathcal{R}^* : \on{Nat}_{\Delta^\op}(\bcat{X}_\bullet,\scr{Y}_\bullet)^{\mathcal{I}\!\on{oplax}} \xlongrightarrow{} \on{Nat}_{\Delta^\op}(\scr{X}_\bullet,\scr{Y}_\bullet)^{\mathcal{I}\!\on{oplax}}. 
     \]
     We can now use \autoref{lem:adjonadj} to see that $\mathcal{R}^* \circ \mathcal{L}^* \isom \on{id}$ and to further obtain a natural transformation $\on{id} \xRightarrow{} \mathcal{L}^*  \circ \mathcal{R}^*$. To finish the proof we need to show that this later natural transformation is a pointwise equivalence. This amount to verify that for every $\mathcal{I}\!\on{oplax}$ natural transformation $f: \bcat{X}_\bullet \xlongrightarrow{} \scr{Y}_\bullet$, every $n\geq 0$ and every $x \in \bcat{X}_n$ the image of the map $\epsilon_x : x \xlongrightarrow{} R_n \circ L_n(x)$ under $f_n$ is an equivalence in $\scr{Y}_n$. Since $\scr{Y}_\bullet$ is a double $(\infty,1)$-category it will suffice to check the map is an equivalence in $\scr{Y}_n$ after applying the functors $\scr{Y}_n \xlongrightarrow{} \scr{Y}_1$ associated to the maps $\varphi_i: 1 \xlongrightarrow{} n$ selecting the objects $i,i+1$ for $i=0,\dots,n-1$. Note that in particular the maps $\varphi_i$ are interval inclusions so it follows from \autoref{lem:adjonadj} that we have a commutative diagram
     \[
       \begin{tikzcd}
         \bcat{X}_n \times \Delta^1 \arrow[r,"\Phi_n"] \arrow[d,swap,"\bcat{X}(\varphi_i) \times \on{id}"] & \bcat{X}_n \arrow[d,"\bcat{X}(\varphi_i)"] \arrow[r,"f_n"] & \scr{Y}_n \arrow[d,"\scr{Y}(\varphi_i)"] \\
         \bcat{X}_1 \times \Delta^1 \arrow[r,"\Phi_1"] & \bcat{X}_1 \arrow[r,"f_1"] & \scr{Y}_1
       \end{tikzcd}
     \]
     and thus $f_n(\epsilon_x)$ must be an equivalence in $\scr{Y}_n$.

     To finish the proof we need to show that given an enhanced double $(\infty,1)$-category $\bcat{E}_\bullet$ there exists some double $(\infty,1)$-category $\scr{E}_\bullet$ such that $K_*(\scr{E})_\bullet \isom \bcat{E}_\bullet$. Let $\scr{E}_\bullet$ be the nucleus of $\bcat{E}_\bullet$. We can use $\mathcal{I}\!\on{oplax}$ natural transformation $\bcat{E}_\bullet \xRightarrow{} \scr{E}_\bullet$ and the adjunction $K^* \dashv K_*$ to obtain a natural transformation
     \[
       \Omega: \bcat{E}_\bullet \xRightarrow{} K_*(\scr{E})_\bullet.
     \]
     Since $\bcat{E}_\bullet$ is enhanced (\autoref{def:enhanced}) it follows that for each $n\geq 0$ the map $\bcat{E}_n \xlongrightarrow{} K_*(\scr{E})_n$ is induced by the identity map between the limit diagrams defining both $\bcat{E}_n$ and $K_*(\scr{E})_n$ so we conclude by \autoref{rem:limitformula} that $\Omega$ is a pointwise weak equivalence.
   \end{proof}

   \begin{definition}\label{def:nucmorph}
     Let $\mu:\bcat{E}_\bullet \xRightarrow{} \bcat{B}_\bullet$ be a morphism in $\EDbl$ and let $\scr{E}_\bullet$ and $\scr{B}_\bullet$ be their corresponding nuclei. We say that $\mu$ is \emph{nuclear} it given $n\geq 2$ and $x \in \bcat{E}_n$ which which factors through $\scr{E}_n$ then it follows that $\mu(x) \in \scr{B}_n$. We denote by $\EDblnuc$ the $(\infty,2)$-category obtained by only considering the nuclear morphisms in $\EDbl$. In a totally analogous way we define the $(\infty,2)$-category $\ESegnuc$.
   \end{definition}

  \begin{lemma}\label{lem:nuccharac}
    Let $\scr{E}_\bullet,\mathscr{L}_\bullet \in \on{Dbl}\left(\bcat{C}\!\on{at}_{(\infty,1)}\right)^{\mathcal{I}\!\on{oplax}}$. Then given an $\mathcal{I}\!\on{oplax}$ natural transformation $\Phi: \scr{E}_\bullet \Rightarrow \mathscr{L}_\bullet$ it follows that $\Phi$ is natural (non-lax) if and only if the induced map on enhanced double $(\infty,1)$-categories $K_*(\Phi)$ is nuclear (see \autoref{def:nucmorph}).
  \end{lemma}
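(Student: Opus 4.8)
The plan is to unwind the definitions on both sides and reduce the claim to a statement about components of the unit and counit natural transformations of the adjunction $K^* \dashv K_*$, exploiting the explicit limit-formula description of $K_*$ from \autoref{rem:limitformula} together with the structural results \autoref{prop:1dblcharac} and \autoref{lem:adjonadj}. First I would set up notation: write $\bcat{E}_\bullet = K_*(\scr{E})_\bullet$ and $\bcat{L}_\bullet = K_*(\mathscr{L})_\bullet$ for the enhanced double $(\infty,1)$-categories obtained from $\scr{E}_\bullet$ and $\mathscr{L}_\bullet$, and recall that by \autoref{prop:enhancedkanfactors} these are indeed enhanced and that their nuclei are $\scr{E}_\bullet$ and $\mathscr{L}_\bullet$ respectively. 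By \autoref{prop:1dblcharac} (and the fibrational/limit description), for each $n$ the category $\bcat{E}_n$ is the $\mathcal{I}\!\on{oplax}$-limit of $\scr{E}_\bullet \circ \pi_n$, and $\scr{E}_n \subset \bcat{E}_n$ is exactly the ordinary limit (the full subcategory picked out by the right adjoints $R_n$), the inclusion $i_n = R_n$ being right adjoint to the leg $\alpha_{\on{id}_n} = L_n$. So being nuclear for $K_*(\Phi)$ amounts to: for every $n \geq 2$ and every $x \in \bcat{E}_n$ lying in the essential image of $R_n$, the value $K_*(\Phi)_n(x)$ again lies in the essential image of $R_n^{\mathscr{L}}$.

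Next I would translate the condition that $\Phi$ is non-lax into a statement at the level of the enhanced models. The functor $K_*(\Phi)_n : \bcat{E}_n \to \bcat{L}_n$ is, under the limit formula, induced by post-composing the cone legs; concretely $\alpha_u^{\mathscr{L}} \circ K_*(\Phi)_n \simeq \mathscr{L}(\text{forget})\circ \Phi_a \circ \alpha_u^{\scr{E}}$ for $u : a \to n$ in $\Delta_{/n}$, where I abuse notation for the structure maps. Restricting to objects of $\scr{E}_n$ (i.e. precomposing with $i_n = R_n$) and using $\alpha_{\on{id}_n}\circ R_n \simeq \on{id}$, the restriction $K_*(\Phi)_n \circ R_n$ is computed by the legs $\alpha_u^{\mathscr{L}}\circ K_*(\Phi)_n \circ R_n \simeq \mathscr{L}(u)\circ \Phi_n$ (using \autoref{lem:adjonadj} to see that the relevant $2$-cells in the cone that control laxness are built from the unit $\on{id}\Rightarrow R_nL_n$). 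Therefore $K_*(\Phi)_n(R_n x)$ lands in the ordinary-limit subcategory $\mathscr{L}_n \subset \bcat{L}_n$ precisely when all the $2$-cells $\theta_u$ in the partially lax cone defining $K_*(\Phi)_n(R_n x)$ are invertible, and by the same mechanism as in the proof of \autoref{prop:1dblcharac} these $2$-cells are invertible exactly when the component $\Phi$-compatibility squares for every interval inclusion and degeneracy are already strictly commuting at the level $\scr{E}$ — but $\Phi$ was already $\mathcal{I}$oplax, so these already commute; the content is the surjections-and-remaining maps, i.e. the squares that distinguish $\mathcal{I}$oplax from genuinely natural transformations. Unwinding, the naturality square of $\Phi$ at an arbitrary $u : m \to n$ commutes if and only if the corresponding $\theta_u$ vanishes, which is exactly the nuclearity condition at level $n$.

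Concretely the proof would run: (1) assume $\Phi$ is non-lax; then for every $u$ the $2$-cell $\theta_u$ in the cone is an equivalence, so $K_*(\Phi)_n(R_n x)$ is an ordinary limit cone, hence lies in $\mathscr{L}_n$, giving nuclearity. (2) Conversely assume $K_*(\Phi)$ is nuclear; apply this at each $n \geq 2$ with $x$ ranging over $R_n$-images, deduce that each $\theta_u$ becomes invertible after applying every interval-inclusion leg, then use that $\mathscr{L}_\bullet$ is a double $(\infty,1)$-category — so a $2$-cell in $\mathscr{L}_n$ is invertible iff it is invertible after applying all $\mathscr{L}(\varphi_i)$ for the interval inclusions $\varphi_i : 1 \to n$, exactly as in the last paragraph of the proof of \autoref{thm:enhanced} — to conclude each $\theta_u$ is itself invertible; since the surjective maps are generated under composition together with interval inclusions by \autoref{lem:iclarified}, this forces the $\Phi$-naturality squares at \emph{all} $u \in \Delta$ to commute, i.e. $\Phi$ is non-lax. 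The main obstacle I expect is bookkeeping: making precise, at the level of the $\mathcal{I}\!\on{oplax}$-limit cone, which $2$-cell $\theta_u$ corresponds to which naturality square of $\Phi$, and checking that the correspondence "$\theta_u$ invertible $\iff$ square at $u$ commutes" is faithful rather than just implication in one direction — this is where one must carefully invoke \autoref{lem:adjonadj} (the mate compatibility of $L_\bullet$ and $R_\bullet$) and the completeness argument for double $(\infty,1)$-categories. Everything else is formal manipulation with the adjunction $K^* \dashv K_*$ and the definitions of nuclear morphism and of the oplax Segal condition.
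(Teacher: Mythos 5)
Your proposal is correct in substance but follows a genuinely different, more computational route than the paper. The paper's proof is a two-line formal argument: by \autoref{prop:1dblcharac}, nuclearity of $K_*(\Phi)$ is exactly the condition that $K_*(\Phi)$ restricts along the fully faithful inclusions $i_\bullet$ to a map $\widetilde{\Phi}:\scr{E}_\bullet\Rightarrow\mathscr{L}_\bullet$ between the nuclei, which is automatically non-lax since it is a restriction of the genuine natural transformation $K_*(\Phi)$; pasting the resulting square with the counit square for $r_\bullet$ and using $r\circ i\simeq\on{id}$ identifies $\Phi$ with $\widetilde{\Phi}$, so $\Phi$ is natural. This sidesteps entirely the identification you flag as your main obstacle, namely matching each lax structure cell $\theta_u$ of the $\mathcal{I}\!\on{oplax}$ limit cone for $K_*(\Phi)_n(i_n y)$ with the lax naturality cell of $\Phi$ at $u$ — that identification is believable and can be made precise from the fibrational description of $f_*$, but it is real work and is exactly what the paper's diagram chase avoids. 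Two smaller remarks on your write-up: the detour through ``invertibility is detected on the interval inclusions $\varphi_i:1\to n$'' is unnecessary, since lying in the essential image of the fully faithful right adjoint $i_n$ already forces the unit $\Xi_x$, and hence each whiskered cell $\theta_u$, to be invertible directly; and the appeal to \autoref{lem:iclarified} to ``generate'' the non-idle maps is not quite the right mechanism — what you actually need is that the cone at level $n$ is indexed by \emph{all} $u:a\to n$ in $\Delta_{/n}$ (so nuclearity at each $n\geq 2$ hits every naturality square with target $[n]$), together with the observation that every morphism of $\Delta$ with target $[0]$ or $[1]$ is already idle, so those squares commute by hypothesis. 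With those adjustments your argument goes through, and it has the mild virtue of making explicit which lax cells of $\Phi$ are being killed, at the cost of considerably more bookkeeping than the paper's proof.
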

  \begin{proof}
    As a consequence of \autoref{prop:1dblcharac} we see that condition $\star)$ is equivalent to the existence the natural transformation $\widetilde{\Phi}$ appearing in the left-most square in the commutative diagram
    \[
      \begin{tikzcd}
        \scr{E}_\bullet \arrow[d,"\widetilde{\Phi}"] \arrow[r,"i_{\scr{E}}"] & K_*(\scr{E})_\bullet \arrow[d,"K_*(\Phi)"] \arrow[r,"r_{\scr{E}}"] & \scr{E}_\bullet \arrow[d,"\Phi"] \\
        \mathscr{L}_\bullet \arrow[r,"\epsilon_{\mathscr{L}}"] & K_*(\mathscr{L})_\bullet \arrow[r,"\eta_{\mathscr{L}}"] & \mathscr{L}_\bullet
      \end{tikzcd}
    \]
    We know that $r \circ i$ is equivalent to the identity natural transformation so we conclude that $\Phi$ must be natural as well.
  \end{proof}

  \begin{corollary}\label{cor:enhanced}
    The commutative diagram  given in \autoref{thm:enhanced} restricts to a diagram
    \[
     \begin{tikzcd}
       \CSeg \arrow[r,"\simeq"] \arrow[d] & \ESegnuc \arrow[d] \\
       \on{Dbl}(\bcat{C}\!\on{at}_{(\infty,1)}) \arrow[r,"\simeq"] & \EDblnuc
     \end{tikzcd}
    \]
  where the vertical morphisms are fully faithful and the horizontal  are equivalences of $(\infty,2)$-categories.
  \end{corollary}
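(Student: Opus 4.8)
The plan is to deduce Corollary \ref{cor:enhanced} directly from Theorem \ref{thm:enhanced} together with Lemma \ref{lem:nuccharac}, by checking that passing to nuclear morphisms on the enhanced side corresponds exactly to passing to non-lax morphisms on the $\mathcal{I}\!\on{oplax}$ side. First I would recall the basic setup: by Theorem \ref{thm:enhanced} we have a commutative square whose horizontal maps $K_*$ are equivalences of $(\infty,2)$-categories and whose vertical maps are fully faithful. The $(\infty,2)$-categories $\CSeg$ and $\on{Dbl}(\bcat{C}\!\on{at}_{(\infty,1)})$ are obtained from $\CSeglax$ and $\on{Dbl}(\bcat{C}\!\on{at}_{(\infty,1)})^{\mathcal{I}\!\on{oplax}}$ by discarding the non-identity lax data on $1$-morphisms (keeping the same objects), and dually $\ESegnuc$, $\EDblnuc$ are obtained from $\ESeg$, $\EDbl$ by restricting to nuclear morphisms while keeping the same objects and $2$-morphisms. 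So the content of the corollary is that the equivalences $K_*$ identify these sub-$(\infty,2)$-categories.

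The key step is the following: the equivalence $K_*\colon \on{Dbl}(\bcat{C}\!\on{at}_{(\infty,1)})^{\mathcal{I}\!\on{oplax}} \xrightarrow{\simeq} \EDbl$ carries, on objects, a double $(\infty,1)$-category $\scr{E}_\bullet$ to the enhanced double $(\infty,1)$-category $K_*(\scr{E})_\bullet$ whose nucleus is $\scr{E}_\bullet$ itself (as noted inside the proof of Theorem \ref{thm:enhanced}); so objects match up automatically, and since all the $(\infty,2)$-categories in sight have the same objects and $2$-morphisms as their ``lax''/``enhanced'' counterparts, it suffices to check that a $1$-morphism $\Phi\colon\scr{E}_\bullet\Rightarrow\mathscr{L}_\bullet$ of $\on{Dbl}(\bcat{C}\!\on{at}_{(\infty,1)})^{\mathcal{I}\!\on{oplax}}$ is (equivalent to) a non-lax natural transformation if and only if $K_*(\Phi)$ is nuclear. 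This is precisely the statement of Lemma \ref{lem:nuccharac}. Therefore the equivalence $K_*$ restricts to an equivalence $\on{Dbl}(\bcat{C}\!\on{at}_{(\infty,1)})\xrightarrow{\simeq}\EDblnuc$, and the same argument applied to complete Segal objects (using \autoref{prop:enhancedprev}, which lets us recognise enhanced Segal objects among enhanced double $(\infty,1)$-categories by the condition that $\bcat{X}_0$ is an $\infty$-groupoid, a condition preserved and detected since $K_*(\scr{X})_0\isom\scr{X}_0$) gives the equivalence $\CSeg\xrightarrow{\simeq}\ESegnuc$. Commutativity of the restricted square follows from that of the square in Theorem \ref{thm:enhanced}, and full faithfulness of the vertical maps is inherited from the inclusions $\on{Dbl}(\bcat{C}\!\on{at}_{(\infty,1)})\hookrightarrow\on{Dbl}(\bcat{C}\!\on{at}_{(\infty,1)})^{\mathcal{I}\!\on{oplax}}$ etc., which are fully faithful essentially by definition (they are subcategory inclusions that are bijective on objects, full on $2$-morphisms, and faithful on $1$-morphisms).

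The one point requiring genuine care, and the main obstacle, is to make sure that the comparison of $1$-morphisms in Lemma \ref{lem:nuccharac} is compatible with compositions and with the $2$-morphism structure, so that it genuinely assembles into an equivalence of $(\infty,2)$-categories rather than merely a bijection on equivalence classes of $1$-morphisms. Concretely, I would argue that $K_*$ being an equivalence of $(\infty,2)$-categories, it induces on each mapping $(\infty,1)$-category an equivalence $\on{Nat}_{\Delta^\op}(\scr{E}_\bullet,\mathscr{L}_\bullet)^{\mathcal{I}\!\on{oplax}}\xrightarrow{\simeq}\on{Nat}_{\Delta^\op}(K_*\scr{E}_\bullet,K_*\mathscr{L}_\bullet)$ (between enhanced double categories), and Lemma \ref{lem:nuccharac} identifies the full subgroupoid (resp.\ full sub-$(\infty,1)$-category, once one also accounts for $2$-morphisms, which are unrestricted on both sides) spanned by non-lax transformations with that spanned by nuclear transformations; since taking such full subcategories on all mapping objects of an $(\infty,2)$-category produces a genuine sub-$(\infty,2)$-category as long as the selected $1$-morphisms are closed under composition --- which they are, nuclear morphisms being closed under composition essentially because the nucleus construction $\on{S}(-)$ is functorial --- the restriction of $K_*$ is again an equivalence. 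I would spell out the closure-under-composition point for nuclear morphisms (it is immediate from \autoref{def:nucmorph} and functoriality of the nucleus) and then invoke the general principle that an equivalence of $(\infty,2)$-categories restricts to an equivalence between corresponding ``wide'' sub-$(\infty,2)$-categories determined by a composition-closed class of $1$-morphisms stable under the equivalence.
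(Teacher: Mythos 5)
Your proposal is correct and follows essentially the same route as the paper: deduce essential surjectivity on objects and full faithfulness on mapping $(\infty,1)$-categories from \autoref{thm:enhanced}, reduce to the bottom horizontal arrow via full faithfulness of the vertical inclusions, and use \autoref{lem:nuccharac} to identify the nuclear morphisms with the images of the non-lax natural transformations, i.e.\ to get essential surjectivity on mapping $(\infty,1)$-categories. The additional care you take about closure of nuclear morphisms under composition and compatibility with $2$-morphisms is sound but is left implicit in the paper's argument.
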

  \begin{proof}
    It follows from \autoref{thm:enhanced} that the restriction of the functor $K_*$ will remain essentially surjective. We also note that the vertical morphisms are again fully faithful so we can restrict our attention to showing that the bottom horizontal morphism is fully faithful.

     It is also a consequence of \autoref{thm:enhanced} that the functor $K_*$ restricted to $\on{Dbl}(\bcat{C}\!\on{at}_{(\infty,1)})$ induces a fully faithful functor on mapping $(\infty,1)$-categories. We conclude from \autoref{lem:nuccharac} that $K_*$ is also essentially surjective on mapping $(\infty,1)$-categories.
  \end{proof}

 \section{The enhanced square functor}
 \begin{definition}\label{def:sq}
   Let $\bcat{C}$ be a fibrant marked-scaled simplicial set. We define a functor
   \[
     \mathbf{Sq}_{\on{E}}(\bcat{C})_\bullet: \Delta^\op \xlongrightarrow{} \bcat{C}\!\on{at}_{(\infty,1)}, \enspace n \mapsto \Sqe(\bcat{C})_n= \Fun^{\mathbf{opgr}}(\Delta^n_\flat,\bcat{C})^{\leq 1}
   \]
   where $(-)^{\leq 1}$ is the underlying $(\infty,1)$-category functor given in \autoref{def:leq}. We call this functor the \emph{enhanced square functor}. 
 \end{definition}

\begin{definition}
  Let $\bcat{C}$ be a fibrant marked-scaled simplicial set. We define a functor
   \[
     \mathbf{Sq}(\bcat{C})_\bullet: \Delta^\op \xlongrightarrow{} \bcat{C}\!\on{at}_{(\infty,1)}, \enspace n \mapsto \mathbf{Sq}(\bcat{C})_n= \Fun^{\mathbf{opgr}}(\Delta^n_\sharp,\bcat{C})^{\leq 1}
   \]
   which we call the \emph{ordinary} square functor.
\end{definition}

 \begin{remark}\label{rem:osqdouble}
   It is easy to see that the ordinary square functor $\mathbf{Sq}(\bcat{C})_\bullet$ defines a double $(\infty,1)$-category.
 \end{remark}

 \begin{definition}
   We denote by $\bcat{B}\!\on{icat}_\infty$ the $\on{Set}_\Delta^+$-enriched category defined as follows:
   \begin{itemize}
     \item The objects are given by fibrant marked-scaled simplicial sets.
     \item Given a pair of objects $\bcat{C},\bcat{D}$ we have a marked simplicial set of maps between $\bcat{C}$ and $\bcat{D}$ given by $\Fun(\bcat{C},\bcat{D})^{\leq 1}$ (see \autoref{def:leq}).
   \end{itemize}
   We observe that since $\Fun(\bcat{C},\bcat{D})^{\leq 1}$ is always an $(\infty,1)$-category for every pair object it follows that $\bcat{B}\!\on{icat}_\infty$ is a fibrant $\on{Set}_\Delta^+$-enriched category which models the $(\infty,2)$-category of $(\infty,2)$-categories, functors and natural transformations.
 \end{definition}

 \begin{remark}\label{rem:enrichedmodelforfunct}
    Recall that we can model the $(\infty,2)$-category $\on{Fun}(\Delta^\op,\iCatt)$ as a fibrant $\on{Set}_\Delta^+$-enriched category as follows:
    \begin{itemize}
      \item The objects are $\on{Set}_\Delta^+$-enriched functors $X_\bullet:\Delta^\op \xlongrightarrow{} \on{Set}_\Delta^+$ which are cofibrant and fibrant for the injective model structure on $\on{Fun}(\Delta^\op,\on{Set}_\Delta^+)$. In particular for every $n\geq 0$ we have that $X_n$ is an $(\infty,1)$-category.
      \item Given a pair of objects $X_\bullet$ and $Y_\bullet$ we define $\on{Nat}_{\Delta^\op}(X_\bullet,Y_\bullet)$ as the (fibrant) marked simplicial set characterised by the universal property which identifies a map of marked simplicial sets $K \xlongrightarrow{} \on{Nat}_{\Delta^\op}(X_\bullet,Y_\bullet)$ with a natural transformation of $\on{Set}_\Delta^+$-enriched functors $(X \times \underline{K})_\bullet \xRightarrow{} Y_\bullet$ where
      \[
        (X \times \underline{K})_n= X_n \times K, \enspace \enspace \text{ For }n\geq 0.
      \]
    \end{itemize}
  \end{remark}

  \begin{remark}\label{rem:sqenriched}
   Let $\mathscr{L} \xlongrightarrow{} \Fun(\bcat{C},\bcat{D})^{\leq 1}$ then we can  define a natural transformation $(\Sqe(\bcat{C}) \times \underline{\mathscr{L}})_\bullet \xRightarrow{} \Sqe(\bcat{D})_\bullet$ by sending a simplex $(\sigma_1,\sigma_2): \Delta^k  \xlongrightarrow{} \Sqe(\bcat{C})_n \times \mathscr{L}$ to the composite
   \[
     \Delta^n_\flat \tensor \Delta^k_\sharp \xlongrightarrow{(\sigma_1,\sigma_2 \circ p_2)} \bcat{C}\times \mathscr{L} \xlongrightarrow{} \bcat{D}
   \]
   where $p_2$ denotes the projection map $ \Delta^n_\flat \tensor \Delta^k_\sharp \xlongrightarrow{} \Delta^k_\sharp$. It follows that we have a functor of $(\infty,2)$-categories
   \[
     \Sqe: \bcat{B}\!\on{icat}_\infty \xlongrightarrow{} \on{Fun}(\Delta^\op,\iCatt).
   \]

  Essentially the same discussion also shows that we have a functor of $(\infty,2)$-categories:
   \[
     \mathbf{Sq}: \bcat{B}\!\on{icat}_\infty \xlongrightarrow{} \on{Dbl}(\bcat{C}\!\on{at}_{(\infty,1)}).
   \]
 \end{remark}

The goal of this section is to establish the following two facts:
\begin{itemize}
  \item For every $(\infty,2)$-category $\bcat{C}$ the functor $\Sqe(\bcat{C})_\bullet$ defines an enhanced double $(\infty,1)$-category.
  \item The functor $\Sqe$ is a fully faithful functor of $(\infty,2)$-categories.
\end{itemize}

\begin{definition}\label{def:expandedDelta}
 Let $D^n$ be the poset of pairs $(i,S)$ where $S\subseteq [n]$ with $S \neq 0$ and $i \in S$. We declare $(i,S) < (j,T)$ if $i \leq j$ and $S \subseteq T$. We define a marked-scaled simplicial set $\mathcal{D}^n \in \on{Set}_\Delta^{\mathbf{ms}}$ as follows:
 \begin{enumerate}
   \item The underlying simplicial set of $\mathcal{D}^n$ is the simplicial subset of $D^n$ consisting in those simplices 
   \[
      (i_0,S_0) \xlongrightarrow{} (i_1,S_1) \xlongrightarrow{} \cdots (i_n,S_n)
    \] 
    such that $i_{j} \in S_0$ for $0 < j \leq n$.
  \item An edge $(i,S) \xlongrightarrow{} (j,T)$ is declared marked if $i=j$.
  \item A  2-simplex $(i,S) \xlongrightarrow{} (j,T) \xlongrightarrow{} (\ell, W)$ is declared to be thin if at least one of the following conditions is satisfied:
  \begin{enumerate}
    \item The inclusion $S \xlongrightarrow{} T$ is idle (see \autoref{def:deltaidle}).
    \item We have that $j=\ell$. 
  \end{enumerate}
 \end{enumerate}
 It is easy to see that the categories $\mathcal{D}^n$ assemble into a cosimplicial object $\mathcal{D}^\bullet: \Delta \xlongrightarrow{} \on{Set}_\Delta^{\mathbf{ms}}$.
\end{definition}

\begin{definition}\label{def:alphanOO}
  Let $\Nsc(\OO^n)=\bcat{O}^n$ be the scaled nerve of the 2-category defined in \autoref{def:OI} where we view $\bcat{O}^n$ as a marked-scaled simplicial set by declaring every equivalence to be marked. We construct a map
  \[
    \alpha_n: \mathcal{D}^n \xlongrightarrow{} \bcat{O}^n
  \]
  which sends a $k$-simplex $\rho$ specified by $\{(i_j,S_j)\}_{j=0}^k$ to the $k$-simplex determined by the map $f_{\rho}:\OO^k \xlongrightarrow{} \OO^n$, which sends a vertex $u \in [k]$ to $i_u$ and sends $U \in \OO^k(u,v)$ where $U=\{u,v\}$ to $S_{u} \cap [i_u,i_v]$. We remind the reader that the functor $f_{\rho}$ is uniquely determined by this data since we have that given $u<y<v$ it follows that
  \[
    S_u \cap [i_u,i_v]  \subseteq \left( S_u \cap [i_u,i_y]\right) \cup \left( S_y \cap [i_y,i_v]\right).
  \]
  This defines $\alpha_n$ at the level of the underlying simplicial sets. We further check that $\alpha_n$ maps marked edges to degenerate edges. We finish the definition of $\alpha_n$ by showing that our map preserves thin 2-simplices. This amounts to showing that given $(i,S) \xlongrightarrow{} (j,T) \xlongrightarrow{} (\ell, W)$ satisfying either condition 1 or condition 2 in \autoref{def:expandedDelta} it follows that
  \[
    S \cap [i,\ell]= ( S \cap [i,j] ) \cup (T \cap [j,\ell]).
  \]
  If $S \xlongrightarrow{} T$ is idle it follows that if $x \in T \cap [j,\ell]$ then since by definition we have that $\ell \in S$ it follows that $x \in S \cap [i,\ell]$ which proves our claim. If $j=\ell$ the claim is obvious.

  One can easily verify that the maps $\{\alpha_n\}_{n \in \mathbb{N}}$ assemble into a natural transformation of cosimplicial objects $\alpha_\bullet: \mathcal{D}^\bullet \xRightarrow{} \bcat{O}^\bullet$.
\end{definition} 

\begin{lemma}\label{lem:expandeddeltaplus}
  Let $\mathcal{D}^n_+$ be the marked-scaled simplicial set obtained from $\mathcal{D}^n$ by marking (resp. scaling) an edge (resp. triangle) if its image in $\bcat{O}^n$ under $\alpha_n$ is marked (resp. scaled). Then the associated map $\mathcal{D}^n  \xlongrightarrow{} \mathcal{D}^n_+$ is a weak equivalence of marked-scaled simplicial sets.
\end{lemma}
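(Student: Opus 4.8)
The plan is to prove that the cofibration $\mathcal{D}^n \to \mathcal{D}^n_+$ is a trivial cofibration, by rebuilding $\mathcal{D}^n_+$ from $\mathcal{D}^n$ through a transfinite string of pushouts along marked-scaled anodyne maps; a trivial cofibration is in particular a weak equivalence. First I would record that essentially nothing changes except the scaling: the underlying simplicial sets of $\mathcal{D}^n$ and $\mathcal{D}^n_+$ coincide, and so do their marked edges, since the $\alpha_n$-image of an edge $(i,S) \to (j,T)$ is the morphism $S \cap [i,j]$ of $\OO^n$, which — the mapping categories of $\OO^n$ being posets — is an equivalence exactly when it is a degenerate edge of $\bcat{O}^n$, i.e. exactly when $i=j$, i.e. exactly when the edge is already marked in $\mathcal{D}^n$. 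Unravelling \autoref{def:alphanOO} then shows that a non-degenerate $2$-simplex $\sigma = \bigl[(i_0,S_0) \to (i_1,S_1) \to (i_2,S_2)\bigr]$ is thin in $\mathcal{D}^n_+$ iff $S_0$ and $S_1$ agree on the interval $[i_1,i_2]$, whereas it is thin in $\mathcal{D}^n$ iff $S_0 \hookrightarrow S_1$ is idle or $i_1=i_2$ (and the latter implies the former, consistently with $\mathcal{D}^n \subseteq \mathcal{D}^n_+$). So it remains to adjoin the \emph{extra} triangles: those $\sigma$ with $S_0 \cap [i_1,i_2] = S_1 \cap [i_1,i_2]$, with $i_1 < i_2$, and with $S_0 \hookrightarrow S_1$ not idle.

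Next I would filter the inclusion by the extra triangles, ordered lexicographically by an auxiliary complexity triple such as $\bigl(\,|S_2|,\ -i_0,\ i_2 - i_1\,\bigr)$, increasing, adjoining at each stage all extra triangles with the next value of the index. To adjoin such a $\sigma$ I would exhibit it as the unique non-thin face of a suitable $3$-simplex $\tau$ of $\mathcal{D}^n$ whose other faces are already thin — either by conditions (a)/(b) of \autoref{def:expandedDelta}, or because they are extra triangles of strictly smaller index, already added at an earlier stage — and then scale $\sigma$ via a pushout along a saturation map $(\Delta^3, E, U_k) \to (\Delta^3, E, \sharp)$, which is marked-scaled anodyne with $E$ the set of already-marked edges occurring (all of the form $i=j$), built from maps of type \ref{ms:innerhorn}, \ref{ms:wonky4} and \ref{ms:composeacrossthin} as in \autoref{rem:saturation}. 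Concretely: when $S_1 \subsetneq S_2$ one takes $\tau = \bigl[(i_0,S_0) \to (i_1,S_1) \to (i_2,S_1) \to (i_2,S_2)\bigr]$, whose faces $d_0\tau, d_1\tau$ are thin because $j = \ell = i_2$ and whose face $d_3\tau$ is an extra triangle of smaller $|S_2|$; when $S_1 = S_2$ but $i_0 < i_1$ one inserts the vertex $(i_1,S_0)$, reducing $\sigma$ to an extra triangle with larger first coordinate $i_0$; when moreover $i_0 = i_1$ one inserts a vertex $(m, S_1)$ with $m$ the least element of $S_0 \cap (i_0,i_2]$, reducing $i_2 - i_1$; and the remaining minimal triangles, where $S_0$ has no element strictly between $i_0$ and $i_2$, are handled by a symmetric move extending the simplex past $i_2$ (or before $i_0$) toward the offending element of $S_1 \setminus S_0$ that witnesses non-idleness.

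The main obstacle is exactly this boundary bookkeeping: the naive auxiliary $3$-simplex degenerates precisely for the ``short'' extra triangles, so the argument is a finite nested induction (reflected in the coordinates of the complexity index) together with a small number of ad hoc auxiliary simplices for the minimal case. At each step one must verify that the auxiliary simplex actually lies in $\mathcal{D}^n$ — that it is a valid chain in $D^n$ satisfying the constraint $i_j \in S_0$ — and that all edges and faces used are among those already present/marked, so that the pushout is genuinely marked-scaled anodyne rather than merely a trivial cofibration. Granting this, $\mathcal{D}^n \to \mathcal{D}^n_+$ is a transfinite composite of pushouts of marked-scaled anodyne maps, hence a weak equivalence of marked-scaled simplicial sets.
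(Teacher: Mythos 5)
Your setup and your first two reduction steps coincide with the paper's: only the scaling changes, the extra triangles $(i,S)\xlongrightarrow{}(j,T)\xlongrightarrow{}(\ell,W)$ are exactly those with $S\cap[j,\ell]=T\cap[j,\ell]$, and the auxiliary $3$-simplices $(i,S)\to(j,T)\to(\ell,T)\to(\ell,W)$ and $(i,S)\to(j,S)\to(j,T)\to(\ell,T)$ reduce everything to scaling triangles of the form $\sigma:(j,S)\to(j,T)\to(\ell,T)$. The gap is in your endgame. Take $n=3$, $S=\{0,1,3\}$, $T=\{0,1,2,3\}$, $j=0$, $\ell=1$: then $S\cap(j,\ell)=\emptyset$, so your insertion of $(m,T)$ with $m\in S\cap(j,\ell]$ is degenerate, and in fact there is \emph{no} vertex of $\mathcal{D}^n$ strictly between $(j,S)$ and $(j,T)$, nor between $(j,T)$ and $(\ell,T)$ (the constraint that all first coordinates lie in $S$ rules everything out). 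Hence $\sigma$ is not an inner face $d_1$ or $d_2$ of any nondegenerate $3$-simplex, and is not a face of type $\Delta^{\{0,1,4\}}$ or $\Delta^{\{0,3,4\}}$ of any admissible $4$-simplex, so no pushout along \ref{ms:saturation} or \ref{ms:wonky4} can ever scale it. Your proposed "symmetric move extending past $\ell$ (or before $j$) toward the offending element" makes things worse: appending $(3,T)$ in the example produces the faces $(0,S)\to(0,T)\to(3,T)$ and $(0,S)\to(1,T)\to(3,T)$, neither of which is thin in $\mathcal{D}^3_+$ because $S\cap[0,3]\neq T\cap[0,3]$, so the auxiliary simplex has three non-thin faces; and exhibiting $\sigma$ as an outer face $d_0$ or $d_3$ requires a marked edge in position $\{0,1\}$ resp. $\{2,3\}$, which these extensions do not provide.

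The missing idea is to \emph{prepend} the vertex $(j,S\cap[j,\ell])$, whose edge to $(j,S)$ is marked because the first coordinates agree, and to apply the outer saturation of \autoref{lem:0saturations}~$i)$ to the $3$-simplex $(j,S\cap[j,\ell])\to(j,S)\to(j,T)\to(\ell,T)$. Its remaining faces are scaled beforehand by \ref{ms:wonky4}-pushouts against the $3$-simplices linking them to the degenerate row $(j,S\cap[j,\ell])\to(j,T\cap[j,\ell])\to(\ell,T\cap[j,\ell])$ --- this is where the defining identity $S\cap[j,\ell]=T\cap[j,\ell]$ is actually spent. This marked-edge mechanism is unavoidable for the minimal extra triangles, so as written your induction has an unreachable base case.
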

\begin{proof}
  First we observe that $\mathcal{D}^n_+$ and $\mathcal{D}^n$ differ only in the scaling. Let $\sigma:(i,S) \xlongrightarrow{} (j,T) \xlongrightarrow{} (\ell, W)$ be a simplex such that $\alpha_n(\sigma)$ is thin in $\bcat{O}^n$. We note that this happens precisely if $S \cap [j,\ell]= T \cap [j,\ell]$ and we wish to show that we can scale $\sigma$ using pushouts along anodyne morphisms.

  We have a 3-simplex $\rho:(i,S) \xlongrightarrow{} (j,T) \xlongrightarrow{} (\ell,T) \xlongrightarrow{} (\ell,W)$ where every face is scaled except possibly the face skipping the vertex 2 and the face skipping the final vertex. Therefore, we see that if we can scale the face skipping the final vertex the result will follow after using a pushout along a morphism of type \ref{ms:wonky4} in \autoref{def:msanodyne}. A totally analogous argument, now using the 3-simplex $\theta:(i,S) \xlongrightarrow{} (j,S) \xlongrightarrow{} (j,T) \xlongrightarrow{} (\ell,T)$, shows that it will be enough to succesfully scale the simplex $(j,S) \xlongrightarrow{} (j,T) \xlongrightarrow{} (\ell,T)$. 

  We look now at a commutative diagram
  \[
    \begin{tikzcd}
      (j,S \cap [j,\ell]) \arrow[r] \arrow[d] & (j,T \cap [j,\ell]) \arrow[d] \arrow[r] & (\ell, T \cap [j,\ell]) \arrow[d] \\
      (j,S) \arrow[r] & (j,T) \arrow[r] & (\ell,T)
    \end{tikzcd}
  \]
 and note that the top horizontal row is given by a degenerate simplex. It follows that we can scale every simplex in this diagram, except the bottom horizontal row, using morphisms of type \ref{ms:wonky4} in \autoref{def:msanodyne}. To finisn the proof we look at the 3-simplex
 \[
   \Xi: (j,S \cap [j,\ell]) \xlongrightarrow{}  (j,S)  \xlongrightarrow{} (j,T) \xlongrightarrow{} (\ell,T)
 \]
 and note that we can scale the remaing to simplex using a pushout along a morphism of type $i)$ in \autoref{lem:0saturations}.
 \end{proof}

\begin{proposition}\label{prop:alphaessesurj}
  For every $n\geq 0$ we have that the map $\alpha_n: \mathcal{D}^n \xlongrightarrow{} \bcat{O}^n$ is a weak equivalence of marked-scaled simplicial sets.
\end{proposition}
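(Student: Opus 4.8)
The plan is to reduce the claim to a statement about the mapping $(\infty,1)$-categories of $\bcat{O}^n$ and then to recognise those mapping categories as nerves of explicit posets, using an inductive dévissage on $n$. By \autoref{lem:expandeddeltaplus} it suffices to show that $\alpha_n: \mathcal{D}^n_+ \xlongrightarrow{} \bcat{O}^n$ is a bicategorical equivalence, where now both sides carry the scaling pulled back along $\alpha_n$, so $\alpha_n$ is scaling-preserving and scaling-reflecting by construction. Since $\mathcal{D}^n_+$ and $\bcat{O}^n$ have the same set of objects (the elements of $[n]$, via $(i,\{i\}) \mapsto i$ on one side and the defining poset $[n]$ on the other, and $\alpha_n$ is the identity on objects), it is enough by the standard criterion for bicategorical equivalences (a scaling-preserving, essentially surjective map which is an equivalence on all mapping $(\infty,1)$-categories) to prove that for every pair $i \leq j$ the induced functor
\[
  \mathcal{D}^n_+(i,j) \xlongrightarrow{} \bcat{O}^n(i,j)
\]
is an equivalence of $(\infty,1)$-categories. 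By \autoref{def:mappingcat} (or the explicit computation of mapping categories of scaled nerves of $2$-categories), the right-hand side is the nerve of the poset $\OO^n(i,j)$ of subsets $S \subseteq [n]$ with $\min S = i$, $\max S = j$, ordered by inclusion.

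First I would unwind the left-hand side. A $k$-simplex of $\mathcal{D}^n_+(i,j)$ is, by \autoref{def:mappingcat}, a $(k+1)$-simplex of $\mathcal{D}^n$ of the form $(i,S_0) \to (i_1,S_1) \to \cdots \to (i_k,S_k) \to (j,T)$ with $T$ in the last slot playing the role of the pinned endpoint; the constraint $i_\ell \in S_0$ for all $\ell$ together with the ordering $(i,S_0) < (i_1,S_1) < \cdots$ forces $S_0 \subseteq S_1 \subseteq \cdots \subseteq T$ and $i \leq i_1 \leq \cdots \leq j$ with $i_\ell \in S_0$. I expect that projecting such a simplex to the chain of subsets $S_0 \subseteq \cdots \subseteq S_k \subseteq T$ exhibits $\mathcal{D}^n_+(i,j)$ as equivalent to the nerve of the poset of those $S$ with $i \in S$, $\max(S \cap [i,j]) $ controlled appropriately — the key point being that the $i_\ell$-data is ``free'' once $S_0$ is fixed, so the fibres of the projection onto the $S$-chain are nerves of intervals, hence weakly contractible. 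Concretely, I would show the projection
\[
  \mathcal{D}^n_+(i,j) \xlongrightarrow{} \Nerv(\OO^n(i,j))
\]
(sending $\rho$ to $S_0 \cap [i,j] \subseteq \cdots \subseteq S_k \cap [i,j] \subseteq T \cap [i,j]$, which lands in $\OO^n(i,j)$ because all these sets have $\min = i$ and, after intersecting with $[i,j]$, $\max = j$) is a cocartesian fibration (or at least a map with weakly contractible fibres over every object and, more robustly, over every simplex), and that it agrees with $\alpha_n$ on mapping categories up to the identification of $\bcat{O}^n(i,j)$ with $\Nerv(\OO^n(i,j))$. Quillen's Theorem A, or the observation that this projection is right/left adjoint objectwise, then gives that it is a weak homotopy equivalence; since the scalings on both sides are the ones pulled back along $\alpha_n$, this upgrades to the desired equivalence of $(\infty,1)$-categories.

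The cleanest route, and the one I would actually write, is an induction on $n$. For $n = 0$ both sides are the point. For the inductive step I would use the last-vertex decomposition: $\bcat{O}^n$ is built from $\bcat{O}^{n-1}$ (the full sub-$2$-category on $\{0,\dots,n-1\}$) by freely adjoining the object $n$ together with the orientals' worth of higher cells, and $\mathcal{D}^n$ admits a parallel decomposition where the new simplices are exactly those whose $S_0$ contains $n$ — these are governed by $\mathcal{D}^{n-1}$ via the slice construction of \autoref{def:mappingcat} applied at the object $n$. Matching the two pushout-style decompositions, using \autoref{lem:expandeddeltaplus} and the left-properness of the model structure (\autoref{thm:scaledmodel}) to conclude that a weak equivalence on each piece gives a weak equivalence of the total objects, finishes the induction. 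The main obstacle I anticipate is bookkeeping: verifying that the two decompositions are genuinely pushouts along cofibrations in $\scsSet$ with matching attaching maps, and that the scalings match on the nose — in particular checking that the thin triangles of $\mathcal{D}^n$ (conditions (a) ``$S \to T$ idle'' and (b) ``$j = \ell$'' in \autoref{def:expandedDelta}) are exactly what is needed for $\alpha_n$ to be scaling-reflecting after the enlargement of \autoref{lem:expandeddeltaplus}. Everything else is a contractibility-of-fibres computation of the kind already rehearsed in the proof of \autoref{lem:expandeddeltaplus}.
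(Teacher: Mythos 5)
Your reduction rests on a false premise: $\alpha_n$ is not bijective on objects. By \autoref{def:expandedDelta} the vertices of $\mathcal{D}^n$ are \emph{all} pairs $(i,S)$ with $i\in S$, not just the pairs $(i,\{i\})$, so $\mathcal{D}^n$ has many objects lying over each $i\in[n]$ and $\alpha_n$ collapses all of them. Consequently the ``same set of objects, so check mapping categories'' strategy does not get off the ground as stated: you would at minimum have to prove essential surjectivity together with full faithfulness at every pair $\bigl((i,S),(j,T)\bigr)$, and — since $\mathcal{D}^n$ is not fibrant — the slice model of \autoref{def:mappingcat} is not available for it, so you would be forced into computing $\mathfrak{C}^{\mathbf{sc}}[\mathcal{D}^n]$ anyway. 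The fallback induction via a ``last-vertex'' pushout decomposition of $\bcat{O}^n$ is also not substantiated: $\Nsc(\OO^n)$ does not visibly arise as a pushout of $\Nsc(\OO^{n-1})$ along a cofibration in $\scsSet$, and no candidate attaching maps are produced.

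For comparison, the paper separates the two difficulties. It factors $\alpha_n$ as an epimorphism $\pi_n:\mathcal{D}^n\to\mathcal{O}^n$ followed by a monomorphism $\iota_n:\mathcal{O}^n\to\bcat{O}^n$. The collapse $\pi_n$ is shown to be an equivalence by writing $\mathcal{D}^n$ as a homotopy colimit of its decorated simplices and checking that every edge killed by $\alpha_n$ is marked and every triangle containing such an edge is thin — this is where \autoref{lem:expandeddeltaplus} is really used, and it is the rigorous version of your ``the $i_\ell$-data is free once $S_0$ is fixed'' heuristic. The map $\iota_n$ is then treated by an explicit computation of $\mathfrak{C}^{\mathbf{sc}}[\bcat{O}^n]$, whose mapping posets consist of pairs $(S,\vec{U})$, together with a marked deformation retraction onto $\OO^n(i,j)$ which is checked to restrict to the subcategory $\mathfrak{C}^{\mathbf{sc}}[\mathcal{O}^n]$. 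This rigidification step — identifying the image $\mathcal{O}^n$ inside $\bcat{O}^n$ after applying $\mathfrak{C}^{\mathbf{sc}}$ and showing the retraction descends — is the genuinely delicate part of the argument and is absent from your proposal.
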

\begin{proof}
  Note that in virtue of \autoref{lem:expandeddeltaplus} we can assume that the decorations of $\mathcal{D}^n$ are those induced from $\bcat{O}^n$. Let us consider an epi-mono factorization of $\alpha_n$ given by
  \[
    \mathcal{D}^n \xlongrightarrow{\pi_n} \mathcal{O}^n \xlongrightarrow{\iota_n} \bcat{O}^n.
  \]
  We will show that both $\pi_n$ and $\iota_n$ are weak equivalences. It is clear that we can express $\mathcal{D}^n$ as a homotopy colimit of its (decorated) simplices. Now, since both $\mathcal{D}^n$ and $\mathcal{O}^n$ can be expressed as homotopy colimits over the same diagram and $\pi_n$ is the map obtained from a natural transformation between the diagrams it will be enough to show that if we are given a (decorated) simplex $\tau: \Delta^k \xlongrightarrow{} \mathcal{D}^n$ such that its image under $\alpha_n$ factors as
  \[
    \Delta^k \xlongrightarrow{s} \Delta^m \xlongrightarrow{} \bcat{O}^n 
  \]
  where $s$ is surjective, then it follows that the map of marked-scaled simplicial sets $s:\Delta^k \xlongrightarrow{} \Delta^m$ (where the decorations of $\Delta^m$ are those induced from $\bcat{O}^n$) is a weak equivalence. However, in this situation it follows from \autoref{lem:expandeddeltaplus} that every edge that gets collapsed in $\bcat{O}^n$ is already marked in $\Delta^k$ and that every triangle containing one of those edges has to be thin. One can now easily check that the map  $s: \Delta^k \xlongrightarrow{} \Delta^m$ admits a homotopy inverse.

  To finish the proof, we will show that $\iota_n$ is a weak equivalence. Since the marking in $\mathcal{O}^n$ and in $\bcat{O}^n$ is given by degenerate edges it will be enough to show (after forgetting the marking) that the map of $\on{Set}_\Delta^+$-enriched categories $\mathfrak{C}^{\mathbf{sc}}[\mathcal{O}^n] \xlongrightarrow{}  \mathfrak{C}^{\mathbf{sc}}[\bcat{O}^n]$ is a weak equivalence.

  We can describe $\mathfrak{C}^{\mathbf{sc}}[\bcat{O}^n]$ as the a certain 2-category (viewed as a $\on{Set}_\Delta^+$-enriched category), together with a marking on the mapping categories, as follows:
  \begin{enumerate}
    \item Objects are given by elements $i \in [n]$.
    \item Given $i,j \in [n]$ with $i > j$ we define $\mathfrak{C}^{\mathbf{sc}}[\bcat{O}^n](i,j)$ to be the empty marked simplicial set. If $i \leq j$ we define $\mathfrak{C}^{\mathbf{sc}}[\bcat{O}^n](i,j)$ to be the poset defined as follows:
    \begin{itemize}
      \item The objects are given pairs $(S,\vec{U})$ where $S \subseteq [n]$ is of the form $S=\{i=i_0<i_1<i_2<\cdots i_{k-1}<i_{k}=j\}$ and $\vec{U}=\{U_\gamma\}_{\gamma=0}^{k-1}$ is a collection of non-empty subsets $U_{\gamma}\subseteq [n]$ such that $\min(U_\gamma)=i_{\gamma}$ and $\max(U_{\gamma})=i_{\gamma+1}$.
      \item We say that $(S,\vec{U}) \leq (T,\vec{V})$ if $S \subseteq T$ and for every $0\leq \gamma \leq k-1$ 
      \[
          U_{\gamma} \subset  \bigcup_{i_{\gamma}<x_{\epsilon}<i_{\gamma+1}}V_{\epsilon},
        \]  
        where $T=\{i=x_0<x_1 < \cdots <x_{t-1}<x_t=j\}$.
       \item We say that a map $(S,\vec{U}) \xlongrightarrow{} (T,\vec{V})$ is marked if for every $0\leq \gamma \leq k-1$  we have that
        \[
          U_{\gamma} =  \bigcup_{i_{\gamma}<x_{\epsilon}<i_{\gamma+1}}V_{\epsilon}.
        \]  
     \end{itemize} 
     \item We note that there is a map $\xi_n:\mathfrak{C}^{\mathbf{sc}}[\bcat{O}^n] \xlongrightarrow{} \OO^n$ which is the identity on objects and sends an element $(S,\vec{U}) \in \mathfrak{C}^{\mathbf{sc}}[\bcat{O}^n](i,j)$ in  to the union $\bigcup\limits_{\gamma }U_\gamma$ viewed as an object in $\OO^n(i,j)$. $\mathfrak{C}^{\mathbf{sc}}[\bcat{O}^n](i,j)$. One checks that the map 
     \[
      \xi_n^{i,j}: \mathfrak{C}^{\mathbf{sc}}[\bcat{O}^n](i,j) \xlongrightarrow{} \OO^n(i,j)
     \]
     admits a section $r_n^{i,j}$ which sends a subset $S \in \OO^n(i,j)$ to the pair $r_n^{i,j}(S)=(\{i,j\},S)$. We finally see that we have a natural transformation $r_n^{i,j}\circ \epsilon^{i,j}_n \xRightarrow{} \on {id}$  whose components are given by the obvious maps $(\{i,j\}, \xi^{i,j}_n(S,\vec{U})) \xlongrightarrow{} (S,\vec{U})$. Since the components of this natural transformation are marked edges we conclude that $\xi^n_{i,j}$ is a weak equivalence of marked simplicial sets and thus $\xi_n$ is weak equivalence of $\on{Set}_\Delta^+$-enriched categories.

  \end{enumerate}
Unpacking the definitions we see that we can identify $\mathfrak{C}^{\mathbf{sc}}[\mathcal{O}^n]$ with the $\on{Set}_\Delta^+$-enriched subcategory of $\mathfrak{C}^{\mathbf{sc}}[\bcat{O}^n]$ containing the same set of objects and where $\mathfrak{C}^{\mathbf{sc}}[\mathcal{O}^n](i,j)$ is the marked simplicial subset consisting in those simplices
\[
 \rho: (S^0,\vec{U}^0) \xlongrightarrow{} (S^1,\vec{U}^1) \xlongrightarrow{} \cdots \xlongrightarrow{} (S^k,\vec{U}^k)
\]
satisfying the following property:
\begin{itemize}
  \item[$\star)$] Let $S_0=\{i=i_0<i_1<i_2<\cdots i_{k-1}<i_{k}=j\}$ and let $i_{j}<i_{j+1}$ in $S_0$. Then for every $x \in S_k$ such that $i_{j}<x<i_{j+1}$ it follows that $x \in U^0_j$.
\end{itemize}
As a consequence of $3$ above, it follows that it will be enough to show that for every $i\leq j$ the obvious map
\[
  \hat{\xi}^{i,j}_n: \mathfrak{C}^{\mathbf{sc}}[\mathcal{O}^n](i,j) \xlongrightarrow{} \OO^n(i,j)
\]
is a weak equivalence. 

We note that the map $r_n^{i,j}$ in point 3, clearly factors through $\mathfrak{C}^{\mathbf{sc}}[\mathcal{O}^n](i,j)$ so it will be enough to show that the marked homotopy between $r_n^{i,j} \circ \xi^{i,j}_n \xRightarrow{} \on{id}$ descends. We consider a simplex
\[
 \rho: (S^0,\vec{U}^0) \xlongrightarrow{} (S^1,\vec{U}^1) \xlongrightarrow{} \cdots \xlongrightarrow{} (S^k,\vec{U}^k)
\]
and let $\xi^{i,j}_n(S^r,\vec{U}^r)=W^r$. We claim that every simplex in the diagram
\[
  \begin{tikzcd}
    (\{i,j\},W^0) \arrow[r] \arrow[d] &  (\{i,j\},W^1) \arrow[r] \arrow[d] & \cdots \arrow[r] \arrow[d] & (\{i,j\},W^k) \arrow[d] \\
    (S^0,\vec{U}^0) \arrow[r] & (S^1,\vec{U}^1) \arrow[r] & \cdots \arrow[r]& (S^k,\vec{U}^k)
  \end{tikzcd}
\]
factors through $\mathfrak{C}^{\mathbf{sc}}[\mathcal{O}^n](i,j)$ which follows immediately from condition $\star)$ above. This shows that the homotopy descends to $\mathfrak{C}^{\mathbf{sc}}[\mathcal{O}^n](i,j)$ and thus our result is proved.
\end{proof}

  \begin{lemma}\label{lem:redsubsets}
    Let $T_n \subset \Delta_{/n}$ be the full subcategory on objects such that the associated morphism $\varphi:a \xlongrightarrow{} n$ is a monomorphism and let us equipp $T_n$ with the marking induced from $(\Delta_{/n},\mathcal{I})$.  Then the morphism $\iota: (T_n,\mathcal{I}_{|T_n}) \xlongrightarrow{} (\Delta_{/n},\mathcal{I})$ defines a weak equivalence in the model structure of $(1,0)$-fibrations over $\Delta$.
  \end{lemma}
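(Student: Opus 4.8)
The plan is to show that the inclusion $\iota:(T_n,\mathcal{I}_{|T_n}) \xlongrightarrow{} (\Delta_{/n},\mathcal{I})$ admits a deformation retraction which is compatible with the marking, and then invoke the fact that a marked map which is a retract of something exhibiting a deformation retract is a weak equivalence in the model structure of $(1,0)$-fibrations. Concretely, there is a functor $\mathbf{r}:\Delta_{/n} \xlongrightarrow{} T_n$ sending $\varphi:[a] \xlongrightarrow{} [n]$ to its epi-mono factorization $[a] \twoheadrightarrow [a'] \hookrightarrow [n]$; that is, $\mathbf{r}(\varphi)$ is the image of $\varphi$, regarded as a subobject $[a'] \subseteq [n]$. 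Because every morphism in $\Delta$ factors uniquely as a surjection followed by an injection, this is functorial, and $\mathbf{r}\circ\iota = \mathrm{id}_{T_n}$. The surjection $\varphi \Rightarrow \iota\mathbf{r}(\varphi)$ --- more precisely the map $\varphi \xlongrightarrow{} \mathbf{r}(\varphi)$ in $\Delta_{/n}$ given by the surjective component of the factorization --- is natural in $\varphi$, hence assembles into a natural transformation $\mathrm{id}_{\Delta_{/n}} \Rightarrow \iota\circ\mathbf{r}$, i.e. a functor $H:\Delta_{/n}\times[1] \xlongrightarrow{} \Delta_{/n}$. The key observation is that this natural transformation has all components lying in $\mathcal{I}$: indeed each component is by construction a surjection in $\Delta$, which is idle by \autoref{def:deltaidle}. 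So $H$ restricts to a map $(\Delta_{/n},\mathcal{I})\times (\Delta^1)^\sharp \xlongrightarrow{} (\Delta_{/n},\mathcal{I})$ of marked simplicial sets, where $(\Delta^1)^\sharp$ denotes $\Delta^1$ with every edge marked.

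Next I would package this into a homotopy statement in the relevant model structure. The model structure of $(1,0)$-fibrations over $\Delta$ is the one obtained (dually) from \autoref{thm:model}, and it is enriched over marked simplicial sets; the map $(\Delta^1)^\sharp \xlongrightarrow{} \Delta^0$ is a weak equivalence of marked simplicial sets, so tensoring a fibrant-cofibrant object with it produces a cylinder. The natural transformation $H$ witnesses that $\iota\circ\mathbf{r}$ is homotopic to the identity on $(\Delta_{/n},\mathcal{I})$ through a marked homotopy, while $\mathbf{r}\circ\iota$ is literally the identity. Since both $(T_n,\mathcal{I}_{|T_n})$ and $(\Delta_{/n},\mathcal{I})$ represent $(1,0)$-fibrations over $\Delta$ (the projection to $\Delta$ being a cartesian fibration with the idle morphisms among the edges one keeps track of), the existence of such mutually inverse maps up to marked homotopy forces $\iota$ to be a weak equivalence. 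Alternatively, and perhaps more cleanly, one argues directly: a marked map between fibrant objects which admits a retraction $\mathbf{r}$ together with a marked homotopy $\mathrm{id}\simeq \iota\mathbf{r}$ is a weak equivalence by the two-out-of-six / retract-of-a-trivial-cofibration argument, using that the inclusion $(\Delta_{/n},\mathcal{I}) \xlongrightarrow{} (\Delta_{/n},\mathcal{I})\times(\Delta^1)^\sharp$ at either endpoint is a trivial cofibration (this is the analogue of \autoref{prop:grpushout} / the standard fact that tensoring with a contractible marked interval is a trivial cofibration).

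The step I expect to be the main obstacle is verifying carefully that the homotopy $H$ is genuinely a map of \emph{marked} simplicial sets in the appropriate sense for the $(1,0)$-fibration model structure --- i.e. that it sends edges marked in $(\Delta_{/n},\mathcal{I})\times(\Delta^1)^\sharp$ to edges in $\mathcal{I}$. An edge of the product is a pair $(e,\epsilon)$ with $e$ an edge of $\Delta_{/n}$ and $\epsilon\in\{0\to 0,\,0\to 1,\,1\to 1\}$; its image under $H$ is a composite of (a component of $e$ or $\mathbf{r}(e)$) with a surjection, and one must check, using \autoref{lem:iclarified}, that such a composite again lies in $\mathcal{I}$ --- this is exactly the content of that lemma, that $\mathcal{I}$ is closed under composition and every element factors as a surjection followed by an interval inclusion. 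Once that bookkeeping is in place, the rest is formal. I would also double-check the edge case $n=0,1$ where $T_n$ is already most of $\Delta_{/n}$, but the argument above is uniform in $n$ and needs no special treatment.
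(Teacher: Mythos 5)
There is a genuine gap in the step where you pass from your combinatorial observations to the conclusion. The retraction $\mathbf{r}$ and the homotopy $H$ you construct are \emph{not maps over $\Delta$}: the object $\varphi\colon [a]\to[n]$ lies over $[a]\in\Delta$, whereas $\mathbf{r}(\varphi)$ lies over the image $[a']$ with $a'<a$ whenever $\varphi$ is not injective, so neither $\mathbf{r}$ nor $H$ commutes with the projections to $\Delta$. The model structure in the statement is a model structure on objects over $\Delta$ (cf. \autoref{thm:model}); its weak equivalences are detected by mapping, over $\Delta$, into $(1,0)$-fibrations over $\Delta$, and a homotopy inverse that does not live over the base induces no map on these relative mapping spaces. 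A marked homotopy equivalence of the underlying marked simplicial sets does not imply a weak equivalence in the sliced model structure --- only the converse holds, since pushforward along $S\to \ast$ is left Quillen --- so the ``formal'' last step of your argument is precisely the one that fails. A smaller inaccuracy: $(\Delta_{/n},\mathcal{I})\to\Delta$ is not a fibrant object of this model structure (the marking $\mathcal{I}$ is a proper subcollection of the cartesian edges), so you also cannot lean on fibrancy of source and target.

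That said, everything you actually verified is exactly the combinatorial input the paper uses; it only needs to be assembled fibrewise rather than globally. The paper proves the statement in the model structure of $(1,0)$-fibrations over $\Delta_{/n}$ (which does push forward to $\Delta$, in the correct direction) and then invokes the cofinality criterion of Theorem 3.31 in \cite{AScof}: for each $\varphi$ one must exhibit a factorization of $\varphi$ through an object of $T_n$ via a morphism of $\mathcal{I}$, and show it is initial in the localization of the comma category $(T_n)_{\varphi/}$ at the marked edges. Your adjunction $\mathbf{r}\dashv\iota$ supplies exactly this: the epi--mono factorization gives an initial object of $(T_n)_{\varphi/}$, its structure map $\varphi\to\mathbf{r}(\varphi)$ is a surjection and hence idle, and \autoref{lem:iclarified} handles the comparison with arbitrary factorizations through $\mathcal{I}$. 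So the repair is to replace the global deformation retraction by this pointwise comma-category argument.
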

  \begin{proof}
    We will show that $\iota$ defines a weak equivalence in the model structure of $(1,0)$-fibrations over $\Delta_{/n}$. This is precisely the definition of being $(1,0)$-cofinal with respect to the markings described above. This in turn will imply that $\iota^\op$ is final with respect to $\mathcal{I}\!\on{oplax}$ limits. Invoking Theorem 3.31 in \cite{AScof} we are reduced to show the following:
    \begin{itemize}
      \item[i)] For every morphism $\varphi:a \xlongrightarrow{} n$ there exists an object in the pullback $(T_n)_{\varphi/}=T_n \times_{\Delta_{/n}}(\Delta_{/n})_{\varphi/}$ represented by a factorization $\varphi=u \circ v$ such that $v \in \mathcal{I}$ and $u \in T_n$.
      \item[ii)] Every such factorization defines an initial object the $(\infty,1)$-categorical localisation of $(T_n)_{\varphi/}$ at the collection of edges of $(T_n)_{\varphi/}$ which is given by those edges whose image in $T_n$ is marked.
    \end{itemize}
    If $\varphi \in T_n$ both conditions are immediate since $\iota$ is fully faithful. From now on, we can assume that $\varphi$ is not injective. Then the first condition is clearly satisfied by picking an epi-mono factorization of $\varphi$. Now suppose that we have a factorization $\varphi=u \circ v$ representing an object $(v,u) \in(T_n)_{\varphi/} $ as above and let us assume that $v$ is not surjective. Then by picking an epi-mono factorization of $v=\gamma \circ \beta$ we obtain a morphism between $(\beta,u \circ  \gamma) \xlongrightarrow{} (v,u)$ which is marked by \autoref{lem:iclarified}. It follows that both objects become equivalent in the localisation. We can safely assume that $v$ is surjective. In this situation every object of $(T_n)_{\varphi/}$ is given by a factorization $(r,s)$ where $r$ is not necessarily in $\mathcal{I}$.  Since $s$ is by assumption injective it follows that if we take an epi-mono factorization of $r=\tau \circ  \rho$ we obtain by uniqueness of the epi-mono factorization that $\rho=v$ and a unique morphism $(v,u) \xlongrightarrow{} (r,s)$. We learn that the object $(v,u)$ is already initial in $(T_n)_{\varphi/}$ and since localizations are (co)final the result follows. 
  \end{proof}

  \begin{proposition}\label{prop:Sqeenhanced}
  Let $\bcat{C} \in \bcat{B}\!\on{icat}_\infty$ be an $(\infty,2)$-category. Then we have an natural equivalence $\Sqe(\bcat{C})\xRightarrow{\simeq} K^*(\mathbf{Sq}(\bcat{C}))_\bullet$ where $K^*$ is the functor given in \autoref{def:kdownstar}. In particular, it follows that $\Sqe(\bcat{C})_\bullet$ is an enhanced double $(\infty,1)$-category.
  \end{proposition}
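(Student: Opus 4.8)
The goal is to produce a natural equivalence $\Sqe(\bcat{C})_\bullet \xRightarrow{\simeq} K^*(\mathbf{Sq}(\bcat{C}))_\bullet$, after which the second assertion follows formally: $\mathbf{Sq}(\bcat{C})_\bullet$ is a double $(\infty,1)$-category (\autoref{rem:osqdouble}), $K^* \dashv K_*$ restricts to an equivalence onto enhanced double $(\infty,1)$-categories by \autoref{prop:enhancedkanfactors} and \autoref{thm:enhanced}, and being an enhanced double $(\infty,1)$-category is invariant under equivalence; one also checks directly that $K^*(\mathbf{Sq}(\bcat{C}))$ is enhanced via \autoref{rem:limitformula}. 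So the entire content is the first equivalence. The plan is to compute both sides levelwise and exhibit a natural comparison.

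First I would unwind the two sides at level $n$. By \autoref{def:sq}, $\Sqe(\bcat{C})_n = \Fun^{\mathbf{opgr}}(\Delta^n_\flat,\bcat{C})^{\leq 1}$, which by \autoref{def:graymapping} corepresents maps $\Delta^n_\flat \tensor (-) \to \bcat{C}$. On the other side, by \autoref{rem:limitformula} (applied to the identity $\Delta^\op \to \Delta^\op$ with $E = \mathcal{I}$, exactly as in \autoref{def:kdownstar}), $K_*(\mathbf{Sq}(\bcat{C}))_n$ is the $\mathcal{I}\!\on{oplax}$ limit of $\mathbf{Sq}(\bcat{C})_\bullet \circ \pi_n$ over $(\Delta_{/n})^\op$; and $K^*$ acts as the identity on objects (as noted in the proof of \autoref{prop:1dblcharac}), so I must instead directly compare $\Sqe(\bcat{C})_\bullet$ with $\mathbf{Sq}(\bcat{C})_\bullet$ regarded as the \emph{same} functor but now remembered with its $\mathcal{I}\!\on{oplax}$-functoriality — more precisely, $K^*$ being a left adjoint to a right Kan extension means $K^*(\mathbf{Sq}(\bcat{C}))_\bullet$ is $\mathbf{Sq}(\bcat{C})_\bullet$ itself in $\Fun(\Delta^\op,\iCatt)$, and the claim is that this object is equivalent (in $\Fun(\Delta^\op,\iCatt)$) to $\Sqe(\bcat{C})_\bullet$. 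The heart of the matter is thus an explicit combinatorial description: $\Fun^{\mathbf{opgr}}(\Delta^n_\flat,\bcat{C})$ should be computed as a limit/colimit over the poset of subsimplices, mediated by the orientals. This is where I would invoke \autoref{prop:alphaessesurj}: the maps $\alpha_n : \mathcal{D}^n \xrightarrow{\simeq} \bcat{O}^n$ are bicategorical equivalences, and $\mathcal{D}^n$ is built precisely from the combinatorial data of pairs $(i,S)$ with $S\subseteq[n]$, $i\in S$, which is the same indexing data governing the $\mathcal{I}\!\on{oplax}$ limit over $(\Delta_{/n})^\op$. Using the Gray tensor product's interaction with $\mathfrak{C}^{\mathbf{sc}}$ — namely $\mathfrak{C}^{\mathbf{sc}}[\Delta^n_\flat \tensor \Delta^k_\sharp] \simeq \OO^n \times [k]$ up to the cofibrancy/markedness subtleties already handled in Section 2 — I would identify $\Fun^{\mathbf{opgr}}(\Delta^n_\flat,\bcat{C})$ with an appropriate mapping object out of $\bcat{O}^n$, and then transport along $\alpha_n$ to rewrite it as a mapping object out of $\mathcal{D}^n$, which assembles into the desired $\mathcal{I}\!\on{oplax}$ limit over $(\Delta_{/n})^\op$ of $\mathbf{Sq}(\bcat{C})_\bullet$.

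Concretely, the key steps in order: (1) record that $K^*$ is the identity on objects and reduce the statement to constructing, for each $\bcat{C}$, an equivalence $\Sqe(\bcat{C})_\bullet \simeq \mathbf{Sq}(\bcat{C})_\bullet$ of functors $\Delta^\op \to \iCatt$ that is natural in $\bcat{C}$ and compatible with the unit/counit of $K^* \dashv K_*$; (2) show $\mathbf{Sq}(\bcat{C})_\bullet$ carries a canonical enhanced structure via \autoref{rem:limitformula}, i.e.\ $K_*(\mathbf{Sq}(\bcat{C}))_n \simeq \mathcal{I}\!\on{oplax}\text{-}\lim$ over $(\Delta_{/n})^\op$ of the levels of $\mathbf{Sq}(\bcat{C})$; (3) use \autoref{lem:redsubsets} to cut the indexing category $(\Delta_{/n})^\op$ down to $T_n^\op$, the poset of injections $a\hookrightarrow n$ (equivalently, nonempty subsets $S\subseteq[n]$), which makes the $\mathcal{I}\!\on{oplax}$ limit combinatorially matchable to $\mathcal{D}^n$; (4) identify $\Fun^{\mathbf{opgr}}(\Delta^n_\flat,\bcat{C})$ with $\Fun^{\mathbf{sc}}(\mathcal{D}^n, \bcat{C})$-type mapping object using \autoref{prop:alphaessesurj} together with the standard description of $\mathfrak{C}^{\mathbf{sc}}$ of a Gray product, thereby obtaining the natural equivalence levelwise; (5) check naturality in $n$ (the cosimplicial structure on $\mathcal{D}^\bullet$ from \autoref{def:expandedDelta} matches the functoriality of the $\mathcal{I}\!\on{oplax}$ limits, and the $\mathcal{I}$-markings on $\mathcal{D}^n$ correspond under $\alpha_n$ to the idle morphisms via condition (3) of \autoref{def:expandedDelta}) and in $\bcat{C}$ (immediate from functoriality of the mapping objects); (6) conclude that $\Sqe(\bcat{C})_\bullet \simeq K^*(\mathbf{Sq}(\bcat{C}))_\bullet$ and hence, since the right-hand side is an enhanced double $(\infty,1)$-category, so is the left.

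The main obstacle I anticipate is step (4): precisely matching the Gray-tensor mapping object $\Fun^{\mathbf{opgr}}(\Delta^n_\flat,\bcat{C})^{\leq 1}$ with the $\mathcal{D}^n$-indexed $\mathcal{I}\!\on{oplax}$ limit requires care about \emph{markings} — which edges of $\mathcal{D}^n$ are marked, how that corresponds to the $\mathcal{I}$-laxness, and how the thin-triangle condition (3)(a)--(b) of \autoref{def:expandedDelta} encodes exactly the cocartesian-composition conditions that appear in the definition of $\Fun^{\mathbf{opgr}}$ via \autoref{prop:grpushout}. The bookkeeping is essentially the content of \autoref{def:alphanOO} and \autoref{lem:expandeddeltaplus}, so I expect to reduce (4) to citing those, but verifying that the \emph{$(\infty,1)$-categorical} truncation $(-)^{\leq 1}$ on both sides matches up — in particular that the marked edges of $\Fun^{\mathbf{opgr}}(\Delta^n_\flat,\bcat{C})$ coincide with those edges declared invertible in the $\mathcal{I}\!\on{oplax}$ limit — is the delicate point. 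A secondary subtlety is confirming compatibility of the constructed equivalence with the $K^* \dashv K_*$ adjunction unit, which is what makes "$\Sqe(\bcat{C}) \simeq K^*(\mathbf{Sq}(\bcat{C}))$" a statement about the enhanced structure and not merely about the underlying simplicial objects; I would handle this by observing that the comparison map is induced by the universal cone exhibiting the $\mathcal{I}\!\on{oplax}$ limit, exactly as in the proof of \autoref{prop:1dblcharac}.
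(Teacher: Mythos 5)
Your steps (2)--(6) track the paper's actual argument closely: describe $K_*(\mathbf{Sq}(\bcat{C}))_n$ as the $\mathcal{I}\!\on{oplax}$ limit over $(\Delta_{/n})^\op$ via \autoref{rem:limitformula}, cut the indexing category down to $T_n$ via \autoref{lem:redsubsets}, and identify the resulting limit with $\Fun^{\mathbf{opgr}}(\Delta^n_\flat,\bcat{C})^{\leq 1}$ by passing through the orientals and the equivalence $\alpha_n:\mathcal{D}^n\xlongrightarrow{}\bcat{O}^n$ of \autoref{prop:alphaessesurj}. The paper makes the comparison map explicit in a way you only gesture at: the inclusions $i_n:\Delta^n_\sharp\xlongrightarrow{}\bcat{O}^n$ assemble into an $\mathcal{I}$oplax natural transformation $\mathbf{Sq}_{\bcat{O}}(\bcat{C})_\bullet\xRightarrow{}\mathbf{Sq}(\bcat{C})_\bullet$, which transposes under the adjunction $K^*\dashv K_*$ to the desired genuine natural transformation with target $K_*(\mathbf{Sq}(\bcat{C}))_\bullet$.

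There is, however, a genuine error in your step (1). You read the statement literally as involving $K^*$, observe that $K^*$ is the identity on objects, and reduce to showing that $\Sqe(\bcat{C})_\bullet$ and $\mathbf{Sq}(\bcat{C})_\bullet$ are equivalent in $\Fun(\Delta^\op,\iCatt)$. This is false: for $n\geq 2$ the objects of $\Sqe(\bcat{C})_n=\Fun^{\mathbf{opgr}}(\Delta^n_\flat,\bcat{C})^{\leq 1}$ are arbitrary $n$-simplices of $\bcat{C}$ (oplax normalised functors $[n]\xlongrightarrow{}\bcat{C}$), whereas the objects of $\mathbf{Sq}(\bcat{C})_n=\Fun^{\mathbf{opgr}}(\Delta^n_\sharp,\bcat{C})^{\leq 1}$ are only the fully commutative ones, so already $\Sqe(\bcat{C})_2\not\simeq\mathbf{Sq}(\bcat{C})_2$ in general. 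The statement as printed contains a typo: the intended target is $K_*(\mathbf{Sq}(\bcat{C}))_\bullet$, the partially lax right Kan extension, as both the "in particular" clause (via \autoref{prop:enhancedkanfactors}, which says $K_*$ of a double $(\infty,1)$-category is enhanced) and the rest of your own plan make clear. Your step (6) would likewise fail as written: $K^*(\mathbf{Sq}(\bcat{C}))_\bullet=\mathbf{Sq}(\bcat{C})_\bullet$ is not an enhanced double $(\infty,1)$-category in general, since $\mathbf{Sq}(\bcat{C})_n$ is the ordinary limit of its restriction to $(\Delta_{/n})^\op$ rather than the $\mathcal{I}\!\on{oplax}$ limit required by \autoref{def:enhanced}. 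The fix is simply to drop the reduction in step (1) and aim the levelwise comparison of steps (2)--(5) at $K_*(\mathbf{Sq}(\bcat{C}))_\bullet$ throughout, constructing the comparison map by transposing the $\mathcal{I}$oplax transformation induced by $i_n$ across $K^*\dashv K_*$.
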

  \begin{proof}
    We consider an auxiliary functor $\mathbf{Sq}_{\bcat{O}}(\bcat{C})_\bullet$ given by $\mathbf{Sq}_{\bcat{O}}(\bcat{C})_n=\Fun^{\mathbf{opgr}}(\bcat{O}^n,\bcat{C})^{\leq 1}$. We note that we have a natural equivalence of functors $\Sqe(\bcat{C})_\bullet \xRightarrow{\simeq} \mathbf{Sq}_{\bcat{O}}(\bcat{C})_\bullet$ so it will suffice to construct an equivalence $\mathbf{Sq}_{\bcat{O}}(\bcat{C})\xRightarrow{\simeq} K^*(\mathbf{Sq}(\bcat{C}))_\bullet$

    We observe that we have maps $i_n:\Delta^n_\sharp \xlongrightarrow{} \bcat{O}^n$ which send and object $i \in [n]$ to the object $i \in \bcat{O}^n$ and send an edge $i < j$ to the subset $[i,j]=\{i,i+1,i+2,\dots,j\}$. We further consider the obvious map $\pi_n:\bcat{O}^n \xlongrightarrow{} \Delta^n_\sharp$ which is uniquely determined by mapping an object $i \in \bcat{O}^n$ to $i \in [n]$. We make some remarks:
    \begin{enumerate}
      \item The maps $\{i_n\}_{n \geq 0}$ assemble into an $\mathcal{I}$lax natural transformation $i: \Delta^\bullet_\sharp \xRightarrow{} \bcat{O}^\bullet$. Indeed, given a monotone morphism $f:[n] \xlongrightarrow{} [m]$ and $a,b \in [n]$ with $a<b$ we always have an inclusion of subsets $f([a,b]) \subset [f(a),f(b)]$ which is an equality precisly when $f$ belongs to $\mathcal{I}$.

      This yields an $\mathcal{I}$oplax natural transformation $\alpha^*:\mathbf{Sq}_{\bcat{O}}(\bcat{C})_\bullet \xRightarrow{} \mathbf{Sq}(\bcat{C})_\bullet$ for every $(\infty,2)$-category $\bcat{C}$.

      \item The maps $\{p_n\}_{n \geq 0}$ assemble into a natural transformation $p: \bcat{O}^\bullet \xRightarrow{} \Delta^\bullet_\sharp$ which yields a natural transformation $\mathbf{Sq}(\bcat{C})_\bullet \xRightarrow{} \mathbf{Sq}_{\bcat{O}}(\bcat{C})_\bullet$ which is pointwise fully faithful. The fully faithfulness follow from the fact that $\Delta^n_\sharp$ can be express as the localization of $\bcat{O}^n$ at all 2-morphisms. In order words, the map $p$ induces an equivalence of marked-scaled simplicial sets $(\bcat{O}^n,\flat,\sharp) \xlongrightarrow{} (\Delta^n,\flat,\sharp)$. 
    \end{enumerate}
    We can now see that since the functor $K_*$ was defined as a right adjoint then the $\mathcal{I}$oplax natural transformation $\alpha^*$ induces a natural transofmration $\Phi:\mathbf{Sq}_{\bcat{O}}(\bcat{C})_\bullet  \xRightarrow{} K_*(\mathbf{Sq}(\bcat{C})_\bullet)$. To finish the proof, we need to show that $\Phi$ is a pointwise equivalence. Let $n \geq 0$ and consider the map $T_n \xlongrightarrow{} \Delta_{/n}$ studied in \autoref{lem:redsubsets} then given the composite
    \[
      G: T_n^{\op} \xlongrightarrow{} (\Delta_{/n})^\op \xlongrightarrow{} \Delta^\op \xlongrightarrow{\mathbf{Sq}(\bcat{C})_n} \bcat{C}\!\on{at}_{(\infty,1)}
    \]
    Then it follows from \autoref{lem:redsubsets} that we have a factorization
    \[
       \mathbf{Sq}_{\bcat{O}}(\bcat{C})_n \xlongrightarrow{\varphi_n} \lim_{\mathcal{I}\!\on{oplax}}G \xlongrightarrow{\simeq} K_*(\mathbf{Sq}(\bcat{C}))_n
    \]
    Using the description of the $\mathcal{I}$oplax limit given in \cite{Berman} we can identify the map $\varphi$ as the map 
    \[
      \alpha_n^*: \Fun(\bcat{O}^n,\bcat{C})^{\leq 1} \xlongrightarrow{} \Fun(\mathcal{D}^n,\bcat{C})^{\leq 1}
    \]
    induced by restriction along the morphism $\alpha_n: \mathcal{D}^n \xlongrightarrow{} \bcat{O}^n$ given in \autoref{def:alphanOO}. The result now follows from \autoref{prop:alphaessesurj}.
  \end{proof}

  We wish now to show that our enhanced square functor is fully faithful. The strategy that we will deploy will be first to show that for every pair of $(\infty,2)$-categories $\bcat{C}$ and $\bcat{D}$, there exists a natural equivalence of $(\infty,1)$-categories
  \[
    \on{Nat}_{\Delta^\op}(\Sqe(\bcat{C})_\bullet,\Sqe(\bcat{D})_\bullet) \xlongrightarrow{\simeq} \Fun(\mathbb{X}^{\tensor}_{\bcat{C}},\bcat{D})^{\leq 1}
  \]
  where $\mathbb{X}^{\tensor}_{\bcat{C}}$ is a certain marked-scaled simplicial set. Moreover, we will show that the action of $\Sqe$ on mapping $(\infty,1)$-categories
  \[
    \Fun(\bcat{C},\bcat{D})^{\leq 1} \xlongrightarrow{} \on{Nat}_{\Delta^\op}(\Sqe(\bcat{C})_\bullet,\Sqe(\bcat{D})_\bullet)
  \]
is induced by restriction along a certain map $\eta_{\bcat{C}}:\mathbb{X}_{\bcat{C}}^\tensor \xlongrightarrow{} \bcat{C}$. 

We will conclude this section by showing that $\eta_{\bcat{C}}$ is a weak equivalence of marked-scaled simplicial sets which will imply our claim.

\begin{definition}\label{def:graycoend}
We have an adjunction of ordinary categories (see \autoref{rem:graymodels})
\[
  \int^{\mathbf{opgr}}_{E} : \Fun(\Delta^\op,\on{Set}_\Delta^{+})  \llra \on{Set}_\Delta^{\mathbf{ms}} : \Sqe
\]
where define the left adjoint view the coend formula
   \[
    \int^{\mathbf{opgr}}_{E} : \Fun(\Delta^\op,\on{Set}_\Delta^{+}) \xlongrightarrow{} \on{Set}_\Delta^{\mathbf{ms}}, \enspace X_\bullet \mapsto \int \Delta^\bullet_\flat \tensor X_\bullet,
  \]
and where the definition of $\Sqe(X)_\bullet$ for an arbitrary marked-scaled simplicial set is the obvious one.
 \end{definition} 

 \begin{proposition}\label{prop:enhancedcoendleftquillen}
   The adjoint functors 
   \[
  \int^{\mathbf{opgr}}_{E} : \Fun(\Delta^\op,\on{Set}_\Delta^{+})  \llra \on{Set}_\Delta^{\mathbf{ms}} : \Sqe
\]
define a Quillen adjunction where we view $\Fun(\Delta^\op,\on{Set}_\Delta^{+})$ as being equipped with the injective model structure and $ \on{Set}_\Delta^{\mathbf{ms}}$ with the model structure on marked-scaled simplicial sets provided in \autoref{thm:markedscaledmodel}.
 \end{proposition}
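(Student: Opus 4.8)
The statement is the Quillen adjunction claim for $\int^{\mathbf{opgr}}_E \dashv \Sqe$. Since $\int^{\mathbf{opgr}}_E$ is a left adjoint, it preserves colimits, so by general nonsense it suffices to check that $\int^{\mathbf{opgr}}_E$ sends the generating cofibrations of the injective model structure on $\Fun(\Delta^\op, \on{Set}_\Delta^+)$ to cofibrations of marked-scaled simplicial sets, and the generating trivial cofibrations to trivial cofibrations. The first step is to recall a convenient set of generators: the injective model structure on simplicial objects in $\on{Set}_\Delta^+$ is cofibrantly generated with generating (trivial) cofibrations of the form $\partial\Delta^m \otimes A \coprod_{\partial\Delta^m \otimes B} \Delta^m \otimes B \xlongrightarrow{} \Delta^m \otimes A$ (the $\otimes$ here is the copower by representable simplicial sets in $\Delta$, i.e. $(\Delta^m \otimes A)_\bullet = \Delta^m \otimes A_\bullet$ pointwise), where $A \xlongrightarrow{} B$ ranges over generating (trivial) cofibrations in $\on{Set}_\Delta^+$ and $m \geq 0$. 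Applying the coend formula and using that the coend of $\Delta^\bullet_\flat \tensor (-)$ against $\Delta^m \otimes A$ is just $\Delta^m_\flat \tensor A$ (with $A$ regarded as a constant-in-$\Delta$ marked simplicial set, i.e. via $L_\sharp$), we see that $\int^{\mathbf{opgr}}_E$ sends such a pushout-product generator precisely to the Gray pushout-product
\[
  (\partial\Delta^m_\flat \tensor B) \coprod_{\partial\Delta^m_\flat \tensor A} (\Delta^m_\flat \tensor A) \xlongrightarrow{} \Delta^m_\flat \tensor B
\]
where $A \xlongrightarrow{} B$ is now viewed inside $\on{Set}_\Delta^{\mathbf{ms}}$ via the left Quillen embedding $L_\sharp : \on{Set}_\Delta^+ \xlongrightarrow{} \on{Set}_\Delta^{\mathbf{ms}}$ of \autoref{def:leq}.

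\textbf{Key steps.} Granting that identification, the proof reduces to invoking \autoref{prop:msgrayleftquillen}: the Gray tensor product $\btensor$ (equivalently $\tensor$, by \autoref{prop:bothgraycoincide} and \autoref{rem:graymodels}) is a left Quillen bifunctor on $\on{Set}_\Delta^{\mathbf{ms}} \times \on{Set}_\Delta^{\mathbf{ms}}$, so its pushout-product sends a pair of cofibrations to a cofibration, and a pair where one factor is a trivial cofibration to a trivial cofibration. Here the maps $\partial\Delta^m_\flat \xlongrightarrow{} \Delta^m_\flat$ are cofibrations of marked-scaled simplicial sets, and $L_\sharp$ carries the generating (trivial) cofibrations of $\on{Set}_\Delta^+$ to (trivial) cofibrations of $\on{Set}_\Delta^{\mathbf{ms}}$ since $L_\sharp$ is left Quillen (\autoref{def:leq}); combining these gives that $\int^{\mathbf{opgr}}_E$ carries generating (trivial) cofibrations of the injective model structure to (trivial) cofibrations. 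Care must be taken with one subtlety: the coend formula as written defines $\int^{\mathbf{opgr}}_E$ via $\Delta^\bullet_\flat \tensor X_\bullet$, so I must check that when $X_\bullet = \Delta^m \otimes A$ is a copower of a \emph{constant} marked simplicial set $A$, the coend $\int^{[k] \in \Delta} \Delta^k_\flat \tensor (\Delta^m \otimes A)$ is genuinely $\Delta^m_\flat \tensor L_\sharp(A)$ — this follows from the co-Yoneda lemma applied in the $\Delta$-variable (the coend $\int^{[k]} \Delta^k_\flat \tensor \Delta^m(-,[k])$ reduces to $\Delta^m_\flat$) together with the fact that $\tensor$ preserves the colimits defining the copower by $A$.

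\textbf{Main obstacle.} The one point that genuinely needs attention is the interaction of the coend with the markings and scalings: I want $\int^{\mathbf{opgr}}_E$ to be \emph{exactly} the colimit-preserving extension of $[n] \mapsto \Delta^n_\flat \tensor (-)$, and in particular to commute with $L_\sharp$ on constant objects, so that the reduction above is clean rather than merely a weak equivalence. This is where one should be slightly careful, because — as \autoref{rem:failurecolimit} and \autoref{prop:bothgraycoincide} warn — $\tensor$ itself does not preserve all colimits on the nose, only the colimit-preserving variant $\btensor$ does. So strictly speaking the functor $\int^{\mathbf{opgr}}_E$ in \autoref{def:graycoend} should be understood (as \autoref{rem:graymodels} allows) using whichever model, $\tensor$ or $\btensor$, makes the coend literally colimit-preserving; using $\btensor$ the identification of $\int^{\mathbf{opgr}}_E$ on generators with the Gray pushout-product is an isomorphism, and \autoref{prop:msgrayleftquillen} (which is stated for $\btensor$) applies directly. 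With that model-choice fixed the argument is routine: generators go to (trivial) cofibrations, hence $\int^{\mathbf{opgr}}_E$ is left Quillen, hence $(\int^{\mathbf{opgr}}_E, \Sqe)$ is a Quillen adjunction.
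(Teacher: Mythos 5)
Your argument is correct and is essentially the paper's: the proof given there simply defers to the analogous \autoref{prop:coendleftquillen}, which cites \cite[Proposition A.2.9.26]{HTT} for the coend of the left Quillen bifunctor $L_\sharp(-)\btensor L_\sharp(-)$ against the Reedy-cofibrant cosimplicial object $\Delta^\bullet_\flat$ — exactly the generator-by-generator pushout-product check you carry out by hand, including your correct insistence on working with the colimit-preserving model $\btensor$ so that the reduction to $\partial\Delta^m_\flat\btensor L_\sharp(B)\coprod_{\partial\Delta^m_\flat\btensor L_\sharp(A)}\Delta^m_\flat\btensor L_\sharp(A)\to\Delta^m_\flat\btensor L_\sharp(B)$ is an isomorphism rather than a mere equivalence (note also the inadvertent swap of $A$ and $B$ in your displayed generator). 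The one step you leave implicit is that the Reedy and injective model structures on $\Fun(\Delta^\op,\on{Set}_\Delta^{+})$ coincide, so that your boundary pushout-products really do generate the injective structure; this is standard for the elegant Reedy category $\Delta$ with monomorphisms as cofibrations, and the paper relies on it tacitly as well.
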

 \begin{proof}
   This follows from a totally analogous argument to that of \autoref{prop:coendleftquillen}.
 \end{proof}

 \begin{remark}\label{rem:etalc}
   Let $\mathscr{L}$ be an $(\infty,1)$-category. Then we observe that $\Sqe$ induces a map
   \[
     \Fun\left(\mathscr{L},\Fun(\bcat{C},\bcat{D})^{\leq 1}\right)^{\simeq} \xlongrightarrow{} \Fun(\mathscr{L},\on{Nat}_{\Delta^\op}(\Sqe(\bcat{C})_\bullet,\Sqe(\bcat{D})_\bullet))^{\simeq}
   \]
   whose target is equivalent to
   \[
    \Fun(\mathscr{L},\on{Nat}_{\Delta^\op}(\Sqe(\bcat{C})_\bullet,\Sqe(\bcat{D})_\bullet))^{\simeq}  \simeq \on{Nat}_{\Delta^\op}((\Sqe(\bcat{C})\times \underline{\mathscr{L}})_\bullet, \Sqe(\bcat{D})_\bullet)^{\simeq} .
   \]
   Here we are using the superscript “$\simeq$“ to denote the underlying $\infty$-groupoid of the functor $(\infty,1)$-category (resp. $(\infty,1)$-category of natural transformations). Let us remind the reader that the last equivalence follows readily from the definition of $\on{Nat}_{\Delta^\op}$ given in \autoref{rem:enrichedmodelforfunct}

    We observe that the Quillen adjunction in \autoref{prop:enhancedcoendleftquillen} provides us with an equivalence of $\infty$-groupoids
   \[
      \on{Nat}_{\Delta^\op}((\Sqe(\bcat{C})\times \underline{\mathscr{L}})_\bullet, \Sqe(\bcat{D})_\bullet)^\simeq \isom \Fun\left(\int^{\mathbf{opgr}}_{E}(\Sqe(\bcat{C})\times \underline{\mathscr{L}})_\bullet, \bcat{D}\right)^{\simeq},
   \]
   Since we have that $\Fun\left(\mathscr{L},\Fun(\bcat{C},\bcat{D})^{\leq 1}\right)^{\simeq} \isom \Fun(\bcat{C}\times \mathscr{L},\bcat{D})^{\simeq}$ we see after unraveling the definitions that the morphism above is induced by the map of marked-scaled simplicial sets
   \[
    \eta_{\bcat{C}}^{\mathscr{L}}: \int^{\mathbf{opgr}}_{E}(\Sqe(\bcat{C})\times \underline{\mathscr{L}})_\bullet \xlongrightarrow{} \bcat{C} \times \mathscr{L}.
   \]
  It is easy to see that at the level of the underlying simplicial sets the map $\eta_{\bcat{C}}^{\mathscr{L}}$ is simply the product of the counit of the adjunction $\eta_{\bcat{C}}$ in \autoref{prop:enhancedcoendleftquillen} with the identity functor on $\mathscr{L}$.
  \end{remark}

  \begin{definition}
    Let $\bcat{C}$ be an $(\infty,2)$-category and let $\mathscr{L}$ be an $(\infty,1)$-category. We define a marked scaled simplicial set $\mathbb{X}_{\bcat{C}}^{\mathscr{L}}$ whose underlying simplicial set coincides to that of $\int^{\mathbf{opgr}}_{E}(\Sqe(\bcat{C})\times \underline{\mathscr{L}})_\bullet $ by declaring an edge (resp. triangle) to be marked (resp. thin) if its image under the map defined in \autoref{rem:etalc}
    \[
    \eta_{\bcat{C}}^{\mathscr{L}}: \int^{\mathbf{opgr}}_{E}(\Sqe(\bcat{C})\times \underline{\mathscr{L}})_\bullet \xlongrightarrow{} \bcat{C} \times \mathscr{L}.
   \]
   is marked (resp. thin). We note that we have an isomorphism of marked-scaled simplicial sets $\mathbb{X}_{\bcat{C}}^{\mathscr{L}} \isom \mathbb{X}_{\bcat{C}}^{\Delta^{0}} \times \mathscr{L}$. In the specific case where $\mathscr{L}=\Delta^0$ we will denote $\mathbb{X}_{\bcat{C}}^{\Delta^{0}} $ by $\mathbb{X}^{\tensor}_{\bcat{C}}$.
  \end{definition}

    \begin{remark}
    For the rest of this section we will let $\mathscr{L}$ denote an $(\infty,1)$-category. 
  \end{remark}

  \begin{proposition}\label{prop:sqrigid}
    Let $\bcat{C},\bcat{D} \in \bcat{B}\!\on{icat}_{\infty}$. Then every morphism $\Phi:\Sqe(\bcat{C})_\bullet \xRightarrow{} \Sqe(\bcat{D})_\bullet$ is nuclear (see \autoref{def:nucmorph}). 
  \end{proposition}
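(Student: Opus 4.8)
The statement asserts that every natural transformation $\Phi: \Sqe(\bcat{C})_\bullet \xRightarrow{} \Sqe(\bcat{D})_\bullet$ of simplicial objects automatically induces a map between the nuclei, i.e. is nuclear in the sense of \autoref{def:nucmorph}. By \autoref{prop:Sqeenhanced} we know $\Sqe(\bcat{C})_\bullet \simeq K^*(\mathbf{Sq}(\bcat{C}))_\bullet$, and more usefully that $\Sqe(\bcat{C})_\bullet$ is an enhanced double $(\infty,1)$-category whose nucleus is $\mathbf{Sq}(\bcat{C})_\bullet$ (compare \autoref{rem:osqdouble} and the proof of \autoref{prop:Sqeenhanced}). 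So what must be checked is the following: for $n\geq 2$ and an object $x\in \Sqe(\bcat{C})_n=\Fun^{\mathbf{opgr}}(\Delta^n_\flat,\bcat{C})^{\leq 1}$ which factors through the nucleus $\mathbf{Sq}(\bcat{C})_n=\Fun^{\mathbf{opgr}}(\Delta^n_\sharp,\bcat{C})^{\leq 1}$ — equivalently, $x$ is an \emph{honest} functor $\Delta^n_\sharp \xlongrightarrow{} \bcat{C}$ (a thin simplex in the scaled sense) — the image $\Phi_n(x)\in \Sqe(\bcat{D})_n$ again factors through $\mathbf{Sq}(\bcat{D})_n$, i.e. is itself an honest functor $\Delta^n_\sharp \xlongrightarrow{} \bcat{D}$.

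\textbf{Key steps.} First I would unwind the characterization of the nucleus inside $\Sqe$: by \autoref{lem:canonicalnat} and \autoref{lem:adjonadj}, an object $x\in\Sqe(\bcat{C})_n$ lies in (the essential image of the right adjoint from) the nucleus precisely when the unit map $\epsilon_x : x \xlongrightarrow{} R_n L_n(x)$ is an equivalence; equivalently $x$ lies in $\mathbf{Sq}(\bcat{C})_n=\Fun^{\mathbf{opgr}}(\Delta^n_\sharp,\bcat{C})^{\leq 1}$. Since $\Phi$ is a genuine (non-lax) natural transformation, it commutes with the unit maps $\epsilon$ and with the structure maps $L_n, R_n$ coming from the oplax Segal structure, which are functorial in $\bcat{C}$; hence $\Phi_n(\epsilon_x)$ is identified with $\epsilon_{\Phi_n(x)}$. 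Thus if $\epsilon_x$ is an equivalence, so is $\epsilon_{\Phi_n(x)}$, and therefore $\Phi_n(x)$ factors through the nucleus $\mathbf{Sq}(\bcat{D})_n$ by the same characterization. The only genuinely content-bearing point is that the enhanced structure on $\Sqe(-)$ — the adjunctions $L_n\dashv R_n$ and the unit $\Phi$ of $K^*\dashv K_*$, equivalently the data witnessing $\Sqe(\bcat{C})$ as enhanced — is natural in $\bcat{C}$, which follows from \autoref{rem:sqenriched} (so that $\Sqe$ is a functor of $(\infty,2)$-categories, hence in particular of simplicial objects) together with \autoref{prop:Sqeenhanced} identifying $\Sqe(\bcat{C})$ with $K^*(\mathbf{Sq}(\bcat{C}))$ naturally in $\bcat{C}$. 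Concretely: $\Phi$ lands in $\Fun^{\mathbf{opgr}}(\Delta^n_\flat,\bcat{D})^{\leq 1}$ by sending a thin $n$-simplex of $\bcat{C}$ — which, restricted along $\Delta^n_\flat\to\Delta^n_\sharp$, is just the same map with less marking — along a \emph{functor}; since $x:\Delta^n_\sharp \to \bcat{C}$ respects all scalings, the composite $\Phi_n(x)$ is computed from a map $\Delta^n_\flat\tensor\Delta^k_\sharp \to \bcat{C} \to \bcat{D}$ that in fact factors through $\Delta^n_\sharp \tensor \Delta^k_\sharp$, which exhibits $\Phi_n(x)$ as an honest functor $\Delta^n_\sharp \to \bcat{D}$.

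\textbf{Main obstacle.} The delicate point is making precise that $\Phi$ being natural (rather than merely $\mathcal{I}\mathrm{oplax}$) really does force compatibility with the whole enhanced structure, not just with the face and degeneracy maps of $\Delta^\op$. The cleanest route is \autoref{lem:nuccharac}: applied to the double $(\infty,1)$-categories $\mathbf{Sq}(\bcat{C})_\bullet$ and $\mathbf{Sq}(\bcat{D})_\bullet$ (their nuclei being themselves, by \autoref{rem:osqdouble}), it states that an $\mathcal{I}\mathrm{oplax}$ transformation between them is (non-lax) natural iff $K_*$ of it is nuclear. A natural transformation $\Phi:\Sqe(\bcat{C})_\bullet \xRightarrow{}\Sqe(\bcat{D})_\bullet$ corresponds, under the equivalence of \autoref{thm:enhanced} restricted as in \autoref{prop:Sqeenhanced}, to a (non-lax) natural transformation of the underlying complete Segal objects $\mathbf{Sq}(\bcat{C})_\bullet \xRightarrow{}\mathbf{Sq}(\bcat{D})_\bullet$; applying \autoref{lem:nuccharac} to it, $K_*$ of that transformation — which is $\Phi$ up to the identification $\Sqe\simeq K^*\circ\mathbf{Sq}$ of \autoref{prop:Sqeenhanced} — is nuclear. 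So the plan is: invoke \autoref{prop:Sqeenhanced} to identify $\Sqe(\bcat{C})$ with $K_*(\mathbf{Sq}(\bcat{C}))$ (note $K^*\circ K_* \simeq \mathrm{id}$ on double $(\infty,1)$-categories by \autoref{prop:1dblcharac}(i)), observe that a natural transformation between these corresponds via the adjunction $K^*\dashv K_*$ to a natural transformation between the nuclei $\mathbf{Sq}(\bcat{C})_\bullet, \mathbf{Sq}(\bcat{D})_\bullet$, and conclude nuclearity by \autoref{lem:nuccharac}. I expect the only thing requiring care in the write-up is keeping the two incarnations of $\Sqe(\bcat{C})$ — as a functor $\Delta^\op\to\bcat{C}\!\on{at}_{(\infty,1)}$ built from Gray mapping objects, and as $K_*$ of $\mathbf{Sq}(\bcat{C})$ — notationally aligned so that "natural transformation of $\Sqe$'s" and "natural transformation of nuclei" are visibly the same datum.
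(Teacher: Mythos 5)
There is a genuine gap: your reduction is circular. Under \autoref{thm:enhanced} (restricted as in \autoref{prop:Sqeenhanced}), a natural transformation $\Phi:\Sqe(\bcat{C})_\bullet \Rightarrow \Sqe(\bcat{D})_\bullet$ corresponds to an \emph{$\mathcal{I}$oplax} natural transformation $\mathbf{Sq}(\bcat{C})_\bullet \Rightarrow \mathbf{Sq}(\bcat{D})_\bullet$ — that is the content of the equivalence $\on{Dbl}(\bcat{C}\!\on{at}_{(\infty,1)})^{\mathcal{I}\!\on{oplax}} \simeq \on{EDbl}(\bcat{C}\!\on{at}_{(\infty,1)})$ — and \autoref{lem:nuccharac} says precisely that this correspondent is non-lax \emph{if and only if} $\Phi$ is nuclear. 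So your assertion that $\Phi$ "corresponds to a (non-lax) natural transformation of the underlying complete Segal objects" is equivalent to the statement being proved, not a step towards it. The same problem infects the earlier paragraph: a natural transformation of simplicial objects commutes with $L_n$ (which is assembled from face maps) but there is no a priori reason it commutes with the right adjoints $R_n$, hence no reason that $\Phi_n(\epsilon_x)$ can be identified with $\epsilon_{\Phi_n(x)}$; that identification is again what nuclearity amounts to.

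One can see the abstract route is doomed: if naturality alone forced nuclearity, then every morphism in $\on{EDbl}(\bcat{C}\!\on{at}_{(\infty,1)})$ would be nuclear, forcing $\on{Dbl}(\bcat{C}\!\on{at}_{(\infty,1)})^{\mathcal{I}\!\on{oplax}} \simeq \on{Dbl}(\bcat{C}\!\on{at}_{(\infty,1)})$ by \autoref{thm:enhanced} and \autoref{cor:enhanced}, i.e.\ every $\mathcal{I}$oplax transformation of double $(\infty,1)$-categories would be non-lax, which is false. The proposition is a genuinely special rigidity property of the quintet construction, and the paper's proof is correspondingly hands-on: one first shows (claim $\square$) that an edge of $\Sqe(\bcat{C})_1$ represented by a commutative square is sent to a commutative square, by factoring a general square through the elementary squares built from $\min$/$\max$ and a thin $3$-simplex witness, using that degenerate squares on an object of $\bcat{C}(x,y)$ are detected by $\Sqe(\bcat{C})_2$; one then propagates this to thin $2$-simplices in $\Sqe(\bcat{C})_2$ via the auxiliary morphism $f_\sigma\circ m$ with $m(i,j)=\max(i,j)$ and an anodyne-morphism argument. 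Some argument of this explicit kind, exploiting that objects of $\Sqe(\bcat{C})_n$ are diagrams in $\bcat{C}$ with many degeneracies available, is unavoidable.
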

  \begin{proof}
    First, let us assume that we know that $\Phi_1:\Sqe(\bcat{C})_1 \xlongrightarrow{} \Sqe(\bcat{D})_1 $ satisfies the following:
    \begin{itemize}
      \item[$\square)$] If $e:\Delta^1 \xlongrightarrow{} \Sqe(\bcat{C})_1$ is an edge represented by a commutative square. Then $\Phi_1(e)$ is also represented by a commutative square. 
    \end{itemize}
    Now we consider a an object $\sigma:\Delta^0 \xlongrightarrow{} \Sqe(\bcat{C})_2$ represented by a thin 2-simplex $f_\sigma:\Delta^2_\sharp \xlongrightarrow{} \bcat{C}$ and we wish to show that $\Phi_2(\sigma)$ is also represented by a thin 2-simplex in $\bcat{D}$. We consider an morphism $\nu$ in $\Sqe(\bcat{C})_2$ given by the map
    \[
      f_\nu:\Delta^2_\flat \tensor \Delta^1 \xlongrightarrow{m} \Delta^2_\flat \xlongrightarrow{f_\sigma} \bcat{C}
    \]
    where $m(i,j)=\max(i,j)$. We make some observations:
    \begin{enumerate}
      \item We have that $d_1(\nu)=\sigma$.
       \item Every square in $\nu$ commutes.
       \item The restriction  $f_\nu$ to $\Delta^{2}_\flat \tensor \Delta^{\{1\}}$ is degenerate.
       \item Given the map $i_2:[0] \xlongrightarrow{} [2]$ that selects the final vertex, it follows that $\Sqe(\bcat{C})(i_2)(\nu)=u$ is an equivalence in $\Sqe(\bcat{C})_0$.
     \end{enumerate} 
     Using $\square)$ and 2 above we conclude that $\Phi_2(\nu)$ consists only in commutative squares. Moreover 3, implies that bottom simplex in $\Phi_2(\nu)$is degenerate. Finally, it follows from from 4 that the $\Phi_0(u)$ is also an equivalence. A routinary argument using anodyne morphisms then shows that the associated map
     \[
       f_{\Phi_2(\nu)}: \Delta^2_\flat \tensor \Delta^1 \xlongrightarrow{} \bcat{D}
     \]
     has every simplex being thin except possible the restriction to $\Delta^2_\flat \tensor \Delta^{\{0\}}$. However, $4$ implies that this simplex must be thin aswell. We conclude that $\Phi_2(\sigma)$ is represented by a thin simplex as desired.

     Now we turn our attention to proving the claim $\square)$. First we show the claim for edges $e \in \Sqe(\bcat{C})_1$ which are of the form
     \[
       \Delta^1 \tensor \Delta^1 \xlongrightarrow{\tau_\epsilon} \Delta^1 \xlongrightarrow{\varphi} \bcat{C}, \enspace \enspace \epsilon \in \{-,+\}
     \]
     where $\tau_-(i,j)=\min(i,j)$ and $\tau_+(i,j)=\max(i,j)$. Let $\varphi:x \xlongrightarrow{} y$ in $\bcat{C}$ and consider the following diagram 
     \[\begin{tikzcd}
  & x & x \\
  x & x & y \\
  x & y & y
  \arrow[from=2-1, to=2-2]
  \arrow[from=2-1, to=3-1]
  \arrow["\varphi"', from=3-1, to=3-2]
  \arrow["\varphi", from=2-2, to=3-2]
  \arrow["\varphi", from=2-2, to=2-3]
  \arrow[from=2-3, to=3-3]
  \arrow[from=3-2, to=3-3]
  \arrow[from=1-2, to=2-2]
  \arrow[from=1-2, to=1-3]
  \arrow["\varphi"', from=1-3, to=2-3]
\end{tikzcd}\]
 Using a similar argument as before, we conclude that the claim holds for the class of edges described above. Suppose that we are given another edge represented $\gamma$ by a commutative diagram
 \[
  f_{\gamma}: \Delta^1 \times \Delta^1 \xlongrightarrow{} \bcat{C}
 \]
 such that the restriction of $f_\gamma$ to  $\Delta^1 \times \Delta^{\{\epsilon\}}$ with $\epsilon \in \{0,1\}$ is degenerate. Then we see that $f_\gamma$ represents an invertible 2-morphism in the mapping $(\infty,1)$-category $\bcat{C}(x,y)$. In particular, it follows that we can find edges in $\Sqe(\bcat{D})_2$ which exhibit $\Phi_1(\gamma)$ as representing an invertible morphism in $\bcat{D}(\Phi(x),\Phi(y))$.  For the final case we consider a general commutative square $f_\kappa: \Delta^1 \times \Delta^1 \xlongrightarrow{}\bcat{C}$ represented by a diagram
 \[\begin{tikzcd}
  {x_0} & {y_0} \\
  {x_1} & {y_1}
  \arrow["f", from=1-1, to=1-2]
  \arrow["u"', from=1-1, to=2-1]
  \arrow["g"', from=2-1, to=2-2]
  \arrow["v", from=1-2, to=2-2]
\end{tikzcd}\]
We consider a 3-simplex $\rho: \Delta^3 \xlongrightarrow{} \Sqe(\bcat{C})_1$ given by the diagram depicted below
\[\begin{tikzcd}
  {x_0} & {x_0} \\
  {x_0} & {y_0} \\
  {x_1} & {y_1} \\
  {y_1} & {y_1}
  \arrow["f", from=2-1, to=2-2]
  \arrow["u"', from=2-1, to=3-1]
  \arrow["g"', from=3-1, to=3-2]
  \arrow["v", from=2-2, to=3-2]
  \arrow[from=1-1, to=2-1]
  \arrow[from=1-1, to=1-2]
  \arrow["f", from=1-2, to=2-2]
  \arrow["g"', from=3-1, to=4-1]
  \arrow[from=3-2, to=4-2]
  \arrow[from=4-1, to=4-2]
\end{tikzcd}\]
 where 2-simplex is thin. It is easy to check that $\Phi_1(\rho)$ consists only in thin simplices which proves our result.
  \end{proof}

 \begin{lemma}\label{lem:sqedgessat}
     Let $\Xi:(\Sqe(\bcat{C}) \times \underline{\mathscr{L}}) \xRightarrow{} \Sqe(\bcat{D})_\bullet$ be a natural transformation. Given an edge $(u,v):\Delta^1 \xlongrightarrow{} \Sqe(\bcat{C})_1 \times \mathscr{L}$ such that $v$ is is an equivalence in $\mathscr{L}$, let us denote by $f_u:\Delta^1 \tensor \Delta^1 \xlongrightarrow{} \bcat{C}$ the map associated to the edge $u$. Then given another edge 
       \[
         \Delta^1 \xlongrightarrow{\tau} \Delta^1 \tensor \Delta^1 \xlongrightarrow{f_u} \bcat{C}
       \]
       such that $f_u \circ \tau$ is an equivalence in $\bcat{C}$ then it follows that $f_{\Xi(u)} \circ \tau$ is also an equivalence in $\bcat{D}$, where $f_{\Xi(u)}$ is the map associated to $\Xi(u,v)$.
  \end{lemma}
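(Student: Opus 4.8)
The plan is to reduce the statement to the previously-established rigidity result \autoref{prop:sqrigid} together with the nuclearity characterisation of edges. First I would observe that we may ignore the $\mathscr{L}$-coordinate entirely: since $v$ is an equivalence in $\mathscr{L}$, the edge $(u,v)$ becomes an equivalence in $\Sqe(\bcat{C})_1 \times \mathscr{L}$ after projecting away the $\Sqe(\bcat{C})_1$-component, so the relevant data is carried by $u$ alone. More precisely, the natural transformation $\Xi$ restricts along the section $\Sqe(\bcat{C})_\bullet \xlongrightarrow{} \Sqe(\bcat{C})_\bullet \times \underline{\mathscr{L}}$ determined by a chosen object of $\mathscr{L}$ to a morphism $\Sqe(\bcat{C})_\bullet \xRightarrow{} \Sqe(\bcat{D})_\bullet$, and because $v$ is invertible we may connect $(u,v)$ to an edge of the form $(u, \mathrm{const})$ by an equivalence, using that $\Sqe(\bcat{C})_1 \times \mathscr{L}$ is an $(\infty,1)$-category. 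Hence it suffices to prove the statement for a plain morphism $\Phi:\Sqe(\bcat{C})_\bullet \xRightarrow{} \Sqe(\bcat{D})_\bullet$ and an edge $u$ of $\Sqe(\bcat{C})_1$.

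Next I would invoke \autoref{prop:sqrigid}, which tells us that $\Phi$ is nuclear in the sense of \autoref{def:nucmorph}; in particular, from the proof of that proposition we already know the claim $\square)$: if $e:\Delta^1 \xlongrightarrow{} \Sqe(\bcat{C})_1$ is represented by a commuting square then $\Phi_1(e)$ is too. The key remaining point is to pass from ``$\Phi_1$ preserves commuting squares'' to ``$\Phi_1$ preserves the property that a given edge $\tau$ of $f_u$ is an equivalence in $\bcat{C}$.'' For this I would exploit the same degeneracy-and-pasting trick used throughout Section 5: given $f_u:\Delta^1_\flat\tensor\Delta^1_\sharp \xlongrightarrow{} \bcat{C}$ and an edge $\tau:\Delta^1 \xlongrightarrow{} \Delta^1\tensor\Delta^1$ with $f_u\circ\tau$ an equivalence, build an auxiliary $2$- or $3$-simplex of $\Sqe(\bcat{C})_1$ — of the type appearing in the proof of \autoref{prop:sqrigid}, whose faces are the degenerate square on $f_u\circ\tau$, a commuting square, and an invertible $2$-morphism in the relevant mapping $(\infty,1)$-category — witnessing $f_u\circ\tau$ as an equivalence. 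Applying $\Phi$ and using $\square)$ together with the fact that $\Phi_0$ preserves equivalences (being a functor of $(\infty,1)$-categories) forces the corresponding face of $\Phi_1(u)$, namely $f_{\Xi(u)}\circ\tau$, to be an equivalence in $\bcat{D}$.

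Concretely, the pasting diagram I have in mind is the one appearing at the end of \autoref{prop:sqrigid}: write $f_u\circ\tau = \varphi$ and consider the $3\times 2$ array whose columns/rows are copies of the source and target of $\varphi$ with the degenerate and $\varphi$-labelled edges arranged so that every square commutes and the bottom (or top) row is degenerate; the associated map $\Delta^1_\flat\tensor\Delta^3 \xlongrightarrow{} \bcat{C}$ exhibits $\varphi$ as an invertible $2$-morphism of $\bcat{C}(x,y)$, and after applying $\Phi$ the image has all simplices thin except possibly one face, which is then forced thin by a routine anodyne argument. I expect the main obstacle to be purely bookkeeping: setting up the auxiliary simplices so that \emph{all} of their squares genuinely commute (so that $\square)$ applies verbatim) while still retaining enough degeneracy to conclude invertibility of the single surviving face. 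Once those diagrams are written down, the conclusion is immediate by the same ``every face thin except one, hence all thin'' reasoning used repeatedly in \autoref{prop:sqrigid}, combined with the fact that an edge of $\Sqe(\bcat{D})_1$ whose restriction along an idle map becomes degenerate and whose image in $\bcat{D}$ is a zig-zag of equivalences must itself be an equivalence.
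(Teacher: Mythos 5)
Your overall strategy --- reduce to $\mathscr{L}=\Delta^0$, treat the diagonal edge as the only nontrivial case, and then combine claim $\square)$ from the proof of \autoref{prop:sqrigid} with a two-out-of-three argument in $\bcat{D}$ --- is the same as the paper's. But the auxiliary diagram you describe does not do the job, for two reasons. First, you ask that the auxiliary simplices be arranged ``so that all of their squares genuinely commute (so that $\square)$ applies verbatim)''; this is self-defeating, because for the argument to say anything about $f_{\Xi(u)}$ the edge $u$ itself must appear as a face of the auxiliary simplex, and $u$ is an arbitrary lax square --- only its diagonal is assumed invertible, not the square. Second, the gadget you borrow from the end of \autoref{prop:sqrigid} (the array exhibiting something as an invertible $2$-morphism of a mapping $(\infty,1)$-category $\bcat{C}(x,y)$) is aimed at squares whose horizontal restrictions are degenerate; here $f_u\circ\tau$ is a $1$-morphism of $\bcat{C}$ that happens to be an equivalence, not a $2$-morphism, so that construction is not the relevant one.

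What the paper actually does is precompose $f_u$ with an explicit collapse map $p:\Delta^1\tensor\Delta^2_\sharp\xlongrightarrow{}\Delta^1\tensor\Delta^1$ to produce a single $2$-simplex $\nu$ of $\Sqe(\bcat{C})_1$ with three features at once: $d_0(\nu)=u$; $d_2(\nu)$ is represented by a genuinely commutative square (built from degeneracies of $f_u$), so $\square)$ applies to it; and one long edge of the prism $f_\nu$ is exactly $f_u\circ\tau$, expressed as a composite of edges that are forced to be equivalences. Applying $\Xi$, scaling all but one triangle of the image prism in $\bcat{D}$ by the usual anodyne argument, and invoking two-out-of-three then forces $f_{\Xi(u)}\circ\tau$ to be invertible. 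Until you exhibit a simplex with exactly this mix of one non-commuting face (namely $u$ itself) and enough commuting and degenerate data to trap the diagonal, the passage from ``$\Xi$ preserves commuting squares'' to ``$\Xi$ preserves invertibility of the diagonal of a lax square'' remains a gap.
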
 
  \begin{proof}
    First, we observe that without loss of generality one can assume that $\mathscr{L}$ is the walking isomorphism. Further inspection reveals that it will enough to show the case where $\mathscr{L}$ is the point. If $\tau$ selects a vertical or horizontal edge the claim follows easily. If $\tau$ selects the diagonal edge we consider a 2-simplex $\nu$ in $\Sqe(\bcat{C})_1$ given by restricting $f_u$ along the map
     \[
       p:\Delta^1 \tensor \Delta^2_\sharp \xlongrightarrow{} \Delta^1 \tensor \Delta^1, \enspace  \enspace  (i,j ) \mapsto p(i,j)=\begin{cases}
         \min(i,j), \enspace \text{ if } j\leq 1 \\
         (i,j-1), \enspace \text{ if } j =2.
       \end{cases}
     \]
     Let us make some remarks:
     \begin{itemize}
       \item[$i)$] The composite $f_u \circ p$ sends the edge $(1,0) \xlongrightarrow{} (1,2)$ to $f_u \circ \tau$.
       \item[$ii)$] We have that $d_0(\nu)=u$.
       \item[$iii)$] The edge $d_2(\nu)$ is represented by a commutative diagram. It follows from the proof of \autoref{prop:sqrigid} that is image in $\Sqe(\bcat{D})_1$ must be also commuting.
     \end{itemize}
     We can now look at $f_{\Xi(\nu)}: \Delta^1 \tensor \Delta^2_\sharp \xlongrightarrow{}\bcat{D}$ and observe the following:
     \begin{itemize}
         \item The image of the edge $(1,0) \xlongrightarrow{} (1,2)$ under $f_{\Xi(\nu)}$ is an equivalence in $\bcat{D}$.
         \item The image of the edge $(0,0) \xlongrightarrow{} (1,2)$ under $f_{\Xi(\nu)}$ is an equivalence in $\bcat{D}$ since it is the composite of a pair of equivalences.
         \item The image of the every triangle in the $3$-simplex given by $(0,0) \xlongrightarrow{} (0,1) \xlongrightarrow{} (1,1) \xlongrightarrow{} (1,2)$ under $f_{\Xi(\nu)}$ is thin. To see this we use $iii)$ above together with a routinary argument using anodyne morphisms.
       \end{itemize}  
       Using the remarks above we conclude that since the edge $(0,0) \xlongrightarrow{} (0,1)$ is also sent to an equivalence it follows that the image of the edge $(0,1) \xlongrightarrow{} (1,2)$ must be an equivalence in $\bcat{D}$ as well. This finishes the proof of $1$.
  \end{proof}

  \begin{lemma}\label{lem:sqetrisat}
    Let $\Xi:(\Sqe(\bcat{C}) \times \underline{\mathscr{L}}) \xRightarrow{} \Sqe(\bcat{D})_\bullet$ be a natural transformation. Given a triangle $(\sigma,\rho):\Delta^2 \xlongrightarrow{} \Sqe(\bcat{C})_2 \times \mathscr{L}$, let us denote by $f_{\sigma}:\Delta^2_\flat \tensor \Delta^2_\sharp \xlongrightarrow{} \bcat{C}$ the map associated to $\sigma$. Then given another triangle
       \[
         \Delta^2 \xlongrightarrow{\kappa} \Delta^2_\flat \tensor \Delta^2_\sharp \xlongrightarrow{f_{\sigma}} \bcat{C}
       \]
       such that $f_{\sigma} \circ \kappa$ is thin in $\bcat{C}$ then it follows that $f_{\Xi(\sigma)} \circ \kappa$ is also thin in $\bcat{D}$, where $f_{\Xi(\sigma)}$ is the map associated to $\Xi(\sigma,\rho)$.
  \end{lemma}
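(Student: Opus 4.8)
\textbf{Proof strategy for \autoref{lem:sqetrisat}.}
The plan is to mimic the structure of the proof of \autoref{lem:sqedgessat}: reduce to the point, enumerate the cases according to which triangle $\kappa$ selects inside $\Delta^2_\flat \tensor \Delta^2_\sharp$, and for the hard cases produce an auxiliary higher-dimensional simplex in $\Sqe(\bcat{C})_2$ whose faces are controlled by results we already have (\autoref{prop:sqrigid}, which tells us $\Xi$ is nuclear, and \autoref{lem:sqedgessat} applied componentwise). First I would observe that, exactly as in \autoref{lem:sqedgessat}, one can assume $\mathscr{L}=\Delta^0$ by replacing $\mathscr{L}$ with the walking isomorphism and then with the point, since the condition on $\rho$ being an equivalence in $\mathscr{L}$ only enters through the accompanying edges, all of which are handled by \autoref{lem:sqedgessat}.

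Next I would stratify by the image of $\kappa$. If the image of $\kappa$ in $\Delta^2_\sharp$ (the second factor) is degenerate, then $f_\sigma \circ \kappa$ factors through $\bcat{C}$ via the $\Delta^2_\flat$-coordinate, i.e.\ it is a triangle coming from a $2$-simplex of $\bcat{C}$ regarded through the flat marking; in this situation $f_\sigma\circ\kappa$ thin means exactly that the underlying $2$-simplex of $\Sqe(\bcat{C})_2$ is a thin $2$-simplex $\Delta^2_\sharp \xlongrightarrow{} \bcat{C}$, and nuclearity of $\Xi$ (\autoref{prop:sqrigid}) gives the conclusion directly. The genuinely new content is when $\kappa$ involves a non-degenerate edge in the $\Delta^2_\sharp$ factor, so that $f_\sigma\circ\kappa$ mixes the two Gray-tensor directions. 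Here I would take the thin triangle $\kappa$ and fatten it: choose a $3$-simplex $\theta$ in $\Sqe(\bcat{C})_2$, obtained by restricting $f_\sigma$ along a suitable monotone (and partly $\max$- or $\min$-collapsing) map $\Delta^2_\flat \tensor \Delta^3_\sharp \xlongrightarrow{} \Delta^2_\flat \tensor \Delta^2_\sharp$, arranged so that: one of its faces is $\kappa$; the remaining faces are either degenerate in one Gray direction, or represented by commutative squares (hence preserved by $\Xi$ via \autoref{prop:sqrigid}), or have an edge sent to an equivalence by $\Xi$ via \autoref{lem:sqedgessat}. Applying $\Xi$ to $\theta$ and running the now-standard argument with scaled-anodyne maps of type \ref{i:saturation} and \ref{ms:wonky4} from \autoref{def:msanodyne} then forces $f_{\Xi(\sigma)}\circ\kappa$ to be thin, since all the other $2$-faces of $f_{\Xi(\theta)}$ are already thin.

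I expect the main obstacle to be the bookkeeping in this last step: there are several essentially different shapes of thin triangle $\kappa$ inside $\Delta^2_\flat \tensor \Delta^2_\sharp$ (according to how the three chosen vertices distribute across rows $0,1,2$ of $\Delta^2_\flat$ and columns $0,1,2$ of $\Delta^2_\sharp$), and for each one must exhibit the correct fattening $\theta$ and verify that every auxiliary face lands in a class already known to be preserved by $\Xi$. The key recurring trick — the same one used in \autoref{prop:sqrigid} and \autoref{lem:sqedgessat} — is that inserting a degenerate column or row produces a $3$-simplex all of whose faces but one are visibly thin, so the last one can be scaled using \ref{i:saturation}; the only thing to check case by case is that the edge witnessing the needed equivalence (for \ref{i:saturation} to apply after passing through $\Xi$) is one to which \autoref{lem:sqedgessat} or the diagonal-edge analysis already applies. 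Once all shapes are dispatched, the conclusion that $f_{\Xi(\sigma)}\circ\kappa$ is thin follows, completing the proof.
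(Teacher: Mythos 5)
Your high-level strategy is the right one (stratify by the shape of $\kappa$, use \autoref{prop:sqrigid} and \autoref{lem:sqedgessat} to control faces of auxiliary simplices, and close with anodyne maps of type \ref{i:saturation}/\ref{ms:wonky4}), but there is a genuine error at the very first step. The reduction to $\mathscr{L}=\Delta^0$ is not available here. In \autoref{lem:sqedgessat} that reduction was legitimate only because the $\mathscr{L}$-component $v$ of the edge was \emph{assumed} to be an equivalence; in the present lemma $\rho:\Delta^2\xlongrightarrow{}\mathscr{L}$ is an arbitrary $2$-simplex, so the most you can do is pull $\Xi$ back along the classifying map of $\rho$ and assume $\mathscr{L}=\Delta^2_\sharp$ with $\rho=\on{id}$. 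The $\mathscr{L}$-direction cannot then be discarded: whenever $\kappa$ has a non-degenerate second component, $f_{\Xi(\sigma,\rho)}\circ\kappa$ depends on edges and triangles of $\Xi(\sigma,\rho)$, not just on its vertices, and these in turn depend on $\rho$ as a simplex of $\mathscr{L}$; moreover the scaling conditions you must verify in $\bcat{D}$ involve which edges are marked in the $\mathscr{L}$-direction. Concretely, the auxiliary diagrams one is forced to analyze are maps $\Delta^2_\flat\tensor(\Delta^2_\sharp\times\Delta^2_\sharp)\xlongrightarrow{}\bcat{D}$, where one $\Delta^2_\sharp$ is the simplicial direction of $\Sqe(\bcat{C})_2$ and the other is $\mathscr{L}$. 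Proving only the constant-$\rho$ case yields a statement too weak for its use in \autoref{prop:sqesaturation}, where arbitrary $\mathscr{L}$ is essential.

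The second problem is that what you defer as \enquote{bookkeeping} is the actual content of the proof. The case where the second component $\gamma$ of $\kappa$ is constant does follow from nuclearity at the level of objects, as you say; but already the case where $\gamma$ is degenerate on an edge cannot be dispatched by a single fattening. One first has to establish intermediate statements that are \emph{stronger} than the lemma itself — for instance, that for an edge $e$ of $\Sqe(\bcat{C})_1$ represented by a commutative square, \emph{every} triangle in the image of the induced map $\Delta^1\tensor(\Delta^1\times\Delta^2_\sharp)\xlongrightarrow{}\bcat{D}$ is thin, and an analogous statement for certain reparametrized edges of $\Sqe(\bcat{C})_2$ obtained by collapsing along $\min$/$\max$-type maps $\Delta^2_\flat\tensor\Delta^1\xlongrightarrow{}\Delta^1\tensor\Delta^1$ — and the non-degenerate case is then built on top of these. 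Each such claim requires its own anodyne-filtration argument, and in several of them the edge needed to trigger \ref{i:saturation} is marked only because of the $\mathscr{L}$-coordinate, which is another reason the reduction to a point breaks the argument. As written, the proposal is a correct plan for the constant-$\rho$ case and an outline elsewhere, not a proof of the stated lemma.
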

  \begin{proof}
    We can assume without loss of generality that $\mathscr{L}=\Delta^2_\sharp$. The proof will proceed by cases which are governed by the composite 
    \[
      \gamma:\Delta^2 \xlongrightarrow{\kappa} \Delta^2_\flat \tensor \Delta^2_\sharp \xlongrightarrow{} \Delta^2_\sharp
    \]
    where the second map is simply the projection onto the second factor.
    \begin{enumerate}
      \item The map $\gamma$ is degenerate on a point $j$.

      In this case we look at the 2-simplex $\phi$ given by restricting $f_\sigma$ along the map $\Delta^2_\flat \tensor \Delta^{\{j\}} \xlongrightarrow{} \Delta^2_\flat \tensor \Delta^2_\sharp$. We consider a 2-simplex $\hat{\sigma}$ in $\Sqe(\bcat{C})_2$ specified by the morphism
      \[
        f_{\hat{\sigma}}:\Delta^2_\flat \tensor \Delta^2_\sharp \xlongrightarrow{m} \Delta^2_\sharp \xlongrightarrow{\phi} \bcat{C}
      \]
      where the map $m$ is simply the maximum function. We consider the composite
      \[
        \Delta^2_\sharp \times \Delta^2_\sharp \xlongrightarrow{(\hat{\sigma},\on{id})} \Sqe(\bcat{C})_2 \times \Delta^2_\sharp \xlongrightarrow{} \Sqe(\bcat{D})_2
      \]
      which corresponds to a morphism
      \[
        F: \Delta^2_\flat \tensor ( \Delta^2_\sharp \times \Delta^2_\sharp) \xlongrightarrow{} \bcat{D}.
      \]
      We will show that every triangle in the image of $F$ is thin. To this end we make some observations:
      \begin{itemize}
        \item[i)]  Observe that the restriction of $F$ along the map $\iota: \Delta^{\{1,2\}}_\flat \tensor ( \Delta^2_\sharp \times \Delta^2_\sharp) \xlongrightarrow{} \Delta^2_\flat \tensor ( \Delta^2_\sharp \times \Delta^2_\sharp)$ factors as
        \[
            \Delta^{\{1,2\}}_\flat \tensor ( \Delta^2_\sharp \times \Delta^2_\sharp) \xlongrightarrow{\on{id}\times s_0 \times \on{id}}  \Delta^{\{1,2\}}_\flat \tensor (\Delta^1 \times \Delta^2_\sharp)
         \] 
         where $s_0:[2] \xlongrightarrow{} [1]$ is the degeneracy map which collapses the vertex $0$ and $1$. Consequently, in order to show that every triangle in the image of $F \circ \iota$ is thin it will suffice to show that every triangle in the image of
         \[
          \widetilde{F}:\Delta^{\{1,2\}}_\flat \tensor ( \Delta^{\{1,2\}}_\sharp \times \Delta^2_\sharp) \xlongrightarrow{} \Delta^2_\flat \tensor ( \Delta^2_\sharp \times \Delta^2_\sharp) \xlongrightarrow{F} \bcat{D}
         \]
         It follows from the proof of \autoref{prop:sqrigid} that every 2-simplex of the source of $\widetilde{F}$ which degenerates on a point in the factor $\Delta^2_\sharp$ must be sent to a thin simplex. Moreover, we see that every simplex of the source which either factors through $\Delta^{\{\epsilon\}}_\flat \tensor ( \Delta^{\{1,2\}}_\sharp \times \Delta^2_\sharp)$ with $\epsilon \in \{1,2\}$, through $\Delta^{\{1,2\}}_\flat \tensor ( \Delta^{\{2\}}_\sharp \times \Delta^2_\sharp)$ must be sent to a thin simplex. The first case is clear and the second follows since that restriction factors through the image of a degeneracy operator. The claim now follows after a routinary argument.
         \item[ii)]  Given the map $\alpha:\Delta^2_\flat \tensor ( \Delta^{\{1,2\}}_\sharp \times \Delta^2_\sharp) \xlongrightarrow{} \Delta^2_\flat \tensor ( \Delta^2_\sharp \times \Delta^2_\sharp)$ it follows that the restriction of $F$ along $\alpha$ factors through the map
         \[
           \Delta^2_\flat \tensor ( \Delta^{\{1,2\}}_\sharp \times \Delta^2_\sharp) \xlongrightarrow{s_0 \times \on{id}\times \on{id}} \Delta^{\{1,2\}}_\flat \tensor ( \Delta^{\{1,2\}}_\sharp \times \Delta^2_\sharp).
         \]
         We conclude from part $i)$ that every simplex in the image of $F \circ \alpha$ must be thin.
         \item[iii)] Let $\beta_\epsilon:\Delta^{\{0,\epsilon\}}_\flat \tensor ( \Delta^{\{0,\epsilon\}}_\sharp \times \Delta^2_\sharp)$ with $\epsilon \in \{1,2\}$. One verifies that also the restriction $F \circ \beta_{\epsilon}$ maps every triangle to a thin 2-simplex in $\bcat{D}$.
         \item[iv)] We also see that it follows from \autoref{prop:sqrigid} that every simplex in $\Delta^2_\flat \tensor ( \Delta^{2}_\sharp \times \Delta^2_\sharp)$ which degenerates on a point in the last factor must be sent to a thin simplex in $\bcat{D}$.
      \end{itemize}
       The claim follows after a routinary argument involving anodyne morphisms from observations $i)-iv)$.
       \item The map $\gamma$ is degenerate on an edge.

      To deal with this case we first establish the following claims:
      \begin{itemize}
        \item[$\diamond)$] Let $(e,\on{id}): \Delta^1 \times \Delta^2_\sharp \xlongrightarrow{} \Sqe(\bcat{C})_1 \times \Delta^2_\sharp $ such that $e$ is represented by a commutative square and consider the associated map 
        \[
          G: \Delta^1 \tensor( \Delta^1 \times \Delta^2_\sharp) \xlongrightarrow{} \bcat{D}
        \]
        Then it follows the the image of $G$ consists only in thin simplices.

        To show this claim we make again some observations:
        \begin{itemize}
          \item The map $G$ sends every triangle in  $\Delta^{\{\epsilon\}} \tensor( \Delta^1 \times \Delta^2_\sharp)$ with $\epsilon \in \{0,1\}$ to a thin simplex.
          \item The map $G$ sends every triangle in $\Delta^1 \tensor( \Delta^{\{\epsilon\}} \times \Delta^2_\sharp)$ with $\epsilon \in \{0,1\}$ to a thin simplex. This follows from the proof of $i)$ in case 1.
          \item The map $G$ sends every triangle which degenerates to a point in the last factor to a thin simplex. This follows from the proof of \autoref{prop:sqrigid}.
        \end{itemize}
        The claim $\diamond)$ now follows after another routinary argument using anodyne morphisms.
        \item[$\heartsuit)$] Let $\varphi_\epsilon: \Delta^2_\flat \tensor \Delta^1 \xlongrightarrow{} \Delta^1 \tensor \Delta^1$ with $\epsilon \in \{+,-\}$ and where we define
        \[
          \varphi_{-} (i,j)= \begin{cases}  
            (0,\min(i,j)), \enspace \text{ if } i \leq 1\enspace ,\\
            (i-1,j), \enspace \text{ if }i > 1 .
          \end{cases}         
          \enspace \enspace \enspace \varphi_+(i,j)=\begin{cases}
            (i,j), \enspace \text{ if } i \leq 1\enspace ,\\
            \max(i-1,j), \enspace \text{ if } i > 1.
          \end{cases}                          
        \]
        We further define 2-simplices $\alpha_{\epsilon}: \Delta^2 \xlongrightarrow{}\Delta^2_\flat \tensor \Delta^1$ with $\epsilon \in \{-,+\}$ given by
        \[
          \alpha_- : (0,0) \xlongrightarrow{} (1,0) \xlongrightarrow{}(2,1), \enspace \enspace \alpha_+ : (0,0) \xlongrightarrow{} (1,1) \xlongrightarrow{}(2,1)
        \]
        Given an edge $e: \Delta^1 \xlongrightarrow{} \Sqe(\bcat{C})_1$ consider another edge $\omega_\epsilon: \Delta^1 \xlongrightarrow{} \Sqe(\bcat{C})_2$ determined by the map
        \[
          f_{\omega_e} :\Delta^2_\flat \tensor \Delta^1 \xlongrightarrow{\varphi_{\epsilon}} \Delta^1 \tensor \Delta^1 \xlongrightarrow{f_e} \bcat{C}.
        \]
        We consider the map  $H_{\epsilon}:\Delta^2_\flat \tensor (\Delta^1 \tensor \Delta^2_\sharp) \xlongrightarrow{} \bcat{D}$ induced the composite
        \[
          \Delta^1 \times \Delta^2_\sharp \xlongrightarrow{(\omega_\epsilon,\on{id})} \Sqe(\bcat{C})_2 \times \Delta^2_\sharp \xlongrightarrow{\Xi} \Sqe(\bcat{D})_2.
        \]
        Then every simplex $\nu: \Delta^2 \xlongrightarrow{} \Delta^2_\flat \tensor (\Delta^1 \tensor \Delta^2_\sharp)$ whose projection onto the first 2 factors equal $\alpha_\epsilon$ is sent to a thin simplex in $\bcat{D}$ by $H_\epsilon$.

        We will only verify the case where $\epsilon=+$ and leave the remaining case as an (easier) exercise to the reader. Let $\nu$ be given by $(0,0,a) \xlongrightarrow{} (1,1,b) \xlongrightarrow{} (2,1,c)$ and consider a 3-simplex $\theta$ given by
        \[
          (0,0,a) \xlongrightarrow{} (1,0,a) \xlongrightarrow{} (1,1,b) \xlongrightarrow{} (2,1,c).
        \]
        It follows that the face skipping the vertex $0$ and the face skipping the vertex $3$ must be sent to a thin simplex in $\bcat{D}$ due to $\diamond)$. It then follows that it will suffice to show that the face skipping the vertex $2$ gets sent to a thin simplex. To this end we consider another 3-simplex
        \[
          (0,0,a) \xlongrightarrow{} (1,0,a) \xlongrightarrow{} (2,0,b) \xlongrightarrow{} (2,1,c).
        \]
        It is easy to verify that every face in this simplex (except possibly the face skipping the vertex $2$) is mapped under $H$ to a thin 2-simplex. We conclude that every face in this 3-simplex is thin in $\bcat{D}$ and thus the claim holds.

      \end{itemize}
      To prove the general case where $\gamma$ is degenerate on an edge, we can safely replace the original 2-simplex $\sigma:\Delta^2 \xlongrightarrow{} \Sqe(\bcat{C})_2$ with one of its faces. Note that if $f_\sigma \circ \kappa$ factors through a square $\Delta^1 \tensor \Delta^1$ then the result follows from $\diamond)$. We will assume that $\gamma$ is of the form $0 \xlongrightarrow{} 1 \xlongrightarrow{} 1$ and leave the remaining case as an (easier) exercise to the reader. We consider a map
      \[
        t_{-}: \Delta^3_\flat \tensor \Delta^1 \xlongrightarrow{} \Delta^2_\flat \tensor \Delta^1, \enspace \enspace t_{-}(i,j)=\begin{cases}
          (0,\min(i,j)), \enspace \text{ if } i \leq 1 \\
          (i-1,j), \enspace \text{ if } i > 1.
        \end{cases}
      \]
      which yields an edge  $s:\Delta^1 \xlongrightarrow{} \Sqe(\bcat{C})_3$. As before, we look at a functor
      \[
        L: \Delta^3_{\flat} \tensor (\Delta^1 \tensor \Delta^2_\sharp) \xlongrightarrow{} \bcat{D}
      \]
      and we wish to show that the image of the triangle $(1,0,a) \xlongrightarrow{} (2,1,b) \xlongrightarrow{} (3,1,c)$ under $L$ is thin in $\bcat{D}$. We extend this triangle to a 3-simplex
      \[
        (0,0,a) \xlongrightarrow{} (1,0,a)\xlongrightarrow{}(2,1,b)\xlongrightarrow{} (3,1,c)
      \]
      We observe that the first edge gets mapped under $L$ to an equivalence in $\bcat{D}$ and that the faces skipping the vertex $2$ and $3$ respectively get sent to a thin simplex as a consequence of $\heartsuit)$. Therefore it will be enough to show that the face skipping the vertex $1$ gets sent to a thin simplex. We consider another 3-simplex
      \[
        (0,0,a) \xlongrightarrow{} (0,1,a)\xlongrightarrow{} (2,1,b)\xlongrightarrow{} (3,1,c)
      \]
      In this simplex the face skipping the vertex $2$ and the face skipping the vertex $3$ get sent to a thin simplex due to $\diamond)$. The face skipping the vertex $0$ gets sent to a thin simplex due to point 1. The claim now follows.

     \item The simplex $\gamma$ does not degenerate.

     We break this final case into two subcases:
     \begin{itemize}
       \item The projection of $\kappa$ onto its first factor is degenerate on an edge.

       The only relevant case here is when $\kappa$ is of the form $(i,0) \xlongrightarrow{} (i,1) \xlongrightarrow{} (j,2)$ since the claim is already true if $\kappa$ is of the form $(i,0) \xlongrightarrow{} (j,1) \xlongrightarrow{} (j,2)$. Note that we can replace in this case $\sigma$ with one of its faces given by $f_{d(\sigma)}:\Delta^1 \tensor \Delta^2_\sharp  \xlongrightarrow{} \bcat{C}$. We define a map
       \[
         r: \Delta^2_\flat \tensor \Delta^2_\sharp \xlongrightarrow{}\Delta^1 \tensor \Delta^2_\sharp, \enspace r(i,j)=\begin{cases}
           (i,j), \enspace \text{ if } i \leq 1, \\
           (1,0), \enspace \text{ if } (i,j)=(2,0) \\
           (1,2), \enspace \text{ else. }
         \end{cases}
       \]
     which induces a morphism $W:\Delta^2_\flat \tensor (\Delta^2_\sharp \times \Delta^2) \xlongrightarrow{}\bcat{D}$. Our goal is to show that $W$ the image of a simplex of the form $(0,0,a) \xlongrightarrow{} (0,1,b) \xlongrightarrow{} (1,2,c)$ is thin in $\bcat{D}$. We look at a 3-simplex given by
     \[
       (0,0,a) \xlongrightarrow{} (0,1,b)  \xlongrightarrow{}1,2,c) \xlongrightarrow{} (2,2,c),
     \]
    and note that by $\heartsuit)$ every face of this 3-simplex gets mapped to a thin simplex in $\bcat{D}$ except possibly the skipping the vertex $2$ and the face skipping the vertex $3$. Moreover since the last vertex is an equivalence in $\bcat{D}$ it will suffice to show that the image $2$-nd face is thin in $\bcat{D}$. Finally let us consider another 3-simplex
    \[
        (0,0,a) \xlongrightarrow{} (0,1,b) \xlongrightarrow{}(2,1,c) \xlongrightarrow{}(2,2,c)
    \]
    It follows that every face in this simplex is thin in $\bcat{D}$  (using $\diamond)$) except possible the face skipping the vertex $2$ which implies our claim.
    \item The projection of $\kappa$ on its first factor does not degenerate.  The proof is almost the same as in the previous case so we only give a sketch. We define a map
    \[
      z: \Delta^3_\flat \tensor \Delta^2_\sharp \xlongrightarrow{} \Delta^2_\flat \tensor \Delta^2_\sharp, \enspace z(i,j)=\begin{cases}
        (i,j), \enspace \text{ if } i \leq 2\\
        (2,0), \enspace \text{ if } (i,j)=(3,0) \\
        (2,2), \enspace \text{ else }
      \end{cases}
    \]
    The proof now proceeds in almost the identical way as we did above with the map $W$.
     \end{itemize}
     The result now follows. \qedhere
    \end{enumerate}
  \end{proof}

  \begin{proposition}\label{prop:sqesaturation}
    The canonical map $\int^{\mathbf{opgr}}_{E}(\Sqe(\bcat{C})\times \underline{\mathscr{L}})_\bullet \xlongrightarrow{} \mathbb{X}^\tensor_{\bcat{C}} \times \mathscr{L}$ is a weak equivalence of marked-scaled simplicial sets.
  \end{proposition}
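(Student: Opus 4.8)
The plan is to reduce to the case $\mathscr{L}=\Delta^0$ and then to prove that the resulting comparison, which is the identity on underlying simplicial sets and merely adds decorations, is a trivial cofibration by a filtration built out of $\mathbf{MS}$-anodyne morphisms, in the spirit of \autoref{thm:bigcombinatorial}, \autoref{lem:expandeddeltaplus} and \autoref{prop:alphaessesurj}.

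First I would dispose of the parameter $\mathscr{L}$. Since $\int^{\mathbf{opgr}}_{E}$ is a left adjoint (\autoref{prop:enhancedcoendleftquillen}) and the scaling of the Gray tensor $\Delta^n_\flat\tensor(-)$ depends only on the $\Delta^n$-component of a triangle, the left factor being minimally scaled, one checks directly that $\Delta^n_\flat\tensor(B\times I(\mathscr{L}))\cong(\Delta^n_\flat\tensor B)\times I(\mathscr{L})$ for the maximally scaled marked-scaled simplicial set $I(\mathscr{L})$, and hence that $\int^{\mathbf{opgr}}_{E}(\Sqe(\bcat{C})\times\underline{\mathscr{L}})_\bullet\cong\bigl(\int^{\mathbf{opgr}}_{E}\Sqe(\bcat{C})_\bullet\bigr)\times I(\mathscr{L})$. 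By \autoref{rem:etalc} and the definition of $\mathbb{X}^{\mathscr{L}}_{\bcat{C}}$, the canonical map in the statement is then $(-)\times I(\mathscr{L})$ applied to the one for $\mathscr{L}=\Delta^0$. As $(-)\times I(\mathscr{L})$ is a left Quillen endofunctor of $\on{Set}_\Delta^{\mathbf{ms}}$ — it preserves cofibrations and, by \autoref{prop:msCart}, carries $\mathbf{MS}$-anodyne maps to $\mathbf{MS}$-anodyne maps — it preserves weak equivalences between cofibrant objects, and every marked-scaled simplicial set is cofibrant; so it suffices to treat $\mathscr{L}=\Delta^0$.

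For $\mathscr{L}=\Delta^0$, by the very definition of $\mathbb{X}^{\tensor}_{\bcat{C}}=\mathbb{X}^{\Delta^0}_{\bcat{C}}$ the canonical map is the identity on underlying simplicial sets, and it is a map of marked-scaled simplicial sets which adds precisely the edges and triangles whose image under $\eta_{\bcat{C}}$ is an equivalence, respectively a thin triangle, of $\bcat{C}$ (recall that $\bcat{C}$ is fibrant). Equivalently, writing the coend as the colimit over the indexing category $\mathcal{S}_{\tensor}$ of \autoref{def:indexStensor} of $\bigl(f\colon\Delta^n_\flat\tensor\Delta^k_\sharp\to\bcat{C}\bigr)\mapsto\Delta^n_\flat\tensor\Delta^k_\sharp$, using the colimit-preserving incarnation $\btensor$ of the Gray tensor product (cf.\ \autoref{rem:graymodels}), the canonical map is the colimit of the evident natural transformation into $T_\tensor$ of \autoref{def:indexfunctortensor}. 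I would then add the missing decorations by a filtration exactly as in \autoref{lem:expandeddeltaplus} and \autoref{prop:alphaessesurj}: order the non-degenerate simplices of the common underlying simplicial set by dimension and attach them one at a time. A missing thin triangle $\sigma$ is attached by extending it — using fibrancy of $\bcat{C}$, so that the thin triangle $\eta_{\bcat{C}}(\sigma)$ and the equivalences along its edges compose coherently — to a $3$-simplex whose other faces are already scaled, and taking a pushout along a map of type \ref{ms:wonky4} (with \autoref{lem:0saturations} where an outer scaling is needed). A missing marked edge $e$, which maps to an equivalence in $\bcat{C}$, is attached by producing a thin $2$-simplex witnessing a composite of $e$ with an inverse and taking a pushout along a map of type \ref{ms:composeacrossthin}, invoking maps of type \ref{ms:equivalences} for the Kan-complex-shaped pieces. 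Since each attached simplex already has its decorated boundary present, every stage is a pushout along an $\mathbf{MS}$-anodyne morphism, and the canonical map is a trivial cofibration.

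The main obstacle is precisely the bookkeeping in the previous paragraph: for each non-degenerate simplex carrying extra decorations one must identify the $\mathbf{MS}$-anodyne move that attaches it and check that the auxiliary simplices produced by fibrancy of $\bcat{C}$ carry the decorations required for that move — the same flavour of combinatorics as in Section 3, though genuinely lighter (the boundary is always fully available) and largely importable from what has already been proved. One could instead hope to deduce the statement from \autoref{thm:bigcombinatorial} via the factorisation $\eta_{\bcat{C}}=\pi_{\tensor}\circ(\mathrm{can})$, together with the fact that $\pi_{\tensor}$ is a weak equivalence (by $2$-out-of-$3$ from $\pi_{\tensor}\circ s_{\tensor}=\on{id}$ and \autoref{thm:bigcombinatorial}); but recognising $\eta_{\bcat{C}}$ directly as a weak equivalence is no easier than the proposition itself, so I would carry out the filtration.
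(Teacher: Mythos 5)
Your route is genuinely different from the paper's, and it has a gap at its central step. The paper does not attempt to exhibit the comparison as an $\mathbf{MS}$-anodyne map; instead it maps both sides into an arbitrary fibrant $\bcat{D}$ and observes that $\Fun(\mathbb{X}^\tensor_{\bcat{C}}\times\mathscr{L},\bcat{D})^{\simeq}$ is identified with the full subgroupoid of $\on{Nat}_{\Delta^\op}((\Sqe(\bcat{C})\times\underline{\mathscr{L}})_\bullet,\Sqe(\bcat{D})_\bullet)^{\simeq}$ consisting of those natural transformations preserving the extra decorations; the content is then that \emph{every} natural transformation does so, which is exactly \autoref{lem:sqedgessat} and \autoref{lem:sqetrisat} (resting on \autoref{prop:sqrigid}). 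Your filtration would, if carried out, prove the stronger statement that the comparison is $\mathbf{MS}$-anodyne, but the step you describe as bookkeeping is where the real difficulty sits: to scale a triangle $\sigma$ of the coend whose image $\eta_{\bcat{C}}(\sigma)$ is thin, or to mark an edge whose image is an equivalence, the witnessing $3$-simplices and inverses must be simplices \emph{of the coend}, i.e.\ must factor through prisms $\Delta^n_\flat\tensor\Delta^k_\sharp\to\bcat{C}$ compatibly with the colimit over $\mathcal{S}_\tensor$. Fibrancy of $\bcat{C}$ only produces these witnesses in $\bcat{C}$ itself, and there is no reason they lift to the coend. Producing them inside $\Sqe(\bcat{C})_\bullet$ — by constructing auxiliary maps $\Delta^2_\flat\tensor\Delta^2_\sharp\to\bcat{C}$, $\Delta^3_\flat\tensor\Delta^1\to\bcat{C}$, etc., and exploiting naturality — is precisely what the several pages of \autoref{prop:sqrigid}, \autoref{lem:sqedgessat} and \autoref{lem:sqetrisat} do; your sketch assumes this content away. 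Note also that the difficulty is not ``lighter'' than \autoref{thm:bigcombinatorial}: that theorem concerns the other leg $s_\tensor\colon\bcat{C}\to\mathbb{X}^\tensor_{\bcat{C}}$ and does not help here, as you correctly observe at the end.

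A secondary problem is the reduction to $\mathscr{L}=\Delta^0$. The claimed isomorphism $\Delta^n_\flat\tensor(B\times I(\mathscr{L}))\cong(\Delta^n_\flat\tensor B)\times I(\mathscr{L})$ fails at the level of scalings: by \autoref{def:gray} a triangle of the left-hand side whose $\{0,1\}$-edge is marked in $B$ but not in $\mathscr{L}$ and whose $\{1,2\}$-edge is nondegenerate in $\Delta^n$ is thin on the right but not on the left. This is the same cosmetic discrepancy handled by \autoref{prop:bothgraycoincide} and could presumably be repaired up to trivial cofibration, but as stated the reduction is not an isomorphism, and in any case the paper's argument handles general $\mathscr{L}$ directly (the lemmas reduce to $\mathscr{L}$ the walking isomorphism, resp.\ $\Delta^2_\sharp$). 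To salvage your approach you would essentially have to reprove the two saturation lemmas in the language of anodyne extensions of the coend, which is not shorter than the paper's proof.
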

  \begin{proof}
    Let $\bcat{D}$ be a fibrant marked-scaled simplicial set. We need to show that the induced morphism of $\infty$-groupoids
    \[
      \varphi:\Fun(\mathbb{X}^\tensor_{\bcat{C}} \times \mathscr{L},\bcat{D})^{\simeq} \xlongrightarrow{} \Fun\left(\int^{\mathbf{opgr}}_{E}(\Sqe(\bcat{C})\times \underline{\mathscr{L}})_\bullet,\bcat{D}\right)^{\simeq} \isom \on{Nat}_{\Delta^\op}((\Sqe(\bcat{C})\times \underline{\mathscr{L}})_\bullet, \Sqe(\bcat{D})_\bullet)^\simeq 
    \]
    is a weak equivalence. First we note that by construction our map identifies the  left-most $\infty$-groupoid with a full subgroupoid of the space of natural transformations consisting in those natural transformations $\Xi: \Sqe(\bcat{C})\times \underline{\mathscr{L}})_\bullet \xRightarrow{} \Sqe(\bcat{D})$ satisfying the conditions studied in \autoref{lem:sqedgessat} and \autoref{lem:sqetrisat}. We conclude from both lemmas that these conditions are always satisfied and thus the result follows.
  \end{proof}

  \begin{definition}
      Let $\bcat{C}$ be an $\infty$-bicategory. We $\mathcal{S}_{\bcat{C},\tensor}$ to be the (ordinary) category whose objects are given by maps $f:\Delta^n_\flat \tensor \Delta^{k}_\sharp \xlongrightarrow{} \bcat{C}$ and whose morphisms are given by commutative diagrams
  \[
     \begin{tikzcd}
        \Delta^{n}_\flat \tensor \Delta^{k}_{\sharp} \arrow[rr,"\alpha \tensor \beta"] \arrow[dr,swap,"f"] && \Delta^s_{\flat} \tensor \Delta^{\ell}_\sharp \arrow[dl,"g"] \\
        & \bcat{C} &
     \end{tikzcd}
  \]
  where $\alpha \tensor \beta$ is a product of monotone maps. When the $\infty$-bicategory in question is clear from context we will use the simplified notation $\mathcal{S}_\tensor$.
  \end{definition}

  \begin{lemma}\label{lem:xxdescription}
    Let $\bcat{C}$ be an $\infty$-bicategory. We define a functor $T_{\bcat{C}}^{\tensor}:\mathcal{S}_\tensor \xlongrightarrow{} \on{Set}_\Delta^{\mathbf{ms}}$ sending an object $f:\Delta^n_\flat \tensor \Delta^k_\sharp$ to the marked-scaled simplicial set $T(f)$ whose underlying simplicial set is given by $\Delta^n \times \Delta^k$ and the decorations are induced from those of $\bcat{C}$ via the map $f$.  Then there exists an isomorphism of marked-scaled simplicial sets
    \[
      \colim_{\mathcal{S}_\tensor}T_{\bcat{C}}^\tensor \isom \mathbb{X}_{\bcat{C}}^\tensor.
    \]
  \end{lemma}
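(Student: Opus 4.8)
The plan is to unwind the definition of $\int^{\mathbf{opgr}}_{E}$ as a coend of Gray tensor products and compare it, column by column, with the colimit $\colim_{\mathcal{S}_\tensor}T_{\bcat{C}}^\tensor$ described in \autoref{def:indexfunctorcolimittensor}. Recall from \autoref{def:graycoend} that
\[
  \int^{\mathbf{opgr}}_{E}(\Sqe(\bcat{C}))_\bullet = \int^{[n]\in\Delta} \Delta^n_\flat \tensor \Sqe(\bcat{C})_n,
\]
and that $\Sqe(\bcat{C})_n = \Fun^{\mathbf{opgr}}(\Delta^n_\flat,\bcat{C})^{\leq 1}$ has as its $k$-simplices the maps $\Delta^n_\flat \tensor \Delta^k_\sharp \xlongrightarrow{} \bcat{C}$. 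First I would rewrite the coend as a colimit over the twisted arrow category (or the category of simplices) of the simplicial object $\Sqe(\bcat{C})_\bullet$: a general coend $\int^{[n]}\Delta^n_\flat\tensor Y_n$ for a simplicial set–valued functor $Y_\bullet$ can be written as $\colim$ over $\Delta/Y_\bullet$ of the functor $([n],\sigma\in Y_n)\mapsto \Delta^n_\flat\tensor\Delta^0$; but since the Gray tensor product is colimit-preserving in each variable and $Y_n$ is itself a colimit of its simplices $\Delta^k_\sharp\to\Sqe(\bcat{C})_n$, I can instead index directly over pairs $(\Delta^n_\flat\tensor\Delta^k_\sharp\to\bcat{C})$.

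The key step is then to identify the resulting indexing category with $\mathcal{S}_\tensor$ and to check that the diagram one is taking the colimit of is precisely $T_{\bcat{C}}^\tensor$. An object of $\mathcal{S}_\tensor$ is a map $f:\Delta^n_\flat\tensor\Delta^k_\sharp\to\bcat{C}$, which is exactly the datum of an $n$-simplex of $\Delta^\bullet$ together with a $k$-simplex of $\Sqe(\bcat{C})_\bullet$ sitting over it — i.e.\ an object of the category of simplices of the bisimplicial-type object $\big(([n],[k])\mapsto \Hom(\Delta^n_\flat\tensor\Delta^k_\sharp,\bcat{C})\big)$. Morphisms in $\mathcal{S}_\tensor$ are pairs of monotone maps $\alpha\tensor\beta$ compatible with $f$ and $g$; under the coend/colimit these are exactly the identifications one mods out by in forming $\int^{[n]}\Delta^n_\flat\tensor\Sqe(\bcat{C})_n$ (the $\alpha$-direction is the coend identification over $\Delta$, the $\beta$-direction is the colimit presenting each $\Sqe(\bcat{C})_n$ as glued from its simplices $\Delta^k_\sharp$, and the mixed morphisms encode functoriality of the Gray product). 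Evaluating the coend integrand at $(f)$ gives $\Delta^n_\flat\tensor\Delta^k_\sharp$ with underlying simplicial set $\Delta^n\times\Delta^k$; since $\int^{\mathbf{opgr}}_E$ lands in $\on{Set}_\Delta^{\mathbf{ms}}$, an edge (resp.\ triangle) of the colimit is marked (resp.\ thin) iff it is so in some $\Delta^n_\flat\tensor\Delta^k_\sharp$ — but the canonical map to $\bcat{C}$ (which is $f$ on this piece) forces these decorations to agree with the pullback of the decorations of $\bcat{C}$, and conversely any decoration pulled back from $\bcat{C}$ is witnessed on the corresponding simplex; this is exactly the definition of $T_{\bcat{C}}^\tensor(f)$ in \autoref{def:indexfunctortensor}. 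So both sides have the same underlying simplicial set with the same decorations, compatibly with the structure maps, yielding the claimed isomorphism. (Note this lemma restates, with explicit reference to $\bcat{C}$, the content already packaged in \autoref{def:indexStensor}–\autoref{def:indexfunctorcolimittensor}.)

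The main obstacle is the bookkeeping for the Gray tensor product's failure to commute with colimits on the nose: by \autoref{rem:failurecolimit} the functor $\tensor$ is not colimit-preserving, so the manipulation above must be performed with $\btensor$ (the colimit-preserving variant of \autoref{prop:bothgraycoincide}), and one must then invoke \autoref{prop:bothgraycoincide} and \autoref{rem:graymodels} to transport the conclusion back. Concretely, the definition of $\int^{\mathbf{opgr}}_E$ via the coend formula should be read using $\btensor$, and on each generator $\Delta^n_\flat$ (which is of the form $(\Delta^n,\flat,\flat)$) one has $\Delta^n_\flat\btensor K = \Delta^n_\flat\tensor K$ by construction, so no discrepancy arises at the level of the simplices being glued — the only care needed is that the scaling on the total colimit is computed as the colimit of the scalings, which is precisely what makes it match $T_{\bcat{C}}^\tensor(f)$ rather than the (a priori larger) Gray-product scaling. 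Once this is checked the isomorphism is purely formal, so I would present it as a short verification rather than a computation.
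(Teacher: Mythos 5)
Your proposal is correct and follows essentially the same route as the paper: both arguments reduce to the observation that a $k$-simplex of $\Sqe(\bcat{C})_n$ is precisely a map $\Delta^n_\flat\tensor\Delta^k_\sharp\xlongrightarrow{}\bcat{C}$, that the coend identifications are exactly the morphisms of $\mathcal{S}_\tensor$, and that the decorations on both sides are by construction pulled back along the canonical maps to $\bcat{C}$, so only the underlying simplicial sets need comparison (which also renders the $\tensor$ versus $\btensor$ worry moot). The paper packages this as an explicit map $\omega$ over $\bcat{C}$ built from $\on{id}\tensor\sigma_f$; your coend-unwinding is the same computation phrased diagrammatically, and your only slight imprecision is in describing the colimit scaling as "a priori larger" — the Gray-product scaling on the coend is in fact smaller than the pullback scaling, but this is irrelevant since $\mathbb{X}^\tensor_{\bcat{C}}$ carries the pulled-back decorations by definition.
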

  \begin{proof}
    Let $\mathbb{T}_{\bcat{C}}^\tensor=\colim\limits_{\mathcal{S}_\tensor}T_{\bcat{C}}^\tensor$ and observe that this marked-scaled simplicial set comes equipped with a canonical map $p_{\bcat{C}}: \mathbb{T}_{\bcat{C}}^\tensor \xlongrightarrow{} \bcat{C}$. We will construct a map $\omega$ fitting into a commutative diagram
    \[\begin{tikzcd}
  {\mathbb{T}_{\bcat{C}}^\tensor} && {\mathbb{X}_{\bcat{C}}^{\tensor}} \\
  & {\bcat{C}}
  \arrow["\omega", from=1-1, to=1-3]
  \arrow["{p_{\bcat{C}}}"', from=1-1, to=2-2]
  \arrow["{\eta_{\bcat{C}}}", from=1-3, to=2-2]
\end{tikzcd}\]
   and show that $\omega$ is an isomorphism on the underlying simplicial sets. Since the decorations on both marked-scaled simplicial sets are induced via the vertical maps the result will follow. We give the construction of $\omega$ and leave the rest of the verifications to the reader. 

   If we define $\omega$ at the level of the underlying simplicial sets and show that $p_{\bcat{C}}= \eta_{\bcat{C}}\circ \omega$ then $\omega$ will automatically be compatible with the decorations. Now given $T(f)= \Delta^n_\flat \tensor \Delta^k_\sharp$ we observe that $f$ defines a $k$-simplex $\sigma_f:\Delta^k \xlongrightarrow{} \Fun^{\mathbf{opgr}}(\Delta^n_\flat,\bcat{C})^{\leq 1}$. We can therefore consider the map 
   \[
     \omega_f: \Delta^{n}_\flat \tensor \Delta^k_\sharp \xlongrightarrow{\on{id}\tensor \sigma_f} \Delta^n_\flat \tensor \Fun^{\mathbf{opgr}}(\Delta^n_\flat,\bcat{C})^{\leq 1} \xlongrightarrow{} \mathbb{X}_{\bcat{C}}^\tensor.
   \]
   This choices are compatible with the colimit defining $\mathbb{T}_{\bcat{C}}^\tensor$ and thus yield the desire map $\omega$ which clearly satisfies $p_{\bcat{C}}=\eta_{\bcat{C}}\circ \omega$.  
  \end{proof}

  \begin{theorem}\label{thm:sqefullyfaithful}
    The enhanced square functor $\Sqe: \bcat{B}\!\on{icat}_\infty \xlongrightarrow{} \on{EDbl}(\bcat{C}\!\on{at}_{(\infty,1)})$ is fully faithful.
  \end{theorem}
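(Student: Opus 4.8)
The plan is to prove fully faithfulness by computing the action of $\Sqe$ on mapping $(\infty,1)$-categories and identifying it with restriction along a weak equivalence of marked-scaled simplicial sets. Concretely, for $\infty$-bicategories $\bcat{C}$ and $\bcat{D}$, I want to show that the map
\[
  \Fun(\bcat{C},\bcat{D})^{\leq 1} \xlongrightarrow{} \on{Nat}_{\Delta^\op}(\Sqe(\bcat{C})_\bullet,\Sqe(\bcat{D})_\bullet)
\]
is an equivalence of $(\infty,1)$-categories. Since everything in sight is built out of functor $(\infty,1)$-categories, it suffices to check that the induced map on mapping spaces out of an arbitrary test $(\infty,1)$-category $\mathscr{L}$ is an equivalence of $\infty$-groupoids; this reduces, via \autoref{rem:etalc}, to studying the map of $\infty$-groupoids obtained by applying $\Fun(-,\bcat{D})^\simeq$ to the canonical map
\[
  \eta_{\bcat{C}}^{\mathscr{L}}: \int^{\mathbf{opgr}}_{E}(\Sqe(\bcat{C})\times \underline{\mathscr{L}})_\bullet \xlongrightarrow{} \bcat{C} \times \mathscr{L}.
\]

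First I would invoke \autoref{prop:Sqeenhanced} to know that $\Sqe(\bcat{C})_\bullet$ genuinely lands in $\on{EDbl}(\bcat{C}\!\on{at}_{(\infty,1)})$, so that the statement makes sense; and \autoref{rem:sqenriched} to know that $\Sqe$ is a functor of $(\infty,2)$-categories whose action on mapping categories is the restriction map above. Next, using \autoref{lem:xxdescription} I identify the source of $\eta_{\bcat{C}}^{\mathscr{L}}$ (for $\mathscr{L}=\Delta^0$) with the marked-scaled simplicial set $\mathbb{X}^{\tensor}_{\bcat{C}}=\colim_{\mathcal{S}_\tensor}T^\tensor_{\bcat{C}}$ studied in Section 3, and the general $\mathscr{L}$ case splits off as $\mathbb{X}^{\tensor}_{\bcat{C}} \times \mathscr{L}$. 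The key input is then \autoref{thm:bigcombinatorial}, which asserts that the canonical map $\bcat{C} \xlongrightarrow{s_\tensor} \mathbb{X}^{\tensor}_{\bcat{C}}$ is a trivial cofibration of marked-scaled simplicial sets; combined with \autoref{prop:sqesaturation} (that $\int^{\mathbf{opgr}}_{E}(\Sqe(\bcat{C})\times \underline{\mathscr{L}})_\bullet \xlongrightarrow{} \mathbb{X}^\tensor_{\bcat{C}} \times \mathscr{L}$ is a weak equivalence, which rests on \autoref{lem:sqedgessat} and \autoref{lem:sqetrisat}), this gives that $\eta_{\bcat{C}}^{\mathscr{L}}$ is a weak equivalence. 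Since $\bcat{D}$ is fibrant, $\Fun(-,\bcat{D})^\simeq$ sends this weak equivalence to an equivalence of $\infty$-groupoids, and unwinding the identifications of \autoref{rem:etalc} this is exactly the statement that $\Sqe$ is fully faithful on the mapping $(\infty,1)$-category between $\bcat{C}$ and $\bcat{D}$.

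The main obstacle is really already packaged into the earlier results: \autoref{thm:bigcombinatorial} is the heavy combinatorial heart (the elaborate path-extension and pivot-trick machinery of Section 3), and \autoref{lem:sqetrisat} is the delicate verification that a natural transformation between enhanced square objects automatically respects the thin triangles of the Gray tensor product — i.e. that the essential image of $\eta_{\bcat{C}}^{\mathscr{L}}$-restriction is \emph{all} natural transformations, not a proper subgroupoid. Assuming those, the remaining work in this proof is bookkeeping: carefully threading the chain of identifications
\[
  \Fun(\mathscr{L},\Fun(\bcat{C},\bcat{D})^{\leq 1})^{\simeq} \isom \Fun(\bcat{C}\times \mathscr{L},\bcat{D})^{\simeq} \xlongrightarrow{(\eta_{\bcat{C}}^{\mathscr{L}})^*} \Fun\Bigl(\textstyle\int^{\mathbf{opgr}}_{E}(\Sqe(\bcat{C})\times \underline{\mathscr{L}})_\bullet,\bcat{D}\Bigr)^{\simeq} \isom \on{Nat}_{\Delta^\op}((\Sqe(\bcat{C})\times\underline{\mathscr{L}})_\bullet,\Sqe(\bcat{D})_\bullet)^{\simeq}
\]
and checking naturality in $\mathscr{L}$ so as to promote the resulting equivalence of $\infty$-groupoids to an equivalence of mapping $(\infty,1)$-categories, hence to fully faithfulness of the $(\infty,2)$-functor $\Sqe$.
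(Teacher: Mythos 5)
Your proposal is correct and follows essentially the same route as the paper: identify the action of $\Sqe$ on mapping $(\infty,1)$-categories with restriction along $\eta_{\bcat{C}}$ via \autoref{rem:etalc}, \autoref{prop:sqesaturation} and \autoref{lem:xxdescription}, then deduce that $\eta_{\bcat{C}}$ is a weak equivalence by 2-out-of-3 from the trivial cofibration $s_{\bcat{C}}^{\tensor}$ of \autoref{thm:bigcombinatorial}. The only cosmetic difference is that the paper states the final step directly in terms of the section $s_{\bcat{C}}^{\tensor}$ rather than spelling out the $\mathscr{L}$-parametrized bookkeeping, which is already packaged into \autoref{rem:etalc}.
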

  \begin{proof}
    Let $\bcat{C},\bcat{D} \in \bcat{B}\!\on{icat}_\infty$ be a pair of $(\infty,2)$-categories. Combining \autoref{rem:etalc} and \autoref{prop:sqesaturation} we see that the map
    \[
      \Fun(\bcat{C},\bcat{D})^{\leq 1} \xlongrightarrow{} \on{Nat}_{\Delta^\op}(\Sqe(\bcat{C})_\bullet, \Sqe(\bcat{D})_\bullet) \isom \Fun(\mathbb{X}_{\bcat{C}}^{\tensor},\bcat{D})^{\leq 1}
    \]
    is induced by the map $\eta_{\bcat{C}}: \mathbb{X}_{\bcat{C}}^{\tensor} \xlongrightarrow{} \bcat{C}$. Therefore, we need to show that $\eta_{\bcat{C}}$ is an equivalence of marked-scaled simplicial sets. We observe that we have a section
    \[
      s_{\bcat{C}}^{\tensor}: \bcat{C} \xlongrightarrow{} \mathbb{X}_{\bcat{C}}^{\tensor}
    \]
    which sends a simplex $\sigma:\Delta^n_\flat \xlongrightarrow{} \bcat{C}$ to the factor in the colimit of \autoref{lem:xxdescription} corresponding to the map $\Delta^n_\flat \tensor \Delta^0 \xlongrightarrow{\sigma} \bcat{C}$. The map $s_{\bcat{C}}^{\tensor}$ is a trivial cofibration by \autoref{thm:bigcombinatorial} so we conclude by 2-out-of-3.
  \end{proof}

  \begin{corollary}\label{cor:ordinarysqe}
    The ordinary square functor $\mathbf{Sq}: \bcat{B}\!\on{icat}_\infty  \xlongrightarrow{} \on{Dbl}(\bcat{C}\!\on{at}_{(\infty,1)})$ is fully faithful. Moreover, the composite
    \[
        \bcat{B}\!\on{icat}_\infty  \xlongrightarrow{\mathbf{Sq}} \on{Dbl}(\bcat{C}\!\on{at}_{(\infty,1)})\xlongrightarrow{} \on{Dbl}\left(\bcat{C}\!\on{at}_{(\infty,1)}\right)^{\mathcal{I}\!\on{oplax}}
       \]   
     where the second functor is the obvious inclusion is also fully faithful.  
  \end{corollary}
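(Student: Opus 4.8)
The plan is to deduce Corollary~\ref{cor:ordinarysqe} from the fully faithfulness of the enhanced square functor (Theorem~\ref{thm:sqefullyfaithful}) together with the structural results comparing enhanced double $(\infty,1)$-categories with ordinary ones (Theorem~\ref{thm:enhanced} and Corollary~\ref{cor:enhanced}). The key observation is that, by \autoref{prop:Sqeenhanced}, for every $(\infty,2)$-category $\bcat{C}$ we have $\Sqe(\bcat{C})_\bullet \simeq K^*(\mathbf{Sq}(\bcat{C}))_\bullet$, and by \autoref{rem:osqdouble} the functor $\mathbf{Sq}(\bcat{C})_\bullet$ is an honest double $(\infty,1)$-category. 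Hence $\Sqe(\bcat{C})_\bullet \simeq K_*(\mathbf{Sq}(\bcat{C}))_\bullet$ (the nucleus of $\Sqe(\bcat{C})$ is $\mathbf{Sq}(\bcat{C})$, and $K_*$ recovers the enhancement by \autoref{prop:enhancedkanfactors} together with the fact that $K^*$ acts as the identity on objects and $\mathbf{Sq}(\bcat{C})$ is already a double $(\infty,1)$-category). In other words, the enhanced square functor factors as
\[
  \bcat{B}\!\on{icat}_\infty \xlongrightarrow{\mathbf{Sq}} \on{Dbl}(\bcat{C}\!\on{at}_{(\infty,1)}) \xlongrightarrow{K_*} \on{EDbl}(\bcat{C}\!\on{at}_{(\infty,1)}),
\]
where I must check this factorization holds at the level of functors of $(\infty,2)$-categories, not merely objectwise --- this is essentially the naturality statement packaged into \autoref{prop:Sqeenhanced}, which produces the equivalence $\Sqe(\bcat{C})\xRightarrow{\simeq}K^*(\mathbf{Sq}(\bcat{C}))_\bullet$ naturally in $\bcat{C}$.

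First I would record that $\mathbf{Sq}$ lands in $\on{Dbl}(\bcat{C}\!\on{at}_{(\infty,1)})$ (by \autoref{rem:osqdouble}) and moreover is nuclear-valued: by \autoref{prop:sqrigid} every morphism in the image of $\Sqe$ is nuclear, so by \autoref{lem:nuccharac} the composite functor $K_* \circ \mathbf{Sq}$ actually lands in $\EDblnuc$, which by \autoref{cor:enhanced} is equivalent to $\on{Dbl}(\bcat{C}\!\on{at}_{(\infty,1)})$ via $K_*$. Thus in the commutative square
\[
  \begin{tikzcd}
    \on{Dbl}(\bcat{C}\!\on{at}_{(\infty,1)}) \arrow[r,"K_*","\simeq"'] & \EDblnuc \\
    \bcat{B}\!\on{icat}_\infty \arrow[u,"\mathbf{Sq}"] \arrow[ur,swap,"\Sqe"] &
  \end{tikzcd}
\]
the top horizontal arrow is an equivalence by \autoref{cor:enhanced}, and the diagonal arrow $\Sqe$ is fully faithful by \autoref{thm:sqefullyfaithful} (noting that $\Sqe$ being nuclear-valued, its essential image lies in $\EDblnuc$, and fully faithfulness into $\EDbl$ restricts to fully faithfulness into the full subcategory $\EDblnuc$ on morphisms --- here I should be slightly careful, since $\EDblnuc$ is \emph{not} a full subcategory of $\EDbl$, so I would instead argue directly that $\Sqe\colon \bcat{B}\!\on{icat}_\infty \to \EDblnuc$ is fully faithful, which follows because the mapping $(\infty,1)$-categories computed in \autoref{thm:sqefullyfaithful} already consist entirely of nuclear morphisms by \autoref{prop:sqrigid}). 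By two-out-of-three for fully faithful functors applied to $\Sqe \simeq K_* \circ \mathbf{Sq}$ with $K_*$ an equivalence, we conclude $\mathbf{Sq}\colon \bcat{B}\!\on{icat}_\infty \to \on{Dbl}(\bcat{C}\!\on{at}_{(\infty,1)})$ is fully faithful.

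For the second assertion, that the composite with the inclusion $\on{Dbl}(\bcat{C}\!\on{at}_{(\infty,1)}) \to \on{Dbl}(\bcat{C}\!\on{at}_{(\infty,1)})^{\mathcal{I}\!\on{oplax}}$ remains fully faithful, I would argue as follows. By \autoref{thm:enhanced} the functor $K_*$ restricts to an equivalence $\on{Dbl}(\bcat{C}\!\on{at}_{(\infty,1)})^{\mathcal{I}\!\on{oplax}} \xrightarrow{\simeq} \EDbl$, and on the subcategories of ordinary (non-lax) morphisms it restricts to the equivalence $\on{Dbl}(\bcat{C}\!\on{at}_{(\infty,1)}) \xrightarrow{\simeq} \EDblnuc$ of \autoref{cor:enhanced}. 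Since the inclusion $\on{Dbl}(\bcat{C}\!\on{at}_{(\infty,1)}) \hookrightarrow \on{Dbl}(\bcat{C}\!\on{at}_{(\infty,1)})^{\mathcal{I}\!\on{oplax}}$ corresponds under $K_*$ to the inclusion $\EDblnuc \hookrightarrow \EDbl$, it suffices to show that $\Sqe\colon \bcat{B}\!\on{icat}_\infty \to \EDbl$ is fully faithful, which is exactly \autoref{thm:sqefullyfaithful}. Composing the fully faithful $\mathbf{Sq}$ with the inclusion then gives a functor naturally equivalent (via $K_*$) to the fully faithful $\Sqe$, hence fully faithful. The main obstacle I anticipate is keeping the bookkeeping of variances and markings straight --- in particular verifying cleanly that $\Sqe \simeq K_*\circ\mathbf{Sq}$ as functors (and not just objectwise) and that the square above commutes up to coherent natural equivalence; once that compatibility is in hand, everything else is a formal two-out-of-three argument using the equivalences supplied by \autoref{thm:enhanced} and \autoref{cor:enhanced}.
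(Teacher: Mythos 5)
Your proposal is correct and uses essentially the same ingredients as the paper's proof: the factorization $\Sqe \simeq K_* \circ \mathbf{Sq}$ from \autoref{prop:Sqeenhanced}, the fully faithfulness of $\Sqe$ from \autoref{thm:sqefullyfaithful}, the equivalences of \autoref{thm:enhanced} and \autoref{cor:enhanced}, and the rigidity statement \autoref{prop:sqrigid} to control nuclearity. The only difference is cosmetic ordering --- the paper first establishes fully faithfulness of the composite into $\on{Dbl}(\bcat{C}\!\on{at}_{(\infty,1)})^{\mathcal{I}\!\on{oplax}}$ and then descends to $\mathbf{Sq}$ via \autoref{lem:nuccharac}, whereas you prove the two assertions in the opposite order.
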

  \begin{proof}
   Let $\widetilde{\mathbf{S}}\!\mathbf{q}$ be the composite in the statement. It follows from \autoref{prop:Sqeenhanced} that we have a commutative diagram
    \[
      \begin{tikzcd}
        \bcat{B}\!\on{icat}_\infty \arrow[r,"\widetilde{\mathbf{S}}\!\mathbf{q}"] \arrow[dr,swap,"\Sqe"] &  \on{Dbl}\left(\bcat{C}\!\on{at}_{(\infty,1)}\right)^{\mathcal{I}\!\on{oplax}} \arrow[d,"K_*"] \\
        & \on{EDbl}(\bcat{C}\!\on{at}_{(\infty,1)})
      \end{tikzcd}
    \]
    It then follows from \autoref{thm:enhanced} and \autoref{thm:sqefullyfaithful} that $\widetilde{\mathbf{S}}\!\mathbf{q}$ is fully faithful as well. 

    Let $\bcat{S}$ be the full subcategory of $\on{Dbl}(\bcat{C}\!\on{at}_{(\infty,1)})$ consisting in the essential image of $\mathbf{Sq}$. In order to show that $\mathbf{Sq}$ is fully faithful it will be enough to show that the induced functor $\bcat{S} \xlongrightarrow{}\on{Dbl}\left(\bcat{C}\!\on{at}_{(\infty,1)}\right)^{\mathcal{I}\!\on{oplax}} $ is fully faithful. In virtue of \autoref{lem:nuccharac} this amounts to showing that the image of the maps in $\bcat{S}$ under the functor $K_*$ are nuclear. This follows from \autoref{prop:sqrigid}.
  \end{proof}

 \section{The enhanced globular functor and the Gray tensor product}
 \begin{definition}
   Let $\scr{X}_\bullet, \scr{Y}_\bullet \in \CSeg$ be a pair of $(\infty,2)$-categories represented by complete Segal objects. We define an oplax normalised functor $f: \scr{X}_\bullet \xrightarrow{} \scr{Y}_\bullet$ to be the data of an $\mathcal{I}\!\on{oplax}$-natural transformation between the corresponding Segal objects. In particular, we can view the category $\CSeglax$ as the $(\infty,2)$-category whose objects are $(\infty,2)$-categories, its 1-morphisms are oplax normalised functors and its 2-morphisms are oplax natural transformations $F \xRightarrow{} G$ whose components at every $x \in \scr{X}_0$  are equivalences $F(x) \xlongrightarrow{\simeq} G(x)$. 
 \end{definition}

 \begin{lemma}\label{lem:bicatoplax}
   Let $(\bcat{C},E_{\bcat{C}},T_{\bcat{C}})$ and $(\bcat{A},E_{\bcat{A}},T_{\bcat{A}})$ be fibrant marked-scaled simplicial sets and recall the definition of the invertible 2-morphisms given in \autoref{def:invertible2morph}. We set $\overline{\bcat{C}}=(\bcat{C},E_{\bcat{C}},M_{\bcat{C}})$ and consider the fibrant marked-scaled simplicial set $\on{Fun}^{\mathbf{opgl}}(\overline{\bcat{C}},\bcat{A})$ (see \autoref{def:globfunctorcat}). Then there exists an isomorphism of marked-scaled simplicial sets
   \[
     \on{Fun}^{\mathbf{opgl}}(\overline{\bcat{C}},\bcat{A}) \xlongrightarrow{\isom} \on{Fun}^{\mathbf{opgl}}(\overline{\bcat{C}},\overline{\bcat{A}})
   \]
   where $\overline{\bcat{A}}=(\bcat{A},E_{\bcat{A}},M_{\bcat{A}})$.
 \end{lemma}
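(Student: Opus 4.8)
The goal is to show that the identity on underlying simplicial sets induces an isomorphism of marked-scaled simplicial sets $\on{Fun}^{\mathbf{opgl}}(\overline{\bcat{C}},\bcat{A}) \xlongrightarrow{\isom} \on{Fun}^{\mathbf{opgl}}(\overline{\bcat{C}},\overline{\bcat{A}})$. By the defining universal property of the globular functor category (\autoref{def:globfunctorcat}), a map $A \xlongrightarrow{} \on{Fun}^{\mathbf{opgl}}(\overline{\bcat{C}},\bcat{Z})$ is the same as a map $\overline{\bcat{C}} \odot A \xlongrightarrow{} \bcat{Z}$, so both sides have the same simplices whenever a map $\overline{\bcat{C}} \odot A \xlongrightarrow{} \bcat{A}$ automatically lands in $\overline{\bcat{A}}$; that is, the only content of the statement is that the $n$-simplices (hence in particular the marked edges and thin triangles, which correspond to maps out of $\Delta^n_\flat \odot \Delta^k_\sharp$ with small $n,k$) coincide. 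So the plan is: since on underlying simplicial sets both mapping objects are literally equal, it suffices to check that an edge (resp. triangle) is marked (resp. thin) in $\on{Fun}^{\mathbf{opgl}}(\overline{\bcat{C}},\bcat{A})$ if and only if it is so in $\on{Fun}^{\mathbf{opgl}}(\overline{\bcat{C}},\overline{\bcat{A}})$, and the ``only if'' direction is trivial because $\overline{\bcat{A}}$ has fewer thin triangles than $\bcat{A}$ (while having the same marked edges). Hence the real work is the ``if'' direction for triangles.

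First I would unwind the decorations. An edge of $\on{Fun}^{\mathbf{opgl}}(\overline{\bcat{C}},\bcat{A})$ is a map $e: \overline{\bcat{C}} \odot \Delta^1 \xlongrightarrow{} \bcat{A}$, and it is marked precisely when the corresponding map out of $\overline{\bcat{C}} \odot \Delta^1_\sharp$ (equivalently, after the equivalence-scaling, the natural transformation is an equivalence) exists; since $\overline{\bcat{A}}$ and $\bcat{A}$ have the \emph{same} collection of marked edges $E_{\bcat{A}}$, and the globular Gray product only remembers the marking of the left factor, the marked edges on both sides agree on the nose. For triangles, a thin triangle of $\on{Fun}^{\mathbf{opgl}}(\overline{\bcat{C}},\bcat{A})$ is a map $\sigma: \overline{\bcat{C}} \odot \Delta^2_\sharp \xlongrightarrow{} \bcat{A}$; I must show that such a $\sigma$ in fact factors through $\overline{\bcat{A}}$, i.e. that every thin triangle $\tau$ of $\bcat{A}$ in the image of $\sigma$ is already an \emph{invertible} $2$-morphism of $\bcat{A}$ (lies in $M_{\bcat{A}}$). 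Concretely, a triangle $\rho$ in the underlying simplicial set of $\overline{\bcat{C}}\odot\Delta^2$ has the form $(\rho_{\bcat{C}},\rho_\Delta)$ with $\rho_\Delta \in (\Delta^2)_2$. When $\rho$ is scaled by $\sigma$ via a map out of $\overline{\bcat{C}}\odot\Delta^2_\sharp$, I need to see that $\sigma(\rho)$ has one of its two outer edges an equivalence; this should follow from the definition of $\odot$: the thin triangles of $\overline{\bcat{C}}\odot\Delta^2_\sharp$ that are not automatically degenerate-related require either thinness in both factors (forcing the $\overline{\bcat{C}}$-component to be thin, i.e. in $M_{\bcat{C}}$, so one of its outer edges is an equivalence in $\bcat{C}$, whence $\sigma$ of that edge is an equivalence in $\bcat{A}$), or a marked outer edge in one of the factors. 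In every case one of the outer edges of $\sigma(\rho)$ is the image of an equivalence, hence an equivalence of $\bcat{A}$, so $\sigma(\rho) \in M_{\bcat{A}}$.

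Thus the key step is a combinatorial bookkeeping: enumerate the non-degenerate triangles $\rho$ of $\overline{\bcat{C}} \odot \Delta^2$ that become thin under a map out of $\overline{\bcat{C}} \odot \Delta^2_\sharp$ and verify in each case that one of the outer edges of $\rho$ is forced to be marked in $\overline{\bcat{C}}$ (equivalently, an equivalence). Because $\overline{\bcat{C}}$ is already a marked-scaled simplicial set whose thin triangles are exactly the invertible $2$-morphisms $M_{\bcat{C}}$, a thin triangle of $\overline{\bcat{C}}$ is by definition one with a marked outer edge (\autoref{def:invertible2morph}); feeding this through the definition of $\odot$ (\autoref{def:globularGray}, \autoref{def:gray}), whose scaling condition ii) asks that one of two specified outer edges be marked in one of the factors, makes the claim essentially immediate. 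The main obstacle, such as it is, is being careful about the thin triangles coming from condition i) of \autoref{def:gray} (``thin in both factors''): here thinness in the $\overline{\bcat{C}}$-factor already means the $\overline{\bcat{C}}$-component lies in $M_{\bcat{C}}$, so its image in $\bcat{A}$ lands in $M_{\bcat{A}}$. Once this is checked, the two markings and scalings agree, and since the underlying simplicial sets are equal the identity map is the desired isomorphism of marked-scaled simplicial sets; I would also note that fibrancy of both sides follows from \autoref{prop:globularpushout} as in \autoref{def:globfunctorcat}, so the isomorphism is between $\infty$-bicategories.
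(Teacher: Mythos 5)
Your proposal is correct and follows essentially the same route as the paper: both reduce to showing that any map $\overline{\bcat{C}}\odot K \to \bcat{A}$ factors through $\overline{\bcat{A}}$, using that a thin triangle of $\overline{\bcat{C}}\odot K$ must have its $\overline{\bcat{C}}$-component in $M_{\bcat{C}}$, so one of its outer edges is marked in the globular product and hence lands on an equivalence of $\bcat{A}$, placing the image triangle in $M_{\bcat{A}}$. One small caution: the scaling of \autoref{def:gray} requires thinness in both factors \emph{and} the marked-outer-edge condition (not ``either/or'' as you phrase it at one point), but your argument only uses the first clause, which is exactly what the paper's proof does.
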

 \begin{proof}
   Let $K\in \on{Set}_\Delta^{\mathbf{ms}}$ and consider a map $\Psi:\overline{\bcat{C}} \odot K \xlongrightarrow{}\bcat{A}$. It will suffice to show that $\Psi$ factors through $\overline{\bcat{A}}$. Let $\sigma_1 : \Delta^2 \xlongrightarrow{} \overline{\bcat{C}}$ and $\sigma_2: \Delta^2 \xlongrightarrow{} K$ be a pair of maps defining a thin 2-simplex $\sigma$ in $\overline{\bcat{C}} \odot K$. Observe that by definition of the scaling in $\overline{\bcat{C}} \odot K$ \emph{both} $\sigma_1$ and $\sigma_2$ must be thin in their respective factors. In particular this implies that either $\Psi(\sigma(0\xlongrightarrow{} 1))$ or $\Psi(\sigma(1\xlongrightarrow{} 2))$ must be an equivalence in $\bcat{A}$ by the definition of the marking in the globular product. It follows that $\Psi(\sigma)$ is thin in $\overline{\bcat{A}}$.
 \end{proof}

 \begin{definition}\label{def:oplaxbicat}
   We define an $(\infty,2)$-category as the fibrant $\on{Set}_\Delta^+$-enriched category $\bcat{B}\!\on{icat}_\infty^{\mathbf{oplax},\odot}$ defined as follows:
   \begin{itemize}
     \item The objects of $\bcat{B}\!\on{icat}^{\mathbf{oplax},\odot}_{\infty}$ are given by fibrant marked-scaled simplicial sets, i.e. $\infty$-bicategories.
     \item Let $(\bcat{C},E_{\bcat{C}},T_{\bcat{C}})$ be an object of $\bcat{B}\!\on{icat}_\infty^{\mathbf{oplax},\odot}$ and recall the definition of the invertible 2-morphisms given in \autoref{def:invertible2morph}. We set $\overline{\bcat{C}}=(\bcat{C},E_{\bcat{C}},M_{\bcat{C}})$ and define the mapping $(\infty,1)$-category  between $\bcat{C}$ and $\bcat{A}$ to be underlying $(\infty,1)$-category of $\on{Fun}^{\mathbf{opgl}}(\overline{\bcat{C}},\bcat{A})$ (see \autoref{def:globfunctorcat}).
   \end{itemize}
   Note that the composition on mapping $(\infty,1)$-categories is well defined by \autoref{lem:bicatoplax}.
 \end{definition}

  The goal of this section is to define a functor $\EGlob: \bcat{B}\!\on{icat}^{\mathbf{oplax},\odot}_{\infty} \xlongrightarrow{} \ESeg$ and show that it yields an equivalence of $(\infty,2)$-categories. We will start by defining a fully faithful functor
  \[
    \overline{\EGlob}: \bcat{B}\!\on{icat}^{\mathbf{oplax},\odot}_{\infty}  \xlongrightarrow{} \on{Fun}(\Delta^\op,\bcat{C}\!\on{at}_{(\infty,1)})
  \]  
  and show that it factors through $\ESeg$. To improve readability, we will slightly abuse the notation and denote $\overline{\EGlob}$ by $\EGlob$. Given an $\infty$-bicategory $\bcat{C}$ the functor $\EGlob(\bcat{C})$ is defined as follows
 \[
    \EGlob(\bcat{C})_n= \on{Fun}^{\mathbf{opgl}}((\Delta^n,\flat,\flat),\bcat{C})^{\leq 1}
  \]
  where $(-)^{\leq 1}$ is the underlying $(\infty,1)$-category functor given in \autoref{def:leq}. 

  \begin{remark}\label{rem:n0isgroupoid}
    We observe that if $n=0$ and $\bcat{C}$ is an $\infty$-bicategory we have that $\EGlob(\bcat{C})_0$ is a space.
  \end{remark}

  \begin{lemma}\label{lem:Eglobfunct}
    Let $\Delta^n_\flat=(\Delta^n,\flat,\flat)$ and let $\bcat{C}$ be an $\infty$-bicategory. For every $n\geq 0$, we consider the canonical morphism,
    \[
     \Gamma^n: \Delta^n_\flat \odot \Fun^{\mathbf{opgl}}(\Delta^n_\flat,\bcat{C}) \xlongrightarrow{} \bcat{C}
    \]
    Then the following holds:
    \begin{enumerate}
       \item The map $\Gamma^n$ factors through $\overline{\bcat{C}}$.
       \item For every marked-scaled simplicial set $K$ and every $n\geq 0$ we have a morphism
    \[
      \Gamma^n_K: \Delta^n_\flat \odot \left(  \Fun^{\mathbf{opgl}}(\Delta^n_\flat,\bcat{C}) \times K \right) \xlongrightarrow{} \overline{\bcat{C}} \odot K
    \]
     \end{enumerate}  
  \end{lemma}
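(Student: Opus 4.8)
The plan is to verify both claims by unwinding the defining universal property of $\mathrm{Fun}^{\mathbf{opgl}}(-,-)$ from \autoref{def:globfunctorcat}, which identifies maps out of $A \odot X$ with maps $A \to \mathrm{Fun}^{\mathbf{opgl}}(X,-)$ (up to reversing the variance), together with the description of the scaling in the globular Gray product $\odot$ from \autoref{def:globularGray} and the definition of the invertible $2$-morphisms $M_{\bcat{C}} \subseteq T_{\bcat{C}}$ from \autoref{def:invertible2morph}. The point of part~(1) is essentially a triangle-by-triangle check that $\Gamma^n$ never lands a thin triangle outside $M_{\bcat{C}}$.

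For part~(1): the counit $\Gamma^n$ is the map adjoint to the identity of $\mathrm{Fun}^{\mathbf{opgl}}(\Delta^n_\flat,\bcat{C})$, so I first recall exactly what a thin triangle in $\Delta^n_\flat \odot \mathrm{Fun}^{\mathbf{opgl}}(\Delta^n_\flat,\bcat{C})$ is. Since $\Delta^n_\flat = (\Delta^n,\flat,\flat)$ has only degenerate marked edges, by \autoref{def:globularGray} (via \autoref{def:gray}) a $2$-simplex $\sigma = (\sigma_1,\sigma_2)$ is thin only if $\sigma_1$ is thin (hence degenerate) in $\Delta^n$ \emph{and} $\sigma_2$ is thin in $\mathrm{Fun}^{\mathbf{opgl}}(\Delta^n_\flat,\bcat{C})$, \emph{and} the restriction of $\sigma_1$ to $\Delta^{\{1,2\}}$ is marked in $\Delta^n$ (i.e. degenerate) \emph{or} the restriction of $\sigma_2$ to $\Delta^{\{0,1\}}$ is marked. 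In either case the first factor $\sigma_1$ is a degenerate triangle $a \to a \to a$ (coming from a vertex $a \in [n]$), so the image $\Gamma^n(\sigma)$ is a thin triangle one of whose edges is $\Gamma^n$ applied to a degenerate edge in the $\Delta^n$-factor; such an edge is of the form $\mathrm{id}_{F(a)}$ for the functor $F$ picked out by $\sigma_2$, hence an equivalence in $\bcat{C}$. By \autoref{def:invertible2morph} this exhibits $\Gamma^n(\sigma)$ as lying in $M_{\bcat{C}}$, so $\Gamma^n$ factors through $\overline{\bcat{C}} = (\bcat{C},E_{\bcat{C}},M_{\bcat{C}})$. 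The only mild subtlety is handling the case where $\sigma_1$ is the \emph{constant} triangle but $\sigma_2$ is a genuinely thin (non-degenerate) triangle; here one uses that $\sigma_2(0\to 1)$ is marked and unwinds the marking in $\mathrm{Fun}^{\mathbf{opgl}}$ (equivalently, in $\Delta^n_\flat \odot (-)$) to see that the corresponding edge of $\Gamma^n(\sigma)$ is again an equivalence in $\bcat{C}$.

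For part~(2): given part~(1), I take the globular Gray product of the factored map $\Gamma^n \colon \Delta^n_\flat \odot \mathrm{Fun}^{\mathbf{opgl}}(\Delta^n_\flat,\bcat{C}) \to \overline{\bcat{C}}$ with $K$ on the right and precompose with the canonical interchange/projection map
\[
  \Delta^n_\flat \odot \bigl( \mathrm{Fun}^{\mathbf{opgl}}(\Delta^n_\flat,\bcat{C}) \times K \bigr) \longrightarrow \bigl( \Delta^n_\flat \odot \mathrm{Fun}^{\mathbf{opgl}}(\Delta^n_\flat,\bcat{C}) \bigr) \odot K,
\]
which exists because $\odot$ (being defined on underlying simplicial sets as the Cartesian product, with a prescribed scaling, cf. \autoref{def:globularGray}) commutes with products in the obvious lax sense; composing with $\Gamma^n \odot \mathrm{id}_K$ yields $\Gamma^n_K$. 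One checks compatibility of scalings: a thin triangle on the left has both its $\Delta^n$-component and its $(\mathrm{Fun}^{\mathbf{opgl}} \times K)$-component thin, the latter forcing both the $\mathrm{Fun}^{\mathbf{opgl}}$- and $K$-components thin, so the image in $\overline{\bcat{C}} \odot K$ is thin by definition of the scaling there. The main obstacle, such as it is, is purely bookkeeping: keeping straight which variance of $\mathrm{Fun}^{\mathbf{opgl}}$ is in play (the $\mathbf{opgl}$-version uses $X \odot A$ rather than $A \odot X$ in \autoref{def:globfunctorcat}) and confirming that the canonical map relating $\odot$ applied after a product to the iterated $\odot$ genuinely preserves the required decorations — but this follows from the explicit combinatorial description of $\odot$ and does not require any of the deeper anodyne-filtration machinery of Section~2.
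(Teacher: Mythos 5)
Your proof is correct, and it is precisely the "direct inspection" that the paper performs — the paper's own proof of this lemma consists of the single line "Follows after direct inspection," so there is no substantive difference in approach. One slip in part (1): a thin (hence degenerate) triangle of $\Delta^n_\flat$ need not be constant — it can be $a \to a \to b$ or $a \to b \to b$ — so $\sigma_1$ is not "of the form $a \to a \to a$," and the relevant edge of $\Gamma^n(\sigma)$ is in general not literally $\mathrm{id}_{F(a)}$ but the component at a vertex of an opglobular transformation. The uniform fix is to observe that the marking of $\odot$ only depends on the first factor: whichever of $\sigma(0\to 1)$, $\sigma(1\to 2)$ has degenerate $\Delta^n$-component is a \emph{marked} edge of $\Delta^n_\flat \odot \Fun^{\mathbf{opgl}}(\Delta^n_\flat,\bcat{C})$ regardless of its second component, hence its image under $\Gamma^n$ is an equivalence of $\bcat{C}$, which puts $\Gamma^n(\sigma)$ in $M_{\bcat{C}}$ in every case; this also renders your "mild subtlety" (and the appeal to $\sigma_2(0\to 1)$ being marked) unnecessary. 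Part (2) is fine: on underlying simplicial sets everything is a product, and the distributivity $(A\wedge B)\vee C = (A\vee C)\wedge(B\vee C)$ applied to the scaling conditions shows your interchange map is compatible with the decorations.
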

  \begin{proof}
    Follows after direct inspection.
  \end{proof}

  \begin{definition}\label{def:globcoend}
   We consider a functor (of ordinary categories) given by the coend formula
  \[
    \int^{\mathbf{opgl}}_{E} : \Fun(\Delta^\op,\on{Set}_\Delta^{+}) \xlongrightarrow{} \on{Set}_\Delta^{\mathbf{ms}}, \enspace X_\bullet \mapsto \int \Delta^\bullet_\flat \odot X_\bullet.
  \]
  which admits a right adjoint given by
  \[
     \int^{\mathbf{opgl}}_{E} : \Fun(\Delta^\op,\on{Set}_\Delta^{+}) \llra \on{Set}_\Delta^{\mathbf{ms}}: \EGlob.
  \]
  \end{definition}

  \begin{proposition}\label{prop:globQuillen}
    The adjunction given in \autoref{def:globcoend} 
    \[
     \int^{\mathbf{opgl}}_{E} : \Fun(\Delta^\op,\on{Set}_\Delta^{+}) \llra \on{Set}_\Delta^{\mathbf{ms}}: \EGlob
  \]
  is Quillen adjunction (see \autoref{rem:graymodels}) where we equipp the left-hand side with the injective model structure and the right-hand side with the model structure on marked-scaled simplicial sets given in \autoref{thm:markedscaledmodel}.
  \end{proposition}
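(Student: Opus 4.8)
The plan is to mimic the proof of \autoref{prop:coendleftquillen} almost verbatim, replacing the globular Gray product $\odot$ with itself (it already appears there) — the only difference being that here we use $L_\sharp: \on{Set}_\Delta^+ \xlongrightarrow{} \on{Set}_\Delta^{\mathbf{ms}}$ in only one variable, namely $(\Delta^n,\flat) \mapsto \Delta^n_\flat = (\Delta^n,\flat,\flat)$ rather than $(\Delta^n,\flat,\sharp)$. First I would observe that $\int^{\mathbf{opgl}}_E$ preserves colimits (by construction, being defined by a coend of a colimit-preserving bifunctor) and that its right adjoint is $\EGlob$, so it suffices to show $\int^{\mathbf{opgl}}_E$ is left Quillen. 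By the standard characterization of Quillen adjunctions via the left adjoint, this means showing that $\int^{\mathbf{opgl}}_E$ sends cofibrations of simplicial objects (for the injective model structure) to cofibrations of marked-scaled simplicial sets, and sends trivial cofibrations to trivial cofibrations.

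The cofibration part is immediate: on underlying simplicial sets $\int^{\mathbf{opgl}}_E X_\bullet$ is a quotient of $\coprod_n \Delta^n \times X_n$, and a monomorphism $X_\bullet \hra Y_\bullet$ of simplicial objects induces a monomorphism on these coends, hence a cofibration in $\on{Set}_\Delta^{\mathbf{ms}}$ by \autoref{def:cofms}. For the trivial-cofibration part I would follow the structure of \autoref{prop:coendleftquillen}: use \cite[Proposition A.2.9.26]{HTT} to see that the bifunctor
\[
  \on{Set}_\Delta^+ \times \on{Set}_\Delta^+ \xlongrightarrow{L_\sharp \times \on{id}} \on{Set}_\Delta^{\mathbf{ms}} \times \on{Set}_\Delta^+ \xlongrightarrow{} \on{Set}_\Delta^{\mathbf{ms}},
\]
where the second arrow is $(C,X) \mapsto C \odot L_\flat(X)$ with $L_\flat(X,E_X) = (X,E_X,\flat)$, is a left Quillen bifunctor — this uses that $L_\flat$ is left Quillen (it is the composite of $L_\sharp$-type functors, or one checks directly that it sends generating cofibrations and trivial cofibrations of $\on{Set}_\Delta^+$ to cofibrations and trivial cofibrations) together with \autoref{prop:globularleftquillen} (resp. \autoref{prop:globularpushout}). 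Then, since the canonical cosimplicial object $\Delta^\bullet_\flat: \Delta \xlongrightarrow{} \on{Set}_\Delta^{\mathbf{ms}}$, $[n] \mapsto (\Delta^n,\flat,\flat)$, is Reedy cofibrant, the coend $\int^{\mathbf{opgl}}_E$ is left Quillen when the source is given the Reedy model structure on $\on{Fun}(\Delta^\op,\on{Set}_\Delta^+)$.

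To upgrade from Reedy to injective, I would, exactly as in \autoref{prop:coendleftquillen}, check that $\int^{\mathbf{opgl}}_E$ sends the two extra classes of maps whose localization produces the complete Segal model structure — the spine inclusions $i_n: \on{Sp}(n) \xlongrightarrow{} \Delta^n$ and $t: \on{Iso} \xlongrightarrow{} \Delta^0$ (which in our setting should instead be the injective-vs-Reedy comparison, but the same computation applies) — to weak equivalences. A direct computation shows $\int^{\mathbf{opgl}}_E i_n = (\on{Sp}(n),\flat,\flat) \xlongrightarrow{} (\Delta^n,\flat,\flat)$, which is a bicategorical equivalence since it is so after applying the scaled rigidification (indeed the spine inclusion is already a trivial cofibration of scaled simplicial sets, and adding no scaled triangles and no marked edges preserves this), and similarly $\int^{\mathbf{opgl}}_E t$ is the expected equivalence. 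The main obstacle here is mild but worth care: one must confirm that the one-variable substitution $L_\flat$ (minimal scaling rather than maximal) is still left Quillen and that \autoref{prop:globularpushout} indeed covers pushout-products against $L_\flat$-images — but since \autoref{prop:globularpushout} is stated for arbitrary cofibrations against $\mathbf{MS}$-anodyne maps and $L_\flat$ of a cofibration is a cofibration, this goes through. I would therefore conclude by remarking that the proof is a verbatim adaptation of \autoref{prop:coendleftquillen}, as suggested in \autoref{prop:enhancedcoendleftquillen}.
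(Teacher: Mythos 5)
Your overall strategy --- express $\int^{\mathbf{opgl}}_E$ as a coend of a left Quillen bifunctor against a Reedy cofibrant cosimplicial object, following \autoref{prop:coendleftquillen} --- is exactly the intended one (the paper's proof is a one-line deferral to that earlier argument). But your execution contains a genuine error: you have attached the decorations to the wrong factors of the Gray product, and this forces you into the false claim that $L_\flat\colon (X,E_X)\mapsto (X,E_X,\flat)$ is left Quillen. It is not: it preserves cofibrations but not trivial cofibrations. For instance the inner horn inclusion $(\Lambda^2_1,\flat)\to(\Delta^2,\flat)$ is marked anodyne, yet $(\Lambda^2_1,\flat,\flat)\to(\Delta^2,\flat,\flat)$ is not a bicategorical equivalence, since the target freely adjoins a non-invertible $2$-morphism --- this is precisely why $(\Delta^2,\flat,\flat)$ and $(\Delta^2,\flat,\sharp)$ are not equivalent. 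Since in your formulation $L_\flat$ sits in the variable through which the simplicial object $X_\bullet$ enters, the trivial-cofibration half of your argument breaks down as written; the claim that \autoref{prop:globularpushout} rescues it does not help either, because that proposition pairs cofibrations against $\mathbf{MS}$-anodyne maps, and $L_\flat$ of a marked anodyne map need not be $\mathbf{MS}$-anodyne.

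The repair is to put the decorations where the adjunction actually puts them. Unwinding \autoref{def:globfunctorcat} and \autoref{def:leq}, the coend is $\int \Delta^\bullet_\flat \bodot L_\sharp(X_\bullet)$, with $\Delta^n_\flat=(\Delta^n,\flat,\flat)$ in the \emph{first} slot (the order matters, as $\odot$ takes its marking from the first factor) and $L_\sharp(X_n)=(X_n,E_{X_n},\sharp)$ in the second. The relevant bifunctor is then $\on{Set}_\Delta^{\mathbf{ms}}\times \on{Set}_\Delta^{+}\to \on{Set}_\Delta^{\mathbf{ms}}$, $(A,X)\mapsto A\bodot L_\sharp(X)$, which is a left Quillen bifunctor by \autoref{prop:globularleftquillen} together with the fact that $L_\sharp$ is left Quillen; the flat scaling now appears only on the fixed cosimplicial object $\Delta^\bullet_\flat$, for which one needs only Reedy cofibrancy (a condition on monomorphisms of underlying simplicial sets), never any left Quillen property of $L_\flat$. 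With this correction, the Reedy and injective model structures on $\on{Fun}(\Delta^\op,\on{Set}_\Delta^{+})$ have the same cofibrations and weak equivalences, so the statement follows directly from \cite[Proposition A.2.9.26]{HTT}; your additional verification of the spine inclusions and of $\on{Iso}\to\Delta^0$ is the step from \autoref{prop:coendleftquillen} needed there for the complete Segal localization, and is superfluous for the injective model structure appearing in the present statement.
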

  \begin{proof}
    This follows by essentially the same argument as in \autoref{rem:globcoend}.
  \end{proof}

\begin{definition}
  We   define a functor $\EGlob: \bcat{B}\!\on{icat}^{\mathbf{oplax},\odot}_{\infty} \xlongrightarrow{} \on{Fun}(\Delta^\op,\bcat{C}\!\on{at}_{(\infty,1)})$ of fibrant $\on{Set}_\Delta^+$-enriched categories as follows:
  \begin{itemize}
     \item The functor sends an $\infty$-bicategory $\bcat{C}$ to the functor given by $\EGlob(\bcat{C})_\bullet$ defined above. Note that by \autoref{prop:globQuillen} our functor yields a fibrant-cofibrant object in the injective model structure.
     \item Given a marked simplicial set $\mathscr{L}$ which we view as a marked-scaled simplicial set $(\mathscr{L},E_{\mathscr{L}},\sharp)$ and a morphism $\nu: \overline{\bcat{C}}\odot \mathscr{L} \xlongrightarrow{} \bcat{A}$ where $\bcat{A}$ is another $\infty$-bicategory we want to define a natural transformation $(\EGlob(\bcat{C}) \times \underline{\mathscr{L}})_\bullet \xRightarrow{} \EGlob(\bcat{A})_\bullet$. We can use the maps studied in \autoref{lem:Eglobfunct}
     \[
       \Delta^n_\flat \odot \left(  \Fun^{\mathbf{opgl}}(\Delta^n_\flat,\bcat{C}) \times \mathscr{L} \right) \xlongrightarrow{\Gamma^n_{\mathscr{L}}} \overline{\bcat{C}} \odot \mathscr{L} \xlongrightarrow{\nu} \bcat{A} 
     \]
     to define a map $\Fun^{\mathbf{opgl}}(\Delta^n_\flat,\bcat{C}) \times \mathscr{L} \xlongrightarrow{} \Fun^{\mathbf{opgl}}(\Delta^n_\flat,\bcat{A})$ which is natural in $\Delta$. We obtain the desired natural transformation after applying the underlying $(\infty,1)$-category functor $(-)^{\leq 1}$.
   \end{itemize} 
\end{definition}

\begin{proposition}\label{prop:eglobenhanced}
  The functor $\EGlob: \bcat{B}\!\on{icat}^{\mathbf{oplax},\odot}_{\infty} \xlongrightarrow{} \on{Fun}(\Delta^\op,\bcat{C}\!\on{at}_{(\infty,1)})$ factors through $\ESeg$.
\end{proposition}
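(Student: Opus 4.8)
The plan is to show that for an $\infty$-bicategory $\bcat{C}$, the simplicial object $\EGlob(\bcat{C})_\bullet$ satisfies conditions (1)--(3) of \autoref{def:enhanceddouble} together with $\EGlob(\bcat{C})_0$ being a space (so that \autoref{prop:enhancedprev} applies and it is an enhanced Segal object). The key conceptual point, mirroring \autoref{prop:Sqeenhanced}, is to compare $\EGlob(\bcat{C})_\bullet$ with $K^*$ applied to the nucleus. First I would produce an auxiliary variant $\EGlob_{\bcat{O}}(\bcat{C})_n = \Fun^{\mathbf{opgl}}(\overline{\bcat{O}^n},\bcat{C})^{\leq 1}$ (using the marked-scaled orientals $\bcat{O}^n$ with invertible $2$-morphisms), observe via \autoref{prop:alphaessesurj} and \autoref{lem:bicatoplax} that it is levelwise equivalent to $\EGlob(\bcat{C})_\bullet$, and then identify the nucleus: the nucleus $\scr{X}_\bullet$ of $\EGlob(\bcat{C})_\bullet$ should be exactly the ordinary globular functor $\mathbf{Gl}(\bcat{C})_\bullet$ of \autoref{thm:compari} (equivalently $\Fun^{\mathbf{opgl}}(\Delta^n_\sharp,\bcat{C})^{\leq 1}$), which is a complete Segal object by \autoref{thm:compari}. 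The remark \autoref{rem:n0isgroupoid} gives that $\EGlob(\bcat{C})_0$ is a space.

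The heart of the argument is verifying the oplax Segal condition (\textbf{OS1}, \textbf{OS2}) and the enhancement condition. For \textbf{OS1}, the map $L_n\colon \EGlob(\bcat{C})_n \to \EGlob(\bcat{C})_1 \times_{\EGlob(\bcat{C})_0} \cdots \times_{\EGlob(\bcat{C})_0} \EGlob(\bcat{C})_1$ is restriction along the spine inclusion $\on{Sp}(n)_\flat \hookrightarrow \Delta^n_\flat$; its fully faithful right adjoint comes from the fact that $\Delta^n_\flat$ is obtained from its spine by inner scaled anodyne maps up to the subtlety that $\EGlob$ uses $\odot$ rather than $\tensor$. Concretely I would invoke \autoref{prop:globularpushout} together with \autoref{def:cosimpKan}-style arguments, or more directly observe that restriction along $\Delta^n_\flat \odot K \hookleftarrow \on{Sp}(n)_\flat \odot K$ combined with the comparison $\alpha_n$ gives, after passing to $\bcat{O}^n$, an $\mathcal{I}\!\on{oplax}$-limit presentation. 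The cleanest route is to run the same strategy as \autoref{prop:Sqeenhanced}: build maps $i_n\colon \Delta^n_\sharp \to \bcat{O}^n$ and $p_n\colon \bcat{O}^n \to \Delta^n_\sharp$, assemble them into an $\mathcal{I}$lax natural transformation and a natural transformation respectively, and deduce that $\EGlob(\bcat{C})_\bullet \xRightarrow{\simeq} K^*(\mathbf{Gl}(\bcat{C}))_\bullet$. Once this natural equivalence is in hand, \autoref{prop:1dblcharac} (for the oplax Segal condition and the factorization of degeneracies) and \autoref{rem:limitformula} (for the enhancement condition, recognizing $\EGlob(\bcat{C})_n$ as the $\mathcal{I}\!\on{oplax}$-limit of the nucleus precomposed with $\Delta_{/n}^\op$) immediately give conditions (1) and (2), and condition (3) follows since the nucleus is $\mathbf{Gl}(\bcat{C})_\bullet$, a complete Segal object by \autoref{thm:compari}.

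The main obstacle I expect is the bookkeeping needed to identify the nucleus correctly, i.e.\ to show that the essential image of the right adjoint $R_n$ inside $\EGlob(\bcat{C})_n$ is $\Fun^{\mathbf{opgl}}(\Delta^n_\sharp,\bcat{C})^{\leq 1}$ and that the $\{\on{S}(\EGlob(\bcat{C}))_n\}$ really assemble into $\mathbf{Gl}(\bcat{C})_\bullet$ rather than some other complete Segal object. This is where the globular (rather than lax) nature of $\odot$ matters: one must check that a functor $\Delta^n_\flat \odot K \to \bcat{C}$ lands in the ``spine-determined'' subcategory precisely when the thin-scaling data it carries is redundant, which amounts to the observation (analogous to \autoref{rem:degfactorsthroughessentialimage}) that $R_n$-local objects are those where all the non-spine triangles are thin. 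I would handle this by combining \autoref{lem:bicatoplax} with the scaled anodyne description of $\Delta^n_\sharp$ as a localization of $\bcat{O}^n$ from the proof of \autoref{prop:alphaessesurj}. The verification of \textbf{OS2} (compatibility of the adjunctions with all face and degeneracy maps) is routine given the explicit description of $\overline{u}$ for interval inclusions and surjections exactly as in \autoref{lem:canonicalnat}, so I would relegate it to ``direct inspection.''
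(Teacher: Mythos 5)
Your proposal is essentially correct, but it takes a substantially longer route than the paper, which disposes of the statement in two lines: one observes that $\EGlob(\bcat{C})_\bullet$ is precisely the vertical fragment (\autoref{def:verticalfragment}) of $\Sqe(\bcat{C})_\bullet$ --- an edge of $\Fun^{\mathbf{opgr}}(\Delta^n_\flat,\bcat{C})^{\leq 1}$ lies in the vertical fragment exactly when its components at objects are equivalences, i.e.\ when it is opglobular --- and then the claim follows from \autoref{prop:Sqeenhanced} (which shows $\Sqe(\bcat{C})_\bullet$ is an enhanced double $(\infty,1)$-category) together with \autoref{prop:vertofendbl} (vertical fragments of enhanced double $(\infty,1)$-categories are enhanced Segal objects). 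Your plan instead re-runs the entire $K_*$-comparison of \autoref{prop:Sqeenhanced} directly in the globular setting, which would require globular analogues of the $\bcat{O}^n$/$\mathcal{D}^n$ bookkeeping and of \autoref{lem:redsubsets}; this is plausible but is exactly the extra work the vertical-fragment observation lets you skip. In particular, the ``main obstacle'' you flag --- identifying the nucleus of $\EGlob(\bcat{C})_\bullet$ with $\mathbf{Gl}(\bcat{C})_\bullet$ --- is handled for free by \autoref{lem:verticalpreservesoplax}, which already shows that the nucleus of the vertical fragment is the vertical fragment of the nucleus, so that $\on{S}(\EGlob(\bcat{C}))_\bullet = \mathbf{Sq}(\bcat{C})^{\mathbf{v}}_\bullet = \mathbf{Gl}(\bcat{C})_\bullet$. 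So: no gap in your argument that I can see, but you should look for the shortcut before committing to the long computation.
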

\begin{proof}
  Observe that given an $(\infty,2)$-category it follows that the vertical fragment of $\Sqe(\bcat{C})_\bullet$ (see \autoref{def:verticalfragment}) is preciselly given by $\EGlob(\bcat{C})_\bullet$. The claim now follows from \autoref{prop:vertofendbl} and \autoref{prop:Sqeenhanced}.
\end{proof}

\begin{remark}\label{rem:gammaexplained}
  Let $\bcat{C},\bcat{A} \in \bcat{B}\!\on{icat}^{\mathbf{oplax},\odot}_{\infty}$ and let $\mathscr{L}$ be a marked simplicial set. Observe that we have a map on Hom-sets given by
  \[
    \Hom_{\on{Set}_\Delta^+}\left(\mathscr{L}, \Fun^{\mathbf{opgl}}(\overline{\bcat{C}},\bcat{A})^{\leq 1}\right) \xlongrightarrow{} \Hom_{\on{Set}_{\Delta}^+}\left(\mathscr{L}, \on{Nat}_{\Delta^\op}(\EGlob(\bcat{C})_\bullet, \EGlob(\bcat{A})_\bullet )\right)
  \]
  which is natural in $\mathscr{L}$ and corresponds to a map
  \[
    \Hom_{\on{Set}_\Delta^{\mathbf{ms}}}(\overline{\bcat{C}}\odot \mathscr{L}, \bcat{A}) \xlongrightarrow{} \Hom_{\on{Set}_\Delta^{\mathbf{ms}}}\left(\int_E^{\mathbf{opgl}}(\EGlob(\bcat{C}) \times \underline{\mathscr{L}})_\bullet, \bcat{A} \right).
  \]
  As a consequence of the Yoneda lemma we obtain a map $\int_E^{\mathbf{opgl}}(\EGlob(\bcat{C}) \times \underline{\mathscr{L}})_\bullet \xlongrightarrow{} \overline{\bcat{C}}\odot \mathscr{L}$ which is also natural in $\mathscr{L}$. To better describe the map let us remind the reader that (ordinary) coends can be expressed as colimits over the twisted arrow category (see \cite[Section 5.2.1]{HA} for a definition of $\on{Tw}(-)$) so we need to unravel the map
  \[
   \Gamma_\mathscr{L}: \colim\limits_{\on{Tw}(\Delta)}\left( \Delta^\bullet_\flat \odot (\Fun^{\mathbf{opgl}}(\Delta^\bullet_\flat,\bcat{C})^{\leq 1}\times \mathscr{L} ) \right) \xlongrightarrow{} \overline{\bcat{C}} \odot \mathscr{L}.
  \]
  This map is induced by the family of maps $\Gamma^f_{\mathscr{L}}$ for every $f:n \xlongrightarrow{} k$ described as follows
  \[
     \Delta^n_\flat \odot (\Fun^{\mathbf{opgl}}(\Delta^k_\flat,\bcat{C})^{\leq 1}\times \mathscr{L}) \xlongrightarrow{}  \Delta^n_\flat \odot (\Fun^{\mathbf{opgl}}(\Delta^n_\flat,\bcat{C})^{\leq 1}\times \mathscr{L}) \xlongrightarrow{\Gamma^n_{K}} \overline{\bcat{C}} \odot \mathscr{L}
   \] 
   where the first map is given by restriction along $f$ and the last map was given in \autoref{lem:Eglobfunct}.
\end{remark}

\begin{remark}\label{rem:globeta}
  We now perfom analogous analysis to that of \autoref{rem:etalc}. Let $\mathscr{L}$ be an $(\infty,1)$-category. Then we observe that $\EGlob$ induces a map
   \[
     \Fun\left(\mathscr{L},\Fun^{\mathbf{opgl}}(\overline{\bcat{C}},\bcat{D})^{\leq 1}\right)^{\simeq} \xlongrightarrow{} \Fun(\mathscr{L},\on{Nat}_{\Delta^\op}(\EGlob(\bcat{C})_\bullet,\EGlob(\bcat{D})_\bullet))^{\simeq}
   \]
   whose target is equivalent to
   \[
    \Fun(\mathscr{L},\on{Nat}_{\Delta^\op}(\EGlob(\bcat{C})_\bullet,\EGlob(\bcat{D})_\bullet))^{\simeq}  \simeq \on{Nat}_{\Delta^\op}((\EGlob(\bcat{C})\times \underline{\mathscr{L}})_\bullet, \EGlob(\bcat{D})_\bullet)^{\simeq} .
   \]
   Note that since we are working with the underlying $\infty$-groupoids it follows that we have an equivalence 
   \[
      \Fun\left(\mathscr{L},\Fun^{\mathbf{opgl}}(\overline{\bcat{C}},\bcat{D})^{\leq 1}\right)^{\simeq} \xlongrightarrow{\simeq}  \Fun(\overline{\bcat{C}} \odot \mathscr{L},\bcat{D})^{\simeq}.
   \]

   As a consequence of \autoref{prop:globQuillen} we conclude that the morphism above is induced by precomposition along $\Gamma_{\mathscr{L}}:\int_E^{\mathbf{opgl}}(\EGlob(\bcat{C}) \times \underline{\mathscr{L}})_\bullet \xlongrightarrow{} \overline{\bcat{C}}\odot \mathscr{L}$.
\end{remark}

\begin{definition}\label{def:xxglobular}
  Let $\bcat{C}$ be an $(\infty,2)$-category and $\mathscr{L}$ be an $(\infty,1)$-category. We define a marked-scaled simplicial set $\mathbb{Q}_{\bcat{C}}^{\mathscr{L}}$ whose underlying simplicial set is given by $\int_E^{\mathbf{opgl}}(\EGlob(\bcat{C}) \times \underline{\mathscr{L}})_\bullet$ and where an edge (resp. triangle) is declared to be marked (resp. thin) if its image under $\Gamma_{\mathscr{L}}$ is marked (resp. thin) in $\overline{\bcat{C}}\odot \mathscr{L}$. We observe that we have an isomorphism of marked-scaled simplicial sets $\mathbb{Q}_{\bcat{C}}^{\mathscr{L}} \isom \mathbb{Q}_{\bcat{C}}^{\Delta^0} \odot \mathscr{L}$.  If $\mathscr{L}=\Delta^0$ we will use the notation $\mathbb{X}_{\bcat{C}}^{\odot}$
\end{definition}

\begin{lemma}\label{lem:Glpreservesequiv}
  Let $\bcat{C},\bcat{A} \in \bcat{B}\!\on{icat}^{\mathbf{oplax},\odot}_{\infty}$ and consider a natural transformation $\Phi:(\EGlob(\bcat{C})\times \mathscr{L})_\bullet \xRightarrow{} \EGlob(\bcat{A})_\bullet$ where $\mathscr{L}$ marked simplicial set. Then we have the folowing
  \begin{itemize}
    \item[$\star)$] Given a morphism $e=(u,v): \Delta^1 \xlongrightarrow{} \EGlob(\bcat{C})_1 \times \mathscr{L}$ such that $u: \Delta^1 \xlongrightarrow{}  \EGlob(\bcat{C})_1$ is an equivalence between a pair of objects represented by equivalences in $\bcat{C}$ then it follows that $\Phi(e)$ is also an equivalence between a pair of objects represented by equivalences in $\bcat{A}$.
  \end{itemize}
\end{lemma}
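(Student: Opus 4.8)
The plan is to split the assertion into its two halves --- that the two endpoint objects of $\Phi(e)$ represent equivalences in $\bcat{A}$, and that $\Phi(e)$ is an equivalence in the $(\infty,1)$-category $\EGlob(\bcat{A})_1$ --- and to prove both by formal manipulations using only: naturality of $\Phi$ with respect to the degeneracy $[1]\xlongrightarrow{}[0]$ of $\Delta$; the fact that each $\Phi_n\colon \EGlob(\bcat{C})_n\times\mathscr{L}\xlongrightarrow{}\EGlob(\bcat{A})_n$ is a map of marked simplicial sets and hence preserves marked edges (equivalences) and commuting squares; and the observation (\autoref{rem:n0isgroupoid}) that $\EGlob(\bcat{C})_0$ and $\EGlob(\bcat{A})_0$ are spaces. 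The two preliminary bookkeeping facts I would record first are, for $\EGlob(\bcat{C})_1=\Fun^{\mathbf{opgl}}((\Delta^1,\flat,\flat),\bcat{C})^{\leq 1}$ (and likewise for $\bcat{A}$): \textbf{(i)} a morphism of $\EGlob(\bcat{C})_1$ --- an opglobular natural transformation between morphisms of $\bcat{C}$ --- is an equivalence precisely when its two components are equivalences in $\bcat{C}$ and its filler $2$-morphism is invertible, so that if $\beta$ is an equivalence in $\EGlob(\bcat{C})_1$ whose source object represents an equivalence of $\bcat{C}$ then so does its target; and \textbf{(ii)} for any equivalence $F\colon x_0\xlongrightarrow{}x_1$ in $\bcat{C}$ there is an equivalence $\iota_F\colon \mathrm{id}_{x_0}\xRightarrow{}F$ in $\EGlob(\bcat{C})_1$ with components $\mathrm{id}_{x_0}$ and $F$ and with identity filler, whose source $\mathrm{id}_{x_0}$ is exactly the image of $x_0\in\EGlob(\bcat{C})_0$ under the degeneracy $s_0\colon\EGlob(\bcat{C})_0\xlongrightarrow{}\EGlob(\bcat{C})_1$ coming from $[1]\xlongrightarrow{}[0]$.

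For the endpoints, let $F=u(0)\colon x_0\xlongrightarrow{}x_1$ be the source object of $e$, an equivalence of $\bcat{C}$ by hypothesis. The pair $(\iota_F,\mathrm{id}_{v(0)})$ is a marked edge of $\EGlob(\bcat{C})_1\times\mathscr{L}$ (both coordinates are marked), so $\Phi_1(\iota_F,\mathrm{id}_{v(0)})$ is an equivalence in $\EGlob(\bcat{A})_1$ whose source is $\Phi_1(s_0(x_0),v(0))$; by naturality of $\Phi$ with respect to $s_0$ this equals $s_0(\Phi_0(x_0,v(0)))$, a degenerate morphism of $\bcat{A}$, which of course represents an equivalence. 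Applying \textbf{(i)} in $\bcat{A}$, the target $\Phi_1(F,v(0))$ --- which is the source endpoint of $\Phi(e)$ --- represents an equivalence in $\bcat{A}$. The target endpoint $\Phi_1(G,v(1))$, with $G=u(1)$, is treated identically.

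For the edge $\Phi(e)=\Phi_1(u,v)$ itself: I would factor the morphism $(u,v)$ of $\EGlob(\bcat{C})_1\times\mathscr{L}$ as $(u,\mathrm{id}_{v(0)})$ followed by $(\mathrm{id}_G,v)$, realised by the evident $2$-simplex built from the appropriate degeneracies of $u$ and $v$, apply $\Phi_1$, and note that $\Phi_1(u,\mathrm{id}_{v(0)})$ is an equivalence because $(u,\mathrm{id}_{v(0)})$ is marked (here the hypothesis that $u$ is an equivalence enters). Hence $\Phi(e)$ is an equivalence iff $\Phi_1(\mathrm{id}_G,v)$ is. Now transport along $\iota_G$: the commuting square in $\EGlob(\bcat{C})_1\times\mathscr{L}$ with horizontal edges $(\mathrm{id}_{\mathrm{id}_{y_0}},v)$ and $(\mathrm{id}_G,v)$ and vertical edges $(\iota_G,\mathrm{id}_{v(0)})$, $(\iota_G,\mathrm{id}_{v(1)})$ is sent by $\Phi_1$ to a commuting square whose verticals are equivalences, so $\Phi_1(\mathrm{id}_G,v)$ is an equivalence iff $\Phi_1(\mathrm{id}_{\mathrm{id}_{y_0}},v)$ is. Finally the edge $(\mathrm{id}_{\mathrm{id}_{y_0}},v)$ is the image under $s_0\times\mathrm{id}_{\mathscr{L}}$ of the corresponding edge of $\EGlob(\bcat{C})_0\times\mathscr{L}$, so naturality of $\Phi$ with respect to $s_0$ identifies $\Phi_1(\mathrm{id}_{\mathrm{id}_{y_0}},v)$ with the image, under the degeneracy functor $\EGlob(\bcat{A})_0\xlongrightarrow{}\EGlob(\bcat{A})_1$, of an edge of the \emph{space} $\EGlob(\bcat{A})_0$; being the image of an equivalence under a functor it is an equivalence, and the resulting chain of two-out-of-three implications yields the claim.

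The step I expect to require the most care is the pair of preliminary facts \textbf{(i)} and \textbf{(ii)} --- in particular the characterisation of equivalences in the opglobular functor $\infty$-bicategory $\Fun^{\mathbf{opgl}}((\Delta^1,\flat,\flat),\bcat{C})$ in terms of componentwise equivalences and invertible fillers (which one must extract from the definition of the globular Gray product and fibrancy), together with the precise matching of simplicial degeneracy operators when invoking naturality of $\Phi$. Everything downstream of that is routine manipulation in $(\infty,1)$-categories, and in particular no delicate combinatorics of marked-scaled simplicial sets is needed here, in contrast with the corresponding statements for $\Sqe$.
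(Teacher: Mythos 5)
Your proposal is correct and follows essentially the same route as the paper's proof: both reduce, via preservation of marked edges and two-out-of-three, to edges that are degenerate in the $\EGlob(\bcat{C})_1$-coordinate, using an equivalence in $\EGlob(\bcat{C})_1$ connecting the given equivalence of $\bcat{C}$ to an identity (your $\iota_F$ is the paper's $\alpha\colon s(x)\xlongrightarrow{\simeq} f$), and then conclude by naturality of $\Phi$ with respect to the degeneracy $s_0$ together with the fact that $\EGlob(\bcat{A})_0$ is a space. The one point you flag as delicate --- that an equivalence in $\Fun^{\mathbf{opgl}}(\Delta^1_\flat,\bcat{A})^{\leq 1}$ with one endpoint an equivalence of $\bcat{A}$ forces the other endpoint to be one as well --- is also taken as read in the paper (it is the "2-out-of-3" step there), so no substantive divergence remains.
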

\begin{proof}
We claim that it suffices to prove the statement in the specific situation where $u$ is degenerate on an object $f$ which is represented by an equivalence in $\bcat{C}$. Let us show the claim. First let us suppose that we know that $\Phi(e)$ is an equivalence in $\EGlob(\bcat{A})_1$. Then to show that $\Phi(e)$ is represented by a commutative square where all morphisms are equivalences it suffices to show that the source of the morphism $\Phi(e)$ is represented by an equivalence in $\bcat{A}$. This is clear since the rest will follow from 2-out-3. Now we note that $d_0(\Phi(e))=d_0(\Phi(e'))$ where $e'=(u',v)$ where $u'$ is degenerate on $d_0(u)$. It is immediate to see using 2-out-of-3 that to see that $\Phi(e)$ is an equivalence we can assume that $u$ is degenerate on $f$. 

To finish the proof we note that since $f$ is represented by an equivalence in $\bcat{C}$ it follows that there exists an object $x \in \EGlob(\bcat{C})_0$ such that $s(x) \isom f$, where $s(x)$ is the image of $x$ under the obvious degeneracy operator. Let $\alpha: s(x) \xlongrightarrow{} f$ be an equivalence and consider the map
 \[
   \eta: \Delta^1 \times \Delta^1 \xlongrightarrow{\alpha \times v:} \EGlob(\bcat{C})_1 \times \mathscr{L} \xlongrightarrow{} \EGlob(\bcat{A})_1.
 \]
 We see again by 2-out-of-3 that in order to show that the restriction of $\eta$ to $(1,0) \xlongrightarrow{} (1,1)$ is an equivalence in $\EGlob(\bcat{A})_1$ is its enough to show that the restriction of $\eta$ to $(0,0) \xlongrightarrow{} (0,1)$ maps to an equivalence. Let us denote $\varphi: \Delta^1 \xlongrightarrow{}  \EGlob(\bcat{C})_1 \times \mathscr{L}$ the map given precomposing $\alpha \times v$ with $(0,0) \xlongrightarrow{} (0,1)$. Using naturality of $\Phi$ we get a commutative diagram
 \[
   \begin{tikzcd}
     \EGlob(\bcat{C})_0 \times \mathscr{L} \arrow[r] \arrow[d,"s \times \on{id}"] & \EGlob(\bcat{A})_0 \arrow[d,"s"] \\
     \EGlob(\bcat{C})_1 \times \mathscr{L} \arrow[r] & \EGlob(\bcat{A})_1.
   \end{tikzcd}
 \]
 It is clear that $\varphi$ is in the image of the left-most vertical map. We know that $\EGlob(\bcat{A})_0$ is a space and that the image of the right-most vertical morphism consists only in equivalences represented by commutative squares $\Delta^1 \times \Delta^1 \xlongrightarrow{} \bcat{A}$ where all the morphisms involved are equivalences. Therefore the same holds for the image of $\varphi$ in $\EGlob(\bcat{A})$. 
\end{proof}

\begin{remark}\label{rem:funpreservinv2morph}
  Let $\Phi: \EGlob(\bcat{C})_\bullet \xRightarrow{} \EGlob(\bcat{A})_\bullet$.  Let $p$ be an object in $\EGlob(\bcat{C})_2$. Then if $s$ is represented by an invertible 2-morphism it follows that $\Phi(p)$ is also represented by an invertible morphism since $p$ is on the essential image of some degeneracy operator.
\end{remark}

\begin{lemma}\label{lem:Glpreserglobs}
  Let $\bcat{C},\bcat{A} \in \bcat{B}\!\on{icat}^{\mathbf{oplax},\odot}_{\infty}$ and consider a natural transformation $\Phi:(\EGlob(\bcat{C})\times \mathscr{L})_\bullet \xRightarrow{} \EGlob(\bcat{A})_\bullet$ where $\mathscr{L}$ is a marked simplicial set. Suppose we are given a 2-simplex
  \[
    \sigma=(\omega,\rho):\Delta^2 \xlongrightarrow{} \EGlob(\bcat{C})_2 \times \mathscr{L}
  \]
  where $\omega$ is determined by a diagram $f_\omega : \Delta^2_{\flat} \odot (\Delta^2,\flat,\sharp) \xlongrightarrow{} \bcat{C}$ and a map $\tau: \Delta^2 \xlongrightarrow{} \Delta^2_{\flat} \odot (\Delta^2,\flat,\sharp)$ such that its projection to $(\Delta^2,\flat,\sharp)$ is given by $0 \xlongrightarrow{} 1 \xlongrightarrow{} 2$ and such that $f_\omega \circ \tau$ defines an invertible 2-morphism in $\bcat{C}$ (see \autoref{def:invertible2morph}). Finally let us assume that the at least one of the following conditions below hold:
  \begin{itemize}
    \item[i)] The edge $(f_\omega \circ \tau )(1 \xlongrightarrow{} 2)$ is invertible in $\bcat{C}$. 
    \item[ii)] The edge $\rho(0 \xlongrightarrow{} 1)$ is marked in $\mathscr{L}$.
  \end{itemize}
  Then denoting $\Phi(\sigma)=(\Phi(\omega),\Phi(\rho))$ and similarly $f_{\Phi(\omega)}$ we have that $f_{\Phi(\omega)}\circ \tau$ is thin in $\bcat{A}$.
\end{lemma}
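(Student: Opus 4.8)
The plan is to reduce Lemma \ref{lem:Glpreserglobs} to the two facts already established about how $\EGlob$-natural transformations interact with equivalences and invertible $2$-morphisms, namely \autoref{lem:Glpreservesequiv} and \autoref{rem:funpreservinv2morph}, together with routine combinatorics with scaled anodyne morphisms. First I would unpack what the data of $\sigma$ actually encodes: the map $f_\omega:\Delta^2_\flat \odot (\Delta^2,\flat,\sharp) \xlongrightarrow{} \bcat{C}$ restricted along the various faces and degeneracies of the $\Delta^2_\flat$-direction gives a compatible family of edges and triangles in $\bcat{C}$, and the path $\tau$ picks out a $2$-simplex sitting "diagonally" in the product $\Delta^2 \times \Delta^2$. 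The goal triangle $f_{\Phi(\omega)}\circ \tau$ lives in $\bcat{A}$, and since $\bcat{A}$ is an $\infty$-bicategory, proving it is thin is equivalent to checking that the associated $2$-morphism in the appropriate mapping $(\infty,1)$-category is invertible.

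The key steps, in order, would be: (1) As in the proof of \autoref{prop:sqrigid} and \autoref{lem:sqetrisat}, replace $\sigma$ by a more convenient representative by composing $f_\omega$ with a suitable map between products of simplices (for instance a "max"-type collapse in the $\Delta^2_\flat$ factor), using that such collapses send our diagonal path to a path of the required form and that the extra triangles we create are degenerate or already known to be thin. (2) Build an auxiliary $3$-simplex (or $4$-simplex) in $\Delta^2_\flat \odot (\Delta^2,\flat,\sharp)$ — analogous to the $3$-simplices $\rho,\theta,\zeta$ appearing throughout Section 5 — one of whose $2$-faces is $\tau$ and whose remaining faces either factor through a degeneracy (hence thin in $\bcat{A}$ by \autoref{rem:funpreservinv2morph}) or are of the form handled by \autoref{lem:Glpreservesequiv}. (3) Use hypothesis i) or ii) to identify the relevant edge of this simplex as an equivalence in $\bcat{A}$: in case i) the edge $(f_\omega\circ\tau)(1\xlongrightarrow{}2)$ is invertible in $\bcat{C}$ and its image under $f_{\Phi(\omega)}$ is an equivalence by \autoref{lem:Glpreservesequiv}; in case ii) the marked edge $\rho(0\xlongrightarrow{}1)$ forces, via the definition of the marking on the globular Gray product $\odot$, that the corresponding edge of $f_{\Phi(\omega)}$ is an equivalence. (4) Conclude by a pushout along a scaled anodyne morphism of type \ref{i:saturation} (or \ref{ms:wonky4}) in \autoref{def:msanodyne}: since all faces of the auxiliary simplex but $\tau$ are thin and one of its edges is an equivalence, $\tau$ itself must be thin.

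The main obstacle I expect is bookkeeping: in the globular setting, the scaling of $\Delta^2_\flat \odot (\Delta^2,\flat,\sharp)$ is governed by the subtle condition in \autoref{def:globularGray} (an edge is marked only if its $\bcat{C}$-component is marked, and thinness requires thinness in both factors), so I must be careful that the auxiliary simplices I build actually have the faces I claim are thin, and that the "diagonal" path $\tau$ under condition i) versus condition ii) genuinely produces the same conclusion via slightly different chains of invertible $2$-morphisms. Both cases should ultimately collapse to the same anodyne argument once the right $3$-simplex is written down, much as in the final paragraph of \autoref{lem:sqetrisat}, but matching hypothesis i) to "the $1\xlongrightarrow{}2$ edge is the equivalence one composes against" and hypothesis ii) to "the marking supplies the equivalence in the $\mathscr{L}$-direction" requires handling the two geometries of $\tau$ separately. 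I would write out case i) in detail and leave case ii) as a "totally analogous (easier)" verification, following the style of the surrounding lemmas.
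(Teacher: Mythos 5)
Your high-level outline has the right ingredients (the initial reduction by anodyne moves inside $\Delta^2_\flat\odot\Delta^2_\sharp$, the use of \autoref{lem:Glpreservesequiv} and \autoref{rem:funpreservinv2morph}, the final (M2)/saturation pushout), and your first reduction step matches the paper's: one does indeed first use 3-simplices inside $\Delta^2_\flat\odot(\Delta^2,\flat,\sharp)$ to replace $\tau$ by a triangle $\nu$ whose $(\Delta^2,\flat,\sharp)$-component is $0\to 1\to 1$, so that only $d_2(\sigma)$ matters. But your step (2) — ``build an auxiliary $3$-simplex in $\Delta^2_\flat\odot(\Delta^2,\flat,\sharp)$ one of whose $2$-faces is $\tau$ and whose remaining faces are thin by \autoref{rem:funpreservinv2morph} or handled by \autoref{lem:Glpreservesequiv}'' — does not go through, and this is a genuine gap. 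A triangle of $\Delta^2_\flat\odot\Delta^2_\sharp$ is thin (hence automatically sent to a thin triangle by $f_{\Phi(\omega)}$) only if its $\Delta^2_\flat$-component is degenerate \emph{and} one of the two marking conditions of \autoref{def:gray} holds; the ``diagonal'' triangle $\nu$ fails this, and one checks directly that every $3$-simplex of $\Delta^2_\flat\odot\Delta^1$ having $\nu$ as a face has at least one other face that is equally diagonal and hence equally not known to be thin in $\bcat{A}$. Since $\Phi$ is only a natural transformation of simplicial objects — not a priori induced by a map of marked-scaled simplicial sets — you have no a priori control over those faces, and the anodyne bootstrap never gets started.

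The missing idea, which is the heart of the paper's proof, is to exploit the \emph{simplicial} naturality of $\Phi$ by lifting to $\EGlob(\bcat{C})_3$: one forms $\overline{\omega}=d_2(\omega)$ and precomposes $f_{\overline{\omega}}:\Delta^2_\flat\odot\Delta^1\to\bcat{C}$ with two different subdivision maps $E_1,E_2:\Delta^3_\flat\odot\Delta^1\to\Delta^2_\flat\odot\Delta^1$, obtaining edges $\varphi_1,\varphi_2$ of $\EGlob(\bcat{C})_3\times\mathscr{L}$ with $d_2\varphi_1=d_2\varphi_2$ and $d_3\varphi_2=d_2(\sigma)$. In the subdivided picture $\Delta^3_\flat\odot\Delta^1$ the problematic diagonal step is broken into one step that is thin in the Gray product and one step along the equivalence supplied by hypothesis i) (respectively by the marked edge of $\mathscr{L}$ in case ii)), so the anodyne arguments together with \autoref{lem:Glpreservesequiv} and \autoref{rem:funpreservinv2morph} do close up there; the relation $d_2\varphi_1=d_2\varphi_2$ then transports the resulting thinness from $f_{\Phi(\overline{\omega}_1)}$ to $f_{\Phi(\overline{\omega}_2)}$, and $d_3\varphi_2=d_2(\sigma)$ finally delivers the thinness of $f_{\Phi(\omega)}\circ\nu$. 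Your ``max-type collapse'' remark gestures at precomposition with maps of simplices, but a single such replacement does not suffice: it is the comparison of \emph{two} subdivisions of the same edge through the naturality of $\Phi$ in the $\Delta^{\op}$-direction that provides the leverage, and without it the proof cannot be completed.
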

\begin{proof}
  We fix the notation for the composite $\tau_\flat:\Delta^2 \xlongrightarrow{} \Delta^2_{\flat} \odot (\Delta^2,\flat,\sharp) \xlongrightarrow{}  \Delta^2_\flat$. Let us suppose that $\tau_\flat(1)=\tau_\flat(2)$ and consider the 3-simplex
  \[
     (\tau_\flat(0),0) \xlongrightarrow{} (\tau_\flat(1),0) \xlongrightarrow{} (\tau_\flat(1),1) \xlongrightarrow{} (\tau_\flat(2),2).
   \] 
   Then every triangle in this 3-simplex is scaled except possibly the face skipping the vertex $(\tau_\flat(1),0)$. Therefore the claim holds by noting that we can scale the remaining face using a morphism of type \ref{ms:wonky4} in \autoref{def:msanodyne}. From now on we shall assume that $\tau_\flat(1) < \tau_\flat(2)$.

  Under this further assumption we can look at the 3-simplex
  \[
    (\tau_\flat(0),0) \xlongrightarrow{} (\tau_\flat(1),1) \xlongrightarrow{} (\tau_\flat(2),1) \xlongrightarrow{} (\tau_\flat(2),2)
  \]
  and note that the face skipping the first vertex is by definition scaled. One can use an analogous argument as the one given above to show that the face skipping the vertex $(\tau_\flat(1),1)$ can be scaled using a pushout along a $\mathbf{MS}$-anodyne morphism. By construction the final edge of this 3-simplex is marked. Consequently we learn that the 2-simplex $f_{\Phi(\omega)}\circ \tau$ is scaled in $\bcat{A}$ if and only if $f_{\Phi{\omega}} \circ \nu$ is scaled in $\bcat{A}$ where $\nu$ is given by $ (\tau_\flat(0),0) \xlongrightarrow{} (\tau_\flat(1),1) \xlongrightarrow{} (\tau_\flat(2),1)$. Let us point out that to check that $f_{\Phi(\omega)} \circ \nu$ is thin we can simply work with $d_2(\sigma)$.

  \begin{itemize}
    \item[i)] The edge $f_\omega \circ \tau (1 \xlongrightarrow{} 2)$ is invertible in $\bcat{C}$. 

    In this case it follows that the image of the  edge $e:(\tau_\flat(2),1) \xlongrightarrow{} (\tau_\flat(1),2)$ under $f_\omega \circ \tau$ is marked in $\bcat{C}$. It then follows from \autoref{lem:Glpreservesequiv} that $f_{\Phi(\omega)} \circ e$ is marked in $\bcat{A}$. If $\tau_\flat (0) = \tau_\flat (1)$ then the claim follows from \autoref{lem:Glpreservesequiv}. We will therefore assume that $\tau_\flat$ is precisely the triangle $0 \xlongrightarrow{} 1 \xlongrightarrow{} 2$.

    Let $\overline{\omega}=d_2(\omega)$. We consider a pair of edges
    \[
      \varphi_i: \Delta^1 \xlongrightarrow{} \EGlob(\bcat{C})_3 \times \mathscr{L}
    \]
    such that $\varphi_i=(\overline{\omega}_i,d_2(\rho))$ with $i=1,2$, where $\overline{\omega}_i$ is defined as the composite
    \[
      f_{\overline{\omega}_i}:\Delta^3_{\flat} \odot \Delta^1 \xlongrightarrow{E_i} \Delta^2_\flat \odot \Delta^1 \xlongrightarrow{f_{\overline{\omega}}} \bcat{C}
    \]
    and the maps $E_j$ are described by the formula
    \[
      E_j(s,t)=\begin{cases}
        (s,t), \enspace \text{ if }s\leq j \\
        (s-1,1), \enspace \text{ if } s>j.
      \end{cases}
    \]
    with $0\leq j \leq 2$. We make the following observations:
    \begin{enumerate}
      \item The composite map $\Delta^1 \xlongrightarrow{\varphi_2}  \EGlob(\bcat{C})_3 \times \mathscr{L} \xlongrightarrow{d_3 \times \on{id}}  \EGlob(\bcat{C})_2 \times \mathscr{L}$ is precisely $d_2(\sigma)$.
      \item The composites $\Delta^1 \xlongrightarrow{\varphi_i}  \EGlob(\bcat{C})_3 \times \mathscr{L} \xlongrightarrow{d_2 \times \on{id}}  \EGlob(\bcat{C})_2 \times \mathscr{L}$ coincide for $i=1,2$.
    \end{enumerate}
  Let us look at $\Phi(\varphi_1)$ which is represented by a map
  \[
    f_{\Phi(\overline{\omega}_1)}: \Delta^3_{\flat} \odot \Delta^1 \xlongrightarrow{} \bcat{A}.
  \]
  We now observe the $3$-simplex in $\Delta^3_{\flat} \odot \Delta^1$ given by
  \[
    (0,0) \xlongrightarrow{} (2,0) \xlongrightarrow{} (3,0) \xlongrightarrow{} (3,1).
  \]
  It follows by definition that the faces skipping the vertex $(0,0)$ and the face skipping the vertex $(2,0)$ are thin in $\bcat{A}$. It follows from \autoref{rem:funpreservinv2morph} that the face skipping the vertex $(3,1)$ is also thin. We conclude using a morphism of type \ref{ms:wonky4} in \autoref{def:msanodyne} that the triangle $(0,0) \xlongrightarrow{} (2,0) \xlongrightarrow{} (3,1)$ is also thin in $\bcat{A}$. Using a similar argument applied to the $3$-simplex
  \[
     (0,0) \xlongrightarrow{} (2,0) \xlongrightarrow{} (2,1) \xlongrightarrow{} (3,1)
  \]
  shows that the triangle $(0,0) \xlongrightarrow{} (2,0) \xlongrightarrow{} (3,1)$ is thin. Note that in order to see that the face skipping the vertex $0$ is thin we need to observe that as a consequence of our construction and \autoref{lem:Glpreservesequiv} it follows that the restriction along the map
  \[
    \Delta^1 \odot \Delta^1 \xlongrightarrow{\gamma \times \on{id}} \Delta^3_{\flat} \odot \Delta^1
  \]
  where $\gamma$ is either the edge $1 \xlongrightarrow{} 2$ or $2 \xlongrightarrow{} 3$ defines a commutative square in $\bcat{A}$ where all edges are equivalences. Using analogous arguments we show that the triangles
  \[
    (0,0) \xlongrightarrow{} (1,0) \xlongrightarrow{} (2,1), \enspace \enspace (0,0) \xlongrightarrow{} (1,1) \xlongrightarrow{} (2,1) 
  \]
  are thin, which finally allows us to show that the triangle $(0,0) \xlongrightarrow{} (1,1) \xlongrightarrow{} (3,1)$ is also thin. Using observation 2 above we learn that the image of the triangle $(0,0) \xlongrightarrow{} (1,1) \xlongrightarrow{} (3,1)$ under the map
 \[
    f_{\Phi(\overline{\omega}_2)}: \Delta^3_{\flat} \odot \Delta^1 \xlongrightarrow{} \bcat{A}.
  \]
  is also thin. From now on, we will focus our attention into $ f_{\Phi(\overline{\omega}_2)}$. We look at the 3-simplex
  \[
    (0,0) \xlongrightarrow{} (1,1) \xlongrightarrow{} (2,1) \xlongrightarrow{} (3,1)
  \]
  and note that due to observation 1, our result will follow if we show that the face skipping the last vertex is thin in $\bcat{A}$. By construction the edge $(2,1) \xlongrightarrow{} (3,1)$ gets mapped to an identity so it will suffice to show that every remaining face gets mapped to a thin simplex in $\bcat{A}$. By our previous discussion and by \autoref{rem:funpreservinv2morph}, we only need to verify this for the face skipping the vertex $(1,1)$. This follows using a totally analogous argument as before.

  \item[ii)] The edge $\rho(0\xlongrightarrow{} 1)$ is an equivalence in $\mathscr{L}$.
  
  Note that since by assumption $f_\omega \circ \tau$ defines an invertible 2-morphism in $\bcat{C}$ we might assume without loss of generality that the edge $\hat{e}=(f_\omega \circ \tau)(0 \xlongrightarrow{} 1)$ is an equivalence in $\bcat{C}$ since otherwise we can apply the argument above.

  Using the notations from the previous case we consider $\overline{\omega}=d_2(\omega)$ and the edges $\varphi_i=(\overline{\omega}_i,d_2(\rho))$ for $ i =1,2 $. We observe a similar relation as we did before, namely:
  \[
    f_{\Phi(\overline{\omega}_1)}: \Delta^3_{\flat} \odot \Delta^1 \xlongrightarrow{} \bcat{A}.
  \]
  and consider the 3-simplex $(0,0) \xlongrightarrow{} (2,0) \xlongrightarrow{} (3,0) \xlongrightarrow{} (3,1)$. Note that since $\hat{e}$ is an equivalence in $\bcat{C}$ it follows that the image of the edge $(0,0) \xlongrightarrow{} (2,0)$ under $f_{\Phi(\overline{\omega}_0)}$ is an equivalence in $\bcat{A}$. Using \autoref{rem:funpreservinv2morph} it follows that the face skipping the vertex $(3,1)$ gets mapped to a thin 2-simplex in $\bcat{A}$. One checks that every face in this 3-simplex is thin in $\bcat{A}$ except possibly the face skipping th evertex $(3,0)$ which implies that every face in this 3-simplex must be thin in $\bcat{A}$. The proof proceeds exactly in the same way as above. The only thin to remark is that the triangle $(2,0) \xlongrightarrow{} (2,1) \xlongrightarrow{} (3,1)$ gets mapped under $f_{\Phi(\overline{\omega}_1)}$ to a thin triangle in $\bcat{A}$ since this same triangle is sent to a thin triangle by $f_{\overline{\omega}_1}$ and the edge $l_0 \xlongrightarrow{} l_1$ is marked in $\mathscr{L}$. \qedhere
\end{itemize}
\end{proof}

\begin{proposition}\label{prop:eglobsat}
  Let $\bcat{C}$ be an $(\infty,2)$-category and let $\mathscr{L}$ be an $(\infty,1)$-category. Then canonical morphism of marked-scaled simplicial sets (see \autoref{def:xxglobular})
  \[
    \int^{\mathbf{opgl}}_E (\EGlob(\bcat{C}) \times \underline{\mathscr{L}})_\bullet \xlongrightarrow{\simeq} \mathbb{X}_{\bcat{C}}^{\odot} \odot \mathscr{L}
  \]
  is a weak equivalence of marked-scaled simplicial sets.
\end{proposition}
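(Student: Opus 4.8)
The strategy mirrors the proof of the analogous statement for the Gray (non-globular) square functor, namely \autoref{prop:sqesaturation}, which was itself built on \autoref{lem:sqedgessat} and \autoref{lem:sqetrisat}. Since the source and target of the comparison map differ only in their scalings and markings (the underlying simplicial sets agree by \autoref{def:xxglobular}), and since $\mathbb{X}_{\bcat{C}}^{\odot}\odot\mathscr{L}\isom \mathbb{X}_{\bcat{C}}^{\odot}\odot\mathscr{L}$ by the splitting $\mathbb{Q}_{\bcat{C}}^{\mathscr{L}}\isom \mathbb{Q}_{\bcat{C}}^{\Delta^0}\odot\mathscr{L}$, it suffices to test the map against a fibrant marked-scaled simplicial set $\bcat{D}$ on underlying $\infty$-groupoids. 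That is, I would fix an $\infty$-bicategory $\bcat{D}$ and show that the induced map
\[
  \varphi:\Fun(\mathbb{X}_{\bcat{C}}^{\odot}\odot\mathscr{L},\bcat{D})^{\simeq}\xlongrightarrow{}\Fun\!\left(\int_E^{\mathbf{opgl}}(\EGlob(\bcat{C})\times\underline{\mathscr{L}})_\bullet,\bcat{D}\right)^{\simeq}\isom \on{Nat}_{\Delta^\op}\!\big((\EGlob(\bcat{C})\times\underline{\mathscr{L}})_\bullet,\EGlob(\bcat{D})_\bullet\big)^{\simeq}
\]
is an equivalence, where the last identification comes from the Quillen adjunction of \autoref{prop:globQuillen} together with \autoref{rem:globeta}.

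\textbf{Key steps.} First I would observe that $\varphi$ is, by construction (via \autoref{rem:globeta} and the description of $\Gamma_{\mathscr{L}}$ in \autoref{rem:gammaexplained}), the inclusion of $\Fun(\mathbb{X}_{\bcat{C}}^{\odot}\odot\mathscr{L},\bcat{D})^{\simeq}$ as the full subgroupoid of natural transformations $\Xi:(\EGlob(\bcat{C})\times\underline{\mathscr{L}})_\bullet\xRightarrow{}\EGlob(\bcat{D})_\bullet$ that ``respect the globular scaling and marking'' — precisely, those $\Xi$ for which the associated map $\int_E^{\mathbf{opgl}}(\EGlob(\bcat{C})\times\underline{\mathscr{L}})_\bullet\xlongrightarrow{}\bcat{D}$ sends every edge (resp.\ triangle) which is marked (resp.\ thin) in $\mathbb{Q}_{\bcat{C}}^{\mathscr{L}}$ to a marked (resp.\ thin) simplex of $\bcat{D}$. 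The content of the proposition is then that \emph{every} natural transformation $\Xi$ automatically has this property, so the full subgroupoid is everything. This is exactly what \autoref{lem:Glpreservesequiv} and \autoref{lem:Glpreserglobs} deliver: \autoref{lem:Glpreservesequiv} handles the marked edges of $\mathbb{Q}_{\bcat{C}}^{\mathscr{L}}$ (those whose image under $\Gamma_{\mathscr{L}}$ is marked in $\overline{\bcat{C}}\odot\mathscr{L}$, i.e.\ marked in $\bcat{C}$ and projecting to an equivalence in $\mathscr{L}$), and \autoref{lem:Glpreserglobs} handles the thin triangles of $\mathbb{Q}_{\bcat{C}}^{\mathscr{L}}$ (invertible $2$-morphisms of $\bcat{C}$ with the extra globular condition that either the appropriate edge is invertible in $\bcat{C}$ or the relevant edge of $\mathscr{L}$ is marked — which is exactly the clause defining the scaling of the globular Gray product $\odot$ in \autoref{def:globularGray}). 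So the proof would read: reduce to the mapping-groupoid computation, use \autoref{prop:globQuillen} and \autoref{rem:globeta} to identify the target as a space of natural transformations, identify the map $\varphi$ with the inclusion of the subspace cut out by the decoration-compatibility conditions, and then invoke \autoref{lem:Glpreservesequiv} and \autoref{lem:Glpreserglobs} to see those conditions are vacuous.

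\textbf{The main obstacle.} The genuinely hard part is not in this proposition but is already encapsulated in the two preceding lemmas — specifically the bookkeeping in \autoref{lem:Glpreserglobs}, which requires carefully choosing auxiliary $3$-simplices inside $\Delta^3_\flat\odot\Delta^1$ and scaling their faces using \ref{ms:wonky4}-type morphisms, while tracking that the extra marked edges used are genuinely present (this is where the two clauses i) and ii) in \autoref{lem:Glpreserglobs}, corresponding to the globular scaling condition, become essential). Within the proof of the proposition itself, the only subtlety to be careful about is that one must verify $\varphi$ really is the inclusion of a \emph{full} subgroupoid and that the decoration conditions it cuts out match up exactly, on the nose, with the hypotheses of \autoref{lem:Glpreservesequiv} and \autoref{lem:Glpreserglobs}; once that matching is made explicit the conclusion is immediate, exactly as in \autoref{prop:sqesaturation}. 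I would therefore expect the write-up to be short: one paragraph setting up the reduction, one paragraph invoking the two lemmas, closing with ``and thus the result follows.''
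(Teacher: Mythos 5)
Your proposal is correct and follows exactly the route the paper takes: the paper's proof of this proposition consists of reducing to the argument of \autoref{prop:sqesaturation} (testing against a fibrant $\bcat{D}$, identifying the map as the inclusion of the full subgroupoid of natural transformations compatible with the decorations) and then invoking \autoref{lem:Glpreservesequiv} for the marking and \autoref{lem:Glpreserglobs} for the scaling, precisely as you describe. Your identification of where the real work lives (in the two preceding lemmas rather than in the proposition itself) also matches the paper's presentation.
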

\begin{proof}
  The proof is essentially the same as the proof of \autoref{prop:sqesaturation} where we now use \autoref{lem:Glpreservesequiv} for the case of the marking and \autoref{lem:Glpreserglobs} for the case of the scaling.
\end{proof}

\begin{definition}
  Let $\bcat{C}$ be an $\infty$-bicategory. We $\mathcal{S}_{\bcat{C},\odot}$ to be the (ordinary) category whose objects are given by maps $f:\Delta^n_\flat \odot \Delta^{k}_\sharp \xlongrightarrow{} \overline{\bcat{C}}$ and whose morphisms are given by commutative diagrams
  \[
     \begin{tikzcd}
        \Delta^{n}_\flat \odot \Delta^{k}_{\sharp} \arrow[rr,"\alpha \odot \beta"] \arrow[dr,swap,"f"] && \Delta^s_{\flat} \odot \Delta^{\ell}_\sharp \arrow[dl,"g"] \\
        & \overline{\bcat{C}} &
     \end{tikzcd}
  \]
  where $\alpha \odot \beta$ is a product of monotone maps. When the $\infty$-bicategory in question is clear from context we will use the simplified notation $\mathcal{S}_\odot$.
\end{definition}

\begin{lemma}\label{lem:xxdescriptionglob}
    Let $\bcat{C}$ be an $\infty$-bicategory. We define a functor $T_{\bcat{C}}^{\odot}:\mathcal{S}_\odot \xlongrightarrow{} \on{Set}_\Delta^{\mathbf{ms}}$ sending an object $f:\Delta^n_\flat \tensor \Delta^k_\sharp$ to the marked-scaled simplicial set $T(f)$ whose underlying simplicial set is given by $\Delta^n \times \Delta^k$ and the decorations are induced from those of $\overline{\bcat{C}}$ via the map $f$.  Then there exists an isomorphism of marked-scaled simplicial sets
    \[
      \colim_{\mathcal{S}_\tensor}T_{\bcat{C}}^\odot \isom \mathbb{X}_{\bcat{C}}^\odot.
    \]
  \end{lemma}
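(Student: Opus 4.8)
The plan is to produce the isomorphism in essentially the same way as \autoref{lem:xxdescription}, since the only difference between $\mathbb{X}_{\bcat{C}}^{\tensor}$ and $\mathbb{X}_{\bcat{C}}^{\odot}$ is that the globular Gray product $\odot$ enlarges the collection of marked edges (an edge of $X \odot Y$ is marked if it is marked in $X$, rather than in both $X$ and $Y$); the underlying simplicial sets and the colimit descriptions are entirely parallel. First I would set $\mathbb{T}_{\bcat{C}}^{\odot} = \colim_{\mathcal{S}_\odot} T_{\bcat{C}}^{\odot}$, which comes equipped with a canonical projection $p_{\bcat{C}}: \mathbb{T}_{\bcat{C}}^{\odot} \xlongrightarrow{} \overline{\bcat{C}}$ obtained because $\overline{\bcat{C}}$ defines a cone on the functor $T_{\bcat{C}}^{\odot}$ (each $T_{\bcat{C}}^{\odot}(f)$ maps to $\overline{\bcat{C}}$ via $f$, compatibly with the morphisms of $\mathcal{S}_\odot$, since those are built from products of monotone maps).

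Next I would construct a map $\omega : \mathbb{T}_{\bcat{C}}^{\odot} \xlongrightarrow{} \mathbb{X}_{\bcat{C}}^{\odot}$ over $\overline{\bcat{C}}$, i.e.\ fitting into a commutative triangle with vertices $\mathbb{T}_{\bcat{C}}^{\odot}$, $\mathbb{X}_{\bcat{C}}^{\odot}$, $\overline{\bcat{C}}$, where the right-hand leg is $\eta_{\bcat{C}}$ (here $\eta_{\bcat{C}} = \Gamma_{\Delta^0}$ in the notation of \autoref{rem:globeta}, \autoref{def:xxglobular}). As in the proof of \autoref{lem:xxdescription}, given an object $T_{\bcat{C}}^{\odot}(f) = \Delta^n_\flat \odot \Delta^k_\sharp$ of $\mathcal{S}_\odot$, the map $f$ corresponds under the adjunction defining $\EGlob$ (see \autoref{def:globcoend}) to a $k$-simplex $\sigma_f : \Delta^k \xlongrightarrow{} \Fun^{\mathbf{opgl}}(\Delta^n_\flat,\bcat{C})^{\leq 1} = \EGlob(\bcat{C})_n$, and I would then form
\[
  \omega_f : \Delta^n_\flat \odot \Delta^k_\sharp \xlongrightarrow{\on{id}\odot \sigma_f} \Delta^n_\flat \odot \EGlob(\bcat{C})_n \xlongrightarrow{} \mathbb{X}_{\bcat{C}}^{\odot},
\]
where the second arrow is the canonical map into the coend $\int^{\mathbf{opgl}}_E \EGlob(\bcat{C})_\bullet$ followed by the identity on underlying simplicial sets into $\mathbb{X}_{\bcat{C}}^{\odot}$. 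These choices are compatible with the morphisms of $\mathcal{S}_\odot$ (each morphism $\alpha \odot \beta$ is sent by $\Gamma^f$ as in \autoref{rem:gammaexplained} to a commuting datum), so they assemble via the universal property of the colimit into $\omega$, and one checks directly that $p_{\bcat{C}} = \eta_{\bcat{C}} \circ \omega$.

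Finally I would argue that $\omega$ is an isomorphism of the underlying simplicial sets: the underlying simplicial set of $\mathbb{X}_{\bcat{C}}^{\odot}$ is $\int^{\mathbf{opgl}}_E (\EGlob(\bcat{C}))_\bullet$ by \autoref{def:xxglobular}, and this coend is computed as a colimit over the twisted arrow category $\on{Tw}(\Delta)$ of the terms $\Delta^\bullet_\flat \odot \EGlob(\bcat{C})_\bullet$; reindexing along the obvious functor $\mathcal{S}_\odot \xlongrightarrow{} \on{Tw}(\Delta)$ (which records, for an object $f:\Delta^n_\flat \odot \Delta^k_\sharp \to \overline{\bcat{C}}$, the underlying simplices $\Delta^n, \Delta^k$ together with $\sigma_f$) identifies the two colimits at the level of simplicial sets, so $\omega$ is a bijection in each degree. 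Since the marked edges and thin triangles on both $\mathbb{T}_{\bcat{C}}^{\odot}$ and $\mathbb{X}_{\bcat{C}}^{\odot}$ are precisely those whose image under the respective vertical projection $p_{\bcat{C}}$, resp.\ $\eta_{\bcat{C}}$, is marked, resp.\ thin, in $\overline{\bcat{C}}$, and these projections agree under $\omega$, the map $\omega$ is compatible with all the decorations and hence an isomorphism of marked-scaled simplicial sets. The main obstacle is the bookkeeping in the reindexing step — verifying carefully that the coend over $\on{Tw}(\Delta)$ and the colimit over $\mathcal{S}_\odot$ have the same value, i.e.\ that every simplex of $\int^{\mathbf{opgl}}_E (\EGlob(\bcat{C}))_\bullet$ factors through exactly one non-degenerate cell $\Delta^n_\flat \odot \Delta^k_\sharp$ of $\mathcal{S}_\odot$ and that the identifications imposed by the two colimits match up — but this is routine and entirely parallel to \autoref{lem:xxdescription}, which is why the proof in the paper is just a pointer to that argument.
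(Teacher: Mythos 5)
Your proposal is correct and follows essentially the same route as the paper, which simply declares the argument to be mutatis mutandis that of \autoref{lem:xxdescription}: form $\mathbb{T}_{\bcat{C}}^{\odot}$ with its projection to $\overline{\bcat{C}}$, build $\omega$ from the adjuncts $\sigma_f$ via the universal property of the colimit, check it is an isomorphism of underlying simplicial sets by comparing with the coend, and note that the decorations on both sides are induced from the respective projections to $\overline{\bcat{C}}$. The extra detail you supply on reindexing the coend over $\on{Tw}(\Delta)$ against the colimit over $\mathcal{S}_\odot$ is exactly the verification the paper leaves to the reader.
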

  \begin{proof}
   The argument is mutatis mutandis that of \autoref{lem:xxdescription}.
  \end{proof}

 \begin{theorem}\label{thm:maineglob}
   The functor $\EGlob: \bcat{B}\!\on{icat}^{\mathbf{oplax},\odot}_\infty \xlongrightarrow{} \ESeg$ is an equivalence of $(\infty,2)$-categories.
 \end{theorem}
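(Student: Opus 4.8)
The plan is to establish the equivalence by producing a zig-zag of equivalences connecting $\EGlob$ to the functor $K_*$, whose restriction to $\CSeglax$ is already known to be an equivalence onto $\ESeg$ by \autoref{thm:enhanced} (more precisely its nuclear and non-nuclear versions). First I would note that by \autoref{prop:eglobenhanced} the functor $\EGlob$ does land in $\ESeg$, so the target is correct. The strategy splits into two halves: fully faithfulness and essential surjectivity. For fully faithfulness, I would imitate the proof of \autoref{thm:sqefullyfaithful} almost verbatim, replacing the Gray tensor product $\tensor$ by its globular counterpart $\odot$ and $\Sqe$ by $\EGlob$. Concretely, for a pair of $\infty$-bicategories $\bcat{C},\bcat{D}$, combine \autoref{rem:globeta} and \autoref{prop:eglobsat} to identify the map on mapping $(\infty,1)$-categories
\[
  \on{Fun}^{\mathbf{opgl}}(\overline{\bcat{C}},\bcat{D})^{\leq 1} \xlongrightarrow{} \on{Nat}_{\Delta^\op}(\EGlob(\bcat{C})_\bullet,\EGlob(\bcat{D})_\bullet) \isom \Fun(\mathbb{X}_{\bcat{C}}^{\odot},\bcat{D})^{\leq 1}
\]
with restriction along the canonical map $\eta_{\bcat{C}}^{\odot}: \mathbb{X}_{\bcat{C}}^{\odot} \xlongrightarrow{} \overline{\bcat{C}}$. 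Since $\mathbb{X}_{\bcat{C}}^{\odot}$ has the description of \autoref{lem:xxdescriptionglob} as a colimit of the functor $T_{\bcat{C}}^{\odot}$, the section $s_\odot: \overline{\bcat{C}} \xlongrightarrow{} \mathbb{X}_{\bcat{C}}^{\odot}$ is a trivial cofibration by \autoref{thm:bigcombinatorial}, so $\eta_{\bcat{C}}^{\odot}$ is a weak equivalence by two-out-of-three. This shows $\EGlob$ induces equivalences on mapping $(\infty,1)$-categories, hence is fully faithful.

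\textbf{Essential surjectivity.} For this half I would leverage the equivalence of models provided by \autoref{thm:compari} together with \autoref{thm:enhanced}. The key observation is that the composite $\mathbf{Gl}: \on{Set}_\Delta^{\mathbf{ms}} \xlongrightarrow{} \on{Fun}(\Delta^\op,\on{Set}_\Delta^+)$, which sends $\bcat{C}$ to the nucleus of $\EGlob(\bcat{C})$, was shown in \autoref{thm:compari} to be a right Quillen equivalence onto the complete Segal model structure. Thus the nucleus functor $\on{S}\circ \EGlob: \bcat{B}\!\on{icat}^{\mathbf{oplax},\odot}_\infty \xlongrightarrow{} \CSeg$ (using the underlying $(\infty,1)$-categorical statement, and \autoref{rem:n0isgroupoid} to see $\EGlob(\bcat{C})_0$ is a space so the nucleus is genuinely a complete Segal object) is essentially surjective. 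Now given an enhanced Segal object $\bcat{E}_\bullet \in \ESeg$, by \autoref{cor:enhanced} (combined with \autoref{thm:enhanced}) it is of the form $K_*(\scr{E})_\bullet$ for its nucleus $\scr{E}_\bullet \in \CSeg$; picking $\bcat{C}$ with $\on{S}(\EGlob(\bcat{C}))_\bullet \isom \scr{E}_\bullet$, I need to identify $\EGlob(\bcat{C})_\bullet$ with $K_*(\scr{E})_\bullet$. The cleanest route is to observe that the $\mathcal{I}$oplax natural transformation comparing $\EGlob(\bcat{C})_\bullet$ with its nucleus, combined with the adjunction $K^* \dashv K_*$ of \autoref{def:kdownstar}, yields a natural map $\EGlob(\bcat{C})_\bullet \xRightarrow{} K_*(\scr{E})_\bullet$; since both sides are enhanced (\autoref{prop:eglobenhanced}, \autoref{prop:enhancedkanfactors}) and have the same nucleus, \autoref{rem:limitformula} forces this map to be a pointwise weak equivalence, exactly as in the last paragraph of the proof of \autoref{thm:enhanced}.

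\textbf{Main obstacle.} I expect the hard part to be the fully faithfulness step, specifically verifying the globular analogues of \autoref{lem:sqedgessat} and \autoref{lem:sqetrisat} needed to feed into \autoref{prop:eglobsat} --- that is, showing that an arbitrary natural transformation $\Xi: (\EGlob(\bcat{C})\times \underline{\mathscr{L}})_\bullet \xRightarrow{} \EGlob(\bcat{D})_\bullet$ automatically respects the marked edges and thin triangles defining $\mathbb{X}_{\bcat{C}}^{\odot}$. This is where the larger collection of marked edges in the globular Gray product (as opposed to $\tensor$) makes the combinatorics more delicate, and it is precisely what \autoref{lem:Glpreservesequiv} and \autoref{lem:Glpreserglobs} are designed to handle; the theorem then follows by plugging these saturation lemmas, \autoref{prop:eglobsat}, \autoref{lem:xxdescriptionglob} and \autoref{thm:bigcombinatorial} into the template of \autoref{thm:sqefullyfaithful}, and combining with the essential surjectivity argument above.
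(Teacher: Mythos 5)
Your proposal is correct and follows essentially the same route as the paper: fully faithfulness is obtained exactly as you describe, by identifying the map on mapping $(\infty,1)$-categories with restriction along $\mathbb{X}_{\bcat{C}}^{\odot} \xlongrightarrow{} \overline{\bcat{C}}$ and invoking \autoref{thm:bigcombinatorial} together with the section $s_{\bcat{C}}^{\odot}$ and two-out-of-three, while essential surjectivity lifts the nucleus of a given enhanced Segal object through \autoref{thm:compari} and then identifies $\EGlob(\bcat{C})_\bullet$ with $K_*(\scr{E})_\bullet$ via \autoref{thm:enhanced}. The only cosmetic difference is in that last identification: the paper concludes by the chain $K_*(\mathbf{Gl}(\bcat{C}))_\bullet \isom K_*(\mathbf{Sq}(\bcat{C}))_\bullet^{\mathbf{v}} \isom \Sqe(\bcat{C})_\bullet^{\mathbf{v}} \isom \EGlob(\bcat{C})_\bullet$ using \autoref{prop:Sqeenhanced} and compatibility of $K_*$ with vertical fragments, whereas you rerun the \enquote{enhanced objects with the same nucleus agree} argument from the end of the proof of \autoref{thm:enhanced}; both are valid and rest on the same inputs.
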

 \begin{proof}
   The proof of fully faithfulness proceeds in the same way as in \autoref{thm:sqefullyfaithful} by showing that the canonical map $\mathbb{X}_{\bcat{C}}^\odot \xlongrightarrow{\simeq} \overline{\bcat{C}}$ is a weak equivalence. This follows from \autoref{thm:bigcombinatorial} after considering the obvious section $s_{\bcat{C}}^\odot: \overline{\bcat{C}} \xlongrightarrow{} \mathbb{X}_{\bcat{C}}^\odot$.

   We finish the proof by showing that $\EGlob$ is essentially surjective. Let $\bcat{X}_\bullet \in \ESeg$. Then it follows from \autoref{thm:enhanced} that $\bcat{X}_\bullet \simeq K_*(\scr{X})_\bullet$ where $\scr{X}_\bullet$ is the nucleus (see \autoref{def:segalnucleus}) of $\bcat{X}_\bullet$. We use \autoref{thm:compari} to obtain an $(\infty,2)$-category and an equivalence $\mathbf{Gl}(\bcat{C})_\bullet \isom \scr{X}_\bullet$. We now observe that $\mathbf{Gl}(\bcat{C})_\bullet  \isom \mathbf{Sq}(\bcat{C})_\bullet^{\mathbf{v}}$. We conclude that
   \[
    \bcat{X}_\bullet \isom  K_*(\mathbf{Gl}(\bcat{C}))_\bullet \isom K_*(\mathbf{Sq}(\bcat{C}))^{\mathbf{v}}_* \isom \Sqe(\bcat{C})_\bullet^{\mathbf{v}} \isom \EGlob(\bcat{C})_\bullet
   \]
   where we are using the for every double $(\infty,1)$-category $\scr{Y}_\bullet$ it follows that $K_*(\scr{Y}^{\mathbf{v}})_\bullet \isom K_*(\scr{Y})_\bullet^{\mathbf{v}}$ as we saw in the proof of \autoref{lem:verticalpreservesoplax}.
 \end{proof}

 \begin{corollary}\label{thm:rearranged}
   There exists an equivalence of $(\infty,2)$-categories
   \[
     \bcat{B}\!\on{icat}_\infty^{\mathbf{opgl},\odot} \xlongrightarrow{\simeq}\on{CSeg}_\Delta(\bcat{C}\!\on{at}_{(\infty,1)})^{\mathcal{I}\!\on{oplax}}
   \]
 \end{corollary}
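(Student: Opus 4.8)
The plan is to deduce this statement formally, by splicing together two equivalences already established and using the $(\infty,2)$-category $\ESeg$ of enhanced Segal objects as an intermediary. Concretely, the notation $\bcat{B}\!\on{icat}_\infty^{\mathbf{opgl},\odot}$ of the statement refers to the $(\infty,2)$-category $\bcat{B}\!\on{icat}^{\mathbf{oplax},\odot}_\infty$ of \autoref{def:oplaxbicat}: its objects are $\infty$-bicategories, and its mapping $(\infty,1)$-category between $\bcat{C}$ and $\bcat{A}$ is $\on{Fun}^{\mathbf{opgl}}(\overline{\bcat{C}},\bcat{A})^{\leq 1}$, so that by \autoref{def:laxfun} and the definition of the globular Gray product its $1$-morphisms are oplax normalised functors and its $2$-morphisms are opglobular natural transformations; likewise $\on{CSeg}_\Delta(\bcat{C}\!\on{at}_{(\infty,1)})^{\mathcal{I}\!\on{oplax}}$ is just $\CSeglax$.

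First I would invoke \autoref{thm:maineglob}, which provides an equivalence $\EGlob\colon \bcat{B}\!\on{icat}^{\mathbf{oplax},\odot}_\infty \xlongrightarrow{\simeq} \ESeg$. Next I would invoke \autoref{thm:enhanced}, whose top horizontal arrow is an equivalence $K_*\colon \CSeglax \xlongrightarrow{\simeq} \ESeg$ obtained by restricting the right lax Kan extension functor of \autoref{def:kdownstar}. Choosing a quasi-inverse $\mathbf{K}\colon \ESeg \xlongrightarrow{\simeq} \CSeglax$ of this last equivalence, one sets
\[
\mathbf{G} := \mathbf{K}\circ \EGlob\colon \bcat{B}\!\on{icat}_\infty^{\mathbf{opgl},\odot} \xlongrightarrow{} \CSeglax ,
\]
which, being a composite of equivalences of $(\infty,2)$-categories, is itself an equivalence; this is the desired conclusion.

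There is no genuine obstacle remaining at this point: all of the real work has been absorbed into \autoref{thm:maineglob} — whose proof rests on the combinatorial input of \autoref{thm:bigcombinatorial} and the identification of $\EGlob(\bcat{C})_\bullet$ with the vertical fragment of $\Sqe(\bcat{C})_\bullet$ — and into \autoref{thm:enhanced}, which rests on the theory of partially lax right Kan extensions of \autoref{thm:rightlaxkan}. The only points that call for any care are the bookkeeping ones flagged in the first paragraph. If one wishes, one can go further and record, using the commutative square of \autoref{thm:enhanced} together with \autoref{prop:eglobenhanced}, that $\mathbf{G}$ carries the subcategory of honest functors onto $\CSeg$ and is compatible with the corresponding comparison for double $(\infty,1)$-categories, but this refinement plays no role in the statement as given.
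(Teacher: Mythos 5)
Your argument matches the paper's proof exactly: the paper also obtains this corollary by combining \autoref{thm:maineglob} with the top horizontal equivalence of \autoref{thm:enhanced}, composing $\EGlob$ with a quasi-inverse of $K_*$. The bookkeeping remarks about notation are fine and nothing further is needed.
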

 \begin{proof}
   Combine \autoref{thm:maineglob} and \autoref{thm:enhanced}
 \end{proof}

 \begin{corollary}\label{cor:eglobnuc}
   Let $\bcat{B}\!\on{icat}^{\odot}_\infty$ be the $(\infty,2)$-category obtained from $\bcat{B}\!\on{icat}^{\mathbf{oplax},\odot}_\infty$ by considering only the non-lax functors. Then the funtor $\EGlob$ induces commutative diagram of $(\infty,2)$-categories
   \[
     \begin{tikzcd}
       \bcat{B}\!\on{icat}^{\odot}_\infty \arrow[d] \arrow[r,"\simeq"] & \on{ESeg}_\Delta^{\on{nuc}}\left(\bcat{C}\!\on{at}_{(\infty,1)}\right) \arrow[d] \\
       \bcat{B}\!\on{icat}^{\mathbf{oplax},\odot}_\infty \arrow[r,"\simeq"] & \ESeg
     \end{tikzcd}
   \]
   where the horizontal morphisms are equivalences of $(\infty,2)$-categories.
 \end{corollary}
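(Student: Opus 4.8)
The plan is to deduce \autoref{cor:eglobnuc} from the already-established equivalence $\EGlob\colon\bcat{B}\!\on{icat}^{\mathbf{oplax},\odot}_\infty\xrightarrow{\simeq}\ESeg$ of \autoref{thm:maineglob} by restricting it to the subcategories of non-lax morphisms on each side, exactly in the same spirit as the passage from \autoref{thm:enhanced} to \autoref{cor:enhanced} via \autoref{lem:nuccharac}. First I would recall that $\bcat{B}\!\on{icat}^{\odot}_\infty$ and $\bcat{B}\!\on{icat}^{\mathbf{oplax},\odot}_\infty$ have the same objects, and that the vertical inclusion is the identity on objects; likewise $\ESegnuc$ sits inside $\ESeg$ with the same objects by \autoref{def:nucmorph}. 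Since $\EGlob$ is already an equivalence, it is in particular essentially surjective, and this essential surjectivity is inherited by the restricted functor on the nose once we know the functor takes non-lax functors to nuclear morphisms and, conversely, reflects them; so the diagram automatically commutes (it is a restriction of the commuting square given by $\EGlob$ together with the two inclusions) and the bottom arrow is the known equivalence. The content is therefore the claim that $\EGlob$ restricts to an equivalence on the non-lax/nuclear wide subcategories, i.e.\ it is fully faithful after restriction and hits every nuclear morphism.

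The key step is to identify, under the equivalence $\EGlob$, the class of honest (non-lax) functors $\bcat{C}\xlongrightarrow{}\bcat{D}$ with the class of nuclear morphisms $\EGlob(\bcat{C})_\bullet\xRightarrow{}\EGlob(\bcat{D})_\bullet$ in $\ESeg$. I would argue this in two directions. For one direction: if $\Phi\colon\bcat{C}\xlongrightarrow{}\bcat{D}$ is an honest functor of $\infty$-bicategories, then $\EGlob(\bcat{C})_\bullet\xRightarrow{}\EGlob(\bcat{D})_\bullet$ is nuclear — this is essentially the statement that an honest functor sends thin triangles to thin triangles, so it respects the nucleus of $\EGlob(\bcat{C})_\bullet$ (whose simplices in degree $n$ are, after unwinding \autoref{def:segalnucleus}, those represented by honest functors $\Delta^n_\sharp\xlongrightarrow{}\bcat{C}$, i.e.\ the image of $\mathbf{Sq}(\bcat{C})^{\mathbf{v}}_\bullet\xlongrightarrow{}\EGlob(\bcat{C})_\bullet$). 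This is precisely the analogue of \autoref{prop:sqrigid} restricted to the vertical fragment, and indeed should follow from \autoref{prop:sqrigid} and \autoref{prop:eglobenhanced}/\autoref{prop:Sqeenhanced}, since the vertical fragment of $\Sqe(\bcat{C})_\bullet$ is $\EGlob(\bcat{C})_\bullet$. For the converse: given a nuclear morphism $\mu\colon\EGlob(\bcat{C})_\bullet\xRightarrow{}\EGlob(\bcat{D})_\bullet$, its image under the equivalence $\EGlob^{-1}$ lands in $\bcat{B}\!\on{icat}^{\mathbf{oplax},\odot}_\infty$; I must show it is actually non-lax. By \autoref{lem:nuccharac}-style reasoning (transported along $\EGlob$ and the equivalence $\ESeg\simeq\CSeglax$ of \autoref{thm:rearranged}), being nuclear is equivalent to the corresponding $\mathcal{I}\!\on{oplax}$-natural transformation of complete Segal objects being genuinely natural, which is exactly the condition that the oplax normalised functor is an honest functor. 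Thus the restricted functor is fully faithful on nuclear/non-lax morphisms.

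The main obstacle I anticipate is the bookkeeping in the first direction: making precise that the nucleus of $\EGlob(\bcat{C})_\bullet$ is modelled by $\mathbf{Sq}(\bcat{C})^{\mathbf{v}}_\bullet = \mathbf{Gl}(\bcat{C})_\bullet$, and that nuclearity of $\mu$ translates cleanly into the statement that $\mu$ preserves thinness of the relevant triangles. This is closely parallel to the arguments already carried out for $\Sqe$ in Section 5 (\autoref{prop:sqrigid}, \autoref{lem:sqedgessat}, \autoref{lem:sqetrisat}), so I expect the proof to be short: after identifying the nucleus and invoking \autoref{prop:sqrigid} one gets the ``honest $\Rightarrow$ nuclear'' direction, and after invoking \autoref{lem:nuccharac} (or rather its image under the equivalences of \autoref{thm:enhanced} and \autoref{thm:maineglob}) one gets the converse. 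Assembling these, together with the already-established essential surjectivity of $\EGlob$ and the observation that $K_*$ preserves vertical fragments (used at the end of the proof of \autoref{thm:maineglob}), yields the commuting square with both horizontal arrows equivalences and both vertical arrows the evident wide-subcategory inclusions.
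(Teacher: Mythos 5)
Your proposal is correct and follows essentially the same route as the paper: restrict the equivalence of \autoref{thm:maineglob} to the wide subcategories on each side, and reduce everything to the observation that an oplax normalised functor $\overline{\bcat{C}}\xlongrightarrow{}\bcat{D}$ is non-lax precisely when the induced morphism of enhanced Segal objects is nuclear. The paper treats that last equivalence as an immediate verification at the level of marked-scaled simplicial sets, whereas you route the converse through \autoref{lem:nuccharac} and the comparison with $\CSeglax$; this is a cosmetic difference, not a different argument.
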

 \begin{proof}
   Let $\EGlob^{\on{nuc}}$ denote the top horizontal functor. It is clear that a non-lax functor induces a nuclear morphism in the corresponding enhanced Segal objects. We know that this functor is essentially surjective by \autoref{thm:maineglob} and that its action on mapping $(\infty,1)$-categories is fully faithful. Therefore, we only need to show that it is essentially surjective on mapping $(\infty,1)$-categories. It immediate to verify that given an oplax functor $f:\overline{\bcat{C}} \xlongrightarrow{} \bcat{D}$ then it follows $F$ is non-lax precisely $\EGlob(f)$ is nuclear. The result now follows.
 \end{proof}

 \begin{definition}\label{def:seggray}[\cite{GaitsgoryRozenblyum} 0.34]
   Let $\scr{X}_\bullet$ and $\scr{Y}_\bullet$ be a pair of (complete) Segal objects in $\bcat{C}\!\on{at}_{(\infty,1)}$. We define the \emph{Gray tensor product} of $\scr{X}_\bullet$ and $\scr{Y}_\bullet$ denoted by $(\scr{X}\tensor \scr{Y})_\bullet \in \CSeg$ by means of the universal property
   \[
     \on{Nat}_{\Delta^\op}((\scr{X}\tensor \scr{Y})_\bullet, \scr{Z}_\bullet) \isom \on{Nat}_{\Delta^\op}(\scr{X}_\bullet \times \scr{Y}_\bullet, \scr{Z}_\bullet)^{\mathbf{gr}} \subset \on{Nat}_{\Delta^\op}(\scr{X}_\bullet \times \scr{Y}_\bullet, \scr{Z}_\bullet)^{\mathcal{I}\!\on{oplax}}
   \]
   where $\scr{Z}_\bullet \in \CSeg$ and $\on{Nat}_{\Delta^\op}(\scr{X}_\bullet \times \scr{Y}_\bullet, \scr{Z}_\bullet)^{\mathbf{gr}}$ is the full subcategory of $\on{Nat}_{\Delta^\op}(\scr{X}_\bullet \times \scr{Y}_\bullet, \scr{Z}_\bullet)^{\mathcal{I}\!\on{oplax}}$ consisting in those oplax functors $F$ which satisfy the following conditions below:
   \begin{itemize}
      \item For every object $x \in \scr{X}_0$ the restriction to $\{x\} \times \scr{Y}_\bullet \xlongrightarrow{} \scr{Z}_\bullet$ defines a functor and similarly for every object in $y \in \scr{Y}_0$.
      \item For every pair of objects $f \in \scr{X}_1$ and $g \in \scr{Y}_1$ let us consider an object $\sigma \in \scr{X}_2 \times \scr{Y}_2$ whose component in $\scr{X}$ is given by the composite of $f$ with the identity on the object $d_1(f)$ and whose component in $\scr{Y}_2$ is the composite of the identity on $d_0(g)$ with $g$. Then the morphism induced by $F$
      \[
        F_2(\sigma) \xlongrightarrow{}  F_1 \circ\left(\scr{X}(d_1) \times \scr{Y}(d_1)\right)(\sigma)
      \]
      is an equivalence in $\scr{Z}_1$.
    \end{itemize} 
 \end{definition}

 \begin{definition}\label{def:Eseggray}
    Let $\bcat{X}_\bullet$ and $\bcat{Y}_\bullet$ be a pair of enhanced Segal objects. We define the \emph{Gray tensor product} of $\bcat{X}_\bullet$ and $\bcat{Y}_\bullet$ denoted by $(\bcat{X}\tensor \bcat{Y})_\bullet \in \ESeg$ by means of the universal property 
    \[
     \on{Nat}_{\Delta^\op}((\bcat{X}\tensor \bcat{Y})_\bullet, \bcat{Z}_\bullet)^{\on{nuc}} \isom \on{Nat}_{\Delta^\op}(\bcat{X}_\bullet \times \bcat{Y}_\bullet, \bcat{Z}_\bullet)^{\mathbf{Egr}} \subset \on{Nat}_{\Delta^\op}(\bcat{X}_\bullet \times \bcat{Y}_\bullet, \bcat{Z}_\bullet)
   \]
   where $\bcat{Z}_\bullet \in \ESeg$ and $\on{Nat}_{\Delta^\op}(\bcat{X}_\bullet \times \bcat{Y}_\bullet, \bcat{Z}_\bullet)^{\mathbf{Egr}}$ is the full subcategory of $\on{Nat}_{\Delta^\op}(\bcat{X}_\bullet \times \bcat{Y}_\bullet, \bcat{Z}_\bullet)$ consisting in those oplax functors $F$ which satisfy the following conditions below:
   \begin{itemize}
      \item For every object $x \in \bcat{X}_0$ the restriction to $\{x\} \times \bcat{Y}_\bullet \xlongrightarrow{} \bcat{Z}_\bullet$ is nuclear and similarly for every object in $y \in \bcat{Y}_0$.
      \item For every pair of objects $f \in \bcat{X}_1$ and $g \in \bcat{Y}_1$ let us consider an object $\sigma \in \bcat{X}_2 \times \bcat{Y}_2$ whose component in $\bcat{X}$ is given by the composite of $f$ with the identity on the object $d_1(f)$ and whose component in $\bcat{Y}_2$ is the composite of the identity on $d_0(g)$ with $g$. Then the image $F(\sigma)$ belongs to the nucleus of $\bcat{Z}_\bullet$.
    \end{itemize} 
 \end{definition}

\begin{definition}
  Let $\bcat{C}, \bcat{D} \in \bcat{B}\!\on{icat}_\infty^{\mathbf{oplax},\odot}$ for every $(\infty,2)$-category $\bcat{A}$ we define 
  \[
     \Fun^{\mathbf{opgl}}\left(\overline{\bcat{C}\times \bcat{D}},\bcat{A}\right)^{\mathbf{gr},\leq 1} \xlongrightarrow{} \Fun^{\mathbf{opgl}}\left(\overline{\bcat{C}\times \bcat{D}},\bcat{A}\right)^{\leq 1}
   \] as the full subcategory on those oplax functors $F$ sending the collection of thin triangles in $\bcat{C}\tensor \bcat{D}$ to thin triangles in $\bcat{A}$.
\end{definition}

\begin{lemma}\label{lem:goodgrayuniv}
  For every triple of $(\infty,2)$-categories $\bcat{C},\bcat{D}$ and $\bcat{A}$ in $\bcat{B}\!\on{icat}_\infty^{\mathbf{oplax},\odot}$ there exists an equivalence of $(\infty,1)$-categories
  \[
     \Fun^{\mathbf{opgl}}\left(\overline{\bcat{C}\times \bcat{D}},\bcat{A}\right)^{\mathbf{gr},\leq 1} \xlongrightarrow{\simeq}  \Fun^{\mathbf{opgl}}\left(\bcat{C} \tensor \bcat{D},\bcat{A}\right)^{\leq 1}.
  \]
\end{lemma}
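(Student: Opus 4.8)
The plan is to compare both sides with the Gray functor category $\on{Fun}^{\mathbf{opgr}}(\bcat{C}\tensor\bcat{D},\bcat{A})$ via the universal property of the Gray tensor product established in \autoref{thm:univgray}. First I would recall that by \autoref{thm:univgray}, restriction along the canonical map $\iota_{\bcat{C},\bcat{D}}:(\bcat{C}\times\bcat{D},E_{\bcat{C}}\times E_{\bcat{D}},M_{\bcat{C}\times\bcat{D}})\xlongrightarrow{}(\bcat{C}\times\bcat{D},E_{\bcat{C}}\times E_{\bcat{D}},T_{\bcat{C}\tensor\bcat{D}})$ identifies $\on{Fun}^{\mathbf{oplax}}(\bcat{C}\times\bcat{D},\bcat{A})$ with the full subcategory of $\on{Fun}(\bcat{C}\tensor\bcat{D},\bcat{A})$ on those oplax normalised functors satisfying conditions (1) and (2) of that theorem. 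The key observation is that condition (1) of \autoref{thm:univgray} (that the restrictions $F_x\colon\bcat{D}\to\bcat{A}$ and $F_y\colon\bcat{C}\to\bcat{A}$ are genuine functors) together with the normalisation condition is precisely what it means for an oplax functor out of $\overline{\bcat{C}\times\bcat{D}}$ (scaled with $M_{\bcat{C}\times\bcat{D}}$) to map the thin triangles of $\bcat{C}\tensor\bcat{D}$ to thin triangles — that is, to lie in $\on{Fun}^{\mathbf{opgl}}(\overline{\bcat{C}\times\bcat{D}},\bcat{A})^{\mathbf{gr},\leq 1}$.

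The second step is to upgrade this identification from the underlying $(\infty,1)$-categorical level to the globular functor categories, i.e. to pass from $\on{Fun}^{\mathbf{oplax}}$ (based on the Gray tensor $\tensor$ on the simplicial direction) to $\on{Fun}^{\mathbf{opgl}}$ (based on the globular Gray product $\odot$). Here I would invoke \autoref{prop:globularcoincide} and \autoref{prop:globularpushout}, which say that the natural map $X\bodot Y\to X\odot Y$ is a trivial cofibration and that the globular pushout-products with $\mathbf{MS}$-anodyne maps are trivial cofibrations; this guarantees that forming $\on{Fun}^{\mathbf{opgl}}$ against $\overline{\bcat{C}\times\bcat{D}}$ and against $\bcat{C}\tensor\bcat{D}$ yields $\infty$-bicategories and that the comparison is well-behaved. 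Combined with \autoref{lem:bicatoplax}-style reasoning (that $\on{Fun}^{\mathbf{opgl}}$ out of a marked-scaled simplicial set whose scaling is $M_{\bcat{C}\times\bcat{D}}$ automatically factors through invertible $2$-morphisms), the restriction functor $\iota^*$ extends to an equivalence between $\on{Fun}^{\mathbf{opgl}}(\overline{\bcat{C}\times\bcat{D}},\bcat{A})^{\leq 1}$ and a full subcategory of $\on{Fun}^{\mathbf{opgl}}(\bcat{C}\tensor\bcat{D},\bcat{A})^{\leq 1}$, and the extra $\mathbf{gr}$ condition on the source matches being a genuine (non-oplax on each slice) functor on the target, which is automatic. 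So the essential image is all of $\on{Fun}^{\mathbf{opgl}}(\bcat{C}\tensor\bcat{D},\bcat{A})^{\leq 1}$.

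The third step is to check that the equivalence is fully faithful, not merely essentially surjective on objects: I need to verify that natural transformations (and $2$-morphisms) correspond under restriction along $\iota_{\bcat{C},\bcat{D}}$. This amounts to noting that $\iota^*$ being fully faithful in \autoref{thm:univgray} is a statement about the whole mapping $(\infty,1)$-categories, and that the globular functor categories $\on{Fun}^{\mathbf{opgl}}(-,\bcat{A})$ are built from the $\odot$-Gray product in the "$K$" variable in a way that is insensitive to the scaling difference between $M_{\bcat{C}\times\bcat{D}}$ and $T_{\bcat{C}\tensor\bcat{D}}$ on the "$\bcat{C}\times\bcat{D}$" variable — the point being that a morphism in these functor categories is tested by mapping out of $A\odot X$ where $A$ ranges over marked-scaled simplicial sets and $X$ is the source, and the relevant triangles in $A\odot X$ that could detect the scaling difference are handled by \autoref{prop:globularpushout}.

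The main obstacle I expect is the bookkeeping in step two: carefully matching the $\mathbf{gr}$-condition in the definition of $\on{Fun}^{\mathbf{opgl}}(\overline{\bcat{C}\times\bcat{D}},\bcat{A})^{\mathbf{gr},\leq 1}$ (oplax functors sending thin triangles of $\bcat{C}\tensor\bcat{D}$ to thin triangles) with the two conditions of \autoref{thm:univgray}, and ensuring that the globular variant of the universal property genuinely holds — i.e. that passing from $\tensor$ to $\odot$ in the mapping-object direction does not introduce a discrepancy. The clean way to handle this is to run the same adjunction argument as in the proof of \autoref{thm:univgray} but with the globular Gray product in place of the ordinary one, using that $\on{Fun}^{\mathbf{opgl}}$ and $\on{Fun}^{\mathbf{opgr}}$ agree up to the trivial cofibration of \autoref{prop:globularcoincide}; once this is set up the verification reduces to the already-established combinatorics of \autoref{prop:grpushout} and \autoref{prop:globularpushout}, and the scaling-matching is then a direct unwinding of \autoref{def:gray}(2) against \autoref{def:invertible2morph}.
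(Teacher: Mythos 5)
Your first step is essentially a tautology rather than an application of \autoref{thm:univgray}: since $\overline{\bcat{C}\times\bcat{D}}$ and $\bcat{C}\tensor\bcat{D}$ have the same underlying simplicial set and the same marking and differ only in the scaling, an oplax normalised functor $F\colon\overline{\bcat{C}\times\bcat{D}}\to\bcat{A}$ sends the triangles of $T_{\bcat{C}\tensor\bcat{D}}$ to thin triangles if and only if it \emph{is} a map of marked-scaled simplicial sets $\bcat{C}\tensor\bcat{D}\to\bcat{A}$. So the object-level bijection is free, and \autoref{thm:univgray} contributes nothing here; moreover that theorem concerns the cartesian functor categories $\Fun(\bcat{C}\tensor\bcat{D},\bcat{A})$, whose morphisms are ordinary natural transformations rather than the opglobular ones of the lemma, so its fully-faithfulness statement does not transfer. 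The entire content of the lemma therefore sits in the higher simplices, and this is where your argument has a genuine gap: you must show that a simplex of $\Fun^{\mathbf{opgl}}(\overline{\bcat{C}\times\bcat{D}},\bcat{A})$, i.e.\ a map $(\overline{\bcat{C}\times\bcat{D}})\odot A\to\bcat{A}$, all of whose vertices satisfy the $\mathbf{gr}$-condition, automatically sends the \emph{additional} thin triangles of $(\bcat{C}\tensor\bcat{D})\odot A$ to thin triangles of $\bcat{A}$. None of \autoref{prop:globularcoincide}, \autoref{prop:globularpushout} or \autoref{prop:grpushout} addresses this: they concern colimit-preservation and fibrancy of the Gray products, not the inclusion of the two decorations $(\overline{\bcat{C}\times\bcat{D}})\odot A\hookrightarrow(\bcat{C}\tensor\bcat{D})\odot A$ on the same simplicial set. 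Deferring the issue to ``rerunning the adjunction argument of \autoref{thm:univgray} with $\odot$ in place of $\tensor$'' just relocates the same verification without performing it.

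The missing step is a short but unavoidable combinatorial check, which is exactly what the paper's proof consists of (it in fact shows the map is an isomorphism). Unwinding \autoref{def:gray} and \autoref{def:globularGray}, a triangle $\rho$ of $(\bcat{C}\tensor\bcat{D})\odot\Delta^2_\sharp$ that is not already thin in $(\overline{\bcat{C}\times\bcat{D}})\odot\Delta^2_\sharp$ falls into two cases. Either the edge $\rho(1\to2)$ is marked in $\bcat{C}\tensor\bcat{D}$, in which case its $\bcat{C}\times\bcat{D}$-component already lies in $M_{\bcat{C}\times\bcat{D}}$ and there is nothing to prove; or the $\Delta^2$-component of $\rho(0\to1)$ is degenerate, in which case one extends $\rho$ to a $3$-simplex $(c_0,d_0,i)\to(c_1,d_1,i)\to(c_2,d_2,i)\to(c_2,d_2,j)$ with $d_2(\theta)=\rho$, observes that the three remaining faces are sent to thin triangles (the one lying over the vertex $i$ precisely because the vertex functor $F_i$ factors through $\bcat{C}\tensor\bcat{D}$, the other two because their second edge is degenerate in $\bcat{C}\times\bcat{D}$), and concludes by the saturation move of type \ref{ms:wonky4} in \autoref{def:msanodyne}. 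You need to supply this $3$-simplex argument, or an equivalent one, for your proof to close.
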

\begin{proof}
  We will proof that the map in question is an isomorphism. Since $\overline{\bcat{C}\times \bcat{D}}$ and $\bcat{C}\tensor \bcat{D}$ only differ in the scaling it will enough to show that given a map $F: \left(\overline{\bcat{C}\times \bcat{D}}\right) \odot \Delta^2_\sharp \xlongrightarrow{} \bcat{A}$ whose restriction to every $i \in \Delta^2_{\sharp}$ yields a map $F_i: \bcat{C} \tensor \bcat{D} \xlongrightarrow{} \bcat{A}$ then $F$ sends every thing triangle in $(\bcat{C}\tensor \bcat{D})\odot \Delta^2_\sharp$ to a thin triangle in $\bcat{A}$.

  Let $\rho: \Delta^2_\sharp \xlongrightarrow{} (\bcat{C}\tensor \bcat{D})\odot \Delta^2_\sharp$. We consider two cases:
  \begin{enumerate}
    \item The edge $\rho(1 \xlongrightarrow{} 2)$ is marked in $\bcat{C}\tensor \bcat{D}$. 

    Then it follows that $\rho$ defines a thin simplex in $\left(\overline{\bcat{C}\times \bcat{D}}\right) \odot \Delta^2_\sharp$ and the claim follows.
    \item The edge $\rho(0 \xlongrightarrow{} 1)$ is degenerate.

    We consider a 3-simplex which we depict as 
    \[
       \theta: (c_0,d_0,i) \xlongrightarrow{} (c_1,d_1,i) \xlongrightarrow{} (c_2,d_2,i) \xlongrightarrow{}(c_2,d_2,j)
     \] 
     such that $d_2(\theta)=\rho$. It follows that $F$ sends every face of this 3-simplex, (except possible the face skipping the vertex 2) to a thin simplex in $\bcat{A}$ and thus the claim holds. \qedhere
  \end{enumerate}
\end{proof}

\begin{theorem}\label{thm:maingray}
 The different versions of the Gray tensor product given in \autoref{thm:univgray}, \autoref{def:seggray} and $\autoref{def:Eseggray}$ are equivalent. More precisely, let $\bcat{C}\tensor_{\on{gr}}\bcat{D}$ denote a fibrant replacement of the Gray tensor product $\bcat{C}\tensor \bcat{D}$ (see \autoref{def:gray}) where $\bcat{C}$ and $\bcat{D}$ are fibrant marked-scaled simplicial sets. Then we have equivalences
 \[
   \EGlob(\bcat{C}\tensor_{\on{gr}}\bcat{D})_\bullet \simeq \left(\EGlob(\bcat{C})\tensor \EGlob(\bcat{D})\right)_\bullet, \enspace \enspace K_*((\mathbf{Gl}(\bcat{C})\tensor \mathbf{Gl}(\bcat{D}))_\bullet \isom (\EGlob(\bcat{C})\tensor \EGlob(\bcat{D}))_\bullet.
 \]
 which are natural in $\bcat{C}$ and $\bcat{D}$.
\end{theorem}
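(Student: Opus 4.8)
The plan is to deduce both equivalences from the representable universal properties of the four Gray tensor products involved (\autoref{thm:univgray}, \autoref{def:seggray}, \autoref{def:Eseggray}, and the scaled Gray product of \autoref{def:gray}), using that $\EGlob\colon\bcat{B}\!\on{icat}^{\mathbf{oplax},\odot}_\infty\xlongrightarrow{} \ESeg$ and $K_*\colon\CSeglax\xlongrightarrow{}\ESeg$ are equivalences (\autoref{thm:maineglob}, \autoref{thm:enhanced}) which, by \autoref{cor:eglobnuc} and \autoref{cor:enhanced}, identify nuclear morphisms with honest (non-lax) functors. Since each tensor is defined so as to corepresent a subcategory of natural transformations out of a product, it suffices to produce, for every test $(\infty,2)$-category $\bcat{A}$, an equivalence of the relevant mapping $(\infty,1)$-categories that is natural in all variables, and then invoke the (co)Yoneda lemma; naturality in $\bcat{C}$ and $\bcat{D}$ will be automatic because every comparison map below is induced by a manifestly functorial map of representing objects.

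For the first equivalence, fix $\bcat{A}$. Because $\Fun^{\mathbf{opgl}}(\Delta^n_\flat,-)^{\leq 1}$ is a composite of right adjoints it preserves products, so $\EGlob(\bcat{C})_\bullet\times\EGlob(\bcat{D})_\bullet\isom\EGlob(\bcat{C}\times\bcat{D})_\bullet$, and full faithfulness of $\EGlob$ gives $\on{Nat}_{\Delta^\op}(\EGlob(\bcat{C})_\bullet\times\EGlob(\bcat{D})_\bullet,\EGlob(\bcat{A})_\bullet)\simeq\Fun^{\mathbf{opgl}}(\overline{\bcat{C}\times\bcat{D}},\bcat{A})^{\leq 1}$. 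The crucial step is to check that under this identification the $\mathbf{Egr}$-subcategory of \autoref{def:Eseggray} goes to $\Fun^{\mathbf{opgl}}(\overline{\bcat{C}\times\bcat{D}},\bcat{A})^{\mathbf{gr},\leq 1}$: by \autoref{cor:eglobnuc} the condition that the restriction to a slice $\{x\}\times\EGlob(\bcat{D})_\bullet$ be nuclear says that the corresponding oplax functor preserves the thin triangles of $\{x\}\times\bcat{D}$, which are exactly the Gray-thin triangles of $\bcat{C}\tensor\bcat{D}$ with degenerate $\bcat{C}$-component (\autoref{def:gray}), and dually for the slices $\bcat{C}\times\{y\}$; while the object $\sigma\in\EGlob(\bcat{C})_2\times\EGlob(\bcat{D})_2$ assembled from $(f\circ\on{id},\on{id}\circ g)$ corresponds to the distinguished thin triangle $(x,y)\to(x',y)\to(x',y')$ of \autoref{thm:univgray}, so that "$F(\sigma)$ lies in the nucleus" becomes "$F$ preserves that triangle". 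Then $\on{Nat}_{\Delta^\op}(\EGlob(\bcat{C})_\bullet\times\EGlob(\bcat{D})_\bullet,\EGlob(\bcat{A})_\bullet)^{\mathbf{Egr}}\simeq\Fun^{\mathbf{opgl}}(\overline{\bcat{C}\times\bcat{D}},\bcat{A})^{\mathbf{gr},\leq 1}\simeq\Fun^{\mathbf{opgl}}(\bcat{C}\tensor\bcat{D},\bcat{A})^{\leq 1}$ by \autoref{lem:goodgrayuniv}, and the latter equals $\Fun^{\mathbf{opgl}}(\bcat{C}\tensor_{\on{gr}}\bcat{D},\bcat{A})^{\leq 1}$ since $\bcat{C}\tensor\bcat{D}\to\bcat{C}\tensor_{\on{gr}}\bcat{D}$ is a trivial cofibration and $\bcat{A}$ is fibrant (\autoref{prop:globularleftquillen}). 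Applying $\EGlob$ again, this last category is $\on{Nat}_{\Delta^\op}(\EGlob(\bcat{C}\tensor_{\on{gr}}\bcat{D})_\bullet,\EGlob(\bcat{A})_\bullet)^{\on{nuc}}$, so the defining universal property of $(\EGlob(\bcat{C})\tensor\EGlob(\bcat{D}))_\bullet$ identifies it with $\EGlob(\bcat{C}\tensor_{\on{gr}}\bcat{D})_\bullet$.

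For the second equivalence I would take the nucleus (\autoref{def:segalnucleus}) of both sides of the equivalence just obtained. By \autoref{thm:enhanced} every enhanced Segal object $\bcat{X}_\bullet$ satisfies $\bcat{X}_\bullet\isom K_*(\on{S}(\bcat{X})_\bullet)$, and by the proof of \autoref{thm:maineglob} the nucleus of $\EGlob(\bcat{C})_\bullet$ is $\mathbf{Gl}(\bcat{C})_\bullet$; hence the nucleus of $\EGlob(\bcat{C}\tensor_{\on{gr}}\bcat{D})_\bullet$ is $\mathbf{Gl}(\bcat{C}\tensor_{\on{gr}}\bcat{D})_\bullet$ and $(\EGlob(\bcat{C})\tensor\EGlob(\bcat{D}))_\bullet\isom K_*(\mathbf{Gl}(\bcat{C}\tensor_{\on{gr}}\bcat{D})_\bullet)$. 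It then remains to identify $\mathbf{Gl}(\bcat{C}\tensor_{\on{gr}}\bcat{D})_\bullet$ with $(\mathbf{Gl}(\bcat{C})\tensor\mathbf{Gl}(\bcat{D}))_\bullet$ of \autoref{def:seggray}; this is the exact analogue of the condition-matching above, carried out in $\CSeglax$ using that $\mathbf{Gl}$ is a right Quillen equivalence (\autoref{thm:compari}) which again transports thin-triangle preservation to itself. Equivalently, one can shortcut it by applying the equivalence $K_*$ to \autoref{def:seggray} and comparing directly with \autoref{def:Eseggray} through \autoref{lem:nuccharac}, which translates "non-lax" into "nuclear" and "equivalence in $\scr{Z}_1$" into "belongs to the nucleus of $\bcat{Z}_\bullet$", so that the $\mathbf{gr}$ and $\mathbf{Egr}$ subcategories correspond.

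The hard part will be the bookkeeping in the condition-matching step: I must verify carefully that, transported across $\EGlob$ (resp.\ $K_*$), the two clauses of the $\mathbf{Egr}$ condition reproduce precisely the clauses defining $\mathbf{gr}$, and in particular that preservation of the slice-thin triangles together with the distinguished composite triangles already forces preservation of \emph{all} Gray-thin triangles of \autoref{def:gray}. This is exactly the point where \autoref{lem:goodgrayuniv} and the combinatorics of the Gray scaling do the real work; everything else is formal manipulation of adjunctions, products, and universal properties.
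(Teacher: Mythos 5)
Your proposal is correct and follows essentially the same route as the paper: both arguments transport the corepresenting universal properties of the various Gray tensor products across the equivalences $\EGlob$ and $K_*$, match the $\mathbf{gr}$/$\mathbf{Egr}$ subcategories of natural transformations (with \autoref{lem:goodgrayuniv} supplying the key combinatorial identification $\Fun^{\mathbf{opgl}}(\overline{\bcat{C}\times\bcat{D}},\bcat{A})^{\mathbf{gr},\leq 1}\simeq\Fun^{\mathbf{opgl}}(\bcat{C}\tensor\bcat{D},\bcat{A})^{\leq 1}$), and conclude by Yoneda. Your "shortcut" for the second equivalence via \autoref{lem:nuccharac} is precisely the paper's argument; the alternative route through nuclei would additionally require identifying $\mathbf{Gl}(\bcat{C}\tensor_{\on{gr}}\bcat{D})_\bullet$ with $(\mathbf{Gl}(\bcat{C})\tensor\mathbf{Gl}(\bcat{D}))_\bullet$, which is not needed on the paper's path.
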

\begin{proof}
  Let $\bcat{A}$ be a fibrant marked-scaled simplicial set. In virtue of \autoref{thm:enhanced} and \autoref{thm:maineglob} it follows that the functors $K_*$ and $\EGlob$ induce commutative diagrams of $(\infty,1)$-categories

  \[
    \begin{tikzcd}
      \on{Nat}_{\Delta^\op}((\mathbf{Gl}(\bcat{C})\times \mathbf{Gl}(\bcat{D}))_\bullet,\mathbf{Gl}(\bcat{A})_\bullet)^{\mathbf{gr}} \arrow[r,"\simeq"] \arrow[d] & \on{Nat}_{\Delta^\op}((\EGlob(\bcat{C})\times \EGlob(\bcat{D}))_\bullet,\EGlob(\bcat{A})_\bullet)^{\mathbf{Egr}}  \arrow[d] \\
      \on{Nat}_{\Delta^\op}((\mathbf{Gl}(\bcat{C})\times \mathbf{Gl}(\bcat{D}))_\bullet,\mathbf{Gl}(\bcat{A})_\bullet)^{\mathcal{I}\!\on{oplax}} \arrow[r,"\simeq"] &  \on{Nat}_{\Delta^\op}((\EGlob(\bcat{C})\times \EGlob(\bcat{D}))_\bullet,\EGlob(\bcat{A})_\bullet)
    \end{tikzcd}
  \]
  \[
    \begin{tikzcd}
       \Fun^{\mathbf{opgl}}\left(\overline{\bcat{C}\times \bcat{D}},\bcat{A}\right)^{\mathbf{gr},\leq 1} \arrow[r,"\simeq"] \arrow[d] &  \on{Nat}_{\Delta^\op}((\EGlob(\bcat{C})\times \EGlob(\bcat{D}))_\bullet,\EGlob(\bcat{A})_\bullet)^{\mathbf{Egr}} \arrow[d] \\
    \Fun^{\mathbf{opgl}}(\overline{\bcat{C} \times \bcat{D}},\bcat{A})^{\leq 1} \arrow[r,"\simeq "] &  \on{Nat}_{\Delta^\op}((\EGlob(\bcat{C})\times \EGlob(\bcat{D}))_\bullet,\EGlob(\bcat{A})_\bullet)
    \end{tikzcd}
  \]
 where the horizontal morphisms are fully faithful functors.

We look at the correpresentable functor on $(\EGlob(\bcat{C})\tensor \EGlob(\bcat{D}))_\bullet$ which we denote as
\[
   \scr{G}^{\on{E}}_{\bcat{C},\bcat{D}}: \on{ESeg}_\Delta^{\on{nuc}}\left(\bcat{C}\!\on{at}_{(\infty,1)}\right) \xlongrightarrow{} \bcat{C}\!\on{at}_{(\infty,1)}.
 \] 
  It follows from the universal property of the Gray tensor that this functor is naturally equivalent to
\[
  \hat{\scr{G}}^{\on{E}}_{\bcat{C},\bcat{D}}: \on{ESeg}_\Delta^{\on{nuc}}\left(\bcat{C}\!\on{at}_{(\infty,1)}\right) \xlongrightarrow{} \bcat{C}\!\on{at}_{(\infty,1)}, \enspace \enspace \bcat{X}_\bullet  \mapsto \on{Nat}_{\Delta^\op}((\EGlob(\bcat{C})\times \EGlob(\bcat{D}))_\bullet,\EGlob(\bcat{A})_\bullet)^{\mathbf{Egr}}
\]
where $\EGlob(\bcat{A})_\bullet \isom \bcat{X}_\bullet$ is obtained by first mapping to $\bcat{B}\!\on{icat}_\infty^{\mathbf{oplax},\odot}$ using the inverse of $\EGlob$. From this point on we will identify both functors under the notation $\scr{G}^{\on{E}}_{\bcat{C},\bcat{D}}$.

We then consider the correpresentable functor on $K_*(\mathbf{Gl}(\bcat{C})\tensor \mathbf{Gl}(\bcat{D}))_\bullet$ which we denote as
\[
  \scr{H}_{\bcat{C},\bcat{D}}: \on{ESeg}_\Delta^{\on{nuc}}\left(\bcat{C}\!\on{at}_{(\infty,1)}\right) \xlongrightarrow{} \bcat{C}\!\on{at}_{(\infty,1)}.
\]
 Note that as a consequence of \autoref{cor:enhanced} it follows that this functor is naturally equivalent the functor which sends an enhanced Segal object $\bcat{X}_\bullet \isom \EGlob(\bcat{A})_\bullet$ to the $(\infty,1)$-category of natural transformations $\on{Nat}_{\Delta^\op}((\mathbf{Gl}(\bcat{C})\tensor \mathbf{Gl}(\bcat{D}))_\bullet,\mathbf{Gl}(\bcat{A}))$.

Finally, it follows that we can identify $\scr{H}_{\bcat{C},\bcat{D}}$ with the functor 
\[
  \hat{\scr{H}}_{\bcat{C},\bcat{D}}: \on{ESeg}_\Delta^{\on{nuc}}\left(\bcat{C}\!\on{at}_{(\infty,1)}\right) \xlongrightarrow{} \bcat{C}\!\on{at}_{(\infty,1)}, \enspace \enspace \bcat{X}_\bullet  \mapsto \on{Nat}_{\Delta^\op}((\mathbf{Gl}(\bcat{C})\times \mathbf{Gl}(\bcat{D}))_\bullet,\mathbf{Gl}(\bcat{A})_\bullet)^{\mathbf{gr}}.
\]
Our discussion above guarantees the existence of a natural equivalence of functors $\scr{H}_{\bcat{C},\bcat{D}}\isom \scr{G}^{\on{E}}_{\bcat{C},\bcat{D}}$ so we can conclude by Yoneda that we have an equivalence  
\[
  K_*((\mathbf{Gl}(\bcat{C})\tensor \mathbf{Gl}(\bcat{D}))_\bullet \xlongrightarrow{\simeq} (\EGlob(\bcat{C})\tensor \EGlob(\bcat{D}))_\bullet.
\]

To finish the proof the correspresentable functor on $\EGlob(\bcat{C}\tensor_{\on{gr}}\bcat{D})_{\bullet}$ which we denote by 
\[
  \scr{T}_{\bcat{C},\bcat{D}}:\on{ESeg}_\Delta^{\on{nuc}}\left(\bcat{C}\!\on{at}_{(\infty,1)}\right) \xlongrightarrow{} \bcat{C}\!\on{at}_{(\infty,1)}.
\]
Using \autoref{cor:eglobnuc} we obtain that functor is naturally equivalent to the functor 
\[
  \hat{\scr{T}}_{\bcat{C},\bcat{D}}:\on{ESeg}_\Delta^{\on{nuc}}\left(\bcat{C}\!\on{at}_{(\infty,1)}\right) \xlongrightarrow{} \bcat{C}\!\on{at}_{(\infty,1)}, \enspace \enspace \bcat{X}_\bullet \mapsto \on{Fun}^{\mathbf{opgl}}(\bcat{C}\tensor_{\on{gr}}\bcat{D},\bcat{A})^{\leq 1}
\]
where $\bcat{X}_\bullet\isom \EGlob(\bcat{A})_\bullet$. Since $\bcat{C}\tensor_{\on{gr}}\bcat{D}$ is defined as a fibrant replacement of $\bcat{C}\tensor \bcat{D}$ we see that the functor $\hat{\scr{T}}_{\bcat{C},\bcat{D}}$ is equivalent to the functor that sends $\bcat{X}_\bullet$ to $\Fun^{\mathbf{opgl}}(\bcat{C}\tensor \bcat{D},\bcat{A})^{\leq 1} \isom \Fun^{\mathbf{opgl}}(\overline{\bcat{C}\times \bcat{D}},\bcat{A})^{\mathbf{gray},\leq 1}$ where the last equivalence is given by \autoref{lem:goodgrayuniv}. We conclude from the commutative diagrams above that we have an equivalence of functor $\scr{T}_{\bcat{C},\bcat{D}}\isom \scr{G}^{\on{E}}_{\bcat{C},\bcat{D}}$ which yields an equivalence (by Yoneda) 
\[
  \EGlob(\bcat{C}\tensor_{\on{gr}}\bcat{D})_\bullet \xlongrightarrow{\simeq} \left(\EGlob(\bcat{C})\tensor \EGlob(\bcat{D})\right)_\bullet.
\]
It is clear that our constructions are natural in $\bcat{C}$ and $\bcat{D}$ so our theorem follows. 
\end{proof}

\begin{definition}
  Let $\scr{C}$ be an $(\infty,1)$-category and consider a monoidal structure $\tensor: \scr{C} \times \scr{C}\xlongrightarrow{} \scr{C}$. We say that the monoidal structure on $\scr{C}$ is \emph{closed} if for every $X \in \scr{C}$ the functors
  \[
    \mathblank \tensor X , X \tensor \mathblank : \scr{C} \xlongrightarrow{} \scr{C}
  \]
  admit right adjoints.
\end{definition}

\begin{remark}
  Given an $(\infty,2)$-category $\bcat{A}$ and a functor $\tensor: \bcat{A} \times \bcat{A} \xlongrightarrow{} \bcat{A}$. We will say that $\tensor$ defines a closed monoidal structure if the corresponding functor on the underlying $(\infty,1)$-categories does.
\end{remark}

\begin{corollary}\label{cor:GHRprop}
 The Gray tensor product in any of its incarnations studied in this section defines a closed monoidal structure. In particular, we have that
  \begin{itemize}
    \item The Gray tensor product is associative.
    \item The Gray tensor product commutes with $\infty$-categorical colimits in each variable.
  \end{itemize}
\end{corollary}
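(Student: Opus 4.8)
The plan is to deduce every assertion from properties of the Gray tensor product of marked-scaled simplicial sets already available in the excerpt, and then to transport them along the equivalences of \autoref{thm:maingray}. Thus I would not argue directly with complete Segal objects at all; instead I would observe that \autoref{def:seggray} and \autoref{def:Eseggray} are defined by the \emph{same} universal property (oplax normalised functors out of the cartesian product satisfying the $\mathbf{gr}$/$\mathbf{Egr}$ conditions) that is used in \autoref{thm:univgray} to characterise $\btensor$, so the content of the corollary is really a statement about $\btensor$ on $(\infty,2)$-categories.

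First I would record that the Gray tensor product $\btensor$ on marked-scaled simplicial sets is a closed monoidal structure with the two bulleted properties. Associativity of the point-set functor $\tensor$ of \autoref{def:gray} is immediate from the definition (the underlying simplicial set is the cartesian product, and the marking/scaling conditions are manifestly associative), and by \autoref{prop:bothgraycoincide} together with \autoref{prop:globularcoincide} the colimit-preserving variant $\btensor$ agrees with $\tensor$ up to a pointwise trivial cofibration; hence $\btensor$ carries a coherently associative (indeed $E_1$-) structure after passage to underlying $(\infty,1)$-categories. Colimit-preservation in each variable is precisely the assertion that $\btensor$ is a left Quillen bifunctor, \autoref{prop:msgrayleftquillen}. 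Closedness comes from \autoref{def:graymapping}: by \autoref{prop:grpushout} the functors $\on{Fun}^{\mathbf{gr}}(X,-)$ and $\on{Fun}^{\mathbf{opgr}}(X,-)$ are right adjoint to $X\btensor(-)$ and $(-)\btensor X$ respectively, so on underlying $(\infty,1)$-categories these furnish the required internal homs. (Alternatively one simply cites the corresponding facts for scaled simplicial sets from \cite{GHL_Gray}, recalled in the introduction, and uses that the marked-scaled model is equivalent to the scaled one.)

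Second I would transport the structure along the comparison. By \autoref{thm:maineglob} the functor $\EGlob$ is an equivalence, and by \autoref{thm:maingray} it intertwines $\btensor$ with the Gray tensor product of enhanced Segal objects of \autoref{def:Eseggray}, naturally in both variables; restricting along \autoref{cor:eglobnuc} and invoking the universal property defining $\tensor$ on $\ESeg$ promotes this natural equivalence to a monoidal equivalence. Since monoidal equivalences transport associativity, closedness and separate colimit-preservation, $\ESeg$ (resp. $\ESegnuc$) inherits all of them. Running the same argument with $K_*$ in place of $\EGlob$, using \autoref{thm:enhanced}, \autoref{cor:enhanced} and the second equivalence of \autoref{thm:maingray}, carries the structure to $\CSeglax$, hence to $\CSeg$ equipped with the Gray tensor product of \autoref{def:seggray}; this is exactly \autoref{introcor:gray}. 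The two displayed consequences are then immediate specialisations of ``closed monoidal''.

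The step I expect to be the main obstacle is upgrading the pointwise natural equivalences of \autoref{thm:maingray} to genuine monoidal equivalences, i.e. checking compatibility with the associativity (and unit) constraints rather than merely with the underlying bifunctors. The clean way to sidestep a direct coherence computation is to argue representably: on each of the three sides the Gray tensor product corepresents the functor sending $\bcat{Z}$ to the $(\infty,1)$-category of $\mathbf{gr}$- (resp. $\mathbf{Egr}$-, resp. ``gr''-) natural transformations out of the cartesian product, and the commuting squares appearing in the proof of \autoref{thm:maingray} identify these corepresented functors compatibly with composition; by a monoidal enhancement of the Yoneda lemma this forces the comparison functor to be monoidal. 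I would spell out this representable argument explicitly rather than manipulating associators by hand.
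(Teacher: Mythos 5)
Your proposal is correct and follows essentially the same route as the paper: reduce via \autoref{thm:maingray} to the marked-scaled model, get associativity from \autoref{def:gray}, colimit-preservation from \autoref{prop:msgrayleftquillen}, and closedness from the adjunctions with $\on{Fun}^{\mathbf{gr}}$ and $\on{Fun}^{\mathbf{opgr}}$. The only difference is that you devote care to upgrading the comparison of \autoref{thm:maingray} to a genuinely monoidal equivalence before transporting the structure, a coherence point the paper's proof passes over silently; your representable argument is a reasonable way to fill that in and is consistent with how \autoref{thm:maingray} itself is proved.
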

\begin{proof}
  By virtue of \autoref{thm:maingray} it is enough to prove these assertions in any of the models studied. In the setting of (marked) scaled simplicial sets this was originally established in \cite{GHL_Gray}. For completeness of this document we remind the reader that associativity follows immediately from the definition of the Gray tensor product of marked-scaled simplicial sets given in \autoref{def:gray} and the final assertion follows from \autoref{prop:msgrayleftquillen}.

  To finish the proof, we consider an $(\infty,2)$-category $\bcat{C}$ modelled by a fibrant marked-scaled simplicial set and wish to show that the functors
  \[
      \mathblank \tensor \bcat{C}, \bcat{C}\tensor \mathblank: \bcat{B}\!\on{icat}^{\leq 1}_\infty \xlongrightarrow{} \bcat{B}\!\on{icat}^{\leq 1}_\infty 
   \] 
   admit right adjoints (at level of $(\infty,1)$-categories). Note that for every triple of $(\infty,2)$-categories $\bcat{C},\bcat{D}$ and $\bcat{A}$  we have natural equivalences of $\infty$-groupoids
  \[
    \Fun(\bcat{D}\tensor \bcat{C},\bcat{A})^\simeq \xlongrightarrow{\simeq}\Fun(\bcat{D},\Fun^{\mathbf{gr}}((\bcat{C},\bcat{A}))^\simeq
  \]
  \[
    \Fun(\bcat{C}\tensor \bcat{D},\bcat{A})^\simeq  \xlongrightarrow{\simeq} \Fun(\bcat{D},\Fun^{\mathbf{opgr}}((\bcat{C},\bcat{A}))^\simeq
  \]
  We conclude that we have adjunctions $\mathblank \tensor \bcat{C} \dashv \Fun^{\mathbf{gr}}((\bcat{C},\bcat{A})$ and $\bcat{C} \tensor \mathblank \dashv \Fun^{\mathbf{opgr}}((\bcat{C},\bcat{A})$ and thus the result follows.
\end{proof}

 \newpage
 {}
 
\end{document}